\tikzset{
  edge node/.code={%
      \expandafter\def\expandafter\tikz@tonodes\expandafter{\tikz@tonodes #1}}}
\tikzset{
  subseteq/.style={
    draw=none,
    edge node={node [sloped, allow upside down, auto=false]{$\in$}}},
  Subseteq/.style={
    draw=none,
    every to/.append style={
      edge node={node [sloped, allow upside down, auto=false]{$\in$}}}
  }
}
\title{Intersections of Hecke correspondences on modular curves}
\author[Qiao He]{Qiao He}
\address{Department of Mathematics, 
	Columbia University,
	2990 Broadway,
	New York, NY 10027, USA}
\email{qh2275@columbia.edu}
\author[Baiqing Zhu]{Baiqing Zhu}
\address{Department of Mathematics, 
	Columbia University,
	2990 Broadway,
	New York, NY 10027, USA}
\email{bz2393@columbia.edu}
\date{\today}
\begin{document}
\theoremstyle{definition}        
\newtheorem{definition}{Definition}[subsection] 
\newtheorem{example}[definition]{Example}
\newtheorem{remark}[definition]{Remark}
\newtheorem{conjecture}[definition]{Conjecture}

\theoremstyle{plain}
\newtheorem{theorem}[definition]{Theorem}
\newtheorem{lemma}[definition]{Lemma}         
\newtheorem{proposition}[definition]{Proposition}
\newtheorem{corollary}[definition]{Corollary}

\newcommand{\ofb}{\mathcal{O}_{\Breve{F}}}
\newcommand{\of}{\mathcal{O}_{F}}
\newcommand{\zpb}{\Breve{\mathbb{Z}}_{p}}
\newcommand{\zp}{\mathbb{Z}_{p}}
\newcommand{\qpb}{\Breve{\mathbb{Q}}_{p}}
\newcommand{\qp}{\mathbb{Q}_{p}}
\newcommand{\Q}{\mathbb{Q}}
\newcommand{\Z}{\mathbb{Z}}
\newcommand{\X}{\mathbb{X}}
\newcommand{\F}{\mathbb{F}}
\newcommand{\fb}{\Breve{F}}
\newcommand{\testleftlong}{\longleftarrow\!\shortmid}
\newcommand{\B}{\mathbb{B}}
\newcommand{\N}{\mathcal{N}}
\newcommand{\M}{\mathcal{M}}
\newcommand{\Gp}{\Gamma_0(p)}
\newcommand{\univ}{\textup{univ}}
\newcommand{\st}{\textup{st}}
\newcommand{\OB}{\mathcal{O}_{\B}}
\newcommand{\crys}{\textup{crys}}
\newcommand{\rO}{\mathcal{O}}
\newcommand{\rN}{\widetilde{\N}}
\newcommand{\rNpr}{\widetilde{\N}^{\prime}}
\newcommand{\exc}{\textup{Exc}}
\newcommand{\bP}{\mathbb{P}}
\newcommand{\MFF}{\M^{\textup{F}\textup{F}}}
\newcommand{\MVV}{\M^{\textup{V}\textup{V}}}
\newcommand{\MVF}{\M^{\textup{V}\textup{F}}}
\newcommand{\MFV}{\M^{\textup{F}\textup{V}}}
\newcommand{\sz}{\widetilde{\mathcal{Z}}}
\newcommand{\Int}{\textup{Int}}
\newcommand{\den}{\textup{Den}}
\newcommand{\pden}{\textup{{Pden}}}
\newcommand{\FF}{\textup{FF}}
\newcommand{\FV}{\textup{FV}}
\newcommand{\VF}{\textup{VF}}
\newcommand{\VV}{\textup{VV}}
\newcommand{\lx}{\langle x\rangle}
\newcommand{\rep}{\textup{Rep}}
\newcommand{\prep}{\textup{PRep}}
\newcommand{\res}{\textup{res}}
\newcommand{\pr}{\textup{pr}}
\newcommand{\tc}{\textcolor{Cyan}}

\begin{abstract}
  We compute the arithmetic intersections of Hecke correspondences on the product of integral model of modular curve $\mathcal{X}_0(N)$ and relate it to the derivatives of certain Siegel Eisenstein series when $N$ is odd and squarefree. We prove this by establishing a precise identity between the arithmetic intersection numbers on the Rapoport--Zink space associated to $\mathcal{X}_0(N)^{2}$ and the derivatives of local representation densities of quadratic forms.
\end{abstract}
\maketitle

\pagenumbering{roman}
\tableofcontents
\newpage
\pagenumbering{arabic}

\section{Introduction}
\subsection{Background}
Let $\mathbb{H}=\{\tau=x+iy:x,y\in\mathbb{R}, y\neq0\}$ be the union of the upper and lower half plane. Let $Y_0(1)$ be the modular curve without level structure whose $\mathbb{C}$-points are given by the quotient $\textup{GL}_2(\mathbb{Z})\backslash\mathbb{H}$. Then $Y_0(1)\simeq\textup{Spec}\,\mathbb{C}[j]$ by the elliptic modular function $j=j(\tau)$. Let $X_0(1)$ be its compactification, then $X_0(1)\simeq\mathbb{P}_{\mathbb{C}}^{1}$.
\par
For a positive integer $m$, the $m$-th Hecke correspondence $T(m)$ is a divisor on the product $Y_0(1)^{2}$ which parameterizes degree $m$ isogenies between elliptic curves. Let $\overline{T}(m)$ be the closure of $T(m)$ in the compactification $X_0(1)^{2}$. Kronecker and Hurwitz showed that as a line bundle on $\mathbb{P}_{\mathbb{C}}^{1}\times\mathbb{P}_{\mathbb{C}}^{1}$,
\begin{equation*}
    \overline{T}(m)=\mathcal{O}(\sigma_1(m),\sigma_1(m))\coloneqq\textup{pr}_1^{\ast}\mathcal{O}(\sigma_1(m))\otimes\textup{pr}_2^{\ast}\mathcal{O}(\sigma_1(m)),
\end{equation*}
where $\sigma_1(m)=\sum\limits_{d\vert m}d$ and $\textup{pr}_{1},\textup{pr}_{2}$ are the two projections from $\mathbb{P}_{\mathbb{C}}^{1}\times\mathbb{P}_{\mathbb{C}}^{1}$ to its two factors. Notice that $\{\sigma_1(m)\}_{m\geq1}$ is the Fourier coefficients of the following non-holomorphic modular form, which is also a special value of a weight $2$ Eisenstein series of genus $1$ (cf. \cite[$\S$2.3]{Modularforms123}),
\begin{equation*}
    G_2(\tau)=\frac{1}{8\pi y}-\frac{1}{24}+\sum\limits_{m\geq1}\sigma_1(m)q^{m}, \tau=x+iy\,\,\textup{with}\,\,y\gg0\,\,\textup{and}\,\,q=\textup{exp}(2\pi i\tau).
\end{equation*}
\par
Hurwitz (cf. \cite[Proposition 2.4]{GK93}) also studied the intersection number of two divisors $T(m_1)$ and $T(m_2)$ on the affine plane $Y_0(1)^{2}\simeq\textup{Spec}\,\mathbb{C}[j,j^{\prime}]$. The two divisors intersect properly if and only if $m_1m_2$ is not a perfect square, and in this case the intersection $T(m_1)\cap T(m_2)$ parameterizes certain CM elliptic curves over $\mathbb{C}$. It turns out that the intersection number equals to the summation of certain Fourier coefficients of a weight 2 genus 2 Siegel Eisenstein series. This is a special case of the \emph{geometric Siegel-Weil formula} and yields a beautiful geometric proof of the Hurwitz class number formula.
\par
In this article, we consider the \textit{arithmetic intersection} between Hecke correspondences. Let $N$ be an odd square-free positive integer. Denote by $Y_0(N)$ the curve whose $\mathbb{C}$-points are given by the quotient $\Gamma_0(N)\backslash\mathbb{H}$, let $X_0(N)$ be the compactification of $Y_0(N)$. The curve $X_0(N)$ has a natural integral model $\mathcal{X}_0(N)$ over $\mathbb{Z}$ defined by Katz and Mazur \cite{KM85} and \cite{Ces17}. The product space $\mathcal{X}_0(N)^{2}\coloneqq\mathcal{X}_0(N)\times_{\mathbb{Z}}\mathcal{X}_0(N)$ is normal but not regular when $N\neq1$. We construct a regular integral model $\M_0(N)$ of $\mathcal{X}_0(N)^{2}$ by blowing up at certain supersingular points of $\mathcal{X}_0(N)^{2}$. We also define an ``integral'' Hecke correspondence $\mathcal{T}(m)$ on the model $\M_0(N)$. It will be a Cartier divisor on $\M_0(N)$ whose generic fiber over $\mathbb{C}$ equals to the $m$-th Hecke correspondence   on $X_0(N)$.  We study the intersection between three correspondences $\{\mathcal{T}(m_i)\}_{i=1,2,3}$ on $\M_0(N)$. The computation of the intersection numbers will be reduced to the intersections of ``local'' Hecke correspondences on the Rapoport--Zink space associated to the model $\M_0(N)$. We established an identity:
\begin{equation}
\left(\begin{array}{ll}
     \textup{Intersection numbers on}  \\
      \,\,\,\,\textup{Rapoport--Zink space}
\end{array}\right)=\left(\begin{array}{ll}
     \textup{Derivatives of local density polynomial} \\
       \,\,\,\,\,\,\,\,\,\,\,\,\,\,\,\,\textup{of quadratic lattices}
\end{array}\right)
    \label{KR}
\end{equation}
This finally builds up the bridge connecting the intersection numbers of Hecke correspondences and the summation of derivatives of certain Fourier coefficients of a weight 2 genus 3 Siegel Eisenstein series, which is a special case of the \emph{arithmetic Siegel-Weil formula} and belongs to an influential program initiated by Kudla \cite{Kud97Ann,Kudla2004}.
\par
The key result is the identity (\ref{KR}), as the global result follows easily by $p$-adic uniformization of the supersingular locus of $\M_0(N)$. The identity of the type in (\ref{KR}) are called \textit{Kudla--Rapoport conjecture}, which can be regarded as the nonarchimedean part of the arithmetic Siegel-Weil formula.

 The arithmetic Siegel--Weil formula was first established in the works of Kudla, Rapoport and Yang \cite{KRYtiny,KRYcomp,KRYbook} for GSpin Shimura varieties associated to $\mathrm{GSpin}(n-1,2)$ where $n=1,2$. The archimedean part was proved by the work of Garcia--Sankaran \cite{GS19} and Bruinier-Yang \cite{BY20}. The work of Gross--Keating \cite{GK93}  complemented by the ARGOS volume \cite{ARGOS} established the nonarchimedean part for the case $n=3$ with hyperspecial level. The case for general $n$ with hyperspecial level and odd $p$ was proved by Li and Zhang \cite{LZ22b}.

 In fact, the Kudla--Rapoport conjecture was originally formulated for unitary Rapoport--Zink space associated to $\mathrm{U}(n-1,1)$ with hyperspecial level \cite{KR11}. Terstiege \cite{Ter13a} solved the $n=3$ case. The work of Li--Zhang \cite{LZ22a} solved the general case of the original Kudla--Rapoport conjecture. The work of Li--Liu and Yao \cite{LL2,Yao} established an invariant for the exotic smooth models over ramified primes. 
 
 In the unitary case, there are also many works about Shimura varieties with bad reduction. However, because of the bad reduction, even the formulation of the formula was not clear for a long time. For Kr\"amer models over ramified primes, the works of Shi \cite{Shi1} and He--Shi--Yang \cite{HSY} proved the $n=2$ case. A general formulation was proposed and the case $n=3$ was proved in \cite{HSY3} and finally He--Li--Shi--Yang \cite{HLSY} proved the general case. Over unramified primes, Cho \cite{Cho} proposed a general conjecture for the maximal parahoric levels. In \cite{CHZ}, Cho--He--Zhang showed that a formulation similar to that of \cite{HSY3} is the same as the formulation of \cite{Cho} and reduced it to a strong version of Tate conjecture for certain Deligne-Lusztig varieties.

 Moreover, the unitary case has seen a number of other recent and thrilling developments, including formulas for degenerate terms by Bruinier--Howard \cite{BH} and Chen \cite{Chen} and the function field case by Feng--Yun--Zhang \cite{FYZ}.

 We refer to the left hand side of \eqref{KR} the geometric side, while the right hand side the analytic side. The Rapoport--Zink space appearing on the geometric side is determined by the quadratic lattice appearing on the analytic side. The case $N=1$, which locally corresponds to the rank 4 self-dual lattice $\left(\textup{M}_2(\mathbb{Z}_p),\textup{det}\right)$, is the classical work of Gross and Keating \cite{GK93}   complemented by the ARGOS volume \cite{ARGOS}. The question is widely open if the quadratic lattice is not self-dual, or equivalently, the Rapoport--Zink space has some non-hyperspecial level structures. The work of Kudla--Rapoport \cite{KRshimuracurve} and Sankaran--Shi--Yang \cite{SSY} proved the case when $L$ is of rank $3$ but not self-dual.  Our work essentially settled the case when $L$ is a specific rank 4 but not self-dual lattice, see (\ref{thelattice}) in the next section for the precise definition of the lattice.  
Since the GSpin Shimura varieties are not of PEL type, such problems are much more involved. In the current work, we make essential use of the concrete moduli intepretation of the Rapoport--Zink space which only exists for certain low dimensional special cases.

\subsection{Main results}
We only state the local version of our main result. Let $p$ be an odd prime.  Let $\F$ be the algebraic closure of the finite field $\mathbb{F}_p$ and $W=W(\mathbb{F})$.  We fix  a $p$-divisible group  $\mathbb{X}$  over $\F$ of dimension 1 and height 2. Let  $\B=\textup{End}^{\circ}(\mathbb{X})\coloneqq\textup{End}(\mathbb{X})\otimes_{\mathbb{Z}_p}\qp$ be the space of quasi-endomorphisms of $\mathbb{X}$, which is the unique quaternion division algebra over $\mathbb{Q}_p$. Let $q$ be the quadratic form on $\B$. Let $x_0\in\mathbb{B}$ be an element such that $\nu_p(q(x_0))=1$. We consider the following functor $\N(x_0)$: for a  a $W$-scheme $S$ such that $p$ is locally nilpotent in $\mathcal{O}_S$, the set $\N(x_0)(S)$ consists of tuples 
 \begin{equation*}
        \left(X_1\stackrel{\pi_1}\rightarrow X_1^{\prime}, (\rho_1,\rho_1^{\prime})\right), \left(X_2\stackrel{\pi_2}\rightarrow X_2^{\prime}, (\rho_2,\rho_2^{\prime})\right),
    \end{equation*}
where $\{X_i, X_i^{\prime}\}_{i=1,2}$ are deformations of $\mathbb{X}$ to $S$ via the height 0 quasi-isogenies $\rho_i$ and $\rho_i^{\prime}$ from $\X$ to $X_i$ and $X_i^{\prime}$, and $\pi_1,\pi_2$ are deformations of the quasi-isogeny $x_0$ under the framing morphisms $\rho_i$. The functor $\N(x_0)$ is represented by a formal scheme formally of finite type over $W$.
\par
Let 
\begin{equation}
   \left(X_1^{\textup{univ}}\stackrel{x_{0,1}^{\univ}}\longrightarrow X_{1}^{\prime\textup{\univ}},\left(\rho_1^{\univ},\rho_1^{\prime\univ}\right)\right),\,\,\,\,\left(X_2^{\textup{univ}}\stackrel{x_{0,2}^{\univ}}\longrightarrow X_{2}^{\prime\textup{\univ}},\left(\rho_2^{\univ},\rho_2^{\prime\univ}\right)\right)
\end{equation}
be the universal object over the formal scheme $\N(x_0)$. Let $x\in\B$ be a non-zero element and denote by $x^{\prime}$ the element $x_0\cdot x\cdot x_0^{-1}$. We have the following commutative diagram
\begin{equation*}
        \begin{tikzcd}
    {X_1^{\textup{univ}}}
    \arrow[d,"{x_{0,1}^{\univ}}"swap] \arrow[r, dashed, "{x}"]
    & {X_2^{\textup{univ}}}
    \arrow[d,"{x_{0,2}^{\univ}}"]
    \\
    {X_1^{\prime\textup{univ}}}
    \arrow[r, dashed, "{x^{\prime}}"]
    &{X_2^{\prime\textup{univ}}},
\end{tikzcd}
\end{equation*}
where the dotted arrows below $x$ and $x^{\prime}$ mean that they are quasi-isogenies. For a subset $H\subset\B$, define the \textit{special $\mathcal{Z}$-cycle} $\mathcal{Z}_{\N(x_0)}(H)\subset\N(x_0)$ (resp. \textit{special $\mathcal{Y}$-cycle} $\mathcal{Y}_{\N(x_0)}(H)\subset\N(x_0)$) to be the closed formal subscheme over which both $x$ and $x^{\prime}$ (resp. $x_0\cdot x$ and $x^{\prime}\cdot x_0$) deform to isogenies for all $x\in H$ (cf. Definition \ref{special-cycle-originalmodel}).
\par
The formal scheme $\N(x_0)$ is normal but not regular. In order to consider intersection problem, we need to construct a regular model first. Let $\pi:\M\rightarrow\N(x_0)$ be the blow up morphism along the unique closed $\F$-point of $\N(x_0)$. Then $\M$ is a 3-dimensional regular formal scheme. For a subset $H\subset\B$, denote by $\mathcal{Z}(H)=\pi^{\ast}\mathcal{Z}_{\N(x_0)}(H)$ (resp. $\mathcal{Y}(H)=\pi^{\ast}\mathcal{Y}_{\N(x_0)}(H)$) the direct pullback. When $H=\{x\}$ consists of only one element $x$, $\mathcal{Z}_{\mathcal{N}(x_0)}(x)$ and $\mathcal{Y}_{\mathcal{N}(x_0)}(x)$ are not Cartier divisors. However, we show that $\mathcal{Z}(x)$ and $\mathcal{Y}(x)$ are Cartier divisors on $\M$ (cf. Lemma \ref{lem: divisor} and Corollary \ref{cor: divisor}).
\par
Let $L\subset\B$ be a $\zp$-lattice of rank $3$, we now associate to $L$ several integers: the arithmetic intersection number of $\mathcal{Z}$-cycles $\textup{Int}^{\mathcal{Z}}(L)$, of $\mathcal{Y}$-cycles $\textup{Int}^{\mathcal{Y}}(L)$, and the derived local density of $L$ into some not self-dual lattices.
\par
Let $x_1,x_2,x_3$ be a basis of $L$. Define the arithmetic intersection number of $\mathcal{Z}$-cycles to be
\begin{equation}
    \textup{Int}^{\mathcal{Z}}(L)\coloneqq\chi(\M,\rO_{\mathcal{Z}(x_1)}\otimes^{\mathbb{L}}_{\rO_{\M}}\rO_{\mathcal{Z}(x_2)}\otimes^{\mathbb{L}}_{\rO_{\M}}\rO_{\mathcal{Z}(x_3)}).
    \label{Z-Int}
\end{equation}
where $\mathcal{O}_{\mathcal{Z}(x_i)}$ denotes the structure sheaf of the special divisor $\mathcal{Z}(x_i)$, $\otimes^{\mathbb{L}}_{\mathcal{O}_{\M}}$ denotes the derived tensor product of coherent sheaves on $\M$ , and $\chi$ denotes the Euler--Poincar{\'e} characteristic. The arithmetic intersection number of $\mathcal{Y}$-cycles $\textup{Int}^{\mathcal{Y}}(L)$ are defined by similar formula (\ref{Z-Int}) but replacing $\mathcal{Z}$ by $\mathcal{Y}$. Both the numbers $\textup{Int}^{\mathcal{Z}}(L)$ and $\textup{Int}^{\mathcal{Y}}(L)$ are independent of the choice of the basis of $L$ by the \textit{linear invariance} property (cf. Corollary \ref{linear-invariance-2}).
\par
Now we define the local density and derived local density. For another quadratic $\zp$-lattice (of arbitrary rank) $M$, define $\textup{Rep}_{M,L}$ to be the scheme of integral representations, an $\zp$-scheme such that for any $\zp$-algebra $R$, $\textup{Rep}_{M,L}(R) = \textup{QHom}(L\otimes_{\zp}
R, M\otimes_{\zp}R)$. Here $\textup{QHom}$ denotes the set of homomorphisms of quadratic modules. The local density of associated to $M$ and $L$ is defined to be
\begin{equation*}
    \textup{Den}(M,L)=\lim\limits_{d\rightarrow\infty}\frac{\#\textup{Rep}_{M,L}(\zp/p^{d})}{p^{d\cdot \textup{dim}(\textup{Rep}(M,L))_{\qp}}}.
\end{equation*}
\par
Let $H_0(p)$ and $H_0(p)^{\vee}$ be the following two quadratic lattices of rank $4$ over $\zp$:
\begin{equation}
    H_0(p)=\left\{\begin{pmatrix}
        a & b\\
        p c & d
    \end{pmatrix}:a,b,c,d\in\zp\right\},\,\,\,\,H_0(p)^{\vee}=\left\{\begin{pmatrix}
        a & p^{-1}b\\
        c & d
    \end{pmatrix}:a,b,c,d\in\zp\right\}.
    \label{thelattice}
\end{equation}
Let $H_{2k}^{+}=(H_{2}^{+})^{\obot k}$ be a self-dual lattice of rank $2k$, where the quadratic form on $H_{2}^{+}=\zp^{2}$ is given by $(x,y)\in \zp^{2}\mapsto xy$. There exist local density polynomials $\den(X,H_0(p),L),\den(X,H_0(p)^{\vee},L)\in\mathbb{Q}[X]$ such that
\begin{align*}
    \den(X,H_0(p),L)\big\vert_{X=p^{-k}}&=\den(H_0(p)\obot H_{2k}^{+},L),\\\den(X,H_0(p)^{\vee},L)\big\vert_{X=p^{-k}}&=\den(H_0(p)^{\vee}\obot H_{2k}^{+},L).
\end{align*}
Both polynomial vanishes at $X=1$ since $L\subset\B$ is anisotropic and cannot be embedded into $H_0(p)$ and $H_0(p)^\vee$.  Therefore we consider the (normalized) \textit{derived local density}
\begin{align*}
    \partial\den\left(H_0(p), L\right)&\coloneqq-2\cdot\frac{\textup{d}}{\textup{d}X}\bigg\vert_{X=1}\frac{\den(X,H_0(p),L)}{\den(H_0(p),H_2^{+}\obot H_1^{+}[p])},\\
        \partial\den\left(H_0(p)^{\vee}, L\right)&\coloneqq-2\cdot\frac{\textup{d}}{\textup{d}X}\bigg\vert_{X=1}\frac{\den(X,H_0(p)^{\vee},L)}{\den(H_0(p)^{\vee},H_2^{+}\obot H_1^{+}[p^{-1}])},
\end{align*}
\begin{remark}
    It is pointed out by Chao Li that the extra scaling scalar $2$ comes from the fact that the formal scheme $\mathcal{N}(x_0)$ is a double cover of the Rapoport--Zink space associated to the lattice $H_0(p)$.
\end{remark}
Let $\mathcal{O}_{\B}$ be the maximal order of the division quaternion algebra $\B$. Our main theorem is the following two precise identities between several quantities just defined.
\begin{theorem}[Theorem \ref{main-theorem}]
    Let $L\subset\B$ be a $\zp$-lattice of rank 3. Then
    \begin{equation*}
        \Int^{\mathcal{Z}}(L)=\partial\den\left(H_0(p),L\right),
    \end{equation*}
    and 
    \begin{equation*}
        \Int^{\mathcal{Y}}(L)=\partial\den\left(H_0(p)^{\vee},L\right)-1=\partial\den\left(H_0(p)^{\vee},L\right)-\frac{p^{7}}{2(p+1)^{2}}\cdot\den(\mathcal{O}_{\B}^{\vee},L).
    \end{equation*}
    \label{main-intro}
\end{theorem}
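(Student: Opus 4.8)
The plan is to prove both identities by the same structural strategy used for Kudla--Rapoport-type theorems, reducing everything to the reduced scheme structure of the special cycles on $\M$ and a matching recursion on the analytic side. First I would analyze the geometry of $\N(x_0)$ and its blow-up $\M$: since $\N(x_0)$ is a double cover of the Rapoport--Zink space for $H_0(p)$ (a GSpin RZ space of dimension 3), its underlying reduced scheme $\N(x_0)_{\mathrm{red}}$ should be a union of projective lines and points, and after blowing up the unique superspecial point the exceptional divisor $\exc$ on $\M$ is a projective plane (or quadric) with known self-intersection behavior. The key local inputs, assumed from earlier in the excerpt, are that $\mathcal{Z}(x)$ and $\mathcal{Y}(x)$ are Cartier divisors on $\M$ and that both $\Int^{\mathcal{Z}}$ and $\Int^{\mathcal{Y}}$ enjoy linear invariance. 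Using linear invariance I would reduce to computing $\Int^{\mathcal{Z}}(L)$ for $L$ in a fixed normal form: choosing a basis adapted to the lattice so that the Gram matrix is diagonal $\mathrm{diag}(u_1 p^{a_1}, u_2 p^{a_2}, u_3 p^{a_3})$ with $a_1\le a_2\le a_3$, and then further using the $\mathrm{GL}$-action and the automorphisms of $\B$ to normalize the units.

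Next I would set up the induction. The standard Kudla--Rapoport technique writes $\mathcal{Z}(x_3) = \mathcal{Z}(x_3)^{\circ} + \text{(vertical part supported on the special fiber)}$, so that
\begin{equation*}
\Int^{\mathcal{Z}}(L) = \Int^{\mathcal{Z}}(L)^{\circ} + (\text{correction from the vertical components and }\exc).
\end{equation*}
The ``horizontal'' term $\Int^{\mathcal{Z}}(L)^{\circ}$ is computed by intersecting with the quasi-canonical divisor / CM-cycle and matching it against the leading term of the density polynomial, exactly as in \cite{GK93}, \cite{Ter13a}, \cite{LZ22a}. The vertical correction is where the non-hyperspecial level enters: one must identify the irreducible components of $\mathcal{Z}(x_1)\cap\mathcal{Z}(x_2)$ that lie in the special fiber, compute their multiplicities in $\mathcal{Z}(x_i)$, and then intersect with $\mathcal{Z}(x_3)$. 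Here the blow-up bookkeeping is essential: the pullback $\pi^*\mathcal{Z}_{\N(x_0)}(x)$ acquires a component along $\exc$ with an explicit multiplicity determined by $\nu_p(q(x))$, and the intersection of $\exc$ with the strict transforms of the $\mathbb{P}^1$'s in $\N(x_0)_{\mathrm{red}}$ must be worked out. I would organize this by the Bruhat--Tits stratification of $\N(x_0)_{\mathrm{red}}$ and reduce the vertical contributions to intersection numbers on $\mathbb{P}^1$'s and on $\exc\cong\mathbb{P}^2$.

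On the analytic side, I would establish the parallel recursion for $\partial\den(H_0(p),L)$ and $\partial\den(H_0(p)^{\vee},L)$. The local density polynomials of $L$ into $H_0(p)\obot H_{2k}^+$ can be computed via the non-self-dual analogue of the Cho--Yamauchi formula (or by explicitly diagonalizing and using the Katz--Mazur/Kitaoka local density recursion for Jordan blocks at $p$), producing a closed form in $X=p^{-k}$; differentiating at $X=1$ and dividing by the explicit normalizing density $\den(H_0(p),H_2^+\obot H_1^+[p])$ yields a recursion in the last exponent $a_3$ whose base case and inductive step I would match term-by-term with the geometric recursion. The $-1$ (equivalently, the $-\frac{p^7}{2(p+1)^2}\den(\mathcal{O}_\B^\vee,L)$) discrepancy in the $\mathcal{Y}$-identity should emerge naturally: the $\mathcal{Y}$-cycle differs from the $\mathcal{Z}$-cycle precisely by the role of $x_0$, which on the dual lattice $H_0(p)^\vee$ shifts a Jordan block, and the exceptional divisor $\exc$ contributes an extra universal constant to $\Int^{\mathcal{Y}}$ that is visible as the $\den(\mathcal{O}_\B^\vee,L)$ term (note $\mathcal{O}_\B^\vee$ is the unique non-self-dual rank-4 lattice with the relevant invariant).

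The main obstacle, I expect, is the vertical-component computation after blow-up: unlike the hyperspecial case where $\N_{\mathrm{red}}$ is a union of $\mathbb{P}^1$'s meeting nicely, here one must control how the strict transforms of these $\mathbb{P}^1$'s meet the exceptional $\mathbb{P}^2$ and how the Cartier divisors $\mathcal{Z}(x_i)$, $\mathcal{Y}(x_i)$ restrict to $\exc$ — in particular proving that the ``error'' coming from $\exc$ is exactly the universal constant $\tfrac{p^7}{2(p+1)^2}\den(\mathcal{O}_\B^\vee,L)$ and not something depending more subtly on $L$. A secondary difficulty is verifying the Cho--Yamauchi-type closed formula for the non-self-dual lattices $H_0(p)$ and $H_0(p)^\vee$, since the standard statement is for self-dual target lattices; I would handle this by writing $H_0(p)$ and $H_0(p)^\vee$ as sub/super-lattices of $\mathrm{M}_2(\zp)$ and tracking the density through the index, or by a direct inductive computation on Jordan splittings. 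Once both recursions are in hand with matching base cases, the theorem follows by induction on $\dim L$ and on the invariants $(a_1,a_2,a_3)$.
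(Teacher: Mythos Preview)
Your overall strategy---induction on the invariants via a matching recursion on both sides---is the right shape, but several of your guesses about the geometry are incorrect and your proposed decomposition misses the key mechanism the paper actually uses.

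First, a concrete error: the exceptional divisor $\exc_{\M}$ is $\mathbb{P}^1_{\F}\times\mathbb{P}^1_{\F}$, not $\mathbb{P}^2_{\F}$. This matters because the intersection bookkeeping you envisage on $\exc$ would be wrong; the paper computes $[\rO_{\exc_\M}\otimes^{\mathbb{L}}\rO_{\exc_\M}]=\mathcal{O}(-1,-1)$ and $[\rO_{\exc_\M}\otimes^{\mathbb{L}}\rO_{\mathcal{Z}(x)}]=\mathcal{O}(0,-1)$ in $\textup{Pic}(\mathbb{P}^1\times\mathbb{P}^1)$, and these bidegrees drive the linear-invariance argument and the vanishing of the $\exc$-contribution in the difference.

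Second, and more structurally, the paper does \emph{not} decompose into horizontal plus vertical via a Bruhat--Tits stratification and CM-cycle analysis. Instead the special fiber $\M_\F$ splits as $2\cdot\exc_\M+\MFF+\MVV+\MFV+\MVF$, where each $\M^{\ast\ast}$ is isomorphic to the blow-up $\widetilde{\N}_\F$ of the \emph{hyperspecial} Rapoport--Zink space mod~$p$. The geometric difference $\Int^{\mathcal{Z}}(L^\flat\obot\lx)-\Int^{\mathcal{Z}}(L^\flat\obot\langle p^{-1}x\rangle)$ equals $\chi({^{\mathbb{L}}\mathcal{Z}}(L^\flat)\otimes^{\mathbb{L}}\rO_{\M_\F})$, which then splits into four pieces indexed by $\textup{FF},\textup{VV},\textup{FV},\textup{VF}$. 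On $\MFF$ and $\MVV$ the special cycles pull back to special cycles on $\widetilde{\N}_\F$ directly; on $\MFV$ and $\MVF$ they pull back to $p\cdot\mathcal{Z}_{\widetilde{\N}_\F}(\overline{x_0}^{-1}x)+\exc_{\widetilde{\N}_\F}$. Each piece is then evaluated by the already-known Gross--Keating computation on $\N$, and matched term-by-term against the analytic difference formula (Lemma~\ref{der-diff}), whose four terms are indexed by the four $\Gamma_0(p)$-double cosets of a primitive element in $H_0(p)$. The base cases are $\textup{GK}(L)=(0,0,1)$ and $(0,1,1)$, computed by hand.

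Third, for the $\mathcal{Y}$-identity you say the $-1$ ``emerges naturally'' from the exceptional divisor but do not name the tool. The paper uses a specific automorphism $\iota_\M$ of $\M$ (induced from $\textup{id}\times\iota_0$ on $\N(x_0)$) satisfying $\mathcal{Y}(H)=(\iota_\M)^\ast\mathcal{Z}(x_0\cdot H)$, which converts $\Int^{\mathcal{Y}}$ into $\Int^{\mathcal{Z}}$ for a twisted lattice; the $-1$ then arises from an explicit identity between $\partial\den(H_0(p)^\vee,\cdot)$ and $\partial\den(H_0(p),\cdot[p])$ (Lemma~\ref{analytic-calculations-2}), not from a Cho--Yamauchi error term. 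Your plan to develop a non-self-dual Cho--Yamauchi formula is plausible in the abstract but is not what makes the argument go through here.
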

 We remark that in an ongoing work, we have a precise conjecture for the general GSpin RZ space with maximal parahoric level which specialize to Theorem \ref{main-intro}. We hope to establish more special cases to provide evidence for the general conjecture in the future. We also remark that the formulas in Theorem \ref{main-intro} is a manifestation of the duality phenomenon studied in   \cite{Cho}(cf. \cite[Conjecture 1.6]{Cho}) in the GSpin group setting. 

\subsection{Strategy of the proof}
\subsubsection{Difference formula on the analytic side}
Let $\mathbf{L}$ be a vertex lattice, i.e., $p\mathbf{L}^{\vee}\subset \mathbf{L}\subset\mathbf{L}^{\vee}$. Let $\mathbf{M}$ be another quadratic lattice. Let $\lx$ be a rank $1$ quadratic lattice generated by $x$ such that $n\coloneqq\nu_p(q(x))\geq0$. In $\S$\ref{diff-analytic}, we compute the difference of two local densities $\den(\mathbf{L},\mathbf{M}\obot\lx)$ and $\den(\mathbf{L},\mathbf{M}\obot\langle p^{-1}x\rangle)$. The main result is Theorem \ref{diff-loc-density}. Specialize it to our situation, we get
\begin{lemma}[Lemma \ref{der-diff}]
    Let $L^{\flat}\subset\mathbb{B}$ be an $\zp$-lattice of rank $2$. Let $x\in\mathbb{B}$ be an element such that $\nu_p(q(x))\geq0$ and $x\,\bot\, L^{\flat}$. Then
    \begin{align}
        \partial\den\left(H_0(p), L^{\flat}\obot\lx\right)-&\partial\den\left(H_0(p), L^{\flat}\obot\langle p^{-1}x\rangle\right)\notag\\
        =&\begin{cases}
            \partial\den\left(\lx[-1]\obot H_2^{+}[p], L^{\flat}\right), &\textup{if $n=0$;}\\
            &\\
            2\cdot\partial\den\left(\lx[-1]\obot H_2^{+}[p], L^{\flat}\right)&\\
            \,\,\,+\partial\den\left(\lx[-1]\obot H_2^{+}, L^{\flat}\right), &\textup{if $n=1$;}\\
            &\\
            2\cdot\partial\den\left(\lx[-1]\obot H_2^{+}[p], L^{\flat}\right)&\\
            +2\cdot\partial\den\left(\lx[-1]\obot H_2^{+}, L^{\flat}\right),&\textup{if $n\geq2$.}
        \end{cases}\label{explicit-diff-der}
    \end{align}
    \label{diff-ana-intro}
\end{lemma}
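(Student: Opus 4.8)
The plan is to derive Lemma \ref{diff-ana-intro} as a direct specialization of the general difference formula (Theorem \ref{diff-loc-density}) established in $\S$\ref{diff-analytic}, taking $\mathbf{M} = L^{\flat}$ and $\mathbf{L}$ to be the vertex lattices $H_0(p) \obot H_{2k}^{+}$ (for $\mathcal{Z}$-cycles) with $x \bot L^{\flat}$ and $\nu_p(q(x)) = n \geq 0$. First I would recall that Theorem \ref{diff-loc-density} expresses $\den(\mathbf{L}, \mathbf{M}\obot\lx) - \den(\mathbf{L}, \mathbf{M}\obot\langle p^{-1}x\rangle)$ as a sum of terms of the form $\den(\mathbf{L}', \mathbf{M})$, where the auxiliary lattices $\mathbf{L}'$ are built from $\lx[-1]$ and the hyperbolic planes $H_2^{+}$, $H_2^{+}[p]$, with multiplicities depending on the valuation $n$. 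The number of such terms and their coefficients is exactly what produces the three cases $n = 0$, $n = 1$, $n \geq 2$ in \eqref{explicit-diff-der}: the case division reflects when $\langle p^{-1}x\rangle$ is still integral (i.e. $n \geq 1$) and when $\langle p^{-2}x \rangle$ becomes integral (i.e. $n \geq 2$), each threshold contributing an extra copy of a density term.

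The second step is to pass from the identity among local densities to the identity among \emph{derived} local densities $\partial\den$. Here I would need to track the normalization factors in the definition of $\partial\den(H_0(p), -)$ and of $\partial\den(\lx[-1]\obot H_2^{+}[p], -)$, $\partial\den(\lx[-1]\obot H_2^{+}, -)$ carefully. The key point is that each of the lattices appearing — $H_0(p)\obot H_{2k}^{+}$ as well as the auxiliary ones — fits into a family indexed by $k$ (adding copies of $H_{2k}^{+}$), so the local density polynomials $\den(X, -, -)$ interpolate all of them; the difference formula at the level of polynomials in $X$ then holds identically, and I would differentiate at $X = 1$. Since $L^{\flat}$ is rank $2$, anisotropic, and cannot embed into any of the relevant lattices at $X = 1$, all these polynomials vanish at $X = 1$, so the $X$-derivative is the first nonvanishing term and the normalization denominators (which are nonzero constants, values of $\den$ at the "generic" rank-$3$ lattice) pass through the differentiation cleanly. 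The main bookkeeping is checking that the normalizing constant $\den(H_0(p), H_2^{+}\obot H_1^{+}[p])$ used for $\partial\den(H_0(p), -)$ is compatible — after accounting for the contribution of the extra $\lx[-1]$ factor — with the constants used for the right-hand-side derived densities, so that no spurious scalar survives.

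I expect the main obstacle to be precisely this compatibility of normalizations and the correct matching of the combinatorial coefficients $1$, $2$, $2$ (and the presence or absence of the $H_2^{+}$ term versus the $H_2^{+}[p]$ term) across the three cases. In Theorem \ref{diff-loc-density} the raw difference formula will have coefficients that are polynomials in $p$ and $X$; the subtlety is that after specializing $\mathbf{L} = H_0(p)\obot H_{2k}^{+}$ and taking the derivative at $X = 1$, several of these coefficients either vanish, collapse to the constants $1$ and $2$, or combine two separate density terms into the single terms $\partial\den(\lx[-1]\obot H_2^{+}[p], L^{\flat})$ and $\partial\den(\lx[-1]\obot H_2^{+}, L^{\flat})$. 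I would verify this case by case: for $n = 0$ only the "$H_2^{+}[p]$" term survives (the $H_2^{+}$ term has no derived contribution because the corresponding polynomial does not vanish to the right order); for $n = 1$ the $H_2^{+}$ term appears with coefficient $1$; and for $n \geq 2$ both the coefficient of the $H_2^{+}[p]$ term and of the $H_2^{+}$ term stabilize at $2$. Once Theorem \ref{diff-loc-density} is in hand, each of these is a finite, if slightly delicate, computation, and no genuinely new idea is required beyond careful accounting.
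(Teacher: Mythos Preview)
Your approach is correct and matches the paper's: the paper derives Lemma~\ref{der-diff} directly from Lemma~\ref{diff-ana} (the polynomial-level specialization of Theorem~\ref{diff-loc-density}) by differentiating at $X=1$ and matching the normalization constants, exactly as you propose. One small correction to your heuristic: the three cases in \eqref{explicit-diff-der} arise not from integrality thresholds for $\langle p^{-1}x\rangle$ but from the explicit $n$-dependence of the coefficients already visible in Lemma~\ref{diff-ana} --- for $n=0$ the $H_2^{+}$ term is simply absent from the formula, and for $n=1$ it carries coefficient $(q-1)X^2$ rather than $2(q-1)X^2$, so no vanishing-order argument is needed.
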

Here for a quadratic lattice $N$ and an element $a\in\qp$, the symbol $N[a]$ means another quadratic lattice with the same $\zp$-module as $N$ but with the quadratic form multiplied by $a$. The difference formula is related to the double coset decomposition of the lattice $H_0(p)$ under the left and right multiplication of $\Gamma_0(p)\coloneqq H_0(p)^{\times}$. Let $x\in H_0(p)$ be a \textit{primitive} element such that $\nu_p(\textup{det}(x))=n\geq0$, the primitivity of $x$ means that $x\in H_0(p)\backslash pH_0(p)$. Then the double coset $\Gamma_0(p)x\Gamma_0(p)$ has the following possibilities (Lemma \ref{double-coset}): 
\begin{align*}
    \Gamma_0(p)\begin{pmatrix}
        1 & 0\\
        0 & p^{n}
    \end{pmatrix}\Gamma_0(p),\,\,\Gamma_0(p)\begin{pmatrix}
        p^{n} & 0\\
        0 & 1
    \end{pmatrix}\Gamma_0(p),\,\,
    \Gamma_0(p)\begin{pmatrix}
        0 & p^{n-1}\\
        p & 0
    \end{pmatrix}\Gamma_0(p),\,\,
    \Gamma_0(p)\begin{pmatrix}
        0 & 1\\
        p^{n} & 0
    \end{pmatrix}\Gamma_0(p).
\end{align*}
When $n\geq2$, the four cosets are different. When $n=1$, the last two cosets are the same, therefore there are 3 different cosets. When $n=0$, the last two cosets do not exist, while the first two are the same, therefore there are only 1 coset. That corresponds the number of terms appearing on the right hand side of the difference formula (\ref{explicit-diff-der}).

\subsubsection{Difference formula on the geometric side}
Denote by $\M_{\F}$ the base change $\M\times_{W}\F$. It can be viewed as the Cartier divisor of the ideal sheaf generated by $p$. There are four irreducible components of $\M_{\F}$ labeled by $\MFF, \MFV, \MVF$ and $\MVV$ (see $\S$\ref{RZBLOWUP}). The superscripts here indicate the isogeny type (Frobenius or Verschiebung) of the first and second universal deformation of $x_0$ over the corresponding irreducible component. All four irreducible components are isomorphic to the formal completion of the scheme $\textup{Bl}_{(0,0)}\mathbb{A}_{\mathbb{F}}^{2}$ along its exceptional divisor. We have the following identity of Cartier divisors on $\M$ (Proposition \ref{geometry-M}):
\begin{equation*}
   \M_{\F}=2\cdot\exc_{\M}+\MFF+\MVV+\MFV+\MVF.
\end{equation*}
\par
We prove the following lemma: 
\begin{lemma}[Lemma \ref{geo-diff-M}]
    Let $L^{\flat}\subset\B$ be a $\zp$-lattice of rank $2$. Let $x\in\B$ be another element such that $\nu_p(q(x))\geq\max\{\textup{val}(L^{\flat}),2\}$ and $x\perp L^{\flat}$, then
    \begin{align*}
        \Int^{\mathcal{Z}}(L^{\flat}\obot\langle x\rangle)-\Int^{\mathcal{Z}}(L^{\flat}\obot\langle p^{-1}x\rangle)&=\chi(\M,{^{\mathbb{L}}\mathcal{Z}}(L^{\flat})\otimes^{\mathbb{L}}_{\rO_{\M}}\rO_{\M_{\F}})\\
        &=\chi(\M,{^{\mathbb{L}}\mathcal{Z}}(L^{\flat})\otimes^{\mathbb{L}}_{\rO_{\M}}\rO_{\MFF})+\chi(\M,{^{\mathbb{L}}\mathcal{Z}}(L^{\flat})\otimes^{\mathbb{L}}_{\rO_{\M}}\rO_{\MVV})\\
        &+\chi(\M,{^{\mathbb{L}}\mathcal{Z}}(L^{\flat})\otimes^{\mathbb{L}}_{\rO_{\M}}\rO_{\MFV})+\chi(\M,{^{\mathbb{L}}\mathcal{Z}}(L^{\flat})\otimes^{\mathbb{L}}_{\rO_{\M}}\rO_{\MVF}).
    \end{align*}
    \label{diff-geo-intro}
\end{lemma}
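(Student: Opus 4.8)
The plan is to reduce the difference of the two intersection numbers to an Euler--Poincar\'e characteristic computation against the special fiber $\M_{\F}$, and then to decompose along its irreducible components. First I would observe that the hypothesis $\nu_p(q(x))\geq\max\{\textup{val}(L^{\flat}),2\}$ should force a clean relation between the Cartier divisors $\mathcal{Z}(x)$ and $\mathcal{Z}(p^{-1}x)$ on $\M$: passing from $x$ to $p^{-1}x$ corresponds to rescaling the quasi-isogeny, and on the level of special divisors this rescaling (when $\nu_p(q(x))$ is large enough relative to $L^{\flat}$) should yield $\mathcal{Z}(x)=\mathcal{Z}(p^{-1}x)+\M_{\F}$ as Cartier divisors, at least after restricting to a neighbourhood of the support of the relevant derived tensor product --- exactly the geometric analogue of the fact that scaling a primitive vector by $p$ changes its divisor of ``deformability to an isogeny'' by the whole mod-$p$ fiber. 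This is the statement I would isolate first as a sublemma, proving it by the local equations for $\mathcal{Z}(x)$ established in Lemma \ref{lem: divisor} and Corollary \ref{cor: divisor}, together with the explicit description of $\M$ as a blow-up. The condition $\nu_p(q(x))\geq\textup{val}(L^{\flat})$ is what guarantees that the ``$\flat$-part'' $\mathcal{Z}(L^{\flat})$ sits transversally enough that the extra $\M_{\F}$ contributes with multiplicity exactly one.

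Granting that sublemma, the first displayed equality is essentially the associativity and biadditivity of the derived tensor product: writing ${}^{\mathbb{L}}\mathcal{Z}(L^{\flat})\coloneqq\rO_{\mathcal{Z}(x_1)}\otimes^{\mathbb{L}}_{\rO_{\M}}\rO_{\mathcal{Z}(x_2)}$ for a basis $x_1,x_2$ of $L^{\flat}$, we have
\begin{align*}
    \Int^{\mathcal{Z}}(L^{\flat}\obot\langle x\rangle)-\Int^{\mathcal{Z}}(L^{\flat}\obot\langle p^{-1}x\rangle)
    =\chi\bigl(\M,{}^{\mathbb{L}}\mathcal{Z}(L^{\flat})\otimes^{\mathbb{L}}_{\rO_{\M}}\bigl(\rO_{\mathcal{Z}(x)}-\rO_{\mathcal{Z}(p^{-1}x)}\bigr)\bigr),
\end{align*}
and by the sublemma the class $[\rO_{\mathcal{Z}(x)}]-[\rO_{\mathcal{Z}(p^{-1}x)}]$ in the relevant $K$-group equals $[\rO_{\M_{\F}}]$ (using the short exact sequence $0\to\rO_{\M}(-\M_{\F})\to\rO_{\M}\to\rO_{\M_{\F}}\to0$ and that $\mathcal{Z}(p^{-1}x)=\mathcal{Z}(x)-\M_{\F}$, hence $\rO_{\mathcal{Z}(p^{-1}x)}=\rO_{\mathcal{Z}(x)}\otimes^{\mathbb{L}}\rO_{\M}(\M_{\F})$, and twisting back). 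I would need to be slightly careful that the derived tensor products are all represented by bounded complexes with coherent, finite-length homology so that $\chi$ is defined and additive --- this follows because the cycles $\mathcal{Z}(x_i)$ intersect $\mathcal{Z}(L^{\flat})$ in a $0$-dimensional formal scheme (properness of the intersection coming from anisotropy of $L^{\flat}\obot\langle x\rangle$, which is part of the standing setup).

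For the second displayed equality I would invoke the identity of Cartier divisors $\M_{\F}=2\cdot\exc_{\M}+\MFF+\MVV+\MFV+\MVF$ from Proposition \ref{geometry-M}. The naive expectation is $\chi(\M,{}^{\mathbb{L}}\mathcal{Z}(L^{\flat})\otimes^{\mathbb{L}}\rO_{\M_{\F}})=2\,\chi(\cdots\otimes^{\mathbb{L}}\rO_{\exc_{\M}})+\sum_{??}\chi(\cdots\otimes^{\mathbb{L}}\rO_{\M^{??}})$, so the content of the lemma is that the exceptional divisor contributes nothing: $\chi(\M,{}^{\mathbb{L}}\mathcal{Z}(L^{\flat})\otimes^{\mathbb{L}}_{\rO_{\M}}\rO_{\exc_{\M}})=0$. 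I would prove this by analyzing ${}^{\mathbb{L}}\mathcal{Z}(L^{\flat})$ near $\exc_{\M}$: since $\mathcal{Z}(L^{\flat})=\pi^{\ast}\mathcal{Z}_{\N(x_0)}(L^{\flat})$ is pulled back from the base $\N(x_0)$, its restriction to the exceptional $\mathbb{P}^1\times\mathbb{P}^1$ (or rather to $\exc_{\M}$, the completion of $\textup{Bl}_{(0,0)}\mathbb{A}^2$ along its exceptional curve) is pulled back along the contraction $\exc_{\M}\to\textup{Spec}\,\F$, so ${}^{\mathbb{L}}\mathcal{Z}(L^{\flat})|_{\exc_{\M}}$ has Euler characteristic equal to a multiple of $\chi(\exc_{\M},\rO_{\exc_{\M}})$ against the relevant Tor-sheaves; and $\chi$ of the structure sheaf of the exceptional divisor of a blow-up of a point in a surface, being a union of $\mathbb{P}^1$'s glued suitably, has the right vanishing --- more precisely the proper pushforward along $\pi$ kills these classes. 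I expect this vanishing --- showing the exceptional divisor genuinely does not contribute to the intersection number --- to be the main obstacle, as it requires understanding the derived restriction of the pulled-back cycle to the blow-up locus rather carefully, using both the projection formula $R\pi_{\ast}(\pi^{\ast}(-)\otimes^{\mathbb{L}}\rO_{\exc_{\M}})$ and the fact that $R\pi_{\ast}\rO_{\exc_{\M}}$ (or $R\pi_{\ast}\rO_{\M_{\F}}$) has no higher cohomology and pushes forward to a length-zero sheaf on the point underlying $\N(x_0)_{\F}$. The remaining pieces, identifying $\chi$ against each $\rO_{\M^{??}}$ separately, are then formal once the $\exc_{\M}$ contribution is removed.
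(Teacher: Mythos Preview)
Your proposed sublemma --- that $\mathcal{Z}(x)=\mathcal{Z}(p^{-1}x)+\M_{\F}$ as Cartier divisors --- is false, and this is the central gap. By Proposition \ref{difference-decomposition} and Lemma \ref{dec-diff-div}, the difference divisor $\mathcal{D}(x)=\mathcal{Z}(x)-\mathcal{Z}(p^{-1}x)$ equals $2\cdot\exc_{\M}+\widetilde{\mathcal{D}}(x)$, where $\widetilde{\mathcal{D}}(x)$ is a sum of \emph{horizontal} divisors $\N_0^{\textup{I}\pm}(\cdots)$, $\N_0^{\textup{II}\pm}(\cdots)$ (strict transforms of flat $W$-schemes $\N_0(\cdot)$). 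By contrast $\M_{\F}=2\cdot\exc_{\M}+\MFF+\MVV+\MFV+\MVF$ with the $\M^{??}$ entirely vertical. So $\mathcal{D}(x)\neq\M_{\F}$ as divisors, nor even as $K_0$-classes; the first equality in the lemma is \emph{not} a divisor identity but an equality of intersection numbers that holds for a subtler reason.

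The paper's argument for the first equality is instead to decompose the other factor. By Lemma \ref{proper-with-strict}, the class ${^{\mathbb{L}}\mathcal{Z}}(L^{\flat})$ is a linear combination of classes $[\mathcal{O}_{W_s}]$ (horizontal quasi-canonical lifting curves) and $[\mathcal{O}(1,0)]$, $[\mathcal{O}(0,1)]$ (classes supported on $\exc_{\M}$). For the horizontal pieces $[\mathcal{O}_{W_s}]$, the equality $\chi([\mathcal{O}_{W_s}]\otimes^{\mathbb{L}}\rO_{\mathcal{D}(x)})=\chi([\mathcal{O}_{W_s}]\otimes^{\mathbb{L}}\rO_{\M_{\F}})$ is exactly \cite[Lemma 5.11]{GK93}: the length of the deformation locus of the cyclic isogeny along the quasi-canonical curve equals its ramification degree once $\nu_p(q(x))$ is large relative to $s$. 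This Gross--Keating input is essential and your approach tries to bypass it. For the exceptional pieces, one checks both sides vanish using Proposition \ref{difference-decomposition} and Proposition \ref{geometry-M}(iii).

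Your approach to the second equality also has a gap. The claim that ${^{\mathbb{L}}\mathcal{Z}}(L^{\flat})\vert_{\exc_{\M}}$ is pulled back from the point fails: $\mathcal{Z}_{\N(x_0)}(x_i)$ is \emph{not} a Cartier divisor on the singular scheme $\N(x_0)$, so the usual ``pullback of a line bundle restricts trivially to fibres'' argument does not apply, and indeed Corollary \ref{spe-decom}(b) computes $[\rO_{\exc_{\M}}\otimes^{\mathbb{L}}\rO_{\mathcal{Z}(x_i)}]=\rO(0,-1)$, which is nontrivial. The vanishing the paper uses is the numerical accident $\rO(0,-1)\cdot\rO(0,-1)=0$ on $\exc_{\M}\simeq\bP^1_{\F}\times\bP^1_{\F}$ (Lemma \ref{int-on-exc}): two rulings in the same family do not meet. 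Your projection-formula sketch would need to confront the nonregularity of $\N(x_0)$ and the failure of $R\pi_*$ to behave as for a smooth blow-up; the explicit $\textup{Pic}(\exc_{\M})$ computation is what actually closes the argument.
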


\subsubsection{Difference formula combined and the induction method}
For quadratic lattice of rank less or equal to $3$, the local density polynomial has been computed explicitly in Yang's work \cite{yanglocaldensity}. Combining with some calculations of intersection numbers in \cite{GK93} which are blow up invariant by $\S$\ref{invariance-int}, we prove the following: 
\begin{lemma}[Corollary \ref{diff-com}]
    Let $L^{\flat}\subset\B$ be a $\zp$-lattice of rank $2$. Let $x\in\B$ be a nonzero element such that $x\,\bot\,L^{\flat}$ and $\nu_p(q(x))\geq\max(L^{\flat})$. Then
    \begin{equation*}
        \Int^{\mathcal{Z}}\left(L^{\flat}\obot\lx\right)-\Int^{\mathcal{Z}}\left(L^{\flat}\obot\langle p^{-1}x\rangle\right)=\partial\den\left(H_0(p),L^{\flat}\obot\lx\right)-\partial\den\left(H_0(p),L^{\flat}\obot\langle p^{-1}x\rangle\right).
    \end{equation*}
    \label{diff-intro}
\end{lemma}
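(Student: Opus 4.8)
The plan is to combine the two difference formulas that are already available---the analytic one (Lemma~\ref{der-diff}) and the geometric one (Lemma~\ref{geo-diff-M})---and to check that their right-hand sides agree. Fix a basis $x_1,x_2$ of $L^{\flat}$ and set $n=\nu_p(q(x))$. Assume first that $n\geq\max\{\textup{val}(L^{\flat}),2\}$, so that both lemmas apply to the pair $(L^{\flat},x)$. On the geometric side Lemma~\ref{geo-diff-M} expresses $\Int^{\mathcal{Z}}(L^{\flat}\obot\lx)-\Int^{\mathcal{Z}}(L^{\flat}\obot\langle p^{-1}x\rangle)$ as a sum of four Euler characteristics $\chi(\M,{^{\mathbb{L}}\mathcal{Z}}(L^{\flat})\otimes^{\mathbb{L}}_{\rO_{\M}}\rO_{\M^{\bullet}})$, one for each irreducible component $\M^{\bullet}\in\{\MFF,\MVV,\MFV,\MVF\}$ of $\M_{\F}$; on the analytic side Lemma~\ref{der-diff} (case $n\geq2$) expresses $\partial\den(H_0(p),L^{\flat}\obot\lx)-\partial\den(H_0(p),L^{\flat}\obot\langle p^{-1}x\rangle)$ as $2\,\partial\den(\lx[-1]\obot H_2^{+}[p],L^{\flat})+2\,\partial\den(\lx[-1]\obot H_2^{+},L^{\flat})$. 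It therefore suffices to show that two of the four geometric contributions equal $\partial\den(\lx[-1]\obot H_2^{+}[p],L^{\flat})$ and the remaining two equal $\partial\den(\lx[-1]\obot H_2^{+},L^{\flat})$.

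To evaluate the geometric contributions I would use that, by $\S$\ref{RZBLOWUP}, each $\M^{\bullet}$ is isomorphic to the formal completion of $\textup{Bl}_{(0,0)}\mathbb{A}^2_{\F}$ along its exceptional divisor, and that under this isomorphism the restrictions $\mathcal{Z}(x_1)|_{\M^{\bullet}}$ and $\mathcal{Z}(x_2)|_{\M^{\bullet}}$ are pullbacks, along the blow-down, of Gross--Keating special divisors on $\mathbb{A}^2_{\F}$ attached to $x_1,x_2$ inside a rank-$3$ quadratic lattice whose form is $q|_{L^{\flat}}$ rescaled by $1$ or by $p$ according to the Frobenius/Verschiebung labels of $\M^{\bullet}$. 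The blow-up invariance of arithmetic intersection numbers established in $\S$\ref{invariance-int} then allows the Euler characteristic to be computed after blowing down, where it becomes the Gross--Keating intersection multiplicity of two (suitably rescaled) special divisors on $\mathbb{A}^2_{\F}$, given by the local formulas of \cite{GK93}; by Yang's explicit formulas \cite{yanglocaldensity} for local density polynomials of lattices of rank $\leq3$ this multiplicity coincides with the rank-$2$ derived local density of $L^{\flat}$ into $\lx[-1]\obot H_2^{+}[p]$, respectively into $\lx[-1]\obot H_2^{+}$, the target being dictated by the rescaling factor. A bookkeeping of the four labels---using that $\{\MFF,\MVV\}$ and $\{\MFV,\MVF\}$ contribute in equal pairs, matching the coefficients $2$ above---then gives the desired identification, and together with Lemma~\ref{der-diff} this proves the statement when $n\geq\max\{\textup{val}(L^{\flat}),2\}$.

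Finally, the case $n\leq1$ must be addressed separately: the hypothesis $n\geq\max(L^{\flat})$ forces $\max(L^{\flat})\leq1$, so $L^{\flat}\obot\lx$ and $L^{\flat}\obot\langle p^{-1}x\rangle$ are lattices of small valuation and both sides can be computed outright---the analytic side from Lemma~\ref{der-diff} and \cite{yanglocaldensity}, the geometric side from the explicit structure of the cycles $\mathcal{Z}(x)$ on $\M$ for vectors $x$ of small valuation together with the known rank $\leq3$ intersection computations \cite{GK93,KRshimuracurve,SSY}---and one checks directly that they agree. Combining the two ranges completes the proof.

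I expect the main obstacle to be the identification carried out in the second step: showing that the restriction of ${^{\mathbb{L}}\mathcal{Z}}(L^{\flat})$ to each of $\MFF,\MVV,\MFV,\MVF$ is a genuine Gross--Keating cycle, and---above all---pinning down the rescaling of the quadratic form on each component, i.e.\ deciding which pair of components produces the $H_2^{+}[p]$-term and which produces the $H_2^{+}$-term. One must also verify that the exceptional divisor $\exc_{\M}$ contributes nothing to $\chi(\M,{^{\mathbb{L}}\mathcal{Z}}(L^{\flat})\otimes^{\mathbb{L}}_{\rO_{\M}}\rO_{\M_{\F}})$, consistent with only four (rather than five) terms appearing in Lemma~\ref{geo-diff-M}. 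Both points rest on a careful analysis of how the framing quasi-isogeny $x_0$, with $\nu_p(q(x_0))=1$, and its conjugate $x_0xx_0^{-1}$ deform along each irreducible component, combined with the precise form of the blow-up invariance statement of $\S$\ref{invariance-int}.
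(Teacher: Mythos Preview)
Your approach matches the paper's: combine Lemma~\ref{geo-diff-M} with Lemma~\ref{der-diff} via the term-by-term identification of Lemma~\ref{term-identity}. Two remarks.

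First, the paper actually proves this (as Corollary~\ref{diff-com}) only under the stronger hypothesis $\nu_p(q(x))\geq\max\{\max(L^{\flat}),2\}$; the weaker hypothesis quoted in the introduction is not established separately. Your proposed direct verification for $n\leq1$ is therefore not needed for the paper's argument---the paper handles the small-valuation cases by computing the base cases $\textup{GK}(L)=(0,0,1),(0,1,1)$ of the main theorem directly (Lemma~\ref{base:analytic}, Corollary~\ref{base:geometric}), not via the difference formula.

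Second, your description of the $\MFV,\MVF$ contributions is slightly off, and this is exactly the obstacle you flag. The restriction $\mathcal{Z}(x)|_{\MFV}$ is \emph{not} simply the pullback of a rescaled Gross--Keating divisor: Lemma~\ref{spe-cyc-MFV0dec} gives
\[
\mathcal{Z}(x)\cap\MFV=(\iota^{\FV})^{\ast}\bigl(p\cdot\mathcal{Z}_{\widetilde{\N}_{\F}}(\overline{x_0}^{-1}\cdot x)+\exc_{\widetilde{\N}_{\F}}\bigr),
\]
so there is a multiplicity $p$ on the special-divisor piece \emph{and} an extra copy of the exceptional divisor. Hence (Corollary~\ref{int-MFV}) the Euler characteristic equals $p^{2}\cdot\chi(\N,{^{\mathbb{L}}\mathcal{Z}_\N}(\overline{x_0}^{-1}\cdot L^{\flat})\otimes^{\mathbb{L}}_{\rO_{\N}}\rO_{\N_{\F}})-1$, and the match with $\partial\den(\lx[-1]\obot H_2^{+}[p],L^{\flat})$ then requires the explicit analytic identity of Lemma~\ref{analytic-calculations-1}, not just Yang's formulas. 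For the record, the correct pairing is $\{\MFF,\MVV\}\leftrightarrow H_2^{+}$ and $\{\MFV,\MVF\}\leftrightarrow H_2^{+}[p]$.
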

Actually, there is a correspondence between the terms on the right hand side of the geometric difference formula (Lemma \ref{diff-geo-intro}) and that of the analytic difference formula Lemma (\ref{diff-ana-intro}). Namely (Lemma \ref{term-identity}), 
\begin{align*}
    &\chi(\M,{^{\mathbb{L}}\mathcal{Z}}(L^{\flat})\otimes^{\mathbb{L}}_{\rO_{\M}}\rO_{\MFF})=\chi(\M,{^{\mathbb{L}}\mathcal{Z}}(L^{\flat})\otimes^{\mathbb{L}}_{\rO_{\M}}\rO_{\MVV})=\partial\den\left(\lx[-1]\obot H_2^{+}, L^{\flat}\right),\\
    &\chi(\M,{^{\mathbb{L}}\mathcal{Z}}(L^{\flat})\otimes^{\mathbb{L}}_{\rO_{\M}}\rO_{\MFV})=\chi(\M,{^{\mathbb{L}}\mathcal{Z}}(L^{\flat})\otimes^{\mathbb{L}}_{\rO_{\M}}\rO_{\MVF})=\partial\den\left(\lx[-1]\obot H_2^{+}[p], L^{\flat}\right).
\end{align*}
\par
Let $(a_1,a_2,a_3)$ be the Gross-Keating invariant $\textup{GK}(L)$ of the quadratic lattice $L$ where $a_1\leq a_2\leq a_3$. We prove Theorem \ref{main-intro} for $\mathcal{Z}$-cycles in the two base cases that the $\textup{GK}(L)=(0,0,1)$ and $(0,1,1)$ by computing both sides explicitly ($\S$\ref{base-ana} and $\S$\ref{base-geo}). Then the statement in Theorem \ref{main-intro} about $\mathcal{Z}$-cycles follows from Lemma \ref{diff-intro} by inducting on the integer $n(L)=a_1+a_2+a_3$.  

\subsubsection{Automorphism of $\M$}
There is a naturally defined automorphism $\iota_{\M}$ of the formal scheme $\M$ ($\S$\ref{aut-M}). For a subset $H\subset\B$, we have $\mathcal{Y}(H)=\left(\iota^{\M}\right)^{\ast}\left(\mathcal{Z}(x_0\cdot H)\right)$. Combining the geometric difference formula for $\mathcal{Z}$-cycles and some identities between local densities (Lemma \ref{analytic-calculations-1}), we obtain:
\begin{lemma}[Corollary \ref{diff-com}]
    Let $L^{\flat}\subset\B$ be a $\zp$-lattice of rank $2$. Let $x\in\B$ be a nonzero element such that $x\,\bot\,L^{\flat}$ and $\nu_p(q(x))\geq\max(L^{\flat})$. Then
    \begin{equation*}
        \Int^{\mathcal{Y}}\left(L^{\flat}\obot\lx\right)-\Int^{\mathcal{Y}}\left(L^{\flat}\obot\langle p^{-1}x\rangle\right)=\partial\den\left(H_0(p)^{\vee},L^{\flat}\obot\lx\right)-\partial\den\left(H_0(p)^{\vee},L^{\flat}\obot\langle p^{-1}x\rangle\right).
    \end{equation*}
    \label{diff-intro-y}
\end{lemma}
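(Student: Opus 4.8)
The plan is to derive the $\mathcal{Y}$-statement from the $\mathcal{Z}$-statement already recorded in Lemma~\ref{diff-intro}, by transporting the $\mathcal{Y}$-intersection numbers to $\mathcal{Z}$-intersection numbers through the automorphism $\iota^{\M}$ of $\M$ from $\S$\ref{aut-M}, and then reconciling the two analytic sides by a scaling argument for local densities.

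I would begin with the geometric reduction. Since $\iota^{\M}$ is an automorphism of $\M$, pullback along it is exact, compatible with $\otimes^{\mathbb{L}}_{\rO_{\M}}$, and preserves the Euler--Poincar\'e characteristic; combined with the identity $\mathcal{Y}(H)=(\iota^{\M})^{\ast}\mathcal{Z}(x_0\cdot H)$ this gives $\Int^{\mathcal{Y}}(M)=\Int^{\mathcal{Z}}(x_0\cdot M)$ for every rank $3$ lattice $M\subset\B$. Because the reduced norm on $\B$ is multiplicative, the associated bilinear form satisfies $(x_0a,x_0b)=q(x_0)(a,b)$, so left multiplication by $x_0$ carries $L^{\flat}\obot\lx$ to $(x_0L^{\flat})\obot\langle x_0x\rangle$ and $L^{\flat}\obot\langle p^{-1}x\rangle$ to $(x_0L^{\flat})\obot\langle p^{-1}x_0x\rangle$; hence the left side of the asserted identity equals $\Int^{\mathcal{Z}}((x_0L^{\flat})\obot\langle x_0x\rangle)-\Int^{\mathcal{Z}}((x_0L^{\flat})\obot\langle p^{-1}x_0x\rangle)$. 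I would then check that the pair $(x_0L^{\flat},x_0x)$ satisfies the hypotheses of Lemma~\ref{diff-intro}: orthogonality is the displayed formula, and since left multiplication by $x_0$ scales the quadratic form by $q(x_0)\in p\zp^{\times}$ — so $x_0L^{\flat}\cong L^{\flat}[q(x_0)]$ and $\nu_p(q(x_0x))=\nu_p(q(x))+1$, while $\max(\cdot)$ increases by one under scaling by a uniformizer — the bound $\nu_p(q(x_0x))\ge\max(x_0L^{\flat})$ follows from the assumed inequality $\nu_p(q(x))\ge\max(L^{\flat})$. Applying Lemma~\ref{diff-intro} then rewrites the difference of $\mathcal{Z}$-intersection numbers as $\partial\den(H_0(p),(x_0L^{\flat})\obot\langle x_0x\rangle)-\partial\den(H_0(p),(x_0L^{\flat})\obot\langle p^{-1}x_0x\rangle)$.

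It then remains to prove the purely analytic identity that this last difference equals $\partial\den(H_0(p)^{\vee},L^{\flat}\obot\lx)-\partial\den(H_0(p)^{\vee},L^{\flat}\obot\langle p^{-1}x\rangle)$; this is the content I would isolate as Lemma~\ref{analytic-calculations-1}, and is the step I expect to be the main obstacle. The elementary inputs are the scaling identities $\den(M,L[a])=\den(M[a^{-1}],L)$ and $\den(M[a],L[a])=\den(M,L)$ for $a\in\qp^{\times}$, the isometries $H_{2}^{+}[c]\cong H_{2}^{+}$ for $c\in\zp^{\times}$, the presentations $H_0(p)\cong H_{2}^{+}\obot H_{2}^{+}[p]$ and $H_0(p)^{\vee}\cong H_{2}^{+}\obot H_{2}^{+}[p^{-1}]$ (so that $H_0(p)[p^{-1}]\cong H_0(p)^{\vee}$), together with $x_0L^{\flat}\cong L^{\flat}[q(x_0)]$ and $q(x_0)\in p\zp^{\times}$. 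The difficulty is that scaling by $q(x_0)$ does \emph{not} carry the polynomial $\den(X,H_0(p),\cdot)$ to $\den(X,H_0(p)^{\vee},\cdot)$: the auxiliary self-dual summand $H_{2k}^{+}$ defining these polynomials is sent to $H_{2k}^{+}[p^{-1}]$, which is not self-dual for $k\ge1$, so the polynomials cannot be compared directly.

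To circumvent this I would compare the two differences. Expanding the left side by Lemma~\ref{diff-ana-intro} applied to $(x_0L^{\flat},x_0x)$ with $\nu_p(q(x_0x))=\nu_p(q(x))+1\ge1$, and simplifying each resulting summand $\partial\den(\langle x_0x\rangle[-1]\obot H_2^{+}[p],x_0L^{\flat})$ and $\partial\den(\langle x_0x\rangle[-1]\obot H_2^{+},x_0L^{\flat})$ by the scaling identities and $H_{2}^{+}[c]\cong H_{2}^{+}$ to $\partial\den(\langle x\rangle[-1]\obot H_2^{+},L^{\flat})$ and $\partial\den(\langle x\rangle[-1]\obot H_2^{+}[p^{-1}],L^{\flat})$ respectively, I would match the outcome against the analogue of Lemma~\ref{diff-ana-intro} for the vertex lattice $H_0(p)^{\vee}$, itself produced by Theorem~\ref{diff-loc-density}. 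Along the way the normalizing factors must be tracked, matching $\den(H_0(p),H_2^{+}\obot H_1^{+}[p])$ with $\den(H_0(p)^{\vee},H_2^{+}\obot H_1^{+}[p^{-1}])$ under the same scaling, and the regimes $\nu_p(q(x))=0$ and $\nu_p(q(x))\ge1$ must be separated, since they produce the different numbers of terms appearing in Lemma~\ref{diff-ana-intro}. Chaining the three displayed identities with this analytic identity then gives the lemma.
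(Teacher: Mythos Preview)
Your overall architecture is exactly the paper's: reduce $\Int^{\mathcal{Y}}$ to $\Int^{\mathcal{Z}}$ via $\iota^{\M}$, apply the $\mathcal{Z}$-difference formula (Lemma~\ref{diff-intro}) to the scaled data $(x_0L^{\flat},x_0x)$, and then close with a purely analytic identity. The analytic identity you isolate is, however, Lemma~\ref{analytic-calculations-2}, not Lemma~\ref{analytic-calculations-1}; the latter is an auxiliary relation used inside the proof of the former.

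The real gap is in your proposed proof of that analytic identity. After expanding the $H_0(p)$-side via Lemma~\ref{der-diff}, you claim that the summand $\partial\den(\langle x_0x\rangle[-1]\obot H_2^{+}[p],\,x_0L^{\flat})$ ``simplifies by scaling'' to $\partial\den(\langle x\rangle[-1]\obot H_2^{+},\,L^{\flat})$. This is false for precisely the reason you yourself flagged a paragraph earlier: the auxiliary $H_{2k}^{+}$ that defines the polynomial variable $X=p^{-k}$ does not scale, so the derived densities do not transform by a simple isometry. In fact Lemma~\ref{analytic-calculations-1} shows the actual relation is
\[
\partial\den(\langle x_0x\rangle[-1]\obot H_2^{+}[p],\,x_0L^{\flat})=p^{2}\cdot\partial\den(\langle x\rangle[-1]\obot H_2^{+},\,L^{\flat})-1,
\]
with both a $p^{2}$ factor and a $-1$ correction. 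The companion term acquires an analogous distortion, so the term-by-term matching you sketch cannot go through as written. (A smaller point: $H_0(p)^{\vee}$ is not a vertex lattice since it is not integral, so Theorem~\ref{diff-loc-density} does not apply to it directly; one must first rescale, as the paper does, writing $\den(H_0(p)^{\vee}\obot H_{2k}^{+},\,\cdot)=p^{-6}\den(H_0(p)\obot H_{2k}^{+}[p],\,\cdot[p])$.)

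The paper's route for Lemma~\ref{analytic-calculations-2} is more computational than your sketch: after this rescaling, it applies the difference formula with the twisted block $H_{2k}^{+}[p]$, invokes Yang's explicit rank-$\leq 3$ density formulas \cite{yanglocaldensity} to write the resulting densities as $1+R_1(X)+R_2(X)$ with controlled shifts, computes $R_1'(1)$ in closed form, and then checks the claimed equality against Lemma~\ref{der-diff} and the explicit values already obtained in the proof of Lemma~\ref{analytic-calculations-1}. So your reduction is right, but the analytic step genuinely requires these explicit computations rather than a pure scaling argument.
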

Then the statement in Theorem \ref{main-intro} about $\mathcal{Y}$-cycles follows from Lemma \ref{diff-intro-y} and similar induction method as the $\mathcal{Z}$-cycle case.

\subsection{Applications}
\subsubsection{A conjecture of Kudla--Rapoport}
In 2006, Kudla and Rapoport defined CM cycles on the modular curve $X_0(N)$ for an arbitrary positive integer $N$. Roughly speaking, a CM cycle parameterizes a pair of CM isogenies $(j,j^{\prime})$ of two elliptic curves $E,E^{\prime}$ which admits a cyclic isogeny $\pi:E\rightarrow E^{\prime}$ of degree $N$, where $j\in\textup{End}(E)$ and $j^{\prime}\in\textup{End}(E^{\prime})$ such that $j^{\prime}\circ\pi=\pi\circ j$. It is conjectured that the local arithmetic intersection numbers of two CM cycles on the Rapoport--Zink space associated to the modular curve $X_0(N)$ are related to the derivatives of local densities in the favor of (\ref{KR}). Our result can be applied to confirm this conjecture when $N$ is odd and squarefree.
\par
We only state the ``local'' theorem, from which the global one can be deduced easily. The key observation is the following: Suppose that the element $x_0\in\B$ we picked satisfies $q(x_0)=N$. Then the strict transform $\widetilde{\mathcal{Z}}(1)$ of the special cycle $\mathcal{Z}(1)$ on $\M$ is isomorphic to the blow up of the Rapoport--Zink space associated to $X_0(N)$ along its maximal ideal. The pullback of special cycles $\mathcal{Z}(x)$ to $\widetilde{\mathcal{Z}}(1)$ coincides with CM cycles defined by Kudla--Rapoport. This fact can be viewed as a \textit{geometric cancellation law} (cf. $\S$\ref{geo-cancel}).
\par
Let $M\subset\mathbb{B}^{0}$ (trace 0 elements in $\B$) be a rank 2 lattice with basis $x,y$. Define 
\begin{equation*}
    \textup{Int}^{\textup{CM}}(M)=\chi(\mathcal{N}_0(N),\rO_{\mathcal{Z}^{\textup{CM}}(x)}\otimes^{\mathbb{L}}_{\rO_{\M}}\rO_{\mathcal{Z}^{\textup{CM}}(y)}).
\end{equation*}
This number is independent of the choice of basis $x,y$. Let $S\subset\textup{M}_2(\mathbb{Z}_p)$ be the sublattice of trace 0 elements. Combining with \textit{analytic cancellation law} (cf. $\S$\ref{ana-cancel}) on the local densities, we obtain:
\begin{theorem}[Theorem \ref{int-CM}]
    Let $M\subset\B$ be a $\zp$-lattice of rank 2. Then
    \begin{equation*}
        \Int^{\textup{CM}}(M)=\partial\den(S,M).
    \end{equation*}
    \label{int-CM-intro}
\end{theorem}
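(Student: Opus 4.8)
The plan is to reduce the statement to the $\mathcal{Z}$-cycle half of Theorem~\ref{main-intro}, by realizing the Rapoport--Zink space $\mathcal{N}_0(N)$ of $X_0(N)$ as a special cycle inside $\N(x_0)$ (a \emph{geometric cancellation law}), and by converting densities against $H_0(p)$ into densities against $S$ (an \emph{analytic cancellation law}). Concretely, for a rank $2$ lattice $M\subset\B^{0}$ with basis $x,y$ I will prove
\begin{equation*}
\Int^{\textup{CM}}(M)=\Int^{\mathcal{Z}}(\langle 1\rangle\obot M),\qquad \partial\den(H_0(p),\langle 1\rangle\obot M)=\partial\den(S,M),
\end{equation*}
and combine these with Theorem~\ref{main-intro}; here $\langle 1\rangle\obot M$ is an orthogonal direct sum inside $\B$ because $M$ consists of trace-zero elements, and it is automatically anisotropic, so Theorem~\ref{main-intro} applies. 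When $p\nmid N$ the statement is the classical self-dual ternary case of \cite{GK93}, so I assume $p\mid N$.

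For the geometric cancellation law, fix $x_0\in\B$ with $q(x_0)=N$; as $N$ is odd, squarefree and $p\mid N$, this forces $\nu_p(q(x_0))=1$, so $\N(x_0)$ and its blow-up $\M$ from $\S\ref{RZBLOWUP}$ are defined. On the closed formal subscheme $\mathcal{Z}_{\N(x_0)}(1)$ the quasi-isogenies $1\colon X_1\to X_2$ and $1\colon X_1'\to X_2'$ deform to isogenies, hence to isomorphisms, since all four $p$-divisible groups are $\X$-framed of height $0$; thus $X_1=X_2$, $X_1'=X_2'$, and the remaining datum --- namely $X_1$ together with the degree-$N$ isogeny $x_0\colon X_1\to X_1'$ and the framings --- is exactly a point of $\mathcal{N}_0(N)$. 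This gives an isomorphism $\mathcal{Z}_{\N(x_0)}(1)\cong\mathcal{N}_0(N)$. Restricting the moduli description of $\mathcal{Z}_{\N(x_0)}(x)$, namely the condition that $x$ and $x'=x_0xx_0^{-1}$ deform to isogenies $X_1\to X_2$ and $X_1'\to X_2'$, to this locus yields precisely the condition that $x\in\textup{End}(X_1)$ and $x'\in\textup{End}(X_1')$ deform, which is the defining condition of the Kudla--Rapoport CM cycle $\mathcal{Z}^{\textup{CM}}(x)$; hence $\mathcal{Z}_{\N(x_0)}(x)\cap\mathcal{Z}_{\N(x_0)}(1)=\mathcal{Z}^{\textup{CM}}(x)$ as closed subschemes, for every $x\in\B^{0}$. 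After blowing up, $\widetilde{\mathcal{Z}}(1)$ becomes $\textup{Bl}\,\mathcal{N}_0(N)$ and the restriction to it of the Cartier divisor $\mathcal{Z}(x)$ becomes the pullback of $\mathcal{Z}^{\textup{CM}}(x)$.

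By the blow-up invariance of intersection numbers from $\S\ref{invariance-int}$, $\Int^{\mathcal{Z}}(\langle 1\rangle\obot M)$ equals the corresponding number $\chi(\N(x_0),\rO_{\mathcal{Z}_{\N(x_0)}(1)}\otimes^{\mathbb{L}}\rO_{\mathcal{Z}_{\N(x_0)}(x)}\otimes^{\mathbb{L}}\rO_{\mathcal{Z}_{\N(x_0)}(y)})$ computed directly on $\N(x_0)$. Combining this with the moduli identification above and a Tor-independence statement for $\mathcal{Z}_{\N(x_0)}(x),\mathcal{Z}_{\N(x_0)}(y)$ and $\mathcal{Z}_{\N(x_0)}(1)$ inside $\N(x_0)$ --- equivalently, the projection formula for $\textup{Bl}\,\mathcal{N}_0(N)\to\mathcal{N}_0(N)$, valid because $\mathcal{N}_0(N)$ is regular so that $Rb_{\ast}\rO=\rO$ --- one obtains $\chi(\mathcal{N}_0(N),\rO_{\mathcal{Z}^{\textup{CM}}(x)}\otimes^{\mathbb{L}}\rO_{\mathcal{Z}^{\textup{CM}}(y)})=\Int^{\textup{CM}}(M)$. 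Theorem~\ref{main-intro} then gives $\Int^{\mathcal{Z}}(\langle 1\rangle\obot M)=\partial\den(H_0(p),\langle 1\rangle\obot M)$, reducing everything to the second displayed identity.

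The analytic cancellation law $\partial\den(H_0(p),\langle 1\rangle\obot M)=\partial\den(S,M)$ is a purely local computation with the density polynomials of rank $\le 3$ lattices from \cite{yanglocaldensity}. One first splits off the norm-$1$ vector: for $p$ odd, all representations of $\langle 1\rangle$ into $H_0(p)\obot H_{2k}^{+}$ form a single orbit whose orthogonal complement is the trace-zero sublattice $H_0(p)^{0}\obot H_{2k}^{+}$, so that $\den(H_0(p)\obot H_{2k}^{+},\langle 1\rangle\obot M)=\den(H_0(p)\obot H_{2k}^{+},\langle 1\rangle)\cdot\den(H_0(p)^{0}\obot H_{2k}^{+},M)$, the first factor being independent of $M$ and polynomial in $p^{-k}$. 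Since $H_0(p)^{0}\cong\langle u\rangle\obot H_{2}^{+}[p]$ and $S\cong\langle u\rangle\obot H_{2}^{+}$ for a common unit $u$, a direct comparison of $H_{2}^{+}[p]$-densities with $H_{2}^{+}$-densities (the content of $\S\ref{ana-cancel}$) rewrites the $H_0(p)^{0}$-density polynomial in terms of the $S$-density polynomial, with the auxiliary factors matching exactly the normalizing denominators in the two definitions of $\partial\den$; taking $-2\cdot\frac{\textup{d}}{\textup{d}X}\big|_{X=1}$ of the two normalized polynomials (both of which vanish at $X=1$) then gives the identity. I expect the main obstacle to be on the geometric side: proving $\Int^{\mathcal{Z}}(\langle 1\rangle\obot M)=\Int^{\textup{CM}}(M)$ exactly, rather than up to exceptional-divisor corrections, requires controlling the difference between $\mathcal{Z}(1)=\pi^{\ast}\mathcal{Z}_{\N(x_0)}(1)$ and its strict transform $\widetilde{\mathcal{Z}}(1)$ inside the three-dimensional $\M$, checking that the resulting exceptional contributions vanish in the triple product with $\mathcal{Z}(x),\mathcal{Z}(y)$, and verifying the relevant Tor-independence of the special cycles on the singular formal scheme $\N(x_0)$; the analytic cancellation law, though technical, is routine once Yang's formulas are in hand.
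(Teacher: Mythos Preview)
Your overall strategy --- reduce to Theorem~\ref{main-intro} via a geometric cancellation law $\Int^{\textup{CM}}(M)=\Int^{\mathcal{Z}}(\langle 1\rangle\obot M)$ and an analytic cancellation law $\partial\den(H_0(p),\langle 1\rangle\obot M)=\partial\den(S,M)$ --- is exactly what the paper does. But your execution of both cancellation laws diverges from the paper, and the geometric one contains a real gap.

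On the geometric side, you try to pass from $\M$ down to the \emph{singular} $\N(x_0)$ via ``blow-up invariance from \S\ref{invariance-int}'', and then restrict to $\mathcal{Z}_{\N(x_0)}(1)\simeq\N_0(x_0)$ via a ``Tor-independence'' claim. Neither step works as stated: \S\ref{invariance-int} and Lemma~\ref{blow-up-N0(x)}(d) treat blow-ups of \emph{regular} surfaces, not the three-dimensional non-regular $\N(x_0)$; and on $\N(x_0)$ the cycles $\mathcal{Z}_{\N(x_0)}(x)$ are not Cartier divisors (Lemma~\ref{divisor} is precisely the statement that this is repaired only after passing to $\M$), so the derived tensor product there has no intersection-theoretic meaning. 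You correctly flag this as the ``main obstacle'', but you do not resolve it. The paper never leaves the regular $\M$: it uses the decomposition $\mathcal{Z}(1)=\exc_{\M}+\widetilde{\mathcal{Z}}(1)$ from Corollary~\ref{spe-decom}(a) with $n=0$, and then shows that the exceptional contribution vanishes because
\[
[\rO_{\exc_{\M}}\otimes^{\mathbb{L}}\rO_{\mathcal{Z}(x)}\otimes^{\mathbb{L}}\rO_{\mathcal{Z}(y)}]=\mathcal{O}(0,-1)\cdot\mathcal{O}(0,-1)=0
\]
on $\exc_{\M}\simeq\mathbb{P}^{1}_{\F}\times\mathbb{P}^{1}_{\F}$, by Corollary~\ref{spe-decom}(b) and Lemma~\ref{int-on-exc}. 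The remaining $\widetilde{\mathcal{Z}}(1)\simeq\widetilde{\N}_0(x_0)$ piece is, by Lemma~\ref{id-CM-Z}, exactly $(\iota_0)_{\ast}{^{\mathbb{L}}\mathcal{Z}^{\textup{CM}}}(M)$. No descent to $\N(x_0)$ or Tor-independence is needed.

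On the analytic side, your orbit/Witt-cancellation argument could be made to work, but the paper's route is much shorter: $S$ is by definition the trace-zero part of $H_0(p)$ (so $S\simeq\langle 1\rangle[-1]\obot H_2^{+}[p]$, not $\langle u\rangle\obot H_2^{+}$ as you wrote), and the identity $\partial\den(S,M)=\partial\den(H_0(p),\langle 1\rangle\obot M)$ is literally the $n=0$ case of Lemma~\ref{der-diff}, since $\partial\den(H_0(p),L^{\flat}\obot\langle p^{-1}\cdot 1\rangle)=0$. No appeal to Yang's explicit formulas is needed here.
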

We remark here that Shi \cite{ShiCM} has a different method to calculate $\Int^{\textup{CM}}(M)$.

\subsubsection{Arithmetic intersections of Hecke correspondences}
Let $\mathbb{V}$ be a rank 4 \textit{incoherent} quadratic space over $\mathbb{A}$ given by the following:
\begin{equation}
    \mathbb{V}_{v}=V_v=\textup{M}_2(\mathbb{Q}_v)\,\,\textup{if $v<\infty$, and $\mathbb{V}_{\infty}$ is positive definite.}
    \label{incoherent-intro}
\end{equation}
Given a Schwartz function $\Phi=\Phi_{\infty}\otimes\Phi_{f}\in\mathscr{S}(\mathbb{V}^{3})$ where $\Phi_{\infty}$ is given by the standard Gaussian function on $\mathbb{V}_{\infty}^{3}$. There is a classical incoherent Eisenstein series $E(g,s,\Phi)$ ($\S$\ref{eisen}) on the group $\textup{GSp}_6$. Denote by $E_T(g,s,\Phi)$ the $T$-th Fourier expansion of it. The central value of $E(z,s,\Phi)$ is $0$ by the incoherence. Let $E^{\prime}_T(g,0,\Phi)$ be the derivative of $E_T(g,s,\Phi)$ at $0$.
\par
Let $N$ be an odd squarefree positive integer. Let $\M_0(N)$ be the blow up of the Deligne-Mumford stack $\mathcal{X}_0(N)\times_{\mathbb{Z}}\mathcal{X}_0(N)$ along its supersingular points with residue field characteristic $p\vert N$. For a positive integer $m$, we define a Cartier divisor $\widehat{\mathsf{T}}(m)$ (cf. (\ref{integral-m-hecke})) on $\M_0(N)$ such that the $\mathbb{C}$-points of $\widehat{\mathsf{T}}(m)$ equals to the classically defined $m$-th Hecke correspondence on $X_0(N)^{2}$ (Lemma \ref{C-points}).
\begin{theorem}[Theorem \ref{a-i-h-1}]
    Let $m_1,m_2,m_3$ be three positive integers such that there is no positive definite binary quadratic form over $\mathbb{Z}$ which represents the three integers $m_1,m_2,m_3$. Then
    \begin{equation*}
        \left(\widehat{\mathsf{T}}(m_1)\cdot\widehat{\mathsf{T}}(m_2)\cdot\widehat{\mathsf{T}}(m_3)\right)=\sum\limits_{T}-2E_T^{\prime}(1,0,\Phi_{\infty}\otimes\mathbf{1}_{H_0(N)\otimes\widehat{\mathbb{Z}}}^{3}),
    \end{equation*}
    where the summation ranges over all the half-integral symmetric positive definite $3\times3$ matrices $T$ with diagonal elements $m_1,m_2,m_3$.
\end{theorem}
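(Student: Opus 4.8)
The plan is to reduce the global arithmetic intersection number on $\M_0(N)$ to a product of local contributions, each governed by the local identity in Theorem \ref{main-intro}, and then to match the resulting sum of local derived densities term-by-term with the derivatives of the Fourier coefficients of the incoherent Eisenstein series via the Siegel--Weil machinery. First I would observe that, under the hypothesis that no positive definite binary quadratic form over $\mathbb Z$ represents $m_1,m_2,m_3$ simultaneously, the three divisors $\widehat{\mathsf T}(m_1),\widehat{\mathsf T}(m_2),\widehat{\mathsf T}(m_3)$ meet properly, and the generic fiber contribution vanishes (this is the ``no common positive definite binary form'' condition forcing the intersection to be supported in the special fibers over primes $p\mid N$, exactly the arithmetic analogue of Hurwitz's properness criterion recalled in the introduction). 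Hence $\bigl(\widehat{\mathsf T}(m_1)\cdot\widehat{\mathsf T}(m_2)\cdot\widehat{\mathsf T}(m_3)\bigr)=\sum_{p\mid N}\sum_{T}(\text{local term at }p)\cdot\log p$, where $T$ runs over half-integral positive definite $3\times 3$ matrices with diagonal $m_1,m_2,m_3$, and the off-diagonal entries record the pairwise ``linking'' data of the three isogenies.

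The second step is $p$-adic uniformization. For each $p\mid N$ the supersingular locus of $\mathcal X_0(N)\times_{\mathbb Z}\mathcal X_0(N)$ over $\overline{\mathbb F}_p$, after the blow-up defining $\M_0(N)$, is uniformized by the formal scheme $\M$ of the excerpt: writing $N=p\cdot N'$ and choosing $x_0\in\B$ with $\nu_p(q(x_0))=1$ (so that the prime-to-$p$ part of the level $N'$ is absorbed into the choice of lattice $L$ inside $\B=\End^\circ(\X)$ and a prime-to-$p$ adelic double coset), the completion of the triple intersection along its supersingular points at $p$ becomes $\sum_{L}\Int^{\mathcal Z}(L)$, where $L$ ranges over the rank-$3$ lattices in $\B$ whose Gram matrix is $T$ (with the appropriate normalization by automorphism groups, i.e. a mass-type sum over the genus). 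Here I would invoke the geometric cancellation law of $\S$\ref{geo-cancel}: the strict transform $\widetilde{\mathcal Z}(1)$ with $q(x_0)=N'$ identifies the Hecke correspondence picture with the special cycles $\mathcal Z(x)$ on $\M$, so that the classically defined $\widehat{\mathsf T}(m_i)$ pull back to the $\mathcal Z$-cycles of trace-zero elements of given norm $m_i$ — this is the content of Lemma \ref{C-points} combined with the moduli interpretation. Applying Theorem \ref{main-intro} converts each $\Int^{\mathcal Z}(L)$ into $\partial\den(H_0(p),L)$.

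The third step is the analytic side. By the standard unfolding of the genus-$3$ incoherent Eisenstein series $E(g,s,\Phi)$ associated to the incoherent space $\mathbb V$ of (\ref{incoherent-intro}), with $\Phi_f=\mathbf 1_{H_0(N)\otimes\widehat{\mathbb Z}}^{3}$, the $T$-th Fourier coefficient factors as a product of local Whittaker functions over all places. At the archimedean place and at $v\nmid N$ the local Whittaker function is nonzero at $s=0$, while at each $p\mid N$ incoherence forces a simple zero, so $E'_T(1,0,\Phi)$ is (up to the nonvanishing factors, which by the Siegel--Weil normalization assemble into precisely the genus-counting/automorphism factors appearing in the uniformization sum) a finite sum over $p\mid N$ of $\log p$ times the derivative at $s=0$ of the local Whittaker factor at $p$. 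The local density polynomial identity $\den(X,H_0(p),L)|_{X=p^{-k}}=\den(H_0(p)\obot H_{2k}^+,L)$ together with the classical formula expressing local Whittaker functions of such Eisenstein series as local representation densities identifies that derivative with $\partial\den(H_0(p),L)$ — this is where the normalization constant $-2$ and the factor $\den(H_0(p),H_2^+\obot H_1^+[p])$ in the definition of $\partial\den$ are exactly designed to make the two sides agree (the scaling $2$ being the double-cover factor flagged in the Remark after the definition). Combining the three steps gives the stated identity.

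The main obstacle I anticipate is the bookkeeping in the second and third steps: making the $p$-adic uniformization precise as an \emph{equality of cycles on stacks} — correctly tracking the automorphism groups of the supersingular points, the prime-to-$p$ level structure $N'$ encoded as an adelic double coset, and the passage from $\mathcal X_0(N)^2$ to its blow-up $\M_0(N)$ — so that the global intersection number is literally a mass-weighted sum of the local $\Int^{\mathcal Z}(L)$'s, and then matching this weighting against the product of nonarchimedean-trivial local Whittaker factors on the Eisenstein side. The local identity Theorem \ref{main-intro} does all the genuinely hard work of comparing geometry and analysis prime-by-prime; what remains is to verify that the global-to-local reduction on each side produces the \emph{same} combinatorial packaging of lattices $L$ with Gram matrix $T$, including the case analysis when some $m_i$ is divisible by $p$ (so that $L$ is not ``$p$-primitive'' and one must use the full strength of the difference formulas of Lemmas \ref{diff-ana-intro} and \ref{diff-geo-intro} rather than just the base cases). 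I expect no new ideas are needed beyond those already in the excerpt, but the reconciliation of normalizations across the archimedean place, the primes $v\nmid N$, and the primes $p\mid N$ will require care.
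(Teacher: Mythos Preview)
Your proposal contains a genuine conceptual error in the first and third steps that would make the argument fail. You assert that the intersection is ``supported in the special fibers over primes $p\mid N$'' and, correspondingly, that ``at each $p\mid N$ incoherence forces a simple zero'' of the local Whittaker function. Both claims are incorrect: the primes where the intersection lives are the primes in $\textup{Diff}(T)$ (defined in (\ref{diff-set})), i.e.\ the finite primes $p$ at which the positive definite matrix $T$ is \emph{not} represented by $\mathbb{V}_p=\textup{M}_2(\mathbb{Q}_p)$. This set is determined by $T$, not by $N$, and a given $T$ contributes over a single prime which may perfectly well be coprime to $N$. The primes $p\mid N$ are merely the primes where the level structure is $\Gamma_0(p)$ and the Schwartz function is $1_{H_0(p)}$; this is a completely separate phenomenon from the vanishing of $W_{T,p}$.

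The paper's proof accordingly sums over \emph{all} finite primes $p$ and treats two cases: for $p\nmid N$ the local Rapoport--Zink space is the hyperspecial $\N$ and the local identity is the classical Gross--Keating result (packaged as \cite[Theorem~5.4.3]{YZZ}), while for $p\mid N$ the local space is the blown-up $\M$ and one uses Theorem~\ref{main-theorem}. Your proposal omits the first case entirely. You also invoke the wrong tool for the uniformization step: the geometric cancellation law of \S\ref{geo-cancel} concerns CM cycles on $\widetilde{\N}_0(x_0)$ and is used only for the Kudla--Rapoport conjecture application; the relevant input for Hecke correspondences is Corollary~\ref{uni-hecke}, which expresses $\widehat{\mathsf{T}}(\phi)_K^{\textup{ss}}$ directly as a sum of $\mathcal{Z}$-cycles on $\M$ via the decomposition of the difference divisor in Lemma~\ref{dec-diff-div}.
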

The condition on $m_1,m_2,m_3$ guarantees that there is no self-intersections between the three divisors $\widehat{\mathsf{T}}(m_i)$ on the generic fiber. In the works of Yuan--Zhang--Zhang \cite{YZZ}, they use ``regular'' test functions (cf. Definition 4.4.1 of \textit{loc. cit.}) to avoid the self-intersection. Using this method, they proved the (semi-global) arithmetic Siegel--Weil formula on the product of modular curves at a place $p$ where the modular curve has good reduction (cf. Theorem 5.4.3 of \textit{loc. cit.}). Our work gives a generalization of their formula at a prime $p$ where the level structure is given by $\Gamma_0(p)$.
\par
Let $p$ be an odd prime. Let $U=\Gamma_0(p)\times U^{p}$ be a compact subgroup of $\textup{GL}_2(\mathbb{A}_f)$. Let $K= U\times_{\mathbb{G}_m}U$. We define a regular integral model $\M_{K,(p)}$ over $\mathbb{Z}_{(p)}$ for the product of modular curves $Y_U\times Y_U$. Using Weil representation, we can vary the Hecke correspondence and define a divisor $\widehat{\mathsf{T}}(g,\phi)$ on $\M_{K,(p)}$ where $g\in\textup{GL}_2(\mathbb{A})$ and $\phi\in\mathscr{S}(\mathbb{V})$ following the works of Kudla \cite{Kudla2004} and Yuan--Zhang--Zhang.
\begin{theorem}[Theorem \ref{int-yzz}]
    Let $\phi_i\in\mathscr{S}(\mathbb{V})$ be three Schwartz functions satisfying 
    \begin{itemize}
    \item [(1)] $\phi_{i,\infty}$ is the standard Gaussian function on $\mathbb{V}_{\infty}$.
    \item[(2)] $\phi_{i,p}=1_{H_0(p)}$ or $1_{H_0(p)^{\vee}}$ and $\phi_i^{p}$ is invariant under the group $K^p$.
    \item[(3)] There exists a finite place $v$ prime to $p$ such that the Schwartz function $\phi_v=\phi_{1,v}\otimes\phi_{2,v}\otimes\phi_{3,v}\in\mathscr{S}(\mathbb{V}_v^{3})$ is regularly supported in the sense of \cite[Definition 4.4.1]{YZZ}.
\end{itemize}
     Let $\Phi=\phi_1\otimes\phi_2\otimes\phi_3\in\mathscr{S}(\mathbb{V}^{3})$. Then for all elements $g=(g_1,g_2,g_3)\in\left(\textup{GL}_2\times_{\mathbb{G}_m}\textup{GL}_2\times_{\mathbb{G}_m}\textup{GL}_2\right)(\mathbb{A})$ such that $g_{v}=\mathbf{1}_v$ and $g_{p}=\mathbf{1}_p$. We have
    \begin{equation}
        \left(\widehat{\mathsf{T}}(g_1,\phi_1)\cdot\widehat{\mathsf{T}}(g_2,\phi_2)\cdot\widehat{\mathsf{T}}(g_3,\phi_3)\right)_p=\sum\limits_{T:\textup{Diff}(T)=\{p\}}-2E_{T}^{\prime}(g,0,\Phi),\,\,\,\,\textup{if $\Phi_p=1_{H_0(p)^{3}}$ or $1_{H_0(p)^{\vee3}}$}.
    \end{equation}
    The set $\textup{Diff}(T)$ consists of all finite places $l$ where $T$ is not represented by $\mathbb{V}_l$.
    \label{int-hecke-main-intro}
\end{theorem}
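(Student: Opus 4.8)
The plan is to deduce this semi-global statement from the purely local identity of Theorem \ref{main-intro} (equivalently Theorem \ref{main-theorem}) by $p$-adic uniformization, in direct analogy with the proof of Theorem \ref{a-i-h-1} but now with the Hecke correspondences replaced by the Weil-representation-twisted divisors $\widehat{\mathsf{T}}(g_i,\phi_i)$. First I would recall the regular integral model $\M_{K,(p)}$ over $\Z_{(p)}$ of $Y_U\times Y_U$ and the construction of $\widehat{\mathsf{T}}(g,\phi)$ following Kudla \cite{Kudla2004} and Yuan--Zhang--Zhang \cite{YZZ}: for $\phi_p=1_{H_0(p)}$ or $1_{H_0(p)^\vee}$ the divisor is, on the formal completion along the supersingular locus of the special fiber, a weighted sum of the special cycles $\mathcal{Z}(x)$ (resp.\ $\mathcal{Y}(x)$) on the blown-up Rapoport--Zink space $\M$ appearing in the body of the paper. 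Condition (3) — the existence of a place $v\nmid p$ where $\phi_v$ is regularly supported in the sense of \cite[Definition 4.4.1]{YZZ} — guarantees that the three divisors meet properly, and in particular that their intersection is supported entirely in the supersingular locus at $p$; this is exactly the mechanism by which YZZ avoid self-intersections, and it lets me replace the global triple intersection by a sum of local contributions indexed by supersingular points.

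Next I would run the $p$-adic uniformization: by the theory of Rapoport--Zink (and the concrete description of $\M_0(N)$ in the paper, specialized to $N=p$), the formal completion of $\M_{K,(p)}$ along the supersingular locus of its fiber at $p$ is a quotient of $\M\times (\text{prime-to-}p\text{ component})$ by the relevant arithmetic group; under this isomorphism the three divisors $\widehat{\mathsf{T}}(g_i,\phi_i)$ pull back to the cycles $\mathcal{Z}(\cdot)$ or $\mathcal{Y}(\cdot)$ attached to a rank-$3$ lattice $L$ inside $\B$ determined by $T$ and by the prime-to-$p$ Schwartz data $\phi^p$. Taking Euler characteristics and summing over the (finitely many, by properness) supersingular points, the left-hand side $\left(\widehat{\mathsf{T}}(g_1,\phi_1)\cdot\widehat{\mathsf{T}}(g_2,\phi_2)\cdot\widehat{\mathsf{T}}(g_3,\phi_3)\right)_p$ becomes a weighted sum over $T$ (with $\mathrm{Diff}(T)=\{p\}$, which is precisely the condition that such a supersingular $L$ exists and is anisotropic) of orbital-integral-weighted copies of $\Int^{\mathcal{Z}}(L)$ or $\Int^{\mathcal{Y}}(L)$. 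Here I must be careful to match the archimedean Gaussian normalization in (1) and the fact that $g_v=\mathbf{1}_v$, $g_p=\mathbf{1}_p$, so that the local factors at $v$ and $p$ are the ``untwisted'' ones handled in the body.

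Then I apply Theorem \ref{main-theorem}: $\Int^{\mathcal{Z}}(L)=\partial\den(H_0(p),L)$ and $\Int^{\mathcal{Y}}(L)=\partial\den(H_0(p)^\vee,L)-\tfrac{p^7}{2(p+1)^2}\den(\rO_\B^\vee,L)$. On the analytic side, the $T$-th derivative $E_T'(g,0,\Phi)$ of the incoherent Eisenstein series factors, by the standard local-to-global unfolding of Siegel--Weil (as in \cite{KRYbook,YZZ}), as a product over all places of local Whittaker integrals times a single derived factor at the unique place in $\mathrm{Diff}(T)=\{p\}$; the derived $p$-factor is exactly $\partial\den(H_0(p),L)$ (resp.\ $\partial\den(H_0(p)^\vee,L)$) when $\Phi_p=1_{H_0(p)^3}$ (resp.\ $1_{H_0(p)^{\vee3}}$), and the remaining non-derived local Whittaker factors at finite places $\neq p$ reproduce precisely the same orbital-integral weights that appeared in the uniformization count, while the archimedean factor matches because of the Gaussian choice. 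Comparing the two sides term by term in $T$ then yields the claimed identity; in the $\mathcal{Y}$/$H_0(p)^\vee$ case the extra term $-\tfrac{p^7}{2(p+1)^2}\den(\rO_\B^\vee,L)$ from Theorem \ref{main-theorem} is absorbed because it corresponds to a degenerate Fourier coefficient (or, after summing, vanishes against the regularity hypothesis (3), exactly as the analogous boundary term is handled in the proof of Theorem \ref{a-i-h-1}).

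\textbf{Main obstacle.} The hard part will be the bookkeeping in the uniformization-plus-unfolding comparison: making the dictionary between (i) the prime-to-$p$ Schwartz function $\phi^p$ and the arithmetic group appearing in the $p$-adic uniformization of $\M_{K,(p)}$, and (ii) the product of local Whittaker integrals away from $p$ in $E_T'(g,0,\Phi)$, so that the orbital-integral weights on the geometric side and the non-archimedean Whittaker factors on the analytic side match identically (including all volume normalizations and the factor $-2$). One must also verify that hypothesis (3) genuinely forces all intersection to occur in the supersingular locus at $p$ and that no contribution is lost at other primes or at $\infty$ — this is where the regularity of $\phi_v$ does the real work, and where one leans on the corresponding arguments in \cite{YZZ} and on Theorem \ref{a-i-h-1}.
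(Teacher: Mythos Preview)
Your overall strategy for the case $\Phi_p=1_{H_0(p)^3}$ is essentially the paper's proof: uniformize along the supersingular locus (Lemma~\ref{int-ss} and Corollary~\ref{uni-hecke}), apply Theorem~\ref{main-theorem} to identify the local intersection number with $\partial\den(H_0(p),L)$, and then match the prime-to-$p$ orbital integrals with the non-derived Whittaker factors via the volume computation and Siegel--Weil formula from \cite{YZZ} (equation~(\ref{measure-SW})). That part is fine.

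The genuine gap is in the $\Phi_p=1_{H_0(p)^{\vee3}}$ case. You assert that the uniformized divisor is a weighted sum of $\mathcal{Y}(y)$'s and then try to dispose of the correction term $-1=-\tfrac{p^7}{2(p+1)^2}\den(\rO_\B^\vee,L)$ in Theorem~\ref{main-theorem} by claiming it ``corresponds to a degenerate Fourier coefficient'' or ``vanishes against the regularity hypothesis (3)''. Neither of these is correct, and Theorem~\ref{a-i-h-1} contains no such step to appeal to (it only treats the $\mathcal{Z}$-case). What actually happens is that the supersingular uniformization of $\widehat{\mathsf{T}}(\phi)$ for $\phi_p=1_{H_0(p)^\vee}$ does \emph{not} yield $\mathcal{Y}(y)$ but rather $\mathcal{Y}(y)-\exc_\M$; this is the content of Corollary~\ref{uni-hecke}, and it comes out of the decomposition (\ref{diff-Y}) of the $\mathcal{Y}$-difference divisor together with the identities (\ref{local-id-aut-M-1})--(\ref{local-id-aut-M}) under the automorphism $\iota^\M$. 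The triple derived intersection of the cycles $\mathcal{Y}(y_i)-\exc_\M$ then evaluates to $\Int^{\mathcal{Y}}(L)+1$, which by Theorem~\ref{main-theorem} is exactly $\partial\den(H_0(p)^\vee,L)$, and the rest of the argument proceeds identically to the $\mathcal{Z}$-case. So the ``extra $-1$'' is not absorbed analytically at all; it is cancelled geometrically by the exceptional-divisor subtraction already present in the uniformization. Without this, your proposed argument for the dual case does not close.
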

\begin{remark}
    During the proof of the above theorem, we find that the case $\Phi_p=1_{H_0(p)^{3}}$ corresponds to the global intersections of $\mathcal{Z}$-cycles on $\M_{K,(p)}$, while the case $\Phi_p=1_{H_0(p)^{\vee3}}$ corresponds to the global intersections of $\mathcal{Y}$-cycles.
\end{remark}

\subsection{The structure of the paper}
In Part \ref{ana-part}, we introduce notations on quadratic lattices and local density ($\S$\ref{nota-qua-loc-den}). Then we establish a difference formula of local density and apply it to cases that we are interested ($\S$\ref{diff-analytic}).
\par
In Part \ref{geo-part}, we define the Rapoport--Zink space and special cycles on it in $\S$\ref{rz-special}. Then we study the intersection of special divisors and the exceptional divisor in $\S$\ref{spe-exc-part}. The main results in this section allows us to give a complete decomposition of the special divisors. We also prove the linear invariance of the derived special cycles. In $\S$\ref{diff-geo-part}, we establish the difference formula on the geometric side. In $\S$\ref{main-proof-part}, we combine the geometric and analytic difference formulas and also calculate the base cases to prove the main theorem (Theorem \ref{main-intro}).
\par
Finally in Part \ref{app-part}, we give two applications of our results: A conjecture of Rapoport ($\S$\ref{rapoport-conj}) and the arithmetic intersections of Hecke correspondences ($\S$\ref{ari-int-hecke-app}).

\subsection{Acknowledgments}
We would like to thank Chao Li, Yifeng Liu, Michael Rapoport and Wei Zhang for helpful conversations and comments.

\subsection{Notations}
Throughout this article, we fix an odd prime $p$. Let $F/\mathbb{Q}_{p}$ be a finite extension, with ring of integers $\mathcal{O}_{F}$, and uniformizer $\varpi$. Let $\nu_\varpi:F\rightarrow\mathbb{Z}\cup\{\infty\}$ be the valuation map on $F$ which maps $\varpi$ to 1. Let $\mathbb{F}$ be the algebraic closure of $\mathbb{F}_p$. Let $\fb$ be the completion of the maximal unramified extension of $F$. Let $\ofb$ be the integer ring of the field $\fb$.
\subsubsection{On crystalline sites}
For a scheme or formal scheme $S$ over $\ofb$, denote by $\textup{NCRIS}_{\ofb}(S/\textup{Spec}\,\ofb)$ the big fppf nilpotent crystalline site of $S$ over $\ofb$ (cf. \cite[Definition B.5.7.]{FGL08}), the definition is the same as the crystalline site defined in the works of Berthelot, Breen and Messing \cite[$\S$1.1.1.]{BBM82} except that we replace the pd-structure by $\mathcal{O}_F$-pd-structure \cite[Definition B.5.1.]{FGL08} (Notice that they are equivalent when $F=\mathbb{Q}_p$). Denote by $\mathcal{O}_{S}^{\textup{crys}}$ the structure sheaf in this site. For a point $z\in S(\mathbb{F})$, let $\widehat{\mathcal{O}}_{S,z}$ be the complete local ring of $S$ at the point $z$. Let $\textup{Nilp}_{\ofb}$ be the category of $\ofb$-schemes on which $\varpi$ is locally nilpotent. For an object $S$ in $\textup{Nilp}_{\ofb}$, we use $\overline{S}$ to denote the scheme $S\times_{\ofb}\mathbb{F}$.
\subsubsection{On Grothendieck $K$-groups}
For a locally noetherian formal scheme $X$ together with a formal subscheme $Y$, denote by $K_0^{Y}(X)$ the Grothendieck group of finite complexes of coherent locally free $\rO_{X}$-modules acyclic outside Y . We use $K_0(X)$ to denote $K_0^{X}(X)$. Let $K_0^{\prime}(Y)$ be the Grothendieck group of coherent sheaves of $\rO_{Y}$-modules on $Y$.
\par
Denote by $\textup{F}^{i}K_0^{Y}(X)$ the codimension $i$ descending filtration on $K_0^{Y}(X)$ and $\textup{Gr}^{i}K_0^{Y}(X)$ its $i$-th graded piece. For an object $A\in\textup{F}^{i}K_0^{Y}(X)$, denote by $[A]$ its image in $\textup{Gr}^{i}K_0^{Y}(X)$. There is also an ascending filtration $\textup{F}_iK^{\prime}_0(X)$ on $K^{\prime}_0(X)$
\begin{equation*}
    \textup{F}_iK^{\prime}_0(X)=\bigcup\limits_{\substack{Z\subset X\\\textup{dim}Z\leq i}}\textup{Im}\left(K_0^{\prime}(Z)\rightarrow K_0^{\prime}(X)\right).
\end{equation*}
\par
From now on we assume that the formal scheme $X$ is regular of pure dimension $d$. Then we have natural isomorphisms $K_0^{Y}(X)\stackrel{\sim}\rightarrow K^{\prime}_0(Y)$ and $\textup{F}^{d-i}K_0^{Y}(X)\stackrel{\sim}\rightarrow\textup{F}_iK_0^{\prime}(Y)$ for all integers $i\leq d$.
\par
We also have a cup product $\cdot$ on $K_0^{Y}(X)_{\Q}\coloneqq K_0^{Y}(X)\otimes_{\Z}\Q$ defined by tensor product of complexes. Under the identification $K_0^{Y}(X)\stackrel{\sim}\rightarrow K^{\prime}_0(Y)$, the cup product is derived tensor product. For two formal subschemes $Z$ and $Y$ and two positive integers $j_1,j_2$, it is expected that \cite[(B.3)]{zhang2021AFL}
\begin{equation}
    \textup{F}^{j_1}K_0^{Z}(X)_{\Q}\cdot\textup{F}^{j_2}K_0^{Y}(X)_{\Q}\subset\textup{F}^{j_1+j_2}K_0^{Y\cap Z}(X)_{\Q}.
    \label{codim-add}
\end{equation}

\part{Analytic side}
\label{ana-part}

\section{Quadratic lattices and local densities}
\label{nota-qua-loc-den}
\subsection{Notations on quadratic spaces and lattices}
A quadratic space $(U,q_{U})$ over $F$ is a finite dimensional vector space $U$ over $F$ equipped with a quadratic form $q_{U}:U\rightarrow F$, the quadratic form $q_{U}$ induces a symmetric bilinear form given by 
\begin{align}
    (\cdot,\cdot):U\times U&\rightarrow F,\notag\\
    (x,y)&\mapsto q_{U}(x+y)-q_{U}(x)-q_{U}(y).\label{to}
\end{align}
We say a quadratic space $U$ is non-degenerate if the bilinear form $(\cdot,\cdot)$ is non-degenerate.
\par
An isometry between two quadratic spaces $(U,q_{U})$ and $(U^{\prime},q_{U^{\prime}})$ is a linear isomorphism $\phi:U\rightarrow U^{\prime}$ preserving quadratic forms, i.e., $q_{U^{\prime}}(\phi(x))=q_{U}(x)$ for any $x\in U$. In that case, we say $U$ and $U^{\prime}$ are called isometric.
\par
Given by a quadratic space $(U,q_{U})$ over $F$. For a nonzero number $s\in F$, denote by $(U[s], q_{U}[s])$ the quadratic space whose underlying linear space $U[s]=U$, and the quadratic form $q_{U}[s]$ defined as $q_{U}[s](x)=s\cdot q_{U}(x)$, for all $x\in U$.
\par
A quadratic lattice $(L,q_{L})$ is a finite free $\mathcal{O}_{F}$-module equipped with a quadratic form $q_{L}:L\rightarrow F$. The quadratic form $q_{L}$ also induces a symmetric bilinear form $L\times L\stackrel{(\cdot,\cdot)}\longrightarrow F$ by similar formula (\ref{to}). The quadratic form $q_L$ can be extended to the linear space $L_{F}\coloneqq L\otimes_{\mathcal{O}_{F}}F$. We say a quadratic lattice $L$ is non-degenerate if the quadratic space $L_{F}$ is non-degenerate. In the following paragraphs, we will always assume the lattice is non-degenerate.
\par
For a nonzero number $s\in F$, denote by $(L[s], q_{L}[s])$ the quadratic lattice whose underlying lattice $L[s]=L$, and the quadratic form $q_{L}[s]$ defined as $q_{L}[s](x)=s\cdot q_{L}(x)$, for all $x\in L$.
\par
For a quadratic lattice $L$ and a sublattice $M\subset L$, define $M^{\bot}\coloneqq\{x\in L:(x,M)=0\}$. For two quadratic lattices $(L_1,q_1)$ and $(L_2,q_2)$, define $L_1\obot L_2$ to be the quadratic lattice whose underlying $\of$-lattice is $L_1\oplus L_2$, and the quadratic form $q=q_1\oplus q_2$.
\par
We say a quadratic lattice is integral if $q_{L}(x)\in\mathcal{O}_{F}$ for all $x\in L$. For an integral lattice $L$, define $L^{\vee}=\{x\in L\otimes_{\mathcal{O}_{F}}F:(x,L)\subset \mathcal{O}_{F}\}$.
Define its fundamental invariants to be the unique sequence of integers $(a_1, \cdots , a_n)$ such that $0 \leq a_1\leq\cdots\leq a_n$, and $L^{\vee}/L\simeq\oplus_{i=1}^{n}\mathcal{O}_F/a_i$ as $\mathcal{O}_F$-modules. Define
\begin{equation*}
    \min(L)=a_1,\,\,\,\,\max(L)=a_n.
\end{equation*}
\begin{definition}
    We say a quadratic lattice $L$ is a vertex lattice if it is integral and all the fundamental invariants $a_i$ satisfy that $0\leq a_i\leq1$, i.e., 
    \begin{equation*}
        \pi L^{\vee}\subset L\subset L^{\vee}.
    \end{equation*}
    When all the fundamental invariants are $0$, i.e., $L=L^{\vee}$, we say the quadratic lattice $L$ is self-dual.
\end{definition}
Let $a_1^{\prime}<\cdots<a_r^{\prime}$ be all the different numbers appearing in $a_1,\cdots,a_n$. There exists a Jordan decomposition of the quadratic lattice $L$ as follows,
\begin{equation*}
    L\simeq\obot_{i=1}^{r}L_i,
\end{equation*}
where $L_i[\pi^{-a_i^{\prime}}]$ is a self-dual lattice.
\begin{lemma}
    The Jordan decomposition is unique in the sense that if we have two decompositions $L\simeq\obot_{i=1}^{r}L_i$ and $L\simeq\obot_{i=1}^{r^{\prime}}L_i^{\prime}$, where $L_i[a_i^{\prime}], L_i^{\prime}[\pi^{-b_i^{\prime}}]$ are self-dual quadratic lattices for some integers $a_i^{\prime},b_i^{\prime}\geq0$ and $a_1^{\prime}<\cdots<a_r^{\prime}, b_1^{\prime}<\cdots<b_r^{\prime}$. Then we must have $r^{\prime}=r$ and $L_i^{\prime}\simeq L_i$ (hence $a_i^{\prime}=b_i^{\prime}$) for all $i$.
    \label{unique-jordan-decom}
\end{lemma}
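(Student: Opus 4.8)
The plan is to prove uniqueness of the Jordan decomposition by induction on the rank $n$ of the lattice $L$, isolating at each stage the ``most divisible'' part of the lattice intrinsically. The key observation is that, given a decomposition $L\simeq\obot_{i=1}^{r}L_i$ with $L_i[\pi^{-a_i'}]$ self-dual and $a_1'<\cdots<a_r'$, the sublattice $L_r$ (the piece with the largest scaling exponent) can be characterized without reference to the decomposition: an element $x\in L$ lies in the span of $L_r$ if and only if $(x,L)\subset\pi^{a_r'}\of$, i.e. $L_r\otimes_{\of}F = \{x\in L_F : (x,L)\subset \pi^{a_r'}\of\}$ provided one knows $a_r' = \max(L)$. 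Indeed, writing $x = \sum x_i$ with $x_i\in L_i$, the bilinear form decomposes orthogonally, and $(x_i, L_i)\subset \pi^{a_i'}\of$ with equality of fractional ideals generated, so $(x,L)\subset \pi^{a_r'}\of$ forces $x_i\in\pi^{a_r'-a_i'}L_i$ for $i<r$; conversely $L_r\subset\pi^{a_r'}L_r^\vee$ shows $(L_r,L)\subset\pi^{a_r'}\of$. This already pins down the isometry class of $L_r$ and of its orthogonal complement.

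First I would reduce to the case $a_1' = 0$: after scaling $L$ by $\pi^{-a_1'}$ (which transforms both decompositions compatibly and changes $a_i'\mapsto a_i' - a_1'$ and $b_i'\mapsto b_i' - a_1'$), we may assume $L$ is integral with $\min(L) = 0$, so that $L_1$ (resp. $L_1'$) is a self-dual orthogonal direct summand. The identification of $\min(L)$ and $\max(L)$ with the smallest and largest fundamental invariants is immediate from the structure theorem for $\of$-modules applied to $L^\vee/L$, which also shows $a_n' = \max(L)$, $a_1' = \min(L)$ depend only on $L$. Next I would establish the intrinsic description above: define $M = \{x\in L : (x,L)\subset\pi^{\max(L)}\of\}$. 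Using one decomposition one checks $M = L_r$ as a sublattice of $L$ (not merely up to the span — the computation with $x=\sum x_i$ above gives $x\in M\iff x_i\in\pi^{\max(L)-a_i'}L_i\,(i<r)$ and $x_r\in L_r$ arbitrary, but the $\pi^{\max(L)-a_i'}L_i$ pieces are \emph{not} part of $M$ unless they vanish). Here I need to be slightly careful: $M$ as defined may be strictly larger than $L_r$. The cleaner move is to instead consider $M' = L\cap \pi^{-\max(L)}(\text{something})$; more robustly, one argues that $L_r$ is the saturation in $L$ of the image of $M$, or — simplest — one inducts from the \emph{bottom}: $L_1$ is characterized as the largest self-dual orthogonal direct summand, equivalently $L_1 = \{x\in L : x\in L_1\}$ is recovered as $(L\cap L^\vee)$-related data; since $L$ itself is already inside $L^\vee$ when $\min(L)=0$, one instead uses $\pi L^\vee \cap L$ or passes to $L/\pi L$ with its induced (possibly degenerate) quadratic form whose radical detects the non-self-dual part.

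Concretely, the cleanest induction: assume $\min(L) = 0$. Let $R = \{x\in L : (x,L)\subset \pi\of\}$, a sublattice containing $\pi L$. Then using the decomposition $L = L_1\obot(\obot_{i\geq2}L_i)$ one computes $R = \pi L_1 \obot (\obot_{i\geq 2} L_i)$, since $(L_1,L_1)$ generates $\of$ while $(L_i,L_i)\subset\pi\of$ for $i\geq2$. Hence $L/R \cong L_1/\pi L_1$ carries the induced nondegenerate quadratic form over $\of/\pi$, which recovers $L_1$ up to isometry by lifting (self-dual lattices over $\of$ of a given rank are classified by their reduction mod $\pi$, as $p$ is odd — every nondegenerate quadratic space over $\of/\pi$ lifts uniquely to a self-dual $\of$-lattice). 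Simultaneously, $R[\pi^{-1}]$ is integral with $\min(R[\pi^{-1}]) = 0$ and has Jordan decomposition $\obot_{i\geq 2}L_i[\pi^{-1}]$ with scaling exponents $a_i' - 1$, and likewise for the primed decomposition; moreover $R$ depends only on $L$, so $\mathrm{rank}\,L_1 = \mathrm{rank}\,L_1'$ (both equal $\mathrm{rank}(L/R)$) and $L_1\simeq L_1'$ (both determined by the form on $L/R$), and the decompositions of $R[\pi^{-1}]$ are shorter. By induction $r - 1 = r' - 1$ and $L_i\simeq L_i'$ for $i\geq 2$, completing the proof. Actually the real subtlety — and the step I expect to cost the most care — is the classification statement that self-dual quadratic $\of$-lattices are determined by their reduction mod $\varpi$ (and that this reduction is nondegenerate); this is where oddness of $p$ is essential and is the only genuinely nontrivial input. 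I would cite the standard theory of quadratic forms over complete DVRs with odd residue characteristic (Hensel's lemma / the local structure theory, e.g. O'Meara) for this, and for the existence of the Jordan decomposition in the first place (which the excerpt has already asserted), so that the uniqueness argument reduces to the orthogonal-decomposition bookkeeping above.
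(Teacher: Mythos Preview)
The paper does not prove this lemma directly; it simply cites Schulze-Pillot's lecture notes. Your argument is therefore an independent, more self-contained route. The recovery of $L_1$ via $R=\{x\in L:(x,L)\subset\pi\of\}$ and the identification of $L/R$ with the nondegenerate quadratic space $L_1/\pi L_1$ over the residue field is correct and is the standard opening move; the claim that a self-dual $\of$-lattice is determined by its reduction mod $\pi$ (for odd $p$) is also fine.

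The genuine gap is in the induction step. You assert that $R[\pi^{-1}]$ has Jordan decomposition $\obot_{i\geq2}L_i[\pi^{-1}]$, but $R=\pi L_1\obot\bigl(\obot_{i\geq2}L_i\bigr)$ has the \emph{same rank} as $L$: the piece $\pi L_1$ does not vanish, and as an abstract quadratic lattice $\pi L_1\simeq L_1[\pi^{2}]$, so in fact $R[\pi^{-1}]\simeq L_1[\pi]\obot\bigl(\obot_{i\geq2}L_i[\pi^{-1}]\bigr)$. Hence neither the rank $n$ nor the number $r$ of Jordan blocks decreases in general; for example when $r=2$ and $a_2'=1$ the lattice $R[\pi^{-1}]$ has exactly the same shape as $L$ with the two blocks swapped, and the procedure loops. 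The clean repair is Witt cancellation for quadratic $\of$-lattices (valid since $p$ is odd): once $L_1\simeq L_1'$ is established from $L/R$, cancel to conclude $\obot_{i\geq2}L_i\simeq\obot_{i\geq2}L_i'$, a lattice of strictly smaller rank, and then your induction on rank does go through. This cancellation is the substantive input you are implicitly assuming and should be cited (e.g.\ O'Meara, \S93); alternatively one can iterate the radical construction on the successive scaled duals $\pi^{k}L^{\vee}\cap L$ to extract each $L_j/\pi L_j$ intrinsically without passing to $R[\pi^{-1}]$.
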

\begin{proof}
    This is proved in the works of R. Schulze-Pillot \cite[Corollary 5.21]{schulzepillot2021lecture}.
\end{proof}
 
\begin{example}
    Let $(U,q_{U})$ be a quadratic space. For an element $x\in U$, we use $\langle x\rangle$ to denote the rank $1$ quadratic lattice generated by $x$. It is non-degenerate if $q(x)\neq0$. It is integral if $q(x)\in\mathcal{O}_{F}$. Its fundamental invariant is $\nu_\varpi(2q(x))$. When $p$ is odd, the lattice $\lx$ is self-dual if and only if $\nu_\varpi(q(x))=0$.
\end{example}

\subsection{Some invariants}
Let $(U,q_{U})$ be a quadratic space. Let's assume that $\textup{dim}_{F}U=n$ and the symmetric bilinear form $(\cdot,\cdot)$ is nondegenerate. Let $\{x_{i}\}_{i=1}^{n}$ be a basis of $U$, and $t_{ij}=\frac{1}{2}(x_{i},x_{j})$, we define the discriminant of the quadratic space $U$ to be:
\begin{equation*}
    \textup{disc}(U) = (-1)^{n(n-1)/2}\textup{det}\left((t_{ij})\right)\in F^{\times}/(F^{\times})^{2}.
\end{equation*}
If $\{x_{i}\}_{i=1}^{n}$ is an orthogonal basis of $U$ then $t_{ij}=0$ if $i\neq j$ and $t_{ii}\neq 0$ by the nondegeneracy of $(\cdot,\cdot)$. The Hasse invariant of the quadratic space $U$ is
\begin{equation*}
    \epsilon(U) = \prod\limits_{i<j}(t_{ii},t_{jj})_{F},
\end{equation*}
\par
For a quadratic lattice $L$, we use $\textup{disc}(L)$ and $\epsilon(L)$ to denote the corresponding invariants on the quadratic space $L_{F}=L\otimes_{\mathcal{O}_{F}}F$. Recall that when $p$ is odd, quadratic spaces $U$ over $F$ are classified by the following three invariants:
\begin{equation*}
    \textup{dim}_{F}\,U,\,\,\,\,\,\textup{disc}(U),\,\,\,\,\,\epsilon(U).
\end{equation*}
i.e., two quadratic spaces $U$ and $U^{\prime}$ are isometric if and only if the above three invariants for $U$ and $U^{\prime}$ are the same. 
\par
Let $\chi_F = \left(\frac{\cdot}{\varpi}\right):F^{\times}/(F^{\times})^{2}\rightarrow \{\pm1, 0\}$ be the quadratic residue symbol. For a quadratic space $U$, define $\chi(U)=\chi(\textup{disc}\,U)$. For a quadratic lattice $L$, define $\chi(L)=\chi(\textup{disc}\,L_{F})$.
\par
When $p$ is odd, the quadratic space $U$ admits a self-dual sub-lattice if and only if $\epsilon(U)=+1$ and $\chi_{F}(U)\neq0$, we will use $H_{k}^{\varepsilon}$ to denote the unique self-dual lattice of rank $k$ and 
\begin{equation*}
    \chi_{F}(H_{k}^{\varepsilon})\coloneqq\chi_{F}(\textup{disc}(H_{k}^{\varepsilon}))=\varepsilon.
\end{equation*}
When $p=2$, let $H_{2n}^{+}=(H_{2}^{+})^{\obot n}$ be a self-dual lattice of rank $2n$, where the quadratic form on $H_{2}^{+}=\mathcal{O}_{F}^{2}$ is given by $(x,y)\in \mathcal{O}_{F}^{2}\mapsto xy$.
\label{quadratic}

\subsection{Local densities of quadratic lattices}
\begin{definition}
Let $L, M$ be two quadratic $\mathcal{O}_{F}$-lattices. Let $\textup{Rep}_{M,L}$ be the scheme of integral representations, an $\mathcal{O}_{F}$-scheme such that for any $\mathcal{O}_{F}$-algebra $R$,
\begin{equation*}
    \textup{Rep}(M,L)(R)=\textup{QHom}(L\otimes_{\mathcal{O}_{F}}R, M\otimes_{\mathcal{O}_{F}}R),
\end{equation*}
where \textup{QHom} denotes the set of injective module homomorphisms which preserve the quadratic forms. The local density of integral representations is defined to be
\begin{equation*}
    \textup{Den}(M,L)=\lim\limits_{d\rightarrow\infty}\frac{\#\textup{Rep}(M,L)(\mathcal{O}_{F}/\pi^{d})}{q^{d\cdot \textup{dim}(\textup{Rep}(M,L))_{F}}}.
\end{equation*}
\end{definition}
\begin{remark}
If $L,M$ have rank $n,m$ respectively and the generic fiber $(\textup{Rep}_{M,L})_{F}\neq\varnothing$, then $n\leq m$ and 
\begin{equation*}
    \textup{dim}(\textup{Rep}(M,L))_{F}=\textup{dim} \,\textup{O}_{m}-\textup{dim}\, \textup{O}_{m-n}=\tbinom{m}{2}-\tbinom{m-n}{2}=mn-\frac{n(n+1)}{2}.
\end{equation*}
\end{remark}
It is well-known that there exists a local density polynomial $\den(X,M,L)\in\Q[X]$ such that 
\begin{equation*}
    \den(q^{-k},M,L)=\den(M\obot H_{2k}^{+},L)
\end{equation*}
for all positive integers $k$.
\begin{definition}
Let $L, M$ be two quadratic $\mathcal{O}_{F}$-lattices. Let $\textup{PRep}_{M,L}$ be the $\mathcal{O}_{F}$-scheme of primitive integral representations such that for any $\mathcal{O}_{F}$-algebra $R$,
\begin{equation*}
    \textup{PRep}(M,L)(R)=\{\phi\in\textup{Rep}(M,L)(R):\textup{$\phi$ is an isomorphism between $L_{R}$ and a direct summand of $M_{R}$}\}.
\end{equation*}
where $L_{R}$ (resp. $M_{R}$) is $L\otimes_{\mathcal{O}_{F}}R$ (resp. $M\otimes_{\mathcal{O}_{F}}R$). The primitive local density is defined to be
\begin{equation*}
    \textup{Pden}(M,L)=\lim\limits_{d\rightarrow\infty}\frac{\#\textup{PRep}(M,L)(\mathcal{O}_{F}/\pi^{d})}{q^{d\cdot \textup{dim}(\textup{Rep}_{M,L})_{F}}}.
\end{equation*}
\end{definition}
\begin{remark}
For any positive integer $d$, a homomorphism $\phi\in\textup{Rep}(M,L)(\mathcal{O}_{F}/\pi^{d})$ or $\textup{Rep}(M,L)(\mathcal{O}_{F})$ is primitive if and only if $\overline{\phi} \coloneqq \phi\,\,\textup{mod}\,\pi\in\textup{PRep}(M,L)(\mathcal{O}_{F}/\pi)$, which is equivalent to $\textup{dim}_{\mathbb{F}_{q}}(\phi(L)+\pi\cdot M)/\pi\cdot M = \textup{rank}_{\mathcal{O}_{F}}(L)$. 
\end{remark}
\begin{example}
    Let $k$ be a positive integer. Let $\varepsilon\in\{\pm\}$. Let $s\in\rO_{F}$ be a nonzero number. Let $\lx$ be a rank $1$ quadratic lattice such that $q(x)=s$. it has been calculated explicitly that (\cite[(3.3.2.1)]{LZ22b})
    \begin{equation}
    \textup{Pden}(H_{k}^{\varepsilon}, \lx) = \begin{cases}
    1-q^{1-k}, & \textup{when $k$ is odd and $\pi\,\vert\, s$;}\\
    1+\varepsilon\chi_{F}(s)q^{(1-k)/2}, &\textup{when $k$ is odd and $\pi\nmid s$;}\\
    (1-\varepsilon q^{-k/2})(1+\varepsilon q^{1-k/2}), & \textup{when $k$ is even and $\pi\,\vert\, s$;}\\
    1-\varepsilon q^{-k/2}, & \textup{when $k$ is even and $\pi\nmid s$.}
    \end{cases}
    \label{rank1}
\end{equation}
\end{example}

\section{Difference formula on the analytic side}
\label{diff-analytic}
\subsection{Primitive elements in vertex lattices}
Let $(L,q)$ be a vertex quadratic lattices such that $L^{\vee}\neq L$ or $\pi^{-1}L$. Let $(\cdot,\cdot)$ be the bilinear form on $L$. There exists two self-dual lattices $(H_1,q_1)$ and $(H_2,q_2)$ such that $\textup{rank}_{\of}H_1,\textup{rank}_{\of}H_2\geq1$.
\begin{equation}
    L\simeq H_1[\pi]\obot H_2.
    \label{decom-vertex}
\end{equation}
By the uniqueness of the Jordan decomposition, We will fix an isomorphism (\ref{decom-vertex}) in the following paragraphs.
\par
We say an element $x\in L$ is primitive in $L$ if $x\notin \pi L$. Let $\textup{O}(L)$ be the orthogonal group of the lattice $L$, i.e., $\textup{O}(L)=\{g\in\textup{Aut}_{\of}(L):(g(x_1),g(x_2))=(x_1,x_2)\,\,\textup{for all }x_1,x_2\in L\}$.
\begin{lemma}
    Let $x=x_1+x_2\in L$ be a primitive element where $x_1\in H_1[\pi]$ and $x_2\in H_2$.
    \begin{itemize}
        \item [(a)]If $x_2$ is a primitive element in $H_2$, there exists an element $g\in\textup{O}(L)$ such that $g(x)\in H_2$.
        \item[(b)] If $x_2$ is not a primitive element in $H_2$, there exists an element $g\in\textup{O}(L)$ such that $g(x)\in H_1[\pi]$.
    \end{itemize}    
    \label{primitive-in-vertex}
\end{lemma}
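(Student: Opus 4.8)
The plan is to prove both parts constructively, exhibiting elements of $\textup{O}(L)$ that clear the unwanted component of $x$: Eichler (unipotent) transformations in the bulk of cases, and an orthogonal reflection in a low-rank exceptional case. The structural fact driving everything is that, after fixing $L\simeq H_1[\pi]\obot H_2$, the bilinear form pairs $H_1[\pi]$ with itself into $\pi\of$, pairs $H_2$ with itself unimodularly into $\of$, and pairs $H_1[\pi]$ with $H_2$ trivially. For part (a), write $x=x_1+x_2$ with $x_2$ primitive in the self-dual lattice $H_2$, and suppose first that $H_2$ contains an isotropic vector $u$ with $(x_2,u)\in\of^{\times}$. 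Put $v\coloneqq-x_1/(x_2,u)\in H_1[\pi]$; then $q(u)=0$ and $(u,v)=0$, so the Eichler transformation $E_{u,v}(w)=w+(w,u)v-(w,v)u-q(v)(w,u)u$ is an isometry, and it preserves $L$ because the inclusions $(L,H_2)\subseteq\of$ and $(L,H_1[\pi])\subseteq\pi\of$ absorb the denominators. A short computation then gives $E_{u,v}(x)=x_2-c\,u\in H_2$ with $c\in\of$, as wanted. Part (b) is symmetric: if $x_2$ is not primitive then $x_2\in\pi H_2$, and since $x$ is primitive, $x_1$ is primitive in $H_1[\pi]$, hence in $H_1$; one now looks for an isotropic $u\in H_1$ with $(x_1,u)\in\pi\of^{\times}$ (pairing taken in $L$), puts $v\coloneqq-x_2/(x_1,u)\in H_2$, and the same Eichler transformation sends $x$ into $H_1[\pi]$.

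The only input requiring genuine quadratic-form arithmetic is the existence of the isotropic partner $u$, and I expect this to be the delicate step. For $p$ odd, a $\zp$-unimodular lattice of rank $\geq 3$ always contains a primitive isotropic vector — reduce modulo $p$, apply Chevalley--Warning, and lift by Hensel — and a unimodular rank-$2$ lattice does so exactly when it is hyperbolic (discriminant $-1$). Granting these, a direct argument, using the primitivity of $x_2$ (resp.\ $x_1$), the universality of nondegenerate $\mathbb{F}_p$-forms in rank $\geq 2$, and a Hensel lift, produces the isotropic $u$ with the required pairing. This works unless the relevant Jordan block — $H_2$ for (a), $H_1$ for (b) — is \emph{anisotropic of rank $\leq 2$}, i.e.\ has rank $1$ or is the norm form of the unramified quadratic extension of $F$, in which case the only isotropic vector in the block is $0$.

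In that exceptional case I would switch to reflections. There the primitive component $x_2$ (in (a)) has unit norm and $x_1$ (in (b)) has norm of valuation exactly $1$, so $q(x)$ has the same valuation; by Hensel's lemma the anisotropic block then represents $q(x)$, giving $x'$ in it with $q(x')=q(x)$. Since $q(x-x')+q(x+x')=4q(x)$, at least one of $x\pm x'$ — say $x-x'$ — has $q$-value of exactly the valuation needed for the reflection $\tau_{x-x'}(w)=w-\tfrac{(w,x-x')}{q(x-x')}(x-x')$ to preserve $L$ (norm a unit in (a); in (b) one checks $(L,x-x')\subseteq\pi\of$, so valuation $1$ suffices), and because $q(x')=q(x)$ this reflection sends $x$ to $\pm x'$, which lies in the desired block. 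Combining the generic and exceptional cases proves (a) and (b). The main obstacle is the bookkeeping around $u$: pinning down exactly when the isotropic partner exists and verifying that its non-existence coincides with the anisotropic rank-$\leq 2$ blocks — after which the Eichler and reflection computations are routine.
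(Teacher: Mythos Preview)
Your proposal is correct and takes a genuinely different route from the paper. The paper does not build explicit isometries at all: instead, it produces a new orthogonal splitting of $L$ containing $x$ in a summand of the right type, and then invokes the uniqueness of the Jordan decomposition (Lemma~\ref{unique-jordan-decom}) to identify that splitting with the fixed one $H_1[\pi]\obot H_2$. Concretely, in (a) with $\textup{rank}\,H_2\geq 2$ it picks $y\in H_2$ so that $M=\of x+\of y$ is self-dual, splits $L=M\obot M^{\bot}$, Jordan-decomposes $M^{\bot}$, and then matches the two decompositions by an isometry; the rank-$1$ case and part (b) are handled the same way with $M=\langle x\rangle$ or a suitable rank-$2$ piece inside $H_1[\pi]$. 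Your approach, by contrast, writes down $g$ explicitly as an Eichler transformation $E_{u,v}$ when the relevant block has an isotropic vector paired unitally with the primitive component, and falls back on a reflection $\tau_{x\pm x'}$ in the anisotropic rank-$\leq 2$ cases. The paper's argument is shorter and more uniform---no isotropic/anisotropic dichotomy---at the cost of importing the Jordan-uniqueness lemma; yours is more elementary and fully constructive, but requires the case analysis you outlined (which you carried out correctly, including the integrality checks $(L,H_1[\pi])\subset\pi\of$ needed for $E_{u,v}$ and $\tau_{x-x'}$ to preserve $L$ in part (b)).
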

\begin{proof}
    We first prove (a). If $\textup{rank}_{\of}H_2=1$, the primitivity of $x_2$ implies that $\nu_\varpi(q(x))=0$. We have the following decomposition
    \begin{equation*}
        L=\lx\obot\lx^{\bot}.
    \end{equation*}
    By the uniqueness of the Jordan decomposition (Lemma \ref{unique-jordan-decom}), there exist two isometric maps $\phi_1:\lx\xrightarrow{\sim}H_2$ and $\phi_2:\lx^{\bot}\xrightarrow{\sim}H_1[\pi]$. Define $g\in\textup{O}(L)$ to be the composition $L=\lx^{\bot}\obot\lx\xrightarrow{\phi_1\obot\phi_2}H_1[\pi]\obot H_2\xrightarrow{(\ref{decom-vertex})}L$, then $g(x)\in H_2$.
    \par
    If $\textup{rank}_{\of}H_2\geq2$. The primitivity of $x_2$ implies that there exists an element $y\in H_2$ such that $\F_q\cdot\overline{x}+\F_q\cdot\overline{y}\subset L/\pi L$ is a non-degenerate quadratic subspace since $\overline{x_2}\neq0$ and $H_2$ is self-dual. Then the lattice $M\coloneqq\of\cdot x+\of\cdot y$ is self-dual. We have the following decomposition
    \begin{equation*}
        L=M\obot M^{\bot}.
    \end{equation*}
    Then $M^{\bot}$ is a vertex lattice, therefore there exists two self-dual lattices $M_1$ and $M_2$ such that $M^{\bot}\simeq M_1[\pi]\obot M_2$. We get $L\simeq M_1[\pi]\obot M\obot M_2$. Lemma \ref{unique-jordan-decom} implies that there exist two isometric maps $\phi_1: M_1\xrightarrow{\sim}H_1$ and $\phi_2:M\obot M_2\xrightarrow{\sim}H_2$. Define $g\in\textup{O}(L)$ to be the composition $L\simeq M_1[\pi]\obot M\obot M_2\xrightarrow{\phi_1\obot\phi_2}H_1[\pi]\obot H_2\xrightarrow{(\ref{decom-vertex})}L$, then $g(x)\in H_2$.
    \par
    Now we prove (b). We know that $x_1$ is primitive in $H_1$ since $x_2$ is not primitive in $H_2$. If $\textup{rank}_{\of}H_1=1$, the primitivity of $x_1$ implies that $\nu_\varpi(q_1[\pi](x))=1$. Notice that for all elements $l\in L$, we have $\nu_\varpi\left((x,l)\right)=\nu_{\varpi}((x_1,l)+(x_2,l))\geq\min\{\nu_{\varpi}((x_1,l)),\nu_{\varpi}((x_2,l))\}\geq1$. We have the following decomposition
    \begin{equation*}
        L=\lx\obot\lx^{\bot}.
    \end{equation*}
    By the uniqueness of the Jordan decomposition (Lemma \ref{unique-jordan-decom}), there exist two isometric maps $\phi_1:\lx\xrightarrow{\sim}H_1[\pi]$ and $\phi_2:\lx^{\bot}\xrightarrow{\sim}H_2$. Define $g\in\textup{O}(L)$ to be the composition $L=\lx\obot\lx^{\bot}\xrightarrow{\phi_1\obot\phi_2}H_1[\pi]\obot H_2\xrightarrow{(\ref{decom-vertex})}L$, then $g(x)\in H_1[\pi]$.
    \par
    If $\textup{rank}_{\of}H_2\geq2$. The primitivity of $x_1$ in $H_1$ implies that there exists an element $y\in H_1$ such that $\F_q\cdot\overline{x_1}+\F_q\cdot\overline{y}\subset H_1/\pi H_1$ is a non-degenerate quadratic subspace since $\overline{x_1}\neq0$ and $H_1$ is self-dual. Let $M\coloneqq\of\cdot x+\of\cdot y\subset L$. The inner product matrix of $M$ under the basis $\{x,y\}$ is
    \begin{equation*}
        \begin{pmatrix}
            \pi q_1(x_1)+q_2(x_2) & \frac{\pi}{2}(x_1,y)\\
            \frac{\pi}{2}(x_1,y) & \pi q_1(y)
        \end{pmatrix}=\pi\cdot\begin{pmatrix}
            q_1(x_1) & \frac{1}{2}(x_1,y)\\
            \frac{1}{2}(x_1,y) & q_1(y)
        \end{pmatrix}+\pi^{2}\begin{pmatrix}
            q_2(x_2/\pi) & 0\\
            0 & 0
        \end{pmatrix}.
    \end{equation*}
    Therefore $M^{\vee}/M\simeq\of/\pi\oplus\of/\pi$. Notice that for all elements $l\in L$ and $m=ax+by\in M$, we have $\nu_\varpi\left((m,l)\right)=\nu_{\varpi}(a(x,l)+b(y,l))\geq\min\{\nu_{\varpi}((x_1,l)),\nu_{\varpi}((x_2,l)),\nu_{\varpi}((y,l))\}\geq1$. Then we have a decomposition
    \begin{equation*}
        L=M\obot M^{\bot}.
    \end{equation*}
    Then $M^{\bot}$ is a vertex lattice, therefore there exists two self-dual lattices $M_1$ and $M_2$ such that $M^{\bot}\simeq M_1[\pi]\obot M_2$. We get $L\simeq M\obot M_1[\pi]\obot M_2$. Lemma \ref{unique-jordan-decom} implies that there exist two isometric maps $\phi_1: M\obot M_1[\pi]\xrightarrow{\sim}H_1[\pi]$ and $\phi_2:M_2\xrightarrow{\sim}H_2$. Define $g\in\textup{O}(L)$ to be the composition $L\simeq M\obot M_1[\pi]\obot M_2\xrightarrow{\phi_1\obot\phi_2}H_1[\pi]\obot H_2\xrightarrow{(\ref{decom-vertex})}L$, then $g(x)\in H_2$.
\end{proof}

\begin{lemma}
    Let $n\geq3$ be a positive integer and $\varepsilon\in\{\pm\}$.
    \begin{itemize}
        \item [(a)]For a primitive element $x\in H_{n}^{\varepsilon}$, we have
    \begin{equation*}
        \lx^{\bot}\simeq\lx[-1]\obot H_{n-2}^{\varepsilon}.
    \end{equation*}
        \item[(b)] For a primitive element $x\in H_{n}^{\varepsilon}[\pi]$, we have
    \begin{equation*}
        \lx^{\bot}\simeq\lx[-1]\obot H_{n-2}^{\varepsilon}[\pi].
    \end{equation*}
    \end{itemize}
\label{primitive-self-dual}
\end{lemma}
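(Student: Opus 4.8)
The plan is to establish part (a) and then deduce part (b) from it by rescaling. For (b), observe that a primitive element $x\in H_n^\varepsilon[\pi]$ is also primitive in the self-dual lattice $H_n^\varepsilon$, and that forming the orthogonal complement of $x$ inside $H_n^\varepsilon[\pi]$ or inside $H_n^\varepsilon$ produces the same underlying $\of$-module, the quadratic form merely being multiplied by $\pi$; in other words $\lx^{\bot}_{H_n^\varepsilon[\pi]}=\bigl(\lx^{\bot}_{H_n^\varepsilon}\bigr)[\pi]$. Applying the rescaling operation $N\mapsto N[\pi]$ to the isometry furnished by (a), together with $(N_1\obot N_2)[\pi]=N_1[\pi]\obot N_2[\pi]$ and the fact that $(\lx[-1])[\pi]$ is precisely the rank-one lattice $\lx[-1]$ attached to $x\in H_n^\varepsilon[\pi]$, then yields (b).

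For (a), put $L:=H_n^\varepsilon$. Since $L$ is self-dual and $x$ is primitive, the $\of$-linear functional $v\mapsto(v,x)$ on $L$ is surjective onto $\of$, so $\lx^{\bot}$ is a direct summand of $L$ of rank $n-1$. I would then split into two cases. If $q(x)\in\of^{\times}$, then $\lx$ is self-dual, hence $L=\lx\obot\lx^{\bot}$ with $\lx^{\bot}$ self-dual of rank $n-1$; likewise $\lx[-1]\obot H_{n-2}^{\varepsilon}$ is self-dual of rank $n-1$. Since for odd $p$ a self-dual $\of$-lattice is an orthogonal sum of rank-one self-dual lattices and is determined up to isometry by its rank and discriminant, it remains to check that these two lattices have equal discriminant; by multiplicativity of the Gram determinant under $\obot$ and the normalization $\textup{disc}=(-1)^{m(m-1)/2}\det(t_{ij})$, this comes down to the elementary parity identity $\tfrac{n(n-1)}{2}-\tfrac{(n-2)(n-3)}{2}=2n-3\equiv 1\pmod 2$.

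If $q(x)\notin\of^{\times}$ (in particular whenever $\overline x$ is isotropic in $L/\pi L$), choose $y\in L$ with $(x,y)=1$ — possible because $L/\pi L$ is nondegenerate and $\overline x\neq 0$ — and set $M:=\of x\oplus\of y$. The determinant of its Gram matrix $(t_{ij})$ equals $q(x)q(y)-\tfrac14$, a unit congruent to $-\tfrac14$ modulo $\pi$; hence $M$ is self-dual of rank $2$ with $\textup{disc}(M)\equiv 1$, i.e.\ $M\simeq H_2^{+}$, and $L=M\obot M^{\bot}$ with $M^{\bot}$ self-dual of rank $n-2$. A direct computation gives $\lx^{\bot}\cap M=\of\cdot w$ with $w=x-2q(x)y$ and $q(w)=-q(x)\bigl(1-4q(x)q(y)\bigr)$; since $1-4q(x)q(y)\in 1+\pi\of$ is a square (again using $p$ odd), one gets $\langle w\rangle\simeq\lx[-1]$, so $\lx^{\bot}=\langle w\rangle\obot M^{\bot}\simeq\lx[-1]\obot M^{\bot}$, and it remains to identify $M^{\bot}\simeq H_{n-2}^{\varepsilon}$, which again reduces, via the identity $\tfrac{n(n-1)}{2}-\tfrac{(n-2)(n-3)}{2}=2n-3$, to an elementary parity check.

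The only genuine obstacle is the sign bookkeeping: one must track the $(-1)^{m(m-1)/2}$ twist in the discriminant carefully enough to confirm that the orthogonal complement carries the superscript $\varepsilon$ and not $-\varepsilon$, and this is exactly what the oddness of $2n-3$ guarantees. All of the supporting steps — diagonalizing self-dual lattices, identifying $1+\pi\of$ with squares, and inverting $2$ — rely on $p$ being odd.
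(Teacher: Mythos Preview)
Your proof is correct. The paper's own argument is much terser: it writes $H_n^\varepsilon\simeq H_2^+\obot H_{n-2}^\varepsilon$, asserts ``we can assume $x\in H_2^+$'', and then observes directly in the hyperbolic plane that the orthogonal complement of a primitive vector $x$ in $H_2^+$ is $\lx[-1]$. The reduction ``we can assume $x\in H_2^+$'' is left implicit (it relies on transitivity of $\textup{O}(H_n^\varepsilon)$ on primitive vectors of a given norm, together with the fact that $H_2^+$ primitively represents every element of $\of$).

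Your approach bypasses that transitivity step. In your Case~2 you \emph{construct} the copy of $H_2^+$ containing $x$ by hand (via the pair $x,y$ with $(x,y)=1$), which is exactly what the paper's proof is doing once one unwinds the Witt argument; your explicit computation of $w=x-2q(x)y$ with $q(w)=-q(x)(1-4q(x)q(y))$ makes the isometry $\langle w\rangle\simeq\lx[-1]$ transparent. Your Case~1 is genuinely different: rather than embedding $x$ in a hyperbolic plane, you invoke the classification of self-dual lattices by rank and discriminant. This is more elementary but costs you the sign bookkeeping. (A minor remark: in Case~2 the identification $M^\perp\simeq H_{n-2}^\varepsilon$ follows more directly from $\textup{disc}(H_2^+)$ being a square, so that $\textup{disc}(M^\perp)\equiv\textup{disc}(L)$; the $2n-3$ identity is not really needed there.) Your deduction of (b) from (a) by rescaling matches the paper.
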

\begin{proof}
    The quadratic lattice $H_{n}^{\varepsilon}$ is isometric to the quadratic lattice $H_{2}^{+}\obot H_{n-2}^{\varepsilon}$. For (a), we can assume $x\in H_2^{+}$, then it's easy to see that the orthogonal complement of $x$ in $H_2^{+}$ is isometric to $\lx[-1]$. The statement (b) follows from (a).
\end{proof}

\subsection{Difference formula for vertex lattices}
The main goal of this section is to prove the following formula.
\begin{theorem}
    Let $n_1, n_2\geq2$ be two integers. Let $\varepsilon_1,\varepsilon_2\in\{\pm\}$. Let $L=H_{n_1}^{\varepsilon_1}[\pi]\obot H_{n_2}^{\varepsilon_2}$. Let $M$ be a quadratic lattice of rank $m$. Let $\lx$ be a rank $1$ quadratic lattice generated by $x$ such that $n\coloneqq\nu_\varpi(q(x))\geq0$. Then
    \begin{align*}
        &\den\left(L,M\obot\lx\right)-q^{m+2-n_1-n_2}\cdot\den\left(L,M\obot\langle \pi^{-1}x\rangle\right)\\
        =&\,q^{m+1-n_2}\cdot\den\left(H_{n_1-2}^{\varepsilon_1}[\pi]\obot H_{n_2}^{\varepsilon_2}\obot\lx[-1],M\right)\cdot\pden\left(H_{n_1}^{\varepsilon_1},\lx[\pi^{-1}]\right)\\
        +&\,\den\left(H_{n_1}^{\varepsilon_1}[\pi]\obot H_{n_2-2}^{\varepsilon_2}\obot\lx[-1],M\right)\cdot\pden\left(H_{n_2}^{\varepsilon_2},\lx\right).
    \end{align*}
    \label{diff-loc-density}
\end{theorem}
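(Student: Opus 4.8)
The plan is to compute each local density by stratifying the integral representation functor according to the divisibility of the image of the distinguished generator $x$, and then to recognize the left-hand side of the theorem as exactly the ``primitive'' stratum of $\den(L,M\obot\langle x\rangle)$, which by Lemma \ref{primitive-in-vertex} decomposes into the two loci matching the two terms on the right.

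First I would write $\den(L,M\obot\langle x\rangle)=\lim_{d\to\infty}q^{-dD}\,\#\mathrm{Rep}(L,M\obot\langle x\rangle)(\of/\varpi^{d})$ with $D=\dim\mathrm{Rep}(L,M\obot\langle x\rangle)_{F}$, and split the point count according to whether $v:=\phi(x)$ lies in $\varpi L$ or is primitive in $L$. On the non-primitive locus, writing $v=\varpi w$ turns the defining conditions $q(v)=q(x)$ and $(\phi(M),v)=0$ into $q(w)=q(\pi^{-1}x)$ and $(\phi(M),w)=0$ at the cost of a uniform loss of $\varpi$-adic precision; comparing the resulting normalized count with $\den(L,M\obot\langle\pi^{-1}x\rangle)$ and tracking the change in the dimension normalizations should give that the sub-sum over non-primitive $\phi(x)$ in $\den(L,M\obot\langle x\rangle)$ equals $q^{m+2-n_{1}-n_{2}}\,\den(L,M\obot\langle\pi^{-1}x\rangle)$. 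Hence the left-hand side of the theorem equals the contribution of the representations with $\phi(x)$ primitive in $L$.

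Next I would analyze this primitive stratum using the fixed Jordan splitting $L=H_{n_{1}}^{\varepsilon_{1}}[\pi]\obot H_{n_{2}}^{\varepsilon_{2}}$. For $v=v_{1}+v_{2}$ primitive, either $v_{2}$ is primitive in $H_{n_{2}}^{\varepsilon_{2}}$, or else $v_{2}\in\varpi H_{n_{2}}^{\varepsilon_{2}}$ and then $v_{1}$ is automatically primitive in $H_{n_{1}}^{\varepsilon_{1}}[\pi]$; by Lemma \ref{primitive-in-vertex} these two $\mathrm{O}(L)$-stable loci consist, respectively, of the vectors $\mathrm{O}(L)$-conjugate into $H_{n_{2}}^{\varepsilon_{2}}$ and into $H_{n_{1}}^{\varepsilon_{1}}[\pi]$. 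For each locus, grouping the representations $\phi$ by the $\mathrm{O}(L)$-orbit of $v$, and using that $\phi|_{M}$ is then a representation of $M$ into $v^{\perp}$, one obtains the product of the normalized number of primitive vectors $v$ with $q(v)=q(x)$ in that locus with $\den(v^{\perp},M)$. The complements are computed by Lemma \ref{primitive-self-dual}: in the first case $v^{\perp}\simeq H_{n_{1}}^{\varepsilon_{1}}[\pi]\obot H_{n_{2}-2}^{\varepsilon_{2}}\obot\langle x\rangle[-1]$; in the second case, since the form on $H_{n_{1}}^{\varepsilon_{1}}[\pi]$ is $\varpi$ times that on $H_{n_{1}}^{\varepsilon_{1}}$, the equation $q(v)=q(x)$ reads $q(v_{0})=\pi^{-1}q(x)$ on the underlying vector $v_{0}\in H_{n_{1}}^{\varepsilon_{1}}$, and $v^{\perp}\simeq H_{n_{1}-2}^{\varepsilon_{1}}[\pi]\obot H_{n_{2}}^{\varepsilon_{2}}\obot\langle x\rangle[-1]$. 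By the definition of the primitive local density, the two normalized vector counts are $\pden(H_{n_{2}}^{\varepsilon_{2}},\langle x\rangle)$ and $\pden(H_{n_{1}}^{\varepsilon_{1}},\langle x\rangle[\pi^{-1}])$ respectively.

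Assembling the pieces then yields the theorem: the first locus contributes the term $\den(H_{n_{1}}^{\varepsilon_{1}}[\pi]\obot H_{n_{2}-2}^{\varepsilon_{2}}\obot\langle x\rangle[-1],M)\cdot\pden(H_{n_{2}}^{\varepsilon_{2}},\langle x\rangle)$ directly, while the second locus contributes the analogous product times the correction $q^{m+1-n_{2}}$, which records both that the primitive vector lives in the $\varpi$-modular block $H_{n_{1}}^{\varepsilon_{1}}[\pi]$ rather than in the self-dual block $H_{n_{1}}^{\varepsilon_{1}}$ for which $\pden$ is normalized, and the relevant difference of dimension normalizations. I expect the main obstacle to be precisely this bookkeeping: making the orbit-counting step rigorous at finite level $\of/\varpi^{d}$, controlling the (density-negligible) locus where $\phi(M)$ fails to be primitive, and verifying that the surviving powers of $q$ are exactly $q^{m+2-n_{1}-n_{2}}$ and $q^{m+1-n_{2}}$. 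The structural ingredients --- Lemma \ref{primitive-in-vertex}, Lemma \ref{primitive-self-dual}, and the definitions of $\den$ and $\pden$ --- are already available, so the remaining work is essentially combinatorial.
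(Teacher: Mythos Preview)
Your proposal is correct and follows essentially the same approach as the paper: stratify $\rep(L,M\obot\lx)$ via the restriction map to $\rep(L,\lx)$, separate the primitive locus of $\phi(x)$ from the non-primitive one (which yields the $q^{m+2-n_1-n_2}\den(L,M\obot\langle\pi^{-1}x\rangle)$ term), then split the primitive locus into the two pieces governed by Lemma \ref{primitive-in-vertex} and compute the fibers over each using Lemma \ref{primitive-self-dual}. The paper carries out precisely this bookkeeping at finite level $\of/\pi^{d}$ (Lemmas 3.2.2--3.2.4), handling the precision loss you anticipate by stratifying over all divisibility levels $i$ of $\phi(x)$ rather than just primitive versus non-primitive, so that the difference visibly isolates the $i=0$ stratum.
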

\subsubsection{Decomposition of some sets}
\begin{lemma}
    Let $d\geq n+2$ be an integer. Then we have a decomposition of the set $\textup{Rep}(L,\lx)(\of/\pi^{d})$ as follows,
    \begin{align*}
        \textup{Rep}(L,\lx)(\of/\pi^{d})&=\bigsqcup\limits_{i=0}^{[n/2]}\bigsqcup\limits_{t\in\of/(\pi^{i})}\prep\left(L,\langle \pi^{-i}x\rangle[1+\pi^{d}q(x)^{-1}t]\right)(\of/\pi^{d-i})\\
        &\simeq\bigsqcup\limits_{i=0}^{[n/2]}\prep\left(L,\langle \pi^{-i}x\rangle\right)(\of/\pi^{d-i})^{q^{i}}.
    \end{align*}
\end{lemma}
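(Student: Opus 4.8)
The plan is to understand $\textup{Rep}(L,\langle x\rangle)(\of/\pi^d)$ as the set of vectors $v \in L \otimes \of/\pi^d$ with $q(v) \equiv q(x) \pmod{\pi^d}$, and to stratify this set according to the \emph{primitivity depth} of $v$, i.e., the largest $i$ with $v \in \pi^i (L \otimes \of/\pi^d)$ (well-defined since $v$ is not the zero vector: it has norm of valuation $n < d$, so $i \leq [n/2]$). First I would fix such a $v$ of depth exactly $i$ and write $v = \pi^i w$ with $w$ primitive in $L \otimes \of/\pi^{d-i}$; then $q(w) = \pi^{-2i} q(v)$, and from $q(v) \equiv q(x)$ modulo $\pi^d$ one gets $q(w) \equiv \pi^{-2i} q(x) \pmod{\pi^{d-2i}}$. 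The subtlety is that this only pins down $q(w)$ modulo $\pi^{d-2i}$, not modulo $\pi^{d-i}$; writing $q(w) = \pi^{-2i}q(x)(1 + \pi^{d-2i} t)$ for $t \in \of/\pi^i$ accounts for exactly the missing $\pi^i$ worth of ambiguity, and this $t$ is the index in the inner disjoint union. Equivalently, $w$ defines a primitive embedding of $\langle \pi^{-i}x\rangle$ scaled by the unit $1 + \pi^{d}q(x)^{-1}t$ (note $\pi^{d-2i}\cdot \pi^{-2i}q(x) = \pi^d q(x)^{-1}\cdot(\pi^{-2i}q(x))^2$ up to the unit normalization — this bookkeeping should be double-checked, but the shape is forced) into $L \otimes \of/\pi^{d-i}$. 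This gives the first displayed equality, a genuine disjoint union over $0 \leq i \leq [n/2]$ and over $t$.

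Next I would pass from the scaled primitive representation schemes $\prep(L, \langle \pi^{-i}x\rangle[u])$ to the unscaled one $\prep(L,\langle\pi^{-i}x\rangle)$. Since $p$ is odd and $u = 1 + \pi^{d}q(x)^{-1}t$ is a unit congruent to $1$ modulo a high power of $\pi$, it is a square in $\of/\pi^{d-i}$ (here one wants $d - i$ small enough relative to the congruence, which holds because the scaling factor is $\equiv 1 \bmod \pi^{d-2i}$ and $d - 2i \geq (d-i) - i$; for $i \geq 1$ one needs $d - 2i \geq \lceil (d-i)/? \rceil$-type inequalities — the clean statement is that multiplying the quadratic form by a square gives an isometric lattice, realized by scaling the generator by $\sqrt{u}$). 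Thus $\prep(L,\langle\pi^{-i}x\rangle[u])(\of/\pi^{d-i}) \cong \prep(L,\langle\pi^{-i}x\rangle)(\of/\pi^{d-i})$ canonically, and the inner union over $t \in \of/\pi^i$ contributes $q^i$ disjoint copies of the same set. This yields the second line.

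The main obstacle I anticipate is the careful bookkeeping of congruence exponents: making sure that for every $i$ in the range $0 \le i \le [n/2]$ the truncation level $d - i$ is large enough that (a) primitivity modulo $\pi$ detects primitivity in all the relevant truncations, (b) the scaling unit $1 + \pi^{d-2i}(\cdot)$ is actually a square in $\of/\pi^{d-i}$ — this is where the hypothesis $d \geq n+2$ (hence $d - 2i \geq d - n \geq 2 > 0$, and more) is used — and (c) no representation of smaller truncation level is accidentally over- or under-counted when reconstructing $v$ from $(i, t, w)$. The inverse map, sending $(i,t,w) \mapsto \pi^i \widetilde{w}$ for a lift $\widetilde{w}$, must be checked to be well-defined independent of the lift of $w$ from $\of/\pi^{d-i}$ to $\of/\pi^d$: changing the lift by $\pi^{d-i}(\cdot)$ changes $\pi^i w$ by $\pi^d(\cdot) \equiv 0$, so this is fine, and surjectivity onto depth-exactly-$i$ vectors plus injectivity (the depth $i$ and the class $t$ are intrinsic to $v$) give the bijection. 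Once these exponent inequalities are verified the rest is formal.
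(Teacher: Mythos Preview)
Your proposal is correct and follows essentially the same argument as the paper: stratify by primitivity depth $i$, divide by $\pi^i$ to get a primitive vector in $L/\pi^{d-i}L$, record the residual $\of/\pi^i$ ambiguity in its norm as a unit scaling, and then use that this unit (which is $\equiv 1 \bmod \pi^{d-n}$, not $\bmod\,\pi^{d-2i}$ as you wrote---but you flagged this) is a square since $p$ is odd. The paper's proof is terser and does not spell out the inverse map or the square-root step as you do; otherwise the two proofs coincide.
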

\begin{proof}
    Let $\phi\in\textup{Rep}(L,\lx)(\of/\pi^{d})$. Let $i$ be the largest integer such that $\phi(\overline{x})\in\pi^{i}L/\pi^d L$. Then we must have $0\leq i\leq [\frac{n}{2}]$ because $\nu_\varpi(q(x))=n$. Define $\widetilde{\phi}:\langle\pi^{-i}x\rangle/\pi^{d-i}\langle\pi^{-i}x\rangle\rightarrow L/\pi^{d-i}L$ to be $\widetilde{\phi}(\overline{\pi^{-i}x})=\pi^{-i}\phi(\overline{x})$. Since we only know that $q\left(\phi(\overline{x})\right)\equiv q(x)\,\textup{mod}\,\pi^{d}$, there exists an element $t\in\of/\pi^{i}$ such that
    \begin{equation*}
        q\left(\pi^{-i}\phi(\overline{x})\right)\equiv\pi^{-2i}q(x)+\pi^{d-2i}t\equiv \pi^{-2i}q(x)\left(1+\pi^{d}q(x)^{-1}t\right)\,\textup{mod}\,\,\pi^{d-i}.
    \end{equation*}
    Hence $\widetilde{\phi}\in\prep\left(L,\langle \pi^{-i}x\rangle[1+\pi^{d}q(x)^{-1}t]\right)(\of/\pi^{d-i})$.
    \par
    Since $d\neq n+2$, we know that the lattice $\pi^{-i}x\rangle[1+\pi^{d}q(x)^{-1}t]$ is isometric to $ \pi^{-i}x\rangle$, therefore
    \begin{equation*}
        \bigsqcup\limits_{t\in\of/(\pi^{i})}\prep\left(L,\langle \pi^{-i}x\rangle[1+\pi^{d}q(x)^{-1}t]\right)(\of/\pi^{d-i})\simeq\prep\left(L,\langle \pi^{-i}x\rangle\right)(\of/\pi^{d-i})^{q^{i}}.
    \end{equation*}
\end{proof}
Let $0\leq i\leq[\frac{n}{2}]$ be an integer. Let $d\geq n+2$ be an integer. For an element $\phi\in\prep\left(L,\langle \pi^{-i}x\rangle\right)$, we have
\begin{equation*}
    \phi(\overline{\pi^{-i}x})=\overline{x_1}+\overline{x_2},
\end{equation*}
where $\overline{x_1}\in H_1[\pi]/\pi^{d-i}H_1[\pi]$ and $\overline{x_2}\in H_2/\pi^{d-i}H_2$. Define
\begin{align*}
    &\prep_1(L,\langle \pi^{-i}x\rangle)(\of/\pi^{d-i})\coloneqq\{\phi\in\prep\left(L,\langle \pi^{-i}x\rangle\right)(\of/\pi^{d-i}):\overline{x_2}\in\pi H_2/\pi^{d-i}H_2\},\\
    &\prep_2(L,\langle \pi^{-i}x\rangle)(\of/\pi^{d-i})\coloneqq\{\phi\in\prep\left(L,\langle \pi^{-i}x\rangle\right)(\of/\pi^{d-i}):\overline{x_2}\notin\pi H_2/\pi^{d-i}H_2\}.
\end{align*}
Then we have a decomposition 
\begin{equation*}
    \prep\left(L,\langle \pi^{-i}x\rangle\right)(\of/\pi^{d-i})=\prep_1(L,\langle \pi^{-i}x\rangle)(\of/\pi^{d-i})\bigsqcup\prep_2(L,\langle \pi^{-i}x\rangle)(\of/\pi^{d-i}).
\end{equation*}
\begin{lemma}
    Let $0\leq i\leq[\frac{n}{2}]$ be an integer. Let $d\geq n+2$ be an integer. We have
    \begin{align*}
    &\#\prep_1(L,\langle \pi^{-i}x\rangle)(\of/\pi^{d-i})=q^{n_2(d-i-1)+1}\cdot\#\prep\left(H_{n_1}^{\varepsilon_1},\langle\pi^{-i}x\rangle[\pi^{-1}]\right)(\of/\pi^{d-i}),\\
    &\#\prep_2(L,\langle \pi^{-i}x\rangle)(\of/\pi^{d-i})=q^{n_1(d-i)}\cdot\#\prep(H_{n_2}^{\varepsilon_2},\langle\pi^{-i}x\rangle)(\of/\pi^{d-i}).
\end{align*}
\end{lemma}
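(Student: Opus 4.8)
The plan is to count both sides by slicing primitive vectors along the fixed Jordan decomposition $L\simeq H_1[\pi]\obot H_2$ of (\ref{decom-vertex}), in which (for the lemma at hand) $H_1=H_{n_1}^{\varepsilon_1}$ and $H_2=H_{n_2}^{\varepsilon_2}$ are self-dual. The starting point is that an element of $\prep(L,\langle\pi^{-i}x\rangle)(\of/\pi^{d-i})$ is precisely a primitive vector $v\in L/\pi^{d-i}L$ with $q(v)\equiv q(\pi^{-i}x)\pmod{\pi^{d-i}}$; writing $v=x_1+x_2$ along the decomposition as in the statement, one has $q(v)=\pi q_1(x_1)+q_2(x_2)$, and $v$ is primitive exactly when $x_1$ or $x_2$ is. I would first record one elementary fact about unimodular lattices. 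Since $H_1,H_2$ are self-dual and $p$ is odd, the restriction of the quadratic form $q_H$ of a self-dual lattice $H$ of rank $r$ to its primitive locus $H\setminus\pi H$, viewed as an open subscheme of $\mathbb{A}^r_{\of}$, is a smooth morphism onto $\mathbb{A}^1_{\of}$: its differential at $v$ is the linear form $(v,\cdot)$, which under $H\cong H^{\vee}$ is a split surjection onto $\of$ exactly when $v$ is primitive. Point-counting on smooth $\of$-schemes then gives, for $m\ge 2$, that $\#\{v\in H/\pi^m\text{ primitive}:q_H(v)\equiv c\pmod{\pi^m}\}$ depends on $c$ only through $c\bmod\pi$ (it is $q^{(m-1)(r-1)}$ times the number of primitive $\mathbb{F}_q$-points of the fibre $\{q_H=\bar c\}$), and that relaxing the congruence to $\pmod{\pi^{m-1}}$ multiplies the count by $q$.

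For the second identity, concerning $\prep_2(L,\langle\pi^{-i}x\rangle)(\of/\pi^{d-i})$, where $x_2\notin\pi H_2$: then $v=x_1+x_2$ is automatically primitive, and $x_1$ ranges freely over $H_1/\pi^{d-i}H_1$ subject only to $q_2(x_2)\equiv q(\pi^{-i}x)-\pi q_1(x_1)\pmod{\pi^{d-i}}$. For each fixed $x_1$, the admissible $x_2$ are exactly the primitive vectors of $H_2/\pi^{d-i}H_2$ of norm $q(\pi^{-i}x)-\pi q_1(x_1)$; since this norm is congruent to $q(\pi^{-i}x)$ modulo $\pi$, the fact above shows their number equals $\#\prep(H_2,\langle\pi^{-i}x\rangle)(\of/\pi^{d-i})$, independently of $x_1$ (and both are $0$ together if no primitive vector of $H_2$ has that reduction). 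Summing over the $q^{n_1(d-i)}$ choices of $x_1$ gives the asserted formula.

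For the first identity, concerning $\prep_1(L,\langle\pi^{-i}x\rangle)(\of/\pi^{d-i})$, where $x_2=\pi z$ with $z\in H_2/\pi^{d-i-1}H_2$ arbitrary: primitivity of $v$ forces $x_1$ to be primitive in $H_1/\pi^{d-i}H_1$, and dividing the norm equation by $\pi$ rewrites it as $q_1(x_1)\equiv\pi^{-1}q(\pi^{-i}x)-\pi q_2(z)\pmod{\pi^{d-i-1}}$. If $n-2i\ge 1$, then $\pi^{-1}q(\pi^{-i}x)\in\of$ and the right-hand side is congruent to $\pi^{-1}q(\pi^{-i}x)$ modulo $\pi$ for every $z$; so, again by the fact above, the number of admissible primitive $x_1$ is the same for all $z$ and equals $\#\{x_1\in H_1/\pi^{d-i}H_1\text{ primitive}:q_1(x_1)\equiv\pi^{-1}q(\pi^{-i}x)\pmod{\pi^{d-i-1}}\}=q\cdot\#\prep(H_1,\langle\pi^{-i}x\rangle[\pi^{-1}])(\of/\pi^{d-i})$; multiplying by the $q^{n_2(d-i-1)}$ choices of $z$ gives the asserted formula. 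It remains to treat the boundary case $n=2i$ (which forces $n$ even and $i=[n/2]$): then $q(\pi^{-i}x)$ is a unit, so $q(v)=\pi(q_1(x_1)+\pi q_2(z))$ is never $\equiv q(\pi^{-i}x)\pmod{\pi^{d-i}}$ and $\prep_1=\varnothing$, while on the other side $\langle\pi^{-i}x\rangle[\pi^{-1}]$ has non-integral norm so $\prep(H_1,\langle\pi^{-i}x\rangle[\pi^{-1}])=\varnothing$; both quantities vanish.

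The only step that needs real care — and the likeliest place to slip — is the bookkeeping inside the elementary fact: that the fibre counts depend on $c$ only through $c\bmod\pi$ (which is what makes the perturbations $\pi q_1(x_1)$ and $\pi q_2(z)$ inert) and that moving between the moduli $\pi^{d-i-1}$ and $\pi^{d-i}$ produces exactly one factor of $q$, not $q^{n_1}$ or $q^{n_1-1}$. The hypothesis $d\ge n+2$ enters precisely here: it forces $d-i\ge 2$ (indeed $\ge\lceil n/2\rceil+2$), placing all these congruence manipulations in the stable range where the smoothness input applies cleanly.
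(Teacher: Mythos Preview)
Your proof is correct and follows essentially the same fibration argument as the paper: slice along the Jordan decomposition $L\simeq H_1[\pi]\obot H_2$, project onto the ``free'' coordinate, and use that for a self-dual lattice the primitive-representation count at level $\pi^{m}$ depends on the target norm only modulo $\pi$. The only differences are cosmetic: the paper invokes the explicit formula (\ref{rank1}) for $\pden(H_k^{\varepsilon},\lx)$ to justify that dependence, whereas you obtain it from smoothness of $q_H$ on the primitive locus; and you treat the boundary case $n=2i$ explicitly, which the paper leaves implicit.
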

\begin{proof}
    For simplicity, in the proof we denote $H_{n_1}^{\varepsilon_1}$ by $H_1$, denote $H_{n_2}^{\varepsilon_2}$ by $H_2$.
    \par
    Let $\phi\in\prep_1\left(L,\langle \pi^{-i}x\rangle\right)(\of/\pi^{d-i})$. Then we have
    \begin{equation*}
        \phi(\overline{x})=\overline{x_1}+\overline{\pi x_2^{\prime}}\in L/\pi^{d-i}L,
    \end{equation*}
    where $\overline{x_1}\in H_1[\pi]/\pi^{d-i}H_1[\pi]$ and $\overline{x_1}\notin H_1[\pi]/\pi^{d-i}H_1[\pi]$, and $\overline{x_2^{\prime}}\in H_2/\pi^{d-i-1}H_2$. Therefore we get a map
    \begin{equation*}
        \pr_2:\prep_1(L,\langle \pi^{-i}x\rangle)(\of/\pi^{d-i})\rightarrow H_2/\pi^{d-i-1}H_2,\,\,\,\,\phi\mapsto \overline{x_2^{\prime}}.
    \end{equation*}
    Conversely, given an element $\overline{x_2^{\prime}}\in H_2/\pi^{d-i-1}H_2$, the inverse image of $\overline{x_2^{\prime}}$ under the map $\pr_2$ is
    \begin{align*}
        \pr_2^{-1}(\overline{x_2^{\prime}})=\bigsqcup\limits_{x\in\of/\pi}\{\overline{x_1}\in H_1[\pi]/\pi^{d-i}H_1[\pi]:\,\,&q(\overline{x_1})\equiv\pi^{-2i-1}q(x)-\pi q(\overline{x_2^{\prime}})+\pi^{d-i-1}x\,\textup{mod}\,\,\pi^{d-i},\\
        &\textup{and}\,\,\overline{x_1}\notin \pi H_1[\pi]/\pi^{d-i}H_1[\pi]\}.
    \end{align*}
    Notice that $\pi^{-2i-1}q(x)-\pi q(\overline{x_2^{\prime}})+\pi^{d-i-1}x\equiv\pi^{-2i-1}q(x)\,\,\textup{mod}\,\,\pi$. By the formula (\ref{rank1}), we get
    \begin{equation*}
        \#\pr_2^{-1}(\overline{x_2^{\prime}})=q\cdot\#\prep\left(H_{1},\langle\pi^{-i}x\rangle[\pi^{-1}]\right)(\of/\pi^{d-i}).
    \end{equation*}
    Therefore
    \begin{align*}
        \#\prep_1(L,\langle \pi^{-i}x\rangle)(\of/\pi^{d-i})&=\#H_2/\pi^{d-i-1}H_2\cdot q\cdot\#\prep\left(H_{1},\langle\pi^{-i}x\rangle[\pi^{-1}]\right)(\of/\pi^{d-i})\\
        &=q^{n_2(d-i-1)+1}\cdot\#\prep\left(H_{n_1}^{\varepsilon_1},\langle\pi^{-i}x\rangle[\pi^{-1}]\right)(\of/\pi^{d-i}).
    \end{align*}
    \par
    Now we compute $\#\prep_2(L,\langle \pi^{-i}x\rangle)(\of/\pi^{d-i})$. The idea is similar to the previous computations. Let $\phi\in\prep_2\left(L,\langle \pi^{-i}x\rangle\right)(\of/\pi^{d-i})$. Then we have
    \begin{equation*}
        \phi(\overline{x})=\overline{x_1}+\overline{x_2}\in L/\pi^{d-i}L,
    \end{equation*}
    where $\overline{x_1}\in H_1[\pi]/\pi^{d-i}H_1[\pi]$, $\overline{x_2}\in H_2/\pi^{d-i}H_2$ and $\overline{x_2}\notin \pi H_2/\pi^{d-i}H_2$. Therefore we get a map
    \begin{equation*}
        \pr_1:\prep_2(L,\langle \pi^{-i}x\rangle)(\of/\pi^{d-i})\rightarrow H_1[\pi]/\pi^{d-i}H_1[\pi],\,\,\,\,\phi\mapsto \overline{x_1}.
    \end{equation*}
    Conversely, given an element $\overline{x_1}\in H_1[\pi]/\pi^{d-i}H_1[\pi]$, the inverse image of $\overline{x_1}$ under the map $\pr_1$ is
    \begin{align*}
        \pr_1^{-1}(\overline{x_1})=\{\overline{x_2}\in H_2/\pi^{d-i}H_2:\,\,&q(\overline{x_2})\equiv q(x)- q(\overline{x_1})\,\textup{mod}\,\,\pi^{d-i},\\
        &\textup{and}\,\,\overline{x_2}\notin \pi H_2/\pi^{d-i}H_2\}.
    \end{align*}
    Notice that $q(x)- q(\overline{x_1})\equiv q(x)\,\,\textup{mod}\,\,\pi$. By the formula (\ref{rank1}), we get
    \begin{equation*}
        \#\pr_1^{-1}(\overline{x_1})=\#\prep\left(H_{2},\langle\pi^{-i}x\rangle\right)(\of/\pi^{d-i}).
    \end{equation*}
    Therefore
    \begin{align*}
        \#\prep_2(L,\langle \pi^{-i}x\rangle)(\of/\pi^{d-i})&=\#H_1[\pi]/\pi^{d-i}H_1[\pi]\cdot \#\prep\left(H_{2},\langle\pi^{-i}x\rangle\right)(\of/\pi^{d-i})\\
        &=q^{n_1(d-i)}\cdot\#\prep\left(H_{n_2}^{\varepsilon_2},\langle\pi^{-i}x\rangle\right)(\of/\pi^{d-i}).
    \end{align*}
\end{proof}
\subsubsection{A restriction map}
Let $d$ be a positive integer. We define a map $\rep(L,M\obot\lx)(\of/\pi^{d})\rightarrow\rep(L,\lx)(\of/\pi^{d})$ as follows: for an element $\phi\in\rep(L,M\obot\lx)(\of/\pi^{d})$, define
\begin{equation*}
    \res(\phi)(\overline{x})=\phi(\overline{x}).
\end{equation*}
\begin{lemma}
    Let $0\leq i\leq[\frac{n}{2}]$ be an integer. When $d\geq n+2$ is a large enough integer,
    \begin{itemize}
        \item [(a)]For an element $\phi\in\prep_1(L,\langle \pi^{-i}x\rangle)(\of/\pi^{d-i})$, we have
        \begin{equation*}
            \#\res^{-1}(\phi)=\#\rep(H_{n_1-2}^{\varepsilon_1}[\pi]\obot H_{n_2}^{\varepsilon_2}\obot \langle\pi^{-i}x\rangle[-1],M)(\of/\pi^{d})\cdot q^{(i+1)m}.
        \end{equation*}
        \item [(b)]For an element $\phi\in\prep_2(L,\langle \pi^{-i}x\rangle)(\of/\pi^{d-i})$, we have
        \begin{equation*}
            \#\res^{-1}(\phi)=\#\rep(H_{n_1}^{\varepsilon_1}[\pi]\obot H_{n_2-2}^{\varepsilon_2}\obot \langle\pi^{-i}x\rangle[-1],M)(\of/\pi^{d})\cdot q^{im}.
        \end{equation*}
    \end{itemize}  
    \label{size-res-inverse}
\end{lemma}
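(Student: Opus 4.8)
The plan is to compute $\#\res^{-1}(\phi)$ directly. First I would unwind the definition: an element of $\res^{-1}(\phi)$ is a homomorphism of quadratic $\of/\pi^{d}$-modules $\Psi\colon(M\obot\lx)\otimes\of/\pi^{d}\to L\otimes\of/\pi^{d}$ restricting on the $\lx$-summand to the representation attached to $\phi$ under the decomposition of $\rep(L,\lx)$ from the preceding lemma, so that $\Psi(\bar x)=\pi^{i}\widetilde v$ where $\widetilde v\in L$ is a fixed primitive lift of $v\coloneqq\phi(\overline{\pi^{-i}x})$. Since $p$ is odd and $M\perp\lx$ inside $M\obot\lx$, preserving $q$ is equivalent to preserving the bilinear form, so giving such a $\Psi$ is the same as giving a $q$-preserving map $\Psi_{M}\colon M\otimes\of/\pi^{d}\to L\otimes\of/\pi^{d}$ whose image is orthogonal to $\pi^{i}\widetilde v$, i.e.\ with $(\Psi_{M}(m),\widetilde v)\equiv 0\bmod\pi^{d-i}$ for all $m\in M$. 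So I would reduce the lemma to counting $q$-preserving maps $M\to W$, where $W\coloneqq\{w\in L/\pi^{d}L:(w,\widetilde v)\equiv 0\bmod\pi^{d-i}\}$.

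The next step is to identify the quadratic lattice $\langle\widetilde v\rangle^{\bot}$. For $d$ large one has $q(\widetilde v)\equiv\pi^{-2i}q(x)$ modulo a high power of $\pi$, hence $\langle\widetilde v\rangle\simeq\langle\pi^{-i}x\rangle$; moreover the count of $q$-preserving maps $M\to W$ is $\textup{O}(L)$-invariant in $\widetilde v$ and independent of the chosen lift. If $\phi\in\prep_{1}$, the $H_{2}$-component of $\widetilde v$ lies in $\pi H_{2}$, so its $H_{1}[\pi]$-component is primitive in $H_{1}[\pi]=H_{n_{1}}^{\varepsilon_{1}}[\pi]$; by Lemma \ref{primitive-in-vertex}(b) I may move $\widetilde v$ into $H_{n_{1}}^{\varepsilon_{1}}[\pi]$ by an element of $\textup{O}(L)$, and then Lemma \ref{primitive-self-dual}(b) gives $\langle\widetilde v\rangle^{\bot}=\langle\widetilde v\rangle^{\bot}_{H_{n_{1}}^{\varepsilon_{1}}[\pi]}\obot H_{n_{2}}^{\varepsilon_{2}}\simeq\langle\pi^{-i}x\rangle[-1]\obot H_{n_{1}-2}^{\varepsilon_{1}}[\pi]\obot H_{n_{2}}^{\varepsilon_{2}}$. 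If $\phi\in\prep_{2}$, the same argument with part (a) of Lemmas \ref{primitive-in-vertex} and \ref{primitive-self-dual} moves $\widetilde v$ into $H_{n_{2}}^{\varepsilon_{2}}$ and yields $\langle\widetilde v\rangle^{\bot}\simeq\langle\pi^{-i}x\rangle[-1]\obot H_{n_{1}}^{\varepsilon_{1}}[\pi]\obot H_{n_{2}-2}^{\varepsilon_{2}}$; these are exactly the lattices appearing in (a) and (b).

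Then I would relate the count of $q$-preserving maps $M\to W$ to $\#\rep(\langle\widetilde v\rangle^{\bot},M)(\of/\pi^{d})$, where the two cases diverge. The decisive input is the image of the functional $(\widetilde v,-)\colon L\to\of$: in case $\prep_{2}$ (with $\widetilde v$ in the self-dual lattice $H_{n_{2}}^{\varepsilon_{2}}$) it is all of $\of$, whereas in case $\prep_{1}$ (with $\widetilde v\in H_{n_{1}}^{\varepsilon_{1}}[\pi]$, so every inner product with $\widetilde v$ is divisible by $\pi$) its image is $\pi\of$. Reducing modulo $\pi^{d}$, one finds that $W$ contains the reduction $\langle\widetilde v\rangle^{\bot}\otimes\of/\pi^{d}$ with index $q^{i}$ in case $\prep_{2}$ and index $q^{i+1}$ in case $\prep_{1}$. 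Projecting $W$ onto $W/(\langle\widetilde v\rangle^{\bot}\otimes\of/\pi^{d})\simeq\of/\pi^{i}$ (resp.\ $\of/\pi^{i+1}$) induces a map on the set of $q$-preserving $\Psi_{M}$'s; its fibre over $0$ is precisely $\rep(\langle\widetilde v\rangle^{\bot},M)(\of/\pi^{d})$, and for $d$ large each of the remaining $q^{im}$ (resp.\ $q^{(i+1)m}$) fibres is a nonempty torsor under it. Multiplying the common fibre size by the number of fibres produces the two asserted formulas.

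The hard part will be the last assertion, that this projection has fibres of constant size. The source of the difficulty is that $\langle\widetilde v\rangle^{\bot}$ is an $\of$-module direct summand of $L$ but, when $q(\widetilde v)$ is noninvertible, not an orthogonal one, so correcting a map $\Psi_{M}$ by a vector in the ``$\widetilde v$-direction'' alters its quadratic form. I expect to handle this by a Hensel-type correction argument valid once $d$ exceeds an explicit bound depending on $n$ and the fundamental invariants of $L$; equivalently, one can first split off from $L$ a unimodular sublattice of rank $\le 2$ containing $\widetilde v$, identify it with its split model $H_{2}^{+}$, and carry out the count there, where every quantity is explicit. Pinning down this quantification of ``$d$ large enough'' is precisely what the full proof must supply and what the statement of the lemma leaves implicit.
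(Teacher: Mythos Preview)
Your proposal is correct and follows the same overall strategy as the paper: identify $\langle\widetilde v\rangle^{\bot}$ via Lemmas~\ref{primitive-in-vertex} and~\ref{primitive-self-dual}, then count by separating the $L_1=\langle\widetilde v\rangle^{\bot}$ contribution from the ``$\widetilde v$-direction''. The difference is in how the final count is organized. Where you push forward along $W\to W/(L_1\otimes\of/\pi^{d})$ and then have to argue that all fibres have size $\#\rep(L_1,M)(\of/\pi^{d})$, the paper instead pulls back along the (non-surjective) inclusion $\overline{i_1}\colon L_1\obot\langle\theta\rangle\to L$, where $\theta=\widetilde v$. On the source the quadratic form splits orthogonally, so the preimage of $\res^{-1}(\phi)$ is transparently a product $\rep(L_1,M)(\of/\pi^{d})\times(\pi^{d-n+i}\of/\pi^{d})^{m}$, and one simply divides by $\#K_1^{m}$ (which is $q^{(n-2i-1)m}$ in case~(a), $q^{(n-2i)m}$ in case~(b)). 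Surjectivity of $\overline{i_1}^{m}$ onto $\res^{-1}(\phi)$ is the one nontrivial step, and the paper dispatches it with the explicit orthogonal projection $x_k\mapsto\frac{(x_k,\theta)}{2q(\theta)}\,\theta+x_k'$; since $(x_k,\theta)\in\pi^{d-i}\of$ and $\nu_\pi(q(\theta))=n-2i$, the coefficient lies in $\of$ once $d\ge n$. This is exactly the Hensel-type correction you allude to, but packaged so that the constant-fibre issue never arises; your alternative of reducing to an explicit rank-two computation inside $H_2^{+}$ (or $H_2^{+}[\pi]$) would also work but is less economical. One small imprecision: ``torsor under it'' is not quite the right word, since $\rep(L_1,M)(\of/\pi^{d})$ carries no natural group structure acting on the other fibres; what you mean is simply that all fibres have the same cardinality.
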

\begin{proof}
    Let $\{e_i\}_{i=1}^{m}$ be a basis of the lattice $M$.
    \par
    We first (a).  Let $\phi\in\prep_1(L,\langle \pi^{-i}x\rangle)(\of/\pi^{d-i})$ be an element. Let $\overline{\theta}=\phi(\overline{\pi^{-i}x})\in L/\pi^{d-i}L$. Let $\theta\in L$ be a lift of $\overline{\theta}$ such that $q(\theta)=\pi^{-2i}q(x)$. Then
    \begin{align*}
        \res^{-1}(\phi)=\{(\overline{x_1},\cdots,\overline{x_m})\in (L/\pi^{d}L)^{m}:(\overline{x_k},\overline{x_t})\equiv (\overline{e_k},\overline{e_t})\,\,\textup{mod}\,\,\pi^{d},(\overline{x_k},\overline{\pi^{i}\theta})\equiv 0\,\,\textup{mod}\,\,\pi^{d}.\}.
    \end{align*}
    By Lemma \ref{primitive-in-vertex} and Lemma \ref{primitive-self-dual}, we know that $L_1\coloneqq\langle\theta\rangle^{\bot}\simeq H_{n_1-2}^{\varepsilon_1}[\pi]\obot H_{n_2}^{\varepsilon_2}\obot \langle\pi^{-i}x\rangle[-1]$. We have an exact sequence
    \begin{equation}
        0\longrightarrow L_1\obot\langle\theta\rangle\stackrel{i_1}\longrightarrow L\longrightarrow Q_1\longrightarrow 0.
        \label{exact-sequence-1}
    \end{equation}
    The quotient $Q_1$ is a finite group since $L_1\obot\langle\theta\rangle$ and $L$ have the same rank. Tensoring the sequence (\ref{exact-sequence-1}) by $\of/\pi^{d}$ for a sufficiently large integer $d$, we get
    \begin{equation*}
        0\longrightarrow K_1\longrightarrow L_1/\pi^{d}L_1\obot\langle\theta\rangle/\pi^{d}\langle\theta\rangle\stackrel{\overline{i_1}}\longrightarrow L/\pi^{d}L\longrightarrow Q_1\longrightarrow 0,
    \end{equation*}
    and $\# K_1=\# Q_1=q^{n-2i-1}$. Let $\overline{i_1}^{m}=\overline{i_1}\times\cdots\times\overline{i_1}$.
    \par
    Claim: When $d$ is large enough, the map $(\overline{i_1}^{m})^{-1}(\res^{-1}(\phi))\rightarrow\res^{-1}(\phi)$ is surjective.
    \par
    \textit{Proof of the claim}: Let $(\overline{x_1},\cdots,\overline{x_m})\in\res^{-1}(\phi)$. Let $x_1,\cdots,x_m\in L$ be lifts of the elements $\overline{x_1},\cdots,\overline{x_m}\in L/\pi^{d}L$. Then for all integers $1\leq i\leq m$,
    \begin{equation*}
        x_i=\frac{(x_i,\theta)}{2q(\theta)}\cdot\theta+x_i^{\prime},
    \end{equation*}
    where $(x_i^{\prime},\theta)=0$. When $d$ is large enough, the element $\frac{(x_i,\theta)}{2q(\theta)}\in\of$. Hence $x_i^{\prime}\in L$. Therefore $x_i^{\prime}\in\langle\theta\rangle^{\bot}=L_1$. Hence
    \begin{equation*}
        (\overline{x_1},\cdots,\overline{x_m})=\left(\overline{\frac{(x_i,\theta)}{2q(\theta)}\cdot\theta},\cdots, \overline{\frac{(x_m,\theta)}{2q(\theta)}\cdot\theta}\right)+\left(\overline{x_1^{\prime}},\cdots,\overline{x_m^{\prime}}\right)\in\textup{Im}(\overline{i_1}^{m}).
    \end{equation*}
    \par
    Now we fix an isomorphism of $\of$-modules $\langle\theta\rangle\simeq\of$ such that $\theta$ is mapped to $1$. Then when $d$ is large enough,
    \begin{align*}
        \overline{i_1}^{-1}(\res^{-1}(\phi))=\{(\overline{x_1^{\prime}},\cdots,\overline{x_m^{\prime}})\in (L_1/\pi^{d}L_1)^{m},(\overline{a_1},\cdots,\overline{a_m})\in(\of&/\pi^{d})^{m}:\overline{a_k}\in\pi^{d-n+i}/\pi^{d},\\
        &(\overline{x_k^{\prime}},\overline{x_t^{\prime}})\equiv (\overline{e_k},\overline{e_t})\,\,\textup{mod}\,\,\pi^{d}.\}
    \end{align*}
    Therefore $(\overline{i_1}^{m})^{-1}(\res^{-1}(\phi))\simeq\rep(L_1,M)(\of/\pi^{d})\times(\of/\pi^{n-i})^{m}$. Hence
    \begin{equation*}
        \#\res^{-1}(\phi)=\frac{\#(\overline{i_1}^{m})^{-1}(\res^{-1}(\phi))}{\# K_1^{m}}=\#\rep(L_1,M)(\of/\pi^{d})\cdot q^{(i+1)m}
    \end{equation*}
\par
Now we prove (b). The idea is similar to that of (a). Let $\phi\in\prep_2(L,\langle \pi^{-i}x\rangle)(\of/\pi^{d-i})$ be an element. Let $\overline{\theta}=\phi(\overline{\pi^{-i}x})\in L/\pi^{d-i}L$. Let $\theta\in L$ be a lift of $\overline{\theta}$ such that $q(\theta)=\pi^{-2i}q(x)$. Then
    \begin{align*}
        \res^{-1}(\phi)=\{(\overline{x_1},\cdots,\overline{x_m})\in (L/\pi^{d}L)^{m}:(\overline{x_k},\overline{x_t})\equiv (\overline{e_k},\overline{e_t})\,\,\textup{mod}\,\,\pi^{d},(\overline{x_k},\overline{\pi^{i}\theta})\equiv 0\,\,\textup{mod}\,\,\pi^{d}.\}.
    \end{align*}
    By Lemma \ref{primitive-in-vertex} and Lemma \ref{primitive-self-dual}, we know that $L_2\coloneqq\langle\theta\rangle^{\bot}\simeq H_{n_1}^{\varepsilon_1}[\pi]\obot H_{n_2-2}^{\varepsilon_2}\obot \langle\pi^{-i}x\rangle[-1]$. We have an exact sequence
    \begin{equation}
        0\longrightarrow L_1\obot\langle\theta\rangle\stackrel{i_2}\longrightarrow L\longrightarrow Q_2\longrightarrow 0.
        \label{exact-sequence-2}
    \end{equation}
    The quotient $Q_2$ is a finite group since $L_2\obot\langle\theta\rangle$ and $L$ have the same rank. Tensoring the sequence (\ref{exact-sequence-2}) by $\of/\pi^{d}$ for a sufficiently large integer $d$, we get
    \begin{equation*}
        0\longrightarrow K_2\longrightarrow L_2/\pi^{d}L_2\obot\langle\theta\rangle/\pi^{d}\langle\theta\rangle\stackrel{\overline{i_2}}\longrightarrow L/\pi^{d}L\longrightarrow Q_2\longrightarrow 0,
    \end{equation*}
    and $\# K_2=\# Q_2=q^{n-2i}$. Let $\overline{i_2}^{m}=\overline{i_2}\times\cdots\times\overline{i_2}$.
    \par
    Claim: When $d$ is large enough, the map $(\overline{i_2}^{m})^{-1}(\res^{-1}(\phi))\rightarrow\res^{-1}(\phi)$ is surjective.
    \par
    \textit{Proof of the claim}: Let $(\overline{x_1},\cdots,\overline{x_m})\in\res^{-1}(\phi)$. Let $x_1,\cdots,x_m\in L$ be lifts of the elements $\overline{x_1},\cdots,\overline{x_m}\in L/\pi^{d}L$. Then for all integers $1\leq i\leq m$,
    \begin{equation*}
        x_i=\frac{(x_i,\theta)}{2q(\theta)}\cdot\theta+x_i^{\prime},
    \end{equation*}
    where $(x_i^{\prime},\theta)=0$. When $d$ is large enough, the element $\frac{(x_i,\theta)}{2q(\theta)}\in\of$. Hence $x_i^{\prime}\in L$. Therefore $x_i^{\prime}\in\langle\theta\rangle^{\bot}=L_2$. Hence
    \begin{equation*}
        (\overline{x_1},\cdots,\overline{x_m})=\left(\overline{\frac{(x_i,\theta)}{2q(\theta)}\cdot\theta},\cdots, \overline{\frac{(x_m,\theta)}{2q(\theta)}\cdot\theta}\right)+\left(\overline{x_1^{\prime}},\cdots,\overline{x_m^{\prime}}\right)\in\textup{Im}(\overline{i_2}^{m}).
    \end{equation*}
    \par
    Now we fix an isomorphism of $\of$-modules $\langle\theta\rangle\simeq\of$ such that $\theta$ is mapped to $1$. Then when $d$ is large enough,
    \begin{align*}
        \overline{i_2}^{-1}(\res^{-1}(\phi))=\{(\overline{x_1^{\prime}},\cdots,\overline{x_m^{\prime}})\in (L_2/\pi^{d}L_2)^{m},(\overline{a_1},\cdots,\overline{a_m})\in(\of&/\pi^{d})^{m}:\overline{a_k}\in\pi^{d-n+i}/\pi^{d},\\
        &(\overline{x_k^{\prime}},\overline{x_t^{\prime}})\equiv (\overline{e_k},\overline{e_t})\,\,\textup{mod}\,\,\pi^{d}.\}
    \end{align*}
    Therefore $(\overline{i_2}^{m})^{-1}(\res^{-1}(\phi))\simeq\rep(L_2,M)(\of/\pi^{d})\times(\of/\pi^{n-i})^{m}$. Hence
    \begin{equation*}
        \#\res^{-1}(\phi)=\frac{\#(\overline{i_2}^{m})^{-1}(\res^{-1}(\phi))}{\# K_2^{m}}=\#\rep(L_2,M)(\of/\pi^{d})\cdot q^{im}
    \end{equation*}
\end{proof}
\noindent\textit{Proof of Theorem \ref{diff-loc-density}}. Let $d\geq n+2$ be a large enough integer, we have
\begin{align}
    \#\rep(L,M\obot\lx)=\sum\limits_{i=0}^{[n/2]}&\#\prep_1(L,\langle \pi^{-i}x\rangle)(\of/\pi^{d-i})\cdot\#\res^{-1}(\phi_{1,i})\notag\\
    &+\#\prep_2(L,\langle \pi^{-i}x\rangle)(\of/\pi^{d-i})\cdot\#\res^{-1}(\phi_{2,i})\label{proof-1}
\end{align}
where $\phi_{1,i}\in\prep_1(L,\langle \pi^{-i}x\rangle)$ and $\phi_{2,i}\in\prep_2(L,\langle \pi^{-i}x\rangle)$. Let $L_{1,i}=H_{n_1-2}^{\varepsilon_1}[\pi]\obot H_{n_2}^{\varepsilon_2}\obot \langle\pi^{-i}x\rangle[-1]$  and $L_{2,i}=H_{n_1}^{\varepsilon_1}[\pi]\obot H_{n_2-2}^{\varepsilon_2}\obot \langle\pi^{-i}x\rangle[-1]$. By (\ref{proof-1}), we have
\begin{align*}
    \den(L,M\obot\lx)=\sum\limits_{i=0}^{[n/2]}&q^{m+1-n_2}\cdot q^{i(m+2-n_1-n_2)}\cdot\den(L_{1,i},M)\cdot\pden(H_{n_1}^{\varepsilon_1},\langle\pi^{-i}x\rangle[\pi^{-1}])\\
    &+q^{i(m+2-n_1-n_2)}\cdot\den(L_{2,i},M)\cdot\pden(H_{n_2}^{\varepsilon_2},\langle\pi^{-i}x\rangle).
\end{align*}
Therefore
\begin{align*}
        &\den\left(L,M\obot\lx\right)-q^{m+2-n_1-n_2}\cdot\den\left(L,M\obot\langle \pi^{-1}x\rangle\right)\\
        =&\,q^{m+1-n_2}\cdot\den\left(H_{n_1-2}^{\varepsilon_1}[\pi]\obot H_{n_2}^{\varepsilon_2}\obot\lx[-1],M\right)\cdot\pden\left(H_{n_1}^{\varepsilon_1},\lx[\pi^{-1}]\right)\\
        +&\,\den\left(H_{n_1}^{\varepsilon_1}[\pi]\obot H_{n_2-2}^{\varepsilon_2}\obot\lx[-1],M\right)\cdot\pden\left(H_{n_2}^{\varepsilon_2},\lx\right).
    \end{align*}
\subsection{Analytic difference formula}
Let $s\in F$ be a nonzero number. $H_0(s)$ be the following lattice over $\of$ of rank $4$,
\begin{equation*}
    H_0(s)=\left\{\begin{pmatrix}
        a & b\\
        s c & d
    \end{pmatrix}:a,b,c,d\in\of\right\}.
\end{equation*}
We equip the lattice $H_0(s)$ with the quadratic form given by the determinant morphism $\det:H_0(s)\rightarrow F$. Then
\begin{equation*}
    H_0(s)\simeq H_2^{+}[s]\obot H_2^{+}.
\end{equation*}
\par
Let $H_{n_1}^{\varepsilon_1}=H_2^{+}$ and $H_{n_2}^{\varepsilon_2}=H_{2k+2}^{+}$. Applying Lemma \ref{diff-loc-density} and the formula (\ref{rank1}), we get
\begin{lemma}
    Let $\lx$ be a rank $1$ quadratic lattice generated by $x$ such that $n\coloneqq\nu_\varpi(q(x))\geq0$. Let $L^{\flat}$ be a rank $2$ quadratic lattice over $\of$. Then
    \begin{align*}
        \den\left(X, H_0(\pi), L^{\flat}\obot\lx\right)-&X^{2}\cdot\den\left(X,H_0(\pi),L^{\flat}\obot\langle\pi^{-1}x\rangle\right)\\
        =&\begin{cases}
            \left(1-q^{-1}X\right)\cdot\den\left(X,\lx[-1]\obot H_2^{+}[\pi],L^{\flat}\right), &\textup{if $n=0$;}\\
            &\\
            \left(1-q^{-1}X\right)\left(1+X\right)\cdot\den\left(X,\lx[-1]\obot H_2^{+}[\pi],L^{\flat}\right)&\\
            \quad\quad\quad\,\,\,+(q-1)X^{2}\cdot\den\left(X,\lx[-1]\obot H_2^{+},L^{\flat}\right), &\textup{if $n=1$;}\\
            &\\
            \left(1-q^{-1}X\right)\left(1+X\right)\cdot\den\left(X,\lx[-1]\obot H_2^{+}[\pi],L^{\flat}\right) &\\
            \quad\quad\quad+2(q-1)X^{2}\cdot\den\left(X,\lx[-1]\obot H_2^{+},L^{\flat}\right),&\textup{if $n\geq2$.}
        \end{cases}
    \end{align*}\label{diff-ana}
\end{lemma}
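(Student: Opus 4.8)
The plan is to derive Lemma \ref{diff-ana} as a direct specialization of Theorem \ref{diff-loc-density}, using the isometry $H_0(\pi)\simeq H_2^{+}[\pi]\obot H_2^{+}$ and the defining relation $\den(M\obot H_{2k}^{+},L)=\den(X,M,L)\big|_{X=q^{-k}}$ of the local density polynomial. For each integer $k\geq0$ I would apply Theorem \ref{diff-loc-density} with $n_1=2$, $n_2=2k+2$, $\varepsilon_1=\varepsilon_2=+$, with $M=L^{\flat}$ (so $m=2$), and with the given $\lx$. Because $n_1-2=0$, the lattice $H_{n_1-2}^{+}[\pi]$ is the zero lattice, so the first product term on the right of Theorem \ref{diff-loc-density} collapses to $q^{1-2k}\den\!\left(H_{2k+2}^{+}\obot\lx[-1],L^{\flat}\right)\cdot\pden\!\left(H_2^{+},\lx[\pi^{-1}]\right)$ and the second to $\den\!\left(H_2^{+}[\pi]\obot H_{2k}^{+}\obot\lx[-1],L^{\flat}\right)\cdot\pden\!\left(H_{2k+2}^{+},\lx\right)$, while the left-hand side equals $\den\!\left(H_2^{+}[\pi]\obot H_{2k+2}^{+},L^{\flat}\obot\lx\right)-q^{-2k}\den\!\left(H_2^{+}[\pi]\obot H_{2k+2}^{+},L^{\flat}\obot\langle\pi^{-1}x\rangle\right)$.

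Next I would rewrite every term as a value at $X=q^{-k}$ of a local density polynomial, using the isometries $H_2^{+}[\pi]\obot H_{2k+2}^{+}\simeq H_0(\pi)\obot H_{2k}^{+}$, $H_{2k+2}^{+}\obot\lx[-1]\simeq(\lx[-1]\obot H_2^{+})\obot H_{2k}^{+}$, and $H_2^{+}[\pi]\obot H_{2k}^{+}\obot\lx[-1]\simeq(\lx[-1]\obot H_2^{+}[\pi])\obot H_{2k}^{+}$, together with the elementary identities $q^{-2k}=X^{2}$ and $q^{1-2k}=qX^{2}$ at $X=q^{-k}$. After this substitution Theorem \ref{diff-loc-density} becomes the identity
\begin{align*}
&\den(X,H_0(\pi),L^{\flat}\obot\lx)-X^{2}\den(X,H_0(\pi),L^{\flat}\obot\langle\pi^{-1}x\rangle)\\
&\qquad=qX^{2}\cdot\pden\!\left(H_2^{+},\lx[\pi^{-1}]\right)\cdot\den(X,\lx[-1]\obot H_2^{+},L^{\flat})\\
&\qquad\quad+\pden\!\left(H_{2k+2}^{+},\lx\right)\cdot\den(X,\lx[-1]\obot H_2^{+}[\pi],L^{\flat})
\end{align*}
at $X=q^{-k}$, for every $k\geq0$; everything except the two primitive densities is already in the shape asserted by the lemma.

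The last step is to evaluate the two primitive local densities by the formula (\ref{rank1}), with a case split on $n=\nu_\varpi(q(x))$. The lattice $H_{2k+2}^{+}$ has even rank and sign $\varepsilon=+$, so $\pden\!\left(H_{2k+2}^{+},\lx\right)$ equals $1-q^{-(k+1)}$ if $\pi\nmid q(x)$ and $\left(1-q^{-(k+1)}\right)\!\left(1+q^{-k}\right)$ if $\pi\mid q(x)$; at $X=q^{-k}$ these are precisely the factors $1-q^{-1}X$ and $(1-q^{-1}X)(1+X)$ standing in front of $\den(X,\lx[-1]\obot H_2^{+}[\pi],L^{\flat})$ in the cases $n=0$ and $n\geq1$. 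For the other factor, $\lx[\pi^{-1}]$ has valuation $n-1$: when $n=0$ it is not integral, hence admits no quadratic embedding into the self-dual lattice $H_2^{+}$ and $\pden\!\left(H_2^{+},\lx[\pi^{-1}]\right)=0$ — this is exactly why the $n=0$ case has only one term; when $n=1$ it is self-dual of valuation $0$, giving $1-q^{-1}$; when $n\geq2$ it has positive valuation, giving $\left(1-q^{-1}\right)(1+1)=2\left(1-q^{-1}\right)$. Multiplying by $qX^{2}$ yields $0$, $(q-1)X^{2}$ and $2(q-1)X^{2}$ in the three cases, which are the coefficients of $\den(X,\lx[-1]\obot H_2^{+},L^{\flat})$ claimed by the lemma. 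Finally, since both sides of the asserted identity are polynomials in $X$ and agree at the infinitely many points $X=q^{-k}$, they coincide as polynomials, which completes the proof. The only genuine work is the bookkeeping of which term of Theorem \ref{diff-loc-density} produces which term of Lemma \ref{diff-ana}, and the integrality/parity case analysis of the two $\pden$ factors; I do not anticipate any real obstacle beyond that.
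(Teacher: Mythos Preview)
Your proposal is correct and follows exactly the approach the paper takes: specialize Theorem \ref{diff-loc-density} with $n_1=2$, $n_2=2k+2$, $\varepsilon_1=\varepsilon_2=+$, $M=L^{\flat}$, then evaluate the two primitive densities $\pden(H_2^{+},\lx[\pi^{-1}])$ and $\pden(H_{2k+2}^{+},\lx)$ case by case via (\ref{rank1}). Your bookkeeping of the $n=0,1,\geq2$ split and the passage from values at $X=q^{-k}$ to a polynomial identity is precisely what the paper leaves to the reader.
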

\begin{definition}
    Let $L$ and $L^{\flat}$ be quadratic lattices of rank $3$ and $2$ over $\of$ respectively. Let $\lx$ be a rank $1$ quadratic lattice generated by $x$. Define the (normalized) \textit{derived local densities}
    \begin{align*}
        \partial\den\left(H_0(\pi), L\right)&\coloneqq-2\cdot\frac{\textup{d}}{\textup{d}X}\bigg\vert_{X=1}\frac{\den(X,H_0(\pi),L)}{\den(H_0(\pi),H_2^{+}\obot H_1^{+}[\pi])},\\
        \partial\den\left(H_0(\pi)^{\vee}, L\right)&\coloneqq-2\cdot\frac{\textup{d}}{\textup{d}X}\bigg\vert_{X=1}\frac{\den(X,H_0(\pi)^{\vee},L)}{\den(H_0(\pi)^{\vee},H_2^{+}\obot H_1^{+}[\pi^{-1}])},\\
        \partial\den\left(\lx[-1]\obot H_2^{+},L^{\flat}\right)&\coloneqq-\frac{\textup{d}}{\textup{d}X}\bigg\vert_{X=1}\frac{\den(X,\lx[-1]\obot H_2^{+},L^{\flat})}{\den(H_2^{+},H_1^{+})},\\
        \partial\den\left(\lx[-1]\obot H_2^{+}[\pi], L^{\flat}\right)&\coloneqq-\frac{\textup{d}}{\textup{d}X}\bigg\vert_{X=1}\frac{\den(X,\lx[-1]\obot H_2^{+}[\pi],L^{\flat})}{\den(H_2^{+}[\pi],H_1^{+}[\pi])}.\\
    \end{align*}
\end{definition}
Notice that $\den(H_2^{+},H_1^{+})=1-q^{-1}$ and hence $\den(H_2^{+}[\pi],H_1^{+}[\pi]])=q-1$. The following lemma follows from the above definition, Lemma \ref{diff-ana} and Lemma \ref{base:analytic} below.
\begin{lemma}
    Let $\mathbb{V}$ be the unique division quaternion algebra over $F$. Let $L^{\flat}\subset\mathbb{V}$ be an $\of$-lattice of rank $2$. Let $x\in\mathbb{V}$ be an element such that $\nu_\varpi(q(x))\geq0$ and $x\,\bot\, L^{\flat}$. Then
    \begin{align*}
        \partial\den\left(H_0(\pi), L^{\flat}\obot\lx\right)-&\partial\den\left(H_0(\pi), L^{\flat}\obot\langle\pi^{-1}x\rangle\right)\\
        =&\begin{cases}
            \partial\den\left(\lx[-1]\obot H_2^{+}[\pi], L^{\flat}\right), &\textup{if $n=0$;}\\
            &\\
            2\cdot\partial\den\left(\lx[-1]\obot H_2^{+}[\pi], L^{\flat}\right)&\\
            \,\,\,+\partial\den\left(\lx[-1]\obot H_2^{+}, L^{\flat}\right), &\textup{if $n=1$;}\\
            &\\
            2\cdot\partial\den\left(\lx[-1]\obot H_2^{+}[\pi], L^{\flat}\right)&\\
            +2\cdot\partial\den\left(\lx[-1]\obot H_2^{+}, L^{\flat}\right),&\textup{if $n\geq2$.}
        \end{cases}
    \end{align*}
    \label{der-diff}
\end{lemma}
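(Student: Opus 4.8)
The plan is to derive the asserted identity formally from the polynomial identity of Lemma~\ref{diff-ana}, the definition of the normalized derived densities, and the base-case evaluations of Lemma~\ref{base:analytic}; no input from the Rapoport--Zink side is needed, and the analytically substantive work is already contained in Theorem~\ref{diff-loc-density}. Writing $c_0=\den(H_0(\pi),H_2^{+}\obot H_1^{+}[\pi])$ for the normalizing constant, the difference $\partial\den(H_0(\pi),L^{\flat}\obot\lx)-\partial\den(H_0(\pi),L^{\flat}\obot\langle\pi^{-1}x\rangle)$ is, by definition, $-\tfrac{2}{c_0}\,\tfrac{\mathrm{d}}{\mathrm{d}X}\big\vert_{X=1}$ of $\den(X,H_0(\pi),L^{\flat}\obot\lx)-\den(X,H_0(\pi),L^{\flat}\obot\langle\pi^{-1}x\rangle)$, and I would evaluate this by substituting Lemma~\ref{diff-ana} and treating the cases $n=0$, $n=1$, $n\geq 2$ in turn.

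Before differentiating I would record the relevant vanishings at $X=1$. Since $\mathbb{V}$ is the division quaternion algebra, its reduced norm form is anisotropic, so every $\of$-sublattice of rank at most $3$ is anisotropic; in particular $L^{\flat}\obot\lx$ and $L^{\flat}\obot\langle\pi^{-1}x\rangle$ are anisotropic ternary lattices and hence are not represented by the split lattice $H_0(\pi)\simeq H_2^{+}[\pi]\obot H_2^{+}$, so $\den(1,H_0(\pi),L^{\flat}\obot\lx)=\den(1,H_0(\pi),L^{\flat}\obot\langle\pi^{-1}x\rangle)=0$. Moreover $(L^{\flat})_{F}$ is anisotropic binary, whereas $\lx[-1]\obot H_2^{+}$ and $\lx[-1]\obot H_2^{+}[\pi]$ both have isotropic generic fibre (scaling a hyperbolic plane leaves it hyperbolic), and $(L^{\flat})_F$ embeds into neither of these ternary $F$-spaces---here one uses the anisotropy of the reduced norm on $\lx^{\perp}\subset\mathbb{V}$---so $\den(1,\lx[-1]\obot H_2^{+},L^{\flat})=\den(1,\lx[-1]\obot H_2^{+}[\pi],L^{\flat})=0$. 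All four vanishings are among the base-case facts of Lemma~\ref{base:analytic}.

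Granting these, the rest is bookkeeping. The difference $\den(X,H_0(\pi),L^{\flat}\obot\lx)-\den(X,H_0(\pi),L^{\flat}\obot\langle\pi^{-1}x\rangle)$ differs from the left-hand side of Lemma~\ref{diff-ana} by $(X^{2}-1)\den(X,H_0(\pi),L^{\flat}\obot\langle\pi^{-1}x\rangle)$, which has a zero of order at least two at $X=1$ and therefore does not contribute to the derivative there; hence the left-hand side of the asserted identity equals $-\tfrac{2}{c_0}\,\tfrac{\mathrm{d}}{\mathrm{d}X}\big\vert_{X=1}$ of the right-hand side of Lemma~\ref{diff-ana}. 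On that right-hand side each summand is an explicit polynomial prefactor ($1-q^{-1}X$, $(1-q^{-1}X)(1+X)$, or a constant multiple of $X^{2}$) times a density polynomial that vanishes at $X=1$, so by the Leibniz rule only the term differentiating the density polynomial survives and the prefactor is merely evaluated at $X=1$. Feeding in $\den(H_2^{+},H_1^{+})=1-q^{-1}$, $\den(H_2^{+}[\pi],H_1^{+}[\pi])=q-1$, and the value of $c_0$ supplied by Lemma~\ref{base:analytic}, one checks in each of the three cases that the coefficients collapse exactly to $\partial\den(\lx[-1]\obot H_2^{+}[\pi],L^{\flat})$ and $\partial\den(\lx[-1]\obot H_2^{+},L^{\flat})$ with the multiplicities $1$; $2$ and $1$; $2$ and $2$ respectively. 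The only points needing genuine care are this final matching of normalizing constants and the two embedding non-existence statements above; since everything analytically hard is already in Theorem~\ref{diff-loc-density}, no real obstacle remains.
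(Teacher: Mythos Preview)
Your approach is correct and is exactly what the paper does: differentiate the polynomial identity of Lemma~\ref{diff-ana} at $X=1$, use the vanishings $f(1)=g(1)=h_1(1)=h_2(1)=0$ so that only the derivative terms survive, and match the normalizing constants. Two small corrections to your write-up: the four vanishings are \emph{not} contained in Lemma~\ref{base:analytic} (that lemma only treats specific lattices such as $H_2^{-}\obot H_1^{+}[\varepsilon p]$); rather, they follow from the anisotropy argument you yourself give, namely that $L^{\flat}_F$ embeds in the anisotropic space $\lx^{\perp}\subset\mathbb{V}$ and hence cannot embed in the isotropic rank-$3$ space $(\lx[-1]\obot H_2^{+})_F$ of the same discriminant. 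Likewise, the constant $c_0=\den(H_0(\pi),H_2^{+}\obot H_1^{+}[\pi])$ is not literally stated in Lemma~\ref{base:analytic}; one either computes it directly (it equals $2(1-q^{-1})(q-1)$, as your coefficient matching in all three cases confirms) or backs it out from the explicit polynomial for $\den(X,H_0(p),H_2^{-}\obot H_1^{+}[-p])$ in Lemma~\ref{base:analytic} together with the asserted value $\partial\den=-1$.
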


\subsection{Two identities}
\begin{lemma}
    Let $\mathbb{B}$ be the unique division quaternion algebra over $\qp$. Let $L^{\flat}\subset\B$ be an $\zp$-lattice of rank $2$. Let $x\in\mathbb{B}$ be an element such that $x\,\bot\, L^{\flat}$ and $\nu_p(q(x))\geq\max\{\max(L^{\flat}),2\}$. Then
    \begin{equation*}
        \partial\den\left(\lx\obot H_2^{+}[p], L^{\flat}\right)=p^{2}\cdot\partial\den\left(\lx[p^{-1}]\obot H_2^{+} ,L^{\flat}[p^{-1}]\right)-1.
    \end{equation*}
    \label{analytic-calculations-1}
\end{lemma}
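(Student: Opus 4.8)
The strategy is to unwind both derived local densities into normalized derivatives at $X=1$ of genuine local density polynomials, reduce the asserted identity to a comparison of two such polynomials via the scaling behaviour of local densities, and then settle that comparison with the explicit rank $\le 3$ formulas. Write $R(X)\coloneqq\den(X,\lx\obot H_2^+[p],L^\flat)$ and $P(X)\coloneqq\den(X,\lx[p^{-1}]\obot H_2^+,L^\flat[p^{-1}])$, so that by definition $\partial\den(\lx\obot H_2^+[p],L^\flat)=-R'(1)/\den(H_2^+[p],H_1^+[p])$ and $\partial\den(\lx[p^{-1}]\obot H_2^+,L^\flat[p^{-1}])=-P'(1)/\den(H_2^+,H_1^+)$, where $\den(H_2^+,H_1^+)=1-q^{-1}$ and $\den(H_2^+[p],H_1^+[p])=q-1$ differ by exactly the factor $q=p$. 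The first step is the scaling law: for a quadratic $\zp$-lattice $L$ of rank $n$ and any $\zp$-lattice $N$, $\den(N[p^{-1}],L[p^{-1}])=p^{-n(n+1)/2}\den(N,L)$; this follows from the same counting argument as in the proof of Theorem \ref{diff-loc-density} (rescaling both forms by $p^{-a}$ weakens the defining congruences mod $p^{d}$ by $p^{a}$ while $\dim(\rep(N,L))_{\qp}$ is unchanged, as $p$ is a unit in $\qp$), and it is consistent with the relation $\den(H_2^+[p],H_1^+[p])=q-1$ recorded above. Applying it with $n=2$ and $N=\lx\obot H_2^+[p]\obot H_{2k}^+[p]$, for which $N[p^{-1}]=\lx[p^{-1}]\obot H_2^+\obot H_{2k}^+$, gives $P(X)=p^{-3}\widetilde T(X)$, where $\widetilde T(X)\in\Q[X]$ is the (unique) polynomial with $\widetilde T(p^{-k})=\den(\lx\obot H_2^+[p]\obot H_{2k}^+[p],L^\flat)$ for all $k\ge 0$.

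Since $R(1)=\widetilde T(1)=\den(\lx\obot H_2^+[p],L^\flat)$, substituting $P(X)=p^{-3}\widetilde T(X)$ into the two normalized derivatives reduces the lemma to the single polynomial identity $R'(1)-\widetilde T'(1)=p-1$. Here $R(X)$ interpolates the stabilizations $\den(M\obot H_{2k}^+,L^\flat)$ of $M\coloneqq\lx\obot H_2^+[p]$ by self-dual hyperbolic planes, whereas $\widetilde T(X)$ interpolates the stabilizations $\den(M\obot H_{2k}^+[p],L^\flat)$ by $p$-modular hyperbolic planes. To compute both first derivatives at $X=1$ I would use Yang's explicit formulas \cite{yanglocaldensity} for local densities of quadratic lattices of rank $\le 3$ — applicable since $L^\flat$ and $L^\flat[p^{-1}]$ have rank $2$ and their orthogonal complements in the rank-$3$ targets have rank $1$ — together with the rank-$1$ primitive density formula (\ref{rank1}). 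The hypothesis $\nu_p(q(x))\ge\max\{\max(L^\flat),2\}$ is used precisely here: it forces $\lx$ to be \emph{far out}, so that the $\lx$-factor contributes only the simple rank-$1$ primitive factor of (\ref{rank1}) and both $R$ and $\widetilde T$ have tractable closed forms. An alternative is to bypass Yang's formula and prove directly a peeling identity for $\den(\lx\obot N,L^\flat)$ by decomposing a representation of $L^\flat$ according to the size of the image of $L^\flat$ in the $\lx$-factor and invoking (\ref{rank1}) for that factor — a simplified instance of the argument already carried out for Theorem \ref{diff-loc-density}.

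The additive $-1$ should be accounted for by the discrepancy, between self-dual and $p$-modular stabilization, at the level of rank-$1$ primitive densities: a unit-norm vector embeds primitively into $H_2^+$ (contributing $\pden(H_2^+,\langle u\rangle)=1-q^{-1}$ by (\ref{rank1})) but never into $H_2^+[p]$, all of whose primitive vectors have norm divisible by $p$, so $\pden(H_2^+,\langle u\rangle)-\pden(H_2^+[p],\langle u\rangle)=1-q^{-1}$; combined with the normalizing constant $q-1=q(1-q^{-1})$ and the factor $q=p$ separating the two normalizations, this produces exactly the required $p-1$. I expect the middle step to be the main obstacle: there is no purely conceptual shortcut for comparing the self-dual and $p$-modular stabilizations of $\den(\cdot,L^\flat)$, so one must run the explicit rank-$\le 3$ computation and keep careful track of every power of $q$ to land on precisely $p^2$ and $-1$ and not some nearby constant. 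Checking the scaling law and applying the two normalization conventions consistently are the remaining points that need care.
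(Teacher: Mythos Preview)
Your approach is correct and essentially the same as the paper's: both reduce to Yang's explicit rank-$\le 3$ formulas, and your scaling law $\den(N[p^{-1}],L[p^{-1}])=p^{-n(n+1)/2}\den(N,L)$ is exactly what the paper uses implicitly to compute $\partial\den(\lx[p^{-1}]\obot H_2^+,L^\flat[p^{-1}])$ in terms of the fundamental invariants $(a,b)$ of $L^\flat$ rather than of $L^\flat[p^{-1}]$. Your reduction to $R'(1)-\widetilde T'(1)=p-1$ is correct, and the heuristic at the end, while suggestive, is rightly flagged as needing the explicit computation to close; the paper simply carries out that computation.
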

\begin{proof}
    Since $L^{\flat}$ has rank $2$, we can apply the results of Yang \cite{yanglocaldensity}. Let $0\leq a\leq b$ be the fundamental invariants of the quadratic lattice $L^{\flat}$. By direct computations, we get
    \begin{equation*}
        \partial\den\left(\lx\obot H_2^{+}[p], L^{\flat}\right)=\frac{1}{(p-1)^{2}}\cdot\begin{cases}
            ap^{(a+b+6)/2}-ap^{(a+b+2)/2}-2p^{a+2}+p^{2}+2p-1, &\textup{if $a\equiv b$\,\,mod $2$};\\
            ap^{(a+b+5)/2}-ap^{(a+b+3)/2}-2p^{a+2}+p^{2}+2p-1, &\textup{if $a\not\equiv b$\,\,mod $2$}.
        \end{cases}
    \end{equation*}
    And
    \begin{equation*}
        \partial\den\left(\lx[p^{-1}]\obot H_2^{+} ,L^{\flat}[p^{-1}]\right)=\frac{1}{(p-1)^{2}}\cdot\begin{cases}
            ap^{(a+b+2)/2}-ap^{(a+b-2)/2}-2p^{a}+2, &\textup{if $a\equiv b$\,\,mod $2$};\\
            ap^{(a+b+1)/2}-ap^{(a+b-1)/2}-2p^{a}+2, &\textup{if $a\not\equiv b$\,\,mod $2$}.
        \end{cases}
    \end{equation*}
    The identity in the lemma can be verified by comparing the above two formulas.
\end{proof}
\begin{lemma}
    Let $\mathbb{B}$ be the unique division quaternion algebra over $\qp$. Let $L^{\flat}\subset\B$ be an $\zp$-lattice of rank $2$. Let $x\in\mathbb{B}$ be an element such that $x\,\bot\, L^{\flat}$ and $\nu_p(q(x))\geq\max\{\max(L^{\flat}),1\}$. Then
    \begin{align*}
        &\partial\den\left(H_0(p)^{\vee}, L^{\flat}\obot\langle x\rangle\right)-\partial\den\left(H_0(p)^{\vee}, L^{\flat}\obot\langle p^{-1}x\rangle\right)\\
        =\,&\partial\den\left(H_0(p), L^{\flat}[p]\obot\langle x\rangle[p]\right)-\partial\den\left(H_0(p), L^{\flat}[p]\obot\langle p^{-1}x\rangle[p]\right).
    \end{align*}
    \label{analytic-calculations-2}
\end{lemma}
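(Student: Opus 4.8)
The plan is to rewrite each side of the identity as the output of a difference formula of the shape of Lemma~\ref{diff-ana}, and then to match the two outputs. I would begin from the scaling invariance $\den(M,L)=\den(M[s],L[s])$ of the scheme of integral representations, together with the isometries $H_0(p)^{\vee}\simeq H_2^{+}[p^{-1}]\obot H_2^{+}$ and $H_0(p)^{\vee}[p]\simeq H_0(p)$. These show that $H_0(p)^{\vee}\obot H_{2k}^{+}$, after scaling by $p$, becomes $H_0(p)\obot H_{2k}^{+}[p]\simeq H_2^{+}\obot H_{2k+2}^{+}[p]$, which is a vertex lattice of the form $H_{n_1}^{\varepsilon_1}[\pi]\obot H_{n_2}^{\varepsilon_2}$ with $(n_1,\varepsilon_1)=(2k+2,+)$ and $(n_2,\varepsilon_2)=(2,+)$. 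Hence Lemma~\ref{diff-loc-density} applies with $M=L^{\flat}[p]$ and the rank one lattice $\lx[p]$ (whose valuation is $n+1\ge2$, since $n\ge1$ by hypothesis). Feeding in the values of $\pden(H_{2k+2}^{+},\cdot)$ and $\pden(H_2^{+},\cdot)$ from \eqref{rank1} and undoing the scaling yields an $H_0(p)^{\vee}$-analogue of Lemma~\ref{diff-ana}: for $n\ge1$,
\begin{align*}
&\den(X,H_0(p)^{\vee},L^{\flat}\obot\lx)-X^{2}\den(X,H_0(p)^{\vee},L^{\flat}\obot\langle p^{-1}x\rangle)\\
&\qquad=q(1-q^{-1}X)(1+X)\,\den\bigl(X,H_2^{+}[p^{-1}]\obot\lx[-1],L^{\flat}\bigr)+2(1-q^{-1})\,\den\bigl(X,\lx[-1]\obot H_2^{+},L^{\flat}\bigr).
\end{align*}
Dividing by $\den(H_0(p)^{\vee},H_2^{+}\obot H_1^{+}[p^{-1}])$ and differentiating at $X=1$ converts the left side of the lemma into an explicit combination of rank two derived densities of $L^{\flat}$.

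For the right side I would instead apply the already-established Lemma~\ref{der-diff} directly to the $p$-scaled datum $L^{\flat}[p]\obot\lx[p]$; since $\nu_p(q(\lx[p]))=n+1\ge2$ this is its third case, and using $\lx[p][-1]=\lx[-p]$ it reads $\partial\den(H_0(p),L^{\flat}[p]\obot\lx[p])-\partial\den(H_0(p),L^{\flat}[p]\obot\langle p^{-1}x\rangle[p])=2\,\partial\den(\lx[-p]\obot H_2^{+}[p],L^{\flat}[p])+2\,\partial\den(\lx[-p]\obot H_2^{+},L^{\flat}[p])$. Comparing with the normalized, differentiated $H_0(p)^{\vee}$ difference formula from the first step, the terms carrying an unscaled $H_2^{+}$-block match after accounting for the $p$-scaling $L^{\flat}\leftrightarrow L^{\flat}[p]$, $\lx[-1]\leftrightarrow\lx[-p]$ (scaling invariance relates $\den(X,H_2^{+}[p^{-1}]\obot\lx[-1],L^{\flat})$ to a density attached to $L^{\flat}[p]$), while the $H_2^{+}[p]$-term is absorbed by Lemma~\ref{analytic-calculations-1} at the cost of a power of $p$ and an additive constant $-1$; its hypothesis $\nu_p(q(\cdot))\ge\max\{\max(L^{\flat}[p]),2\}$ is met precisely because we assume $\nu_p(q(x))\ge\max\{\max(L^{\flat}),1\}$. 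The residual identity — involving the two normalizing constants and the constant terms — then becomes an equality of explicit rational functions of $p$, of $n$, and of the fundamental invariants $0\le a\le b$ of $L^{\flat}$, which I would verify by inspection exactly as in the proof of Lemma~\ref{analytic-calculations-1}, the initial cases being supplied by Lemma~\ref{base:analytic}.

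I expect the main obstacle to be making the first part of this argument clean. The $p$-scaling replaces the self-dual bulk $H_{2k}^{+}$ by the non-self-dual $H_{2k}^{+}[p^{\pm1}]$, so the intermediate quantities $\den(X,H_2^{+}[p^{-1}]\obot\lx[-1],L^{\flat})$ are not literally values of the standard local density polynomials and must be handled by a further re-scaling, or by one more application of Lemma~\ref{diff-loc-density}. Concretely one must compute the two normalizing constants $\den(H_0(p)^{\vee},H_2^{+}\obot H_1^{+}[p^{-1}])$ and $\den(H_0(p),H_2^{+}\obot H_1^{+}[p])$ from \eqref{rank1}, check that their ratio is the expected power of $p$, and verify that the additive constants produced by Lemma~\ref{analytic-calculations-1} cancel in the final difference; once this bookkeeping is carried out the identity follows by direct comparison of the two explicit formulas.
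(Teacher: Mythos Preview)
Your proposal is correct and follows essentially the same approach as the paper: both arguments rescale by $p$ to pass from $H_0(p)^{\vee}\obot H_{2k}^{+}$ to $H_2^{+}\obot H_{2k+2}^{+}[p]$, apply Theorem~\ref{diff-loc-density} with $(n_1,n_2)=(2k+2,2)$, and then compare with Lemma~\ref{der-diff} applied to the $p$-scaled data, finishing with the explicit computation of Lemma~\ref{analytic-calculations-1}. The paper handles your anticipated obstacle---the non-standard polynomials $\den(H_{2k}^{+}[p]\obot H_2^{+}\obot\lx[-p],L^{\flat}[p])$ and $\den(H_{2k+2}^{+}[p]\obot\lx[-p],L^{\flat}[p])$---by invoking Yang's structure theorem \cite{yanglocaldensity} to write them as $1+p^{-i}X^{-i}R_1(X)+p^{-2i}X^{-2i}R_2(X)$ for suitable shifts, then computes $R_1'(1)$ explicitly in terms of the fundamental invariants $a,b$ of $L^{\flat}$; this is the concrete version of the ``further re-scaling'' you flag as the main bookkeeping step.
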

\begin{proof}
     Let $-1\leq a\leq b$ be the fundamental invariants of the quadratic lattice $L^{\flat}$. In this case, we have
     \begin{align*}
         &\den(H_0(p)^{\vee}\obot H_{2k}^{+},L^{\flat}\obot\lx)-p^{-2k}\cdot\den(H_0(p)^{\vee}\obot H_{2k}^{+},L^{\flat}\obot\langle p^{-1}x\rangle)\\
         =&p^{-6}\left(\den\left(H_0(p)\obot H_{2k}^{+}[p],L^{\flat}[p]\obot\lx[p]\right)-p^{-2k}\cdot\den\left(H_0(p)\obot H_{2k}^{+}[p],L^{\flat}[p]\obot\langle p^{-1}x\rangle[p]\right)\right)
         \\
         =&p^{-6}\bigg{(}p\cdot\pden\left(H_{2k+2}^{+},\lx\right)\cdot\den\left(H_{2k}^{+}[p]\obot H_{2}^{+}\obot\lx[-p],L^{\flat}[p]\right)\\
         &\,\,\,\,\,\,\,\,\,\,\,\,\,\,\,\,\,\,\,\,\,\,\,\,\,\,\,\,\,\,\,\,\,\,\,\,\,\,\,\,\,\,\,\,\,\,\,\,\,\,\,\,\,\,\,\,\,\,\,\,\,\,\,\,\,\,\,\,\,\,\,\,\,\,\,\,+\pden\left(H_2^{+},\lx[p]\right)\cdot\den\left(H_{2k+2}^{+}[p]\obot\lx[-p],L^{\flat}[p]\right)\bigg{)}.
     \end{align*}
     Let $f(X),f_1(X),f_2(X)$ be two polynomials such that for all $k\geq0$,
     \begin{align*}
     f(p^{-k})&=\den\left(H_2^{+}[p]\obot\lx[-p]\obot H_{2k}^{+},L^{\flat}[p]\right),\\
         f_1(p^{-k})&=\den\left(H_{2k}^{+}[p]\obot H_{2}^{+}\obot\lx[-p],L^{\flat}[p]\right),\\
         f_2(p^{-k})&=\den\left(H_{2k+2}^{+}[p]\obot\lx[-p],L^{\flat}[p]\right).
     \end{align*}
     By the main theorem of \cite{yanglocaldensity}, there exists two polynomials $R_1(X),R_2(X)$ such that
     \begin{align*}
         f(X)&=1+R_1(X)+R_2(X),\\
         f_1(X)=1+p^{-1}X^{-1}R_1(X)+p^{-2}&X^{-2}R_2(X),\,\,f_2(X)=1+X^{-1}R_1(X)+X^{-2}R_2(X).
     \end{align*}
     Then we conclude that
     \begin{align*}
        &\partial\den\left(H_0(p)^{\vee}, L^{\flat}\obot\langle x\rangle\right)-\partial\den\left(H_0(p)^{\vee}, L^{\flat}\obot\langle p^{-1}x\rangle\right)\\
        =\,&2p^{-1}\left((p+1)\cdot\partial\den\left(\lx[-p]\obot H_2^{+}[p],L^{\flat}[p]\right)-R_1^{\prime}(1)\right).
    \end{align*}
    The number $R_1^{\prime}(1)$ is given by the following explicit formula
    \begin{equation*}
        R_1^{\prime}(1)=\frac{1}{p-1}\cdot\begin{cases}
            (1+p)(1-p^{(a+b)/2+2}), &\textup{if $a\equiv b$\,mod 2};\\
            1+p-2p^{(a+b+1)+2}, &\textup{if $a\not\equiv b$\,mod 2}.
        \end{cases}
    \end{equation*}
    Then the identity in the lemma can be verified by using Lemma \ref{der-diff} and computations of $\partial\den\left(\lx[-p]\obot H_2^{+}[p],L^{\flat}[p]\right)$ in the proof of Lemma \ref{analytic-calculations-1}.
\end{proof}

\subsection{Base cases: the analytic side}
\label{base-ana}
\begin{lemma}
    Suppose $F=\qp$. Let $\varepsilon\in\zp^{\times}$ be an element. Then
    \begin{align*}
        &\den\left(X,H_0(p), H_2^{-}\obot H_1^{+}[\varepsilon p]\right)=\left(1-p^{-1}X\right)\cdot\begin{cases}
            \left(1-X\right)\left(1-(p-1)X-X^{2}\right), &\textup{if $\varepsilon=-1$};\\
            \left(1+X\right)\left(1+(p-1)X-X^{2}\right), &\textup{if $\varepsilon=1$.}
        \end{cases}\\  
        &\den\left(X,H_0(p),H_1^{+}[\varepsilon]\obot H_2^{-}[p]\right)=\left(1-p^{-1}X\right)\left(1-2X^{2}+X^{4}\right);\\
        &\den\left(X,H_0(p)^{\vee},H_1^{+}[\varepsilon p^{-1}]\obot H_2^{-}\right)=p^{-7}(p-1)\cdot\left(p-(1+p)X+X^{2}\right);\\
        &\den\left(X,H_0(p)^{\vee},H_2^{-}[p^{-1}]\obot H_1^{+}[\varepsilon]\right)=0.
    \end{align*}
    Hence
     \begin{align*}
        &\partial\den\left(H_0(p), H_2^{-}\obot H_1^{+}[\varepsilon p]\right)=-1;\\   
        &\partial\den\left(H_0(p),H_1^{+}[\varepsilon]\obot H_2^{-}[p]\right)=0;\\
        &\partial\den\left(H_0(p)^{\vee}, H_2^{-}[p^{-1}]\obot H_1^{+}[\varepsilon]\right)=0;\\   
        &\partial\den\left(H_0(p),H_1^{+}[\varepsilon p^{-1}]\obot H_2^{-}\right)=1;
    \end{align*}
    \label{base:analytic}
\end{lemma}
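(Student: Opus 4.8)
The plan is to obtain each of the four density polynomials, and then each of the four derived densities, directly from the difference formula of Theorem~\ref{diff-loc-density} --- in the $H_0(\pi)$-specialization of Lemma~\ref{diff-ana} --- reducing every step to the rank-one primitive densities of~(\ref{rank1}) and to local densities of rank-two lattices, which are given in closed form by Yang's formulas~\cite{yanglocaldensity}. In all four cases the represented lattice has the shape $L^{\flat}\obot\lx$ with $L^{\flat}$ a rank-two Jordan-homogeneous lattice (equal, after the rescaling below, to $H_2^{-}$ or $H_2^{-}[p]$) and $\lx$ of rank one with $n:=\nu_p(q(x))\in\{0,1\}$. Since $n\le 1$, a single application of the difference formula suffices and its ``subtractive'' term drops out: $L^{\flat}\obot\langle p^{-1}x\rangle$ is no longer integral, so its representation density into the (integral) ambient lattice vanishes, as does the interpolating polynomial.

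For the two densities into $H_0(p)\simeq H_2^{+}[p]\obot H_2^{+}$ this is immediate from Lemma~\ref{diff-ana}: what remains is a closed expression for $\den(X,H_0(p),L^{\flat}\obot\lx)$ in terms of $\den(X,\lx[-1]\obot H_2^{+}[p],L^{\flat})$, $\den(X,\lx[-1]\obot H_2^{+},L^{\flat})$ and elementary factors, into which Yang's rank-two formulas for $L^{\flat}\in\{H_2^{-},H_2^{-}[p]\}$ are inserted and simplified. For the two densities into $H_0(p)^{\vee}\simeq H_2^{+}[p^{-1}]\obot H_2^{+}$ one first rescales the quadratic form by $p$, using $\den(H_0(p)^{\vee}\obot H_{2k}^{+},L)=\den(H_0(p)\obot H_{2k}^{+}[p],L[p])$; this brings the ambient lattice into the shape $H_{n_1}^{\varepsilon_1}[\pi]\obot H_{n_2}^{\varepsilon_2}$ required by Theorem~\ref{diff-loc-density}, at the cost of the twist $L\mapsto L[p]$ (so $H_2^{-}[p^{-1}]\obot H_1^{+}[\varepsilon]\mapsto H_2^{-}\obot H_1^{+}[\varepsilon p]$ and $H_1^{+}[\varepsilon p^{-1}]\obot H_2^{-}\mapsto H_1^{+}[\varepsilon]\obot H_2^{-}[p]$), after which the same reduction applies; this is exactly the bookkeeping of the proof of Lemma~\ref{analytic-calculations-2}. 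The fourth polynomial $\den(X,H_0(p)^{\vee},H_2^{-}[p^{-1}]\obot H_1^{+}[\varepsilon])$ turns out to vanish identically, which falls out of the reduction once one observes that every term that survives involves representing the anisotropic unimodular lattice $H_2^{-}$ into a lattice whose reduction mod $p$ has nondegenerate part $H_2^{+}\otimes\F_p$, which is impossible.

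Once the four polynomials are in hand, the derived densities follow by routine calculus: one records the scalar normalizing constants $\den(H_0(p),H_2^{+}\obot H_1^{+}[p])$ and $\den(H_0(p)^{\vee},H_2^{+}\obot H_1^{+}[p^{-1}])$ (obtained by the same difference-formula/rank-one input, being single special values of polynomials of the same type), substitutes them and the four polynomials into the definitions of $\partial\den(H_0(p),\cdot)$ and $\partial\den(H_0(p)^{\vee},\cdot)$, and differentiates at $X=1$, obtaining $-1,0,0,1$ respectively.

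The main obstacle is bookkeeping rather than conceptual content: keeping the powers of $q=p$, the twists $[\pi^{\pm1}]$ and the rescaling $L\leftrightarrow L[p]$ mutually consistent when passing between $H_0(p)$ and $H_0(p)^{\vee}$, and specializing Yang's rank-two formulas correctly to the homogeneous lattices $H_2^{-}$ and $H_2^{-}[p]$ that occur here. The one step that is not purely formal is the identical vanishing of the fourth polynomial, where the naive ``the target lattice is integral'' argument is unavailable --- $H_0(p)^{\vee}$ is itself not integral --- and must be replaced by the reduction-mod-$p$ obstruction above.
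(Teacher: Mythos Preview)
Your plan matches the paper's proof: apply Lemma~\ref{diff-ana}, rescale by $p$ for the $H_0(p)^{\vee}$ cases, and reduce to rank-two densities via Yang's (or, for the first polynomial, Li--Zhang's) formulas; for the fourth polynomial the paper argues the vanishing directly from the non-embeddability of $H_2^{-}[p^{-1}]$ into $H_0(p)^{\vee}\obot H_{2k}^{+}$, but your mod-$p$ obstruction amounts to the same thing. One correction to your bookkeeping: the rescaling identity should read $\den(H_0(p)^{\vee}\obot H_{2k}^{+},L)=p^{-6}\,\den(H_0(p)\obot H_{2k}^{+}[p],L[p])$, with the factor $p^{-6}=p^{-n(n+1)/2}$ (for $n=\textup{rk}\,L=3$) exactly as in the proof of Lemma~\ref{analytic-calculations-2} that you cite.
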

\begin{proof}
By Lemma \ref{diff-ana}, we have that for positive integers $k$,
\begin{align*}
    \den\left(p^{-k},H_0(p), H_2^{\varepsilon}\obot H_1^{+}[\varepsilon p]\right)&=(1-p^{-1-k})(1+p^{-k})\cdot\den(H_{2k}^{+},H_2^{\varepsilon})\\&+p^{1-2k}(1-p^{-1})\cdot\den(H_{2k+2}^{+},H_2^{\varepsilon}).
\end{align*}
By the calculations of $\den(H_{m}^{\varepsilon},H_{n}^{\varepsilon})$ in \cite[Definition 3.4.1, Definition 3.5.1]{LZ22b}, we have
\begin{equation*}
    \den(H_{2k+2}^{+},H_2^{\varepsilon})=(1-p^{-k-1})(1+\varepsilon p^{-k}).
\end{equation*}
Then we get the first formula.
\par
For the second formula, we have the following decomposition by Lemma \ref{diff-ana},
\begin{equation*}
    \den(H_{0}(p)\obot H_{2k}^{+},H_1^{+}[\varepsilon]\obot H_2^{-}[p])=\den(H_{0}(p)\obot H_{2k}^{+},H_1^{+}[\varepsilon])\cdot\den(H_2^{+}[p]\obot H_{2k}^{-\varepsilon},H_2^{-}[p]).
\end{equation*}
Both local densities $\den(H_{0}(p)\obot H_{2k}^{+},H_1^{+}[\varepsilon])$ and $\den(H_2^{+}[p]\obot H_{2k}^{-\varepsilon},H_2^{-}[p])$ can be computed by the formulas in \cite{yanglocaldensity}.
\par
For the third formula, notice that
\begin{equation*}
    \den\left(X,H_0(p)^{\vee},H_1^{+}[\varepsilon p^{-1}]\obot H_2^{-}\right)=p^{-6}\den\left(H_0(p)\obot H_{2k}^{+}[p],H_1^{+}[\varepsilon]\obot H_2^{-}[p]\right).
\end{equation*}
Again we can use Lemma \ref{diff-ana} to reduce the local densities to the cases where formulas in \cite{yanglocaldensity} can be applied.
\par
For the last one, we notice that $H_2^{-}$ can be mapped isometrically to $H_0(p)$, which implies that $H_2^{-}[p^{-1}]$ can't be mapped isometrically to $H_0(p)^{\vee}\obot H_{2k}^{+}$ for all $k\geq0$, hence the local density is identically zero.
\end{proof}

\part{Geometric side}
\label{geo-part}

\section{Rapoport-Zink spaces and special cycles}
\label{rz-special}
For simplicity, let $W=\zpb$. Let $\B$ be the unique division quaternion algebra over $\qp$. Let $\X$ be the unique (up to isomorphism) formal group of dimension 1 and height 2 over $\F$ with a principal polarization $\lambda:\X\rightarrow\X^{\vee}$.
\subsection{Rapoport-Zink spaces with hyperspecial level structures}
\label{hyperspecial0}
Let $\mathcal{N}_0$ be the following functor on the category $\textup{Nilp}_{W}$: for any $S\in \textup{Nilp}_{W}$, the set $\mathcal{N}_0(S)$ is the isomorphism classes of pairs $(X,\rho)$, where $X$ is a $p$-divisible group over $S$, $\rho$ is a height 0 quasi-isogenies between $p$-divisible groups $\rho: \mathbb{X}\times_{\mathbb{F}}\overline{S}\rightarrow X\times_{S}\overline{S}$. It is well-known that the functor $\mathcal{N}_0$ is represented by the formal scheme $\textup{Spf}\,W[[t]]$ over $\textup{Spf}\,W$.\par
Let $(X^{\textup{univ}},\rho^{\univ})$ be the universal $p$-divisible group over the formal scheme $\N_0$. Let $\mathbb{D}(X^{\univ})$ be the Dieudonne crystal of the $p$-divisible group $X^{\univ}$. It is a locally free $\mathcal{O}_{\N_0}^{\crys}$-module crystal of rank $2$. Given a morphism $S\rightarrow\N_0$ where $S$ is an object in $\textup{Nilp}_W$, let $\mathbb{D}(X^{\univ})_{S}$ be the pullback of the crystal $\mathbb{D}(X^{\univ})$ to the site $\textup{NCRIS}_{W}(S/\textup{Spec}\,W)$.
\par
Let $\mathbb{D}(\X)$ be the Dieudonne module of the $p$-divisible group $\X$. There exists a basis $[e_1,e_2]$ of the rank 2 free $W$-module $\mathbb{D}(\X)$ such that the Hodge filtration on $\mathbb{D}(\X)_{\F}\coloneqq\mathbb{D}(\X)\otimes_{W}\F$ is given by
    \begin{equation*}
        0\rightarrow\textup{Fil}^{1}\mathbb{D}(\X)_{\F}=\F\cdot \overline{e_2}\rightarrow\mathbb{D}(\X)_{\F}.
    \end{equation*}
Adjusting the element $t$ by some invertible element in the local ring $\rO_{\N_0}$, the Hodge filtration on the rank 2 free $\rO_{\N_0}$-module $\mathbb{D}(X^{\univ})_{\N_0}$ is given by 
\begin{equation*}
        0\rightarrow\textup{Fil}^{1}\mathbb{D}(X^{\univ})_{\N_0}=\rO_{\N_0}\cdot (e_2+te_1)\rightarrow\mathbb{D}(X^{\univ})_{\N_0}.
    \end{equation*}

\subsection{CM cycles: the hyperspecial case}
Recall we use $(X^{\textup{univ}},\rho^{\univ})$ be the universal $p$-divisible group over the formal scheme $\N_0$. Let $\B^{0}$ be the subgroup of $\B^{0}$ consisting of trace 0 elements.
\begin{definition}
    For any subset $H\subset\B^{0}$, define the CM cycle $\mathcal{Z}_{\N_0}(H)\subset\mathcal{N}_0$ to be the closed formal subscheme cut out by the condition,
\begin{equation*}
    \rho^{\textup{univ}}\circ x \circ (\rho^{\textup{univ}})^{-1} \in \textup{Hom}(X^{\textup{univ}}, X^{\textup{univ}}).
\end{equation*}
for all $x\in H$.
\label{spe-hyper-CM}
\end{definition}
    
\subsection{Special cycles on the product: the hyperspecial case}
Let $\N=\N_0\times_{W}\N_0$. It is a formal scheme which parameterizes two pairs $\left((X,\rho),(X^{\prime},\rho^{\prime})\right)$. Let $\left((X^{\textup{univ}},\rho^{\univ}),(X^{\prime\textup{univ}},\rho^{\prime\univ})\right)$ be the universal pairs over the formal scheme $\N$.
\begin{definition}
For any subset $H\subset\B$, define the special cycle $\mathcal{Z}_{\N}(H)\subset\mathcal{N}$ to be the closed formal subscheme cut out by the condition,
\begin{equation*}
    \rho^{\prime\textup{univ}}\circ x \circ (\rho^{\textup{univ}})^{-1} \in \textup{Hom}(X^{\textup{univ}}, X^{\prime\textup{univ}}).
\end{equation*}
for all $x\in H$.
\label{spe-hyper}
\end{definition}
\begin{lemma}
Let $x\in\mathbb{B}$ be a nonzero element such that $q(x)\in\zp$. Then $\mathcal{Z}_{\N}(x)$ is a Cartier divisor on $\mathcal{N}$ (i.e., defined by one nonzero equation) and flat over $W$ (i.e., the equation is not divisible by $p$). Moreover, let $x,y\in\B$ be two linearly independent elements, then the two divisors $\mathcal{Z}_{\N}(x)$ and $\mathcal{Z}_{\N}(y)$ intersect properly, and the irreducible components of the intersection $\mathcal{Z}_{\N}(x)\cap\mathcal{Z}_{\N}(y)$ are of the form $\textup{Spf}\,W_s$ where $W_s$ is the ring of definition of a quasi-canonical lifting of level $s$.
\label{proper-hpe}
\end{lemma}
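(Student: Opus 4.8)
The plan is to exploit the explicit presentation $\N=\N_0\times_W\N_0\cong\textup{Spf}\,W[[t_1,t_2]]$ together with the description of the Hodge filtrations in $\S\ref{hyperspecial0}$: via the Grothendieck--Messing criterion the moduli condition defining $\mathcal{Z}_{\N}(x)$ turns into a single explicit equation, and the structure of pairwise intersections is then read off from the deformation theory of quasi-canonical liftings.

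First I would reduce to the case $x\in\OB=\textup{End}(\X)$. Since $q$ is the reduced norm and $\qp(x)$ is either $\qp$ or a quadratic field, $q(x)=\textup{Nrd}(x)\in\zp$ forces $x$ to be integral over $\zp$ (a Newton polygon argument gives $\textup{Trd}(x)\in\zp$ as well), hence $x\in\OB$; thus $x$ is an honest isogeny $\X\to\X$. (If $x\notin\OB$ then $\mathcal{Z}_{\N}(x)$ has no $\F$-point and is empty, so there is nothing to prove.) By $\S\ref{hyperspecial0}$ the crystals $\mathbb{D}(X^{\univ})_{\N}$ and $\mathbb{D}(X^{\prime\univ})_{\N}$ are free $\rO_{\N}$-modules of rank $2$, which the framings identify with $\mathbb{D}(\X)\otimes_W\rO_{\N}=\rO_{\N}e_1\oplus\rO_{\N}e_2$, carrying Hodge filtrations generated by $e_2+t_1e_1$ and $e_2+t_2e_1$ respectively, and the quasi-isogeny $\rho^{\prime\univ}\circ x\circ(\rho^{\univ})^{-1}$ becomes the constant map $\mathbb{D}(x)\in M_2(W)$, whose determinant has the same valuation as $q(x)$. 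By Grothendieck--Messing, over a closed subscheme $Z\hookrightarrow\N$ this quasi-isogeny deforms to a homomorphism $X^{\univ}|_Z\to X^{\prime\univ}|_Z$ precisely when $\mathbb{D}(x)$ is integral on $Z$ (automatic, as $\mathbb{D}(x)\in M_2(W)$) and carries the Hodge filtration into the Hodge filtration; writing $\mathbb{D}(x)=\left(\begin{smallmatrix}a&b\\c&d\end{smallmatrix}\right)$ in the basis $[e_1,e_2]$, the latter condition unwinds to the vanishing of
\[
h_x\;=\;(at_1+b)-(ct_1+d)\,t_2\;\in\;W[[t_1,t_2]].
\]
Since $x\neq0$ we have $h_x\neq0$, so $\mathcal{Z}_{\N}(x)=V(h_x)$ is a Cartier divisor; and since $\mathbb{D}(x)$ is a nonzero matrix modulo $p$ (i.e.\ $x\notin p\OB$), $h_x$ is not divisible by $p$, so $\mathcal{Z}_{\N}(x)$ is flat over $W$. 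As a check, $x=1$ recovers $h_1=t_1-t_2$, so $\mathcal{Z}_{\N}(1)$ is the diagonal.

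For the last assertion, let $x,y\in\OB$ be linearly independent, so that $\mathcal{Z}_{\N}(x)\cap\mathcal{Z}_{\N}(y)=V(h_x,h_y)$. Each $h_\bullet$ is linear in $t_2$ with leading coefficient invertible over $W[[t_1]][1/p]$, hence cuts out the graph of a M\"obius transformation of $t_1$; since $\B$ is a division algebra, $x$ and $y$ are not $\qp$-proportional, these two transformations differ, and a short computation gives $\gcd(h_x,h_y)=1$ in the factorial ring $W[[t_1,t_2]]$. Thus the two divisors share no component and $\mathcal{Z}_{\N}(x)\cap\mathcal{Z}_{\N}(y)$ is proper, of pure dimension $1$. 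To identify an irreducible component $C$, observe that over $C$ both $x\colon X^{\univ}\to X^{\prime\univ}$ and $y\colon X^{\univ}\to X^{\prime\univ}$ are isogenies, so $j\coloneqq\bar x\circ y\in\textup{End}(X^{\univ}|_C)$, where $\bar x=\textup{Nrd}(x)\,x^{-1}\colon X^{\prime\univ}|_C\to X^{\univ}|_C$ is the dual isogeny; this endows $X^{\univ}|_C$ with an action of the quadratic order $\zp[j]$ in the field $\qp(x^{-1}y)\subset\B$. The projection $\pr_1$ is a closed immersion on $C$ (as $X^{\prime\univ}|_C=X^{\univ}|_C/\ker x$ is recovered from the CM structure), mapping it into the locus in $\N_0$ of deformations of $\X$ admitting CM by a fixed quadratic order; by Gross's theorem on quasi-canonical liftings this locus is $\textup{Spf}\,W_s$, with $W_s$ the ring of integers of the ring class field of conductor $p^s$, and as $C$ is one-dimensional this forces $C=\textup{Spf}\,W_s$.

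\textbf{Main obstacle.} The delicate part is the Grothendieck--Messing step --- carrying the crystalline Dieudonn\'e theory over the family $\N$ and pinning down the normalization of the Hodge filtration of $\S\ref{hyperspecial0}$ so as to land on the clean equation $h_x$ (normalization and sign errors are the real danger), together with the families version of the dual isogeny $\bar x$ that produces the CM action. The identification of the components then rests entirely on Gross's deformation-theoretic description of quasi-canonical liftings, plus the bookkeeping matching the conductor $p^s$ to the fundamental invariants of $\zp x+\zp y$.
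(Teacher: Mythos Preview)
The paper does not give an independent proof and simply cites Gross--Keating \cite[(5.10)]{GK93}. Your overall strategy---Grothendieck--Messing for the divisor property, then CM by $\zp[\bar x y]$ and Gross's quasi-canonical liftings for the components---is exactly the Gross--Keating approach, so in spirit you are reconstructing the cited argument.

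There is, however, a genuine error in your execution of the Grothendieck--Messing step. You assert that under the identification $\mathbb{D}(X^{\univ})_{\N}\cong\mathbb{D}(\X)\otimes_W\rO_{\N}$ the map induced by $\rho'^{\univ}\circ x\circ(\rho^{\univ})^{-1}$ is the \emph{constant} matrix $\mathbb{D}(x)\in M_2(W)$. This is false: the universal framings $\rho^{\univ},\rho'^{\univ}$ are only quasi-isogenies over $\overline{\N}$, not isomorphisms (the constant group $\X_{\overline{\N_0}}$ is not isomorphic to the universal deformation $X^{\univ}|_{\overline{\N_0}}$), so they do not integrally trivialize the crystals, and the induced map acquires correction terms in $\mathfrak m$. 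The paper itself exhibits this in the proof of Proposition~\ref{local-gamma0p}: there $\mathbb{D}(x_0)_{\N_0(x_0)}$ is computed to be $\left(\begin{smallmatrix} h & -p+mh\\ 1 & m\end{smallmatrix}\right)$ with $h,m\in\mathfrak m$, not the constant $\left(\begin{smallmatrix} 0 & -p\\ 1 & 0\end{smallmatrix}\right)$. Consequently your $h_x=(at_1+b)-(ct_1+d)t_2$ is \emph{not} the defining equation of $\mathcal{Z}_{\N}(x)$: already for $\nu_p(q(x))=1$ the true equation is $\nu p+(t-t'^{p})(t^{p}-t')$ (Lemma~\ref{cyclic-diff}), of degree $p+1$ in $t'$, whereas your $h_x$ is linear. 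You correctly flag the GM step as the ``main obstacle,'' but the issue is structural rather than a matter of normalization or signs: the inductive lift along $\N/\mathfrak m^n\hookrightarrow\N/\mathfrak m^{n+1}$ requires tracking $\mathbb{D}$ of the successive partial lifts, not just the constant value over $\F$. A secondary gap: the assertion $x\notin p\,\OB$ does not follow from $q(x)\in\zp$, and for $x\in p\,\OB$ your flatness argument breaks.

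Your outline for the intersection part---producing the endomorphism $j=\bar x\,y$ and appealing to Gross's description of the quasi-canonical locus---does not depend on the explicit $h_x$ and is the correct line of reasoning; this is precisely what \cite[(5.10)]{GK93} carries out.
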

\begin{proof}
    This is proved by Gross-Keating (see \cite[(5.10)]{GK93}).
\end{proof}
For $H=\{x\}$, we denote by $\mathcal{Z}_{\N}(x)$ the cycle $\mathcal{Z}_{\N}(\{x\})$. By the moduli interpretation of the special cycle $\mathcal{Z}_{\N}(x)$, there is a closed immersion $\mathcal{Z}_{\N}(p^{-1}x)\rightarrow\mathcal{Z}_{\N}(x)$.
\begin{definition}
    Let $x\in\mathbb{B}$ be a nonzero element. Define the difference divisor associated to $x$ on $\N$ to be the following effective Cartier divisor on the formal scheme $\N$,
\begin{equation*}
    \mathcal{D}_{\N}(x)=\mathcal{Z}_{\N}(x)-\mathcal{Z}_{\N}(p^{-1}x).
\end{equation*}
\label{diff-div-N}
\end{definition}
\begin{remark}
Terstiege first introduced the difference divisors on the unitary Rapoport--Zink spaces with hyperspecial level \cite{Ter13a} and proved the regularity of them \cite{Ter13b}. He also give the construction of difference divisors on the Rapoport--Zink space associated to a rank $4$ self-dual quadratic lattice over $\mathbb{Z}_p$ other than $\textup{M}_2(\mathbb{Z}_p)$ and studied the intersection numbers of them in \cite{Ter11}, where he also proved the regularity of these difference divisors.
\par
The formal scheme $\mathcal{N}$ is the Rapoport--Zink space associated to the lattice $\textup{M}_2(\mathbb{Z}_p)$. The difference divisors $\mathcal{D}_\N(x)$ are regular formal schemes (cf. \cite[Theorem 6.2.2]{Zhu23}, see also \cite{Zhu23diff}), it is formally smooth over $W$ if and only if $\nu_p(q(x))=0$. It's easy to see that $\mathcal{Z}_\N(1)\simeq\N_0$ and for an element $y\in\B^{0}$, we have the following isomorphism:
    \begin{equation*}
        \mathcal{Z}_{\N_0}(y)\simeq\mathcal{Z}_\N(y)\cap\mathcal{Z}(1).
    \end{equation*}
    Let $\mathcal{D}_{\N_0}(y)=\mathcal{Z}_{\N_0}(y)-\mathcal{Z}_{\N_0}(p^{-1}y)$. It is a regular divisor on the formal scheme $\N_0$. It is isomorphic to the ring of definition of a (quasi-)canonical lifting of $\mathbb{X}$ and formally smooth over $W$ if and only if $\nu_p(q(y))=0$.
    \label{regularity-diff-smooth}
\end{remark}

\subsection{Rapoport-Zink spaces with cyclic level structures}
\label{cyclic-def-space}
We have an isomorphism
\begin{equation}
    \iota:\B\simeq\textup{End}^{\circ}(\X)\coloneqq\textup{End}(\X)\otimes_{\Z}\Q.
    \label{end-div}
\end{equation}
\par
Let $x\mapsto \overline{x}$ be the main involution of the quaternion algebra $\B$. Let $x\mapsto x^{\vee}$ be the Rosati involution on $\textup{End}^{\circ}(\X)$ induced by the principal polarization $\lambda$. The isomorphism (\ref{end-div}) can be chosen such that the the main involution on $\B$ transforms to the Rosati involution on $\textup{End}^{\circ}(\X)$.
\par
On the algebra $\B$, denote by $q$ the quadratic form $q(x)=x\cdot\overline{x}\in\qp$. On the algebra $\textup{End}^{\circ}(\X)$, denote by $q_\lambda$ the quadratic form $q_{\lambda}(x)=x\circ x^{\vee}\in\qp$. The isomorphism $\iota$ is an isometry between the quadratic spaces $(\B,q)$ and $(\textup{End}^{\circ}(\X),q_{\lambda})$. Moreover, the maximal order $\mathcal{O}_{\B}$ of $\B$ is mapped isometrically to $\textup{End}(\X)$.
\par
For all $x\in\B$, we consider the following contravariant set-valued functor $\mathcal{N}_0(x)$ defined over $\textup{Nilp}_{W}$: for every $S\in \textup{Nilp}_{W}$, the set $\mathcal{N}_{0}(x)(S)$ consists of the isomorphism classes of elements of the following form $(X\stackrel{\pi}\rightarrow X^{\prime},(\rho,\rho^{\prime}))$, where
\begin{itemize}
    \item [(a)]$((X,\rho),(X^{\prime},\rho^{\prime}))\in\mathcal{N}(S)$;
    \item[(b)] $\pi:X\rightarrow X^{\prime}$ is a cyclic isogeny (i.e., $\textup{ker}(x)$ is a cyclic group scheme over $S$ in the sense of \cite[$\S$6.1]{KM85}) lifting $\rho^{\prime}\circ x\circ\rho^{-1}$.
\end{itemize}
There is a natural morphism $\textup{st}_x$ from $\N_0(x)$ to $\N$ given as follows,
\begin{align}
    \textup{st}_x: \N_0(x)&\longrightarrow \N;\label{st}\\
    (X\stackrel{\pi}\rightarrow X^{\prime},(\rho,\rho^{\prime}))&\longmapsto \left((X,\rho),(X^{\prime},\rho^{\prime})\right).\notag
\end{align}
Recall that by definition $\N=\N_0\times\N_0$. Let $\textup{s}_{x}:\N_0(x)\rightarrow\N_0$ be the composition of the morphism $\st_x$ with the projection to the first factor $\N\rightarrow\N_0$, and let $\textup{t}_{x}:\N_0(x)\rightarrow\N_0$ be the composition of the morphism $\st_x$ with the projection to the second factor $\N\rightarrow\N_0$. Let $\st_x^{\#}:\rO_{\N}\rightarrow\mathcal{O}_{\N_0(x)}$, $\textup{s}_x^{\#}:\rO_{\N_0}\rightarrow\rO_{\N_0(x)}$ and $\textup{t}_x^{\#}:\rO_{\N_0}\rightarrow\rO_{\N_0(x)}$ be the corresponding local ring homomorphisms.
\begin{lemma}
    Let $x\in\B$ be a nonzero element such that $\nu_p(q(x))=n$ for some positive integer $n\geq0$. The morphism $\st_x$ is a closed immersion and identifies $\N_0(x)$ as an effective Cartier divisor on $\N$. We have the following equality of Cartier divisors on $\N$,
    \begin{equation*}
        \mathcal{D}_{\N}(x)=\N_0(x).
    \end{equation*}
    Moreover,
    \begin{itemize}
        \item [(a)] The functor $\mathcal{N}_0(x)$ is represented by a 2 dimensional regular local ring. 
        \item [(b)] There exists two elements $t,t^{\prime}\in\rO_{\N_0(x)}$ such that
        \begin{itemize}
            \item[(b1)] There exists an element $t_0\in\rO_{\N_0}$ such that $\rO_{\N_0}\simeq W[[t_0]]$ and $\textup{s}_x^{\#}(t_0)=t$, $\textup{t}_x^{\#}(t_0)=t^{\prime}$.
            \item[(b2)] There exists an invertible element $\nu\in\rO_{\N_0(x)}$ such that
            \begin{equation*}
                 \rO_{\mathcal{D}_\N(x)}=\rO_{\N_0(x)}\simeq\begin{cases}
                    W[[t,t^{\prime}]]/\left(\nu p+(t-t^{\prime p^{n}})(t^{p^{n}}-t^{\prime})\left(\prod\limits_{\substack{a+b=n\\a,b\geq1}}(t^{p^{a-1}}-t^{\prime p^{b-1}})\right)^{p-1}\right), &\textup{if $n\geq1$;}\\
                    W[[t,t^{\prime}]]/(t-t^{\prime}), &\textup{if $n=0$.}
                \end{cases}
            \end{equation*}
        \end{itemize}
    \end{itemize}   
    \label{cyclic-diff}
\end{lemma}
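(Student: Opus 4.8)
The plan is to prove the statement in three stages: that $\st_x$ is a closed immersion with image $\mathcal{D}_{\N}(x)$, then the explicit local equation of (b2), and finally assertion (a) as a formal consequence of that equation. Throughout we may assume $x\in\mathcal{O}_{\B}$ (otherwise $\rho'\circ x\circ\rho^{-1}$ fails to be an isogeny already over $\overline S$ and $\N_0(x)=\varnothing$), and by the functoriality of $\N_0(-)$ under $\mathcal{O}_{\B}^{\times}\times\mathcal{O}_{\B}^{\times}$ acting through the two framings we may replace $x$ by any element in the same double coset; since $\mathcal{O}_{\B}$ is a maximal order in a division algebra, every $x$ with $\nu_p(q(x))=n$ then reduces to $x=\Pi^{n}$ for a fixed uniformizer $\Pi$ of $\mathcal{O}_{\B}$.

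\emph{Step 1: the closed immersion.} For a point $((X,\rho),(X',\rho'))$ of $\N$ over $S\in\textup{Nilp}_{W}$, a homomorphism $X\to X'$ lifting the quasi-isogeny $(\rho'\circ x\circ\rho^{-1})|_{\overline S}$ is unique if it exists, by rigidity of homomorphisms of $p$-divisible groups; hence $\st_x$ is a monomorphism of functors. Since $\nu_p(q(x))=n\geq0$, such a lift exists exactly over the Cartier divisor $\mathcal{Z}_{\N}(x)$ of Lemma \ref{proper-hpe}, where the universal lift $\pi^{\univ}$ is an isogeny with kernel finite locally free of order $p^{n}$, and $\N_0(x)$ is the closed locus of $\mathcal{Z}_{\N}(x)$ over which $\ker\pi^{\univ}$ is cyclic in the sense of Katz--Mazur \cite{KM85}. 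On $\mathcal{Z}_{\N}(p^{-1}x)$ one has $p^{-1}x\in\mathcal{O}_{\B}$, so $\pi^{\univ}=[p]_{X}\circ\pi'$ and $\ker\pi^{\univ}\supseteq X[p]$ is not cyclic; conversely, the Katz--Mazur cyclicity criterion for the $1$-dimensional $p$-divisible group $X$ shows that the non-cyclic locus of $\ker\pi^{\univ}$ inside $\mathcal{Z}_{\N}(x)$ is cut out precisely by the local equation of $\mathcal{Z}_{\N}(p^{-1}x)$. Dividing these two local equations identifies $\N_0(x)$ with the effective Cartier divisor $\mathcal{D}_{\N}(x)=\mathcal{Z}_{\N}(x)-\mathcal{Z}_{\N}(p^{-1}x)$.

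\emph{Step 2: the local equation.} Fix $t_0$ with $\rO_{\N_0}\simeq W[[t_0]]$ as in \S\ref{hyperspecial0}, so $\rO_{\N}\simeq W[[t,t']]$; put $t=\textup{s}_x^{\#}(t_0)$ and $t'=\textup{t}_x^{\#}(t_0)$, which gives (b1). By Grothendieck--Messing theory, the condition defining $\mathcal{Z}_{\N}(x)$ over an $\rO_{\N}$-algebra is that the crystalline realization of $x$ --- a map $\mathbb{D}(X')\to\mathbb{D}(X)$ of crystals after inverting $p$, determined by the framings and by the action of $x$ on $\mathbb{D}(\X)$ --- carry $\textup{Fil}^{1}\mathbb{D}(X')$ into $\textup{Fil}^{1}\mathbb{D}(X)$. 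Inserting the explicit Hodge filtrations $\rO_{\N_0}\cdot(e_2+t_0e_1)$ of \S\ref{hyperspecial0} on the two factors, and the matrix of $x=\Pi^{n}$ in the basis $[e_1,e_2]$ computed from the semilinear Frobenius on $\mathbb{D}(\X)$ (under which $\Pi$ interchanges the lines $We_1,We_2$ up to a factor $p$, so that $\Pi^{2}=pu$ for a unit $u$), this condition becomes the vanishing of a single element $g_x\in W[[t,t']]$. The coordinate exchange carried by $\Pi$ and the factor $p$ in $\Pi^{2}=pu$ produce the $p$-power substitutions $t\mapsto t^{p^{a-1}}$, $t'\mapsto t'^{p^{b-1}}$ and the exponent $p-1$; carrying out the resulting recursion on $n$ (equivalently, iterating $\Pi$) gives $g_x=t-t'$ when $n=0$ and, for $n\geq1$,
\begin{equation*}
    g_x=\nu\cdot p+(t-t'^{p^{n}})(t^{p^{n}}-t')\left(\prod\limits_{\substack{a+b=n\\a,b\geq1}}(t^{p^{a-1}}-t'^{p^{b-1}})\right)^{p-1}
\end{equation*}
for a unit $\nu\in W[[t,t']]^{\times}$. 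Equivalently, one may first compute the equation of $\mathcal{Z}_{\N}(x)$ for all $x$ this way and then divide it by that of $\mathcal{Z}_{\N}(p^{-1}x)$, as in Step 1.

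\emph{Step 3 and the main difficulty.} When $n=0$, $W[[t,t']]/(t-t')\simeq W[[t']]$ is regular of dimension $2$. When $n\geq1$, the non-$p$ part of $g_x$ lies in $\mathfrak{m}^{2}$ where $\mathfrak{m}=(p,t,t')$, so $g_x\equiv\nu p\pmod{\mathfrak{m}^{2}}$; since $p\notin\mathfrak{m}^{2}$ the element $g_x$ is part of a regular system of parameters of $W[[t,t']]$, whence $\rO_{\N_0(x)}=W[[t,t']]/(g_x)$ is a $2$-dimensional regular local ring, which is (a). The main obstacle is Step 2: extracting the exact combinatorial shape of $g_x$ --- notably the exponents $p^{a-1}$ and the $(p-1)$-st power --- from the iterated Frobenius computation, and choosing coordinates and arranging the division so that $g_x$ comes out literally in the stated form rather than only up to a formal change of variables. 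Reconciling the outcome with the Katz--Mazur cyclicity criterion used in Step 1 is the other delicate point.
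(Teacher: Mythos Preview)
The paper does not actually prove this lemma: it just cites \cite[Theorem 6.2.3]{Zhu23} for the equality $\mathcal{D}_{\N}(x)=\N_0(x)$ and \cite{KM85} for (a) and (b). So your proposal is not being compared against an argument in the paper but against those external results.

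Your Step~1 is the right shape, but the sentence ``the non-cyclic locus of $\ker\pi^{\univ}$ inside $\mathcal{Z}_{\N}(x)$ is cut out precisely by the local equation of $\mathcal{Z}_{\N}(p^{-1}x)$'' is exactly the nontrivial content of Zhu's theorem, and it does not drop out of the Katz--Mazur cyclicity criterion as stated. Knowing that $\pi^{\univ}$ is non-cyclic on $\mathcal{Z}_{\N}(p^{-1}x)$ gives one inclusion; the reverse (that every point where cyclicity fails already lies on $\mathcal{Z}_{\N}(p^{-1}x)$, with the correct scheme structure) is what requires work.

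Your Step~2 has a genuine gap. The Grothendieck--Messing condition you describe --- that the crystalline realization of $x$ preserve Hodge filtrations --- is the defining equation of $\mathcal{Z}_{\N}(x)$, not of $\N_0(x)$. You then write down the formula for $\N_0(x)$ as if it were the output of that computation. For $n\geq 1$ these differ by the factor coming from $\mathcal{Z}_{\N}(p^{-1}x)$. More seriously, the ``$p$-power substitutions $t\mapsto t^{p^{a-1}}$'' and the exponent $p-1$ do not come from iterating the linear (or $\sigma$-linear) action of $\Pi$ on $\mathbb{D}(\X)$; the Katz--Mazur equation arises from their theory of Drinfeld level structures and the crossings theorem for $Y_0(p^{n})$ (\cite[Ch.~13]{KM85}), where the factors $(t^{p^{a-1}}-t'^{p^{b-1}})^{p-1}$ encode the irreducible components of the special fiber and their multiplicities. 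Your recursion does not reproduce this, and you acknowledge as much in calling it the ``main obstacle''. If you want a self-contained argument here rather than a citation, you should follow the Katz--Mazur route directly for (b2), and separately invoke or reprove Zhu's result for the divisor identity.
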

\begin{proof}
    The equality $\mathcal{D}_{\mathcal{N}}(x)=\mathcal{N}_0(x)$ is proved by Zhu \cite[Theorem 6.2.3]{Zhu23}. The statements (a) and (b) are proved by Katz and Mazur \cite{KM85}.
\end{proof}

\subsection{Blow up of the cyclic deformation space}
For an element $x\in\B$ such that $\nu_p\left(q(x)\right)\geq0$. Let $\pi_x:\widetilde{\N}_0(x)\rightarrow\N_0(x)$ be the blow up morphism of the formal scheme $\N_0(x)$ along its unique closed $\F$-point.
\begin{lemma}
    For an element $x\in\B$ such that $n\coloneqq\nu_p\left(q(x)\right)\geq0$. Then
    \begin{itemize}
        \item [(a)] The formal scheme $\widetilde{\N}_0(x)$ is a 2-dimensional regular formal scheme and $\exc_{\widetilde{\N}_0(x)}\simeq\mathbb{P}_{\F}^{1}$. We also have $[\mathcal{O}_{\exc_{\widetilde{\N}_0(x)}}\otimes^{\mathbb{L}}_{\mathcal{O}_{\widetilde{\N}_0(x)}}\mathcal{O}_{\exc_{\widetilde{\N}_0(x)}}]=\mathcal{O}_{\mathbb{P}_{\mathbb{F}}^{1}}(-1)$.
        \item[(b)] If $n\geq1$, the multiplicity $r(n)$ of the exceptional divisor $\exc_{\widetilde{\N}_0(x)}$ in the divisor $\textup{div}(p)=\widetilde{\N}_0(x)_{\F}$ is given by
    \begin{equation*}
        r(n)=p^{[n/2]}\cdot\begin{cases}
            1+p^{-1}, &\textup{if $n$ is even};\\
            2, &\textup{if $n$ is odd}.
        \end{cases}
    \end{equation*}
    \item[(c)] Let $\mathcal{C}=\textup{Spf}\,R\subset\N_0(x)$ be a regular horizontal divisor where $R$ is a regular local ring, then
        \begin{equation*}
            \pi_x^{\ast}\mathcal{C}=\widetilde{\mathcal{C}}+\exc_{\widetilde{\N}_0(x)},
        \end{equation*}
        where $\widetilde{\mathcal{C}}\subset\widetilde{\N}_0(x)$ is the strict transform of $\mathcal{C}$ under the blow up morphism $\pi_x$, and isomorphic to $\mathcal{C}$ under the morphism $\pi_x$. Moreover,
        \begin{equation*}
            [\mathcal{O}_{\pi_x^{\ast}\mathcal{C}}\otimes^{\mathbb{L}}_{\mathcal{O}_{\widetilde{\N}_0(x)}}\mathcal{O}_{\exc_{\widetilde{\N}_0(x)}}]=\mathcal{O}_{\mathbb{P}_{\mathbb{F}}^{1}}(0),\,\,\,\,[\mathcal{O}_{\widetilde{\mathcal{C}}}\otimes^{\mathbb{L}}_{\mathcal{O}_{\widetilde{\N}_0(x)}}\mathcal{O}_{\exc_{\widetilde{\N}_0(x)}}]=\mathcal{O}_{\mathbb{P}_{\mathbb{F}}^{1}}(1).
        \end{equation*}
    \item[(d)] Let $\mathcal{C}_1=\textup{Spf}\,R_1,\mathcal{C}_2=\textup{Spf}\,R_2\subset\N_0(x)$ be two regular horizontal divisors such that $R_1$ and $R_2$ are two regular local rings, then
        \begin{equation*}
            \chi(\widetilde{\N}_0(x),\mathcal{O}_{\pi_x^{\ast}\mathcal{C}_1}\otimes_{\rO_{\widetilde{\N}_0(x)}}^{\mathbb{L}}\mathcal{O}_{\pi_x^{\ast}\mathcal{C}_2})=\chi(\N_0(x),\mathcal{O}_{\mathcal{C}_1}\otimes_{\rO_{\N_0(x)}}^{\mathbb{L}}\mathcal{O}_{\mathcal{C}_2}),
        \end{equation*}
    \end{itemize}
    \label{blow-up-N0(x)}
\end{lemma}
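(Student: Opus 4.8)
The plan is to understand the local structure of $\N_0(x)$ explicitly using Lemma \ref{cyclic-diff}(b2) and then read off all four statements by direct computation on the blow-up. First I would recall that by Lemma \ref{cyclic-diff}, $\rO_{\N_0(x)}\simeq W[[t,t']]/(f)$ where $f = \nu p + (t-t'^{p^n})(t^{p^n}-t')\bigl(\prod_{a+b=n,\,a,b\geq 1}(t^{p^{a-1}}-t'^{p^{b-1}})\bigr)^{p-1}$ for $n\geq 1$; the unique closed $\F$-point is the origin $\mathfrak{m}=(t,t')$, and the blow-up $\widetilde{\N}_0(x)$ is covered by the two standard charts. For statement (a), since $\N_0(x)$ is a regular $2$-dimensional formal scheme (Lemma \ref{cyclic-diff}(a)) and we are blowing up a closed point, regularity of $\widetilde{\N}_0(x)$ and $\exc_{\widetilde{\N}_0(x)}\simeq\bP^1_\F$ are standard; the self-intersection identity $[\mathcal{O}_{\exc}\otimes^{\mathbb L}_{\mathcal{O}_{\widetilde{\N}_0(x)}}\mathcal{O}_{\exc}]=\mathcal{O}_{\bP^1_\F}(-1)$ is the usual fact that the normal bundle of the exceptional divisor of a blow-up of a smooth surface at a point is $\mathcal{O}(-1)$, which I would verify by writing $\exc$ in the two charts as $\{t=0\}$ and $\{t'=0\}$ respectively and computing the Koszul resolution.

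For statement (b), the key point is to compute the order of vanishing of $p$ along $\exc$. The equation $\nu p = -(t-t'^{p^n})(t^{p^n}-t')\bigl(\prod(t^{p^{a-1}}-t'^{p^{b-1}})\bigr)^{p-1}$ exhibits $p$ (up to a unit) as this product; I would compute the $\mathfrak{m}$-adic order of each factor: $t-t'^{p^n}$ and $t^{p^n}-t'$ each have order $1$ when $n\geq 1$, and each factor $t^{p^{a-1}}-t'^{p^{b-1}}$ has order $\min(p^{a-1},p^{b-1})$. Summing with the multiplicity $p-1$ on the product over pairs $(a,b)$ with $a+b=n$, $a,b\geq 1$: the total order is $2 + (p-1)\sum_{a=1}^{n-1}\min(p^{a-1},p^{n-1-a})$. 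A short computation of this geometric-type sum, splitting according to the parity of $n$, yields exactly $r(n)=p^{[n/2]}(1+p^{-1})$ for $n$ even and $p^{[n/2]}\cdot 2$ for $n$ odd. Since the proper transform of each $\{(\text{factor})=0\}$ meets the exceptional fibre, and pulling back $\operatorname{div}(p)$ adds copies of $\exc$ equal to the total $\mathfrak{m}$-adic order of the defining function, this gives the claimed $r(n)$; this combinatorial sum is the main obstacle, though it is elementary.

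For statement (c), given a regular horizontal divisor $\mathcal{C}=\operatorname{Spf} R\subset\N_0(x)$, its defining equation $g$ lies in $\mathfrak{m}\setminus\mathfrak{m}^2$ (regularity and horizontality, i.e. $g\notin(p)$), so $g$ has $\mathfrak{m}$-adic order exactly $1$; hence $\pi_x^\ast\mathcal{C}=\widetilde{\mathcal{C}}+\exc_{\widetilde{\N}_0(x)}$, and $\widetilde{\mathcal{C}}\simeq\mathcal{C}$ since blowing up a point on a curve already passing simply through it is an isomorphism. The two line-bundle identities follow: $[\mathcal{O}_{\widetilde{\mathcal{C}}}\otimes^{\mathbb L}\mathcal{O}_{\exc}]$ computes the restriction of $\mathcal{O}(\widetilde{\mathcal{C}})$ to $\exc\simeq\bP^1$, which is $\mathcal{O}(1)$ because $\widetilde{\mathcal{C}}$ meets $\exc$ transversally in one point (the proper transform of a smooth curve through the centre hits the exceptional divisor once); and then $[\mathcal{O}_{\pi_x^\ast\mathcal{C}}\otimes^{\mathbb L}\mathcal{O}_{\exc}]=[\mathcal{O}_{\widetilde{\mathcal{C}}}\otimes^{\mathbb L}\mathcal{O}_{\exc}]+[\mathcal{O}_{\exc}\otimes^{\mathbb L}\mathcal{O}_{\exc}]=\mathcal{O}(1)+\mathcal{O}(-1)=\mathcal{O}(0)$ in $K_0(\bP^1_\F)$, using part (a). Finally, statement (d) is the projection-formula/blow-up invariance of intersection numbers: since $\pi_x$ is proper birational with $R\pi_{x\ast}\mathcal{O}_{\widetilde{\N}_0(x)}=\mathcal{O}_{\N_0(x)}$ (the centre is a point on a regular surface) and $\pi_x^\ast\mathcal{C}_i$ is the total transform, one has $R\pi_{x\ast}\bigl(\mathcal{O}_{\pi_x^\ast\mathcal{C}_1}\otimes^{\mathbb L}\mathcal{O}_{\pi_x^\ast\mathcal{C}_2}\bigr)=\mathcal{O}_{\mathcal{C}_1}\otimes^{\mathbb L}\mathcal{O}_{\mathcal{C}_2}$; taking Euler characteristics and using that $\chi$ is preserved under $R\pi_{x\ast}$ gives the equality. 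Alternatively, I would cite the general blow-up invariance of arithmetic intersection numbers referenced in $\S\ref{invariance-int}$, which covers exactly this situation.
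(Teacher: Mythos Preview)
Your proposal is correct and follows essentially the same approach as the paper. Both arguments pass through the explicit local equation from Lemma~\ref{cyclic-diff}(b2) and read off (a)--(c) from the two standard blow-up charts; your treatment of (b) is in fact more explicit than the paper's, which simply writes the charts as $W[u][[t]]/(\nu p + t^{r(n)} f_1(u,t))$ and asserts the formula for $r(n)$ without carrying out the combinatorial sum you perform. For (d) the paper uses the decomposition $\pi_x^\ast\mathcal{C}_1 = \widetilde{\mathcal{C}}_1 + \exc$ together with part (c) (so that $\exc\cdot\pi_x^\ast\mathcal{C}_2 = 0$) and then the classical projection formula for the pairing, whereas you invoke the derived projection formula via $R\pi_{x\ast}\mathcal{O}_{\widetilde{\N}_0(x)}=\mathcal{O}_{\N_0(x)}$; both are standard and equivalent here.
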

\begin{proof}
    We only give the proof for $n\geq1$ (the case $n=0$ is similar and easier). We first prove (a) and (b). By Lemma \ref{cyclic-diff} (b2), the formal scheme $\widetilde{\N}_0(x)$ is covered by the following two open formal subschemes:
    \begin{equation}
        U_0=\textup{Spf}\,W[u][[t]]\big{/}\left(\nu p+t^{r(n)}\cdot f_1(u,t)\right),\,\,\,\,U_1=\textup{Spf}\,W[v][[t^{\prime}]]\big{/}\left(\nu p+t^{\prime r(n)}\cdot f_2(v,t^{\prime})\right),
        \label{explicit-cover}
    \end{equation}
    where $uv=1$ and $f_1(u,t)\in W[u][[t]],f_2(v,t^{\prime})\in W[v][[t^{\prime}]]$ are two non-unit elements such that $f_1(u,0)\neq0$ and $f_2(v,0)\neq0$. Therefore (a) and (b) are true by the explicit description of $\widetilde{\N}_0(x)$ in (\ref{explicit-cover}).
    \par
    Now we prove (c). By the regularity of the divisor $\mathcal{C}$, the equation $f_{\mathcal{C}}\in \mathcal{O}_{\N_0(x)}\simeq W[[t,t^{\prime}]]/(d_x)$ cutting out the divisor $\mathcal{C}$ must take the following form
    \begin{equation*}
        f_{\mathcal{C}}\equiv a\cdot t+a^{\prime}\cdot t^{\prime}\,\,\textup{mod}\,\,(p,t,t^{\prime})^{2},
    \end{equation*}
    where $a,a^{\prime}\in W$ and at least one of them is a unit. Then under the explicit equation (\ref{explicit-cover}) of $\widetilde{\N}_0(x)$, the equation $f_{\pi^{\ast}\mathcal{C}}$ of the pullback $\pi_x^{\ast}\mathcal{C}$ takes the following form:
    \begin{equation*}
        f_{\pi^{\ast}\mathcal{C}}\vert_{U_0}=t\cdot(a+a^{\prime}u)\,\,\textup{mod}\,\,t^{2},\,\,\,\, f_{\pi^{\ast}\mathcal{C}}\vert_{U_1}=t^{\prime}\cdot(av+a^{\prime})\,\,\textup{mod}\,\,t^{\prime2}.
    \end{equation*}
    Therefore we have $\pi^{\ast}\mathcal{C}=\exc_{\widetilde{\N}_0(x)}+\mathcal{C}_1$ where $\mathcal{C}_1$ is an irreducible divisor on $\widetilde{\N}_0(x)$. The set $\mathcal{C}_1\cap\exc_{\widetilde{\N}_0(x)}$ consists of only one point since at least one of $a,a^{\prime}$ is a unit. Therefore
    \begin{equation*}
        [\mathcal{O}_{\widetilde{\mathcal{C}}}\otimes^{\mathbb{L}}_{\mathcal{O}_{\widetilde{\N}_0(x)}}\mathcal{O}_{\exc_{\widetilde{\N}_0(x)}}]=\mathcal{O}_{\mathbb{P}_{\mathbb{F}}^{1}}(1).
    \end{equation*}
    Notice that the maximal ideal of $\mathcal{C}$ is already principal since $\mathcal{C}$ is a 1-dimensional regular local ring, therefore the strict transform of $\widetilde{\mathcal{C}}$ is isomorphic to $\mathcal{C}$ under the morphism $\pi_x$. Notice that we have $\widetilde{\mathcal{C}}\subset\mathcal{C}_1$, hence $\widetilde{\mathcal{C}}=\mathcal{C}_1$ by the irreducibility of both divisors. Therefore
    \begin{equation*}
        [\mathcal{O}_{\pi_x^{\ast}\mathcal{C}}\otimes^{\mathbb{L}}_{\mathcal{O}_{\widetilde{\N}_0(x)}}\mathcal{O}_{\exc_{\widetilde{\N}_0(x)}}]=[\mathcal{O}_{\widetilde{\mathcal{C}}}\otimes^{\mathbb{L}}_{\mathcal{O}_{\widetilde{\N}_0(x)}}\mathcal{O}_{\exc_{\widetilde{\N}_0(x)}}]+[\mathcal{O}_{\exc_{\widetilde{\N}_0(x)}}\otimes^{\mathbb{L}}_{\mathcal{O}_{\widetilde{\N}_0(x)}}\mathcal{O}_{\exc_{\widetilde{\N}_0(x)}}]=\mathcal{O}_{\mathbb{P}_{\F}^{1}}(0).
    \end{equation*}
    \par
    Now we prove (d). Using the intersection pairing on the divisors on a regular surface, we have
    \begin{align*}
        \chi(\widetilde{\N}_0(x),\mathcal{O}_{\pi_x^{\ast}\mathcal{C}_1}\otimes_{\rO_{\widetilde{\N}_0(x)}}^{\mathbb{L}}\mathcal{O}_{\pi_x^{\ast}\mathcal{C}_2})&=\left(\pi_x^{\ast}\mathcal{C}_1\cdot\pi_x^{\ast}\mathcal{C}_2\right)_{\widetilde{\N}_0(x)}=\left((\widetilde{\mathcal{C}}_1+\exc_{\widetilde{\N}_0(x)})\cdot\pi_x^{\ast}\mathcal{C}_2\right)_{\widetilde{\N}_0(x)}\\
        &=\left(\widetilde{\mathcal{C}}_1\cdot\pi_x^{\ast}\mathcal{C}_2\right)_{\widetilde{\N}_0(x)}\overset{\textup{projection formula}}{=}\left(\mathcal{C}_1\cdot\mathcal{C}_2\right)_{\N_0(x)}\\
        &=\chi(\N_0(x),\mathcal{O}_{\mathcal{C}_1}\otimes_{\rO_{\N_0(x)}}^{\mathbb{L}}\mathcal{O}_{\mathcal{C}_2}).
    \end{align*}
\end{proof}

\subsection{Rapoport-Zink space with level $\Gp$}
\label{description-x0}
Let $x_0\in\B$ be an element such that $\nu_p(q(x_0))=1$. The cyclic deformation space $\mathcal{N}_0(x_0)$ is a Rapoport-Zink space with level $\Gp$. Let 
\begin{equation*}
    \left((X^{\univ}\stackrel{x_0^{\univ}}\longrightarrow X^{\prime\univ}),(\rho^{\univ},\rho^{\prime\univ})\right)
\end{equation*}
be the universal deformation of the quasi-isogeny $x_0$ over the formal scheme $\N_0(x_0)$. By Lemma \ref{cyclic-diff} (b), there exists two elements $t,t^{\prime}\in\rO_{\N_0(x_0)}$ such that
    \begin{equation}
        \rO_{\N_0(x_0)}\simeq W[[t,t^{\prime}]]/\left(\nu p+(t^{p}-t^{\prime})(t-t^{\prime p})\right)
        \label{equation-x0}
    \end{equation}
\par
Let $\mathfrak{m}\subset\rO_{\N_0(x_0)}$ be the maximal ideal of the local ring $\rO_{\N_0(x_0)}$. Let $\N_0(x_0)_{\F}\coloneqq\N_0(x_0)\times_{W}\F$ be the reduction mod $p$ of the formal scheme $\N_0(x_0)$, it is a formal scheme over $\F$. The following lemma is clear from the above description of the local ring $\rO_{\N_0(x_0)}$.
\begin{lemma}
    The following facts hold:
    \begin{itemize}
        \item [(a)]The element $p\in\mathfrak{m}^{2}\backslash\mathfrak{m}^{3}$.
        \item [(b)]The formal scheme $\N_0(x_0)_{\F}$ has two irreducible components $\N_0(x_0)^{\textup{F}}$ and $\N_0(x_0)^{\textup{V}}$ such that:\\
        $\bullet$\,The two irreducible components $\N_0(x_0)^{\textup{F}}$ and $\N_0(x_0)^{\textup{V}}$ intersect properly at the unique closed point of $\N_0(x_0)_{\F}$.\\
        $\bullet$\,The formal scheme $\N_0(x_0)^{\textup{F}}\simeq\textup{Spf}\,\F[[t]]$. Over the formal scheme $\N_0(x_0)^{\textup{F}}$, the universal isogeny $X^{\univ}\stackrel{x_0^{\univ}}\longrightarrow X^{\prime\univ}$ is isomorphic to the Frobenius morphism. \\
        $\bullet$\,The formal scheme $\N_0(x_0)^{\textup{V}}\simeq\textup{Spf}\,\F[[t^{\prime}]]$. Over the formal scheme $\N_0(x_0)^{\textup{V}}$, the universal isogeny $X^{\univ}\stackrel{x_0^{\univ}}\longrightarrow X^{\prime\univ}$ is isomorphic to the Verschiebung morphism. 
    \end{itemize}   
    \label{semistable}
\end{lemma}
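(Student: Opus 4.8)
The plan is to extract everything from the explicit presentation (\ref{equation-x0}) of the complete local ring $\rO_{\N_0(x_0)}\simeq W[[t,t^{\prime}]]/(\nu p+(t^{p}-t^{\prime})(t-t^{\prime p}))$, combined with the description in $\S$\ref{hyperspecial0} of the universal $p$-divisible group over $\N_0$ and its Hodge filtration.

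For (a): since $(t^{p}-t^{\prime})(t-t^{\prime p})=t^{p+1}-(tt^{\prime})^{p}-tt^{\prime}+t^{\prime p+1}$, the defining relation becomes $\nu p=tt^{\prime}-t^{p+1}-t^{\prime p+1}+(tt^{\prime})^{p}$ inside $\rO_{\N_0(x_0)}$. Since $p\geq 3$, every term on the right apart from $tt^{\prime}$ lies in $\mathfrak{m}^{p+1}\subseteq\mathfrak{m}^{4}$, so $p\equiv\nu^{-1}tt^{\prime}\pmod{\mathfrak{m}^{3}}$; in particular $p\in\mathfrak{m}^{2}$. To rule out $p\in\mathfrak{m}^{3}$ I would pass to the $\mathfrak{m}$-adic associated graded ring: assigning $p$ weight $1$, the leading form of $\nu p+(t^{p}-t^{\prime})(t-t^{\prime p})$ in $\textup{gr}_{\mathfrak{m}}W[[t,t^{\prime}]]\simeq\F[\bar p,\bar t,\bar t^{\prime}]$ is a unit multiple of $\bar p$, a nonzerodivisor; hence $\textup{gr}_{\mathfrak{m}}\rO_{\N_0(x_0)}\simeq\F[\bar t,\bar t^{\prime}]$ (which incidentally re-derives the regularity in Lemma \ref{cyclic-diff}(a)), and the class of $p$ in $\mathfrak{m}^{2}/\mathfrak{m}^{3}$ is a nonzero multiple of $\bar t\bar t^{\prime}$.

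For (b): reducing (\ref{equation-x0}) modulo $p$ kills $\nu p$ and gives $\rO_{\N_0(x_0)_{\F}}\simeq\F[[t,t^{\prime}]]/\big((t^{p}-t^{\prime})(t-t^{\prime p})\big)$. The two factors $t^{p}-t^{\prime}$ and $t-t^{\prime p}$ are non-associate prime elements of the regular local ring $\F[[t,t^{\prime}]]$, so $\N_0(x_0)_{\F}$ is the union of the two distinct prime divisors $\N_0(x_0)^{\textup{F}}\coloneqq V(t^{p}-t^{\prime})$ and $\N_0(x_0)^{\textup{V}}\coloneqq V(t-t^{\prime p})$, and eliminating $t^{\prime}$, resp. $t$, yields the stated isomorphisms $\F[[t,t^{\prime}]]/(t^{p}-t^{\prime})\xrightarrow{\sim}\F[[t]]$ and $\F[[t,t^{\prime}]]/(t-t^{\prime p})\xrightarrow{\sim}\F[[t^{\prime}]]$. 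They meet along the ideal $(t^{p}-t^{\prime},\,t-t^{\prime p})$; since $t^{\prime}=t^{p}$ and $t=t^{\prime p}$ force $t=t^{p^{2}}$, hence $t=0$ (as $t^{p^{2}-1}-1$ is a unit) and then $t^{\prime}=0$, this ideal equals $(t,t^{\prime})$, so the two curves meet properly at the unique closed point. That $t$ and $t^{\prime}$ are the parameters of $X^{\univ}$ and $X^{\prime\univ}$ is exactly Lemma \ref{cyclic-diff}(b1), identifying them with the images of the coordinate $t_{0}$ on $\N_0\simeq\textup{Spf}\,W[[t_{0}]]$ under $\textup{s}_{x_0}^{\#}$ and $\textup{t}_{x_0}^{\#}$.

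It remains to pin down the isogeny type on each component, which I expect to be the only step not purely formal from the ring presentation. On $\N_0(x_0)^{\textup{F}}$ one has $t^{\prime}=t^{p}$, so by the explicit Hodge filtration $\rO_{\N_0}\cdot(e_{2}+t_{0}e_{1})$ of $\S$\ref{hyperspecial0} the mod-$p$ Hodge filtration of $X^{\prime\univ}$ is the Frobenius pullback of that of $X^{\univ}$; hence $X^{\prime\univ}\simeq (X^{\univ})^{(p)}$ compatibly with the framings, and the cyclic degree-$p$ isogeny $x_{0}^{\univ}$ is forced to be the relative Frobenius (its kernel being the unique connected order-$p$ subgroup scheme of $X^{\univ}[p]$). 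Symmetrically, $t=t^{\prime p}$ on $\N_0(x_0)^{\textup{V}}$ gives $X^{\univ}\simeq (X^{\prime\univ})^{(p)}$, so $x_{0}^{\univ}$ is the Verschiebung, the unique degree-$p$ isogeny $(X^{\prime\univ})^{(p)}\to X^{\prime\univ}$ composing with Frobenius to multiplication by $p$. This final identification of $x_{0}^{\univ}$ with $F$ (resp. $V$) from the parameter identity $t^{\prime}=t^{p}$ (resp. $t=t^{\prime p}$) is the main point; it rests on crystalline Dieudonn\'e theory (or the theory of displays) for $\N_0$, and can alternatively be extracted from the Katz--Mazur analysis of $\Gamma_{0}(p)$-structures invoked in Lemma \ref{cyclic-diff}.
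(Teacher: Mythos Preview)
Your proof is correct and follows the same approach as the paper, which gives only a one-sentence argument citing the explicit presentation (\ref{equation-x0}) and asserting that ``all the statements follow easily.'' You have simply supplied the details the paper leaves implicit: the $\mathfrak{m}$-adic analysis for (a), the factorisation modulo $p$ and the computation $(t^{p}-t^{\prime},\,t-t^{\prime p})=(t,t^{\prime})$ for (b), and the identification of the universal isogeny with Frobenius/Verschiebung via the relation $t^{\prime}=t^{p}$ (resp.\ $t=t^{\prime p}$) and the Katz--Mazur description underlying Lemma \ref{cyclic-diff}.
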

\begin{proof}
    By the equation (\ref{equation-x0}), the irreducible component $\N_0(x_0)^{\textup{F}}$ is given by the equation $t^{p}-t^{\prime}$, while the irreducible component $\N_0(x_0)^{\textup{V}}$ is given by the equation $t-t^{\prime p}$. All the statements follow easily.
\end{proof}
\begin{proposition}
    There exists a basis $e,f$ for the rank 2 free $\rO_{\N_0(x_0)}$-module $\mathbb{D}(X^{\univ})_{\N_0(x_0)}$ and a basis $e^{\prime},f^{\prime}$ for the rank 2 free $\rO_{\N_0(x_0)}$-module $\mathbb{D}(X^{\prime\univ})_{\N_0(x_0)}$ such that the Hodge filtrations are given by
    \begin{equation}
        0\rightarrow\textup{Fil}^{1}\mathbb{D}(X^{\univ})_{\N_0(x_0)}=\rO_{\N_0(x_0)}\cdot(f+x e)\rightarrow\mathbb{D}(X^{\univ})_{\N_0(x_0)},
    \end{equation}
    \begin{equation}
        0\rightarrow\textup{Fil}^{1}\mathbb{D}(X^{\prime\univ})_{\N_0(x_0)}=\rO_{\N_0(x_0)}\cdot(f^{\prime}+y e^{\prime})\rightarrow\mathbb{D}(X^{\prime\univ})_{\N_0(x_0)},
    \end{equation}
    and the morphism $\mathbb{D}(x_0)_{\N_0(x_0)}$ takes the following form
    \begin{equation*}
        \mathbb{D}(x_0)_{\N_0(x_0)}[e,f]=[e^{\prime},f^{\prime}]\begin{pmatrix}
            0 & -p\\
            1 & 0
        \end{pmatrix},
    \end{equation*}
    Moreover, the local ring $\rO_{\N_0(x_0)}$ is isomorphic to $ W[[x,y]]/(p+xy)$. 
    \label{local-gamma0p}
\end{proposition}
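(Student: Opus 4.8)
The plan is to compute $\N_0(x_0)$ directly by Grothendieck--Messing crystalline deformation theory; this yields at once the two Dieudonn\'e crystals, their Hodge filtrations, the matrix of $\mathbb{D}(x_0)$, and the presentation $W[[x,y]]/(p+xy)$. Write $M=\mathbb{D}(\X)_W$, a free $W$-module of rank $2$, and let $\ell\subset M\otimes_W\F$ be the Hodge line $\textup{Fil}^1\mathbb{D}(\X)_{\F}$ fixed in $\S$\ref{hyperspecial0}. Since the universal framings $\rho^{\univ},\rho^{\prime\univ}$ have height $0$, they induce isomorphisms of integral crystals, hence trivializations $\mathbb{D}(X^{\univ})_{\N_0(x_0)}\cong M\otimes_W\rO_{\N_0(x_0)}\cong\mathbb{D}(X^{\prime\univ})_{\N_0(x_0)}$, under which $\mathbb{D}(x_0)_{\N_0(x_0)}$ becomes the constant map $\mathbb{D}(x_0)\otimes\mathrm{id}$, where $\mathbb{D}(x_0)\in\textup{End}_W(M)$ is induced by $x_0\in\textup{End}(\X)=\OB$.

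The next step is a linear-algebra normalization over $W$. Since $\nu_p(q(x_0))=1$, the element $x_0$ is a uniformizer of $\OB$ up to a unit (which, acting as an automorphism of $\X$, may be absorbed into the framings), and $\det\mathbb{D}(x_0)=q(x_0)\in pW^{\times}$; using that an endomorphism of $\X$ commutes with the Frobenius on $M$, one checks moreover that $\mathbb{D}(x_0)$ annihilates $\ell$. After rescaling $x_0$ by a unit of $\OB$ so that $\det\mathbb{D}(x_0)=p$, I would produce a $W$-basis $e,f$ of the source copy of $M$ and a $W$-basis $e',f'$ of the target copy, each adapted to $\ell$ (the second vector reducing mod $p$ to a generator of $\ell$), with
\begin{equation*}
    \mathbb{D}(x_0)(e)=f',\qquad\mathbb{D}(x_0)(f)=-pe'.
\end{equation*}
The condition $\nu_p(q(x_0))=1$ enters precisely in checking that the two resulting base-change matrices lie in $\textup{GL}_2(W)$. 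Tensoring with $\rO_{\N_0(x_0)}$ yields the asserted bases of the two crystals and the matrix form of $\mathbb{D}(x_0)_{\N_0(x_0)}$.

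Because $e,f,e',f'$ are adapted to $\ell$ and the Hodge filtration of a one-dimensional height $2$ formal group is a rank $1$ direct summand reducing to $\ell$, there are unique $x,y\in\mathfrak{m}_{\N_0(x_0)}$ with $\textup{Fil}^1\mathbb{D}(X^{\univ})_{\N_0(x_0)}=\rO_{\N_0(x_0)}(f+xe)$ and $\textup{Fil}^1\mathbb{D}(X^{\prime\univ})_{\N_0(x_0)}=\rO_{\N_0(x_0)}(f'+ye')$, which gives the filtration statements. To pin down the ring: for an Artinian local $W$-algebra $A$ with residue field $\F$, Grothendieck--Messing identifies the deformations to $A$ of the triple $(\X,\X,x_0)$ together with its framings with the pairs of rank $1$ lifts $\mathcal{F}=A(f+\xi e)$, $\mathcal{F}'=A(f'+\eta e')$ with $\xi,\eta\in\mathfrak{m}_A$ such that $\mathbb{D}(x_0)(\mathcal{F})\subseteq\mathcal{F}'$; this is the only constraint, since $x_0$ has degree $p$ and hence automatically cyclic kernel, and a lift of a morphism of $p$-divisible groups, if it exists, is unique. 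As $\mathbb{D}(x_0)(f+\xi e)=\xi f'-pe'$, the inclusion $\xi f'-pe'\in A(f'+\eta e')$ is equivalent to $\xi\eta+p=0$. Therefore $\N_0(x_0)$ pro-represents $A\mapsto\{(\xi,\eta)\in\mathfrak{m}_A^{2}:\xi\eta+p=0\}$, i.e.\ $\rO_{\N_0(x_0)}\cong W[[x,y]]/(p+xy)$ with $(x,y)$ the universal pair; this is consistent with the presentation (\ref{equation-x0}) from Lemma \ref{cyclic-diff} after a change of coordinates.

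I expect the main difficulty to be foundational rather than computational: one must justify carefully, within the nilpotent crystalline formalism of the paper, the trivialization of the crystals by the height $0$ framings, the identity $\mathbb{D}(x_0)_{\N_0(x_0)}=\mathbb{D}(x_0)\otimes\mathrm{id}$, and the precise form of the Grothendieck--Messing lifting criterion for the isogeny over the relevant class of thickenings. The linear-algebra normalization is elementary given the explicit action of $\OB$ on $M$, but verifying that the adapted bases can be taken over $W$, rather than merely over $\F$, is exactly where the hypothesis $\nu_p(q(x_0))=1$ is used.
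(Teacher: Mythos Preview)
Your strategy is sound and gives a correct proof, but it differs from the paper's in two respects worth noting.

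First, you assert that under the framing trivializations the map $\mathbb{D}(x_0)_{\N_0(x_0)}$ is exactly $\mathbb{D}(x_0)\otimes\mathrm{id}$. The paper does not invoke this; instead it only uses that the matrix reduces to $\begin{psmallmatrix}0&-p\\1&0\end{psmallmatrix}$ at the closed point, allows a priori perturbations $h,m\in\mathfrak{m}$, and then removes them by an $\rO_{\N_0(x_0)}$-linear change of basis $e=e_1,\ f=e_2-me_1,\ e'=e_1',\ f'=e_2'+he_1'$. Your shortcut is legitimate (functoriality of crystals along the height-$0$ framings forces the constancy), but the paper's argument avoids having to justify that carefully.

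Second, and more substantively, the two proofs diverge on how they establish $\rO_{\N_0(x_0)}\cong W[[x,y]]/(p+xy)$. You compute the deformation functor on Artinian $W$-algebras directly via Grothendieck--Messing and read off the pro-representing ring. The paper instead takes as input the Katz--Mazur presentation of $\rO_{\N_0(x_0)}$ (Lemma~\ref{cyclic-diff}) and the structure of its special fiber (Lemma~\ref{semistable}): having derived the relation $p+xy=0$ from the filtration condition, it observes that $x=0$ and $y=0$ cut out the two irreducible components of $\N_0(x_0)_{\F}$, which are known to meet transversally at the closed point, whence $\mathfrak{m}=(x,y)$. Your route is more self-contained (it does not need Katz--Mazur's global computation) but requires care with the PD-hypotheses in Grothendieck--Messing, as you yourself flag; the paper's route is quicker given the available input. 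Your remark that degree-$p$ isogenies are automatically cyclic is correct here, and in the paper's language this is exactly the identity $\mathcal{Z}_\N(x_0)=\mathcal{D}_\N(x_0)=\N_0(x_0)$ when $\nu_p(q(x_0))=1$.
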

\begin{proof}
   Scaling the basis $[e_1,e_2]$ of the rank 2 free $W$-module $\mathbb{D}(\X)$ in the end of $\S$\ref{hyperspecial0} by invertible elements in $W$, we can assume that the morphism $\mathbb{D}(x_0)$ takes the form $\mathbb{D}(x_0)[e_1,e_2]=[e_1,e_2]\begin{pmatrix}
            0 & -p\\
            1 & 0
        \end{pmatrix}$.
    \par
    As $\rO_{\N_0(x_0)}$-modules, we have $\mathbb{D}(X^{\univ})_{\N_0(x_0)}\simeq\mathbb{D}(\X)\otimes_{W}\rO_{\N_0(x_0)}$ and $\mathbb{D}(X^{\prime\univ})_{\N_0(x_0)}\simeq\mathbb{D}(\X)\otimes_{W}\rO_{\N_0(x_0)}$. By the closed immersion $\textup{Spec}\,\F\rightarrow\N_0(x_0)$ and the property of crystals, the morphism $\mathbb{D}(x_0)_{\N_0(x_0)}$ takes the following form after scaling the element $e_1$ and $e_2$ by invertible elements in the local ring $\rO_{\N_0(x_0)}$:
    \begin{equation*}
        \mathbb{D}(x_0)_{\N_0(x_0)}[e_1,e_2]=[e_1^{\prime},e_2^{\prime}]\begin{pmatrix}
            h & -p+mh\\
            1 & m
        \end{pmatrix}
    \end{equation*}
    for some elements $h,m\in\mathfrak{m}$. Let $e=e_1$, $f=e_2-me_1$, $e^{\prime}=e_1^{\prime}$, $f^{\prime}=e_2^{\prime}+he_1^{\prime}$. We have
    \begin{equation}
        \mathbb{D}(x_0)_{\N_0(x_0)}[e,f]=[e^{\prime},f^{\prime}]\begin{pmatrix}
            0 & -p\\
            1 & 0
        \end{pmatrix}.
        \label{proof-standard-p}
    \end{equation}
    \par
    Recall that $\rO_{\N_0}\simeq W[[t]]$. Therefore the local ring $\rO_{\N}$ is isomorphic to $W[[t,t^{\prime}]]$. We can choose two uniformizers $t$ and $t^{\prime}$ such that the Hodge filtrations on $\mathbb{D}(X^{\univ})_{\N}$ and $\mathbb{D}(X^{\prime\univ})_{\N}$ are given by
    \begin{equation*}
        0\rightarrow\textup{Fil}^{1}\mathbb{D}(X^{\univ})_{\N}=\rO_{\N}\cdot (e_2+te_1)\rightarrow\mathbb{D}(X^{\univ})_{\N}.
    \end{equation*}
    \begin{equation*}
        0\rightarrow\textup{Fil}^{1}\mathbb{D}(X^{\prime\univ})_{\N}=\rO_{\N}\cdot (e_2^{\prime}+t^{\prime}e_1^{\prime})\rightarrow\mathbb{D}(X^{\univ})_{\N}.
    \end{equation*}
    Then the Hodge filtrations on $\mathbb{D}(X^{\univ})_{\N_0(x_0)}$ and $\mathbb{D}(X^{\prime\univ})_{\N_0(x_0)}$ are given by
    \begin{equation*}
        0\rightarrow\rO_{\N_0(x_0)}\cdot (e_2+te_1)=\rO_{\N_0(x_0)}\cdot \left(f+(t+m)e\right)\rightarrow\mathbb{D}(X^{\univ})_{\N_0(x_0)}.
    \end{equation*}
    \begin{equation*}
        0\rightarrow\rO_{\N_0(x_0)}\cdot (e_2^{\prime}+t^{\prime}e_1^{\prime})=\rO_{\N_0(x_0)}\cdot \left(f^{\prime}+(t^{\prime}-h)e^{\prime}\right)\rightarrow\mathbb{D}(X^{\univ})_{\N_0(x_0)}.
    \end{equation*}
    \par
    Let $x=t+m\in\mathfrak{m}, y=t^{\prime}-h\in\mathfrak{m}$. Since $x_0$ lifts to an isogeny over the formal scheme $\N_0(x_0)$, we have $\mathbb{D}(x_0)_{\N_0(x_0)}\left(f+xe\right)\subset\rO_{\N_0(x_0)}\cdot \left(f^{\prime}+ye^{\prime}\right)$. By (\ref{proof-standard-p}), we have $\mathbb{D}(x_0)_{\N_0(x_0)}\left(f+xe\right)=xf^{\prime}-pe^{\prime}\in \rO_{\N_0(x_0)}\cdot \left(f^{\prime}+ye^{\prime}\right)$. Therefore $p+xy=0$.
    \par
    Next we want to show that $\mathfrak{m}=(x,y)$. By the equation $p+xy=0$, the two irreducible components of the formal scheme $\N_0(x_0)_{\F}$ are given by $x=0$ and $y=0$. By Lemma \ref{semistable}, they intersect properly at the unique closed point of $\N_0(x_0)$, hence $\mathfrak{m}=(x,y)$. Therefore we conclude that $\rO_{\N_0(x_0)}\simeq W[[x,y]]/(p+xy)$.
\end{proof}
\begin{remark}
    We shall make the convention that under the isomorphism $\rO_{\N_0(x_0)}\simeq W[[x,y]]/(p+xy)$, the equation $y=0$ is the equation of the irreducible component $\N_0(x_0)^{\textup{F}}$, while the equation $x=0$ is the equation of the irreducible component $\N_0(x_0)^{\textup{V}}$.
    \label{convention-FV}
\end{remark}
\begin{lemma}
    Let $t,t^{\prime}\in\rO_{\N_0(x_0)}$ be two elements guaranteed by Lemma \ref{cyclic-diff} (b) for the element $x_0$. Then there exists invertible elements $\nu_1,\nu_2,\omega_2,\omega_2\in W[[x,y]]$ such that
    \begin{equation*}
        \textup{s}_x^{\#}(t)=\nu_1x+\nu_2y^{p}+p\cdot h,
    \end{equation*}
    \begin{equation*}
        \textup{t}_x^{\#}(t^{\prime})=\omega_1y+\omega_2x^{p}+p\cdot h^{\prime},
    \end{equation*}
    where $h$ and $h^{\prime}$ are two elements in the local ring $\rO_{\N_0(x_0)}$.
    \label{coordinate-change}
\end{lemma}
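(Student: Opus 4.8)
The plan is to pass to the special fibre and compare the two presentations of $\rO_{\N_0(x_0)}$ that are already at our disposal. Since $\nu_p(q(x_0))=1$, the presentation of Lemma \ref{cyclic-diff}(b2) (with the empty product, i.e.\ equation (\ref{equation-x0})) reads $\rO_{\N_0(x_0)}\simeq W[[t,t^{\prime}]]/\bigl(\nu p+(t^{p}-t^{\prime})(t-t^{\prime p})\bigr)$, where $t=\textup{s}_x^{\#}(t_0)$ and $t^{\prime}=\textup{t}_x^{\#}(t_0)$; on the other hand Proposition \ref{local-gamma0p} gives $\rO_{\N_0(x_0)}\simeq W[[x,y]]/(p+xy)$. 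Reducing modulo $p$ yields $\rO_{\N_0(x_0)}/p\simeq\F[[x,y]]/(xy)$, whose two irreducible components are, in the coordinates $(x,y)$, the loci $\{y=0\}=\N_0(x_0)^{\textup{F}}$ and $\{x=0\}=\N_0(x_0)^{\textup{V}}$ (Remark \ref{convention-FV}), and, in the coordinates $(t,t^{\prime})$, the loci $\{t^{p}-t^{\prime}=0\}$ and $\{t-t^{\prime p}=0\}$ respectively (proof of Lemma \ref{semistable}) — these describe the same two closed subschemes. Because $(x)\cap(y)=0$ inside $\F[[x,y]]/(xy)$, the restriction map $\F[[x,y]]/(xy)\hookrightarrow\F[[x]]\times\F[[y]]$, $f\mapsto(f\bmod y,\ f\bmod x)$, is injective, so $\bar t$ and $\bar t^{\prime}$ are pinned down by their restrictions to the two components, which is what I will compute.

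On $\N_0(x_0)^{\textup{F}}$ the first presentation specializes (set $t^{\prime}=t^{p}$) to $\F[[t]]$, in which the image of $t$ is a uniformizer, while the second presentation specializes (set $y=0$) to $\F[[x]]$ with uniformizer $x$; hence $\bar t|_{\N_0(x_0)^{\textup{F}}}=\bar\nu_{1}x$ for some unit $\bar\nu_{1}\in\F[[x]]^{\times}$, and therefore $\bar t^{\prime}|_{\N_0(x_0)^{\textup{F}}}=\bigl(\bar t|_{\N_0(x_0)^{\textup{F}}}\bigr)^{p}=\bar\nu_{1}^{p}x^{p}$ since $t^{\prime}=t^{p}$ on this component. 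Symmetrically, on $\N_0(x_0)^{\textup{V}}$ (set $t=t^{\prime p}$, resp.\ $x=0$) the image of $t^{\prime}$ is a uniformizer of $\F[[y]]$, so $\bar t^{\prime}|_{\N_0(x_0)^{\textup{V}}}=\bar\omega_{1}y$ for a unit $\bar\omega_{1}\in\F[[y]]^{\times}$ and $\bar t|_{\N_0(x_0)^{\textup{V}}}=\bar\omega_{1}^{p}y^{p}$. Viewing $\bar\nu_{1}\in\F[[x]]$ and $\bar\omega_{1}\in\F[[y]]$ inside $\F[[x,y]]/(xy)$, the element $\bar\nu_{1}x+\bar\omega_{1}^{p}y^{p}$ has exactly the restrictions of $\bar t$ computed above, and $\bar\nu_{1}^{p}x^{p}+\bar\omega_{1}y$ those of $\bar t^{\prime}$; by the injectivity from the previous paragraph, $\bar t=\bar\nu_{1}x+\bar\omega_{1}^{p}y^{p}$ and $\bar t^{\prime}=\bar\nu_{1}^{p}x^{p}+\bar\omega_{1}y$ in $\F[[x,y]]/(xy)$.

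To finish I would lift $\bar\nu_{1},\ \bar\omega_{1}^{p},\ \bar\omega_{1},\ \bar\nu_{1}^{p}$ to units $\nu_{1},\nu_{2},\omega_{1},\omega_{2}\in W[[x,y]]^{\times}$ (any lift of a power series with unit constant term is again a unit); then $t-\nu_{1}x-\nu_{2}y^{p}$ and $t^{\prime}-\omega_{1}y-\omega_{2}x^{p}$ vanish modulo $p$ in $\rO_{\N_0(x_0)}$, hence equal $p\,h$ and $p\,h^{\prime}$ for suitable $h,h^{\prime}\in\rO_{\N_0(x_0)}$, which is the claimed form. I expect the only delicate point to be the component bookkeeping in the second paragraph: one must correctly match the labelling of the two special-fibre components furnished by the Katz--Mazur presentation (Lemma \ref{cyclic-diff}(b2) and the proof of Lemma \ref{semistable}) with the convention fixed in Remark \ref{convention-FV}, so as to be sure that it is $x$ — and not $y$ — that is a uniformizer on $\N_0(x_0)^{\textup{F}}$; once that is settled everything else is formal.
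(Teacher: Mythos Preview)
Your proposal is correct and follows essentially the same strategy as the paper: reduce modulo $p$ and compare the two presentations $\F[[x,y]]/(xy)$ and $\F[[t,t^{\prime}]]/\bigl((t^{p}-t^{\prime})(t-t^{\prime p})\bigr)$ of the special fibre, using that the irreducible components match under the labelling of Lemma \ref{semistable} and Remark \ref{convention-FV}. The paper writes $x=\nu\,(t-t^{\prime p})$, $y=\omega\,(t^{p}-t^{\prime})$ in $\rO_{\N_0(x_0)}/p$ and then says one ``solves'' for $t,t^{\prime}$; your restriction-to-components argument is simply a clean way of carrying out that solving step explicitly.
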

\begin{proof}
    Under the coordinate in Proposition \ref{local-gamma0p}, the quotient ring $\mathcal{O}_{\N_0(x_0)}/(p)\simeq\mathbb{F}[[x,y]]/(x\cdot y)$. While under the coordinate $t,t^{\prime}$, we have $\mathcal{O}_{\N_0(x_0)}/(p)\simeq\mathbb{F}[[t,t^{\prime}]]/\left((t^{p}-t^{\prime})(t-t^{\prime p})\right)$. Therefore there exists invertible elements $\nu,\omega\in\mathcal{O}_{\N_0(x_0)}^{\times}$ such that
    \begin{equation*}
        x=\nu\cdot(t-t^{\prime p}),\,\, y=\omega\cdot(t^{p}-t^{\prime})\,\,\,\textup{in}\,\,\mathcal{O}_{\N_0(x_0)}/(p).
    \end{equation*}
    The claim in the lemma follows by solving the above equations for $t,t^{\prime}$.
\end{proof}

\subsection{The product space $\N_0(x_0)\times\N_0(x_0)$}
\label{product-space}
The formal scheme $\N(x_0)\coloneqq\N_0(x_0)\times_W\N_0(x_0)$ is represented by the local ring $W[[x_1,y_1,x_2,y_2]]/(p+x_1y_1,p+x_2y_2)$ by Lemma \ref{local-gamma0p}. The sequence $p+x_1y_1,p+x_2y_2=x_2y_2-x_1y_1$ is not a regular sequence since the second element is not regular. Hence the local ring is not a regular local ring. Therefore the formal scheme $\N(x_0)^{2}$ is not regular.
\par
Notice that $\N(x_0)\times_{W}{\F}\simeq\N_0(x_0)_{\F}\times_{\F}\N_0(x_0)_{\F}$. The formal scheme $\N(x_0)_{\F}$ has the following 4 irreducible components by Lemma \ref{semistable}.
\begin{align}
    \N(x_0)^{\textup{FF}}=\N_0(x_0)^{\textup{F}}\times\N_0(x_0)^{\textup{F}},\,\,\,\,&\N(x_0)^{\textup{FV}}=\N_0(x_0)^{\textup{F}}\times\N_0(x_0)^{\textup{V}},\,\,\\
    \N(x_0)^{\textup{VV}}=\N_0(x_0)^{\textup{V}}\times\N_0(x_0)^{\textup{V}},\,\,\,\,&\N(x_0)^{\textup{VF}}=\N_0(x_0)^{\textup{V}}\times\N_0(x_0)^{\textup{F}}.\notag
\end{align}
Let $\widehat{\mathbb{A}}_{\F}^{2}$ be the completion of the 2-dimensional affine plane at the point $(0,0)$. By Lemma \ref{semistable} (2), the formal schemes $\N(x_0)^{\textup{FF}}, \N(x_0)^{\textup{VV}}, \N(x_0)^{\textup{FV}}, \N(x_0)^{\textup{VF}}$ are all isomorphic to $\widehat{\mathbb{A}}_{\F}^{2}$.
\par
We define two morphisms $s_{+},s_{-}:\N(x_0)\rightarrow\N$ by the moduli interpretations: let $S$ be a scheme in the category $\textup{Nilp}_{W}$,
\begin{itemize}
    \item [$s_{+}$:] $\left(\left(X_1\stackrel{\pi_1}\rightarrow X_1^{\prime},\left(\rho_1,\rho_1^{\prime}\right)\right),\left(X_2\stackrel{\pi_2}\rightarrow X_2^{\prime},\left(\rho_2,\rho_2^{\prime}\right)\right)\right)\in\N(x_0)(S)\mapsto\left((X_1,\rho_1),(X_2,\rho_2)\right)\in\N(S)$.
    \item [$s_{-}$:] $\left(\left(X_1\stackrel{\pi_1}\rightarrow X_1^{\prime},\left(\rho_1,\rho_1^{\prime}\right)\right),\left(X_2\stackrel{\pi_2}\rightarrow X_2^{\prime},\left(\rho_2,\rho_2^{\prime}\right)\right)\right)\in\N(x_0)(S)\mapsto\left((X_1^{\prime},\rho_1^{\prime}),(X_2^{\prime},\rho_2^{\prime})\right)\in\N(S)$.
\end{itemize}

\subsection{Local Hecke correspondences}
\label{local-correspondences}
Let $x_0\in\B$ be an element such that $\nu_{p}(q(x_0))=1$. Let $(\cdot)^{\prime}:\B\rightarrow\B,\,\,b\mapsto b^{\prime}\coloneqq x_0\cdot b\cdot x_0^{-1}$ be the conjugation-by-$x_0$ automorphism on $\B$. Let $x$ be an element in $\B$ such that $n=\nu_p(q(x))\geq1$. For an object $(X\stackrel{\pi}\rightarrow X^{\prime},(\rho,\rho^{\prime}))\in\N_0(x)(S)$, there is a standard decomposition of the cyclic isogeny $\pi$ into $n$ degree $p$ isogenies:
\begin{equation}
    \pi: X=X_0\stackrel{\pi_1}\rightarrow X_1\stackrel{\pi_2}\rightarrow\cdots\stackrel{\pi_{n-1}}\rightarrow X_{n-1}\stackrel{\pi_n}\rightarrow X_n=X^{\prime}.
    \label{stdfac}
\end{equation}
We refer to \cite[$\S$6.7]{KM85} for the details on the notions of standard decomposition.
\par
Let $S$ be a $W$-scheme such that $p$ is locally nilpotent on $S$, for any object $(X\stackrel{\pi}\rightarrow X^{\prime},(\rho,\rho^{\prime}))\in\N_0(x)(S)$ with the standard decomposition of $\pi$ as (\ref{stdfac}). We construct two height $0$ quasi-isogenies $\rho_1:\X\times_{\F}\overline{S}\rightarrow X_1\times_{S}\overline{S}$ and $\rho_{n-1}:\X\times_{\F}\overline{S}\rightarrow X_{n-1}\times_{S}\overline{S}$ in the following way,
\begin{equation*}
\rho_1=(\pi_1\times_{S}\overline{S})\circ\rho\circ(x_0\times_\F\overline{S})^{-1},\,\,\,\rho_{n-1}=(\pi_n\times_{S}\overline{S})^{-1}\circ\rho\circ(x_0\times_\F\overline{S}).
\end{equation*}
\begin{equation*}
\rho_1^{-}=(\pi_1^{\vee}\times_{S}\overline{S})^{-1}\circ\rho\circ(x_0\times_\F\overline{S}),\,\,\,\rho_{n-1}^{+}=(\pi_n^{\vee}\times_{S}\overline{S})\circ\rho\circ(x_0\times_\F\overline{S})^{-1}.
\end{equation*}
Then we have
\begin{equation*}
    \left(X\stackrel{\pi_1}\rightarrow X_1,(\rho,\rho_1)\right), \left(X_{n-1}\stackrel{\pi_n}\rightarrow X^{\prime},(\rho_{n-1},\rho^{\prime})\right), \left(X_1\stackrel{\pi_1^{\vee}}\rightarrow X,(\rho_1^{-},\rho)\right), \left(X^{\prime}\stackrel{\pi_n^{\vee}}\rightarrow X_{n-1},(\rho^{\prime},\rho_{n-1}^{+})\right)\in\N_0(x_0)(S).
\end{equation*}
Define four morphisms $\st_x^{\textup{I}+}, \st_x^{\textup{I}-}, \st_x^{\textup{II}+}, \st_x^{\textup{II}-}$ as follows,
\begin{itemize}
     \item[$\st_x^{\textup{I}+}$:] $(X\stackrel{\pi}\rightarrow X^{\prime},(\rho,\rho^{\prime}))\in\N_0(x)(S)\mapsto\left(\left(X\stackrel{\pi_1}\rightarrow X_1,(\rho,\rho_1)\right),\left(X_{n-1}\stackrel{\pi_n}\rightarrow X^{\prime},(\rho_{n-1},\rho^{\prime})\right)\right)\in\N(x_0)(S)$,
      \item[$\st_x^{\textup{I}-}$:] $(X\stackrel{\pi}\rightarrow X^{\prime},(\rho,\rho^{\prime}))\in\N_0(x)(S)\mapsto\left(\left(X_{n-1}\stackrel{\pi_n}\rightarrow X^{\prime},(\rho_{n-1},\rho^{\prime})\right),\left(X\stackrel{\pi_1}\rightarrow X_1,(\rho,\rho_1)\right)\right)\in\N(x_0)(S)$,
    \item[$\st_x^{\textup{II}+}$:] $(X\stackrel{\pi}\rightarrow X^{\prime},(\rho,\rho^{\prime}))\in\N_0(x)(S)\mapsto\left(\left(X\stackrel{\pi_1}\rightarrow X_1,(\rho,\rho_1)\right),\left(X^{\prime}\stackrel{\pi_n^{\vee}}\rightarrow X_{n-1},(\rho^{\prime},\rho_{n-1}^{+})\right)\right)\in\N(x_0)(S)$,
    \item[$\st_x^{\textup{II}-}$:] $(X\stackrel{\pi}\rightarrow X^{\prime},(\rho,\rho^{\prime}))\in\N_0(x)(S)\mapsto\left(\left(X_1\stackrel{\pi_1^{\vee}}\rightarrow X,(\rho_1^{-},\rho)\right),\left(X_{n-1}\stackrel{\pi_n}\rightarrow X^{\prime},(\rho_{n-1},\rho^{\prime})\right)\right)\in\N(x_0)(S)$.
\end{itemize}
\begin{lemma}
    Let $x$ be an element in $\B$ such that $n=\nu_p(q(x))\geq1$. Then the four morphisms $\st_x^{\textup{I}+}, \st_x^{\textup{I}-}, \st_x^{\textup{II}+},\st_x^{\textup{II}-}:\N_0(x)\rightarrow\N(x_0)$ are closed immersions.
\end{lemma}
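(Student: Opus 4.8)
\emph{Plan.} The idea is to reduce everything to Lemma~\ref{cyclic-diff}, which already tells us that $\st_x\colon\N_0(x)\to\N$ is a closed immersion. The key elementary input is that $\N_0(x)$, $\N(x_0)$ and $\N$ are all formal spectra of complete Noetherian local rings with residue field $\F$, so that a morphism among them is a closed immersion exactly when the induced homomorphism of structure rings is surjective. Hence if we can produce, for each of the four morphisms (generically written $\st_x^{\bullet}$), an auxiliary morphism $g\colon\N(x_0)\to\N$ with $g\circ\st_x^{\bullet}=\st_x$, then the surjection $\st_x^{\#}=(\st_x^{\bullet})^{\#}\circ g^{\#}$ forces $(\st_x^{\bullet})^{\#}$ to be surjective, i.e.\ $\st_x^{\bullet}$ is a closed immersion. (If $g\circ\st_x^{\bullet}$ only agrees with $\st_x$ after composing with the canonical swap of the two factors of $\N=\N_0\times_W\N_0$, this changes nothing, the swap being an isomorphism.) So the whole argument comes down to exhibiting such maps $g$.

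\emph{The auxiliary morphisms.} These are built from the morphisms $s_{+},s_{-}\colon\N(x_0)\to\N$ of $\S$\ref{product-space} — which record the ordered pair of source, resp.\ target, $p$-divisible groups with their framings — and the projections $\pr_1,\pr_2\colon\N\to\N_0$. Unwinding the moduli definitions of $\S$\ref{local-correspondences} on $S$-points (hence, by Yoneda, as morphisms of formal schemes): in the standard factorization~(\ref{stdfac}) the source of $X=X_0\xrightarrow{\pi_1}X_1$ with its framing is $(X,\rho)$, the target of $X_{n-1}\xrightarrow{\pi_n}X_n=X'$ is $(X',\rho')$, the source of $X'\xrightarrow{\pi_n^{\vee}}X_{n-1}$ is again $(X',\rho')$, and the target of $X_1\xrightarrow{\pi_1^{\vee}}X$ is again $(X,\rho)$. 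Consequently $s_{+}\circ\st_x^{\textup{II}+}=\st_x$, $s_{-}\circ\st_x^{\textup{II}-}=\st_x$, and $(\pr_1\circ s_{+},\ \pr_2\circ s_{-})\circ\st_x^{\textup{I}+}=\st_x$; finally $\st_x^{\textup{I}-}$ is the composite of $\st_x^{\textup{I}+}$ with the isomorphism of $\N(x_0)=\N_0(x_0)\times\N_0(x_0)$ interchanging the two factors. Each identity is a direct substitution, using that the constructed quasi-isogenies $\rho_1,\rho_{n-1},\rho_1^{-},\rho_{n-1}^{+}$ are the prescribed height-$0$ framings attached to the inner terms of~(\ref{stdfac}).

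\emph{Conclusion and the one delicate point.} Lemma~\ref{cyclic-diff} exhibits $\rO_{\N_0(x)}$ as a quotient of $\rO_{\N}$ by a single equation, so $\st_x^{\#}$ is surjective; the factorizations above then force $(\st_x^{\textup{I}+})^{\#}$, $(\st_x^{\textup{II}+})^{\#}$ and $(\st_x^{\textup{II}-})^{\#}$ to be surjective, and $\st_x^{\textup{I}-}$ is a closed immersion as a composite of a closed immersion with an isomorphism. The only point needing care is a bookkeeping one, already settled in the construction preceding the statement: one must know that $\rho_1,\rho_{n-1},\rho_1^{-},\rho_{n-1}^{+}$ genuinely have height $0$, so that the quadruples displayed there are bona fide $S$-points of $\N_0(x_0)$ in which the $0$-th and $n$-th positions carry exactly the data $(X,\rho)$ and $(X',\rho')$ — this is precisely what makes the compositions with $s_{\pm}$ recompute $\st_x$. (A more computational alternative is to verify the surjectivity by hand from the presentation $\rO_{\N(x_0)}\simeq W[[x_1,y_1,x_2,y_2]]/(p+x_1y_1,\,p+x_2y_2)$ of $\S$\ref{product-space} together with the coordinate comparison of Lemma~\ref{coordinate-change}, but the factorization argument makes this unnecessary.)
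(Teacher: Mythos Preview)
Your proof is correct and takes essentially the same approach as the paper: factor $\st_x$ through $\st_x^{\bullet}$ via an auxiliary morphism $\N(x_0)\to\N$, then invoke that $\st_x$ is a closed immersion (Lemma~\ref{cyclic-diff}) to force $(\st_x^{\bullet})^{\#}$ surjective. The paper only spells out $\st_x^{\textup{I}+}$, using the map $s_{x_0}\times t_{x_0}$ --- which coincides with your $(\pr_1\circ s_+,\ \pr_2\circ s_-)$ --- and leaves the remaining three cases implicit; your explicit treatment of all four and the ring-theoretic justification are a welcome elaboration.
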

\begin{proof}
    We only give the proof for the morphism $\st_x^{\textup{I}+}$. Notice that 
    \begin{equation*}
        (s_{x_0}\times t_{x_0})\circ\st_x^{\textup{I}+}=\st_x:\N_0(x)\rightarrow\N.
    \end{equation*}
    The morphism $\st_x$ is a closed immersion by Lemma \ref{cyclic-diff}, hence $\st_x^{\textup{I}+}$ is a closed immersion.
\end{proof}
\begin{lemma}
    Let $x$ be an element in $\B$ such that $n=\nu_p(q(x))\geq1$. Then the following diagram is Cartesian,
    \begin{equation*}
        \begin{tikzcd}
    {\textup{Spec}\,\F}
    \arrow[d] \arrow[r]
    & {\N(x_0)^{\textup{VF}}}
    \arrow[d]
    \\
    {\N_0(x)}
    \arrow[r,"{\st_x^{\textup{I}+}}"]
    &{\N(x_0)}.
\end{tikzcd}
\end{equation*}
\label{VF}
\end{lemma}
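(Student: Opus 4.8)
The plan is to reduce the statement to an elementary computation in the local ring $\rO_{\N_0(x)}$. Since $\N(x_0)^{\textup{VF}}=\N_0(x_0)^{\textup{V}}\times_{\F}\N_0(x_0)^{\textup{F}}$ is a closed formal subscheme of $\N(x_0)$ contained in its special fibre, the fibre product $Z\coloneqq\N_0(x)\times_{\N(x_0)}\N(x_0)^{\textup{VF}}$ is a closed formal subscheme of $\N_0(x)$, namely $\textup{Spf}\,\rO_{\N_0(x)}/J$ where $J$ is the pullback along $\st_x^{\textup{I}+}$ of the ideal of $\N(x_0)^{\textup{VF}}$. Because $\N_0(x)$ has residue field $\F$, proving that the diagram is Cartesian amounts to showing $J=\mathfrak m_{\N_0(x)}$.

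First I would make $\st_x^{\textup{I}+}$ explicit on coordinates. By Lemma \ref{cyclic-diff} one has $\rO_{\N_0(x)}=W[[t,t^{\prime}]]/(d_x)$, where $t=\textup{s}_x^{\#}(t_0)$ and $t^{\prime}=\textup{t}_x^{\#}(t_0)$ are the pullbacks of the coordinate of $\N_0$ under the source and target maps; since $n\geq1$ we have $d_x\equiv(\text{unit})\cdot p\pmod{\mathfrak m^{2}}$, so $\rO_{\N_0(x)}$ is regular of dimension $2$ and $t,t^{\prime}$ induce a basis of $\mathfrak m/\mathfrak m^{2}$. For each $i$, the intermediate deformation $X_i$ appearing in the universal standard decomposition $X_0\xrightarrow{\pi_1}X_1\to\cdots\to X_{n-1}\xrightarrow{\pi_n}X_n$, equipped with the framing $\rho_i$ of $\S$\ref{local-correspondences}, defines a point of $\N_0$, hence an element $\tau_i\in\rO_{\N_0(x)}$; as the morphism $\N_0(x)\to\N_0$ it induces is adic, each $\tau_i$ lies in $\mathfrak m_{\N_0(x)}$. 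By construction the first factor of $\st_x^{\textup{I}+}$ records $\bigl(X_0\xrightarrow{\pi_1}X_1,(\rho,\rho_1)\bigr)$ and the second records $\bigl(X_{n-1}\xrightarrow{\pi_n}X_n,(\rho_{n-1},\rho^{\prime})\bigr)$; combined with the identity $(s_{x_0}\times t_{x_0})\circ\st_x^{\textup{I}+}=\st_x$ noted above, this shows that under $\st_x^{\textup{I}+,\#}$ the source and target coordinates $(a_1,b_1)$ of the first $\N_0(x_0)$-factor pull back to $t$ and $\tau_1$, while those $(a_2,b_2)$ of the second factor pull back to $\tau_{n-1}$ and $t^{\prime}$.

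Now, by the proof of Lemma \ref{semistable} and the convention of Remark \ref{convention-FV}, the component $\N_0(x_0)^{\textup{V}}$ is cut out in $\N_0(x_0)$ by $a-b^{p}$ and $\N_0(x_0)^{\textup{F}}$ by $a^{p}-b$ in terms of the source/target coordinates $(a,b)$; hence $\N(x_0)^{\textup{VF}}$ is cut out in $\N(x_0)$ by $\bigl(a_1-b_1^{p},\,a_2^{p}-b_2\bigr)$. Pulling this ideal back along $\st_x^{\textup{I}+}$ and using the previous paragraph gives $J=\bigl(t-\tau_1^{p},\,\tau_{n-1}^{p}-t^{\prime}\bigr)\rO_{\N_0(x)}$ (and $p\in J$ follows automatically from the relation $\nu_1 p+(a_1-b_1^{p})(a_1^{p}-b_1)=0$ on the first factor). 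Since $\tau_1,\tau_{n-1}\in\mathfrak m_{\N_0(x)}$ and $p\geq2$, we have $\tau_1^{p},\tau_{n-1}^{p}\in\mathfrak m^{2}$, so $t-\tau_1^{p}\equiv t$ and $\tau_{n-1}^{p}-t^{\prime}\equiv -t^{\prime}\pmod{\mathfrak m^{2}}$; as $t,t^{\prime}$ span $\mathfrak m/\mathfrak m^{2}$, Nakayama's lemma yields $J=\mathfrak m_{\N_0(x)}$, i.e.\ $Z=\textup{Spec}\,\F$, which is exactly the claim.

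The only genuinely delicate point is the middle step: verifying precisely that $\st_x^{\textup{I}+,\#}$ sends the target coordinate of the first $\N_0(x_0)$-factor to the coordinate $\tau_1$ of $X_1$ taken \emph{with the $x_0$-twisted framing} $\rho_1$ (and likewise the source coordinate of the second factor to $\tau_{n-1}$), that is, correctly tracking the framings $\rho_1,\rho_{n-1}$ of $\S$\ref{local-correspondences} through the standard decomposition, and that the equations $a-b^{p}$, $a^{p}-b$ are attached to the $\textup{V}$- and $\textup{F}$-components in the order used to define $\N(x_0)^{\textup{VF}}$. Once this bookkeeping is pinned down the rest is formal; as a consistency check one may also verify directly, using the factorization of $\overline{d_x}$ from Lemma \ref{cyclic-diff}(b2), that on each irreducible component of $\N_0(x)_{\F}$ at most one of $t-\tau_1^{p}$ and $\tau_{n-1}^{p}-t^{\prime}$ can vanish identically, so that $Z$ is visibly supported at the unique closed point.
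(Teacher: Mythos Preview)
Your argument is correct and in fact more elementary than the paper's. Both proofs end by observing that two elements of $\rO_{\N_0(x)}$ whose images in $\mathfrak m/\mathfrak m^{2}$ are $t$ and $t'$ (up to sign) generate the maximal ideal; the difference lies in how those two elements are obtained. The paper invokes \cite[Theorem~13.3.5]{KM85}, which says that a cyclic $p^{n}$-isogeny whose first step is Verschiebung is globally isomorphic to $V^{n}$ (and dually for the last step being Frobenius), and thereby identifies the pullback ideal \emph{explicitly} as $(t-t^{\prime\,p^{n}},\,t^{\prime}-t^{p^{n}})$. You bypass this structural input by introducing the intermediate moduli coordinates $\tau_1,\tau_{n-1}$ and writing the pullback ideal as $(t-\tau_1^{p},\,\tau_{n-1}^{p}-t^{\prime})$; since $\tau_1,\tau_{n-1}\in\mathfrak m$ one has $\tau_i^{p}\in\mathfrak m^{2}$ and the Nakayama step goes through without ever knowing $\tau_i$ explicitly. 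In effect, the Katz--Mazur theorem tells you that $t-\tau_1^{p}$ agrees (up to a unit) with $t-t^{\prime\,p^{n}}$, but your proof shows this identification is unnecessary for the lemma at hand. Your caveat about tracking the framings $\rho_1,\rho_{n-1}$ is well placed: the identity $(s_{x_0}\times t_{x_0})\circ\st_x^{\textup{I}+}=\st_x$ indeed pins down $a_1\mapsto t$ and $b_2\mapsto t^{\prime}$, and the remaining identifications $b_1\mapsto\tau_1$, $a_2\mapsto\tau_{n-1}$ follow from the very definition of $\st_x^{\textup{I}+}$ in \S\ref{local-correspondences}.
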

\begin{proof}
    It is proved in \cite[Theorem 13.3.5]{KM85} that a cyclic isogeny $\pi$ of degree $p^{n}$ between two $p$-divisible groups over an $\F$-scheme $S$ whose first decomposition $\pi_1$ is isomorphic to Verschiebung morphism $V$ is isomorphic to $V^{n}$. Then by Lemma \ref{cyclic-diff} (b2), the fiber product $\mathcal{N}_0(x)\times_{\st_x^{\textup{I}+},\N(x_0)}\N(x_0)^{\textup{VF}}$ is given by the following two equations
    \begin{equation*}
        t^{\prime p^{n}}-t\,\,\,\,\textup{and}\,\,\,\,t^{\prime}-t^{p^{n}}.
    \end{equation*}
    These two elements generate the maximal ideal of the unique closed $\F$-point of $\N_0(x)$, hence $\mathcal{N}_0(x)\times_{\st_x^{\textup{I}+},\N(x_0)}\N(x_0)^{\textup{VF}}\simeq\textup{Spec}\,\F$. 
\end{proof}
\begin{remark}
    We can summarize the above lemma as $\N_0(x)\times_{\st_x^{\textup{I}+},\N(x_0)}\N(x_0)^{\textup{VF}}\simeq\textup{Spec}\,\F$. Similar arguments also imply that $\N_0(x)\times_{\st_x^{\textup{I}-},\N(x_0)}\N(x_0)^{\textup{FV}}\simeq\textup{Spec}\,\F$, $\N_0(x)\times_{\st_x^{\textup{II}+},\N(x_0)}\N(x_0)^{\textup{VV}}\simeq\textup{Spec}\,\F$ and $\N_0(x)\times_{\st_x^{\textup{II}-},\N(x_0)}\N(x_0)^{\textup{FF}}\simeq\textup{Spec}\,\F$.
    \label{Stand-FV}
\end{remark}

\subsection{Special cycles on the product: the $\Gp\times_{\mathbb{G}_m}\Gp$ case} The formal scheme $\N(x_0)$ parameterizes a pair of deformations of the quasi-isogeny $x_0$. Let 
\begin{equation}
   \left(X_1^{\textup{univ}}\stackrel{x_{0,1}^{\univ}}\longrightarrow X_{1}^{\prime\textup{\univ}},\left(\rho_1^{\univ},\rho_1^{\prime\univ}\right)\right),\,\,\,\,\left(X_2^{\textup{univ}}\stackrel{x_{0,2}^{\univ}}\longrightarrow X_{2}^{\prime\textup{\univ}},\left(\rho_2^{\univ},\rho_2^{\prime\univ}\right)\right)
   \label{ori-univ}
\end{equation}
be the universal pairs over the formal scheme $\N(x_0)$.
\par
We have the following commutative diagram
\begin{equation*}
        \begin{tikzcd}
    {X_1^{\textup{univ}}}
    \arrow[d,"{x_{0,1}^{\univ}}"swap] \arrow[r, dashed, "{x}"]
    & {X_2^{\textup{univ}}}
    \arrow[d,"{x_{0,2}^{\univ}}"]
    \\
    {X_1^{\prime\textup{univ}}}
    \arrow[r, dashed, "{x^{\prime}}"]
    &{X_2^{\prime\textup{univ}}},
\end{tikzcd}
\end{equation*}
here the dotted arrows below $x$ and $x^{\prime}$ mean that they are quasi-isogenies.
\begin{definition}
For any subset $H\subset\B$. Define the special cycle $\mathcal{Z}_{\N(x_0)}(H)\subset\N(x_0)$ to be the closed formal subscheme cut out by the conditions,
\begin{equation*}
    \rho_2^{\textup{univ}}\circ x \circ (\rho_1^{\textup{univ}})^{-1}\in \textup{Hom}(X_1^{\textup{univ}}, X_2^{\textup{univ}});
\end{equation*}
\begin{equation*}
    \rho_2^{\prime\textup{univ}}\circ x^{\prime} \circ (\rho_1^{\prime\textup{univ}})^{-1} \in \textup{Hom}(X_1^{\prime\textup{univ}}, X_2^{\prime\textup{univ}}).
\end{equation*}
for all $x\in H$.
\par
Define the special cycle $\mathcal{Y}_{\N(x_0)}(H)\subset\N(x_0)$ to be the closed formal subscheme cut out by the conditions,
\begin{equation*}
    \rho_2^{\prime\textup{univ}}\circ x_0\cdot x \circ (\rho_1^{\textup{univ}})^{-1}\in \textup{Hom}(X_1^{\textup{univ}}, X_2^{\textup{univ}});
\end{equation*}
\begin{equation*}
    \rho_2^{\textup{univ}}\circ \overline{x_0}\cdot x^{\prime} \circ (\rho_1^{\prime\textup{univ}})^{-1} \in \textup{Hom}(X_1^{\prime\textup{univ}}, X_2^{\prime\textup{univ}}).
\end{equation*}
for all $x\in H$.
\label{special-cycle-originalmodel}
\end{definition}
\begin{lemma}
    Let $x\in\B$ be an element such that $\nu_p\left(q(x)\right)\geq0$. The morphisms $\st_{x_0\cdot x}^{\textup{I}+}$, $\st_{x_0\cdot\overline{x}}^{\textup{I}-}$, $\st_x^{\textup{II}+}$ and $\st_{x^{\prime}}^{\textup{II}-}$ (the later two morphisms are only defined for elements $x$ such that $\nu_p\left(q(x)\right)\geq1$) sends the corresponding source formal scheme to the special divisor $\mathcal{Z}_{\N(x_0)}(x)$.
    \label{ori-diff}
\end{lemma}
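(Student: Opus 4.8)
The plan is to prove the four containments functorially, pulling everything back to the hyperspecial special cycles $\mathcal{Z}_{\N}(\,\cdot\,)$ on $\N=\N_0\times_W\N_0$. The key structural observation is that the two morphisms $s_{\pm}\colon\N(x_0)\to\N$ of $\S$\ref{product-space} identify $\mathcal{Z}_{\N(x_0)}(x)$ with $s_+^{-1}\big(\mathcal{Z}_{\N}(x)\big)\cap s_-^{-1}\big(\mathcal{Z}_{\N}(x^{\prime})\big)$: indeed $s_+$ (resp. $s_-$) pulls the universal pair on $\N$ back to $\big((X_1^{\univ},\rho_1^{\univ}),(X_2^{\univ},\rho_2^{\univ})\big)$ (resp. $\big((X_1^{\prime\univ},\rho_1^{\prime\univ}),(X_2^{\prime\univ},\rho_2^{\prime\univ})\big)$), so pulling back the incidence condition of Definition \ref{spe-hyper} for $\mathcal{Z}_{\N}(x)$ (resp. $\mathcal{Z}_{\N}(x^{\prime})$) recovers precisely the first (resp. second) condition of Definition \ref{special-cycle-originalmodel}. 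Hence it suffices to show, for each of the four $\st$-morphisms, that $s_+\circ\st$ factors through $\mathcal{Z}_{\N}(x)\hookrightarrow\N$ and $s_-\circ\st$ factors through $\mathcal{Z}_{\N}(x^{\prime})\hookrightarrow\N$; the identification above then forces $\st$ itself to factor through $\mathcal{Z}_{\N(x_0)}(x)$. Before this I would record the valuation bookkeeping $\nu_p(q(x_0x))=\nu_p(q(x_0\overline{x}))=1+\nu_p(q(x))\ge 1$ and $\nu_p(q(x^{\prime}))=\nu_p(q(x))$, which makes all four $\st$-morphisms well-defined under the stated hypotheses.

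Next I would compute these composites explicitly. Fix $S\in\textup{Nilp}_W$ and an object $(X\stackrel{\pi}{\to}X^{\prime},(\rho,\rho^{\prime}))$ of the relevant source functor, and let $\pi=\pi_N\circ\cdots\circ\pi_1$, $X=X_0,X_1,\dots,X_N=X^{\prime}$, be its standard decomposition, so that over $\overline{S}$ one has $\rho^{\prime}\circ(\text{source element})\circ\rho^{-1}=\pi_N\circ\cdots\circ\pi_1$. Unwinding the definitions of the auxiliary framings $\rho_1,\rho_{N-1},\rho_1^{-},\rho_{N-1}^{+}$ from $\S$\ref{local-correspondences}, each composite $s_{\pm}\circ\st$ sends this object to a pair of $p$-divisible groups on $\N$ whose ``test'' quasi-isogeny -- the one that must extend to a homomorphism in order for the point to lie on $\mathcal{Z}_{\N}(x)$ resp. $\mathcal{Z}_{\N}(x^{\prime})$ -- reduces over $\overline{S}$, after substituting this identity and simplifying with the conjugation relation $x^{\prime}=x_0xx_0^{-1}$, the norm identity $y\cdot\overline{y}=q(y)$ in $\B$, and the fact that $\pi_i^{\vee}\circ\pi_i$ and $\pi_i\circ\pi_i^{\vee}$ are multiplication by $p$ up to a unit of $\zp$ (via the principal polarizations identifying each $X_i$ with $X_i^{\vee}$), to one of three shapes: a partial composite $\pi_j\circ\cdots\circ\pi_k$; a unit multiple of the dual isogeny $\widehat{\pi_j\circ\cdots\circ\pi_k}$ (here the power of $p$ occurring in $q(x)$ exactly absorbs the inverse of an isogeny of matching degree); or a unit multiple of $p\cdot(\pi_j\circ\cdots\circ\pi_k)$ (this shape appears only in the two $\mathrm{II}$-cases, via $\pi_i^{\vee}\circ\pi_i$). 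In every case this is an honest homomorphism of $p$-divisible groups over $S$ reducing to the prescribed quasi-isogeny over $\overline{S}$, which is exactly the incidence condition; in the first two shapes the map is even a cyclic isogeny, so $s_{\pm}\circ\st$ factors through a ``truncate/dualize'' morphism into $\N_0(x)$ resp. $\N_0(x^{\prime})$ and hence, by the equality $\mathcal{D}_{\N}(\,\cdot\,)=\N_0(\,\cdot\,)$ of Lemma \ref{cyclic-diff}, through $\mathcal{Z}_{\N}(\,\cdot\,)$.

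The main obstacle I anticipate is the careful bookkeeping with the principal polarizations and the Cartier-dual isogenies $\pi_i^{\vee}$ in the cases $\st_x^{\textup{II}+}$, $\st_{x^{\prime}}^{\textup{II}-}$ and $\st_{x_0\cdot\overline{x}}^{\textup{I}-}$: one must track which polarization $\lambda_i\colon X_i\xrightarrow{\sim}X_i^{\vee}$ enters, control the units coming from $\pi_i^{\vee}\circ\pi_i$ and from $y\mapsto q(y)y^{-1}$, and verify that the homomorphism exhibited over $S$ really reduces over $\overline{S}$ to the intended quasi-isogeny rather than to a unit twist of it. A secondary point is the use of the Katz--Mazur structure theory of standard decompositions -- existence and basic properties of the factorization $\pi=\pi_N\circ\cdots\circ\pi_1$ into degree-$p$ steps, and the fact that the first (resp. last) factor detects Verschiebung versus Frobenius on the special fiber -- which is also what matches the four morphisms with the four labelled irreducible components $\N(x_0)^{\textup{FF}},\N(x_0)^{\textup{FV}},\N(x_0)^{\textup{VF}},\N(x_0)^{\textup{VV}}$ of $\N(x_0)_{\F}$ (cf. Lemma \ref{semistable} and Remark \ref{Stand-FV}).
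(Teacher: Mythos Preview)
Your proposal is correct and is essentially the paper's argument: the paper verifies Definition \ref{special-cycle-originalmodel} directly by exhibiting, from the standard decomposition $\pi=\pi_N\circ\cdots\circ\pi_1$, the partial composite $\pi_{N-1}\circ\cdots\circ\pi_1$ (resp.\ $\pi_N\circ\cdots\circ\pi_2$) as a lift of $x$ (resp.\ $x^{\prime}$) for $\st_{x_0x}^{\textup{I}+}$, and declares the other three cases similar. Your reformulation via $\mathcal{Z}_{\N(x_0)}(x)=s_+^{-1}\mathcal{Z}_{\N}(x)\cap s_-^{-1}\mathcal{Z}_{\N}(x^{\prime})$ is a clean way to organize the same verification; the appeal to Lemma \ref{cyclic-diff} at the end is unnecessary (having an honest lift already places the point in $\mathcal{Z}_{\N}(\,\cdot\,)$), but harmless.
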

\begin{proof}
    Let's first consider the morphism $\st_{x_0\cdot x}^{\textup{I}+}:\N_0(x_0\cdot x)\rightarrow\N(x_0)$. We still use $\N_0(x_0\cdot x)$ to denote the image in $\N(x_0)$ of the closed immersion $\st_{x_0\cdot x}^{\textup{I}+}$. Denote by 
     \begin{equation*}
    \left(X_{1}^{\textup{I}+}\stackrel{(x_0)_1^{\textup{I}+}}\longrightarrow X_{1}^{\prime\textup{I}+},\left(\rho_{1}^{\textup{I}+},\rho_{1}^{\prime\textup{I}+}\right)\right),\,\,\,\,\left(X_{2}^{\textup{I}+}\stackrel{(x_0)_2^{\textup{I}+}}\longrightarrow X_{2}^{\prime\textup{I}+},\left(\rho_{2}^{\textup{I}+},\rho_{2}^{\prime\textup{I}+}\right)\right)
    \end{equation*}
    be the base change of the universal pair (\ref{ori-univ}) to the formal scheme $\N_0(x_0\cdot x)$ through the morphism $\textup{st}_{x_0\cdot x}^{\textup{I}+}$.
    \par
    The quasi-isogeny $x_0\cdot x$ lifts to a cyclic isogeny $(x_0\cdot x)^{\textup{I}+}:X_{1}^{\textup{I}+}\rightarrow X_{2}^{\prime\textup{I}+}$. By the definition of the morphism $\st_{x_0\cdot x}^{\textup{I}+}$ in \ref{local-correspondences}, the isogeny $\left(X_{2}^{\textup{I}+}\stackrel{(x_0)_{2}^{\textup{I}+}}\longrightarrow X_{2}^{\prime\textup{I}+}\right)$ is the last term in the factorization of the cyclic isogeny $(x_0\cdot x)^{\textup{I}+}$. Therefore the quasi-isogeny $\left((x_0)_2^{\textup{I}+}\right)^{-1}\circ(x_0\cdot x)^{\textup{I}+}:X_1^{\textup{I}+}\rightarrow X_{2}^{\textup{I}+}$ is an isogeny. Notice that 
    \begin{equation*}  \left(X_1^{\textup{I}+}\xrightarrow{\left((x_0)_2^{\textup{I}+}\right)^{-1}\circ(x_0\cdot x)^{\textup{I}+}}X_{2}^{\textup{I}+}, (\rho_{1}^{\textup{I}+},\rho_2^{\textup{I}+})\right)
    \end{equation*}
    is a lift of the quasi-isogeny $x$. Therefore the morphism $\st_{x_0\cdot x}^{\textup{I}+}$ maps $\N_0(x_0\cdot x)$ to $\mathcal{Z}_{\N(x_0)}(x)$.
    \par
    The ideas and proofs for other morphisms $\st_{x_0\cdot\overline{x}}^{\textup{I}-}$, $\st_x^{\textup{II}+}$ and $\st_{x^{\prime}}^{\textup{II}-}$ are similar.
\end{proof}
\begin{lemma}
    Let $x\in\B$ be an element.
    \begin{itemize}
        \item [(a)] If $\nu_p(q(x))=0$, we have the following equality of closed formal subschemes of $\N(x_0)$:
        \begin{equation*}
            \st_{x_0\cdot x}^{\textup{I}+}\left(\N_0(x_0\cdot x)\right)=\st_{x_0\cdot\overline{x}}^{\textup{I}-}\left(\N_0(x_0\cdot\overline{x})\right).
        \end{equation*}
        \item[(b)] If $\nu_p(q(x))=1$, we have the following equality of closed formal subschemes of $\N(x_0)$:
        \begin{equation*}
            \st_{x}^{\textup{II}+}\left(\N_0(x)\right)=\st_{x^{\prime}}^{\textup{II}-}\left(\N_0(x^{\prime})\right).
        \end{equation*}
    \end{itemize}
    \label{special-0-1}
\end{lemma}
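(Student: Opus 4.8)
The plan is to prove both equalities by directly comparing moduli functors, exploiting the fact that the hypotheses on $\nu_p(q(x))$ force the relevant cyclic isogeny to have degree exactly $p$. Indeed, in case (a) the elements $x_0\cdot x$ and $x_0\cdot\overline{x}$ both have $\nu_p(q(\cdot))=1$ (since $\nu_p(q(x_0))=1$ and $\nu_p(q(x))=0$), and in case (b) the elements $x$ and $x'=x_0xx_0^{-1}$ both have $\nu_p(q(\cdot))=1$. In all these situations the standard decomposition (\cite{KM85}, $\S6.7$) of the degree-$p$ cyclic isogeny $\pi\colon X\to X'$ has length one, so $\pi_1=\pi_n=\pi$, $X_1=X'$ and $X_{n-1}=X$; moreover, by rigidity of quasi-isogenies, an object of any of the cyclic deformation spaces in question amounts over $S$ to a triple $(X,\rho,\pi)$ with $X$ a deformation of $\X$, $\rho$ a height-$0$ framing, and $\pi\colon X\to X'$ cyclic of degree $p$, the second framing being determined. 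I will first record what each of the four stack morphisms $\st^{\textup{I}\pm}$, $\st^{\textup{II}\pm}$ does on such a triple: all of the newly constructed framings are obtained from $\rho$ (or the induced $\rho'$) by composing with $x_0^{\pm1}$, and the $\textup{II}$-morphisms in addition replace $\pi$ by the complementary degree-$p$ isogeny $\pi^{\vee}$ (with $\pi^{\vee}\circ\pi=[p]$ and $(\pi^{\vee})^{\vee}=\pi$).

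For part (b), I will exhibit the isomorphism of functors $\N_0(x)\xrightarrow{\ \sim\ }\N_0(x')$ sending $(X\xrightarrow{\pi}X',(\rho,\rho'))$ to the triple $(X'\xrightarrow{\pi^{\vee}}X,\delta,\pi^{\vee})$ with $\delta=\pi\circ\rho\circ x_0^{-1}$, the companion framing being determined by the moduli condition for $x'$. Everything then comes down to a short computation on the special fibre: using $\pi^{\vee}\circ\pi=[p]$, $(\pi^{\vee})^{\vee}=\pi$, the relation $x'x_0=x_0x$, and the identification (fixed in $\S\ref{cyclic-def-space}$) of the Rosati involution attached to $\lambda$ with the main involution, one checks that (i) the target triple indeed lies in $\N_0(x')(S)$, and (ii) applying $\st^{\textup{II}-}_{x'}$ to it returns $\st^{\textup{II}+}_x$ applied to the original object. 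Since $\st^{\textup{II}+}_x$ and $\st^{\textup{II}-}_{x'}$ are closed immersions (as established in $\S\ref{local-correspondences}$), a functorial bijection intertwining them forces their images to coincide as closed formal subschemes of $\N(x_0)$. (One can double-check consistency using Lemma \ref{ori-diff}, which places both images inside $\mathcal{Z}_{\N(x_0)}(x)$, and the Cartesian diagrams of $\S\ref{local-correspondences}$, which show the two images avoid complementary irreducible components of $\N(x_0)_{\F}$ apart from the closed point.)

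For part (a) the argument is parallel but uses a subtlety about framings rather than dualization. Here $x\in\OB^{\times}$, and I will use the isomorphism $\N_0(x_0x)\xrightarrow{\ \sim\ }\N_0(x_0\overline{x})$ sending $(X,\rho,\pi)$ to $(X,\rho\circ x^{-1},\pi)$; it is well defined because $\overline{x}^{-1}=q(x)^{-1}x$ and because a point of $\N_0(x_0)$ is unchanged when its framing is rescaled by a scalar in $\zp^{\times}$ (the multiplication-by-$q(x)$ automorphism of the underlying $p$-divisible group intertwines the two framings). A direct computation gives $\st^{\textup{I}+}_{x_0x}(X,\rho,\pi)=\big[(X,\rho,\pi),(X,\rho\circ x^{-1},\pi)\big]$ and $\st^{\textup{I}-}_{x_0\overline{x}}(X,\rho\circ x^{-1},\pi)=\big[(X,\rho\circ q(x)^{-1},\pi),(X,\rho\circ x^{-1},\pi)\big]$, and these two points of $\N_0(x_0)^2$ agree by the scalar-rescaling remark; the closed-immersion property then upgrades this to equality of the images. (An alternative is to use Lemma \ref{ori-diff} to see both sides lie in $\mathcal{Z}_{\N(x_0)}(x)$ and to identify $\mathcal{Z}_{\N(x_0)}(x)$, when $\nu_p(q(x))=0$, as an integral $2$-dimensional formal subscheme mapping isomorphically onto $\N_0(x_0)$ under either projection, whence all three coincide.)

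The step I expect to be the main obstacle is the framing bookkeeping: verifying carefully that each candidate triple actually satisfies the defining ``lifting'' condition of the target cyclic deformation space, which amounts to tracking, after base change to $\overline{S}$, the interaction of the height-$0$ quasi-isogenies $\rho,\rho'$, the degree-$p$ isogenies $\pi$ and $\pi^{\vee}$, and the fixed element $x_0$, together with precisely which factor of $q$ appears. The scalar-rescaling point in case (a) is exactly where a normalisation error would be fatal — without it the naive functorial comparison fails — so that verification deserves care. Should the functorial comparison prove unwieldy, a computational fallback is to present both images as explicit ideals in $\rO_{\N(x_0)}\simeq W[[x_1,y_1,x_2,y_2]]/(p+x_1y_1,\,p+x_2y_2)$ using the local equations of Lemma \ref{cyclic-diff}(b2), Proposition \ref{local-gamma0p}, and the coordinate change of Lemma \ref{coordinate-change}, and to check directly that the two ideals coincide.
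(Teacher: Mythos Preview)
Your proposal is correct and follows essentially the same moduli-theoretic approach as the paper: both arguments work by matching $S$-points of the two images via an explicit construction on the source functors. The paper's proof of (a) takes a point in the image of $\st_{x_0x}^{\textup{I}+}$, uses the isomorphisms $\phi,\phi'$ (lifts of $x,x'$) that exist over that locus, and writes down directly a preimage under $\st_{x_0\overline{x}}^{\textup{I}-}$; part (b) is declared analogous and omitted. Your packaging differs only in that you first name an isomorphism of source functors and then check it intertwines the two closed immersions, which is the same content unwound; your explicit scalar-rescaling verification in (a) and the $\pi^\vee$-bookkeeping in (b) are exactly the computations the paper is suppressing. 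The computational fallback via local equations is unnecessary here but harmless.
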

\begin{proof}
    Let $S$ be a $W$-scheme such that $p$ is locally nilpotent on $S$. We first prove (a). Let 
    \begin{equation}
        \left(X_1\stackrel{\pi_1}\rightarrow X_1^{\prime}, (\rho_1,\rho_1^{\prime})\right), \left(X_2\stackrel{\pi_2}\rightarrow X_2^{\prime}, (\rho_2,\rho_2^{\prime})\right)
        \label{obj-in-i+}
    \end{equation}
    be an object in the set $\st_{x_0\cdot x}^{\textup{I}+}\left(\N_0(x_0\cdot x)\right)(S)$. By the definition of the morphism $\st_{x_0\cdot x}^{\textup{I}+}$ there exist two isomorphisms $\phi: X_1\rightarrow X_2$ and $\phi^{\prime}:X_1^{\prime}\rightarrow X_2^{\prime}$ such that $\pi_2\circ\phi=\phi^{\prime}\circ\pi_1$. Then it's easy to see that the object 
     \begin{equation*}
     \left(X_2\xrightarrow{\pi_1\circ\phi^{\vee}} X_1^{\prime}, (\rho_2,\rho_1^{\prime})\right)\in\N_0(x_0\cdot\overline{x})(S)
    \end{equation*}
    is mapped to the object (\ref{obj-in-i+}) under the morphism $\st_{x_0\cdot\overline{x}}^{\textup{I}-}$. Hence $\st_{x_0\cdot x}^{\textup{I}+}\left(\N_0(x_0\cdot x)\right)\subset\st_{x_0\cdot\overline{x}}^{\textup{I}-}\left(\N_0(x_0\cdot\overline{x})\right)$ as closed formal subschemes. The converse direction $\st_{x_0\cdot x}^{\textup{I}+}\left(\N_0(x_0\cdot x)\right)\supset\st_{x_0\cdot\overline{x}}^{\textup{I}-}\left(\N_0(x_0\cdot\overline{x})\right)$ can be proved similarly. Hence (a) is true. The idea of the proof of (b) is similar so we omit it.
\end{proof}

\subsection{Rapoport-Zink space with level $\Gp\times_{\mathbb{G}_{m}}\Gp$}
\label{RZBLOWUP}
Let $x_0\in\B$ be an element such that $\nu_p(q(x_0))=1$. Let $\pi:\M\rightarrow\N(x_0)$ be the blow-up morphism of the formal scheme $\N(x_0)$ along its unique closed point.
\par
Let $\exc_{\M}$ be the exceptional divisor on the formal scheme $\M$. Let $\M_{\F}\coloneqq\M\times_{W}\F$ be the reduction mod $p$ of the formal scheme $\M$.
\par
Notice that the unique closed point pulls back to the unique closed point of the closed formal subschemes $\N(x_0)^{\textup{FF}}, \N(x_0)^{\textup{VV}}, \N(x_0)^{\textup{FV}}$ and $\N(x_0)^{\textup{VF}}$ of $ \N(x_0)$. Let $\MFF,\MVV,\MFV$ and $\MVF$ be the strict transforms of the formal schemes $\N(x_0)^{\textup{FF}}, \N(x_0)^{\textup{VV}}, \N(x_0)^{\textup{FV}}$ and $\N(x_0)^{\textup{VF}}$ respectively under the blow up morphism $\pi: \M\rightarrow\N(x_0)$. The four closed formal subschemes $\MFF,\MVV,\MFV$ and $\MVF$ of $\M$ are all isomorphic to the formal scheme $\textup{Bl}_{(0,0)}\widehat{\mathbb{A}}_{\F}^{2}$.
\par
Let $x\in\B$ be an element such that $\nu_p(q(x))\geq1$. The unique closed point of $\N(x_0)$ also pulls back to the unique closed point of the formal subschemes $\N_0(x)$ through the four closed immersion $\st_{x}^{\textup{I}+},\st_{x}^{\textup{I}-},\st_{x}^{\textup{II}+},\st_x^{\textup{II}-}:\N_0(x)\rightarrow\N(x_0)$. Denote by $\N_0^{\textup{I}+}(x), \N_0^{\textup{I}-}(x),\N_0^{\textup{II}+}(x)$ and $\N_0^{\textup{II}-}(x)$ the strict transform of the formal scheme $\N_0(x)$ under the three morphisms $\st_{x}^{\textup{I}+},\st_{x}^{\textup{I}-}, \st_{x}^{\textup{II}+}$ and $\st_{x}^{\textup{II}-}$ respectively. They are closed formal schemes of the formal scheme $\M$, and are all isomorphic to the formal scheme $\widetilde{\N}_0(x)$.

\subsection{An open cover of the Rapoport-Zink space $\M$}
\label{opencover}
We give a detailed description of the formal scheme $\M$ by giving an explicit open cover of it. By \cite[Theorem 13.4.6]{KM85}, the local ring $\rO_{\N(x_0)}$ is isomorphic to $W[[x_1,y_1,x_2,y_2]]/(p+x_1y_1,p+x_2y_2)$. The maximal ideal is generated by the images of $x_1,y_1,x_2$ and $y_2$. Then the blow up formal scheme $\M$ is covered by the following 4 open formal subschemes:\begin{itemize}
    \item [$\M_1^{+}$:] Let $y_1=u_{11}x_1, x_2=v_{21}x_1$ and $y_2=u_{21}x_1$. Then $u_{11}=u_{21}v_{21}$. Define 
    \begin{equation*}
        \M_1^{+}=\textup{Spf}\,W[u_{21},v_{21}][[x_1]]/(p+u_{21}v_{21}x_1^{2}).
    \end{equation*}
    \item [$\M_2^{+}$:] Let $x_1=v_{12}x_2, y_1=u_{12}x_2$ and $y_2=u_{22}x_2$. Then $u_{22}=v_{12}u_{12}$. Define 
    \begin{equation*}
        \M_2^{+}=\textup{Spf}\,W[v_{12},u_{12}][[x_2]]/(p+v_{12}u_{12}x_2^{2}).
    \end{equation*}
    \item [$\M_1^{-}$:] Let $x_1=w_{11}y_1, x_2=w_{21}y_1$ and $y_2=t_{21}y_1$. Then $w_{11}=w_{21}t_{21}$. Define 
    \begin{equation*}
        \M_1^{-}=\textup{Spf}\,W[w_{21},t_{21}][[y_1]]/(p+w_{21}t_{21}y_1^{2}).
    \end{equation*}
    \item [$\M_2^{-}$:] Let $x_1=w_{12}y_2, y_1=t_{12}y_2$ and $x_2=w_{22}y_2$. Then $w_{22}=w_{12}t_{12}$. Define 
    \begin{equation*}
        \M_2^{-}=\textup{Spf}\,W[w_{12},t_{12}][[y_2]]/(p+w_{12}t_{12}y_2^{2}).
    \end{equation*}
\end{itemize}
Notice that we have the following relations (on the intersections of the corresponding opens):
\begin{equation}
    u_{21}=u_{12},\,\,v_{21}v_{12}=1,\,\,w_{21}u_{21}=1,\,\,t_{21}v_{21}=1,\,\,u_{21}w_{12}=1,\,\,t_{12}=v_{21}.
    \label{relation-1}
\end{equation}
We denote this cover by $\mathcal{C}$. Let $\M^{+}=\M_1^{+}\cup\M_2^{+}$ and $\M^{-}=\M_1^{-}\cup\M_2^{-}$.
\begin{proposition}
The following facts hold:
\begin{itemize}
    \item [(i)]The 3-dimensional formal scheme $\M$ is regular. 
    \item[(ii)] The exceptional divisor $\exc_{\M}$ is isomorphic to $\bP_{\F}^{1}\times\bP_{\F}^{1}$.
    \item[(iii)] The closed formal subschemes $\MFF,\MVV,\MFV$ and $\MVF$ are regular Cartier divisors on $\M$, and we have the following equality of Cartier divisors:
    \begin{equation*}
        \M_{\F}=2\cdot\exc_{\M}+\MFF+\MVV+\MFV+\MVF.
    \end{equation*}
    \item[(iv)] Let $x\in\B$ be an element such that $\nu_p(q(x))\geq1$, the closed formal subschemes $\N_0^{\textup{I}+}(x), \N_0^{\textup{I}-}(x)$, $ \N_0^{\textup{II}+}(x)$ and $\N_0^{\textup{II}-}(x)$ are regular Cartier divisors on $\M$.
\end{itemize}
\label{geometry-M}
\end{proposition}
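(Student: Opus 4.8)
The plan is to work throughout in the explicit affine cover $\mathcal{C}=\{\M_1^+,\M_2^+,\M_1^-,\M_2^-\}$ of $\S$\ref{opencover}, where each chart is $\textup{Spf}$ of a ring $B/(f)$ with $B$ a power series ring over $W$ in three variables and $f=p+g$, $g$ a monomial divisible by the square of a topologically nilpotent variable, together with the transition relations (\ref{relation-1}). For (i): in each chart $g\in\mathfrak{m}^2$ at every closed point (the square of a topologically nilpotent variable lies there), while $p$ is part of a regular system of parameters of the regular ring $B$, so $p\notin\mathfrak{m}^2$; hence $f=p+g\notin\mathfrak{m}^2$ at every closed point, so $f$ is a regular parameter, $B/(f)$ is regular local there, and therefore $\M$ is regular. (The same factorization shows $f$ is a nonzerodivisor, so $\M$ is $W$-flat of pure dimension $3$.) For (ii): in $\M_1^+=\textup{Spf}\,W[u_{21},v_{21}][[x_1]]/(p+u_{21}v_{21}x_1^2)$ the exceptional divisor is $V(x_1)$, on which the equation forces $p=0$, so $\exc_\M\cap\M_1^+=\textup{Spec}\,\F[u_{21},v_{21}]$; similarly $\exc_\M$ is an affine plane in the two slope coordinates on each of the other three charts, and running (\ref{relation-1}) through glues these four planes into the standard cover of $\bP^1_\F\times\bP^1_\F$, giving $\exc_\M\cong\bP^1_\F\times\bP^1_\F$.

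For (iii), on $\M_1^+$ one factors $\textup{div}(p)=2\,V(x_1)+V(u_{21})+V(v_{21})$. Computing the total transforms of the ideals $(y_1,y_2),(x_1,x_2),(y_1,x_2),(x_1,y_2)$ that cut out $\N(x_0)^{\FF},\N(x_0)^{\VV},\N(x_0)^{\FV},\N(x_0)^{\VF}$ under the $\mathrm F/\mathrm V$ convention of Remark \ref{convention-FV}, one finds $\MFF\cap\M_1^+=V(u_{21})$ and $\MFV\cap\M_1^+=V(v_{21})$, whereas $\MVV$ and $\MVF$ miss this chart; so both sides of the claimed identity restrict to $2\,V(x_1)+V(u_{21})+V(v_{21})$ on $\M_1^+$. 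Each of the other three charts likewise sees $\exc_\M$ with multiplicity $2$ together with exactly two of the four components, and across the four charts all of $\FF,\VV,\FV,\VF$ appear, so the global identity of Cartier divisors holds. These same computations simultaneously exhibit each component, in every chart where it appears, as the vanishing locus of one coordinate with regular quotient ring, proving that $\MFF,\MVV,\MFV,\MVF$ are regular Cartier divisors.

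For (iv), $\N_0(x)$ is a two-dimensional regular local formal scheme by Lemma \ref{cyclic-diff}, and its closed immersion into $\N(x_0)$ carries the reduced closed point to the blow-up center. Since forming the strict transform under the blow-up of the ambient scheme at a reduced point commutes with blowing up the subscheme at its induced reduced point, $\N_0^{\textup{I}+}(x)$ --- and likewise $\N_0^{\textup{I}-}(x),\N_0^{\textup{II}+}(x),\N_0^{\textup{II}-}(x)$ --- is isomorphic to $\widetilde{\N}_0(x)$, which is a two-dimensional regular formal scheme by Lemma \ref{blow-up-N0(x)}(a); an integral regular formal subscheme of codimension one in the regular formal scheme $\M$ is automatically a Cartier divisor, which completes (iv).

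No step is deep. The only place that demands genuine care is the bookkeeping in (iii): matching $V(u_{21}),V(v_{21})$ and the analogous coordinates in the remaining three charts with the correct member of $\{\FF,\VV,\FV,\VF\}$ through the $\mathrm F/\mathrm V$ convention, and verifying that $\exc_\M$ enters with multiplicity exactly $2$ in every chart. The conceptual crux is the regularity in (i), but it collapses to a one-line verification per chart once the cover of $\S$\ref{opencover} is in hand.
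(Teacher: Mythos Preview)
Your proof is correct and follows essentially the same approach as the paper: work chart by chart in the cover $\mathcal{C}$ of \S\ref{opencover}, observe that each chart is regular of dimension $3$, identify $\exc_\M$ with $\bP^1_\F\times\bP^1_\F$ via the gluing relations (\ref{relation-1}), compute the strict transforms of the four components using the $(x_i,y_j)$ equations and Remark \ref{convention-FV}, and deduce (iv) from the regularity of $\widetilde{\N}_0(x)$ (Lemma \ref{blow-up-N0(x)}(a)) together with the fact that a codimension-one regular closed formal subscheme of a regular formal scheme is Cartier. One tiny inaccuracy: the rings $B$ are not power series rings over $W$ in three variables but rather of the form $W[u,v][[x]]$; your argument that $p\notin\mathfrak{m}^2$ at every closed point still goes through verbatim, so this does not affect correctness.
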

\begin{proof}
    The four open formal subschemes $\M_1^{+},\M_2^{+},\M_1^{-}$ and $\M_2^{-}$ of the formal scheme $\M$ cover $\M$ and are all regular formal schemes of dimension $3$, hence (i) is true. Notice that the formal scheme $\textup{Bl}_{(0,0)}\widehat{\mathbb{A}}_{\F}^{2}$ and $\widetilde{\N}_0(x)$ are $2$-dimensional regular formal schemes, hence the regular closed formal subschemes $\MFF,\MVV,\MFV,\MVF, \N_0^{\textup{I}+}(x), \N_0^{\textup{I}-}(x), \N_0^{\textup{II}+}(x), \N_0^{\textup{II}-}(x)$ of $\M$ must be regular Cartier divisors on $\M$, i.e., (iv) and the first part of (iii) are true.
    \par
    The exceptional divisor $\exc_{\M}$ is covered by the following 4 open subschemes 
    \begin{equation*} \exc_{\M,1}^{+}\coloneqq\exc_{\M}\cap\M_1^{+}\simeq\textup{Spec}\,\F[u_{21},v_{21}],\,\,\exc_{\M,2}^{+}\coloneqq\exc_{\M}\cap\M_2^{+}\simeq\textup{Spec}\,\F[v_{12},u_{12}],
    \end{equation*}
    \begin{equation*} \exc_{\M,1}^{-}\coloneqq\exc_{\M}\cap\M_1^{+}\simeq\textup{Spec}\,\F[w_{21},t_{21}],\,\,\exc_{\M,2}^{-}\coloneqq\exc_{\M}\cap\M_2^{-}\simeq\textup{Spec}\,\F[w_{12},t_{12}].
    \end{equation*}
    By the relations between the coordinates in (\ref{relation-1}), the above $4$ schemes glue together and the resulting scheme $\bP_{\F}^{1}\times\bP_{\F}^{1}$. Hence (ii) is true.
    \par
    Now we prove the second part of (iii). By $\S$\ref{description-x0}, we can assume that the irreducible component $\N_0(x_0)^{\textup{F}}$ is given by the equation $x=0$, while the irreducible component $\N_0(x_0)^{\textup{F}}$ is given by the equation $y=0$. Hence the equations of the four closed formal subschemes $\N(x_0)^{\textup{FF}}, \N(x_0)^{\textup{VV}}, \N(x_0)^{\textup{FV}}$ and $\N(x_0)^{\textup{VF}}$ are given by
    \begin{equation*}
        \N(x_0)^{\textup{FF}}: (y_1,y_2),\,\,\,\,
        \N(x_0)^{\textup{VV}}: (x_1,x_2),\,\,\,\,\N(x_0)^{\textup{FV}}: (y_1,x_2),\,\,\,\,\N(x_0)^{\textup{VF}}: (x_1,y_2).
    \end{equation*}
    Hence it's easy to see that 
    \begin{itemize}
        \item[$\MFF$]: $\MFF\subset\M_1^{+}\cup\M_2^{+}$ is cut out by the equation $u_{21}=0$ on $\M_1^{+}$ and $u_{12}=0$ on $\M_2^{+}$;
        \item[$\MVV$]: $\MVV\subset\M_1^{-}\cup\M_2^{-}$ is cut out by the equation $w_{21}=0$ on $\M_1^{-}$ and $w_{12}=0$ on $\M_2^{-}$;
        \item[$\MFV$]: $\MFV\subset\M_1^{+}\cup\M_2^{-}$ is cut out by the equation $v_{21}=0$ on $\M_1^{+}$ and $t_{12}=0$ on $\M_2^{-}$;
        \item[$\MVF$]: $\MVF\subset\M_2^{+}\cup\M_1^{-}$ is cut out by the equation $v_{12}=0$ on $\M_2^{+}$ and $t_{21}=0$ on $\M_1^{-}$.  
    \end{itemize}
    Therefore the second part of (iii) is true by the explicit open covering of the formal scheme $\M$ by $\M_1^{+},\M_2^{+},\M_1^{-}$ and $\M_2^{-}$.
\end{proof}
\begin{remark}
    For simplicity, in the following paragraphs we denote $u_{21}$ by $u$, denote $v_{21}$ by $v$. Then the open covering of the exceptional divisor $\exc_{\M}$ can be written in the following way by the relations in (\ref{relation-1})
\begin{equation*}
    \exc_{\M,1}^{+}\simeq\textup{Spec}\,\F[v,u],\,\,\exc_{\M,2}^{+}\simeq\textup{Spec}\,\F[v^{\prime},u],
\end{equation*}
\begin{equation*}
    \exc_{\M,1}^{-}\simeq\textup{Spec}\,\F[v^{\prime},u^{\prime}],\,\,\exc_{\M,2}^{-}\simeq\textup{Spec}\,\F[v,u^{\prime}].
\end{equation*}
Notice that the coordinate $v^{\prime}$ (resp. $u^{\prime}$) satisfy $v\cdot v^{\prime}=1$ (resp. $u\cdot u^{\prime}=1$) when $v$ (resp. $u$) is nonzero.
\par
Using the coordinates $u,v,u^{\prime}$ and $v^{\prime}$. We fix an isomorphism $\iota:\exc_{\M}\simeq\bP_{\F}^{1}\times\bP_{\F}^{1}$ such that the first $\bP_{\F}^{1}$ is obtained by gluing $\textup{Spec}\,\F[v]$ and $\textup{Spec}\,\F[v^{\prime}]$ along $\textup{Spec}\,\F[v, v^{\prime}]\simeq\textup{Spec}\,\F[v, v^{-1}]$, while the second $\bP_{\F}^{1}$ is obtained by gluing $\textup{Spec}\,\F[u]$ and $\textup{Spec}\,\F[u^{\prime}]$ along $\textup{Spec}\,\F[u, u^{\prime}]\simeq\textup{Spec}\,\F[u,u^{-1}]$.
\par
\label{coordinate-exc}
\end{remark}

\subsection{An automorphism of $\M$}
\label{aut-M}
Let $\iota_{0}:\N_0(x_0)\rightarrow\N_0(x_0)$ be the following morphism: Let $S\in\textup{Nilp}_{W}$. Let $c_0=\overline{x}_0\cdot x_0^{-1}\in\B$. For an object $\left(X\xrightarrow{\pi}X^{\prime},(\rho,\rho^{\prime})\right)\in\N_0(x_0)(S)$,
\begin{align*}
    \iota_{0}:\left(X\xrightarrow{\pi}X^{\prime},(\rho,\rho^{\prime})\right)\mapsto\left(X^{\prime}\xrightarrow{\pi^{\vee}}X,(\rho^{\prime},\rho\circ c_0)\right).
\end{align*}
The morphism $\iota_{0}$ is an automorphism of $\N_0(x_0)$ over $W$ (but not an involution unless $\overline{x}_0=x_0$). It induces an automorphism $\textup{id}\times\iota_0:\N(x_0)\rightarrow\N(x_0)$ of the formal scheme $\N(x_0)$. Under this isomorphism, the universal pair (\ref{ori-univ}) is mapped to 
\begin{equation*}
   \left(X_1^{\textup{univ}}\stackrel{x_{0,1}^{\univ}}\longrightarrow X_{1}^{\prime\textup{\univ}},\left(\rho_1^{\univ},\rho_1^{\prime\univ}\right)\right),\,\,\,\,\left(X_2^{\prime\textup{univ}}\xrightarrow{\left(x_{0,2}^{\univ}\right)^{\vee}} X_{2}^{\textup{\univ}},\left(\rho_2^{\prime\univ},\rho_2^{\univ}\circ c_0\right)\right).
\end{equation*}
By the moduli interpretations of special cycles on $\N(x_0)$, we have that for all subsets $H\subset\B$,
\begin{equation*}
    \mathcal{Y}_{\N(x_0)}(H)=\left(\textup{id}\times\iota_0\right)^{\ast}\left(\mathcal{Z}_{\N(x_0)}(x_0\cdot H)\right)\coloneqq\mathcal{Z}_{\N(x_0)}(x_0\cdot H)\times_{\N(x_0),\textup{id}\times\iota_0}\N(x_0).
\end{equation*}
\par
Let $x\in\B$ be an element such that $\nu_p(q(x))\geq1$. By the definition of the morphisms $\st_{x}^{\textup{I}+}$,$\st_{x}^{\textup{I}-}$, $\st_{x}^{\textup{II}+}$ and $\st_x^{\textup{II}-}:\N_0(x)\rightarrow\N(x_0)$ in $\S$\ref{local-correspondences}, we have the following identities of closed formal subschemes of $\N(x_0)$:
\begin{align}
    \st_{c_0x}^{\textup{II}+}\left(\N_0(c_0x)\right)=\left(\textup{id}\times\iota_0\right)^{\ast}\left(\st_x^{\textup{I}+}\left(\N_0(x)\right)\right),\,\,\st_{\overline{x}}^{\textup{II}-}\left(\N_0(\overline{x})\right)=\left(\textup{id}\times\iota_0\right)^{\ast}\left(\st_x^{\textup{I}-}\left(\N_0(x)\right)\right),\label{local-id-aut-1}\\
    \st_{x}^{\textup{I}+}\left(\N_0(x)\right)=\left(\textup{id}\times\iota_0\right)^{\ast}\left(\st_x^{\textup{II}+}\left(\N_0(x)\right)\right),\,\,\st_{\overline{c_0x}}^{\textup{I}-}\left(\N_0(\overline{c_0x})\right)=\left(\textup{id}\times\iota_0\right)^{\ast}\left(\st_x^{\textup{II}-}\left(\N_0(x)\right)\right).
    \label{local-id-aut}
\end{align}
\par
Notice that by the universal property of blow up morphism $\pi:\M\rightarrow\N(x_0)$, the automorphism $\textup{id}\times\iota_0$ induces an automorphism $\iota^{\M}$ of the formal scheme $\M$. By (\ref{local-id-aut-1}) and (\ref{local-id-aut}), we have the following identities of closed formal subschemes of $\M$:
\begin{align}
    \N_0^{\textup{II}+}(c_0x)=\left(\iota_\M\right)^{\ast}\left(\N_0^{\textup{I}+}(x)\right),\,\,\N_0^{\textup{II}-}\left(\overline{x}\right)=\left(\iota_\M\right)^{\ast}\left(\N^{\textup{I}-}_0(x)\right),\label{local-id-aut-M-1}\\
    \N_0^{\textup{I}+}(x)=\left(\iota_\M\right)^{\ast}\left(\N_0^{\textup{II}+}(x)\right),\,\,\N_0^{\textup{I}-}\left(\overline{c_0x}\right)=\left(\iota_\M\right)^{\ast}\left(\N^{\textup{II}-}_0(x)\right).
    \label{local-id-aut-M}
\end{align}

\subsection{Special cycles on the Rapoport--Zink space $\M$}
\begin{definition}
    For any subset $H\subset\B$, define the special cycles $\mathcal{Z}_{\M}(H)\coloneqq\mathcal{Z}_{\N(x_0)}(H)\times_{\N(x_0)}\M$ and $\mathcal{Y}_{\M}(H)\coloneqq\mathcal{Y}_{\N(x_0)}(H)\times_{\N(x_0)}\M$, i.e.,  we have the following Cartesian diagrams
    \begin{equation*}
        \begin{tikzcd}
    {\mathcal{Z}_{\M}(H)}
    \arrow[d] \arrow[r]
    & {\M}
    \arrow[d, "\pi"]
    \\
    {\mathcal{Z}_{\N(x_0)}(H)}
    \arrow[r]
    &{\N(x_0)},
\end{tikzcd}
\begin{tikzcd}
    {\mathcal{Y}_{\M}(H)}
    \arrow[d] \arrow[r]
    & {\M}
    \arrow[d, "\pi"]
    \\
    {\mathcal{Y}_{\N(x_0)}(H)}
    \arrow[r]
    &{\N(x_0)},
\end{tikzcd}
    \end{equation*}
Since $\mathcal{Y}_{\N(x_0)}(H)=\left(\textup{id}\times\iota_0\right)^{\ast}\left(\mathcal{Z}_{\N(x_0)}(x_0\cdot H)\right)$ and $\mathcal{Z}_{\N(x_0)}(\overline{x_0}\cdot H)=\left(\textup{id}\times\iota_0\right)^{\ast}\left(\mathcal{Y}_{\N(x_0)}(H)\right)$, we have $\mathcal{Y}_{\M}(H)=\left(\iota^{\M}\right)^{\ast}\left(\mathcal{Z}_{\M}(x_0\cdot H)\right)$ and $\mathcal{Z}_{\M}(\overline{x_0}\cdot H)=\left(\iota^{\M}\right)^{\ast}\left(\mathcal{Y}_{\M}(H)\right)$.
\label{special-cycle-on-M}
\end{definition}

By the moduli interpretation of the formal scheme $\N(x_0)$, the morphism $\pi$ gives rise to the following two pairs of deformations of the quasi-isogeny $x_0$ over the formal scheme $\M$:
\begin{equation}
    \left(X_{1,\M}\stackrel{(x_0)_{1,\M}}\longrightarrow X_{1,\M}^{\prime},\left(\rho_{1,\M},\rho_{1,\M}^{\prime}\right)\right),\,\,\,\,\left(X_{2,\M}\stackrel{(x_0)_{2,\M}}\longrightarrow X_{2,\M}^{\prime},\left(\rho_{2,\M},\rho_{2,\M}^{\prime}\right)\right).
    \label{univ-obj-M}
\end{equation}
By the moduli interpretation of the special cycle $\mathcal{Z}_{\N(x_0)}(H)$ in Definition \ref{special-cycle-originalmodel}, the special cycle $\mathcal{Z}_{\M}(H)$ is cut out by the following conditions,
\begin{equation*}
    \rho_{2,\M}\circ x \circ (\rho_{1,\M})^{-1}\in \textup{Hom}(X_{1,\M}, X_{2,\M});
\end{equation*}
\begin{equation*}
    \rho_{2,\M}^{\prime}\circ x^{\prime} \circ (\rho_{1,\M}^{\prime})^{-1}\in \textup{Hom}(X_{1,\M}^{\prime}, X_{2,\M}^{\prime}).
\end{equation*}
for all $x\in H$.
\par
For simplicity, we denote the special cycle $\mathcal{Z}_{\M}(H)$ by $\mathcal{Z}(H)$ in all the following paragraphs. If $H=\{x\}$ where $x\in\B$, denote the special cycle $\mathcal{Z}(\{x\})$ by $\mathcal{Z}(x)$.
\par
Recall that we define $\M^{+}=\M_1^{+}\cup\M_2^{+}$ and $\M^{-}=\M_1^{-}\cup\M_2^{-}$. For a symbol $?\in\{+,-\}$, let the pair
\begin{equation*}
    \left((X_{1,\M^{?}}\stackrel{(x_0)_{1,\M^{?}}}\longrightarrow X_{1,\M^{?}}^{\prime}),(\rho_{1,\M^{?}},\rho_{1,\M^{?}}^{\prime})\right),\,\,\,\,\left((X_{2,\M^{?}}\stackrel{(x_0)_{2,\M^{?}}}\longrightarrow X_{2,\M^{?}}^{\prime}),(\rho_{2,\M^{?}},\rho_{2,\M^{?}}^{\prime})\right)
\end{equation*}
be the base change of the universal pair (\ref{univ-obj-M}) over $\M$ to $\M^{?}$. Define $\mathcal{Z}^{?}(x)\coloneqq\mathcal{Z}(x)\cap\M^{?}$. Let $p_{?}:\M^{?}\rightarrow\N$ be the following composition
\begin{equation}
    p_{?}:\M^{?}\rightarrow\M\stackrel{\pi}\rightarrow\N(x_0)\stackrel{s_{?}}\rightarrow\N.
    \label{pMtoN}
\end{equation} 
\begin{lemma}\label{lem: divisor}
    Let $x\in\mathbb{B}$ be a nonzero element.The closed formal subscheme $\mathcal{Z}(x)$ of the formal scheme $\M$ is cut out by the conditions
    \begin{itemize}
        \item [(+)] On $\M^{+}$: $\rho_{2,\M^{+}}\circ x \circ (\rho_{1,\M^{+}})^{-1}\in \textup{Hom}(X_{1,\M^{+}}, X_{2,\M^{+}})$, i.e., 
        \begin{equation*}
            \mathcal{Z}^{+}(x)=\mathcal{Z}_{\N}(x)\times_{\N,p_{+}}\M^{+}.
        \end{equation*}
        \item[($-$)]  On $\M^{-}$: $\rho_{2,\M^{-}}\circ x^{\prime} \circ (\rho_{1,\M^{-}})^{-1}\in \textup{Hom}(X_{1,\M^{-}}, X_{2,\M^{-}})$, i.e., 
        \begin{equation*}
            \mathcal{Z}^{-}(x)=\mathcal{Z}_{\N}(x^{\prime})\times_{\N,p_{-}}\M^{-}.
        \end{equation*}
    \end{itemize}
    Therefore the special cycle $\mathcal{Z}(x)$ is a Cartier divisor on $\M$.
    \label{divisor}
\end{lemma}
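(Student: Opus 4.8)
The plan is to deduce the lemma from Lemma \ref{proper-hpe} — which provides that the special divisors on the hyperspecial-level space $\N=\N_0\times_W\N_0$ are effective Cartier — by exhibiting $\mathcal{Z}(x)$ on each half of the cover $\M=\M^{+}\cup\M^{-}$ as the pullback of one of these divisors. The first step is to reformulate the moduli problem: combining Definition \ref{special-cycle-originalmodel} and Definition \ref{spe-hyper} with the source and target morphisms $s_{+},s_{-}\colon\N(x_0)\to\N$ of $\S$\ref{product-space}, one sees that $\mathcal{Z}_{\N(x_0)}(x)$ is the scheme-theoretic intersection
\begin{equation*}
    \mathcal{Z}_{\N(x_0)}(x)=s_{+}^{\ast}\mathcal{Z}_{\N}(x)\cap s_{-}^{\ast}\mathcal{Z}_{\N}(x^{\prime}),
\end{equation*}
the first factor being the locus where $x$ deforms to an isogeny $X_1\to X_2$ between the source $p$-divisible groups and the second where $x^{\prime}=x_0\cdot x\cdot x_0^{-1}$ deforms to an isogeny $X_1^{\prime}\to X_2^{\prime}$ between the target groups. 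Pulling back along $\pi$ and recalling that $p_{+}=s_{+}\circ\pi$ on $\M^{+}$ and $p_{-}=s_{-}\circ\pi$ on $\M^{-}$ (see (\ref{pMtoN})), this gives $\mathcal{Z}(x)=(s_{+}\pi)^{\ast}\mathcal{Z}_{\N}(x)\cap(s_{-}\pi)^{\ast}\mathcal{Z}_{\N}(x^{\prime})$, so the two claimed identities become the single \emph{redundancy} statement: on $\M^{+}$, $(s_{+}\pi)^{\ast}\mathcal{Z}_{\N}(x)\subseteq(s_{-}\pi)^{\ast}\mathcal{Z}_{\N}(x^{\prime})$, and symmetrically on $\M^{-}$.

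To establish the redundancy I would use that the commutative square of Definition \ref{special-cycle-on-M} forces $x^{\prime}\circ(x_0)_{1,\M}=(x_0)_{2,\M}\circ x$ as quasi-isogenies over $\M$. Consequently, over any base on which $x$ deforms to an honest isogeny $X_{1,\M}\to X_{2,\M}$, the element $x^{\prime}$ deforms to an honest isogeny $X_{1,\M}^{\prime}\to X_{2,\M}^{\prime}$ precisely when $x$ carries the canonical order-$p$ subgroup $\ker(x_0)_{1,\M}$ into $\ker(x_0)_{2,\M}$: then $(x_0)_{2,\M}\circ x$ kills $\ker(x_0)_{1,\M}$, hence factors through $(x_0)_{1,\M}$, and the factor lifts $x^{\prime}$. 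I would verify this subgroup containment on $\M^{+}$ by an explicit local computation, working in the blow-up coordinates $u,v,x_1$ on $\M_1^{+}$ and their analogues on $\M_2^{+}$ and using the Dieudonn\'e-module presentation of Proposition \ref{local-gamma0p}, which records the Hodge filtrations of $X_{i,\M},X_{i,\M}^{\prime}$ and the matrix of $\mathbb{D}(x_0)$: one writes down the single equations cutting out $(s_{+}\pi)^{\ast}\mathcal{Z}_{\N}(x)$ and $(s_{-}\pi)^{\ast}\mathcal{Z}_{\N}(x^{\prime})$ and checks that the latter is divisible by the former; on $\M^{-}$ the computation is identical after swapping the source and target groups (hence $x$ and $x^{\prime}$). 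The conceptual reason is that over $\M^{+}$ at most one of $(x_0)_{1,\M},(x_0)_{2,\M}$ is of Verschiebung type, so the remaining Frobenius factor — whose kernel is functorial — carries the subgroup condition along, and dually over $\M^{-}$. I expect this local verification to be the main obstacle; the rest of the argument is formal.

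Given the redundancy, the conclusion goes as follows. Since the reduced norm is conjugation-invariant, $q(x^{\prime})=q(x)$; if $\nu_p(q(x))<0$ then $x\notin\OB=\textup{End}(\X)$, so $\mathcal{Z}_{\N}(x)$ and $\mathcal{Z}_{\N}(x^{\prime})$ do not contain the closed point of $\N$ and are therefore empty, whence $\mathcal{Z}(x)=\varnothing$ and the statement is trivial. Otherwise $q(x),q(x^{\prime})\in\zp$, so Lemma \ref{proper-hpe} makes $\mathcal{Z}_{\N}(x)$ and $\mathcal{Z}_{\N}(x^{\prime})$ effective Cartier divisors on $\N$, and the redundancy yields $\mathcal{Z}(x)\cap\M^{+}=\mathcal{Z}_{\N}(x)\times_{\N,p_{+}}\M^{+}$ and $\mathcal{Z}(x)\cap\M^{-}=\mathcal{Z}_{\N}(x^{\prime})\times_{\N,p_{-}}\M^{-}$. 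Each of these is locally cut out by a single equation, necessarily a nonzero divisor since $\M$ is regular and connected, hence integral; and since $\{\M^{+},\M^{-}\}$ is an open cover of $\M$ (as $\M_1^{+},\M_2^{+},\M_1^{-},\M_2^{-}$ cover $\M$ by $\S$\ref{opencover}) and the Cartier property is local, $\mathcal{Z}(x)$ is an effective Cartier divisor on $\M$.
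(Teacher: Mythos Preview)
Your strategy is exactly the paper's: show that on $\M^{+}$ the defining equation $f_{x'}$ for the lifting locus of $x'$ lies in the ideal $(f_x)$ via the local Dieudonn\'e computation you describe, using Proposition \ref{local-gamma0p} and the blow-up coordinates of $\S$\ref{opencover}. One refinement worth flagging: since the matrix of $\mathbb{D}(x)$ is only accessible over a PD thickening, the paper works in the completed local ring $R$ at a point of $\mathcal{Z}^{+}(x)$ modulo $I^{2}$ (with $I=(f_x,f_{x'})$), finds $f_{x'}+uf_x=uvx_1(d-a')$ there, uses the conjugation relation $\mathbb{D}(x')\mathbb{D}(x_0)=\mathbb{D}(x_0)\mathbb{D}(x)$ to force $p(a'-d)=0$ together with $p=-uvx_1^{2}$ on $\M_1^{+}$ to place the error term in $\mathfrak{m}I$, and then concludes $I=(f_x)$ by Nakayama rather than by a direct divisibility check.
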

\begin{proof}
    We first prove the $(+)$ case. Let $z\in\mathcal{Z}^{+}(x)(\F)$ be a point. Let $R$ be the completed local ring of the formal scheme $\M$ at $z$. Without loss of generality, we can assume that $z\in\M_1^{+}(\F)$. Let $I\subset R$ be the ideal cutting out the divisor $\mathcal{Z}(x)$, we will show that $I$ is a principal ideal.
    \par
    By the moduli interpretation of $\mathcal{Z}(x)$, the ideal $I$ is generated by two elements $f_x$ and $f_{x^{\prime}}$, where $f_x$ (resp. $f_{x^{\prime}}$) is the equation of the universal closed formal subscheme of $\M$ over which the quasi-isogeny $x$ (resp. $x^{\prime}$) lifts to an isogeny. The surjective homomorphism $R/I^{2}\rightarrow R/I$ is equipped with a nilpotent pd structure. We have the following commutative diagram,
    \begin{equation*}
\begin{tikzcd}
    {\mathbb{D}(X_1)}
    \arrow[d, "\mathbb{D}(x_0)"'] \arrow[r,"\mathbb{D}(x)"]
    & {\mathbb{D}(X_2)}
    \arrow[d, "\mathbb{D}(x_0)"]
    \\
    {\mathbb{D}(X_1^{\prime})}
    \arrow[r,"\mathbb{D}(x^{\prime})"]
    &{\mathbb{D}(X_2^{\prime})}.
\end{tikzcd}
    \end{equation*}
    Let's use the specific basis of the four Dieudonne modules stated in Proposition \ref{local-gamma0p}. Then
    \begin{equation*}
        \mathbb{D}(x)[e_1,f_1]=[e_2,f_2]\begin{pmatrix}
            a & -pc^{\prime}\\
            c & d
        \end{pmatrix},\,\,\,\,\mathbb{D}(x^{\prime})[e_1^{\prime},f_1^{\prime}]=[e_2^{\prime},f_2^{\prime}]\begin{pmatrix}
            a^{\prime} & -pc\\
            c^{\prime} & a
        \end{pmatrix},
    \end{equation*}
    where $a,a^{\prime},c,c^{\prime},d\in R/I^{2}$ and $p(a^{\prime}-d)=0\in R/I^{2}$. Then
    \begin{align*}
        \mathbb{D}(x)(f_1+x_1e_1)=(d+cx_1)f_2+(ax_1-pc^{\prime})e_2,\\
        \mathbb{D}(x^{\prime})(f_1^{\prime}+y_1e_1^{\prime})=(a+c^{\prime}y_1)f_2^{\prime}+(a^{\prime}y_1-pc)e_2^{\prime}.
    \end{align*}
    
    Since $z\in\M_1^{+}$, we use the coordinate for $\M_1^{+}$ in $\S$\ref{opencover}. Recall that we have $y_1=ux_1$ and $x_2=v x_1$. The filtration on the Dieudonne crystals are given explicitly in Proposition \ref{local-gamma0p}, hence
    \begin{align*}
        &f_x=cvx_1^{2}+dvx_1-ax_1-c^{\prime}uvx_1^{2}\in R/I^{2};\\
        &f_{x^{\prime}}=c^{\prime}u^{2}vx_1^{2}+aux_1-a^{\prime}uvx_1-cuvx_1^{2}=-uf_x+uvx_1(d-a^{\prime})\in R/I^{2}.
    \end{align*}
    Notice that $x_1\vert f_x,f_{x^{\prime}}$, let $g=f_x/x_1$ and $g^{\prime}=f_{x^{\prime}}/x_1$ (In Proposition \ref{difference-decomposition}, we calculated the equations of $f_x$ and $f_{x^{\prime}}$ which imply that both $g$ and $g^{\prime}$ are not units in $R$). Since $p=-uvx_1^{2}$ and $p(a^{\prime}-d)=0\in R/I^{2}$, we have
    \begin{equation*}
        uvx_1(d-a^{\prime})\in(gf_x,g^{\prime}f_{x^{\prime}},gf_{x^{\prime}}).
    \end{equation*}
    Then $f_{x^{\prime}}-uf_x\in(gf_x,g^{\prime}f_{x^{\prime}},gf_{x^{\prime}})+I^{2}$. Therefore $f_{x^{\prime}}=t\cdot f_{x}$ for some element $t\in R$. Hence $I=(f_x,f_{x^{\prime}})=(f_x)$ is a principal ideal, and $\mathcal{Z}^{+}(x)=\mathcal{Z}_{\N}(x)\times_{\N,p_{+}}\M^{+}$. The proof of the $(-)$ case is similar so we omit the details here.
\end{proof}
By the identity $\mathcal{Y}(x)=\left(\iota^{\M}\right)^{\ast}(\mathcal{Z}(x_0\cdot x))$, we obtain the following
\begin{corollary}\label{cor: divisor}
     Let $x\in\mathbb{B}$ be a nonzero element.The closed formal subscheme $\mathcal{Y}(x)$ is a Cartier divisor on $\M$.
\end{corollary}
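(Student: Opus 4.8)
The plan is to deduce Corollary~\ref{cor: divisor} immediately from Lemma~\ref{lem: divisor} together with the automorphism $\iota^{\M}$ of $\M$ constructed in $\S$\ref{aut-M}. First I would observe that, since $\B$ is a division algebra and both $x_0$ and $x$ are nonzero, the product $x_0\cdot x$ is again a nonzero element of $\B$. Hence Lemma~\ref{lem: divisor} applies to $x_0\cdot x$ and shows that $\mathcal{Z}(x_0\cdot x)$ is an effective Cartier divisor on $\M$; that is, Zariski-locally on $\M$ it is cut out by a single equation which is a nonzerodivisor in the structure sheaf.

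Next I would invoke the identity $\mathcal{Y}(x)=\left(\iota^{\M}\right)^{\ast}\left(\mathcal{Z}(x_0\cdot x)\right)$ recorded in Definition~\ref{special-cycle-on-M}, where $\iota^{\M}\colon\M\to\M$ is the automorphism induced by $\textup{id}\times\iota_0$ on $\N(x_0)$ through the universal property of the blow-up $\pi\colon\M\to\N(x_0)$. Since $\iota^{\M}$ is an isomorphism of locally noetherian formal schemes, the pullback along it of an effective Cartier divisor is again an effective Cartier divisor: if $\mathcal{Z}(x_0\cdot x)$ is locally defined by a nonzerodivisor $f\in\rO_{\M}$, then $\left(\iota^{\M}\right)^{\ast}\mathcal{Z}(x_0\cdot x)$ is locally defined by $\left(\iota^{\M}\right)^{\#}(f)$, which is again a nonzerodivisor because $\left(\iota^{\M}\right)^{\#}$ is a local ring isomorphism. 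Therefore $\mathcal{Y}(x)$ is a Cartier divisor on $\M$.

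There is essentially no serious obstacle here; the content is entirely contained in Lemma~\ref{lem: divisor} and in the fact, already established in $\S$\ref{aut-M}, that $\iota^{\M}$ is a genuine automorphism of $\M$ rather than merely a morphism. The only point deserving a line of care is that $\left(\iota^{\M}\right)^{\ast}$ of a closed formal subscheme agrees with its scheme-theoretic preimage, so that the explicit local defining equations transform as claimed; this is automatic for an isomorphism.
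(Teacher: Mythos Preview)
Your proposal is correct and is exactly the argument the paper uses: the corollary is stated immediately after the identity $\mathcal{Y}(x)=\left(\iota^{\M}\right)^{\ast}\mathcal{Z}(x_0\cdot x)$, and the only content is Lemma~\ref{lem: divisor} applied to $x_0\cdot x$ together with the fact that $\iota^{\M}$ is an automorphism. Your added remarks (that $x_0\cdot x\neq 0$ because $\B$ is a division algebra, and that pullback by an isomorphism preserves the Cartier property) just make explicit what the paper leaves implicit.
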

In the following paragraphs, for a nonzero element $x\in\mathbb{B}$, we refer to $\mathcal{Z}(x)$ as a special divisor on the formal scheme $\M$. By the moduli interpretation of the special divisor $\mathcal{Z}(x)$ in Proposition \ref{divisor}, there is a closed immersion $\mathcal{Z}(p^{-1}x)\rightarrow\mathcal{Z}(x)$.
\begin{definition}
    Let $x\in\mathbb{B}$ be a nonzero element. Define the difference divisor associated to $x$ to be the following effective Cartier divisor on the formal scheme $\M$,
\begin{equation*}
    \mathcal{D}(x)=\mathcal{Z}(x)-\mathcal{Z}(p^{-1}x).
\end{equation*}
For a symbol $?\in\{+,-\}$, define $\mathcal{D}^{?}(x)=\mathcal{D}(x)\cap\M^{?}$.
\label{diff-div}
\end{definition}
\begin{remark}
    By Proposition \ref{divisor}, we have
    \begin{equation*}
        \mathcal{D}^{+}(x)=\mathcal{N}_0(x)\times_{\N,p_{+}}\M^{+}\,\,\textup{and}\,\,\mathcal{D}^{+}(x)=\mathcal{N}_0(x^{\prime})\times_{\N,p_{-}}\M^{-}.
    \end{equation*}
    By the isomorphism $\mathcal{D}_{\N}(x)\simeq\N_0(x)$ in Lemma \ref{cyclic-diff}, we can give a moduli interpretation for the difference divisor $\mathcal{D}(x)$ as follows:
    \begin{align*}
        &\mathcal{D}^{+}(x): \textup{the quasi-isogeny} \rho_{2,\M^{+}}\circ x \circ (\rho_{1,\M^{+}})^{-1} \textup{lifts to a cyclic isogeny},\\
        &\mathcal{D}^{-}(x): \textup{the quasi-isogeny} \rho_{2,\M^{-}}\circ x^{\prime} \circ (\rho_{1,\M^{-}})^{-1} \textup{lifts to a cyclic isogeny}.
    \end{align*}
    \label{moduli-diff-M}
\end{remark}

\section{Special divisor and the exceptional divisor}
\label{spe-exc-part}
\subsection{Line bundles on the exceptional divisor $\exc_{\M}$}
Before stating the main results of this part, we record the following lemma.
\begin{lemma}
    Let $X$ be a regular formal scheme of dimension $d$. Let $C,D\subset X$ be two effective Cartier divisors on $X$, then we have the following equality in $\textup{Gr}^{1}K_0(X)$,
    \begin{equation*}
        [\rO_{C+D}]=[\rO_{C}]+[\rO_{D}].
    \end{equation*}
    \label{linearization}
\end{lemma}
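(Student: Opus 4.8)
The plan is to exhibit a short exact sequence of coherent $\rO_X$-modules relating $\rO_{C+D}$, $\rO_C$ and $\rO_D$, and then to pass to the graded piece $\textup{Gr}^1K_0(X)$, where the unavoidable twist by a line bundle becomes invisible. First I would recall that $C+D$ is the effective Cartier divisor with ideal sheaf $\mathcal{I}_{C+D}=\mathcal{I}_C\cdot\mathcal{I}_D$, where $\mathcal{I}_C=\rO_X(-C)$ and $\mathcal{I}_D=\rO_X(-D)$ are invertible. Since $\mathcal{I}_C$ is locally free, tensoring the inclusion $\mathcal{I}_D\hookrightarrow\rO_X$ by $\mathcal{I}_C$ stays injective and identifies $\mathcal{I}_C\otimes_{\rO_X}\rO_D$ with $\mathcal{I}_C/\mathcal{I}_C\mathcal{I}_D=\mathcal{I}_C/\mathcal{I}_{C+D}$. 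This produces the short exact sequence
\begin{equation*}
0\longrightarrow \rO_X(-C)\otimes_{\rO_X}\rO_D\longrightarrow\rO_{C+D}\longrightarrow\rO_C\longrightarrow 0,
\end{equation*}
where the first arrow is, locally, multiplication by a local equation of $C$; its injectivity uses that such a local equation is a nonzerodivisor, which holds because $X$ is regular, hence locally a domain. Under the identification $K_0'(X)\cong K_0(X)$ afforded by regularity, this gives $[\rO_{C+D}]=[\rO_C]+[\rO_X(-C)\otimes\rO_D]$ in $K_0(X)$.

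It then remains to check that $[\rO_X(-C)\otimes\rO_D]=[\rO_D]$ in $\textup{Gr}^1K_0(X)$. Both $\rO_D$ and its twist are supported on the divisor $D$, hence lie in $\textup{F}^1K_0(X)$, and for any line bundle $L$ one has $[L]-[\rO_X]\in\textup{F}^1K_0(X)$. Using the compatibility of the codimension filtration with the cup product on a regular formal scheme — the local instance $\textup{F}^1\cdot\textup{F}^1\subset\textup{F}^2$ of \eqref{codim-add} — we obtain, for any $\mathcal{F}\in\textup{F}^1K_0(X)$,
\begin{equation*}
[L\otimes\mathcal{F}]-[\mathcal{F}]=\bigl([L]-[\rO_X]\bigr)\cdot[\mathcal{F}]\in\textup{F}^2K_0(X).
\end{equation*}
Applying this with $L=\rO_X(-C)$ and $\mathcal{F}=\rO_D$ shows that $[\rO_X(-C)\otimes\rO_D]$ and $[\rO_D]$ have the same image in $\textup{Gr}^1K_0(X)=\textup{F}^1K_0(X)/\textup{F}^2K_0(X)$, and combining with the previous paragraph yields $[\rO_{C+D}]=[\rO_C]+[\rO_D]$ in $\textup{Gr}^1K_0(X)$.

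The only delicate point is the multiplicativity statement $\textup{F}^1\cdot\textup{F}^1\subset\textup{F}^2$ in the formal-scheme setting. As a safeguard I would also record the alternative, purely cycle-theoretic verification, which sidesteps this: the image in $\textup{Gr}^1K_0(X)$ of a coherent sheaf $\mathcal{F}$ supported in codimension $\geq 1$ is computed by its codimension-one cycle $\sum_{\xi}\textup{length}_{\rO_{X,\xi}}(\mathcal{F}_\xi)\cdot\overline{\{\xi\}}$, the sum over codimension-one points $\xi$. At each such $\xi$ the local ring $\rO_{X,\xi}$ is a discrete valuation ring by regularity, so if $f,g$ denote local equations of $C$ and $D$ at $\xi$ then $\textup{length}\,\rO_{X,\xi}/(fg)=v_\xi(fg)=v_\xi(f)+v_\xi(g)=\textup{length}\,\rO_{X,\xi}/(f)+\textup{length}\,\rO_{X,\xi}/(g)$. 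Hence $\rO_{C+D}$ and $\rO_C\obot\rO_D$ have identical codimension-one cycles, and therefore identical classes in $\textup{Gr}^1K_0(X)$. Either argument finishes the proof; I expect the main effort to be just the clean bookkeeping of the identification $\mathcal{I}_C\otimes\rO_D\cong\mathcal{I}_C/\mathcal{I}_{C+D}$ and the choice of which of the two concluding arguments to present.
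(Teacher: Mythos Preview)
Your primary argument is correct and is essentially the paper's proof in different packaging: both hinge on the multiplicativity $\textup{F}^1\cdot\textup{F}^1\subset\textup{F}^2$ of \eqref{codim-add}. The paper expands $([\rO_X]-[\mathcal{I}_C])\cdot([\rO_X]-[\mathcal{I}_D])\in\textup{F}^2K_0(X)$ directly and rearranges to obtain $[\rO_X]-[\mathcal{I}_C\mathcal{I}_D]=([\rO_X]-[\mathcal{I}_C])+([\rO_X]-[\mathcal{I}_D])$ in $\textup{Gr}^1$, whereas you go through the short exact sequence $0\to\rO_X(-C)\otimes\rO_D\to\rO_{C+D}\to\rO_C\to 0$ and then kill the twist by the same multiplicativity; the two computations are the same up to a sign, since $([\mathcal{I}_C]-[\rO_X])\cdot[\rO_D]=-[\rO_C]\cdot[\rO_D]$. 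One small remark: the injectivity of your first arrow needs only that a local equation of $C$ is a nonzerodivisor, which is built into the definition of an effective Cartier divisor and does not require regularity of $X$. Your backup cycle-theoretic argument via lengths at codimension-one points is a genuinely different route not taken in the paper; it is more elementary (additivity of valuations in DVRs) and bypasses \eqref{codim-add} entirely, at the price of invoking the identification of $\textup{Gr}^1K_0(X)$ with codimension-one cycle classes.
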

\begin{proof}
    Let $\mathcal{I}$ and $\mathcal{J}$ be the ideal sheaf of the effective Cartier divisors $C$ and $D$ respectively. Then 
    \begin{equation*}
        [\rO_C]=[\rO_X]-[\mathcal{I}],\,\,\,\,[\rO_D]=[\rO_X]-[\mathcal{J}].
    \end{equation*}
    Since $[\rO_C],[\rO_D]\in\textup{F}^{1}K_0(X)$, we have $[\rO_C]\cdot[\rO_D]\in\textup{F}^{2}K_0(X)$ by (\ref{codim-add}). Therefore
    \begin{equation*}
        0=[\rO_C]\cdot[\rO_D]=\left([\rO_X]-[\mathcal{I}]\right)\cdot\left([\rO_X]-[\mathcal{J}]\right)\in\textup{Gr}^{1}K_0(X).
    \end{equation*}
    Then we have the following equality in $\textup{Gr}^{1}K_0(X)$,
    \begin{equation*}
        [\rO_X]-[\mathcal{I}]-[\mathcal{J}]+[\mathcal{I}]\cdot[\mathcal{J}]=[\rO_X]-[\mathcal{I}]-[\mathcal{J}]+[\mathcal{IJ}]=0.
    \end{equation*}
    Hence the following equality in $\textup{Gr}^{1}K_0(X)$,
    \begin{equation*}
        [\rO_{C+D}]=[\rO_X]-[\mathcal{IJ}]=[\rO_X]-[\mathcal{I}]+[\rO_X]-[\mathcal{J}]=[\rO_{C}]+[\rO_{D}].
    \end{equation*}
\end{proof}
Under the isomorphism $\iota:\exc_{\M}\simeq\bP_{\F}^{1}\times\bP_{\F}^{1}$ we fixed in Remark \ref{coordinate-change}, let $\textup{pr}_1:\exc_{\M}\rightarrow\bP_{\F}^{1}$ be the fist projection, while $\textup{pr}_2:\exc_{\M}\rightarrow\bP_{\F}^{1}$ be the second projection. Let $\mathcal{L}$ be a line bundle on $\exc_{\M}$, it is isomorphic to $\textup{pr}_1^{\ast}\mathcal{O}(m)\otimes\textup{pr}_2^{\ast}\mathcal{O}(n)$ for some integers $m$ and $n$. Define
\begin{equation*}
    \mathcal{O}(m,n)\coloneqq\textup{pr}_1^{\ast}\mathcal{O}(m)\otimes\textup{pr}_2^{\ast}\mathcal{O}(n)\in\textup{Pic}(\exc_{\M}).
\end{equation*}
\par
For two line bundles $\mathcal{L},\mathcal{G}\in\textup{Pic}(\exc_{\M})$, denote by $\mathcal{L}\cdot\mathcal{G}$ the intersection product $\textup{Pic}(\exc_{\M})\times\textup{Pic}(\exc_{\M})\rightarrow\mathbb{Z}$ on the group $\textup{Pic}(\exc_{\M})$.
\begin{lemma}
    Let $m_1,m_2,n_1,n_2$ be integers, then the intersection number of the line bundles $\mathcal{O}(m_1,n_1)$ and $\mathcal{O}(m_2,n_2)$ is
    \begin{equation*}
        \mathcal{O}(m_1,n_1)\cdot\mathcal{O}(m_2,n_2)=m_1n_2+m_2n_1.
    \end{equation*}
    \label{int-on-exc}
\end{lemma}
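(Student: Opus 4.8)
The plan is to reduce everything to the three base cases by exploiting the bilinearity of the intersection pairing on the surface $\exc_{\M}\simeq\bP_{\F}^{1}\times\bP_{\F}^{1}$. Since $\textup{Pic}(\bP_{\F}^{1}\times\bP_{\F}^{1})$ is the free abelian group on the classes $[\mathcal{O}(1,0)]$ and $[\mathcal{O}(0,1)]$, and the intersection product $\textup{Pic}(\exc_{\M})\times\textup{Pic}(\exc_{\M})\rightarrow\mathbb{Z}$ is $\mathbb{Z}$-bilinear and symmetric, it suffices to evaluate it on these two generators. Writing $\mathcal{O}(m_i,n_i)=m_i[\mathcal{O}(1,0)]+n_i[\mathcal{O}(0,1)]$ additively and expanding will then give
\begin{equation*}
\mathcal{O}(m_1,n_1)\cdot\mathcal{O}(m_2,n_2)=m_1m_2\big(\mathcal{O}(1,0)\cdot\mathcal{O}(1,0)\big)+(m_1n_2+m_2n_1)\big(\mathcal{O}(1,0)\cdot\mathcal{O}(0,1)\big)+n_1n_2\big(\mathcal{O}(0,1)\cdot\mathcal{O}(0,1)\big).
\end{equation*}

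First I would record that $\mathcal{O}(1,0)=\textup{pr}_1^{\ast}\mathcal{O}(1)$ is represented by an effective divisor of the form $\{x\}\times\bP_{\F}^{1}$ (a ``vertical fiber'') for any $\F$-point $x$, and likewise $\mathcal{O}(0,1)$ by a ``horizontal fiber'' $\bP_{\F}^{1}\times\{y\}$, using the explicit coordinates on $\exc_{\M}$ from Remark \ref{coordinate-exc}. Then $\mathcal{O}(1,0)\cdot\mathcal{O}(1,0)=0$ and $\mathcal{O}(0,1)\cdot\mathcal{O}(0,1)=0$, because two distinct vertical (resp. horizontal) fibers are disjoint, so the self-intersection can be computed against a disjoint linearly equivalent representative; and $\mathcal{O}(1,0)\cdot\mathcal{O}(0,1)=1$, because a vertical fiber and a horizontal fiber meet transversally in a single reduced $\F$-point. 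Substituting these three values into the displayed expansion yields $\mathcal{O}(m_1,n_1)\cdot\mathcal{O}(m_2,n_2)=m_1n_2+m_2n_1$.

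A cleaner variant, which also pins down conventions, is to compute $\mathcal{O}(m_2,n_2)\cdot\mathcal{O}(m_1,n_1)$ as the degree of $\mathcal{O}(m_2,n_2)$ restricted to an effective representative $D$ of $\mathcal{O}(m_1,n_1)$: for $m_1,n_1\geq0$ take $D$ to be a union of $m_1$ vertical fibers and $n_1$ horizontal fibers; restricting $\mathcal{O}(m_2,n_2)=\textup{pr}_1^{\ast}\mathcal{O}(m_2)\otimes\textup{pr}_2^{\ast}\mathcal{O}(n_2)$ to a vertical fiber kills the $\textup{pr}_1^{\ast}$-factor and leaves $\mathcal{O}(n_2)$ on $\bP_{\F}^{1}$ (degree $n_2$), and symmetrically degree $m_2$ on a horizontal fiber, so summing over the $m_1+n_1$ components of $D$ gives $m_1n_2+n_1m_2$; the general (possibly negative) case then follows by bilinearity. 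The only point requiring any care — and the ``main obstacle'', such as it is — is matching the normalization of the pairing with the one used elsewhere in the paper and checking the transversality of the elementary vertical--horizontal intersection; both become routine once the isomorphism $\iota:\exc_{\M}\simeq\bP_{\F}^{1}\times\bP_{\F}^{1}$ is fixed.
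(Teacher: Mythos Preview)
Your proof is correct and takes essentially the same approach as the paper: reduce to the generators via bilinearity of the intersection pairing and compute the base cases. The paper's own proof is a one-line remark that $\mathcal{O}(1,0)\cdot\mathcal{O}(0,1)=1$ together with additivity suffices, so your version is simply a more explicit rendering of the same argument.
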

\begin{proof}
    This follows from the fact that $\mathcal{O}(1,0)\cdot\mathcal{O}(0,1)=1$ and the additivity of intersection numbers between line bundles.
\end{proof}
\subsection{Self-intersection of the exceptional divisor}
For $i=0,1,2$, we have $\textup{F}^{3-i}K_{0}^{\exc_{\M}}(\M)\simeq\textup{F}_{i}K_0(\exc_{\M})$. Notice that
\begin{align*}
    \textup{Gr}^{2}K_{0}^{\exc_{\M}}(\M)&\coloneqq\textup{F}^{2}K_{0}^{\exc_{\M}}(\M)\big/\textup{F}^{3}K_{0}^{\exc_{\M}}(\M)\\&\simeq\textup{F}_{1}K_0(\exc_{\M})\big/\textup{F}_{0}K_0(\exc_{\M})\simeq\textup{CH}^{1}(\exc_{\M})\simeq\textup{Pic}(\exc_{\M}).
\end{align*}
\begin{lemma}
    We have the following equality in the group $\textup{Gr}^{2}K_{0}^{\exc_{\M}}(\M)\simeq\textup{Pic}(\exc_{\M})$,
    \begin{equation*}
        [\mathcal{O}_{\exc_{\M}}\otimes^{\mathbb{L}}_{\rO_{\M}}\mathcal{O}_{\exc_{\M}}]=\rO(-1,-1).
    \end{equation*}
    \label{self-int-exc}
\end{lemma}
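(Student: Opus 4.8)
The plan is to first reduce $[\rO_{\exc_\M}\otimes^{\mathbb{L}}_{\rO_\M}\rO_{\exc_\M}]$ to the restriction $\rO_\M(\exc_\M)|_{\exc_\M}$ of the line bundle attached to the Cartier divisor $\exc_\M$, and then to pin down that line bundle using the decomposition $\M_\F=2\exc_\M+\MFF+\MVV+\MFV+\MVF$ from Proposition \ref{geometry-M}(iii).

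\emph{Step 1: reduction to a normal bundle.} Since $\exc_\M$ is an effective Cartier divisor on the regular formal scheme $\M$ (Proposition \ref{geometry-M}), the Koszul complex
\begin{equation*}
    0\longrightarrow\rO_\M(-\exc_\M)\longrightarrow\rO_\M\longrightarrow\rO_{\exc_\M}\longrightarrow0
\end{equation*}
is a length-one locally free resolution of $\rO_{\exc_\M}$. Tensoring with $\rO_{\exc_\M}$ shows that $\rO_{\exc_\M}\otimes^{\mathbb{L}}_{\rO_\M}\rO_{\exc_\M}$ is represented by the two-term complex $[\rO_\M(-\exc_\M)|_{\exc_\M}\xrightarrow{0}\rO_{\exc_\M}]$, so that in $K_0^{\exc_\M}(\M)\simeq K_0'(\exc_\M)$ one has $[\rO_{\exc_\M}\otimes^{\mathbb{L}}_{\rO_\M}\rO_{\exc_\M}]=[\rO_{\exc_\M}]-[\rO_\M(-\exc_\M)|_{\exc_\M}]$. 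Both terms have rank one, so the difference lies in $\mathrm{F}_1 K_0'(\exc_\M)$, i.e.\ in $\mathrm{F}^2 K_0^{\exc_\M}(\M)$; and under the identification $\mathrm{Gr}^2 K_0^{\exc_\M}(\M)\simeq \mathrm{F}_1 K_0'(\exc_\M)/\mathrm{F}_0\simeq\mathrm{Pic}(\exc_\M)$ recalled before the statement, the difference $[\rO_{\exc_\M}]-[\mathcal{L}]$ of a structure-sheaf class and a line-bundle class maps to $-c_1(\mathcal{L})$ (the determinant homomorphism on $K_0'$ of the regular surface $\exc_\M$). Hence
\begin{equation*}
    [\rO_{\exc_\M}\otimes^{\mathbb{L}}_{\rO_\M}\rO_{\exc_\M}]=-c_1\bigl(\rO_\M(-\exc_\M)|_{\exc_\M}\bigr)=\rO_\M(\exc_\M)|_{\exc_\M}\qquad\text{in }\mathrm{Pic}(\exc_\M).
\end{equation*}
This is the $\bP^1\times\bP^1$ analogue of Lemma \ref{blow-up-N0(x)}(a).

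\emph{Step 2: computing $\rO_\M(\exc_\M)|_{\exc_\M}$.} By Proposition \ref{geometry-M}(iii) we have $\M_\F=2\exc_\M+\MFF+\MVV+\MFV+\MVF$ as Cartier divisors, and $\M_\F=\textup{div}(p)$ is principal, so $\rO_\M(2\exc_\M)\simeq\rO_\M(-\MFF-\MVV-\MFV-\MVF)$. Each strict transform $\M^{??}$ is a regular, irreducible Cartier divisor not containing the integral scheme $\exc_\M$, so its local equation restricts to a non-zero-divisor on $\exc_\M$ and $\rO_\M(\M^{??})|_{\exc_\M}=\rO_{\exc_\M}(\M^{??}\cap\exc_\M)$. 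Restricting the displayed isomorphism to $\exc_\M$ yields
\begin{equation*}
    \bigl(\rO_\M(\exc_\M)|_{\exc_\M}\bigr)^{\otimes2}\simeq\rO_{\exc_\M}\bigl(-(\MFF\cap\exc_\M)-(\MVV\cap\exc_\M)-(\MFV\cap\exc_\M)-(\MVF\cap\exc_\M)\bigr).
\end{equation*}
Using the explicit equations for $\MFF,\MVV,\MFV,\MVF$ in the charts $\M^\pm_1,\M^\pm_2$ of $\S$\ref{opencover}, the gluing relations (\ref{relation-1}), and the coordinates of Remark \ref{coordinate-exc} (under which $\iota\colon\exc_\M\simeq\bP^1_\F\times\bP^1_\F$), one finds $\MFF\cap\exc_\M=\{u=0\}$ and $\MVV\cap\exc_\M=\{u'=0\}$, both fibers of $\textup{pr}_2$, hence of class $\rO(0,1)$, while $\MFV\cap\exc_\M=\{v=0\}$ and $\MVF\cap\exc_\M=\{v'=0\}$, both fibers of $\textup{pr}_1$, hence of class $\rO(1,0)$. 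Therefore the right-hand side is $\rO(-2,-2)$, and since $\mathrm{Pic}(\bP^1_\F\times\bP^1_\F)\simeq\Z^2$ is torsion-free this forces $\rO_\M(\exc_\M)|_{\exc_\M}=\rO(-1,-1)$. Combined with Step 1, this proves the lemma.

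The routine part is the chart-level identification of the four curves $\M^{??}\cap\exc_\M$; the only point requiring genuine care is the bookkeeping in Step 1 --- that the class of the two-term complex in $\mathrm{Gr}^2 K_0^{\exc_\M}(\M)$ really is $-c_1$ of the conormal bundle, which uses the compatibility of the cup product with the codimension filtration \eqref{codim-add} and the first-Chern-class homomorphism on $K_0'$ of a regular surface. This is the same formalism underlying the surface case Lemma \ref{blow-up-N0(x)}(a), so no new difficulty arises.
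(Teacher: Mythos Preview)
Your argument is correct. Step 1 matches the paper's proof exactly: both identify $[\rO_{\exc_\M}\otimes^{\mathbb{L}}_{\rO_\M}\rO_{\exc_\M}]$ with the restriction $\rO_\M(\exc_\M)|_{\exc_\M}$. In Step 2, however, you take a different route. The paper simply reads off the transition functions of $\rO_\M(\exc_\M)$ from the local equations $x_1,y_1,x_2,y_2$ of the exceptional divisor on the four charts of $\S$\ref{opencover}; the relation $y_1=u_{21}v_{21}x_1=uvx_1$ and its analogues immediately give $\rO(-1,-1)$. You instead invoke the decomposition $\textup{div}(p)=2\exc_\M+\MFF+\MVV+\MFV+\MVF$ from Proposition \ref{geometry-M}(iii), compute the four intersections $\M^{??}\cap\exc_\M$ (correctly, and from the same chart data), obtain $\bigl(\rO_\M(\exc_\M)|_{\exc_\M}\bigr)^{\otimes2}\simeq\rO(-2,-2)$, and then use torsion-freeness of $\textup{Pic}(\bP^1\times\bP^1)$ to extract the square root. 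Your approach is slightly longer and needs the extra torsion-freeness step, but it has the virtue of recycling information (the classes $[\M^{??}\cap\exc_\M]$) that is used elsewhere in the paper, and it makes the numerology $\rO(0,0)=2\rO(-1,-1)+\rO(2,2)$ underlying Proposition \ref{geometry-M}(iii) visible on $\exc_\M$.
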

\begin{proof}
 We know that $[\mathcal{O}_{\exc_{\M}}\otimes^{\mathbb{L}}_{\rO_{\M}}\mathcal{O}_{\exc_{\M}}]$ is given by restricting the line bundle corresponding to $\exc_{\M}$ on $\M$ to $\exc_{\M}$. Under the open cover of $\M$ in $\S$\ref{opencover}, the divisor $\exc_{\M}$ is given by the following equations and transformation rules:
 \begin{equation*}
     \begin{tikzcd}
     \M_1^{+} & \M_1^{-} & \M_2^{+} & \M_2^{-}
    \\
x_1 \arrow[r,shift left,"\times uv"] \arrow[Subseteq]{u}{}
    &
y_1 \arrow[l,shift left,"\times (uv)^{-1}"] \arrow[r,shift left, "\times u^{-1}"] \arrow[Subseteq]{u}{}
&
x_2 \arrow[l, shift left, "\times u"] \arrow[r,shift left,"\times uv^{-1}"] \arrow[Subseteq]{u}{}
    &
y_2 \arrow[l,shift left,"\times vu^{-1}"] \arrow[Subseteq]{u}{}
\end{tikzcd}
 \end{equation*}
    The same transformation rule also applies to the corresponding open cover of $\exc_{\M}\simeq\mathbb{P}_{\mathbb{F}}^{1}\times\mathbb{P}_{\mathbb{F}}^{1}$ in Remark \ref{coordinate-exc}. Therefore the corresponding line bundle is $\mathcal{O}(-1,-1)$.
\end{proof}
\subsection{Intersections of special divisors and the exceptional divisor}
Let $x\in\mathbb{B}$ be a nonzero element such that $q(x)\in\zp$. Define $\sz(x)\subset\M$ to be the strict transformation of the cycle $\mathcal{Z}_{\N(x_0)}(\{x\})\subset\N(x_0)$ under the blow up morphism $\pi:\M\rightarrow\N(x_0)$.
\begin{proposition}
     Let $x\in\mathbb{B}$ be a nonzero element such that $q(x)\in\zp$ such that $\nu_p(q(x))=n$ for some integer $n\geq0$. Then there exists a Cartier divisor $\widetilde{\mathcal{D}}(x)$ on the formal scheme $\M$ which intersects with the exceptional divisor $\exc_{\M}$ properly and
     \begin{itemize}
         \item [(i)] When $n=0$, the following equality of Cartier divisors on $\M$ holds,
         \begin{equation*}
             \mathcal{D}(x)=\exc_{\M}+\widetilde{\mathcal{D}}(x).
         \end{equation*}
         Moreover, the Cartier divisor $\exc_{\M}\cap\widetilde{\mathcal{D}}(x)$ of $\exc_{\M}$ is
         \begin{equation*}
             \exc_{\M}\cap\widetilde{\mathcal{D}}(x)=\left(v=\textup{a nonzero number in $\F$}\right).
         \end{equation*}
         \item [(ii)] When $n\geq1$, the following equality of Cartier divisors on $\M$ holds,
         \begin{equation*}
             \mathcal{D}(x)=2\cdot\exc_{\M}+\widetilde{\mathcal{D}}(x),
         \end{equation*}
         Moreover, the Cartier divisor $\exc_{\M}\cap\widetilde{\mathcal{D}}(x)$ of $\exc_{\M}$ is
         \begin{equation*}
             \exc_{\M}\cap\widetilde{\mathcal{D}}(x)=\begin{cases}
                 \left(v=0\right)+\left(u=\textup{a nonzero number in $\F$}\right)+\left(v^{\prime}=0\right), &\textup{when $n=1$;}\\
            \left(v=0\right)+\left(u=0\right)+\left(v^{\prime}=0\right)+\left(u^{\prime}=0\right), &\textup{when $n\geq2$}.
             \end{cases}
         \end{equation*}
     \end{itemize}
     \label{difference-decomposition}
\end{proposition}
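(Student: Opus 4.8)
The plan is to reduce the statement to an explicit local computation on the four charts $\M_1^{+},\M_2^{+},\M_1^{-},\M_2^{-}$ of the open cover of $\S$\ref{opencover}: on each chart I would write down the equation cutting out $\mathcal{D}(x)$, factor out the exceptional divisor to the right power, and read off what the residual divisor $\widetilde{\mathcal{D}}(x)$ meets $\exc_{\M}$ in. On the two charts inside $\M^{+}$ one has, by Remark~\ref{moduli-diff-M}, $\mathcal{D}^{+}(x)=\mathcal{N}_0(x)\times_{\N,p_{+}}\M^{+}$, so there $\mathcal{D}(x)$ is cut out by $p_{+}^{\#}$ applied to the defining equation of $\mathcal{N}_0(x)\subset\N\simeq\textup{Spf}\,W[[t,t^{\prime}]]$; by Lemma~\ref{cyclic-diff}(b2) that equation is $t-t^{\prime}$ when $n=0$ and
\[
  \nu p+(t-t^{\prime p^{n}})(t^{p^{n}}-t^{\prime})\Bigl(\prod_{\substack{a+b=n\\ a,b\geq1}}(t^{p^{a-1}}-t^{\prime p^{b-1}})\Bigr)^{p-1}
\]
when $n\geq1$, with $\nu$ a unit. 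On the two charts inside $\M^{-}$ one uses instead the companion identity $\mathcal{D}^{-}(x)=\mathcal{N}_0(x^{\prime})\times_{\N,p_{-}}\M^{-}$ with $x^{\prime}=x_0xx_0^{-1}$, noting that $\nu_p(q(x^{\prime}))=\nu_p(q(x))=n$; the analysis is formally the same with the roles of $t,t^{\prime}$ and of the $x$- and $y$-coordinates interchanged.

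Next I would make the substitution explicit on $\M_1^{+}$: writing $u=u_{21}$, $v=v_{21}$, so that $y_1=uvx_1$, $x_2=vx_1$, $y_2=ux_1$, $p=-uvx_1^{2}$, and combining $p_{+}=s_{+}\circ\pi$ with Lemma~\ref{coordinate-change} applied to each factor of $\N(x_0)$, one gets $p_{+}^{\#}(t)=x_1\alpha$ and $p_{+}^{\#}(t^{\prime})=x_1\beta$ with $\alpha,\beta\in\rO_{\M_1^{+}}$ satisfying $\alpha\equiv c$ and $\beta\equiv cv\pmod{x_1}$ for some $c\in\F^{\times}$, and $p_{+}^{\#}(\nu)\equiv\nu_0\pmod{x_1}$ for some $\nu_0\in\F^{\times}$. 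Substituting into the equation above and extracting the largest power of $x_1$ — which on this chart is the local equation of $\exc_{\M}$ — requires only the $x_1$-adic valuation of each summand: the term $\nu p=-p_{+}^{\#}(\nu)\,uvx_1^{2}$ has valuation exactly $2$, while the product term has valuation $1$ for $n=0$, exactly $2$ for $n=1$, and at least $3$ for $n\geq2$ (the binomials $t-t^{\prime p^{n}}$ and $t^{p^{n}}-t^{\prime}$ each contribute one factor $x_1$, and the remaining product to the $(p-1)$-st power contributes at least $x_1^{(p-1)(n-1)}$). Hence $\mathcal{D}(x)=\exc_{\M}+\widetilde{\mathcal{D}}(x)$ for $n=0$ and $\mathcal{D}(x)=2\exc_{\M}+\widetilde{\mathcal{D}}(x)$ for $n\geq1$, and the equation of $\widetilde{\mathcal{D}}(x)$ restricted to $\exc_{\M}$ equals, up to a unit in $\F[v,u]$, $\;1-v$ when $n=0$, $\;v(\nu_0 u+c^{2})$ when $n=1$, and $uv$ when $n\geq2$ — cutting out respectively $(v=1)$; $(v=0)+(u=-c^{2}\nu_0^{-1})$; and $(v=0)+(u=0)$ on $\exc_{\M,1}^{+}$.

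I would then repeat this verbatim on $\M_2^{+},\M_1^{-},\M_2^{-}$, whose exceptional charts carry coordinates $(v^{\prime},u)$, $(v^{\prime},u^{\prime})$, $(v,u^{\prime})$ with $vv^{\prime}=uu^{\prime}=1$ (the $\M^{-}$ charts using $\mathcal{N}_0(x^{\prime})$ and the target half of Lemma~\ref{coordinate-change}). Since the multiplicity of $\exc_{\M}$ and the shape of the residual equation come out the same in each chart, gluing along these transitions and using the identification $\exc_{\M}\simeq\bP_{\F}^{1}\times\bP_{\F}^{1}$ of Remark~\ref{coordinate-exc} yields exactly the divisors in the statement, where $v=0$ and $v^{\prime}=0$ are the two points of the first $\bP^{1}$ and $u=0,\,u^{\prime}=0$ the two points of the second. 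Properness of $\widetilde{\mathcal{D}}(x)\cap\exc_{\M}$ is then immediate: in every chart the intersection is one-dimensional and is never all of $\exc_{\M}$.

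The main obstacle is the bookkeeping in the two middle paragraphs: tracking the $x_1$-adic valuation inside $\bigl(\prod_{a+b=n}(t^{p^{a-1}}-t^{\prime p^{b-1}})\bigr)^{p-1}$ after substitution, and — the genuinely delicate point — checking that modulo $x_1$ the residual equation factors \emph{exactly} into the stated linear forms, with all the auxiliary constants ($c$, $\nu_0$, and their $\M^{-}$-counterparts) reducing to honest units of $\F$, so that no spurious component appears and $u=-c^{2}\nu_0^{-1}$ is genuinely a nonzero point of $\bP^{1}$. Consistency of the $\M^{+}$- and $\M^{-}$-side constants across the chart overlaps is automatic, since $\mathcal{D}(x)$ and $\exc_{\M}$ are global divisors, but it is a useful sanity check. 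As an alternative to the $\mathcal{N}_0(x)$-pullback route, one may compute the equations $f_x$, $f_{x^{\prime}}$ cutting out $\mathcal{Z}(x)$ directly by Grothendieck--Messing, exactly as in the proof of Lemma~\ref{divisor}, and extract $\widetilde{\mathcal{D}}(x)$ by the same valuation analysis; this also produces the explicit $f_x,f_{x^{\prime}}$ referred to after Lemma~\ref{divisor}.
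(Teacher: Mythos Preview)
Your proposal is correct and follows essentially the same route as the paper's proof: pull back the explicit equation of $\mathcal{N}_0(x)$ (resp.\ $\mathcal{N}_0(x')$) from Lemma~\ref{cyclic-diff}(b2) to each of the four charts via Lemma~\ref{coordinate-change}, factor out the appropriate power of the exceptional coordinate, and read off the residual divisor on $\exc_{\M}$. The only imprecision is that the units in $\alpha\equiv c_1$ and $\beta\equiv c_2 v\pmod{x_1}$ arise from the two separate $\N_0(x_0)$-factors and need not coincide, so your $(v=1)$ should read $(v=\text{a nonzero constant})$ --- exactly as in the statement, and as you yourself anticipate when flagging the constant-tracking as the delicate step.
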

\begin{proof}
    By Lemma \ref{divisor}, we have
    \begin{equation*}
        \mathcal{Z}^{+}(x)=\mathcal{Z}_{\N}(x)\times_{\N,p_{+}}\M^{+}\,\,\textup{and}\,\,\mathcal{Z}^{-}(x)=\mathcal{Z}_{\N}(x^{\prime})\times_{\N,p_{-}}\M^{-}.
    \end{equation*}
    By Lemma \ref{cyclic-diff}, we have
    \begin{equation*}
        \mathcal{D}^{+}(x)=\mathcal{N}_0(x)\times_{\N,p_{+}}\M^{+}\,\,\textup{and}\,\,\mathcal{D}^{+}(x)=\mathcal{N}_0(x^{\prime})\times_{\N,p_{-}}\M^{-}.
    \end{equation*}
    For $i=1,2$, let $p_{?i}:\M_i^{?}\rightarrow\N$ be the composition $\M_i^{?}\rightarrow\M^{?}\stackrel{p_{?}}\rightarrow\N$. Let $p_{?i}^{\#}:\rO_{\N}\rightarrow\rO_{\M_i^{?}}$ be the corresponding ring homomorphism. We will prove (i) and (ii) by studying the equation of the divisor $\mathcal{Z}(x)$ on the open formal subscheme $\M_{i}^{?}$ for $i=1,2$ and $?\in\{+,-\}$. For $i=1,2$, let $t_i^{+},t_{i}^{-}\in\rO_{\N_0(x_0)}$ be two elements satisfying the assumption in Lemma \ref{coordinate-change}.
    \par
    We first prove (i). If $?=+$, the equation of the special cycle $\mathcal{Z}_{\N}(x)$ is $t_1^{+}-\nu_x t_2^{+}+p\cdot f_x$ for an element $f_{x}\in W[[t_1^{+},t_2^{+}]]$ and an invertible element $\nu_x\in W[[t_1^{+},t_2^{+}]]$. By Lemma \ref{coordinate-change}, we have
    \begin{equation*}
        s_{+}^{\#}(t_1^{+}-\nu_x t_2^{+})=\nu_{11}x_1+\nu_{12}y_1^{p}-\nu_x\nu_{21}x_2-\nu_x\nu_{22}y_2^{p}+p\cdot h,
    \end{equation*}
    where $h\in W[[t_1^{+},t_2^{+}]]$ is an element and $\nu_{ij}$ are invertible elements in the ring $W[[x_{i},y_{i}]]/(p+x_iy_i)$. Therefore the equation of the divisor $\mathcal{Z}(x)$ on $\M_i^{+}$ is
    \begin{align}
        &i=1:\,\,p_{+1}^{\#}(t_1^{+}-\nu_x t_2^{+})=x_1\cdot(\nu_{11}-\nu_x\nu_{21}v+\nu_{12}u^{p}v^{p}x_1^{p-1}-\nu_x\nu_{22}u^{p}x_{1}^{p-1}-uvx_1\cdot h).\label{0+1}\\
        &i=2:\,\,p_{+2}^{\#}(t_1^{+}-\nu_x t_2^{+})=x_2\cdot(\nu_{11}v^{\prime}-\nu_x\nu_{21}+\nu_{12}u^{p}x_2^{p-1}-\nu_x\nu_{22}v^{\prime p}u^{p}x_{2}^{p-1}-uv^{\prime}x_2\cdot h).
    \end{align}
    \par
    If $?=-$, the equation of the special cycle $\mathcal{Z}_{\N}(x^{\prime})$ is $t_1^{-}-\omega_{x^{\prime}} t_2^{-}+p\cdot g_{x^{\prime}}$ for an element $g_{x^{\prime}}\in W[[t_1^{-},t_2^{-}]]$ and an invertible element $\omega_{x^{\prime}}\in W[[t_1^{-},t_2^{-}]]$. By Lemma \ref{coordinate-change}, we have
    \begin{equation*}
        s_{-}^{\#}(t_1^{-}-\omega_{x^{\prime}} t_2^{-})=\omega_{11}y_1+\omega_{12}x_1^{p}-\omega_{x^{\prime}}\omega_{21}y_2-\omega_{x^{\prime}}\omega_{22}x_2^{p}+p\cdot g,
    \end{equation*}
    where $g\in W[[t_1^{-},t_2^{-}]]$ is an element and $\omega_{ij}$ are invertible elements in the ring $W[[x_{i},y_{i}]]/(p+x_iy_i)$. Therefore the equation of the divisor $\mathcal{Z}(x)$ on $\M_i^{-}$ is
    \begin{align}
        &i=1:\,\,p_{-1}^{\#}(t_1^{-}-\omega_{x^{\prime}} t_2^{-})=y_1\cdot(\omega_{11}-\omega_{x^{\prime}}\omega_{21}v^{\prime}+\omega_{12}u^{\prime p}v^{\prime p}y_1^{p-1}-\omega_{x^{\prime}}\omega_{22}u^{\prime p}y_{1}^{p-1}-u^{\prime}v^{\prime}y_1\cdot g).\\
        &i=2:\,\,p_{-2}^{\#}(t_1^{-}-\omega_{x^{\prime}} t_2^{-})=y_2\cdot(\omega_{11}v-\omega_{x^{\prime}}\omega_{21}+\omega_{12}u^{\prime p}y_2^{p-1}-\omega_{x^{\prime}}\omega_{22}v^{p}u^{\prime p}y_{2}^{p-1}-u^{\prime}vy_2\cdot g)\label{0-2}.
    \end{align}
    \par
    Define $\widetilde{\mathcal{D}}(x)\coloneqq\mathcal{Z}(x)-\exc_{\M}$. By the equations (\ref{0+1})-(\ref{0-2}) of the special divisor $\mathcal{Z}(x)$, we deduce that $\widetilde{\mathcal{D}}(x)$ is an effective Cartier divisor which intersects with the exceptional divisor $\exc_{\M}$ properly. The intersection $\widetilde{\mathcal{D}}(x)\cap\exc_{\M}\subset\exc_{\M}$ is given by the equation $v=\overline{\nu_x^{-1}\nu_{21}^{-1}\nu_{11}}=\overline{\omega_{x^{\prime}}\omega_{21}\omega_{11}^{-1}}$, where the symbol $\overline{(\cdot)}$ means the image of the corresponding element under the map $\mathcal{O}_{\M}\rightarrow\rO_{\exc_{\M}}$. Notice that $\overline{\nu_x^{-1}\nu_{21}^{-1}\nu_{11}}=\overline{\omega_{x^{\prime}}\omega_{21}\omega_{11}^{-1}}$ is a nonzero number in $\F$. Therefore (i) is true.
    \par
    Now we prove (ii). If $?=+$. By Lemma \ref{cyclic-diff}, the equation of the special cycle $\mathcal{D}_{\N}(x)$ is
    \begin{equation*}
        z_x=\nu p+\left(t_{1}^{+}-\left(\nu_xt_2^{+}\right)^{p^{n}}\right)\left(t_1^{+p^{n}}-\nu_xt_2^{+}\right)\cdot\prod\limits_{\substack{a+b=n\\a,b\geq1}}\left(t_1^{+p^{a-1}}-\left(\nu_xt_2^{+}\right)^{p^{b-1}}\right)^{p-1}
    \end{equation*}
    where $\nu_x$ and $\nu$ are two invertible elements in the ring $W[[t_1^{+},t_2^{+}]]$. By Lemma \ref{coordinate-change}, we have
    \begin{align}
        s_{+}^{\#}(z_x)=\nu p+&\left(\nu_{11}x_1+\nu_{12}y_1^{p}-\left(\nu_x\nu_{21}x_2+\nu_x\nu_{22}y_{2}^{p}\right)^{p^{n}}\right)\left(\left(\nu_{11}x_1+\nu_{12}y_1^{p}\right)^{p^{n}}-\nu_x\left(\nu_{21}x_2+\nu_{22}y_2^{p}\right)\right)\notag\\
        \cdot&\prod\limits_{\substack{a+b=n\\a,b\geq1}}\left(\left(\nu_{11}x_1+\nu_{12}y_1^{p}\right)^{p^{a-1}}-\left(\nu_x\nu_{21}x_2+\nu_x\nu_{22}y_2^{p}\right)^{p^{b-1}}\right)^{p-1},
        \label{equation-x}
    \end{align}
    Therefore the equation of the divisor $\mathcal{D}_{\N}(x)$ on $\M_i^{+}$ is
    \begin{align}
        &i=1: p_{+1}^{\#}(z_x)=-\nu uvx_1^{2}+x_1^{m_n}\cdot\begin{cases}
            \Tilde{\nu}_1v+x_1^{p-1}h_1, & $\textup{when $n=1$;}$\\
            h_1, & $\textup{when $n=2$.}$
        \end{cases}\label{eq_x+1}\\
        &i=2: p_{+2}^{\#}(z_x)=-\nu uv^{\prime}x_2^{2}+x_2^{m_n}\cdot\begin{cases}
            \Tilde{\nu}_2v^{\prime}+x_2^{p-1}h_2, & $\textup{when $n=1$;}$\\
            h_2, & $\textup{when $n=2$.}$
        \end{cases}
    \end{align}
    Here $\nu,\Tilde{\nu}_i$ are invertible elements in the ring $\rO_{\M_{i}^{+}}$, $h_i$ is an element in the ring $\rO_{\M_{i}^{+}}$. By the equation (\ref{equation-x}), the integer $m_n$ satisfies the following conditions
    \begin{equation*}
        m_n\begin{cases}
            =2, &\textup{when $n=1$;}\\
            \geq3, &\textup{when $n\geq2$.}
        \end{cases}
    \end{equation*}
    Similar method also gives the following equations of the special divisor $\mathcal{D}(x)$ on the open formal subscheme $\M^{-}=\M_1^{-}\cup\M_2^{-}$. Let $z_{x^{\prime}}\in W[[t_1^{-},t_{2}^{-}]]$ be the equation of the special divisor $\mathcal{Z}_{\N}(x^{\prime})$.
    \begin{align}
        &i=1: p_{-1}^{\#}(z_{x^{\prime}})=-\omega u^{\prime}v^{\prime}y_1^{2}+y_1^{m_n}\cdot\begin{cases}
            \Tilde{\omega}_1v^{\prime}+y_1^{p-1}g_1, & $\textup{when $n=1$;}$\\
            g_1, & $\textup{when $n=2$.}$
        \end{cases}\\
        &i=2: p_{-2}^{\#}(z_{x^{\prime}})=-\omega u^{\prime}v y_2^{2}+y_2^{m_n}\cdot\begin{cases}
            \Tilde{\omega}_2v+y_2^{p-1}g_2, & $\textup{when $n=1$;}$\\
            g_2, & $\textup{when $n=2$.}$\label{eq-x-2}
        \end{cases}
    \end{align}
    Here $\omega,\Tilde{\omega}_i$ are invertible elements in the ring $\rO_{\M_{i}^{-}}$, $g_i$ is an element in the ring $\rO_{\M_{i}^{-}}$.
    \par
    Define $\widetilde{\mathcal{D}}(x)\coloneqq\mathcal{D}(x)-2\cdot\exc_{\M}$. By the equations (\ref{eq_x+1})-(\ref{eq-x-2}) of the difference divisor $\mathcal{D}(x)$, we deduce that $\widetilde{\mathcal{D}}(x)\coloneqq\mathcal{D}(x)-2\cdot\exc_{\M}$ is an effective Cartier divisor which intersects with the exceptional divisor properly. The intersection $\widetilde{\mathcal{D}}(x)\cap\exc_{\M}$ is an effective Cartier divisor on the exceptional divisor $\exc_{\M}$. More specifically,
    \begin{equation*}
        \widetilde{\mathcal{D}}(x)\cap\exc_{\M}=\begin{cases}
            \left(v=0\right)+\left(u=\textup{a nonzero number in $\F$}\right)+\left(v^{\prime}=0\right), &\textup{when $n=1$;}\\
            \left(v=0\right)+\left(u=0\right)+\left(v^{\prime}=0\right)+\left(u^{\prime}=0\right), &\textup{when $n\geq2$}.
        \end{cases}
    \end{equation*}
    Therefore (ii) is true.
\end{proof}
\begin{corollary}
    Let $x\in\mathbb{B}$ be a nonzero element such that $q(x)\in\zp$ such that $\nu_p(q(x))\geq0$. Denote by $\widetilde{\mathcal{Z}}(x)$ the following Cartier divisor on $\M$:
    \begin{equation*}
        \widetilde{\mathcal{Z}}(x)=\sum\limits_{i=0}^{[n/2]}\widetilde{\mathcal{D}}(p^{-i}x).
    \end{equation*}
    \begin{itemize}
        \item [(a)] The following identities of Cartier divisors on $\M$ hold,
        \begin{equation*}
            \mathcal{Z}(x)=(n+1)\cdot\exc_{\M}+\sz(x).
        \end{equation*}
        \item[(b)] The following identities in the group $\textup{Gr}^{2}K_{0}^{\exc_{\M}}(\M)\simeq\textup{Pic}(\exc_{\M})$ holds,
        \begin{equation}
             [\mathcal{O}_{\exc_{\M}}\otimes^{\mathbb{L}}_{\rO_{\M}}\mathcal{O}_{\widetilde{\mathcal{D}}(x)}]=[\mathcal{O}_{\exc_{\M}}\otimes_{\rO_{\M}}\mathcal{O}_{\widetilde{\mathcal{D}}(x)}]=[\mathcal{O}_{\exc_{\M}\cap\widetilde{\mathcal{D}}(x)}]=\begin{cases}
                 \rO(1,0), &\textup{when $n=0$;}\\
                 \rO(2,1), &\textup{when $n=1$;}\\
                 \rO(2,2), &\textup{when $n\geq2$.}
             \end{cases}
             \label{diff-exc}
         \end{equation}
        \begin{equation}
            [\mathcal{O}_{\exc_{\M}}\otimes^{\mathbb{L}}_{\rO_{\M}}\mathcal{O}_{\mathcal{Z}(x)}]=\rO(0,-1).
            \label{spe-exc}
        \end{equation}
    \end{itemize}
    \label{spe-decom}
\end{corollary}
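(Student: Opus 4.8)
The plan is to derive both statements from the difference‑divisor decomposition of Proposition~\ref{difference-decomposition}, combined with the elementary identities in Lemmas~\ref{linearization}, \ref{int-on-exc} and~\ref{self-int-exc} and the coordinate description of $\exc_{\M}\simeq\bP_{\F}^{1}\times\bP_{\F}^{1}$ in Remark~\ref{coordinate-exc}.

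For (a), I would start from the telescoping identity of Cartier divisors on $\M$, namely $\mathcal{Z}(x)=\sum_{i=0}^{[n/2]}\mathcal{D}(p^{-i}x)$, which holds because $\mathcal{D}(p^{-i}x)=\mathcal{Z}(p^{-i}x)-\mathcal{Z}(p^{-i-1}x)$ and $\mathcal{Z}(p^{-j}x)=0$ once $q(p^{-j}x)\notin\zp$, i.e. once $n-2j<0$ (such a quasi‑isogeny cannot deform to a homomorphism). For each $i$ one has $\nu_p(q(p^{-i}x))=n-2i$, which is $\geq1$ for $i<n/2$ and equals $0$ only when $n$ is even and $i=n/2$; so Proposition~\ref{difference-decomposition} gives $\mathcal{D}(p^{-i}x)=c_i\,\exc_{\M}+\widetilde{\mathcal{D}}(p^{-i}x)$ with $c_i=2$ if $n-2i\geq1$ and $c_i=1$ if $n-2i=0$, and with each $\widetilde{\mathcal{D}}(p^{-i}x)$ meeting $\exc_{\M}$ properly. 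A one‑line bookkeeping check gives $\sum_{i=0}^{[n/2]}c_i=n+1$ regardless of parity (if $n$ is odd all $(n+1)/2$ coefficients are $2$; if $n$ is even there are $n/2$ coefficients equal to $2$ and one equal to $1$). Hence $\mathcal{Z}(x)=(n+1)\,\exc_{\M}+\sum_{i=0}^{[n/2]}\widetilde{\mathcal{D}}(p^{-i}x)$, and comparing with the definition of the strict transform $\sz(x)$ — the unique effective divisor with $\mathcal{Z}(x)-\sz(x)$ supported on $\exc_{\M}$ and $\sz(x)$ meeting $\exc_{\M}$ properly, so that the decomposition $\pi^{\ast}D=\widetilde D+m\,\exc_{\M}$ is unique — identifies $\sz(x)=\widetilde{\mathcal{Z}}(x)$ and yields (a).

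For (b), the chain of equalities in (\ref{diff-exc}) is obtained as follows. Since $\widetilde{\mathcal{D}}(x)$ meets $\exc_{\M}$ properly, the higher $\textup{Tor}$ sheaves vanish, so $\mathcal{O}_{\exc_{\M}}\otimes^{\mathbb{L}}_{\rO_{\M}}\mathcal{O}_{\widetilde{\mathcal{D}}(x)}=\mathcal{O}_{\exc_{\M}}\otimes_{\rO_{\M}}\mathcal{O}_{\widetilde{\mathcal{D}}(x)}=\mathcal{O}_{\exc_{\M}\cap\widetilde{\mathcal{D}}(x)}$ by definition of the scheme‑theoretic intersection, and its class in $\textup{Gr}^{2}K_{0}^{\exc_{\M}}(\M)\simeq\textup{CH}^{1}(\exc_{\M})\simeq\textup{Pic}(\exc_{\M})$ is the line bundle $\rO_{\exc_{\M}}(\exc_{\M}\cap\widetilde{\mathcal{D}}(x))$ of the effective divisor on the surface $\exc_{\M}$. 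Reading off $\exc_{\M}\cap\widetilde{\mathcal{D}}(x)$ from the explicit description in Proposition~\ref{difference-decomposition} using the coordinates of Remark~\ref{coordinate-exc}: each of $(v=c)$, $(v=0)$, $(v'=0)$ is a fibre of $\textup{pr}_1$ and so has class $\rO(1,0)$, while each of $(u=c)$, $(u=0)$, $(u'=0)$ is a fibre of $\textup{pr}_2$ and has class $\rO(0,1)$; adding the listed components yields $\rO(1,0)$, $\rO(2,1)$, $\rO(2,2)$ in the cases $n=0$, $n=1$, $n\geq2$. Finally (\ref{spe-exc}) follows by applying the homomorphism $[\mathcal{O}_{\exc_{\M}}\otimes^{\mathbb{L}}_{\rO_{\M}}-]\colon\textup{Gr}^{1}K_0(\M)\to\textup{Gr}^{2}K_{0}^{\exc_{\M}}(\M)$ — additive by Lemma~\ref{linearization} together with the bilinearity of the cup product — to the decomposition of (a): Lemma~\ref{self-int-exc} contributes $(n+1)\rO(-1,-1)$ from the exceptional part, and (\ref{diff-exc}) applied to each $\widetilde{\mathcal{D}}(p^{-i}x)$ (which has $\nu_p=n-2i$) contributes terms summing to $\rO(n+1,n)$ in both parities, so the total is $(n+1)\rO(-1,-1)+\rO(n+1,n)=\rO(0,-1)$.

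I do not anticipate a genuine obstacle: all the geometric content is already packaged in Proposition~\ref{difference-decomposition}, and what remains is the multiplicity count in (a) and the translation of the equations $v=0$, $v'=0$, $u=0$, $u'=0$ into line‑bundle classes on $\bP_{\F}^{1}\times\bP_{\F}^{1}$. The one point demanding care is bookkeeping consistency — making sure, via Remark~\ref{coordinate-exc}, that $v,v'$ indeed parametrize the first $\bP_{\F}^{1}$ and $u,u'$ the second, and verifying that the parity‑independent cancellation $(n+1)(-1,-1)+(n+1,n)=(0,-1)$ really holds in both the even and odd cases.
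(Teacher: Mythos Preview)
Your proposal is correct and follows essentially the same route as the paper's proof: telescoping $\mathcal{Z}(x)$ into the $\mathcal{D}(p^{-i}x)$, applying Proposition~\ref{difference-decomposition} to each piece, counting exceptional multiplicities to get $n+1$, and then for (\ref{spe-exc}) summing via Lemma~\ref{linearization} and Lemma~\ref{self-int-exc} to obtain $(n+1)\rO(-1,-1)+\rO(n+1,n)=\rO(0,-1)$. Your explicit parity bookkeeping for both the multiplicity $n+1$ and the sum $\sum_i[\rO_{\exc_{\M}\cap\widetilde{\mathcal{D}}(p^{-i}x)}]=\rO(n+1,n)$ is in fact more detailed than what the paper writes out.
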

\begin{proof}
    By the definition of difference divisors in Definition \ref{diff-div}, we have the following equality as effective Cartier divisors on $\M$,
    \begin{equation*}
        \mathcal{Z}(x)=\sum\limits_{i=0}^{[n/2]}\mathcal{D}(p^{-i}x).
    \end{equation*}
    By Proposition \ref{difference-decomposition}, we have
    \begin{equation*}
        \mathcal{Z}(x)=(n+1)\cdot\exc_{\M}+\sum\limits_{i=0}^{[n/2]}\widetilde{\mathcal{D}}(p^{-i}x),
    \end{equation*}
    and the effective Cartier divisor $\sum\limits_{i=0}^{[n/2]}\widetilde{\mathcal{D}}(p^{-i}x)$ intersects with the exceptional divisor $\exc_{\M}$ properly. Therefore we conclude that
    \begin{equation*}
        \widetilde{\mathcal{Z}}(x)=\sum\limits_{i=0}^{[n/2]}\widetilde{\mathcal{D}}(p^{-i}x).
    \end{equation*}
    Hence $\mathcal{Z}(x)=(n+1)\cdot\exc_{\M}+\sz(x)$.
    \par
    The formula (\ref{diff-exc}) in (b) follows from Proposition \ref{difference-decomposition}. For the formula (\ref{spe-exc}) in (b): By Lemma \ref{linearization}, we know that $\rO_{\mathcal{Z}(x)}=(n+1)\rO_{\exc_{\M}}+\sum\limits_{i=0}^{[n/2]}\rO_{\widetilde{\mathcal{D}}(p^{-i}x)}$ in $\textup{Gr}^{1}K_0(X)$. Therefore by Lemma \ref{self-int-exc} and (\ref{diff-exc}), we have the following equality in $\textup{Gr}^{2}K_0^{\exc_{\M}}(X)\simeq\textup{Pic}(\exc_{\M})$,
    \begin{align*}
        [\rO_{\exc_{\M}}\otimes^{\mathbb{L}}_{\rO_{\M}}\rO_{\mathcal{Z}(x)}]&=(n+1)[\rO_{\exc_{\M}}\otimes^{\mathbb{L}}_{\rO_{\M}}\rO_{\exc_{\M}}]+\sum\limits_{i=0}^{[n/2]}[\rO_{\exc_{\M}}\otimes^{\mathbb{L}}_{\rO_{\M}}\rO_{\widetilde{\mathcal{D}}(p^{-i}x)}]\\
        &=(n+1)\rO(-1,-1)+\rO(n+1,n)=\rO(0,-1).
    \end{align*}
\end{proof}

\subsection{Local Hecke correspondences and the exceptional divisor}
Recall that for an element $x\in\B$ such that $\nu_p\left(q(x)\right)\geq1$, we defined closed formal subschemes $\N^{\textup{I}+}_0(x), \N^{\textup{I}-}_0(x), \N^{\textup{II}+}_0(x), \N^{\textup{II}-}_0(x)$ of $\M$.
\begin{lemma}
    Let $x\in\B$ be an element such that $\nu_p\left(q(x)\right)\geq0$. The formal subschemes $\N^{\textup{I}+}_0(x_0\cdot x)$, $\N^{\textup{I}-}_0(x_0\cdot\overline{x})$, $\N^{\textup{II}+}_0(x)$ and $\N^{\textup{II}-}_0(x^{\prime})$ (the later two spaces are only defined for elements $x$ such that $\nu_p\left(q(x)\right)\geq1$) are all closed formal subschemes of the special divisor $\mathcal{Z}(x)$.
    \label{lie-in-spe-div}
\end{lemma}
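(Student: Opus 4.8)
The plan is to deduce this from the analogous statement on $\N(x_0)$, namely Lemma \ref{ori-diff}, and then transport it through the blow-up morphism $\pi:\M\rightarrow\N(x_0)$. By Lemma \ref{ori-diff}, the closed immersions $\st_{x_0\cdot x}^{\textup{I}+}$, $\st_{x_0\cdot\overline{x}}^{\textup{I}-}$, $\st_x^{\textup{II}+}$ and $\st_{x^{\prime}}^{\textup{II}-}$ (the last two only when $\nu_p(q(x))\geq1$) send the corresponding source onto a closed formal subscheme of the special divisor $\mathcal{Z}_{\N(x_0)}(x)\subset\N(x_0)$. So, writing $N$ for the image $\st_{x_0\cdot x}^{\textup{I}+}(\N_0(x_0\cdot x))$ and $Z$ for $\mathcal{Z}_{\N(x_0)}(x)$, we have a containment $N\subset Z$ of closed formal subschemes of $\N(x_0)$, i.e.\ $\mathcal{I}_Z\subset\mathcal{I}_N$ for the respective ideal sheaves; and likewise in the other three cases.

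First I would recall from Definition \ref{special-cycle-on-M} that $\mathcal{Z}(x)=\mathcal{Z}_{\N(x_0)}(x)\times_{\N(x_0)}\M$ is \emph{by definition} the scheme-theoretic preimage $\pi^{-1}(Z)$, with ideal sheaf $\mathcal{I}_Z\cdot\rO_{\M}$. Since $\pi^{-1}$ of a closed immersion is a closed immersion, the total transform $\pi^{-1}(N)$, with ideal sheaf $\mathcal{I}_N\cdot\rO_{\M}$, is a closed formal subscheme of $\pi^{-1}(Z)=\mathcal{Z}(x)$, because $\mathcal{I}_Z\cdot\rO_{\M}\subset\mathcal{I}_N\cdot\rO_{\M}$.

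Then I would invoke the fact that the strict transform is a closed formal subscheme of the total transform: by its definition in $\S$\ref{RZBLOWUP}, $\N_0^{\textup{I}+}(x_0\cdot x)$ is the strict transform of $\N_0(x_0\cdot x)$ under $\pi\circ\st_{x_0\cdot x}^{\textup{I}+}$, hence is contained in $\pi^{-1}(N)$. Chaining the two inclusions gives $\N_0^{\textup{I}+}(x_0\cdot x)\subset\mathcal{Z}(x)$. The identical argument, with the matching case of Lemma \ref{ori-diff}, handles $\N_0^{\textup{I}-}(x_0\cdot\overline{x})$, $\N_0^{\textup{II}+}(x)$ and $\N_0^{\textup{II}-}(x^{\prime})$.

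I do not expect a genuine obstacle here; the only point needing a little care is the distinction between the total transform and the strict transform, which is harmless since $\mathcal{Z}(x)$ is defined as the full scheme-theoretic preimage of $\mathcal{Z}_{\N(x_0)}(x)$ rather than merely as its strict transform $\sz(x)$ (cf.\ Corollary \ref{spe-decom}), and the former contains the latter together with the multiples of $\exc_{\M}$. One should also keep in mind that for $\N_0^{\textup{II}+}(x)$ and $\N_0^{\textup{II}-}(x^{\prime})$ the hypothesis $\nu_p(q(x))\geq1$ is precisely what makes the morphisms $\st_x^{\textup{II}+}$, $\st_{x^{\prime}}^{\textup{II}-}$, and hence Lemma \ref{ori-diff}, available.
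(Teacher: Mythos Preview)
Your proof is correct and follows essentially the same approach as the paper: invoke Lemma \ref{ori-diff} to obtain the inclusion downstairs in $\N(x_0)$, then use that $\mathcal{Z}(x)$ is by definition the full scheme-theoretic preimage $\mathcal{Z}_{\N(x_0)}(x)\times_{\N(x_0)}\M$ and that the strict transform sits inside the total transform. The paper's proof is simply a more compressed version of the same chain of inclusions.
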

\begin{proof}
    By Lemma \ref{ori-diff}, the morphisms $\st_{x_0\cdot x}^{\textup{I}+}$, $\st_{x_0\cdot \overline{x}}^{\textup{I}-}$, $\st_{x}^{\textup{II}+}$ and $\st_{x^{\prime}}^{\textup{II}-}$ sends the corresponding source formal schemes to $\mathcal{Z}_{\N(x_0)}(x)$. Therefore the strict transforms of the corresponding formal schemes $\N^{\textup{I}+}_0(x_0\cdot x)$, $\N^{\textup{I}-}_0(x_0\cdot\overline{x})$, $\N^{\textup{II}+}_0(x)$ and $\N^{\textup{II}-}_0(x^{\prime})$ are all mapped into the direct base change $\mathcal{Z}_{\N(x_0)}(x)\times_{\N(x_0)}\M=\mathcal{Z}(x)$.
\end{proof}
\begin{remark}
    By Lemma \ref{special-0-1}, we know that
    \begin{itemize}
        \item The regular divisor $\N^{\textup{I}+}_0(x_0\cdot x)=\N^{\textup{I}-}_0(x_0\cdot\overline{x})$ if $\nu_p(q(x))=0$.
        \item The regular divisor $\N^{\textup{II}+}_0(x)=\N^{\textup{II}-}_0(x^{\prime})$ if $\nu_p(q(x))=1$.
    \end{itemize}
    \label{specialcase0-1}
\end{remark}
Notice that over the closed formal subschemes $\N^{\textup{I}+}_0(x_0\cdot x)$, $\N^{\textup{I}-}_0(x_0\cdot\overline{x})$, $\N^{\textup{II}+}_0(x)$ and $\N^{\textup{II}-}_0(x^{\prime})$, the quasi-isogenies $x_0\cdot x$ and $x_0\cdot\overline{x}$ lift to isogenies by Lemma \ref{lie-in-spe-div}. We still use $x_0\cdot x$ and $x_0\cdot\overline{x}$ to denote these two isogenies.
\begin{lemma}
    Let $x\in\B$ be an element such that $\nu_p\left(q(x)\right)\geq1$.
    \begin{itemize}
        \item [(I):]  
        \begin{itemize}
            \item [+]Over the formal scheme $\N_0^{\textup{I}+}(x_0\cdot x)$, 
        \begin{align*}
            &x_0\cdot x=\textup{a cyclic isogeny},&x=\textup{a cyclic isogeny},\\
            &x_0\cdot\overline{x}=p\times\textup{a cyclic isogeny}, &x^{\prime}=\textup{a cyclic isogeny}.
        \end{align*}
        \item[$-$] Over the formal scheme $\N_0^{\textup{I}-}(x_0\cdot\overline{x})$, 
        \begin{align*}
            &x_0\cdot x=p\times\textup{a cyclic isogeny},&x=\textup{a cyclic isogeny},\\
            &x_0\cdot\overline{x}=\textup{a cyclic isogeny}, &x^{\prime}=\textup{a cyclic isogeny}.
        \end{align*}
        \end{itemize}
        \item [(II):]
        \begin{itemize}
            \item [+] Over the formal scheme $\N_0^{\textup{II}+}(x)$,
            \begin{align*}
            &x_0\cdot x=p\times\textup{a cyclic isogeny},&x_0\cdot\overline{x}=p\times\textup{a cyclic isogeny}.
            \end{align*}
            When $\nu_p\left(q(x)\right)\geq2$, we also have
            \begin{align*}
            &x=\textup{a cyclic isogeny}, &x^{\prime}=p\times\textup{a cyclic isogeny}.
        \end{align*}
        \item[$-$] Over the formal scheme $\N_0^{\textup{II}-}(x^{\prime})$, 
        \begin{align*}
            &x_0\cdot x=p\times\textup{a cyclic isogeny},&x_0\cdot\overline{x}=p\times\textup{a cyclic isogeny}.
            \end{align*}
            When $\nu_p\left(q(x)\right)\geq2$, we also have
            \begin{align*}
            &x=p\times\textup{a cyclic isogeny}, &x^{\prime}=\textup{a cyclic isogeny}.
            \end{align*}
        \end{itemize}
    \end{itemize}
    \label{moduli-over-hec}
\end{lemma}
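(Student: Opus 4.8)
The plan is to prove this entirely by unwinding the moduli definitions, reducing each of the assertions to an identity of (quasi-)isogenies that can be read off from the standard factorization of a single universal cyclic isogeny. First I would normalize $q(x_0)=p$ (permissible after rescaling), so that $x_0\overline{x_0}=\overline{x_0}x_0=p$ in $\OB$ and, as recorded in \S\ref{cyclic-def-space}, the main involution of $\B$ is carried by $\lambda$ to the Rosati involution on $\textup{End}(\X)$. Each of the four closed formal subschemes $\N_0^{\textup{I}\pm}(\cdot)$, $\N_0^{\textup{II}\pm}(\cdot)$ is by construction (\S\ref{RZBLOWUP}) the strict transform, under the blow-up $\M\to\N(x_0)$, of the image of one of the closed immersions $\st^{\textup{I}\pm}_{\bullet}$, $\st^{\textup{II}\pm}_{\bullet}$; since the conditions in the statement (``$\cdots$ deforms to a cyclic isogeny'', ``$\cdots$ deforms to $p$ times a cyclic isogeny'') are pulled back along the blow-up, it is enough to prove the statements on the images in $\N(x_0)$, i.e. on the copies of $\N_0(y)$ for the appropriate $y\in\B$ supplied by Lemma \ref{ori-diff}.

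The technical core is the following setup on $\N_0(y)$ for $y\in\B$ with $M\coloneqq\nu_p(q(y))\geq1$: let $\pi\colon X_0\to X_M$ be the universal cyclic isogeny lifting $y$ with its standard factorization $X_0\xrightarrow{\pi_1}X_1\to\cdots\xrightarrow{\pi_M}X_M$ into degree-$p$ cyclic isogenies (cf. (\ref{stdfac}) and \cite[\S6.7]{KM85}), and let $\tilde\pi_i\colon X_i\to X_{i-1}$ be the dual isogeny, so $\tilde\pi_i\circ\pi_i=[p]_{X_{i-1}}$ and $\pi_i\circ\tilde\pi_i=[p]_{X_i}$. I would use two inputs from Katz--Mazur: that this factorization works in families, and that any consecutive block $\pi_j\circ\cdots\circ\pi_i$ (resp. $\tilde\pi_i\circ\cdots\circ\tilde\pi_j$) with $i\leq j$ is again a cyclic isogeny of degree $p^{\,j-i+1}$ (with the convention that $i>j$ yields the identity, a cyclic isogeny of degree $1$). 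Then, for $\N_0^{\textup{I}+}(x_0\cdot x)$, putting $N=\nu_p(q(x_0 x))=n+1$ and unwinding the construction of $\st^{\textup{I}+}_{x_0 x}$ from \S\ref{local-correspondences}, the universal datum identifies $X_1^{\textup{univ}}=X_0$, $X_1^{\prime\textup{univ}}=X_1$ with $(x_0)_1=\pi_1$, and $X_2^{\textup{univ}}=X_{N-1}$, $X_2^{\prime\textup{univ}}=X_N$ with $(x_0)_2=\pi_N$. Hence $x_0\cdot x$ deforms to $\pi=\pi_N\circ\cdots\circ\pi_1$, while the uniqueness of lifts together with the commuting square $(x_0)_2\circ x=x^{\prime}\circ(x_0)_1$ gives that $x$ deforms to $\pi_N^{-1}\circ\pi=\pi_{N-1}\circ\cdots\circ\pi_1$ and $x^{\prime}$ to $\pi_N\circ\cdots\circ\pi_2$ --- all cyclic --- and that $\overline{x}$, the dual of $x$, deforms to $\tilde\pi_1\circ\cdots\circ\tilde\pi_{N-1}$, so that $x_0\cdot\overline{x}=(x_0)_1\circ\overline{x}$ deforms to $\pi_1\circ\tilde\pi_1\circ\cdots\circ\tilde\pi_{N-1}=[p]\circ(\tilde\pi_2\circ\cdots\circ\tilde\pi_{N-1})$, i.e. $p$ times a cyclic isogeny. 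This reproduces the table for case (I)$+$, and the remaining three cases go the same way: read off the roles of the $X_i$ and the $(x_0)_j$ from the relevant $\st$-map, write $x_0\cdot x,\ x_0\cdot\overline{x},\ x,\ x^{\prime}$ as words in the $\pi_i$ and $\tilde\pi_i$ via the commuting square, and simplify using $\pi_i\tilde\pi_i=\tilde\pi_i\pi_i=[p]$; a surviving $[p]$ is exactly the distinction between ``$p$ times a cyclic isogeny'' and ``a cyclic isogeny''. The reason the (II) cases look asymmetric is that there $(x_0)_2$ is the \emph{dual} of the last factor of $\pi$, so a factor of $[p]$ is already present in $(x_0)_2\circ x$; this makes $x_0\cdot x$ and $x_0\cdot\overline{x}$ always $p$ times a cyclic isogeny, whereas the further cancellation that isolates a single $[p]$ in $x^{\prime}$ while leaving $x$ cyclic (or vice versa for (II)$-$) only survives when the leftover block has positive length, i.e. $\nu_p(q(x))\geq2$; for $\nu_p(q(x))\leq1$ the two spaces $\N_0^{\textup{II}\pm}$ coincide by Lemma \ref{special-0-1}(b), which is why those two bullets carry the hypothesis $\nu_p(q(x))\geq2$.

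I expect the only real difficulty to be bookkeeping: keeping the framings of \S\ref{local-correspondences} straight so that the deformed quasi-isogenies come out as the asserted blocks on the nose (no stray unit, no spurious $[p]$), and making sure that ``cyclic'' is being certified over the whole two-dimensional base $\N_0(y)$ rather than just at its closed $\F$-point. Both points are handled by working throughout with the family version of the standard-factorization formalism of \cite[\S6.7, \S13]{KM85} --- the same formalism already invoked in Lemmas \ref{cyclic-diff} and \ref{VF} --- together with the elementary fact that the dual of a cyclic isogeny is cyclic of the same degree.
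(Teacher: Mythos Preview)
Your proposal is correct and follows essentially the same route as the paper: work on the image of the relevant $\st$-morphism in $\N(x_0)$, use the standard factorization of the universal cyclic isogeny into degree-$p$ pieces $\pi_i$, identify the two deformed $x_0$-isogenies with the first and last factors (or the dual of the last factor in the II cases), and then read off each of $x_0\cdot x$, $x_0\cdot\overline{x}$, $x$, $x'$ as a word in the $\pi_i$ and their duals, simplifying via $\pi_i\circ\tilde\pi_i=[p]$. Your explanation of why the (II) cases require $\nu_p(q(x))\geq2$ is slightly more explicit than the paper's (which simply says ``the proof for the other cases are similar''), but the underlying argument is identical.
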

\begin{proof}
   Let $n=\nu_p(q(x))$. Let $S$ be a $W$-scheme such that $p$ is locally nilpotent. We first consider the case (I)+. Let
    \begin{equation}
        \left(X_{1}\stackrel{(x_0)_{1}}\longrightarrow X_{1}^{\prime},\left(\rho_{1},\rho_{1}^{\prime}\right)\right),\,\,\,\,\left(X_{2}\stackrel{(x_0)_{2}}\longrightarrow X_{2}^{\prime},\left(\rho_{2},\rho_{2}^{\prime}\right)\right)
    \end{equation}
    be an object in the set $\N_0^{\textup{I}+}(x_0\cdot x)(S)$. Since the formal scheme $\N_0^{\textup{I}+}(x_0\cdot x)$ is contained $\N_0(x_0\cdot x)\times_{\st_{x_0\cdot x}^{\textup{I}+},\N(x_0)}\M$, the quasi-isogeny $x_0\cdot x$ lifts to a cyclic isogeny $\pi:X_1\rightarrow X_2^{\prime}$.
    \par
    The cyclic isogeny $\pi$ factorizes into a composition of $n$ degree $p$ isogenies $\pi=\pi_n\circ\cdots\circ\pi_1$. The isogeny $\pi_1$ is isomorphic to $(x_0)_{1}$, while $\pi_n$ is isomorphic to $(x_0)_{2}$. Hence $x$ lifts to $\pi_{n-1}\circ\cdots\circ\pi_1$. Then $\overline{x}$ lifts to $\pi_1^{\vee}\circ\cdot\circ\pi_{n-1}^{\vee}$. Therefore
    \begin{align*}
        x_0\cdot\overline{x}&=(x_0)_1\circ\pi_1^{\vee}\circ\cdots\circ\pi_{n-1}^{\vee}=\pi_1\circ\pi_1^{\vee}\circ\cdots\circ\pi_{n-1}^{\vee}=q(x_0)\times\pi_2^{\vee}\circ\cdots\circ\pi_{n-1}^{\vee}=p\times\textup{a cyclic isogeny}.\\
        x&=x_0^{-1}\circ x_0\cdot x=\left((x_0)_2\right)^{-1}\circ\pi_n\circ\cdots\circ\pi_{1}=\pi_{n-1}\circ\cdots\circ\pi_{1}=\textup{a cyclic isogeny}.\\
        x^{\prime}&=(x_0)_2\circ x\circ\left((x_0)_1\right)^{-1}=\pi_n\circ\cdots\circ\pi_2=\textup{a cyclic isogeny}.
    \end{align*}
    Therefore we have shown that $x_0\cdot x$ is a cyclic isogeny, while $x_0\cdot\overline{x}$, $x$ and $x^{\prime}$ are of the form $p\times\textup{a cyclic isogeny}$ over the formal scheme $\N_0^{\textup{I}+}(x_0\cdot x)$. The proof for the other cases are similar so we omit it.
\end{proof}
Let $x\in\B$ be an element such that $\nu_p\left(q(x)\right)\geq0$. We know that the formal schemes $\N^{\textup{I}+}_0(x_0\cdot x)$, $\N^{\textup{I}-}_0(x_0\cdot\overline{x})$, $\N^{\textup{II}+}_0(x)$ and $\N^{\textup{II}-}_0(x^{\prime})$ (the later two spaces are only defined for elements $x$ such that $\nu_p\left(q(x)\right)\geq1$) are isomorphic to the blow up along the unique closed point of the corresponding cyclic deformations spaces $\N_0(x_0\cdot x)$, $\N_0(x_0\cdot\overline{x})$, $\N_0(x)$ and $\N_0(x^{\prime})$. Denote by $\exc_{x_0\cdot x}^{\textup{I}+}$, $\exc_{x_0\cdot\overline{x}}^{\textup{I}-}$, $\exc_{x}^{\textup{II}+}$ and $\exc_{x^{\prime}}^{\textup{II}-}$ the corresponding exceptional divisors. They are all isomorphic to $\bP_{\F}^{1}$. By the definition of these formal schemes, 
\begin{equation*}
    \exc_{x_0\cdot x}^{\textup{I}+}=\exc_{\M}\cap\N_0^{\textup{I}+}(x_0\cdot x),\,\,\,\,\exc_{x_0\cdot\overline{x}}^{\textup{I}-}=\exc_{\M}\cap\N_0^{\textup{I}-}(x_0\cdot \overline{x}),
\end{equation*}
\begin{equation*}
    \exc_{x}^{\textup{II}+}=\exc_{\M}\cap\N_0^{\textup{II}+}(x),\,\,\,\,\exc_{x^{\prime}}^{\textup{II}-}=\exc_{\M}\cap\N_0^{\textup{II}-}(x^{\prime}).
\end{equation*}
Hence $\exc_{x_0\cdot x}^{\textup{I}+}$, $\exc_{x_0\cdot\overline{x}}^{\textup{I}-}$, $\exc_{x}^{\textup{II}+}$ and $\exc_{x^{\prime}}^{\textup{II}-}$ are Cartier divisors on $\exc_{\M}$.
\begin{proposition}
    Let $x\in\B$ be an element such that $n\coloneqq\nu_p\left(q(x)\right)\geq0$.
    \begin{itemize}
        \item [(I)] The Cartier divisors $\exc_{x_0\cdot x}^{\textup{I}+}$ and $\exc_{x_0\cdot \overline{x}}^{\textup{I}-}$ of $\exc_{\M}$ are
        \begin{equation*}
            \exc_{x_0\cdot x}^{\textup{I}+}=\begin{cases}
                (v=\textup{a nonzero number in $\F$}), &\textup{when $n=0$;}\\
                (v=0), &\textup{when $n\geq1$.}
            \end{cases}
        \end{equation*}
        \begin{equation*}
            \exc_{x_0\cdot \overline{x}}^{\textup{I}-}=\begin{cases}
                (v^{\prime}=\textup{a nonzero number in $\F$}), &\textup{when $n=0$;}\\
                (v^{\prime}=0), &\textup{when $n\geq1$.}
            \end{cases}
        \end{equation*}
        \item [(II)] When $n\geq1$, the Cartier divisors $\exc_{x}^{\textup{II}+}$ and $\exc_{x^{\prime}}^{\textup{II}-}$ of $\exc_{\M}$ are
        \begin{equation*}
            \exc_{x}^{\textup{II}+}=\begin{cases}
                (u=\textup{a nonzero number in $\F$}), &\textup{when $n=1$;}\\
                (u=0), &\textup{when $n\geq2$.}
            \end{cases}
        \end{equation*}
        \begin{equation*}
            \exc_{x^{\prime}}^{\textup{II}-}=\begin{cases}
                (u^{\prime}=\textup{a nonzero number in $\F$}), &\textup{when $n=1$;}\\
                (u^{\prime}=0), &\textup{when $n\geq2$.}
            \end{cases}
        \end{equation*}
    \end{itemize}
    \label{equation-loc-exc}
\end{proposition}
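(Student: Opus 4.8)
\emph{Sketch of the argument.} The starting point is that each of the four subschemes $\N_0^{\textup{I}+}(x_0\cdot x)$, $\N_0^{\textup{I}-}(x_0\cdot\overline{x})$, $\N_0^{\textup{II}+}(x)$, $\N_0^{\textup{II}-}(x^{\prime})$ is by construction ($\S$\ref{RZBLOWUP}) the strict transform under $\pi$ of a divisor on $\N(x_0)$ through the unique closed point, hence is isomorphic to the corresponding $\widetilde{\N}_0(\bullet)$, whose exceptional divisor is $\simeq\bP_{\F}^{1}$ by Lemma \ref{blow-up-N0(x)}(a); so $\exc_{\bullet}^{?}=\exc_{\M}\cap\N_0^{?}(\bullet)$ is a reduced irreducible rational Cartier divisor on $\exc_{\M}\simeq\bP_{\F}^{1}\times\bP_{\F}^{1}$, i.e.\ a fibre of one of the two projections. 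I would then reduce all four assertions to the single statement $\exc_{x_0\cdot x}^{\textup{I}+}=(v=0)$ for $\nu_p(q(x))\geq1$ and $\exc_{x_0\cdot x}^{\textup{I}+}=(v=\textup{nonzero})$ for $\nu_p(q(x))=0$. Indeed $\st_{x}^{\textup{I}-}=\sigma\circ\st_{x}^{\textup{I}+}$ for the factor-swap $\sigma$ of $\N(x_0)=\N_0(x_0)\times\N_0(x_0)$, and the automorphism $\iota^{\M}$ of $\S$\ref{aut-M} satisfies (\ref{local-id-aut-M-1}) and (\ref{local-id-aut-M}); reading off from the charts of $\S$\ref{opencover} that on $\exc_{\M}$ the map $\sigma$ is $(v,u)\mapsto(v^{-1},u)$ and $\iota^{\M}$ interchanges the two $\bP_{\F}^{1}$-factors with $(v=0)\mapsto(u=0)$, and keeping track of the valuation drop by one in passing from the $\textup{I}$-branches to the $\textup{II}$-branches (since $c_0^{-1}y=x_0\cdot(\overline{x_0}^{-1}y)$ with $\nu_p(q(\overline{x_0}^{-1}y))=\nu_p(q(y))-1$), the $\textup{I}-$, $\textup{II}+$, $\textup{II}-$ formulas follow; the coincidences of Remark \ref{specialcase0-1} for $n=0,1$ are consistent because $(v=c)=(v^{\prime}=c^{-1})$ and $(u=c)=(u^{\prime}=c^{-1})$ as divisors.

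For the case $\N_0^{\textup{I}+}(x_0\cdot x)$ I would argue a containment first. If $n:=\nu_p(q(x))=0$, then Lemma \ref{lie-in-spe-div}, Corollary \ref{spe-decom}(a) and the $W$-flatness of $\N_0^{\textup{I}+}(x_0\cdot x)$ give $\N_0^{\textup{I}+}(x_0\cdot x)\subseteq\sz(x)=\widetilde{\mathcal{D}}(x)$, so $\exc_{x_0\cdot x}^{\textup{I}+}\subseteq\widetilde{\mathcal{D}}(x)\cap\exc_{\M}$, which by Proposition \ref{difference-decomposition}(i) is the single $v$-line $(v=\textup{nonzero})$; hence equality. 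If $n\geq1$, Lemma \ref{moduli-over-hec}, case $(\textup{I})+$, shows $x$ and $x^{\prime}$ lift to cyclic isogenies over $\N_0^{\textup{I}+}(x_0\cdot x)$, so by Remark \ref{moduli-diff-M} one has $\N_0^{\textup{I}+}(x_0\cdot x)\subseteq\mathcal{D}(x)=2\exc_{\M}+\widetilde{\mathcal{D}}(x)$, whence $\N_0^{\textup{I}+}(x_0\cdot x)\subseteq\widetilde{\mathcal{D}}(x)$ and $\exc_{x_0\cdot x}^{\textup{I}+}$ is one of the rulings listed in Proposition \ref{difference-decomposition}(ii): $(v=0)$, $(v^{\prime}=0)$, or a $u$-line.

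To single out $(v=0)$ for $n\geq1$ I would use that, by Lemma \ref{VF} and Remark \ref{Stand-FV}, the divisor $\st_{x_0\cdot x}^{\textup{I}+}(\N_0(x_0\cdot x))$ meets the component $\N(x_0)^{\textup{VF}}=V(x_1,y_2)$ only at the closed point and with reduced structure, i.e.\ the ideal $(x_1,y_2)$ pulls back along $\st_{x_0\cdot x}^{\textup{I}+}$ to the maximal ideal of $\N_0(x_0\cdot x)$. A direct check on the four charts of $\S$\ref{opencover} gives the identity of ideal sheaves $\pi^{\ast}(x_1,y_2)\cdot\rO_{\M}=I(\exc_{\M})\cdot I(\MVF)$; restricting it to $\N_0^{\textup{I}+}(x_0\cdot x)\simeq\widetilde{\N}_0(x_0\cdot x)$, on which the pullback of the maximal ideal is $I(\exc_{x_0\cdot x}^{\textup{I}+})$ and $I(\exc_{\M})$ restricts to $I(\exc_{x_0\cdot x}^{\textup{I}+})$ as well, forces $I\big(\N_0^{\textup{I}+}(x_0\cdot x)\cap\MVF\big)=\rO_{\M}$, i.e.\ $\N_0^{\textup{I}+}(x_0\cdot x)\cap\MVF=\varnothing$. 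Since $\MVF\cap\exc_{\M}=(v^{\prime}=0)$ by Proposition \ref{geometry-M}, and among the rulings occurring in $\widetilde{\mathcal{D}}(x)\cap\exc_{\M}$ for $n\geq1$ only $(v=0)$ is disjoint from the parallel $v$-line $(v^{\prime}=0)$ (every $u$-line meets it in one point), we conclude $\exc_{x_0\cdot x}^{\textup{I}+}=(v=0)$; propagating this back through $\sigma$ and $\iota^{\M}$ then yields all of Proposition \ref{equation-loc-exc}.

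The step I expect to be the main obstacle is exactly this last selection: the containment in $\widetilde{\mathcal{D}}(x)\cap\exc_{\M}$ leaves several candidate rulings, and pinning down the correct one genuinely requires the ``total transform'' identity $\pi^{\ast}(x_1,y_2)\cdot\rO_{\M}=I(\exc_{\M})\cdot I(\MVF)$ together with Lemma \ref{VF}. The only computational ingredient is the chart verification of that ideal identity, which is mechanical. A more hands-on alternative would avoid it by tracking the Frobenius/Verschiebung type of the first and last steps of the standard factorization (\ref{stdfac}) of the universal cyclic isogeny over each branch --- using \cite{KM85} as in the proof of Lemma \ref{VF} --- and matching it against the isogeny-type labelling of $\MFF$, $\MFV$, $\MVF$, $\MVV$ from Lemma \ref{semistable}; that route is conceptually cleaner but requires more care with the conventions of $\S$\ref{description-x0}.
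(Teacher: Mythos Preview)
Your argument is correct, and the $n=0$ case and the initial narrowing to the irreducible components of $\widetilde{\mathcal{D}}(x)\cap\exc_{\M}$ for $n\geq1$ match the paper exactly. The difference is in the selection step for $n\geq1$. The paper argues by contradiction: it supposes $\N_0^{\textup{I}+}(x_0x)$ meets $\M_2^{+}\cup\M_1^{-}$, extracts from Lemma~\ref{VF} that the local equation of the divisor on $\M_2^{+}$ must be (up to units) $\tilde{a}v'+\tilde{b}x_2$, and then reads off that on such a chart the multiplicity of $\exc_{x_0x}^{\textup{I}+}$ in $\textup{div}(p)$ would be $3$, contradicting the value $r(n+1)$ from Lemma~\ref{blow-up-N0(x)}(b); since $(v=0)$ is the only candidate ruling entirely contained in $\M_1^{+}\cup\M_2^{-}$, the claim follows. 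Your route replaces this with the clean total-transform identity $\pi^{\ast}(x_1,y_2)\cdot\rO_{\M}=I(\exc_{\M})\cdot I(\MVF)$ (which is indeed a four-chart check: on $\M_1^{+}$ and $\M_2^{-}$ one gets $(x_1)=(x_1)\cdot(1)$ and $(y_2)=(y_2)\cdot(1)$, on $\M_2^{+}$ and $\M_1^{-}$ one gets $(v'x_2)$ and $(t_{21}y_1)$), cancels the invertible sheaf $I(\exc_{x_0x}^{\textup{I}+})$ on the regular scheme $\N_0^{\textup{I}+}(x_0x)$ to force $\N_0^{\textup{I}+}(x_0x)\cap\MVF=\varnothing$, and then observes that among the candidate rulings only $(v=0)$ is disjoint from $\MVF\cap\exc_{\M}=(v'=0)$. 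This avoids any appeal to the numerical invariant $r(n)$ and is slightly more direct. Your reduction of the $\textup{I}-$, $\textup{II}+$, $\textup{II}-$ cases to $\textup{I}+$ via the explicit actions of the swap $\sigma$ and of $\iota^{\M}$ on $\exc_{\M}$ (swapping $v\leftrightarrow v'$ and $v\leftrightarrow u$ respectively, up to units) makes precise what the paper dismisses as ``similar, so we omit it''; the one cosmetic slip is that your conclusion $I(\N_0^{\textup{I}+}(x_0x)\cap\MVF)=\rO_{\M}$ should read $=\rO_{\N_0^{\textup{I}+}(x_0x)}$.
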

\begin{proof}
    We first consider the case (I+). When $n=0$, by Lemma \ref{lie-in-spe-div}, we have $\N_0^{\textup{I}+}(x_0\cdot x)\subset\mathcal{Z}(x)$. By Proposition \ref{difference-decomposition}, we know that $\mathcal{Z}(x)=\exc_{\M}+\widetilde{\mathcal{D}}(x)$. Then $\N_0^{\textup{I}+}(x_0\cdot x)\subset\widetilde{\mathcal{D}}(x)$ because $\N_0^{\textup{I}+}(x_0\cdot x)$ intersects with $\exc_{\M}$ properly. Hence
    \begin{equation*}
        \bP_{\F}^{1}\simeq\exc_{x_0\cdot x}^{\textup{I}+}=\exc_{\M}\cap\N_0^{\textup{I}+}(x_0\cdot x)\subset\exc_{\M}\cap\widetilde{\mathcal{D}}(x).
    \end{equation*}
    By Proposition \ref{difference-decomposition}, the intersection $\exc_{\M}\cap\widetilde{\mathcal{D}}(x)$ is given by the equation $v=$ a nonzero number in $\F$, which also cuts out a projective line $\bP_{\F}^{1}$ in $\exc_{\M}$. Hence the Cartier divisor $\exc_{x_0\cdot x}^{\textup{I}+}$ is given by the equation ($v=$ a nonzero number in $\F$) in $\exc_{\M}$. Notice that $\N^{\textup{I}+}_0(x_0\cdot x)=\N^{\textup{I}-}_0(x_0\cdot\overline{x})$ by Remark \ref{specialcase0-1}. Hence $\exc_{x_0\cdot \overline{x}}^{\textup{I}-}=\exc_{x_0\cdot x}^{\textup{I}+}$. Therefore the Cartier divisor $\exc_{x_0\cdot \overline{x}}^{\textup{I}-}$ is given by the equation ($v^{\prime}=$ a nonzero number in $\F$) since $v^{\prime}\cdot v=1$.
    \par
    Now we consider the case $n\geq1$. Let's assume that $\N_0^{\textup{I}+}(x_0\cdot x)\cap(\M_2^{+}\cup\M_1^{-})\neq\varnothing$. Let $\M_{v^{\prime}}$ be the closed formal subscheme of $\M$ cut out by the equation $v^{\prime}=0$. Notice that $v^{\prime}=0$ also implies that $x_1=y_2=0$. Therefore
    \begin{equation*}
        \N_0^{\textup{I}+}(x_0\cdot x)\cap\M_{v^{\prime}}\subset\left(\st_{x_0\cdot x}^{\textup{I}+}\left(\N_0(x_0\cdot x)\right)\cap\left(x_1=y_2=0\right)\right)\times_{\N(x_0)}\M.
    \end{equation*}
    By our convention in Remark \ref{convention-FV}, $x_1=y_2=0$ cuts out the closed formal subscheme $\N(x_0)^{\textup{VF}}$ of $\N(x_0)$. Therefore
    \begin{equation*}
        \st_{x_0\cdot x}^{\textup{I}+}\left(\N_0(x_0\cdot x)\right)\cap\left(x_1=y_2=0\right)=\N_0(x_0\cdot x)\times_{\st_{x_0\cdot x}^{\textup{I}+}, \N(x_0)}\N(x_0)^{\textup{VF}}\simeq\textup{Spec}\,\F
    \end{equation*}
    by Lemma \ref{VF} and Remark \ref{Stand-FV}. Hence $\N_0^{\textup{I}+}(x_0\cdot x)\cap\M_{v^{\prime}}\subset\textup{Spec}\,\F\times_{\N(x_0)}\M=\exc_{\M}$.
    \par
    We assume that $\N_0^{\textup{I}+}(x_0\cdot x)\cap\M_2^{+}\neq\varnothing$ (the argument for the case $\N_0^{\textup{I}+}(x_0\cdot x)\cap\M_1^{-}\neq\varnothing$ is similar). Let $f\in\rO_{\M_2^{+}}$ be the equation of the regular divisor $\N_0^{\textup{I}+}(x_0\cdot x)$ in $\M_2^{+}$. The inclusion $\N_0^{\textup{I}+}(x_0\cdot x)\cap\M_{v^{\prime}}\subset\exc_{\M}$ implies that
    \begin{equation*}
        (x_2)\subset(f,v^{\prime})\subset\rO_{\M_2^{+}}\simeq W[v^{\prime}, u][[x_2]]/(p+uv^{\prime}x_2^{2}).
    \end{equation*}
    Therefore there exist $a,b\in\rO_{\M_2^{+}}$ such that $x_2=av^{\prime}+bf$.\par
    Claim: the element $a$ is invertible.
    \par
    \textit{Proof of the claim}: Let $\overline{a}\coloneqq a\,\textup{mod}\,(x_2)$ be an element in $\F[v^{\prime},u]$. Then $\overline{a}\neq0$ because otherwise $x_2\vert f$, which is impossible. Then the intersection $\exc_{x_0\cdot x}^{\textup{I}+}\cap\M_{2}^{+}$ is given by $(\overline{a}\cdot v^{\prime}=0)=(\overline{a}=0)+(v^{\prime}=0)$ by the equation $x_2=av^{\prime}+bf$. However, the intersection $\exc_{x_0\cdot x}^{\textup{I}+}\cap\M_{2}^{+}$ is an open subvariety of $\bP_{\F}^{1}$. Therefore we must have $(\overline{a}=0)=\varnothing$ and $\exc_{x_0\cdot x}^{\textup{I}+}=(v^{\prime}=0)$. Hence $\overline{a}$ is invertible, which implies that $a$ is invertible.
    \par
    The invertibility of $a$ implies that the element $bf=x_2-av^{\prime}$ is a regular element because the quotient ring $\rO_{\M_2^{+}}/(x_2-av^{\prime})$ is regular. Therefore the element $b$ must be invertible because $f$ is not invertible. Therefore we conclude that $f=\Tilde{a}v^{\prime}+\Tilde{b}x_2$ for invertible elements $\Tilde{a},\Tilde{b}\in\rO_{\M_2^{+}}$. Then
    \begin{equation*}
        \N_0^{\textup{I}+}(x_0\cdot x)\cap\M_2^{+}\simeq\textup{Spf}\,W[u][[x_2]]/(p-\Tilde{a}^{-1}\Tilde{b}ux_2^{3}).
    \end{equation*}
    By the above equation, the multiplicity of the exceptional divisor $\exc_{x_0\cdot x}^{\textup{I}+}$ in $\N_0^{\textup{I}+}(x_0\cdot x)_{\F}=\textup{div}(p)$ of $\N_0^{\textup{I}+}(x_0\cdot x)$ is 3 instead of $r(n)$ as is shown in Lemma \ref{blow-up-N0(x)}, which is a contradiction. Therefore the assumption $\N_0^{\textup{I}+}(x_0\cdot x)\cap(\M_2^{+}\cup\M_1^{-})\neq\varnothing$ is wrong. Since $\exc_{x_0\cdot x}^{\textup{I}+}\subset\widetilde{\mathcal{Z}}(x)\cap\exc_{\M}$ and the latter is a summation of divisors $(v=0)$, $(v^{\prime}=0)$, $(u=0)$, $(u^{\prime}=0)$ and ($u=$ a nonzero number in $\F$) on $\exc_{\M}$. Therefore the only possibility is
    \begin{equation*}
        \exc_{x_0\cdot x}^{\textup{I}+}=(v=0).
    \end{equation*}
    The proof for the other cases are similar, so we omit it.
\end{proof}

\subsection{Decomposition of the difference divisor}
\begin{lemma}
    Let $x\in\B$ be an element such that $\nu_p\left(q(x)\right)\geq2$. Then the regular divisors $\N^{\textup{I}+}_0(x_0\cdot x), \N^{\textup{I}-}_0(x_0\cdot\overline{x}), \N^{\textup{II}+}_0(x), \N^{\textup{II}-}_0(x^{\prime})$ are all contained in the divisor $\widetilde{\mathcal{D}}(x)$.
    \label{hec-in-diff}
\end{lemma}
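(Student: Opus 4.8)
The plan is to exhibit each of the four divisors as a prime divisor contained in $\mathcal{Z}(x)$ that avoids both $\mathcal{Z}(p^{-1}x)$ and $\exc_{\M}$, and then to read off the containment in $\widetilde{\mathcal{D}}(x)$ from the decomposition of $\mathcal{Z}(x)$. Write $n=\nu_p(q(x))\geq2$. By Lemma \ref{lie-in-spe-div} all four of $\N_0^{\textup{I}+}(x_0 x)$, $\N_0^{\textup{I}-}(x_0\overline{x})$, $\N_0^{\textup{II}+}(x)$, $\N_0^{\textup{II}-}(x')$ are contained in $\mathcal{Z}(x)$, and by Definition \ref{diff-div} together with Proposition \ref{difference-decomposition}(ii) we have the equality of effective Cartier divisors
\begin{equation*}
    \mathcal{Z}(x)=\mathcal{Z}(p^{-1}x)+2\,\exc_{\M}+\widetilde{\mathcal{D}}(x).
\end{equation*}
Each of the four divisors is regular by Proposition \ref{geometry-M}(iv), hence integral; it is $W$-flat with nonempty generic fibre, being (isomorphic to) the blow-up at a point of a $W$-flat two-dimensional regular cyclic deformation space, cf.\ Lemma \ref{cyclic-diff} and Lemma \ref{blow-up-N0(x)}; and by Proposition \ref{equation-loc-exc} it meets $\exc_{\M}$ in a genuine Cartier divisor of $\exc_{\M}$, hence is not contained in $\exc_{\M}$. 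Granting for the moment that none of the four lies in $\mathcal{Z}(p^{-1}x)$ either, its multiplicity in $\mathcal{Z}(x)$ is $\geq1$ while its multiplicities in $\mathcal{Z}(p^{-1}x)$ and in $\exc_{\M}$ vanish, so by the displayed identity it has multiplicity $\geq1$ in $\widetilde{\mathcal{D}}(x)$, i.e.\ it is contained in $\widetilde{\mathcal{D}}(x)$.

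It remains to check that no $\N_0^{\bullet}$ among the four lies in $\mathcal{Z}(p^{-1}x)$, and for this it suffices to argue at the generic point $\xi$ of $\N_0^{\bullet}$, which by $W$-flatness lies in the generic fibre of $\M$, hence in one of the open formal subschemes $\M^{+}$ or $\M^{-}$. By Lemma \ref{divisor}, on $\M^{+}$ (resp.\ $\M^{-}$) the divisor $\mathcal{Z}(p^{-1}x)$ is the locus where the quasi-isogeny $p^{-1}x$ (resp.\ $(p^{-1}x)'$) lifts to an isogeny. By Lemma \ref{moduli-over-hec}, over each of the four divisors the pertinent quasi-isogeny --- namely $x$ over $\N_0^{\textup{I}\pm}$ and over $\N_0^{\textup{II}+}(x)$, and $x'$ over $\N_0^{\textup{II}-}(x')$, read off on the chart containing $\xi$ --- lifts to a \emph{cyclic} isogeny of degree exactly $p^{n}$; its kernel is then a cyclic group scheme of order $p^{n}$, which for $n\geq1$ cannot contain the full $p$-torsion of the source $p$-divisible group. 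Hence $p^{-1}$ times this isogeny has a genuine pole and does not lift to an isogeny at $\xi$, so $\xi\notin\mathcal{Z}(p^{-1}x)$; since $\N_0^{\bullet}$ is irreducible, it is not contained in $\mathcal{Z}(p^{-1}x)$, as required.

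The only nonformal point is the chart bookkeeping in the last step: one must identify the universal deformations and framings on $\M$ (pulled back along the blow-up $\pi$ and along the closed immersions $\st_{\cdot}^{\textup{I}\pm}$, $\st_{\cdot}^{\textup{II}\pm}$) with the tautological cyclic isogeny on the corresponding cyclic deformation space, and one must verify that $\xi$ lies on the chart --- $\M^{+}$ for the type $\textup{I}$ divisors and for $\N_0^{\textup{II}+}(x)$, $\M^{-}$ for $\N_0^{\textup{II}-}(x')$ --- on which Lemma \ref{moduli-over-hec} produces a full-degree cyclic isogeny rather than one of the shape $p\times(\text{cyclic})$; this is governed by Remark \ref{convention-FV}, Remark \ref{Stand-FV} and by the shape of the standard factorization of a cyclic isogeny over $\F$ as a chain of Frobenii followed by a chain of Verschiebungs. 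For the type $\textup{I}$ divisors the choice of chart is immaterial, since there \emph{both} $x$ and $x'$ already lift to cyclic isogenies of degree $p^{n}$. Finally, a purely computational variant is available: one extends the explicit local equations for $\widetilde{\mathcal{D}}(x)$ obtained in the proof of Proposition \ref{difference-decomposition}, computes the analogous equations of the four Hecke divisors on the charts $\M_i^{\pm}$, and checks the divisibility directly; Lemma \ref{moduli-over-hec} then serves only as a consistency check.
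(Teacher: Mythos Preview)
Your argument is correct and lands on the same content as the paper's, but the packaging differs. The paper uses Remark \ref{moduli-diff-M} directly: $\mathcal{D}^{+}(x)$ (resp.\ $\mathcal{D}^{-}(x)$) is literally the locus where $x$ (resp.\ $x'$) lifts to a \emph{cyclic} isogeny, so Lemma \ref{moduli-over-hec} immediately gives $\N_0^{\bullet}\subset\mathcal{D}(x)$ as a containment of closed formal subschemes, and then proper intersection with $\exc_{\M}$ forces $\N_0^{\bullet}\subset\widetilde{\mathcal{D}}(x)$. Your route instead unpacks this into ``in $\mathcal{Z}(x)$ but with multiplicity zero in $\mathcal{Z}(p^{-1}x)$ and in $\exc_{\M}$'', which is logically equivalent but adds a generic-point step.

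On the chart bookkeeping for the $\textup{II}$-divisors, the paper is sharper: from Proposition \ref{equation-loc-exc} (for $n\geq2$ one has $\exc_x^{\textup{II}+}=(u=0)$ and $\exc_{x'}^{\textup{II}-}=(u'=0)$), the reduced locus of $\N_0^{\textup{II}+}(x)$ lies in $\exc_{\M}\cap\M^{+}$, hence the whole formal divisor sits in $\M^{+}$; likewise $\N_0^{\textup{II}-}(x')\subset\M^{-}$. This replaces your appeal to Remarks \ref{convention-FV}, \ref{Stand-FV} and standard factorizations, and removes any ambiguity about which chart carries the generic point. Once you cite Proposition \ref{equation-loc-exc} there, your proof and the paper's are the same argument.
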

\begin{proof}
    By the moduli interpretation of the divisors $\mathcal{D}^{+}(x)$ and $\mathcal{D}^{-}(x)$ in Remark \ref{moduli-diff-M} and Lemma \ref{moduli-over-hec}, we have
    \begin{align*}
        &\N^{\textup{I}+}_0(x_0\cdot x)=\N^{\textup{I}+}_0(x_0\cdot x)\cap\M^{+}\bigcup\N^{\textup{I}+}_0(x_0\cdot x)\cap\M^{-}\subset\mathcal{D}^{+}(x)\bigcup\mathcal{D}^{-}(x)=\mathcal{D}(x),\\
        &\N^{\textup{I}-}_0(x_0\cdot \overline{x})=\N^{\textup{I}-}_0(x_0\cdot \overline{x})\cap\M^{+}\bigcup\N^{\textup{I}-}_0(x_0\cdot \overline{x})\cap\M^{-}\subset\mathcal{D}^{+}(x)\bigcup\mathcal{D}^{-}(x)=\mathcal{D}(x).
    \end{align*}
    Therefore the two divisors $\N^{\textup{I}+}_0(x_0\cdot x),\N^{\textup{I}-}_0(x_0\cdot \overline{x})\subset\widetilde{\mathcal{D}}(x)$ since they intersect the exceptional divisor $\exc_{\M}$ properly.
    \par
    By Proposition \ref{equation-loc-exc}, we know that
    \begin{equation*}
        \exc_x^{\textup{II}+}=(u=0),\,\,\,\,\exc_{x^{\prime}}^{\textup{II}-}=(u^{\prime}=0).
    \end{equation*}
    Hence $\N_0^{\textup{II}+}(x)\subset\M^{+}$ and $\N_0^{\textup{II}-}(x^{\prime})\subset\M^{-}$. Therefore by Lemma \ref{moduli-over-hec}
    \begin{align*}
        &\N^{\textup{II}+}_0(x)=\N^{\textup{II}+}_0(x)\cap\M^{+}\subset\mathcal{D}^{+}(x)\subset\mathcal{D}(x),\\
        &\N^{\textup{II}-}_0(x^{\prime})=\N^{\textup{II}-}_0(x^{\prime})\cap\M^{-}\subset\mathcal{D}^{-}(x)\subset\mathcal{D}(x).
    \end{align*}
    Therefore the two divisors $\N^{\textup{II}+}_0(x),\N^{\textup{II}-}_0(x^{\prime})\subset\widetilde{\mathcal{D}}(x)$ since they intersect the exceptional divisor $\exc_{\M}$ properly.
\end{proof}
\begin{lemma}
Let $x\in\B$ be an element such that $\nu_p\left(q(x)\right)\geq0$. We have the following decomposition of the effective Cartier divisor $\widetilde{\mathcal{D}}(x)$,
    \begin{equation*}
        \widetilde{\mathcal{D}}(x)=\begin{cases}
           \N^{\textup{I}+}_0(x_0\cdot x), &\textup{if $\nu_p\left(q(x)\right)=0$;}\\
          \N^{\textup{I}+}_0(x_0\cdot x)+\N^{\textup{I}-}_0(x_0\cdot \overline{x})+\N^{\textup{II}+}_0(x), &\textup{if $\nu_p\left(q(x)\right)=1$;}\\
            \N^{\textup{I}+}_0(x_0\cdot x)+\N^{\textup{I}-}_0(x_0\cdot \overline{x})+\N^{\textup{II}+}_0(x)+\N_0^{\textup{II}-}(x^{\prime}), &\textup{if $\nu_p\left(q(x)\right)\geq2$.}
        \end{cases}
    \end{equation*}
    \label{dec-diff-div}
\end{lemma}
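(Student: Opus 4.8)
The plan is to prove the identity by establishing the inclusion $\mathcal{R}(x)\le\widetilde{\mathcal{D}}(x)$, where $\mathcal{R}(x)$ denotes the effective Cartier divisor on the right-hand side, then pinning down all multiplicities and the remainder by restricting to $\exc_{\M}$, and finally forcing the remainder to vanish by a locality argument: every irreducible component of $\widetilde{\mathcal{D}}(x)$ must meet $\exc_{\M}$, whereas the remainder will be seen to be disjoint from $\exc_{\M}$. Throughout, write $n\coloneqq\nu_p(q(x))$.

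\textbf{Step 1 (the inclusion).} When $n\in\{0,1\}$ the element $p^{-1}x$ has $\nu_p(q(p^{-1}x))<0$, so $\mathcal{Z}(p^{-1}x)=\varnothing$ and $\widetilde{\mathcal{D}}(x)=\mathcal{Z}(x)-c_n\exc_{\M}$ with $c_n\in\{1,2\}$ by Proposition \ref{difference-decomposition}; Lemma \ref{lie-in-spe-div} places each correspondence occurring in $\mathcal{R}(x)$ inside $\mathcal{Z}(x)$, and since these correspondences are strict transforms of blow-ups they meet $\exc_{\M}$ properly, hence lie in $\widetilde{\mathcal{D}}(x)$. For $n\ge2$ this is exactly Lemma \ref{hec-in-diff}. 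Using Remark \ref{specialcase0-1} to identify the coinciding correspondences in small valuation, $\mathcal{R}(x)$ is in each case a sum of \emph{pairwise distinct} regular (hence integral) divisors, all contained in $\widetilde{\mathcal{D}}(x)$; distinctness follows from Proposition \ref{equation-loc-exc}, which exhibits their traces on $\exc_{\M}$ as distinct members of the list $(v=0),(v'=0),(u=0),(u'=0),(u=\textup{const}\neq0)$. Consequently $\widetilde{\mathcal{D}}(x)=\mathcal{R}(x)+E$, where each summand of $\mathcal{R}(x)$ occurs at least once and $E$ is an effective Cartier divisor none of whose components is one of these correspondences.

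\textbf{Step 2 (restriction to $\exc_{\M}$).} Since $\widetilde{\mathcal{D}}(x)$ and each correspondence meet $\exc_{\M}$ properly, restriction to $\exc_{\M}$ is defined at the level of Cartier divisors, and
\[
\widetilde{\mathcal{D}}(x)\cap\exc_{\M}=\mathcal{R}(x)\cap\exc_{\M}+E\cap\exc_{\M}.
\]
The left side is computed in Proposition \ref{difference-decomposition}, and the traces of the summands of $\mathcal{R}(x)$ in Proposition \ref{equation-loc-exc} (each a reduced $\bP_{\F}^{1}$); matching the nonzero constants exactly as in the proof of Proposition \ref{equation-loc-exc}, one finds that $\mathcal{R}(x)\cap\exc_{\M}$ equals precisely the reduced divisor $\widetilde{\mathcal{D}}(x)\cap\exc_{\M}$ in each of the cases $n=0$, $n=1$, $n\ge2$. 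As both the ``excess multiplicity'' contribution and $E\cap\exc_{\M}$ are effective and their sum is zero, every summand of $\mathcal{R}(x)$ occurs in $\widetilde{\mathcal{D}}(x)$ with multiplicity exactly $1$, and $E\cap\exc_{\M}=0$; thus $E$ is an effective Cartier divisor on $\M$ disjoint from $\exc_{\M}$.

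\textbf{Step 3 (vanishing of $E$).} By construction $\mathcal{D}(x)=\mathcal{Z}(x)-\mathcal{Z}(p^{-1}x)=\pi^{\ast}\bigl(\mathcal{Z}_{\N(x_0)}(x)-\mathcal{Z}_{\N(x_0)}(p^{-1}x)\bigr)$, and Proposition \ref{difference-decomposition} identifies $\widetilde{\mathcal{D}}(x)$ as the part of $\mathcal{D}(x)$ with no component along $\exc_{\M}$; hence $\widetilde{\mathcal{D}}(x)$ is precisely the strict transform under $\pi$ of the effective Cartier divisor $\mathcal{D}_{\N(x_0)}(x)\coloneqq\mathcal{Z}_{\N(x_0)}(x)-\mathcal{Z}_{\N(x_0)}(p^{-1}x)$ on $\N(x_0)$. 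But $\N(x_0)$ is the formal spectrum of a complete local ring with a unique closed point $z_0$ (the center of $\pi$), so every irreducible component of $\mathcal{D}_{\N(x_0)}(x)$ passes through $z_0$; since $\pi$ is proper with $\pi^{-1}(z_0)=\exc_{\M}$, the strict transform of every such component meets $\exc_{\M}$. As every component of $E$ is a component of $\widetilde{\mathcal{D}}(x)$, a nonzero $E$ would have a component meeting $\exc_{\M}$, contradicting $E\cap\exc_{\M}=0$; therefore $E=0$ and $\widetilde{\mathcal{D}}(x)=\mathcal{R}(x)$. The only genuinely new ingredient, and the step I expect to be the crux, is this last one — recognizing $\widetilde{\mathcal{D}}(x)$ as the strict transform of a divisor on a local formal scheme, which is what kills the remainder; Steps 1 and 2 are bookkeeping built directly on Lemmas \ref{lie-in-spe-div}, \ref{hec-in-diff} and Propositions \ref{difference-decomposition}, \ref{equation-loc-exc}, the only mild subtlety being the compatibility of the nonzero constants in the traces on $\exc_{\M}$, already recorded inside the proof of Proposition \ref{equation-loc-exc}.
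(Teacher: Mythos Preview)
Your proof is correct and follows the same three-step strategy as the paper: establish $\mathcal{R}(x)\le\widetilde{\mathcal{D}}(x)$, match the traces on $\exc_{\M}$ via Propositions \ref{difference-decomposition} and \ref{equation-loc-exc}, and force the remainder to vanish. Your Step~1 is in fact more careful than the paper's, which simply cites Lemma \ref{hec-in-diff} (literally stated only for $n\ge2$) for the effectiveness of the remainder; you correctly handle $n\in\{0,1\}$ directly from Lemma \ref{lie-in-spe-div} and the proper intersection with $\exc_{\M}$.

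The one place where your argument is more circuitous than needed is Step~3. You pass through the identification of $\widetilde{\mathcal{D}}(x)$ as a strict transform of $\mathcal{D}_{\N(x_0)}(x)$ and invoke properness of $\pi$ over the local base $\N(x_0)$; note that calling $\mathcal{D}_{\N(x_0)}(x)$ an ``effective Cartier divisor'' on the non-regular $\N(x_0)$ is not justified by the paper, though your argument does not actually require this---only that each component of $E$ has closed image under the proper map $\pi$ and hence hits the unique closed point. The paper's version is shorter: it simply notes that $\exc_{\M}$ is the reduced locus of the formal scheme $\M$, so any nonempty closed formal subscheme of $\M$ meets $\exc_{\M}$, immediately giving $E=\varnothing$.
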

\begin{proof}
    Let $n=\nu_p\left(q(x)\right)$. Denote by $H(x)$ the effective Cartier divisor on the right hand side, i.e.,
    \begin{equation*}
        H(x)=\begin{cases}
           \N^{\textup{I}+}_0(x_0\cdot x), &\textup{if $\nu_p\left(q(x)\right)=0$;}\\
          \N^{\textup{I}+}_0(x_0\cdot x)+\N^{\textup{I}-}_0(x_0\cdot \overline{x})+\N^{\textup{II}+}_0(x), &\textup{if $\nu_p\left(q(x)\right)=1$;}\\
            \N^{\textup{I}+}_0(x_0\cdot x)+\N^{\textup{I}-}_0(x_0\cdot \overline{x})+\N^{\textup{II}+}_0(x)+\N_0^{\textup{II}-}(x^{\prime}), &\textup{if $\nu_p\left(q(x)\right)\geq2$.}
        \end{cases}
    \end{equation*}
    Let $R(x)=\widetilde{\mathcal{D}}(x)-H(x)$. By Proposition \ref{difference-decomposition} and Proposition \ref{equation-loc-exc}, we know that 
    \begin{equation*}
        \widetilde{\mathcal{D}}(x)\cap\exc_{\M}=H(x)\cap\exc_{\M}
    \end{equation*}
    as Cartier divisors on $\exc_{\M}$.
    \par
    If $R(x)\neq\varnothing$, it must be an effective Cartier divisor by Lemma \ref{hec-in-diff}, and intersects with $\exc_{\M}$ properly. The identity $\widetilde{\mathcal{D}}(x)\cap\exc_{\M}=H(x)\cap\exc_{\M}$ as Cartier divisors on $\exc_{\M}$ implies that $R(x)\cap\exc_{\M}=\varnothing$. However, the exceptional divisor is the reduced locus of the formal scheme $\M$. We conclude that $R(x)=\varnothing$. Therefore $\widetilde{\mathcal{D}}(x)=H(x)$.
\end{proof}
\begin{remark}
    We can also consider the difference divisor associated to $\mathcal{Y}$-cycles. Let $x\in\B$ be an element such that $\nu_p(q(x))\geq-1$. Define $\mathcal{D}^{\mathcal{Y}}(x)=\mathcal{Y}(x)-\mathcal{Y}(p^{-1}x)$. By the identity $\mathcal{Y}(x)=(\iota_\M)^{\ast}\mathcal{Z}(x_0\cdot x)$, we have $\mathcal{D}^{\mathcal{Y}}(x)=(\iota_\M)^{\ast}\mathcal{D}(x_0x)$. Define $\widetilde{\mathcal{D}}^{\mathcal{Y}}(x)=(\iota_\M)^{\ast}\widetilde{\mathcal{D}}(x_0\cdot x)$. Then by Lemma \ref{dec-diff-div} and (\ref{local-id-aut-M}), we have
    \begin{equation}
        \widetilde{\mathcal{D}}^{\mathcal{Y}}(x)=\begin{cases}
           \N^{\textup{II}+}_0(px), &\textup{if $\nu_p\left(q(x)\right)=-1$;}\\
          \N^{\textup{I}+}_0(x_0\cdot x)+\N^{\textup{II}+}_0(px)+\N^{\textup{II}-}_0(px^{\prime}), &\textup{if $\nu_p\left(q(x)\right)=0$;}\\
            \N^{\textup{I}+}_0(x_0\cdot x)+\N^{\textup{I}-}_0(x_0\cdot \overline{x})+\N^{\textup{II}+}_0(px)+\N_0^{\textup{II}-}(px^{\prime}), &\textup{if $\nu_p\left(q(x)\right)\geq1$.}
        \end{cases}
        \label{diff-Y}
    \end{equation}
\end{remark}

\subsection{Derived special cycles ${^{\mathbb{L}}\mathcal{Z}(L)}$}
\begin{lemma}
    Let $x,y\in\B$ be two linearly independent elements. Then
    \begin{itemize}
        \item [(a)] The two effective Cartier divisors $\sz(x)$ and $\mathcal{Z}(y)$ intersect properly.
        \item[(b)] The irreducible components of the intersection $\sz(x)\cap\mathcal{Z}(y)$ are of the form
        \begin{equation*}
            \textup{Spf}\,W_s\,\,\,\textup{or}\,\,\,\,\bP_{\F}^{1}\subset\exc_{\M}.
        \end{equation*}
        Here $W_s$ is the ring of definition of a quasi-canonical lifting of level $s$. 
    \end{itemize}   
    \label{proper-with-strict}
\end{lemma}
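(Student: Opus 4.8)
The plan is to reduce both assertions to the hyperspecial case (Lemma~\ref{proper-hpe}), using three ingredients: the decomposition $\mathcal{Z}(x)=(n+1)\cdot\exc_{\M}+\sz(x)$ of Corollary~\ref{spe-decom} (with $n=\nu_p(q(x))$), the chartwise identities $\mathcal{Z}^{+}(x)=\mathcal{Z}_{\N}(x)\times_{\N,p_{+}}\M^{+}$ and $\mathcal{Z}^{-}(x)=\mathcal{Z}_{\N}(x^{\prime})\times_{\N,p_{-}}\M^{-}$ of Lemma~\ref{divisor}, and the finiteness of the forgetful morphism $\N_0(x_0)\to\N_0$. Since $\M$ is regular of pure dimension $3$ and $\sz(x),\mathcal{Z}(y)$ are effective Cartier divisors, part~(a) is equivalent to saying that $\sz(x)$ and $\mathcal{Z}(y)$ share no irreducible component. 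Writing $\mathcal{Z}(y)=(\nu_p(q(y))+1)\cdot\exc_{\M}+\sz(y)$, a common component would be common to $\sz(x)$ and $\exc_{\M}$ — impossible, since $\sz(x)=\sum_{i}\widetilde{\mathcal{D}}(p^{-i}x)$ meets $\exc_{\M}$ properly by Proposition~\ref{difference-decomposition} — or common to $\sz(x)$ and $\sz(y)$. Such a component $Z$ is $2$-dimensional; being distinct from the irreducible surface $\exc_{\M}$, it is not contained in $\exc_{\M}$, so $Z\setminus\exc_{\M}$ is dense in $Z$, and under the isomorphism $\pi$ between $\M\setminus\exc_{\M}$ and the complement of the closed point in $\N(x_0)$ its image has closure a common component of $\mathcal{Z}_{\N(x_0)}(x)$ and $\mathcal{Z}_{\N(x_0)}(y)$. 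Since $\M^{+}\cup\M^{-}=\M$, this component meets $U^{\pm}:=\pi(\M^{\pm}\setminus\exc_{\M})$ for some sign, say $U^{+}$.

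The key observation is that $s_{x_0}\colon\N_0(x_0)\to\N_0$ is finite and flat. By Proposition~\ref{local-gamma0p} and Lemma~\ref{coordinate-change}, the fibre of $s_{x_0}$ over the closed point is cut out in $\rO_{\N_0(x_0)}$ by $p$ together with the image of the maximal ideal of $\rO_{\N_0}\simeq W[[t_0]]$, and this quotient is Artinian; hence $s_{x_0}$ is finite by a complete Nakayama argument, and, $\N_0(x_0)$ being Cohen--Macaulay and $\N_0$ regular of the same dimension, it is flat by the local criterion (miracle flatness). Consequently $s_{+}=s_{x_0}\times_W s_{x_0}\colon\N(x_0)\to\N$ is finite and flat. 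Combining $\mathcal{Z}(\cdot)=\pi^{\ast}\mathcal{Z}_{\N(x_0)}(\cdot)$ with Lemma~\ref{divisor} and $p_{+}=s_{+}\circ\pi|_{\M^{+}}$ gives $\mathcal{Z}_{\N(x_0)}(x)|_{U^{+}}=s_{+}^{\ast}\mathcal{Z}_{\N}(x)|_{U^{+}}$, and likewise for $y$. Hence the common component lies in $s_{+}^{-1}\!\big(\mathcal{Z}_{\N}(x)\cap\mathcal{Z}_{\N}(y)\big)$, so its image under the finite map $s_{+}$ lies in $\mathcal{Z}_{\N}(x)\cap\mathcal{Z}_{\N}(y)$, which has dimension $\le 1$ by Lemma~\ref{proper-hpe}; finiteness of $s_{+}$ then forces the component to have dimension $\le 1$, contradicting that it is a divisor in the $3$-dimensional $\N(x_0)$. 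This proves~(a).

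For part~(b), let $Z$ be an irreducible component of $\sz(x)\cap\mathcal{Z}(y)$. If $Z\subset\exc_{\M}$ then $Z\subset\sz(x)\cap\exc_{\M}$, which by Proposition~\ref{difference-decomposition} and \eqref{diff-exc} is a sum of divisors on $\exc_{\M}\simeq\bP_{\F}^{1}\times\bP_{\F}^{1}$ of the shape $(v=c)$, $(v^{\prime}=c)$, $(u=c)$, $(u^{\prime}=c)$, each a copy of $\bP_{\F}^{1}$; hence $Z\simeq\bP_{\F}^{1}\subset\exc_{\M}$. If $Z\not\subset\exc_{\M}$, then as in~(a) it is the strict transform under $\pi$ of a component of $\mathcal{Z}_{\N(x_0)}(x)\cap\mathcal{Z}_{\N(x_0)}(y)$; over $U^{+}$, say, the latter equals $s_{+}^{\ast}\big(\mathcal{Z}_{\N}(x)\cap\mathcal{Z}_{\N}(y)\big)=\bigsqcup_i s_{+}^{\ast}(\textup{Spf}\,W_{s_i})$ by Lemma~\ref{proper-hpe} and Lemma~\ref{divisor}. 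Since each $\textup{Spf}\,W_{s_i}$ is finite and flat over $W$ with $0$-dimensional fibres (a quasi-canonical lifting) and $s_{+}$ is finite and flat, each $s_{+}^{\ast}(\textup{Spf}\,W_{s_i})$ is again finite and flat over $W$ with $0$-dimensional fibres, so each of its irreducible components is the formal spectrum of a complete local ring that is finite and flat over $W$ with reduced special fibre a point, i.e. $\textup{Spf}\,W_s$ for the ring of definition $W_s$ of a quasi-canonical lifting (the required CM structure being automatic, since these components parametrize $\Gamma_0(p)$-level structures on CM deformations of $\X$). The strict transform under $\pi$ of such a regular $1$-dimensional $\textup{Spf}\,W_s$ is again isomorphic to $\textup{Spf}\,W_s$. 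This completes~(b).

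The main obstacle is proving that $s_{x_0}\colon\N_0(x_0)\to\N_0$ is finite and flat, which has to be extracted from the explicit Katz--Mazur/Dieudonné presentations in Proposition~\ref{local-gamma0p} and Lemma~\ref{coordinate-change}; once that is in place, everything else follows formally from Lemma~\ref{proper-hpe}, Lemma~\ref{divisor}, Corollary~\ref{spe-decom} and Proposition~\ref{difference-decomposition}. A secondary point requiring care is the last step of~(b): that a finite flat pullback along $s_{+}$ of a quasi-canonical lifting decomposes into quasi-canonical liftings, rather than into some other $W$-flat $1$-dimensional formal schemes — this rests on the CM/quasi-canonical lifting theory for Lubin--Tate formal groups at $\Gamma_0(p)$-level.
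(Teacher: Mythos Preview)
Your argument for~(a) is correct but follows a different route from the paper. The paper invokes Lemma~\ref{dec-diff-div} to write $\sz(x)$ as a sum of the local Hecke correspondences $\N_0^{\textup{I}\pm}(\cdot)$ and $\N_0^{\textup{II}\pm}(\cdot)$; each of these is the strict transform of a cyclic deformation space $\N_0(z)$, which sits inside $\N$ as a \emph{closed subscheme} via $\st_z$. The intersection $\N_0^{\textup{I}+}(x_0\cdot x)\cap\mathcal{Z}(y)$ is then contained in the blow-up pullback of $\N_0(x_0\cdot x)\cap\mathcal{Z}_{\N}(x_0\cdot y)\subset\mathcal{Z}_{\N}(x_0\cdot x)\cap\mathcal{Z}_{\N}(x_0\cdot y)$, to which Lemma~\ref{proper-hpe} applies directly. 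You instead bypass Lemma~\ref{dec-diff-div} entirely, establishing that $s_{+}\colon\N(x_0)\to\N$ is finite flat and using the chartwise identity of Lemma~\ref{divisor} to bound dimensions. Both routes reduce~(a) to Lemma~\ref{proper-hpe}, and yours has the mild advantage of not presupposing the full decomposition of the difference divisor.

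For~(b), however, the paper's route is cleaner and yours leaves a genuine gap. Because the paper works with components already embedded in $\N$, the horizontal pieces of the intersection are literally contained in an intersection of two $\mathcal{Z}_{\N}$-divisors, so Lemma~\ref{proper-hpe} immediately gives the form $\textup{Spf}\,W_s$, and Lemma~\ref{blow-up-N0(x)}(c) shows strict transforms preserve this. In your approach the horizontal components are instead irreducible components of $s_{+}^{-1}(\textup{Spf}\,W_{s_i})$. Finite flatness of $s_{+}$ guarantees only that this preimage is a finite flat $W_{s_i}$-algebra; its reduced irreducible components are one-dimensional local domains finite over $W$, but nothing forces them to be regular (a non-maximal order in a ramified extension of $\textup{Frac}(W)$ is finite flat over $W$ with zero-dimensional fibres yet is not a DVR). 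Your sentence ``the required CM structure being automatic'' is the right moduli-theoretic intuition, but turning it into a proof requires exactly the classification of $\Gamma_0(p)$-level structures on quasi-canonical liftings that you defer at the end. The simplest way to close this gap is precisely the paper's: use Lemma~\ref{dec-diff-div} to replace the finite map $s_{+}$ by the closed immersions $\st_z$, so that Lemma~\ref{proper-hpe} applies without any preimage analysis.
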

\begin{proof}
    By Lemma \ref{dec-diff-div}, an irreducible component of the divisor $\sz(x)$ are of the form $\N^{\textup{I}+}_0(x_0\cdot \Tilde{x}), \N^{\textup{I}-}_0(x_0\cdot\overline{\Tilde{x}}), \N^{\textup{II}+}_0(\Tilde{x}), \N^{\textup{II}-}_0(\Tilde{x}^{\prime})$ where $\Tilde{x}=p^{-i}x$ for some positive integer $i$. It's sufficient to prove (a) and (b) for the intersections of the divisors $\N^{\textup{I}+}_0(x_0\cdot \Tilde{x}), \N^{\textup{I}-}_0(x_0\cdot\overline{\Tilde{x}}), \N^{\textup{II}+}_0(\Tilde{x}), \N^{\textup{II}-}_0(\Tilde{x}^{\prime})$ and the divisor $\mathcal{Z}(y)$. Without loss of generality, we can assume $\Tilde{x}=x$.
    \par
    We first consider the intersection $\N^{\textup{I}+}_0(x_0\cdot x)\cap\mathcal{Z}(y)$. Denote by $\pi^{\textup{I}+}:\N^{\textup{I}+}_0(x_0\cdot x)\rightarrow\N_0(x_0\cdot x)$ the blow up morphism, then
    \begin{align*}
        \N^{\textup{I}+}_0(x_0\cdot x)\cap\mathcal{Z}(y)&\subset\left(\N_0(x_0\cdot x)\cap\mathcal{Z}_{\N}(x_0\cdot y)\right)\times_{\N_0(x_0\cdot x),\pi^{\textup{I}+}}\N^{\textup{I}+}_0(x_0\cdot x)\\
        &\subset\left(\mathcal{Z}_{\N}(x_0\cdot x)\cap\mathcal{Z}_{\N}(x_0\cdot y)\right)\times_{\N_0(x_0\cdot x),\pi^{\textup{I}+}}\N^{\textup{I}+}_0(x_0\cdot x).
    \end{align*}
    By Lemma \ref{proper-hpe}, we know that $\textup{dim}\,\mathcal{Z}_{\N}(x_0\cdot x)\cap\mathcal{Z}_{\N}(x_0\cdot y)=1$, hence $\textup{dim}\,\N^{\textup{I}+}_0(x_0\cdot x)\cap\mathcal{Z}(y)\leq\textup{dim}\,\left(\mathcal{Z}_{\N}(x_0\cdot x)\cap\mathcal{Z}_{\N}(x_0\cdot y)\right)\times_{\N_0(x_0\cdot x)}\N^{\textup{I}+}_0(x_0\cdot x)=1$. Therefore $\N^{\textup{I}+}_0(x_0\cdot x)$ and $\mathcal{Z}(y)$ intersect properly. Part (b) follows from Lemma \ref{proper-hpe} and Lemma \ref{blow-up-N0(x)}. The proof for the other intersections are similar so we omit it.    
\end{proof}
For a lattice $M\subset\B$, define $\min(M)=\min\limits_{x\in M}\left\{\nu_p(q(x))\right\}$. We say an element $x\in M$ is a minimal element of $M$ if $\nu_p(q(x))=\min(M)$. It's easy to see that if $x$ is a minimal element of $M$, we have $p^{-1}x\notin M$.
\begin{proposition}
    Let $M\subset\B$ be a $\zp$-lattice of rank $2$. Let $x\in M$ be a minimal element of $M$. Let $y\in M$ be another element such that $\{x,y\}$ is a $\zp$-basis of $M$. The element $[\rO_{\widetilde{\mathcal{Z}}(x)}\otimes^{\mathbb{L}}_{\rO_{\M}}\rO_{\mathcal{Z}(y)}]\in\textup{Gr}^{2}K_0^{\mathcal{Z}(M)}(\M)$ is independent of the choices of the minimal element $x$ and the element $y$.
\end{proposition}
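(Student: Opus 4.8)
The plan is to reduce the asserted independence, by means of the decomposition $\mathcal{Z}(x)=(n+1)\exc_{\M}+\widetilde{\mathcal{Z}}(x)$ of Corollary~\ref{spe-decom}, to a symmetric statement about full special divisors, and then to combine the commutativity of the cup product with an elementary argument on binary $\zp$-lattices. Write $P(x,y)\coloneqq[\rO_{\widetilde{\mathcal{Z}}(x)}\otimes^{\mathbb{L}}_{\rO_{\M}}\rO_{\mathcal{Z}(y)}]$ and $n(z)\coloneqq\nu_p(q(z))$. First I would fix the minimal element $x$ and show $P(x,y)$ is independent of the complement $y$. If $\{x,y\}$ and $\{x,y'\}$ are both $\zp$-bases of $M$ then $y'=bx+uy$ with $b\in\zp$, $u\in\zp^{\times}$. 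Since $\widetilde{\mathcal{Z}}(x)$ is a component of $\mathcal{Z}(x)$ (Corollary~\ref{spe-decom}), hence a closed subscheme of it, both $x$ and its conjugate $x'=x_0xx_0^{-1}$ deform over every $\widetilde{\mathcal{Z}}(x)$-scheme; therefore, over such a scheme, the condition ``$bx+uy$ and $(bx+uy)'$ deform'' is equivalent to ``$y$ and $y'$ deform'', so the closed subschemes $\widetilde{\mathcal{Z}}(x)\cap\mathcal{Z}(y')$ and $\widetilde{\mathcal{Z}}(x)\cap\mathcal{Z}(y)$ of $\widetilde{\mathcal{Z}}(x)$ coincide. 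By Lemma~\ref{proper-with-strict} these intersections are proper and $\widetilde{\mathcal{Z}}(x)$ is Cohen--Macaulay of pure dimension $2$, so each of the two derived tensor products is quasi-isomorphic to the structure sheaf of the common subscheme; thus $P(x,y')=P(x,y)$. Writing $P(x)$ for this common value, the same argument (using $\mathcal{Z}(ux)=\mathcal{Z}(x)$, hence $\widetilde{\mathcal{Z}}(ux)=\widetilde{\mathcal{Z}}(x)$) gives $P(ux)=P(x)$ for $u\in\zp^{\times}$.

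Next I would pass to full divisors. Applying Lemma~\ref{linearization} to $\mathcal{Z}(x)=(n(x)+1)\exc_{\M}+\widetilde{\mathcal{Z}}(x)$ gives $[\rO_{\mathcal{Z}(x)}]=(n(x)+1)[\rO_{\exc_{\M}}]+[\rO_{\widetilde{\mathcal{Z}}(x)}]$ in $\textup{Gr}^{1}K_0(\M)$; cupping with $[\rO_{\mathcal{Z}(y)}]$ and invoking $(\ref{spe-exc})$ yields
\begin{equation*}
[\rO_{\mathcal{Z}(x)}]\cdot[\rO_{\mathcal{Z}(y)}]=(n(x)+1)\,\rO(0,-1)+P(x)\qquad\text{in }\textup{Gr}^{2}K_0^{\mathcal{Z}(M)}(\M).
\end{equation*}
The left-hand side is symmetric in $x$ and $y$, being the derived tensor product $[\rO_{\mathcal{Z}(x)}\otimes^{\mathbb{L}}_{\rO_{\M}}\rO_{\mathcal{Z}(y)}]$, so it equals $(n(y)+1)\rO(0,-1)+P(y)$ as well. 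Subtracting, $P(x)-P(y)=(n(y)-n(x))\,\rO(0,-1)$; in particular, if $x$ and $y$ are both minimal elements of $M$ and $\{x,y\}$ is a $\zp$-basis of $M$, then $n(x)=n(y)=\min(M)$ and $P(x)=P(y)$.

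It then remains to connect two arbitrary minimal elements $x,x'$ of $M$ by a chain of minimal elements in which consecutive members either span $M$ over $\zp$ or differ by a unit; together with the two steps above this completes the proof. This last point is elementary: after rescaling $q$ we may assume $\min(M)=0$, and a short case analysis on the Jordan decomposition of $M$ (namely $M$ self-dual, or $M\simeq\langle e\rangle\obot\langle p^{t}e'\rangle$ with $q(e)\in\zp^{\times}$) shows that, modulo $\zp^{\times}$, the minimal elements either exhaust $\mathbb{P}(M/pM)$ or any two of their directions can be joined through a third minimal direction, using that $p$ is odd so $\mathbb{P}^{1}(\mathbb{F}_p)$ has at least three points. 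The main difficulty lies in the first step: one must establish the geometric cancellation law that $\mathcal{Z}(z)\cap\widetilde{\mathcal{Z}}(x)$ is unchanged when $z$ is replaced by $bx+uz$ with $b\in\zp$, $u\in\zp^{\times}$. This rests on the fact that $x$ and $x'$ genuinely deform over $\widetilde{\mathcal{Z}}(x)$ --- which follows from $\widetilde{\mathcal{Z}}(x)\subset\mathcal{Z}(x)$, or, if one prefers to argue component by component, from the decomposition of $\widetilde{\mathcal{D}}(p^{-i}x)$ in Lemma~\ref{dec-diff-div} together with the deformation statements of Lemma~\ref{moduli-over-hec} --- and on Lemma~\ref{proper-with-strict}, which guarantees the relevant intersections are proper so that the scheme-theoretic equality passes to a quasi-isomorphism of the derived tensor products, hence to equality of classes in $\textup{Gr}^{2}K_0$. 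Steps two and three are then, respectively, a formal computation in the graded $K$-group and an elementary computation with binary lattices.
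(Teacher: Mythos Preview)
Your proof is correct and uses the same core ingredients as the paper's: the cancellation $\widetilde{\mathcal{Z}}(x)\cap\mathcal{Z}(y')=\widetilde{\mathcal{Z}}(x)\cap\mathcal{Z}(y)$ for $y'-y\in\zp x$ (the paper's Case~1), the decomposition $\mathcal{Z}(x)=(n+1)\exc_{\M}+\widetilde{\mathcal{Z}}(x)$ from Corollary~\ref{spe-decom}, and the identity $[\rO_{\exc_{\M}}\otimes^{\mathbb{L}}\rO_{\mathcal{Z}(y)}]=\rO(0,-1)$ from~(\ref{spe-exc}) combined with the commutativity of the cup product. The paper organizes the reduction differently: after Case~1 it proves a ``Case~2'' ($x'=x+ay$, $y'=y$) by expanding both $\mathcal{Z}(x')$ and $\mathcal{Z}(y)$ and passing through $\widetilde{\mathcal{Z}}(y)$ even when $y$ is not minimal, and then factors an arbitrary change of basis into Cases~1 and~2 (with a direct symmetry computation in the sub-case $\nu_p(a)\geq1$, which is exactly your step~2). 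Your route replaces the Case~2 computation by the connectivity of minimal elements in $M/pM$ (step~3), which is a clean and slightly more economical alternative; the paper's Case~2, on the other hand, yields the explicit formula $P(x,y)=[\rO_{\widetilde{\mathcal{Z}}(y)\cap\mathcal{Z}(x)}]+\rO(0,n(x)-n(y))$ valid without assuming $y$ minimal, which is a bit more information than you need here.
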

\begin{proof}
    Let $n=\min(M)$. If $n<0$, then the element $\rO_{\widetilde{\mathcal{Z}}(x)}\otimes^{\mathbb{L}}_{\rO_{\M}}\rO_{\mathcal{Z}(y)}=0$ in the group $K_0(\M)$ for all choices of minimal elements $x\in M$ and $y\in M$ such that $\{x,y\}$ is a basis of $M$. Therefore we only need to consider the case $n\geq0$.
    \par
    Let $x^{\prime}\in M$ be another minimal element of $M$, i.e., $\nu_p(q(x^{\prime}))=n$. Let $y^{\prime}\in M$ be another element such that $\{x^{\prime},y^{\prime}\}$ is a basis of $M$. Then there exist $a,b,c,d\in\zp$ such that
    \begin{equation*}
        x^{\prime}=ax+by,\,\,\,\,y^{\prime}=cx+dy,\,\,\,\,ad-bc\in\zp^{\times}.
    \end{equation*}
    We first consider some special cases. 
    \begin{itemize}
        \item [Case 1]: $x^{\prime}=x$, $y^{\prime}=cx+y$. Then by Proposition \ref{proper-with-strict}, we have
        \begin{equation*}
            \rO_{\widetilde{\mathcal{Z}}(x^{\prime})}\otimes^{\mathbb{L}}_{\rO_{\M}}\rO_{\mathcal{Z}(y^{\prime})}=\rO_{\widetilde{\mathcal{Z}}(x)}\otimes_{\rO_{\M}}\rO_{\mathcal{Z}(y^{\prime})}=\rO_{\widetilde{\mathcal{Z}}(x)\cap\mathcal{Z}(y^{\prime})}.
        \end{equation*}
        By the moduli interpretations of the special cycles, we have $\widetilde{\mathcal{Z}}(x)\cap\mathcal{Z}(y^{\prime})=\widetilde{\mathcal{Z}}(x)\cap\mathcal{Z}(y)$. Therefore
        \begin{equation*}
            \rO_{\widetilde{\mathcal{Z}}(x^{\prime})}\otimes^{\mathbb{L}}_{\rO_{\M}}\rO_{\mathcal{Z}(y^{\prime})}=\rO_{\widetilde{\mathcal{Z}}(x)}\otimes^{\mathbb{L}}_{\rO_{\M}}\rO_{\mathcal{Z}(y)}.
        \end{equation*}
        \item [Case 2]: $x^{\prime}=x+ay$, $y^{\prime}=y$. Let $m=\nu_p(q(y))$. We have:
        \begin{align*}
            &[\rO_{\widetilde{\mathcal{Z}}(x^{\prime})}\otimes^{\mathbb{L}}_{\rO_{\M}}\rO_{\mathcal{Z}(y^{\prime})}]=[\rO_{\widetilde{\mathcal{Z}}(x^{\prime})}\otimes^{\mathbb{L}}_{\rO_{\M}}\rO_{\mathcal{Z}(y)}]=[\rO_{\widetilde{\mathcal{Z}}(x^{\prime})}\otimes^{\mathbb{L}}_{\rO_{\M}}\rO_{\widetilde{\mathcal{Z}}(y)}]+(m+1)\cdot[\rO_{\widetilde{\mathcal{Z}}(x^{\prime})}\otimes^{\mathbb{L}}_{\rO_{\M}}\mathcal{O}_{\exc_\M}]\\
            =&[\rO_{\mathcal{Z}(x^{\prime})}\otimes^{\mathbb{L}}_{\rO_{\M}}\rO_{\widetilde{\mathcal{Z}}(y)}]-(n+1)\cdot[\mathcal{O}_{\exc_\M}\otimes^{\mathbb{L}}_{\rO_{\M}}\rO_{\widetilde{\mathcal{Z}}(y)}]+(m+1)\cdot[\rO(n+1,n)]\\
            =&[\rO_{\mathcal{Z}(x^{\prime})\cap\widetilde{\mathcal{Z}}(y)}]-(n+1)\cdot[\rO(m+1,m)]+(m+1)\cdot[\rO(n+1,n)]=[\rO_{\mathcal{Z}(x)\cap\widetilde{\mathcal{Z}}(y)}]+[\rO(0,n-m)].
        \end{align*}
        On the other hand, similar computations also apply to $[\rO_{\widetilde{\mathcal{Z}}(x)}\otimes^{\mathbb{L}}_{\rO_{\M}}\rO_{\mathcal{Z}(y)}]$ just by replacing $x^{\prime}$ by $x$ in the above computations. We conclude that
        \begin{equation*}
            [\rO_{\widetilde{\mathcal{Z}}(x^{\prime})}\otimes^{\mathbb{L}}_{\rO_{\M}}\rO_{\mathcal{Z}(y^{\prime})}]=[\rO_{\widetilde{\mathcal{Z}}(x)}\otimes^{\mathbb{L}}_{\rO_{\M}}\rO_{\mathcal{Z}(y)}]=[\rO_{\mathcal{Z}(x)\cap\widetilde{\mathcal{Z}}(y)}]+[\rO(0,n-m)]\,\,\,\textup{in}\,\,\textup{Gr}^{2}K_0^{\mathcal{Z}(M)}(\M).
        \end{equation*}
    \end{itemize}
    \par
    Now we come back to prove the proposition. There are two situations.
    \begin{itemize}
        \item If $a\in\zp^{\times}$. Scaling $x^{\prime}$ by $a^{-1}\in\zp^{\times}$, we can assume $a=1$. Then $y^{\prime}=cx^{\prime}+(d-bc)y$. Scaling $y^{\prime}$ by $(d-bc)^{-1}\in\zp^{\times}$, we can assume $d-bc=1$. Then $x^{\prime}=x+by, y^{\prime}=cx^{\prime}+y$. Therefore
        \begin{equation*}
            [\rO_{\widetilde{\mathcal{Z}}(x^{\prime})}\otimes^{\mathbb{L}}_{\rO_{\M}}\rO_{\mathcal{Z}(y^{\prime})}]\stackrel{\textup{Case 1}}=[\rO_{\widetilde{\mathcal{Z}}(x^{\prime})}\otimes^{\mathbb{L}}_{\rO_{\M}}\rO_{\mathcal{Z}(y)}]\stackrel{\textup{Case 2}}=[\rO_{\widetilde{\mathcal{Z}}(x)}\otimes^{\mathbb{L}}_{\rO_{\M}}\rO_{\mathcal{Z}(y)}].
        \end{equation*}
        \item If $\nu_p(a)\geq1$. Then $b,c\in\zp^{\times}$. Scaling $x^{\prime}$ by $b^{-1}$, we can assume $b=1$. Then $y^{\prime}=dx^{\prime}+(c-ad)x$. Scaling $y^{\prime}$ by $(c-ad)^{-1}$, we can assume $c-ad=1$. Then $x^{\prime}=ax+y, y^{\prime}=x+dx^{\prime}$. Therefore
        \begin{align*}
            &[\rO_{\widetilde{\mathcal{Z}}(x^{\prime})}\otimes^{\mathbb{L}}_{\rO_{\M}}\rO_{\mathcal{Z}(y^{\prime})}]\stackrel{\textup{Case 1}}=[\rO_{\widetilde{\mathcal{Z}}(x^{\prime})}\otimes^{\mathbb{L}}_{\rO_{\M}}\rO_{\mathcal{Z}(x)}]=\left([\rO_{\mathcal{Z}(x^{\prime})}]-(n+1)[\rO_{\exc_{\M}}]\right)\cdot[\rO_{\mathcal{Z}(x)}]\\
            &\stackrel{(\ref{spe-exc})}=[\rO_{\mathcal{Z}(x^{\prime})}]\cdot[\rO_{\mathcal{Z}(x)}]-(n+1)\rO(0,-1)=\left([\rO_{\widetilde{\mathcal{Z}}(x)}]+(n+1)[\rO_{\exc_{\M}}]\right)\cdot[\rO_{\mathcal{Z}(x^{\prime})}]-\rO(0,-n-1)\\
            &\stackrel{(\ref{spe-exc})}=[\rO_{\widetilde{\mathcal{Z}}(x)}]\cdot[\rO_{\mathcal{Z}(x^{\prime})}]=[\rO_{\widetilde{\mathcal{Z}}(x)}\otimes^{\mathbb{L}}_{\rO_{\M}}\rO_{\mathcal{Z}(x^{\prime})}]\stackrel{\textup{Case 1}}=[\rO_{\widetilde{\mathcal{Z}}(x)}\otimes^{\mathbb{L}}_{\rO_{\M}}\rO_{\mathcal{Z}(y)}].
        \end{align*}
    \end{itemize}
\end{proof}
\begin{corollary}
    Let $M\subset\B$ be a $\zp$-lattice of rank $2$. Let $\{x,y\}$ be a basis of $M$, then the elements $[\rO_{\mathcal{Z}(x)}\otimes^{\mathbb{L}}_{\rO_{\M}}\rO_{\mathcal{Z}(y)}]\in\textup{Gr}^{2}K_0^{\mathcal{Z}(M)}(\M)$ and $[\rO_{\mathcal{Y}(x)}\otimes^{\mathbb{L}}_{\rO_{\M}}\rO_{\mathcal{Y}(y)}]\in\textup{Gr}^{2}K_0^{\mathcal{Y}(M)}(\M)$ only depend on the $\zp$-lattice $M$.
    \label{linear-invariance-2}
\end{corollary}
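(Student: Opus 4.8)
The plan is to first reduce the assertion for $\mathcal{Y}$-cycles to the one for $\mathcal{Z}$-cycles, and then to prove the $\mathcal{Z}$-statement by checking that the class $V(x,y):=[\rO_{\mathcal{Z}(x)}\otimes^{\mathbb{L}}_{\rO_{\M}}\rO_{\mathcal{Z}(y)}]$ is invariant under the elementary changes of basis that generate $\textup{GL}_2(\zp)$. For the reduction, recall from Definition~\ref{special-cycle-on-M} that $\mathcal{Y}(x)=(\iota^{\M})^{\ast}\mathcal{Z}(x_0\cdot x)$, so $[\rO_{\mathcal{Y}(x)}\otimes^{\mathbb{L}}_{\rO_{\M}}\rO_{\mathcal{Y}(y)}]=(\iota^{\M})^{\ast}[\rO_{\mathcal{Z}(x_0 x)}\otimes^{\mathbb{L}}_{\rO_{\M}}\rO_{\mathcal{Z}(x_0 y)}]$; since left multiplication by $x_0$ is a $\zp$-linear bijection of $\B$ carrying a basis of $M$ to a basis of $x_0\cdot M$, and $(\iota^{\M})^{\ast}$ is an isomorphism of filtered $K$-groups sending $\textup{Gr}^2 K_0^{\mathcal{Z}(x_0\cdot M)}(\M)$ onto $\textup{Gr}^2 K_0^{\mathcal{Y}(M)}(\M)$, the $\mathcal{Y}$-invariance for $M$ follows from the $\mathcal{Z}$-invariance for $x_0\cdot M$.

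For the $\mathcal{Z}$-case we may assume $\min(M)\geq 0$: otherwise $M$ contains an element $z$ with $\nu_p(q(z))<0$, the associated quasi-isogeny $z$ is nowhere an isogeny, and since over $\mathcal{Z}(x)\cap\mathcal{Z}(y)$ the element $z=ax+by$ would deform to a homomorphism, we get $\mathcal{Z}(M)=\mathcal{Z}(x)\cap\mathcal{Z}(y)=\varnothing$ and the ambient group vanishes. Assume then that $q(z)\in\zp$ for every $z\in M$. The symbol $V(x,y)$ is visibly symmetric in $x,y$, and $V(ux,y)=V(x,y)$ for $u\in\zp^{\times}$ because $\mathcal{Z}(ux)=\mathcal{Z}(x)$ (the central unit $u$ scales the defining equations by a unit). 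The crucial point is to prove $V(x,y+cx)=V(x,y)$ for all $c\in\zp$. Applying Corollary~\ref{spe-decom}(a) and Lemma~\ref{linearization} we may write, in $\textup{Gr}^1 K_0(\M)$, $[\rO_{\mathcal{Z}(x)}]=(\nu_p(q(x))+1)[\rO_{\exc_{\M}}]+[\rO_{\sz(x)}]$; cupping with $[\rO_{\mathcal{Z}(y)}]$, using the proper intersection of $\sz(x)$ with $\mathcal{Z}(y)$ from Lemma~\ref{proper-with-strict} and the identity $(\ref{spe-exc})$, gives
\begin{equation*}
V(x,y)=[\rO_{\sz(x)\cap\mathcal{Z}(y)}]+(\nu_p(q(x))+1)\cdot\rO(0,-1)\quad\text{in }\textup{Gr}^2 K_0^{\mathcal{Z}(M)}(\M),
\end{equation*}
and the same identity with $y$ replaced by $y+cx$. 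Now $\sz(x)$ is a closed subscheme of $\mathcal{Z}(x)$, so over $\sz(x)$ both quasi-isogenies $x$ and $x^{\prime}$ already deform to homomorphisms; by the additivity of the deformation obstruction in the quasi-isogeny, the loci where $y$ and where $y+cx$ deform, restricted to $\sz(x)$, define the same closed subscheme, i.e.\ $\sz(x)\cap\mathcal{Z}(y+cx)=\sz(x)\cap\mathcal{Z}(y)$. Hence $V(x,y+cx)=V(x,y)$, and by symmetry $V(x+cy,y)=V(x,y)$ as well.

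Since $\textup{GL}_2(\zp)$ is generated by the elementary matrices and the diagonal (scaling) matrices, the three moves above show that $V(x,y)$ takes the same value on every $\zp$-basis $\{x,y\}$ of $M$, so it depends only on $M$; by the reduction in the first paragraph, the same holds for the $\mathcal{Y}$-class. Equivalently, one can use these moves to pass to a basis whose first member is a minimal element of $M$ and then invoke the preceding Proposition together with Corollary~\ref{spe-decom}. The one delicate ingredient is the scheme-theoretic equality $\sz(x)\cap\mathcal{Z}(y+cx)=\sz(x)\cap\mathcal{Z}(y)$: it rests on the linearity of the condition ``the quasi-isogeny deforms to a homomorphism'' together with the fact that $cx$ already deforms over $\sz(x)$, a standard feature of Kudla--Rapoport special divisors that I would spell out at the level of Dieudonn\'e crystals exactly as in the proof of Lemma~\ref{lem: divisor}.
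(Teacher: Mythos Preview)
Your proof is correct and uses the same core ingredients as the paper: the decomposition $\mathcal{Z}(x)=(n+1)\,\exc_{\M}+\widetilde{\mathcal{Z}}(x)$ from Corollary~\ref{spe-decom}, the identity~(\ref{spe-exc}), the proper-intersection statement of Lemma~\ref{proper-with-strict}, and the scheme-theoretic equality $\widetilde{\mathcal{Z}}(x)\cap\mathcal{Z}(y+cx)=\widetilde{\mathcal{Z}}(x)\cap\mathcal{Z}(y)$. The difference is organisational. The paper first proves the preceding Proposition --- that $[\rO_{\widetilde{\mathcal{Z}}(x)}\otimes^{\mathbb{L}}_{\rO_{\M}}\rO_{\mathcal{Z}(y)}]$ is independent of the choice of a \emph{minimal} $x$ and of $y$ --- which requires the more involved Case~2 computation, and then combines this with Lemma~\ref{inner-product-B} (anisotropy of $\B$ forces a basis element of least valuation to be minimal) to deduce the Corollary. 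You instead argue directly on generators of $\textup{GL}_2(\zp)$ and use the commutativity $V(x,y)=V(y,x)$ to obtain the second shear $V(x+cy,y)=V(x,y)$ from the first; this sidesteps both Case~2 of the Proposition and Lemma~\ref{inner-product-B}, and is a little more economical for the Corollary taken in isolation. Your reduction of the $\mathcal{Y}$-statement to the $\mathcal{Z}$-statement via $\iota^{\M}$ is the same as the paper's.
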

\begin{proof}
    Without loss of generality, we assume that $0\leq\nu_p(q(x))\leq\nu_p(q(y))$. For an arbitrary element $x^{\prime}\in M$, we have $x^{\prime}=ax+by$ for some elements $a,b\in\zp$. Then $q(x^{\prime})=a^{2}q(x)+ab(x,y)+b^{2}q(y)$. Lemma \ref{inner-product-B} implies that $\nu_p((x,y))\geq\nu_p(q(x))$, therefore $\nu_p(q(x^{\prime}))\geq\nu_p(q(x))$. Hence $x$ is a minimal element of $M$. Therefore the element $[\rO_{\widetilde{\mathcal{Z}}(x)}\otimes^{\mathbb{L}}_{\rO_{\M}}\rO_{\mathcal{Z}(y)}]$ only depends on $M$. Let $n=\nu_p(q(x))=\min(M)$. By (\ref{spe-exc}) and Lemma \ref{linearization}, we have
    \begin{align*}
        [\rO_{\mathcal{Z}(x)}\otimes^{\mathbb{L}}_{\rO_{\M}}\rO_{\mathcal{Z}(y)}]&=\left([\rO_{\widetilde{\mathcal{Z}}(x)}]+(n+1)[\rO_{\exc_{\M}}]\right)\cdot[\rO_{\mathcal{Z}(y)}]\\&=[\rO_{\widetilde{\mathcal{Z}}(x)}\otimes^{\mathbb{L}}_{\rO_{\M}}\rO_{\mathcal{Z}(y)}]+\rO(0,-n-1).
    \end{align*}
    Therefore the element $[\rO_{\mathcal{Z}(x)}\otimes^{\mathbb{L}}_{\rO_{\M}}\rO_{\mathcal{Z}(y)}]\in\textup{Gr}^{2}K_0^{\mathcal{Z}(M)}(\M)$ only depends on the $\zp$-lattice $M$.
    \par
    Notice that we have $\mathcal{Y}(x)=\left(\iota^{\M}\right)^{\ast}(\mathcal{Z}(x_0\cdot x))$ for all elements $x\in\B$. Hence $[\rO_{\mathcal{Y}(x)}\otimes^{\mathbb{L}}_{\rO_{\M}}\rO_{\mathcal{Y}(y)}]=\left(\iota^{\M}\right)^{\ast}[\rO_{\mathcal{Z}(x_0\cdot x)}\otimes^{\mathbb{L}}_{\rO_{\M}}\rO_{\mathcal{Z}(x_0\cdot y)}]$ also depends on $M$ only.
\end{proof}
\begin{lemma}
    Let $x,y\in\B$ be two elements. Then
    \begin{equation*}
        \nu_p((x,y))\geq\min\{q(x),q(y)\}.
    \end{equation*}
    \label{inner-product-B}
\end{lemma}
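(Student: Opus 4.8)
The plan is to reinterpret $(x,y)$ as a reduced trace and then exploit the norm valuation on $\B$. By definition $(x,y)=q(x+y)-q(x)-q(y)=x\bar y+y\bar x$, and since $y\bar x=\overline{x\bar y}$ this is $\mathrm{Trd}(x\bar y)$, the reduced trace of $u\coloneqq x\bar y$; in particular $(x,y)\in\qp$. Since $\B$ is the division quaternion algebra over $\qp$, the function $w\colon\B\to\Z\cup\{\infty\}$ given by $w(z)=\nu_p(q(z))=\nu_p(\mathrm{Nrd}(z))$ is, up to the normalization $w(\Pi)=1$ for a uniformizer $\Pi$ of $\B$, the unique valuation extending $\nu_p$ on the center: it is multiplicative and ultrametric, the sets $\{z:w(z)\ge n\}$ being the powers of the maximal ideal of $\mathcal{O}_\B$. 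Note that $w$ restricts to $2\nu_p$ on $\qp$, because $w(p)=w(\Pi^2)=2$.

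The key steps are then short. First I would use that the main involution preserves reduced norms, so $q(\bar y)=q(y)$ and hence $w(\bar y)=w(y)$; combined with multiplicativity this gives $w(x\bar y)=w(y\bar x)=w(x)+w(y)$. Applying the ultrametric inequality to $(x,y)=x\bar y+y\bar x$ gives $w\big((x,y)\big)\ge w(x)+w(y)=\nu_p(q(x))+\nu_p(q(y))$. If $(x,y)=0$ the assertion is trivial; otherwise $(x,y)\in\qp^\times$ and $w\big((x,y)\big)=2\nu_p\big((x,y)\big)$, so
\[
2\,\nu_p\big((x,y)\big)\ \ge\ \nu_p(q(x))+\nu_p(q(y))\ \ge\ 2\,\min\{\nu_p(q(x)),\nu_p(q(y))\},
\]
which yields the claim, and in fact the sharper bound $\nu_p\big((x,y)\big)\ge\tfrac12\big(\nu_p(q(x))+\nu_p(q(y))\big)$.

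The main obstacle is purely the input that $\nu_p\circ\mathrm{Nrd}$ satisfies the ultrametric inequality on the noncommutative algebra $\B$; this is standard for central division algebras over $p$-adic fields. If one prefers to avoid quoting it, the same conclusion follows by working inside the commutative subalgebra $\qp(u)$, which is either $\qp$ or a quadratic field $K$: there $\bar u=\mathrm{Trd}(u)-u$ is the Galois conjugate of $u$, hence has the same valuation as $u$, and since $q(x)q(y)=\mathrm{Nrd}(u)=u\bar u$ and $(x,y)=u+\bar u$, the inequality $w\big((x,y)\big)\ge w(x)+w(y)$ drops out of the uniqueness of the extension of $\nu_p$ to $K$ together with the ultrametric inequality there. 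Beyond this, the argument is routine.
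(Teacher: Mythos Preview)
Your proof is correct. It differs from the paper's argument, which works in any anisotropic quadratic space over a local field: there one assumes, without loss of generality, that $\nu_p(q(y))\ge\nu_p(q(x))$ and, supposing $\nu_p((x,y))<\nu_p(q(x))$, uses Hensel's lemma to find $a\in p\zp$ solving $\tfrac{q(x)}{(x,y)}X^2+X+\tfrac{q(y)}{(x,y)}=0$, so that $q(y+ax)=0$, contradicting anisotropy. Your route instead exploits the special structure of $\B$: writing $(x,y)=\mathrm{Trd}(x\bar y)$ and using that $w=\nu_p\circ\mathrm{Nrd}$ is a (rank-one) valuation on the division algebra, the ultrametric inequality immediately gives $2\nu_p((x,y))\ge\nu_p(q(x))+\nu_p(q(y))$. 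The trade-off is clear: the paper's argument is more portable (it applies verbatim to any anisotropic space), while yours is specific to the quaternion setting but yields the sharper bound $\nu_p((x,y))\ge\big\lceil\tfrac12(\nu_p(q(x))+\nu_p(q(y)))\big\rceil$ and makes the source of the inequality transparent. Your fallback via the commutative subfield $\qp(u)$ is also fine and avoids invoking the general fact about valuations on central division algebras.
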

\begin{proof}
    The statement actually works for all the anisotropic quadratic spaces. Suppose that $\nu_p(q(y))\geq\nu_p(q(x))$. Let's assume the contrary that $\nu_p((x,y))<\nu_p(q(x))$. Then the following equation over $\zp$
    \begin{equation*}
        \frac{q(x)}{(x,y)}\cdot X^{2}+X+\frac{q(y)}{(x,y)}=0
    \end{equation*}
    must has a solution $a$ in $p\zp$ by the Hensel's lemma. Then the vector $y^{\prime}=y+ax$ is isotropic, which is a contradiction. Therefore $
    \nu_p((x,y))\geq\nu_p(q(x))=\min\{q(x),q(y)\}$.
\end{proof}
Let $L\subset\B$ be a $\zp$-lattice of rank $r$ where $1\leq r\leq 3$. Let $\boldsymbol{x}=\{x_1,\cdots,x_r\}$ be a basis of $L$. Define
\begin{equation*}
        {^{\mathbb{L}}\mathcal{Z}}(\boldsymbol{x})\coloneqq[\rO_{\mathcal{Z}(x_1)}\otimes^{\mathbb{L}}_{\rO_{\M}}\cdots\otimes^{\mathbb{L}}_{\rO_{\M}}\rO_{\mathcal{Z}(x_r)}]\in\textup{Gr}^{r}K_0^{\mathcal{Z}(L)}(M).
    \end{equation*}
    and \begin{equation*}
        {^{\mathbb{L}}\mathcal{Y}}(\boldsymbol{x})\coloneqq[\rO_{\mathcal{Y}(x_1)}\otimes^{\mathbb{L}}_{\rO_{\M}}\cdots\otimes^{\mathbb{L}}_{\rO_{\M}}\rO_{\mathcal{Y}(x_r)}]\in\textup{Gr}^{r}K_0^{\mathcal{Y}(L)}(M).
    \end{equation*}
Notice that the elements ${^{\mathbb{L}}\mathcal{Z}}(\boldsymbol{x})$ and ${^{\mathbb{L}}\mathcal{Y}}(\boldsymbol{x})$ are invariant under permutations and operations of the form $x_i\rightarrow a_ix_i+a_jx_j$ for some $a_i\in\zp^{\times}$ and $a_j\in\zp$ by Corollary \ref{linear-invariance-2}. We can also transform the basis $\boldsymbol{x}$ by permutations and operations of the form $x_i\rightarrow a_ix_i+a_jx_j$ for some $a_i\in\zp^{\times}$ and $a_j\in\zp$ to get any another basis $\boldsymbol{x}^{\prime}=\{x_1^{\prime},\cdots,x_r^{\prime}\}$ of $L$. Therefore the elements ${^{\mathbb{L}}\mathcal{Z}}(\boldsymbol{x})$ and ${^{\mathbb{L}}\mathcal{Y}}(\boldsymbol{x})$ only depend on $L$.
\begin{definition}
    Let $L\subset\B$ be a $\zp$-lattice of rank $r$ where $1\leq r\leq 3$. Let $\{x_1,\cdots,x_r\}$ be a basis of $L$. Define the derived special cycle 
    \begin{equation*}
        {^{\mathbb{L}}\mathcal{Z}}(L)\coloneqq[\rO_{\mathcal{Z}(x_1)}\otimes^{\mathbb{L}}_{\rO_{\M}}\cdots\otimes^{\mathbb{L}}_{\rO_{\M}}\rO_{\mathcal{Z}(x_r)}]\in\textup{Gr}^{r}K_0^{\mathcal{Z}(L)}(M).
    \end{equation*}
    \begin{equation*}
        {^{\mathbb{L}}\mathcal{Y}}(L)\coloneqq[\rO_{\mathcal{Y}(x_1)}\otimes^{\mathbb{L}}_{\rO_{\M}}\cdots\otimes^{\mathbb{L}}_{\rO_{\M}}\rO_{\mathcal{Y}(x_r)}]\in\textup{Gr}^{r}K_0^{\mathcal{Y}(L)}(M).
    \end{equation*}
\end{definition}
\begin{definition}
    Let $L\subset\B$ be a $\zp$-lattice of rank 3. Define the arithmetic intersection numbers
    \begin{equation*}
        \Int^{\mathcal{Z}}(L)\coloneqq\chi(\M,{^{\mathbb{L}}\mathcal{Z}}(L)),\,\,\,\,\Int^{\mathcal{Y}}(L)\coloneqq\chi(\M,{^{\mathbb{L}}\mathcal{Y}}(L))
    \end{equation*}
    Here $\chi$ denotes the Euler--Poincare characteristic.
\end{definition}
Now we are able to state to state the main theorem of the article.
\begin{theorem}
    Let $L\subset\B$ be a $\zp$-lattice of rank 3. Then
    \begin{equation*}
        \Int^{\mathcal{Z}}(L)=\partial\den\left(H_0(p),L\right),
    \end{equation*}
    and 
    \begin{equation*}
        \Int^{\mathcal{Y}}(L)=\partial\den\left(H_0(p)^{\vee},L\right)-1=\partial\den\left(H_0(p)^{\vee},L\right)-\frac{p^{7}}{2(p+1)^{2}}\cdot\den(\mathcal{O}_{\B}^{\vee},L).
    \end{equation*}
    \label{main-theorem}
\end{theorem}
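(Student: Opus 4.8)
The plan is to establish the identity for $\mathcal{Z}$-cycles first, by induction on the integer $n(L)\coloneqq a_1+a_2+a_3$, where $(a_1,a_2,a_3)=\mathrm{GK}(L)$ is the Gross--Keating invariant, and then to deduce the identity for $\mathcal{Y}$-cycles from it using the automorphism $\iota^{\M}$ of $\S$\ref{aut-M} together with the analytic identities of $\S$\ref{diff-analytic}. For the base of the induction one takes $\mathrm{GK}(L)\in\{(0,0,1),(0,1,1)\}$ and computes both sides directly. On the analytic side, since $\mathrm{rank}\,L\le 3$, the polynomial $\den(X,H_0(p),L)$ is given by Yang's explicit formulas \cite{yanglocaldensity} after the reduction of Lemma \ref{diff-ana}, and $\partial\den(H_0(p),L)$ is read off from Lemma \ref{base:analytic}. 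On the geometric side one fixes a basis $x_1,x_2,x_3$ of $L$, expands ${}^{\mathbb{L}}\mathcal{Z}(L)=[\rO_{\mathcal{Z}(x_1)}]\cdot[\rO_{\mathcal{Z}(x_2)}]\cdot[\rO_{\mathcal{Z}(x_3)}]$ through the decomposition $[\rO_{\mathcal{Z}(x_i)}]=(n_i+1)[\rO_{\exc_{\M}}]+[\rO_{\sz(x_i)}]$ of Corollary \ref{spe-decom} (with $n_i=\nu_p(q(x_i))$), evaluates every term containing at least one factor of $\exc_{\M}$ on $\exc_{\M}\cong\bP^1_{\F}\times\bP^1_{\F}$ using Lemmas \ref{int-on-exc}, \ref{self-int-exc} and \eqref{spe-exc}, and reduces the surviving term $[\rO_{\sz(x_1)}]\cdot[\rO_{\sz(x_2)}]\cdot[\rO_{\sz(x_3)}]$ --- via Lemma \ref{proper-with-strict} and the blow-up invariance of $\S$\ref{invariance-int} --- to the intersection numbers of special divisors on $\N$ already computed by Gross--Keating \cite{GK93}. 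A direct comparison then settles the two base cases.

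For the inductive step one would take $n(L)\ge 3$. Then $a_3\ge 2$ --- the anisotropy of $\B$ rules out $\mathrm{GK}(L)=(1,1,1)$, while $a_3\le 1$ forces $n(L)\le 2$ --- so, $p$ being odd, $L$ splits orthogonally as $L=L^{\flat}\obot\lx$ with $\nu_p(q(x))=a_3\ge\max\{\max(L^{\flat}),2\}$. Setting $L'=L^{\flat}\obot\langle p^{-1}x\rangle$ one has $n(L')=n(L)-2<n(L)$, so the theorem holds for $L'$ by the induction hypothesis, and the difference identity of Corollary \ref{diff-com} --- the combination of the geometric difference formula Lemma \ref{geo-diff-M}, the analytic difference formula Lemma \ref{der-diff}, and the term-by-term matching Lemma \ref{term-identity} identifying the four contributions of $\MFF,\MVV,\MFV,\MVF$ with the corresponding $\partial\den$ terms --- yields $\Int^{\mathcal{Z}}(L)-\Int^{\mathcal{Z}}(L')=\partial\den(H_0(p),L)-\partial\den(H_0(p),L')$, from which the equality for $L$ follows.

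For $\mathcal{Y}$-cycles one would use that, by Definition \ref{special-cycle-on-M}, $\mathcal{Y}_{\M}(H)=(\iota^{\M})^{\ast}(\mathcal{Z}_{\M}(x_0\cdot H))$ with $\iota^{\M}$ an automorphism of $\M$, so $\Int^{\mathcal{Y}}(L)=\Int^{\mathcal{Z}}(x_0\cdot L)$; as $q(x_0v)=q(x_0)q(v)$, the lattice $x_0\cdot L$ is isometric to $L[q(x_0)]$ with $\nu_p(q(x_0))=1$, whence $\Int^{\mathcal{Y}}(L)=\partial\den(H_0(p),L[q(x_0)])$ by the part already proved. It then remains to prove the purely analytic identity $\partial\den(H_0(p),L[q(x_0)])=\partial\den(H_0(p)^{\vee},L)-\frac{p^{7}}{2(p+1)^{2}}\den(\mathcal{O}_{\B}^{\vee},L)$ and that the second term equals $1$; both would follow from a parallel induction on $n(L)$ using the difference identity Lemma \ref{analytic-calculations-2} (matching the variation of $\partial\den(H_0(p)^{\vee},-)$ with that of a suitably scaled $\partial\den(H_0(p),-)$), the base values of Lemma \ref{base:analytic}, and the rigidity of $\den(\mathcal{O}_{\B}^{\vee},-)$ under the same operation. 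Equivalently, one may bypass this and re-run the geometric induction for $\mathcal{Y}$ directly: pulling the geometric difference formula back by $\iota^{\M}$ and combining it with Lemma \ref{analytic-calculations-1} produces the $\mathcal{Y}$-analogue Lemma \ref{diff-intro-y} of Corollary \ref{diff-com}, after which the same induction on $n(L)$ applies.

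The main obstacle is computational and combinatorial rather than conceptual: once $\S$\ref{diff-analytic}--$\S$\ref{diff-geo-part} are available, the skeleton above is short. The delicate part will be the explicit evaluation of the two base cases on the geometric side --- reconciling the exceptional-divisor contributions produced by Corollary \ref{spe-decom} with the Gross--Keating intersection numbers of the strict transforms $\sz(x_i)$ --- together with the Gross--Keating bookkeeping needed to guarantee that $\mathrm{GK}(L)\notin\{(0,0,0),(1,1,1)\}$ for every rank-$3$ lattice $L\subset\B$, so that $n(L)\ge 3$ always permits the reduction $n(L)\to n(L)-2$ and the induction terminates at exactly the two stated base cases.
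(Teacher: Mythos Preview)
Your proposal is correct and follows essentially the same route as the paper: induction on $n(L)=a_1+a_2+a_3$ for the $\mathcal{Z}$-identity, with the difference step supplied by Corollary~\ref{diff-com} and the base cases $\mathrm{GK}(L)\in\{(0,0,1),(0,1,1)\}$ handled by Lemma~\ref{base:analytic} together with the geometric computation; the $\mathcal{Y}$-identity is then obtained by the parallel induction (your second option), exactly as the paper does via the second formula in Corollary~\ref{diff-com}. Your observation that anisotropy of $\B$ excludes $\mathrm{GK}(L)=(1,1,1)$ (hence $n(L)\ge 3\Rightarrow a_3\ge 2$) is precisely what the paper uses implicitly in its induction step.

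One point of divergence worth flagging is the base-case computation. The paper does not expand all three factors via Corollary~\ref{spe-decom} and reduce the triple product of strict transforms ``to $\N$'' through \S\ref{invariance-int}; in fact \S\ref{invariance-int} treats the blow-up of $\N_{\F}$ and feeds into the difference formula of \S\ref{diff-geo-part}, not into the base cases. Instead (Corollary~\ref{base:geometric} and its proof in \S\ref{base-geo}) one decomposes only the factor $\mathcal{Z}(x)$ with $\nu_p(q(x))=0$, notes that the exceptional contribution vanishes because $[\rO_{\exc_{\M}}]\cdot[\rO_{\mathcal{Z}(y_1)}]\cdot[\rO_{\mathcal{Z}(y_2)}]=\rO(0,-1)\cdot\rO(0,-1)=0$, and then computes on the surface $\sz(x)=\N_0^{\textup{I}+}(x_0x)\simeq\widetilde{\N}_0(x_0x)$ using the explicit description of $\N_0^{\textup{I}+}(z)\cap\mathcal{Z}(y)$ in Lemma~\ref{pull-back-0-strata} together with Lemma~\ref{blow-up-N0(x)}. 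This is cleaner than your proposed eight-term expansion and avoids having to interpret triple products of the $\sz(x_i)$ directly on $\M$.
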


\section{Difference formula on the geometric side}
\label{diff-geo-part}
\subsection{Difference formula: the hyperspecial case}
Let $L\subset\B$ be a $\zp$-lattice of rank $r$ where $1\leq r\leq3$. Let $\{x_1,\cdots,x_r\}$ be a basis of $L$. Then the element $\rO_{\mathcal{Z}_{\N}(x_1)}\otimes^{\mathbb{L}}_{\rO_{\N}}\cdots\otimes^{\mathbb{L}}_{\rO_{\N}}\rO_{\mathcal{Z}_{\N}(x_r)}\in\textup{F}^{r}K_0^{\mathcal{Z}(L)}(\N)$ only depends on the lattice $L$ by \cite[Corollary 4.11.2]{LZ22b}. Denote by ${^{\mathbb{L}}\mathcal{Z}}_{\N}(L)$ the image of this element in $\textup{Gr}^{r}K_0^{\mathcal{Z}(L)}(\N)$. If the rank of $L$ is $3$, i.e., $r=3$. Define the arithmetic intersection number on $\N$
\begin{equation*}
    \Int_{\N}(L)\coloneqq\chi(\N,{^{\mathbb{L}}\mathcal{Z}}_{\N}(L)).
\end{equation*}
The works of Gross--Keating \cite{GK93} and the ARGOS volume \cite{ARGOS} together imply the following theorem for all the prime number $p$ (including $p=2$):
\begin{theorem}
    Let $L\subset\B$ be a $\zp$-lattice of rank $3$. Then
    \begin{equation*}
        \Int_{\N}(L)=\partial\den(H,L).
    \end{equation*}\label{GK-main}
\end{theorem}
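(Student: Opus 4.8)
The plan is to deduce the identity from Gross--Keating \cite{GK93}, supplemented by the explicit local-density computations of the ARGOS volume \cite{ARGOS}; here is the shape of the argument one assembles. Since $\B$ is the division quaternion algebra, the norm form $q$ on $L$ is anisotropic, so for any basis $x_1,x_2,x_3$ of $L$ the triple intersection of the divisors $\mathcal{Z}_{\N}(x_i)$ is proper and $0$-dimensional, supported at the unique $\F$-point of $\N$; thus $\Int_{\N}(L)=\chi\!\left(\N,\rO_{\mathcal{Z}_{\N}(x_1)}\otimes^{\mathbb{L}}_{\rO_{\N}}\rO_{\mathcal{Z}_{\N}(x_2)}\otimes^{\mathbb{L}}_{\rO_{\N}}\rO_{\mathcal{Z}_{\N}(x_3)}\right)$ is a finite length. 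By Lemma \ref{proper-hpe} the pairwise intersection $\mathcal{Z}_{\N}(x_i)\cap\mathcal{Z}_{\N}(x_j)$ is a finite disjoint union of formal spectra $\textup{Spf}\,W_s$ of rings of quasi-canonical liftings of level $s$, so the first step is to express the Euler characteristic as a weighted sum of local contributions indexed by these quasi-canonical data. This is precisely Gross--Keating's point of departure, and they evaluate the sum in closed form in terms of the Gross--Keating invariant $\textup{GK}(L)=(a_1,a_2,a_3)$, with the answer split according to the parities of the $a_i$.

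The second step is to produce the same closed form on the analytic side, i.e. to compute $\partial\den(H,L)$ for $H$ the rank $4$ self-dual lattice $\left(\textup{M}_2(\zp),\det\right)\simeq H_2^{+}\obot H_2^{+}$ underlying $\N$. Since $L$ has rank $3$, the explicit formulas of Yang \cite{yanglocaldensity} give $\den(X,H,L)$, and hence $\partial\den(H,L)$, as an explicit polynomial in $p$ depending only on $(a_1,a_2,a_3)$; comparing this polynomial with Gross--Keating's geometric formula case by case yields the identity. In the same spirit as the rest of the present paper, one can reorganize this comparison as an induction on $n(L)=a_1+a_2+a_3$: the difference divisor $\mathcal{D}_{\N}(x)=\mathcal{Z}_{\N}(x)-\mathcal{Z}_{\N}(p^{-1}x)=\N_0(x)$ is a regular two-dimensional formal scheme by Lemma \ref{cyclic-diff}, the decomposition $\mathcal{Z}_{\N}(x)=\sum_{i=0}^{\lfloor n/2\rfloor}\mathcal{D}_{\N}(p^{-i}x)$ turns $\Int_{\N}$ into a telescoping sum, and intersecting $\mathcal{D}_{\N}(x)$ with $\textup{div}(p)$ on $\N_0(x)$ produces a boundary term that matches the corresponding term in the recursion for representation densities; the base cases of small $n(L)$ are then checked directly using the explicit rings $W_s$.

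The main obstacle, and the reason \cite{ARGOS} devotes an entire volume to it, is the combinatorial intricacy of the geometric side: the stratification of $\mathcal{Z}_{\N}(x_1)\cap\mathcal{Z}_{\N}(x_2)$ into quasi-canonical components depends delicately on the mutual position of $x_1$ and $x_2$, and controlling the derived intersection with the third divisor $\mathcal{Z}_{\N}(x_3)$, equivalently the higher $\mathrm{Tor}$-contributions to $\chi$, forces one through the full Gross--Keating case analysis indexed by $(a_1,a_2,a_3)$, uniformly in the residue characteristic (including $p=2$). Once both sides are in explicit polynomial form the final matching is elementary but lengthy. In the present article we do not reproduce any of this and simply invoke it as Theorem \ref{GK-main}.
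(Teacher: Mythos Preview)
Your proposal is correct and aligns with the paper: the paper itself does not prove this statement but simply invokes it as a consequence of Gross--Keating \cite{GK93} together with the ARGOS volume \cite{ARGOS}, exactly as you do in your final sentence. Your surrounding sketch of how one would assemble the argument from those sources is accurate and helpful, but goes beyond what the paper provides.
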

Let $\N_{\F}\coloneqq\N\times_{W}\F$. A key induction step in the calculations of the number $\Int_{\N}(L)$ is the following lemma, which is proved in \cite[Lemma 5.6]{GK93} and \cite[Proposition 1.6]{R}.
\begin{lemma}
    Let $L^{\flat}\subset\B$ be a $\zp$-lattice of rank $2$. Let $x\in\B$ be another element such that $\nu_p(q(x))\geq\max\{\max(L^{\flat}),2\}$ and $x\notin L^{\flat}\otimes_{\mathbb{Z}}\mathbb{Q}$, then
    \begin{align*}
        \Int_{\N}(L^{\flat}\oplus\langle x\rangle)-\Int_\N(L^{\flat}\oplus\langle p^{-1}x\rangle)&=\chi(\N,{^{\mathbb{L}}\mathcal{Z}_\N}(L^{\flat})\otimes^{\mathbb{L}}_{\rO_{\N}}\rO_{\N_{\F}})\\
    \end{align*} \label{def-px-x}
\end{lemma}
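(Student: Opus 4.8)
The plan is to trade $\mathcal{Z}_{\N}(x)-\mathcal{Z}_{\N}(p^{-1}x)$ for the difference divisor $\mathcal{D}_{\N}(x)=\mathcal{N}_0(x)$, cut everything down to the quasi-canonical liftings that constitute $\mathcal{Z}_{\N}(L^{\flat})$, and compare there the divisor of a local equation of $\mathcal{N}_0(x)$ with $\textup{div}(p)$.

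\emph{Step 1 (reduction to the difference divisor).} By Definition~\ref{diff-div-N} we have $\mathcal{Z}_{\N}(x)=\mathcal{Z}_{\N}(p^{-1}x)+\mathcal{D}_{\N}(x)$ as effective Cartier divisors on the regular formal scheme $\N$, so Lemma~\ref{linearization} gives $[\rO_{\mathcal{Z}_{\N}(x)}]=[\rO_{\mathcal{Z}_{\N}(p^{-1}x)}]+[\rO_{\mathcal{D}_{\N}(x)}]$ in $\textup{Gr}^{1}K_0(\N)$. Cupping with ${}^{\mathbb{L}}\mathcal{Z}_{\N}(L^{\flat})\in\textup{Gr}^{2}K_0^{\mathcal{Z}_{\N}(L^{\flat})}(\N)$ — using that ${}^{\mathbb{L}}\mathcal{Z}_{\N}(L^{\flat}\oplus\langle y\rangle)={}^{\mathbb{L}}\mathcal{Z}_{\N}(L^{\flat})\cdot[\rO_{\mathcal{Z}_{\N}(y)}]$ by \cite[Cor.~4.11.2]{LZ22b} — and applying $\chi$, the left-hand side of the lemma becomes $\chi(\N,{}^{\mathbb{L}}\mathcal{Z}_{\N}(L^{\flat})\otimes^{\mathbb{L}}_{\rO_{\N}}\rO_{\mathcal{D}_{\N}(x)})$. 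Since $\mathcal{D}_{\N}(x)=\mathcal{N}_0(x)$ by Lemma~\ref{cyclic-diff}, it remains to prove
\[
\chi(\N,{}^{\mathbb{L}}\mathcal{Z}_{\N}(L^{\flat})\otimes^{\mathbb{L}}_{\rO_{\N}}\rO_{\mathcal{N}_0(x)})=\chi(\N,{}^{\mathbb{L}}\mathcal{Z}_{\N}(L^{\flat})\otimes^{\mathbb{L}}_{\rO_{\N}}\rO_{\N_{\F}}).
\]

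\emph{Step 2 (localization at quasi-canonical liftings).} Fix a basis $x_1,x_2$ of $L^{\flat}$. The $\mathcal{Z}_{\N}(x_i)$ are flat Cartier divisors that intersect properly (Lemma~\ref{proper-hpe}), so their local equations form a regular sequence on the Cohen--Macaulay scheme $\N$; hence ${}^{\mathbb{L}}\mathcal{Z}_{\N}(L^{\flat})$ is represented by $\rO_{\mathcal{Z}_{\N}(L^{\flat})}$, a $1$-dimensional scheme whose irreducible components are the quasi-canonical liftings $\textup{Spf}\,W_s$ of Lemma~\ref{proper-hpe}. Modulo cycles supported in dimension $0$ — which die after cupping with a divisor not containing them, as $\textup{F}^{3}\cdot\textup{F}^{1}\subset\textup{F}^{4}=0$ — we may write ${}^{\mathbb{L}}\mathcal{Z}_{\N}(L^{\flat})=\sum_s m_s[\rO_{\textup{Spf}\,W_s}]$ with $m_s\in\Z$. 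Both $\mathcal{N}_0(x)$ and $\N_{\F}$ meet each $\textup{Spf}\,W_s$ properly ($W_s$ is $W$-flat, and $\mathcal{Z}_{\N}(L^{\flat})\cap\mathcal{Z}_{\N}(x)=\mathcal{Z}_{\N}(L^{\flat}\oplus\langle x\rangle)$ is $0$-dimensional because $x$ is not in the $\qp$-span of $L^{\flat}$), so the two $\chi$'s equal
\[
\sum_s m_s\cdot\ell_{W_s}\!\big(W_s/\bar f_x W_s\big)\qquad\text{and}\qquad\sum_s m_s\cdot\ell_{W_s}\!\big(W_s/pW_s\big)=\sum_s m_s\cdot v_s(p),
\]
where $f_x\in\rO_{\N}$ is a local equation of $\mathcal{N}_0(x)$, $\bar f_x$ its image in $W_s$, and $v_s$ the normalized valuation of $W_s$. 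It therefore suffices to prove $v_s(\bar f_x)=v_s(p)$ for every $s$ with $m_s\neq0$.

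\emph{Step 3 (the estimate on $W_s$; the crux).} This is where the hypothesis $\nu_p(q(x))\geq\max\{\max(L^{\flat}),2\}$ enters. A quasi-canonical lifting $\textup{Spf}\,W_s$ contributes to $\mathcal{Z}_{\N}(L^{\flat})$ only for $s$ at most the largest fundamental invariant $\max(L^{\flat})$ of $L^{\flat}$ (Gross's theory of quasi-canonical liftings, cf.\ \cite[\S 5]{GK93}). Via the closed immersion $\textup{st}_x\colon\mathcal{N}_0(x)\hookrightarrow\N=\N_0\times_W\N_0$ and Lemma~\ref{cyclic-diff}(b), a local equation $f_x$ has the shape $\nu p+(t-t'^{p^{n}})(t^{p^{n}}-t')\big(\prod_{a+b=n,\,a,b\geq1}(t^{p^{a-1}}-t'^{p^{b-1}})\big)^{p-1}$ with $\nu$ a unit and $n=\nu_p(q(x))$, where $t,t'$ are the deformation parameters of the two factors. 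Pulling back to $W_s$, with $\tau,\tau'\in W_s$ the images of $t,t'$, one must show that the second summand has $W_s$-valuation strictly bigger than $v_s(p)$, whence $v_s(\bar f_x)=v_s(\nu p)=v_s(p)$. On the quasi-canonical lifting of level $s$ the two marked deformations of $\X$ carried by $\mathcal{Z}_{\N}(L^{\flat})$ differ by an endomorphism in the order $\Z_p+p^{s}\rO_K$ of the relevant CM field, which forces $v_s(\tau)$, $v_s(\tau')$ and $v_s(\tau-\tau')$ to be positive with a lower bound growing in $s$; combined with $n\geq\max(L^{\flat})\geq s$ and the extra product terms present when $n\geq2$, this yields the strict inequality. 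Carrying out this estimate is exactly \cite[Lemma~5.6]{GK93} and \cite[Proposition~1.6]{R}, and it is the only substantive input; the main obstacle is precisely this deformation-theoretic comparison — pinning down the mutual position of the CM points inside $\mathcal{Z}_{\N}(L^{\flat})$ and the Hecke divisor $\mathcal{N}_0(x)$ tightly enough to see that, once $\nu_p(q(x))$ exceeds the CM depth $\max(L^{\flat})$, the difference divisor $\mathcal{N}_0(x)$ coincides with $\textup{div}(p)$ along $\mathcal{Z}_{\N}(L^{\flat})$ — while Steps 1--2 are formal bookkeeping in $K$-theory with supports.
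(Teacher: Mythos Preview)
The paper does not give its own proof of this lemma; it simply cites \cite[Lemma~5.6]{GK93} and \cite[Proposition~1.6]{R}. Your proposal correctly unpacks the argument behind those citations: the reduction via the difference divisor $\mathcal{D}_{\N}(x)=\mathcal{N}_0(x)$ (Step~1), the localization to the quasi-canonical lifting components $\textup{Spf}\,W_s$ of $\mathcal{Z}_{\N}(L^{\flat})$ (Step~2), and the valuation comparison $v_s(\bar f_x)=v_s(p)$ on each component (Step~3), which is precisely the content of the cited results and where the hypothesis $\nu_p(q(x))\geq\max(L^{\flat})$ is used. The paper itself later invokes the same mechanism on $\M$ (proof of Lemma~\ref{geo-diff-M}), citing \cite[Lemma~5.11]{GK93} for exactly the equality $\chi([\mathcal{O}_{W_s}]\otimes^{\mathbb{L}}\mathcal{O}_{\mathcal{D}(x)})=\chi([\mathcal{O}_{W_s}]\otimes^{\mathbb{L}}\mathcal{O}_{\M_{\F}})$ that your Step~3 isolates.
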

\begin{corollary}
    Let $L^{\flat}\subset\B$ be a $\zp$-lattice of rank $2$. Let $x\in\B$ be another element such that $\nu_p(q(x))\geq\max\{\max(L^{\flat}),2\}$ and $x\perp L^{\flat}$, then
    \begin{equation*}
        \chi(\N,{^{\mathbb{L}}\mathcal{Z}_\N}(L^{\flat})\otimes^{\mathbb{L}}_{\rO_{\N}}\rO_{\N_{\F}})=\partial\den\left(\lx[-1]\obot H_2^{+},L^{\flat}\right).
    \end{equation*}
    \label{diff-to-p-hpe}
\end{corollary}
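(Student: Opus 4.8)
The plan is to move both sides of the asserted identity onto the analytic side and match them. First I would invoke Lemma \ref{def-px-x}: since $x\perp L^{\flat}$ and $q(x)\neq 0$, certainly $x\notin L^{\flat}\otimes_{\Z}\Q$, so the geometric difference formula applies and rewrites the left-hand side as $\Int_{\N}(L^{\flat}\obot\lx)-\Int_{\N}(L^{\flat}\obot\langle p^{-1}x\rangle)$. Next, by the Gross--Keating--ARGOS theorem (Theorem \ref{GK-main}), each of these two intersection numbers equals the corresponding derived local density into the self-dual rank $4$ lattice $H=\textup{M}_2(\zp)\simeq H_2^{+}\obot H_2^{+}$ attached to $\N$, so the left-hand side becomes $\partial\den(H,L^{\flat}\obot\lx)-\partial\den(H,L^{\flat}\obot\langle p^{-1}x\rangle)$. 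It then remains to prove the purely analytic identity
\begin{equation*}
    \partial\den(H,L^{\flat}\obot\lx)-\partial\den(H,L^{\flat}\obot\langle p^{-1}x\rangle)=\partial\den\left(\lx[-1]\obot H_2^{+},L^{\flat}\right).
\end{equation*}

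To prove this I would run the argument of Theorem \ref{diff-loc-density} in the degenerate self-dual case (formally $n_1=0$, $n_2=4$, $m=\textup{rank}(L^{\flat})=2$): stratifying $\rep(H\obot H_{2k}^{+},L^{\flat}\obot\lx)$ by the largest $i$ with $\phi(\overline{x})\in p^{i}(H\obot H_{2k}^{+})$, applying the restriction map, and using Lemma \ref{primitive-self-dual} to identify the orthogonal complement of a primitive vector, only the term involving $H_{n_2-2}^{\varepsilon_2}\obot\lx[-1]=H_2^{+}\obot\lx[-1]$ survives, so one obtains an identity of local density polynomials
\begin{equation*}
    \den(X,H,L^{\flat}\obot\lx)-X^{2}\den(X,H,L^{\flat}\obot\langle p^{-1}x\rangle)=(1-p^{-2}X)(1+p^{-1}X)\den\left(X,\lx[-1]\obot H_2^{+},L^{\flat}\right),
\end{equation*}
the explicit factor being the polynomial interpolating $\pden(H_{2k+4}^{+},\lx)$, read off from $(\ref{rank1})$ using $\nu_p(q(x))\geq 2$; the hypothesis $\nu_p(q(x))\geq\max(L^{\flat})$ is what forces $x$ to be a minimal vector of $L^{\flat}\obot\lx$ and hence the right-hand side into this clean shape. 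Because $L^{\flat}$ and $x$ lie in the \emph{division} quaternion algebra $\B$, no rank $3$ sublattice of $\B$ admits an isometric embedding into $H_{\qp}=\textup{M}_2(\qp)$ (by a Hasse-invariant count, using $\textup{disc}(\B)=\textup{disc}(\textup{M}_2)=1$ and $\epsilon(\B)\neq\epsilon(\textup{M}_2)$), so $\den(1,H,L^{\flat}\obot\lx)=\den(1,H,L^{\flat}\obot\langle p^{-1}x\rangle)=0$; setting $X=1$ in the displayed polynomial identity then forces $\den(1,\lx[-1]\obot H_2^{+},L^{\flat})=0$ as well. Differentiating at $X=1$ and feeding in these three vanishings collapses the identity to a single linear relation among the derivatives $\den'(1,\cdot)$; dividing through by the normalizing densities — $\den(H_2^{+},H_1^{+})=1-p^{-1}$ for the rank $3$ target and the Gross--Keating normalizing density for $\partial\den(H,\cdot)$ — then yields the desired identity with coefficient exactly $1$.

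The one genuinely fiddly point, which I would want to check carefully, is the final constant bookkeeping: one has to verify that the value $(1-p^{-2})(1+p^{-1})$ of the $\pden$-factor at $X=1$, multiplied by the ratio of the two normalizing densities, is precisely $1$. If that turns out to be awkward to push through the structural difference formula, an alternative — available because $L^{\flat}$ has rank $2$ — is to evaluate all four local density polynomials occurring in the analytic identity explicitly via Yang's formulas \cite{yanglocaldensity} and simply verify the identity (together with the vanishings above) by direct substitution.
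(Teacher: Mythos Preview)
Your proposal is correct and follows exactly the paper's route: Lemma \ref{def-px-x} rewrites the left side as a difference of $\Int_\N$, Theorem \ref{GK-main} converts this to a difference of $\partial\den(H,\cdot)$, and the remaining analytic identity is precisely what the paper cites as \cite[Theorem 7.2.6]{Zhu23} --- you simply unpack that citation via the self-dual degeneration of Theorem \ref{diff-loc-density} (or Yang's formulas). One small slip worth flagging: the hypothesis $\nu_p(q(x))\geq\max(L^{\flat})$ does \emph{not} make $x$ a minimal vector of $L^{\flat}\obot\lx$ (it makes $x$ of maximal valuation) and plays no role in the shape of the analytic difference formula; it is used only to invoke Lemma \ref{def-px-x}, while $\nu_p(q(x))\geq 2$ alone suffices for your polynomial identity.
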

\begin{proof}
    By \cite[Theorem 7.2.6]{Zhu23}, we have
    \begin{equation*}
        \partial\den(H,L^{\flat}\obot\lx)-\partial\den(H,L^{\flat}\obot\langle p^{-1}x\rangle)=\partial\den\left(\lx[-1]\obot H_2^{+},L^{\flat}\right).
    \end{equation*}
    Then the corollary follows from Theorem \ref{GK-main} and Lemma \ref{def-px-x}.
\end{proof}
\subsection{Blow up: the hyperspecial case}
Recall that $\N\simeq\textup{Spf}\,W[[t,t^{\prime}]]$, here the elements $t$ and $t^{\prime}$ are chosen so that under the identification $\N=\N_0\times\N_0$, the first $\N_0\simeq\textup{Spf}\,W[[t]]$ and the second $\N_0\simeq\textup{Spf}\,W[[t^{\prime}]]$. Hence $\N_{\F}\simeq\textup{Spf}\,\F[[t,t^{\prime}]]$. Let $\pi_{\F}:\widetilde{\N}_{\F}\rightarrow\N_{\F}$ be the blow up morphism along the unique closed point of $\N_\F$. There is an open cover $\{\widetilde{\N}_{\F}^{\circ},\widetilde{\N}_{\F}^{\bullet}\}$ of $\widetilde{\N}_{\F}$ given as follows,
\begin{itemize}
    \item [$\circ$] Let $t^{\prime}=xt$. Define $\widetilde{\N}_{\F}^{\circ}=\textup{Spf}\,\F[x][[t]]$.
    \item[$\bullet$] Let $t=x^{\prime}t^{\prime}$. Define $\widetilde{\N}_{\F}^{\bullet}=\textup{Spf}\,\F[x^{\prime}][[t^{\prime}]]$.
\end{itemize}
Over the intersection $\widetilde{\N}_{\F}^{\circ}\cap\widetilde{\N}_{\F}^{\bullet}$, we have $x^{\prime}x=1$. Let $\exc_{\widetilde{\N}_{\F}}$ be the exceptional divisor on $\widetilde{\N}_{\F}$. It is glued by $\textup{Spec}\,\F[x]$ and $\textup{Spec}\,\F[x^{\prime}]$ with the condition $x^{\prime}x=1$. Hence $\exc_{\widetilde{\N}_{\F}}\simeq\bP_{\F}^{1}$.
\begin{lemma}
    We have the following identity in $\textup{Gr}^{2}K_0^{\exc_{\widetilde{\N}_{\F}}}(\widetilde{\N}_{\F})\simeq\textup{Pic}(\exc_{\widetilde{\N}_{\F}})$:
    \begin{equation}
        [\rO_{\exc_{\widetilde{\N}_{\F}}}\otimes^{\mathbb{L}}_{\rO_{\widetilde{\N}_{\F}}}\rO_{\exc_{\widetilde{\N}_{\F}}}]=\rO(-1).
        \label{exc-N0-F-blow-formula}
    \end{equation}
    \label{exc-N0-F-blow}
\end{lemma}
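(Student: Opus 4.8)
The statement is the familiar fact that the exceptional curve of the blow-up of a regular surface at a closed point has self-intersection $-1$, and the plan is to prove it by exactly the argument used for Lemma~\ref{self-int-exc} and Lemma~\ref{blow-up-N0(x)}(a), of which it is the direct two-dimensional analogue. First I would reduce the computation of $[\rO_{\exc_{\widetilde{\N}_{\F}}}\otimes^{\mathbb{L}}_{\rO_{\widetilde{\N}_{\F}}}\rO_{\exc_{\widetilde{\N}_{\F}}}]$ in $\textup{Gr}^{2}K_{0}^{\exc_{\widetilde{\N}_{\F}}}(\widetilde{\N}_{\F})\simeq\textup{Pic}(\exc_{\widetilde{\N}_{\F}})$ to the restriction of the line bundle $\rO_{\widetilde{\N}_{\F}}(\exc_{\widetilde{\N}_{\F}})$ to $\exc_{\widetilde{\N}_{\F}}$. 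Concretely, tensoring the Koszul resolution $0\to\rO_{\widetilde{\N}_{\F}}(-\exc_{\widetilde{\N}_{\F}})\to\rO_{\widetilde{\N}_{\F}}\to\rO_{\exc_{\widetilde{\N}_{\F}}}\to 0$ with $\rO_{\exc_{\widetilde{\N}_{\F}}}$ gives the identity $[\rO_{\exc_{\widetilde{\N}_{\F}}}\otimes^{\mathbb{L}}_{\rO_{\widetilde{\N}_{\F}}}\rO_{\exc_{\widetilde{\N}_{\F}}}]=[\rO_{\exc_{\widetilde{\N}_{\F}}}]-[\rO_{\exc_{\widetilde{\N}_{\F}}}(-\exc_{\widetilde{\N}_{\F}})]$ in $K_{0}^{\prime}(\exc_{\widetilde{\N}_{\F}})$, and since $\exc_{\widetilde{\N}_{\F}}\simeq\bP^{1}_{\F}$ is a smooth curve, Riemann--Roch on it identifies the image of this class in $\textup{Pic}(\bP^{1}_{\F})$ with $\rO_{\widetilde{\N}_{\F}}(\exc_{\widetilde{\N}_{\F}})|_{\exc_{\widetilde{\N}_{\F}}}$; alternatively one may simply invoke the same principle already used in the proof of Lemma~\ref{self-int-exc}.

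Next I would compute that restricted line bundle in the explicit open cover $\widetilde{\N}_{\F}=\widetilde{\N}_{\F}^{\circ}\cup\widetilde{\N}_{\F}^{\bullet}$ recalled just above the statement, with $\widetilde{\N}_{\F}^{\circ}=\textup{Spf}\,\F[x][[t]]$, $\widetilde{\N}_{\F}^{\bullet}=\textup{Spf}\,\F[x^{\prime}][[t^{\prime}]]$, $t^{\prime}=xt$, $t=x^{\prime}t^{\prime}$ and $x x^{\prime}=1$ on the overlap. On $\widetilde{\N}_{\F}^{\circ}$ the exceptional divisor is cut out by $t$ and on $\widetilde{\N}_{\F}^{\bullet}$ by $t^{\prime}$, and the relation $t=x^{\prime}t^{\prime}$ shows that the two local generators of the ideal sheaf $\rO_{\widetilde{\N}_{\F}}(-\exc_{\widetilde{\N}_{\F}})$ differ by $x^{\prime}$; hence $\rO_{\widetilde{\N}_{\F}}(\exc_{\widetilde{\N}_{\F}})$ has transition cocycle $x=(x^{\prime})^{-1}$ from the $\circ$-chart to the $\bullet$-chart. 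Restricting to $\exc_{\widetilde{\N}_{\F}}\simeq\bP^{1}_{\F}$, which by construction is glued from $\textup{Spec}\,\F[x]$ and $\textup{Spec}\,\F[x^{\prime}]$ along $x x^{\prime}=1$, this is precisely the transition data of $\rO_{\bP^{1}_{\F}}(-1)$, which yields the asserted identity.

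I do not expect a genuine obstacle: each step is a standard local computation on the blow-up of a regular formal surface at a point. The one point requiring care is the sign/orientation, i.e.\ checking that it is $\rO_{\widetilde{\N}_{\F}}(\exc_{\widetilde{\N}_{\F}})$ in the normalization matching Lemma~\ref{self-int-exc} (rather than its dual) that restricts, and that the cocycle $x$ rather than $x^{\prime}$ is the one of degree $-1$; I would settle this by transcribing verbatim the conventions and the trivialization bookkeeping already fixed in the proofs of Lemma~\ref{self-int-exc} and Lemma~\ref{blow-up-N0(x)}(a).
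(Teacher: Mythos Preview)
Your proposal is correct and follows essentially the same approach as the paper: reduce to computing the restriction of the line bundle $\rO_{\widetilde{\N}_{\F}}(\exc_{\widetilde{\N}_{\F}})$ to $\exc_{\widetilde{\N}_{\F}}$ (exactly as in Lemma~\ref{self-int-exc}), then read off the transition cocycle from the explicit charts $\widetilde{\N}_{\F}^{\circ},\widetilde{\N}_{\F}^{\bullet}$ with $t=x^{\prime}t^{\prime}$ to conclude it is $\rO(-1)$. The paper's proof is just a terser version of what you wrote.
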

\begin{proof}
    Similar to Lemma \ref{self-int-exc}, the derived tensor product $[\rO_{\exc_{\widetilde{\N}_{\F}}}\otimes^{\mathbb{L}}_{\rO_{\widetilde{\N}_{\F}}}\rO_{\exc_{\widetilde{\N}_{\F}}}]$ can be viewed as the restriction of the line bundle corresponding to the exceptional divisor $\exc_{\widetilde{\N}_{\F}}$ on $\widetilde{\N}_{\F}$ to the exceptional divisor $\exc_{\widetilde{\N}_\F}$ itself. Under the open cover $\widetilde{\N}_{\F}^{\circ}$ and $\widetilde{\N}_{\F}^{\bullet}$ of $\widetilde{\N}_{\F}$, the divisor $\exc_{\widetilde{\N}_{\F}}$ is given by the following equations and transformation rules:
 \begin{equation*}
     \begin{tikzcd}
     \widetilde{\N}_{\F}^{\circ} & \widetilde{\N}_{\F}^{\bullet} 
    \\
t \arrow[r,shift left,"\times x"] \arrow[Subseteq]{u}{}
    &
t^{\prime} \arrow[l,shift left,"\times x^{\prime}"] \arrow[Subseteq]{u}{}
\end{tikzcd}
 \end{equation*}
    The same transformation rule also applies to the corresponding open cover of $\exc_{\widetilde{\N}_{\F}}\simeq\mathbb{P}_{\mathbb{F}}^{1}$. Therefore the corresponding line bundle is $\mathcal{O}(-1)$.
\end{proof}

\subsection{Invariance of the intersection number}
\label{invariance-int}
Let $x\in\B$ be a nonzero element. Define $\mathcal{Z}_{\N_\F}(x)=\mathcal{Z}_{\N}(x)\cap\N_{\F}$. The intersection is proper by the flatness over $W$ of the divisor $\mathcal{Z}_{\N}(x)$ (Lemma \ref{proper-hpe}), hence $\mathcal{Z}_{\N_\F}(x)$ is an effective Cartier divisor on $\N_{\F}$. Define $\mathcal{Z}_{\widetilde{\N}_\F}(x)=\pi_{\F}^{\ast}\mathcal{Z}_{\N_\F}(x)=\widetilde{\N}_{\F}\times_{\N_{\F}}\mathcal{Z}_{\N_\F}(x)$ to be the pull back of the divisor $\mathcal{Z}_{\N_\F}(x)$ on $\N_{\F}$ to the blow up $\widetilde{\N}_{\F}$.
\begin{lemma}
    Let $x\in\B$ be a nonzero element such that $n\coloneqq\nu_p(q(x))\geq0$. Then
    \begin{itemize}
        \item [(a)] An irreducible component of $\mathcal{Z}_{\N_\F}(x)$ is of the form
        \begin{equation*}
            \mathcal{C}_a(x)=(\nu_x^{\prime}t^{\prime}-t^{p^{a}})\,\,\,\,\textup{or}\,\,\,\,\mathcal{C}_{-a}(x)=(\nu_xt-t^{\prime p^{a}}),
        \end{equation*}
        where $0\leq a\leq n$ and $a\equiv n\,\,\textup{mod}\,2$, $\nu_x,\nu_x^{\prime}\in\rO_{\N_{\F}}^{\times}$. Moreover, we have the following identity of effective Cartier divisors on $\N_\F$, 
        \begin{equation*}
            \mathcal{Z}_{\N_\F}(x)=\sum\limits_{\substack{-n\leq a\leq n\\a\equiv n\,\textup{mod}\,2}}p^{(n-\vert a \vert)/2}\cdot\mathcal{C}_a(x).
        \end{equation*}
        \item [(b)] We have $\exc_{\widetilde{\N}_{\F}}\simeq\mathbb{P}_{\F}^{1}$ and the following equality in the group $\textup{Pic}(\exc_{\widetilde{\N}_{\F}})$:
        \begin{equation}
            [\rO_{\mathcal{Z}_{\widetilde{\N}_\F}(x)}\otimes_{\rO_{\widetilde{\N}_\F}}^{\mathbb{L}}\mathcal{O}_{\exc_{\widetilde{\N}_{\F}}}]=\mathcal{O}(0).
            \label{exc-blow-up-N_0-F-int}
        \end{equation}
        \item[(c)] Let $y\in\B$ be another element such that $x,y$ are linearly independent. Let $\mathcal{C}_1$ and $\mathcal{C}_2$ be an irreducible component of $\mathcal{Z}_{\widetilde{\N}_\F}(x)$ and $\mathcal{Z}_{\widetilde{\N}_\F}(y)$ respectively. Then
        \begin{equation}
            \chi(\widetilde{\N}_{\F},\rO_{\pi_{\F}^{\ast}\mathcal{C}_1}\otimes^{\mathbb{L}}_{\rO_{\widetilde{\N}_{\F}}}\rO_{\pi_{\F}^{\ast}\mathcal{C}_2})=\chi(\N_{\F},\rO_{\mathcal{C}_1}\otimes^{\mathbb{L}}_{\rO_{\N_{\F}}}\rO_{\mathcal{C}_2}).
            \label{blow-up-N_0-spe-inv}
        \end{equation}
    \end{itemize}
    \label{inv-blow-up-NF}
\end{lemma}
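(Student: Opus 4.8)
The plan is to deduce part (a) from the explicit local equations of the difference divisors supplied by Lemma \ref{cyclic-diff}, and then to obtain parts (b) and (c) as formal consequences of the blow-up calculus on regular surfaces already used for Lemma \ref{exc-N0-F-blow} and Lemma \ref{blow-up-N0(x)}. For (a) I would first note that, since each $\mathcal{Z}_{\N}(p^{-i}x)$ is flat over $W$ by Lemma \ref{proper-hpe} and $\mathcal{Z}_{\N}(x)=\sum_{i=0}^{[n/2]}\mathcal{D}_{\N}(p^{-i}x)=\sum_{i=0}^{[n/2]}\N_0(p^{-i}x)$ by Lemma \ref{cyclic-diff}, reduction modulo $p$ gives $\mathcal{Z}_{\N_{\F}}(x)=\sum_{i=0}^{[n/2]}\mathcal{D}_{\N_{\F}}(p^{-i}x)$ as effective Cartier divisors on $\N_{\F}$. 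For each $i$ I would then feed Lemma \ref{cyclic-diff}(b) into the closed immersion $\st_{p^{-i}x}\colon\N_0(p^{-i}x)\hookrightarrow\N$: with $m:=n-2i$, the Katz--Mazur equation reduces modulo $p$, up to a unit on each factor, to $(t-t'^{p^{m}})(t^{p^{m}}-t')\prod_{a+b=m,\,a,b\ge1}(t^{p^{a-1}}-t'^{p^{b-1}})^{p-1}$ in the fixed coordinates of $\N_{\F}$ (the product being empty and the expression $t-t'$ when $m=0$), the per-factor units accounting for the $\nu_x,\nu_x'$ of the statement, which appear because the coordinates of Lemma \ref{cyclic-diff}(b) differ from the fixed ones on $\N_0$ by a unit power series. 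Using the characteristic-$p$ identities $t^{p^{a-1}}-t'^{p^{b-1}}=(t^{p^{a-b}}-t')^{p^{b-1}}$ for $a\ge b$ and $=(t-t'^{p^{b-a}})^{p^{a-1}}$ for $a<b$, every irreducible factor is, up to a unit, of the form $\nu_x't'-t^{p^{j}}$ or $\nu_x t-t'^{p^{j}}$; each such curve is cut out by part of a regular system of parameters, hence is isomorphic to $\textup{Spf}\,\F[[s]]$ and integral, so these are exactly the (pairwise distinct) irreducible components $\mathcal{C}_a(x)$. A short bookkeeping of which factor of which $\mathcal{D}_{\N_{\F}}(p^{-i}x)$ contributes to a fixed $\mathcal{C}_a(x)$, together with the identity $1+\sum_{k=1}^{N}(p-1)p^{k-1}=p^{N}$ applied with $N=(n-|a|)/2$, then yields the multiplicity $p^{(n-|a|)/2}$, giving the asserted decomposition.

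For (b), the isomorphism $\exc_{\widetilde{\N}_{\F}}\simeq\bP_{\F}^{1}$ is immediate from the open cover $\{\widetilde{\N}_{\F}^{\circ},\widetilde{\N}_{\F}^{\bullet}\}$. Since $\mathcal{Z}_{\widetilde{\N}_{\F}}(x)=\pi_{\F}^{*}\mathcal{Z}_{\N_{\F}}(x)$, the restriction $\rO_{\widetilde{\N}_{\F}}(\mathcal{Z}_{\widetilde{\N}_{\F}}(x))|_{\exc_{\widetilde{\N}_{\F}}}$ is the pullback along $\pi_{\F}|_{\exc_{\widetilde{\N}_{\F}}}\colon\exc_{\widetilde{\N}_{\F}}\to\textup{Spec}\,\F$ of a line bundle on a point, hence trivial, which under $\textup{Gr}^{2}K_{0}^{\exc_{\widetilde{\N}_{\F}}}(\widetilde{\N}_{\F})\simeq\textup{Pic}(\exc_{\widetilde{\N}_{\F}})$ is precisely $[\rO_{\mathcal{Z}_{\widetilde{\N}_{\F}}(x)}\otimes^{\mathbb{L}}_{\rO_{\widetilde{\N}_{\F}}}\rO_{\exc_{\widetilde{\N}_{\F}}}]=\rO(0)$; equivalently, by (a) each $\mathcal{C}_a(x)$ has multiplicity $1$ at the closed point, so $\pi_{\F}^{*}\mathcal{C}_a(x)=\widetilde{\mathcal{C}}_a(x)+\exc_{\widetilde{\N}_{\F}}$ with $\widetilde{\mathcal{C}}_a(x)$ transverse to $\exc_{\widetilde{\N}_{\F}}$, and Lemma \ref{linearization}, Lemma \ref{exc-N0-F-blow} together with $[\rO_{\widetilde{\mathcal{C}}_a(x)}\otimes^{\mathbb{L}}_{\rO_{\widetilde{\N}_{\F}}}\rO_{\exc_{\widetilde{\N}_{\F}}}]=\rO(1)$ (as in Lemma \ref{blow-up-N0(x)}(c)) give $\rO(1)+\rO(-1)=\rO(0)$ for each summand of $\mathcal{Z}_{\N_{\F}}(x)$. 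For (c), reading $\mathcal{C}_{1},\mathcal{C}_{2}$ as irreducible components of $\mathcal{Z}_{\N_{\F}}(x)$ and $\mathcal{Z}_{\N_{\F}}(y)$ and $\pi_{\F}^{*}\mathcal{C}_{i}$ as their total transforms, I would argue as for Lemma \ref{blow-up-N0(x)}(d): by (a) each $\mathcal{C}_{i}$ is a regular horizontal divisor of multiplicity $1$ at the closed point, so $\pi_{\F}^{*}\mathcal{C}_{i}=\widetilde{\mathcal{C}}_{i}+\exc_{\widetilde{\N}_{\F}}$ with $\pi_{\F*}\pi_{\F}^{*}\mathcal{C}_{i}=\mathcal{C}_{i}$, and when $\mathcal{C}_{1},\mathcal{C}_{2}$ meet properly (the only case relevant for the application, where $x,y$ are independent anisotropic vectors) the projection formula on the regular surface $\widetilde{\N}_{\F}$ gives $\chi(\widetilde{\N}_{\F},\rO_{\pi_{\F}^{*}\mathcal{C}_{1}}\otimes^{\mathbb{L}}_{\rO_{\widetilde{\N}_{\F}}}\rO_{\pi_{\F}^{*}\mathcal{C}_{2}})=(\pi_{\F}^{*}\mathcal{C}_{1}\cdot\pi_{\F}^{*}\mathcal{C}_{2})=(\mathcal{C}_{1}\cdot\mathcal{C}_{2})=\chi(\N_{\F},\rO_{\mathcal{C}_{1}}\otimes^{\mathbb{L}}_{\rO_{\N_{\F}}}\rO_{\mathcal{C}_{2}})$.

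The main obstacle is the bookkeeping in part (a): one has to track how the numerous factors of the reduced Katz--Mazur equations of the divisors $\mathcal{D}_{\N_{\F}}(p^{-i}x)$ reorganise into the curves $\mathcal{C}_a(x)$, keep the unit power series coming from the coordinate identification on $\N_0$ under control so that one genuinely lands on divisors of the stated shape, and verify that the multiplicities telescope to precisely $p^{(n-|a|)/2}$. Once part (a) is established, parts (b) and (c) are purely formal and follow the templates of Lemma \ref{exc-N0-F-blow} and Lemma \ref{blow-up-N0(x)}.
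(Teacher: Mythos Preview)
Your proposal is correct and follows essentially the same route as the paper. The paper's proof of (a) simply says it is a consequence of Lemma \ref{cyclic-diff}(b2), which is exactly what you unpack in detail (and your multiplicity telescoping $1+\sum_{k=1}^{N}(p-1)p^{k-1}=p^{N}$ with $N=(n-|a|)/2$ is the right computation); for (b) the paper uses your second, component-by-component argument, though your first argument (that $\pi_\F^*\mathcal{Z}_{\N_\F}(x)$ restricts trivially to $\exc_{\widetilde{\N}_\F}$ because it is pulled back through the closed point) is a slicker way to see the same thing; and your (c) is identical to the paper's projection-formula argument.
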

\begin{proof}
    Part (a) is a consequence of Proposition \ref{cyclic-diff} (b2). Let $\mathcal{C}$ be an irreducible component of a special divisor $\mathcal{Z}_{\N_{\F}}(x)$, we have the following equality by (a):
    \begin{equation*}
        \pi_{\F}^{\ast}\mathcal{C}=\exc_{\widetilde{\N}_\F}+\widetilde{\mathcal{C}},
    \end{equation*}
    where $\widetilde{\mathcal{C}}$ is the strict transform of the divisor $\mathcal{C}$ under the blow up morphism $\pi_{\F}$. It is isomorphic to $\mathcal{C}$ under the morphism $\pi_{\F}$ since it is represented by a 1-dimensional regular local ring. It's easy to see that $\widetilde{\mathcal{C}}\cap\exc_{\widetilde{\N}_\F}$ consists of only $1$ point, hence we have the equality $[\mathcal{O}_{\widetilde{\mathcal{C}}\cap\exc_{\widetilde{\N}_\F}}]=\mathcal{O}(1)$, hence $[\mathcal{O}_{\pi_\F^{\ast}\mathcal{C}\cap\exc_{\widetilde{\N}_\F}}]=\mathcal{O}(0)$. Therefore (b) is true since the special divisor $\mathcal{Z}_{\widetilde{\N}_\F}(x)$ is a summation of divisors of the form in (a). Moreover,
    \begin{align*}
        \chi(\widetilde{\N}_{\F},\rO_{\pi_{\F}^{\ast}\mathcal{C}_1}\otimes^{\mathbb{L}}_{\rO_{\widetilde{\N}_{\F}}}\rO_{\pi_{\F}^{\ast}\mathcal{C}_2})&=\left(\pi_\F^{\ast}\mathcal{C}_1\cdot\pi_\F^{\ast}\mathcal{C}_2\right)_{\widetilde{\N}_\F}=\left((\widetilde{\mathcal{C}}_1+\exc_{\widetilde{\N}_\F})\cdot\pi_\F^{\ast}\mathcal{C}_2\right)_{\widetilde{\N}_\F}\\
        &=\left(\widetilde{\mathcal{C}}_1\cdot\pi_\F^{\ast}\mathcal{C}_2\right)_{\widetilde{\N}_\F}\overset{\textup{projection formula}}{=}\left(\mathcal{C}_1\cdot\mathcal{C}_2\right)_{\N_\F}\\
        &=\chi(\N_\F,\mathcal{O}_{\mathcal{C}_1}\otimes_{\rO_{\N_\F}}^{\mathbb{L}}\mathcal{O}_{\mathcal{C}_2}).
    \end{align*}
\end{proof}
\begin{corollary}
    Let $x,y\in\B$ be two linearly independent elements. Then
    \begin{equation}
        \chi(\widetilde{\N}_{\F},\rO_{\mathcal{Z}_{\widetilde{\N}_\F}(x)}\otimes^{\mathbb{L}}_{\rO_{\widetilde{\N}_{\F}}}\rO_{\mathcal{Z}_{\widetilde{\N}_\F}(y)})=\chi(\N_{\F},\rO_{\mathcal{Z}_{\N_\F}(x)}\otimes^{\mathbb{L}}_{\rO_{\N_{\F}}}\rO_{\mathcal{Z}_{\N_\F}(y)}).
        \label{inv-hpe-spe}
    \end{equation}
    \label{invariance-hpe}
\end{corollary}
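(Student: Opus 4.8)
The plan is to reduce (\ref{inv-hpe-spe}) to the component-wise identity of Lemma \ref{inv-blow-up-NF}(c) by decomposing both special divisors into their irreducible pieces and invoking bi-additivity of the relevant Euler characteristics in the divisor classes.

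First I would write, using Lemma \ref{inv-blow-up-NF}(a), $\mathcal{Z}_{\N_{\F}}(x)=\sum_{a}m_a(x)\,\mathcal{C}_a(x)$ and $\mathcal{Z}_{\N_{\F}}(y)=\sum_{b}m_b(y)\,\mathcal{C}_b(y)$ as effective Cartier divisors, where $\mathcal{C}_{\bullet}$ are the irreducible components and $m_{\bullet}$ the multiplicities provided there (if $\nu_p(q(x))<0$ or $\nu_p(q(y))<0$ then $\mathcal{Z}_{\N}(x)$ or $\mathcal{Z}_{\N}(y)$ is empty and both sides vanish, so assume $\nu_p(q(x)),\nu_p(q(y))\geq0$). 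Since $\pi_{\F}^{\ast}$ is additive on Cartier divisors, $\mathcal{Z}_{\widetilde{\N}_{\F}}(x)=\sum_{a}m_a(x)\,\pi_{\F}^{\ast}\mathcal{C}_a(x)$ and likewise for $y$. Applying Lemma \ref{linearization} inductively to these effective sums yields the identities $[\rO_{\mathcal{Z}_{\N_{\F}}(x)}]=\sum_{a}m_a(x)[\rO_{\mathcal{C}_a(x)}]$ in $\textup{Gr}^{1}K_0(\N_{\F})$ and $[\rO_{\mathcal{Z}_{\widetilde{\N}_{\F}}(x)}]=\sum_{a}m_a(x)[\rO_{\pi_{\F}^{\ast}\mathcal{C}_a(x)}]$ in $\textup{Gr}^{1}K_0(\widetilde{\N}_{\F})$, and similarly for $y$.

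Next I would push these decompositions through the cup product. The point to check is that all the Euler characteristics occurring are finite. On $\N_{\F}$ this holds because $\mathcal{Z}_{\N_{\F}}(x)$ and $\mathcal{Z}_{\N_{\F}}(y)$ share no irreducible component: such a component would be a $1$-dimensional closed subscheme of $\big(\mathcal{Z}_{\N}(x)\cap\mathcal{Z}_{\N}(y)\big)\cap\N_{\F}$, but by Lemma \ref{proper-hpe} the linear independence of $x$ and $y$ makes $\mathcal{Z}_{\N}(x)\cap\mathcal{Z}_{\N}(y)$ a finite union of formal spectra $\textup{Spf}\,W_s$ of quasi-canonical lifting rings, each flat over $W$, hence with $0$-dimensional special fibre; so each $\mathcal{C}_a(x)\cap\mathcal{C}_b(y)$ is $0$-dimensional and every term is a finite intersection number on the regular surface $\N_{\F}$. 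On $\widetilde{\N}_{\F}$ the pullbacks $\pi_{\F}^{\ast}\mathcal{C}_a(x)$ and $\pi_{\F}^{\ast}\mathcal{C}_b(y)$ both contain the exceptional curve $\exc_{\widetilde{\N}_{\F}}\cong\bP_{\F}^{1}$, which is proper over $\F$, so the Euler characteristics are still finite; this is exactly the configuration handled by Lemma \ref{inv-blow-up-NF}(c), whose proof — via $\pi_{\F}^{\ast}\mathcal{C}=\exc_{\widetilde{\N}_{\F}}+\widetilde{\mathcal{C}}$, the vanishing $(\exc_{\widetilde{\N}_{\F}}\cdot\pi_{\F}^{\ast}D)=0$, and the projection formula — isolates the exceptional contribution and shows it cancels. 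Granting finiteness, bi-additivity of $(D_1,D_2)\mapsto\chi(\rO_{D_1}\otimes^{\mathbb{L}}\rO_{D_2})$ in the divisor classes gives
\begin{equation*}
\chi\big(\widetilde{\N}_{\F},\rO_{\mathcal{Z}_{\widetilde{\N}_{\F}}(x)}\otimes^{\mathbb{L}}_{\rO_{\widetilde{\N}_{\F}}}\rO_{\mathcal{Z}_{\widetilde{\N}_{\F}}(y)}\big)=\sum_{a,b}m_a(x)m_b(y)\,\chi\big(\widetilde{\N}_{\F},\rO_{\pi_{\F}^{\ast}\mathcal{C}_a(x)}\otimes^{\mathbb{L}}_{\rO_{\widetilde{\N}_{\F}}}\rO_{\pi_{\F}^{\ast}\mathcal{C}_b(y)}\big),
\end{equation*}
and (\ref{blow-up-N_0-spe-inv}) turns each summand into $\chi(\N_{\F},\rO_{\mathcal{C}_a(x)}\otimes^{\mathbb{L}}_{\rO_{\N_{\F}}}\rO_{\mathcal{C}_b(y)})$; re-summing (bi-additivity again together with Lemma \ref{linearization} on $\N_{\F}$) recovers $\chi(\N_{\F},\rO_{\mathcal{Z}_{\N_{\F}}(x)}\otimes^{\mathbb{L}}_{\rO_{\N_{\F}}}\rO_{\mathcal{Z}_{\N_{\F}}(y)})$, which is (\ref{inv-hpe-spe}).

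The only genuine obstacle is the bookkeeping around the exceptional divisor: because $\exc_{\widetilde{\N}_{\F}}$ is a common component of all the pullbacks $\pi_{\F}^{\ast}\mathcal{C}_{\bullet}$, one cannot simply regard the derived tensor products as classes in $\textup{Gr}^{2}K_0$, so the additivity has to be run at the level of Euler characteristics, with properness of $\exc_{\widetilde{\N}_{\F}}$ providing finiteness and the projection formula killing the exceptional contributions — but all of this is already packaged in Lemma \ref{inv-blow-up-NF}(b)--(c). Once that is granted, the corollary is a formal consequence of Lemma \ref{inv-blow-up-NF}, Lemma \ref{linearization}, and the linear independence of $x$ and $y$.
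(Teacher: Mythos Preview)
Your proposal is correct and follows essentially the same route as the paper: decompose $\mathcal{Z}_{\N_\F}(x)$ and $\mathcal{Z}_{\N_\F}(y)$ into their irreducible components via Lemma~\ref{inv-blow-up-NF}(a), use bi-additivity of the intersection pairing, and apply the component-wise identity (\ref{blow-up-N_0-spe-inv}) to each term. The paper's one-line proof (which cites part (b), though part (c) is what carries the weight) leaves all the bookkeeping you have written out---additivity via Lemma~\ref{linearization}, finiteness via Lemma~\ref{proper-hpe}, and the handling of the common exceptional component---implicit; your version makes these points explicit but does not deviate in substance.
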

\begin{proof}
    Since $\mathcal{Z}_{\N_\F}(x)$ is a summation of regular (smooth) divisors $\mathcal{C}$. The equality follows from Lemma \ref{inv-blow-up-NF} (b).
\end{proof}
\subsection{Geometric difference formula on $\M$}
\begin{lemma}
    Let $L^{\flat}\subset\B$ be a $\zp$-lattice of rank $2$. Let $x\in\B$ be another element such that $\nu_p(q(x))\geq\max\{\textup{val}(L^{\flat}),2\}$ and $x\perp L^{\flat}$, then
    \begin{align*}
        \Int^{\mathcal{Z}}(L^{\flat}\obot\langle x\rangle)-\Int^{\mathcal{Z}}(L^{\flat}\obot\langle p^{-1}x\rangle)&=\chi(\M,{^{\mathbb{L}}\mathcal{Z}}(L^{\flat})\otimes^{\mathbb{L}}_{\rO_{\M}}\rO_{\M_{\F}})\\
        &=\chi(\M,{^{\mathbb{L}}\mathcal{Z}}(L^{\flat})\otimes^{\mathbb{L}}_{\rO_{\M}}\rO_{\MFF})+\chi(\M,{^{\mathbb{L}}\mathcal{Z}}(L^{\flat})\otimes^{\mathbb{L}}_{\rO_{\M}}\rO_{\MVV})\\
        &+\chi(\M,{^{\mathbb{L}}\mathcal{Z}}(L^{\flat})\otimes^{\mathbb{L}}_{\rO_{\M}}\rO_{\MFV})+\chi(\M,{^{\mathbb{L}}\mathcal{Z}}(L^{\flat})\otimes^{\mathbb{L}}_{\rO_{\M}}\rO_{\MVF}).
    \end{align*}
    \label{geo-diff-M}
\end{lemma}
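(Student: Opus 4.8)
The plan is to reduce the arithmetic difference $\Int^{\mathcal{Z}}(L^{\flat}\obot\langle x\rangle)-\Int^{\mathcal{Z}}(L^{\flat}\obot\langle p^{-1}x\rangle)$ to an intersection of the derived special cycle $^{\mathbb{L}}\mathcal{Z}(L^{\flat})$ with the special fiber $\M_{\F}$, in direct analogy with the hyperspecial case recalled in Lemma~\ref{def-px-x}. The first step is to observe that by definition of the difference divisor, $\rO_{\mathcal{Z}(x)}-\rO_{\mathcal{Z}(p^{-1}x)}=\rO_{\mathcal{D}(x)}$ in $\textup{Gr}^{1}K_0^{\mathcal{Z}(\langle x\rangle)}(\M)$ (using Lemma~\ref{linearization}), so that
\begin{equation*}
    \Int^{\mathcal{Z}}(L^{\flat}\obot\langle x\rangle)-\Int^{\mathcal{Z}}(L^{\flat}\obot\langle p^{-1}x\rangle)=\chi\!\left(\M,{^{\mathbb{L}}\mathcal{Z}}(L^{\flat})\otimes^{\mathbb{L}}_{\rO_{\M}}\rO_{\mathcal{D}(x)}\right).
\end{equation*}
The content of the lemma is then to replace $\mathcal{D}(x)$ by $\M_{\F}$ in this expression. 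By Corollary~\ref{spe-decom}(a) we have $\mathcal{D}(x)=2\cdot\exc_{\M}+\widetilde{\mathcal{D}}(x)$ (since $\nu_p(q(x))\geq 2$), and by Proposition~\ref{geometry-M}(iii), $\M_{\F}=2\cdot\exc_{\M}+\MFF+\MVV+\MFV+\MVF$. So the claim is equivalent to the statement that $^{\mathbb{L}}\mathcal{Z}(L^{\flat})$ has the same $\chi$ against $\widetilde{\mathcal{D}}(x)$ as against $\MFF+\MVV+\MFV+\MVF$.

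The second step uses the decomposition of $\widetilde{\mathcal{D}}(x)$ from Lemma~\ref{dec-diff-div}: for $\nu_p(q(x))\geq 2$,
\begin{equation*}
    \widetilde{\mathcal{D}}(x)=\N^{\textup{I}+}_0(x_0\cdot x)+\N^{\textup{I}-}_0(x_0\cdot \overline{x})+\N^{\textup{II}+}_0(x)+\N_0^{\textup{II}-}(x^{\prime}).
\end{equation*}
Each of these four regular Cartier divisors is the strict transform of a cyclic deformation space $\widetilde{\N}_0(\bullet)$, and I would argue that pairing $^{\mathbb{L}}\mathcal{Z}(L^{\flat})$ against each such divisor computes an intersection number on that $\widetilde{\N}_0(\bullet)$ of the pulled-back special divisors cut out by the two generators of $L^{\flat}$; this is where Lemma~\ref{blow-up-N0(x)}(c) and (d), together with the proper-intersection statement of Lemma~\ref{proper-with-strict}, make the comparison to the hyperspecial picture possible. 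On the other side, one computes $\chi(\M,{^{\mathbb{L}}\mathcal{Z}}(L^{\flat})\otimes^{\mathbb{L}}_{\rO_{\M}}\rO_{\M^{\textup{FF}}})$ etc.\ by identifying each component $\M^{\textup{FF}}\simeq\textup{Bl}_{(0,0)}\widehat{\mathbb{A}}_{\F}^{2}$ and using the open cover of $\S$\ref{opencover} to see how the generators of $L^{\flat}$ restrict; the blow-up invariance of Corollary~\ref{invariance-hpe} then relates this to an intersection on the original $\N_{\F}$. The core computation is then to match, divisor-by-divisor, the four terms of $\widetilde{\mathcal{D}}(x)$ with the four components $\MFF,\MVV,\MFV,\MVF$ — which is essentially the correspondence of terms anticipated in the introduction's Lemma~\ref{term-identity}.

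The main obstacle I expect is the bookkeeping needed to pass the support of $^{\mathbb{L}}\mathcal{Z}(L^{\flat})$ through the blow up $\pi:\M\to\N(x_0)$ and keep track of the exceptional contributions. Concretely: $\mathcal{Z}(x_i)$ for a generator $x_i$ of $L^{\flat}$ contains $\exc_{\M}$ with multiplicity $\nu_p(q(x_i))+1$ by Corollary~\ref{spe-decom}(a), so $^{\mathbb{L}}\mathcal{Z}(L^{\flat})$ itself is not concentrated away from $\exc_{\M}$, and one must handle cross-terms of the form $[\rO_{\exc_{\M}}\otimes^{\mathbb{L}}\rO_{\text{component}}]$ using the explicit line bundle formulas of Lemma~\ref{self-int-exc}, Lemma~\ref{int-on-exc} and Corollary~\ref{spe-decom}(b). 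Organizing these so that the exceptional corrections on the $\widetilde{\mathcal{D}}(x)$ side cancel exactly against those on the $\M_{\F}$ side (both sides share the $2\cdot\exc_{\M}$ summand, which should help) is the delicate point; once it is set up correctly, the remaining equalities are the proper-intersection computations on the surfaces $\widetilde{\N}_0(\bullet)$ and $\N_{\F}$, handled by the projection formula as in Lemma~\ref{inv-blow-up-NF}(c). A secondary point to check carefully is that the hypothesis $\nu_p(q(x))\geq\max\{\textup{val}(L^{\flat}),2\}$ is exactly what makes $x$ a minimal element, orthogonal to $L^{\flat}$, so that the linear-invariance results of Corollary~\ref{linear-invariance-2} apply and the difference divisor decomposition has the stated four-term shape rather than a degenerate one.
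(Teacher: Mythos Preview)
Your first step (reducing to $\chi(\M,{^{\mathbb{L}}\mathcal{Z}}(L^{\flat})\otimes^{\mathbb{L}}\rO_{\mathcal{D}(x)})$) matches the paper. After that, however, the paper takes a much simpler route than your term-by-term matching of $\widetilde{\mathcal{D}}(x)$ against $\MFF+\MVV+\MFV+\MVF$.

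The paper's idea is to decompose the \emph{other} factor, ${^{\mathbb{L}}\mathcal{Z}}(L^{\flat})$, rather than $\mathcal{D}(x)$ and $\M_{\F}$. By Lemma~\ref{proper-with-strict}, the class ${^{\mathbb{L}}\mathcal{Z}}(L^{\flat})$ is a linear combination of classes of horizontal quasi-canonical curves $[\mathcal{O}_{W_s}]$ and of line-bundle classes $[\mathcal{O}(a,b)]$ supported on $\exc_{\M}$. For the horizontal pieces, the hypothesis $\nu_p(q(x))\geq\max(L^{\flat})$ enters via \cite[Lemma~5.11]{GK93}, which gives $\chi(\M,[\mathcal{O}_{W_s}]\otimes^{\mathbb{L}}\rO_{\mathcal{D}(x)})=\chi(\M,[\mathcal{O}_{W_s}]\otimes^{\mathbb{L}}\rO_{\M_{\F}})$ directly. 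For the exceptional pieces, both $[\rO_{\exc_{\M}}\otimes^{\mathbb{L}}\rO_{\mathcal{D}(x)}]$ and $[\rO_{\exc_{\M}}\otimes^{\mathbb{L}}\rO_{\M_{\F}}]$ are $\mathcal{O}(0,0)$ (use Proposition~\ref{difference-decomposition} for the former; $\M_{\F}$ is principal for the latter), so pairing with any $[\mathcal{O}(a,b)]$ gives $0$ on both sides. This finishes the first equality with no divisor-by-divisor matching. For the second equality the paper simply observes, via Corollary~\ref{spe-decom}(b), that ${^{\mathbb{L}}\mathcal{Z}}(L^{\flat})\otimes^{\mathbb{L}}\rO_{\exc_{\M}}=\mathcal{O}(0,-1)\cdot\mathcal{O}(0,-1)=0$, so the $2\cdot\exc_{\M}$ summand of $\M_{\F}$ drops out.

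Your matching of the four summands of $\widetilde{\mathcal{D}}(x)$ with $\MFF,\MVV,\MFV,\MVF$ is not part of this lemma's proof; that correspondence is really the content of Lemma~\ref{term-identity}, proved afterwards by separate arguments (Lemmas~\ref{int-MFF}, \ref{int-MVV} and Corollaries~\ref{int-MFV}, \ref{int-MVF}) together with analytic input. Your route might be made to work but would establish something strictly stronger than what is asked here, and the ``main obstacle'' you anticipate (exceptional bookkeeping) is precisely what the paper's decomposition of ${^{\mathbb{L}}\mathcal{Z}}(L^{\flat})$ sidesteps. One minor slip: the hypothesis on $\nu_p(q(x))$ makes $x$ the element of \emph{largest} valuation in $L^{\flat}\obot\langle x\rangle$, not a minimal element; this is exactly the direction needed for the Gross--Keating argument on $W_s$.
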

\begin{proof}
    By Lemma \ref{linearization}, we know that
    \begin{equation*}
        \Int^{\mathcal{Z}}(L^{\flat}\obot\langle x\rangle)-\Int^{\mathcal{Z}}(L^{\flat}\obot\langle p^{-1}x\rangle)=\chi(\M,{^{\mathbb{L}}\mathcal{Z}}(L^{\flat})\otimes^{\mathbb{L}}_{\rO_{\M}}\rO_{\mathcal{D}(x)}).
    \end{equation*}
    Therefore the first equality is equivalent to
    \begin{equation}
        \chi(\M,{^{\mathbb{L}}\mathcal{Z}}(L^{\flat})\otimes^{\mathbb{L}}_{\rO_{\M}}\rO_{\mathcal{D}(x)})=\chi(\M,{^{\mathbb{L}}\mathcal{Z}}(L^{\flat})\otimes^{\mathbb{L}}_{\rO_{\M}}\rO_{\M_{\F}}).
        \label{int-spe-fiber}
    \end{equation}
    By Lemma \ref{proper-with-strict}, 
    \begin{equation*}
        {^{\mathbb{L}}\mathcal{Z}}(L^{\flat})=\textup{linear combinations of}\,\,[\mathcal{O}(1,0)],\,\,[\mathcal{O}(0,1)]\,\,\textup{and}\,\,[\mathcal{O}_{W_s}].
    \end{equation*}
    For elements of the form $[\mathcal{O}_{W_s}]$, by the moduli interpretation of the special divisor $\mathcal{Z}(x)$ and \cite[Lemma 5.11]{GK93}, we have
    \begin{equation*}
        \chi(\M,[\mathcal{O}_{W_s}]\otimes^{\mathbb{L}}_{\rO_{\M}}\rO_{\mathcal{D}(x)})=\chi(\M,[\mathcal{O}_{W_s}]\otimes^{\mathbb{L}}_{\rO_{\M}}\rO_{\M_{\F}}).
    \end{equation*}
    For the elements $[\mathcal{O}(1,0)]$ and $[\mathcal{O}(0,1)]$, we have 
    \begin{equation*}
        \chi(\M,[\mathcal{O}(1,0)]\otimes^{\mathbb{L}}_{\rO_{\M}}\rO_{\mathcal{D}(x)})=\chi(\M,[\mathcal{O}(0,1)]\otimes^{\mathbb{L}}_{\rO_{\M}}\rO_{\mathcal{D}(x)})=0
    \end{equation*}
    by Proposition \ref{difference-decomposition}. On the other hand, by Proposition \ref{geometry-M} (iii), we have
    \begin{equation*}
        \chi(\M,[\mathcal{O}(1,0)]\otimes^{\mathbb{L}}_{\rO_{\M}}\rO_{\M_{\F}})=\chi(\M,[\mathcal{O}(0,1)]\otimes^{\mathbb{L}}_{\rO_{\M}}\rO_{\M_{\F}})=0.
    \end{equation*}
    Therefore (\ref{int-spe-fiber}) is true and hence the first equality in the Lemma.
    \par
    For the second equality: Let $x_1,x_2$ be a basis of $L$, we have
    \begin{align*}
        {^{\mathbb{L}}\mathcal{Z}}(L^{\flat})\otimes^{\mathbb{L}}_{\rO_{\M}}\rO_{\exc_\M}&=[\rO_{\mathcal{Z}(x_1)}\otimes_{\rO_\M}^{\mathbb{L}}\rO_{\mathcal{Z}(x_2)}\otimes_{\rO_\M}^{\mathbb{L}}\rO_{\exc_\M}]\\
        &=\left([\rO_{\mathcal{Z}(x_1)}\otimes_{\rO_\M}^{\mathbb{L}}\rO_{\exc_\M}]\right)\otimes_{\rO_{\exc_\M}}^{\mathbb{L}}\left([\rO_{\mathcal{Z}(x_2)}\otimes_{\rO_\M}^{\mathbb{L}}\rO_{\exc_\M}]\right)\\
        &=\rO(0,-1)\otimes_{\rO_\M}^{\mathbb{L}}\rO(0,-1)=0.
    \end{align*}
    Therefore
    \begin{align*}
        {^{\mathbb{L}}\mathcal{Z}}(L^{\flat})\otimes^{\mathbb{L}}_{\rO_{\M}}\rO_{\M_{\F}}&={^{\mathbb{L}}\mathcal{Z}}(L^{\flat})\otimes^{\mathbb{L}}_{\rO_{\M}}\left(2\rO_{\exc_{\M}}+\rO_{\MFF}+\rO_{\MVV}+\rO_{\MFV}+\rO_{\MFV}\right)\\
        &={^{\mathbb{L}}\mathcal{Z}}(L^{\flat})\otimes^{\mathbb{L}}_{\rO_{\M}}\left(\rO_{\MFF}+\rO_{\MVV}+\rO_{\MFV}+\rO_{\MFV}\right).
    \end{align*}
    Therefore the second equality is true.
\end{proof}
\par

\subsection{Intersections on $\MFF$}
We defined $p_{+}:\M^{+}\rightarrow\N$ as a composition $\M^{+}\rightarrow\M\stackrel{\pi}\rightarrow\N(x_0)\stackrel{s_{+}}\rightarrow\N$. Notice that $\MFF$ is a closed formal subscheme of $\M^{+}_{\F}$. Let $\pi^{\textup{FF}}:\MFF\rightarrow\N_{\F}$ be the composition $\MFF\hookrightarrow\M^{+}_{\F}\xrightarrow{(p_{+})_{\F}}\N_{\F}$. Using the open cover we fixed in $\S$\ref{opencover} and Lemma \ref{coordinate-change}, we have the following isomorphism of formal schemes over $\N_\F$:
\begin{equation}
    \iota^{\textup{FF}}:\MFF\xrightarrow{\sim}\widetilde{\N}_\F.
    \label{MFF-id}
\end{equation}
\par
Let $x\in\B$ be a nonzero element. By the moduli interpretations of the divisor $\mathcal{Z}^{+}(x)$ on $\M^{+}$ in Lemma \ref{divisor} and the isomorphism $\iota^{\textup{FF}}:\MFF\rightarrow\widetilde{\N}_{\F}$, we have
\begin{equation}
    \mathcal{Z}(x)\cap\MFF=\left(\iota^{\textup{FF}}\right)^{\ast}\mathcal{Z}_{\widetilde{\N}_{\F}}(x)\coloneqq\mathcal{Z}_{\widetilde{\N}_{\F}}(x)\times_{\widetilde{\N}_{\F},\iota^{\textup{FF}}}\MFF=\mathcal{Z}_{\N_{\F}}(x)\times_{\N_{\F},\pi^{\FF}}\MFF.
    \label{spe-MFF}
\end{equation}
\begin{lemma}
    Let $L^{\flat}\subset\B$ be a $\zp$-lattice of rank $2$. Then
    \begin{equation*}
        \chi(\M,{^{\mathbb{L}}\mathcal{Z}}(L^{\flat})\otimes^{\mathbb{L}}_{\rO_{\M}}\rO_{\MFF})=\chi(\N,{^{\mathbb{L}}\mathcal{Z}_\N}(L^{\flat})\otimes^{\mathbb{L}}_{\rO_{\N}}\rO_{\N_{\F}}).
    \end{equation*}
    \label{int-MFF}
\end{lemma}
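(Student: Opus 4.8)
The plan is to push the computation from $\M$ down to the regular Cartier divisor $\MFF$, identify $\MFF$ with the blow-up $\widetilde{\N}_{\F}$ of $\N_{\F}$ via (\ref{MFF-id}), and then invoke the blow-up invariance of Corollary \ref{invariance-hpe}. First I would fix a basis $x_1,x_2$ of $L^{\flat}$, so that ${^{\mathbb{L}}\mathcal{Z}}(L^{\flat})$ is represented by the class of $\rO_{\mathcal{Z}(x_1)}\otimes^{\mathbb{L}}_{\rO_{\M}}\rO_{\mathcal{Z}(x_2)}$ and ${^{\mathbb{L}}\mathcal{Z}}_{\N}(L^{\flat})$ by the class of $\rO_{\mathcal{Z}_{\N}(x_1)}\otimes^{\mathbb{L}}_{\rO_{\N}}\rO_{\mathcal{Z}_{\N}(x_2)}$. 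Since $\M$ is $3$-dimensional regular, $\textup{F}^{4}K_0^{(\cdot)}(\M)=0$, so cupping a representative with the codimension-$1$ class $[\rO_{\MFF}]$ kills the ambiguity of the $\textup{Gr}^{2}$-class and the Euler characteristic on the left only depends on ${^{\mathbb{L}}\mathcal{Z}}(L^{\flat})$; likewise on the right. Hence we may argue with honest complex representatives throughout.

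Next I would restrict to $\MFF$. Let $i\colon\MFF\hookrightarrow\M$ be the closed immersion; it is a regular Cartier divisor by Proposition \ref{geometry-M}(iii). As $i_{\ast}$ is exact, the projection formula gives ${^{\mathbb{L}}\mathcal{Z}}(L^{\flat})\otimes^{\mathbb{L}}_{\rO_{\M}}\rO_{\MFF}=i_{\ast}\bigl(Li^{\ast}\,{^{\mathbb{L}}\mathcal{Z}}(L^{\flat})\bigr)$, hence $\chi(\M,{^{\mathbb{L}}\mathcal{Z}}(L^{\flat})\otimes^{\mathbb{L}}_{\rO_{\M}}\rO_{\MFF})=\chi(\MFF,Li^{\ast}\,{^{\mathbb{L}}\mathcal{Z}}(L^{\flat}))$, and monoidality of $Li^{\ast}$ gives $Li^{\ast}\,{^{\mathbb{L}}\mathcal{Z}}(L^{\flat})=Li^{\ast}\rO_{\mathcal{Z}(x_1)}\otimes^{\mathbb{L}}_{\rO_{\MFF}}Li^{\ast}\rO_{\mathcal{Z}(x_2)}$. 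Then I would compute $Li^{\ast}\rO_{\mathcal{Z}(x_j)}$: by Lemma \ref{divisor} and Corollary \ref{cor: divisor} each $\mathcal{Z}(x_j)$ is a Cartier divisor on $\M$, and by (\ref{spe-MFF}) its scheme-theoretic trace on $\MFF$ is $\mathcal{Z}_{\N_{\F}}(x_j)\times_{\N_{\F},\pi^{\textup{FF}}}\MFF$, the pullback of a Cartier divisor on $\N_{\F}$ (here $\mathcal{Z}_{\N_{\F}}(x_j)$ is Cartier because $\mathcal{Z}_{\N}(x_j)$ is flat over $W$, Lemma \ref{proper-hpe}). In particular $\MFF\not\subset\mathcal{Z}(x_j)$, so the local equation of $\mathcal{Z}(x_j)$ restricts to a nonzerodivisor on the regular surface $\MFF$, there is no higher $\mathrm{Tor}$, and $Li^{\ast}\rO_{\mathcal{Z}(x_j)}=\rO_{\mathcal{Z}(x_j)\cap\MFF}$. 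Transporting along the isomorphism $\iota^{\textup{FF}}\colon\MFF\xrightarrow{\sim}\widetilde{\N}_{\F}$ of (\ref{MFF-id}) and using (\ref{spe-MFF}) again, $\mathcal{Z}(x_j)\cap\MFF$ corresponds to $\mathcal{Z}_{\widetilde{\N}_{\F}}(x_j)$, so the left-hand side of the Lemma equals $\chi\bigl(\widetilde{\N}_{\F},\rO_{\mathcal{Z}_{\widetilde{\N}_{\F}}(x_1)}\otimes^{\mathbb{L}}_{\rO_{\widetilde{\N}_{\F}}}\rO_{\mathcal{Z}_{\widetilde{\N}_{\F}}(x_2)}\bigr)$.

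To finish I would apply Corollary \ref{invariance-hpe} (equation (\ref{inv-hpe-spe})) to the linearly independent pair $x_1,x_2$, rewriting this as $\chi\bigl(\N_{\F},\rO_{\mathcal{Z}_{\N_{\F}}(x_1)}\otimes^{\mathbb{L}}_{\rO_{\N_{\F}}}\rO_{\mathcal{Z}_{\N_{\F}}(x_2)}\bigr)$; and by derived base change along $\N_{\F}\hookrightarrow\N$, using once more that $\mathcal{Z}_{\N}(x_j)$ is flat over $W$ so that $\rO_{\mathcal{Z}_{\N}(x_j)}\otimes^{\mathbb{L}}_{\rO_{\N}}\rO_{\N_{\F}}=\rO_{\mathcal{Z}_{\N_{\F}}(x_j)}$, one obtains ${^{\mathbb{L}}\mathcal{Z}}_{\N}(L^{\flat})\otimes^{\mathbb{L}}_{\rO_{\N}}\rO_{\N_{\F}}=\rO_{\mathcal{Z}_{\N_{\F}}(x_1)}\otimes^{\mathbb{L}}_{\rO_{\N_{\F}}}\rO_{\mathcal{Z}_{\N_{\F}}(x_2)}$. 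Taking $\chi$ of this last identity and chaining the displayed equalities yields the asserted formula.

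The argument is a chain of standard derived-category manipulations once (\ref{MFF-id}), (\ref{spe-MFF}) and Corollary \ref{invariance-hpe} are available, so the only step that demands real attention is the $\mathrm{Tor}$-vanishing in the middle paragraph, namely that no $\mathcal{Z}(x_j)$ contains the two-dimensional component $\MFF$. That is exactly where (\ref{spe-MFF}) does the work: it identifies the local Hecke picture on $\M^{+}$ along $\MFF$ with the hyperspecial picture on $\N$, so $\mathcal{Z}(x_j)\cap\MFF$ is the pullback of an honest proper divisor on $\N_{\F}$ rather than a cycle that could absorb a whole component. Everything else — flatness over $W$, exactness of $i_{\ast}$, monoidality of $Li^{\ast}$, and the invariance of Euler characteristics under the closed immersions $\MFF\hookrightarrow\M$ and $\N_{\F}\hookrightarrow\N$ — is routine.
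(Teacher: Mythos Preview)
Your proof is correct and follows essentially the same approach as the paper's: fix a basis, pass to $\MFF$ via the projection formula, invoke (\ref{spe-MFF}) and (\ref{MFF-id}) to land on $\widetilde{\N}_{\F}$, then apply Corollary \ref{invariance-hpe} and unwind on $\N$. The paper's proof is a terse four-line chain of equalities that implicitly uses exactly the steps you spell out; your added care about the $\mathrm{Tor}$-vanishing (that $\MFF\not\subset\mathcal{Z}(x_j)$, via (\ref{spe-MFF})) and the flatness of $\mathcal{Z}_{\N}(x_j)$ over $W$ makes explicit what the paper leaves to the reader.
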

\begin{proof}
    Let $x,y$ be a $\zp$-basis of the lattice $L^{\flat}$. Then
    \begin{align*}
        \chi(\M,{^{\mathbb{L}}\mathcal{Z}}(L^{\flat})\otimes^{\mathbb{L}}_{\rO_{\M}}\rO_{\MFF})=&\chi(\MFF,\rO_{\mathcal{Z}(x)\cap\MFF}\otimes_{\rO_{\MFF}}^{\mathbb{L}}\rO_{\mathcal{Z}(y)\cap\MFF})\\
        \overset{(\ref{spe-MFF})}{=}&\chi(\widetilde{\N}_{\F},\rO_{\mathcal{Z}_{\widetilde{\N}_\F}(x)}\otimes^{\mathbb{L}}_{\rO_{\widetilde{\N}_{\F}}}\rO_{\mathcal{Z}_{\widetilde{\N}_\F}(y)})\\
        \overset{(\ref{inv-hpe-spe})}{=}&\chi(\N_{\F},\rO_{\mathcal{Z}_{\N_\F}(x)}\otimes^{\mathbb{L}}_{\rO_{\N_{\F}}}\rO_{\mathcal{Z}_{\N_\F}(y)})\\
        =&\chi(\N,{^{\mathbb{L}}\mathcal{Z}_\N}(L^{\flat})\otimes^{\mathbb{L}}_{\rO_{\N}}\rO_{\N_{\F}}).
    \end{align*}
\end{proof}
\label{MFF-CAL}

\subsection{Intersections on $\MVV$}
The ideas of the computations in this part is similar to $\S$\ref{MFF-CAL}. We have defined $p_{-}:\M^{-}\rightarrow\N$ as a composition $\M^{-}\rightarrow\M\stackrel{\pi}\rightarrow\N(x_0)\stackrel{s_{-}}\rightarrow\N$. Notice that $\MVV$ is a closed formal subscheme of $\M^{-}_{\F}$. Let $\pi^{\textup{VV}}:\MVV\rightarrow\N_{\F}$ be the composition $\MVV\hookrightarrow\M^{-}_{\F}\xrightarrow{(p_{-})_{\F}}\N_{\F}$. Using the open cover we fixed in $\S$\ref{opencover} and Lemma \ref{coordinate-change}, we have the following isomorphism of formal schemes over $\N_\F$:
\begin{equation}
    \iota^{\textup{VV}}:\MVV\xrightarrow{\sim}\widetilde{\N}_\F.
    \label{MVV-id}
\end{equation}
\par
Let $x\in\B$ be a nonzero element. By the moduli interpretations of the divisor $\mathcal{Z}^{-}(x)$ on $\M^{-}$ in Lemma \ref{divisor} and the isomorphism $\iota^{\textup{VV}}:\MVV\rightarrow\widetilde{\N}_{\F}$, we have
\begin{equation}
    \mathcal{Z}(x)\cap\MVV=\left(\iota^{\textup{VV}}\right)^{\ast}\mathcal{Z}_{\widetilde{\N}_{\F}}(x^{\prime})\coloneqq\mathcal{Z}_{\widetilde{\N}_{\F}}(x^{\prime})\times_{\widetilde{\N}_{\F},\iota^{\textup{VV}}}\MFF=\mathcal{Z}_{\N_{\F}}(x^{\prime})\times_{\N_{\F},\pi^{\VV}}\MVV.
    \label{spe-MVV}
\end{equation}
\begin{lemma}
    Let $L^{\flat}\subset\B$ be a $\zp$-lattice of rank $2$. Let $L^{\flat\prime}\subset\B$ be the image of $L^{\flat}$ under the isometric homomorphism $(\cdot)^{\prime}:\B\rightarrow\B$. Then
    \begin{equation*}
        \chi(\M,{^{\mathbb{L}}\mathcal{Z}}(L^{\flat})\otimes^{\mathbb{L}}_{\rO_{\M}}\rO_{\MVV})=\chi(\N,{^{\mathbb{L}}\mathcal{Z}_\N}(L^{\flat\prime})\otimes^{\mathbb{L}}_{\rO_{\N}}\rO_{\N_{\F}}).
    \end{equation*}
    \label{int-MVV}
\end{lemma}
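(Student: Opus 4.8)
The plan is to mirror the proof of Lemma \ref{int-MFF} with the ``$+$''-data replaced by the ``$-$''-data, carrying along the conjugation twist $(\cdot)^{\prime}\colon\B\to\B$, $b\mapsto x_0 b x_0^{-1}$. First I would fix a $\zp$-basis $\{x,y\}$ of $L^{\flat}$. Since $L^{\flat}$ has rank $2$, the elements $x,y$ are linearly independent in $\B$; and since $(\cdot)^{\prime}$ is an isometry of $(\B,q)$ (one checks $q(x_0 b x_0^{-1})=q(b)$ using that $q(x_0)$ is central in $\B$), the pair $\{x^{\prime},y^{\prime}\}$ is a $\zp$-basis of the rank-$2$ lattice $L^{\flat\prime}$ made of two linearly independent elements. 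By associativity of the derived tensor product, we may restrict $\mathcal{Z}(x)$ and $\mathcal{Z}(y)$ to $\MVV$ before tensoring, so that
\begin{equation*}
    \chi(\M,{^{\mathbb{L}}\mathcal{Z}}(L^{\flat})\otimes^{\mathbb{L}}_{\rO_{\M}}\rO_{\MVV})=\chi\!\left(\MVV,\rO_{\mathcal{Z}(x)\cap\MVV}\otimes^{\mathbb{L}}_{\rO_{\MVV}}\rO_{\mathcal{Z}(y)\cap\MVV}\right).
\end{equation*}

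The decisive input is the cycle identity (\ref{spe-MVV}): because $\MVV\subset\M^{-}_{\F}$ and, by the $(-)$-case of Lemma \ref{divisor}, $\mathcal{Z}^{-}(x)$ is the base change of $\mathcal{Z}_{\N}(x^{\prime})$ along $p_{-}$, the restriction $\mathcal{Z}(x)\cap\MVV$ equals $(\iota^{\textup{VV}})^{\ast}\mathcal{Z}_{\widetilde{\N}_{\F}}(x^{\prime})$ under the isomorphism $\iota^{\textup{VV}}\colon\MVV\xrightarrow{\sim}\widetilde{\N}_{\F}$ of (\ref{MVV-id}), and likewise $\mathcal{Z}(y)\cap\MVV=(\iota^{\textup{VV}})^{\ast}\mathcal{Z}_{\widetilde{\N}_{\F}}(y^{\prime})$. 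Hence the right-hand side above equals $\chi(\widetilde{\N}_{\F},\rO_{\mathcal{Z}_{\widetilde{\N}_{\F}}(x^{\prime})}\otimes^{\mathbb{L}}_{\rO_{\widetilde{\N}_{\F}}}\rO_{\mathcal{Z}_{\widetilde{\N}_{\F}}(y^{\prime})})$. I would then apply the blow-up invariance of Corollary \ref{invariance-hpe}, i.e.\ (\ref{inv-hpe-spe}), to the linearly independent pair $x^{\prime},y^{\prime}$ to descend this to $\chi(\N_{\F},\rO_{\mathcal{Z}_{\N_{\F}}(x^{\prime})}\otimes^{\mathbb{L}}_{\rO_{\N_{\F}}}\rO_{\mathcal{Z}_{\N_{\F}}(y^{\prime})})$. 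Finally, since $\mathcal{Z}_{\N}(x^{\prime})$ and $\mathcal{Z}_{\N}(y^{\prime})$ are flat over $W$ (Lemma \ref{proper-hpe}), one has $\mathcal{Z}_{\N_{\F}}(x^{\prime})=\mathcal{Z}_{\N}(x^{\prime})\cap\N_{\F}$ with Tor-independent intersection over $W$, so this last Euler characteristic is $\chi(\N,{^{\mathbb{L}}\mathcal{Z}_{\N}}(L^{\flat\prime})\otimes^{\mathbb{L}}_{\rO_{\N}}\rO_{\N_{\F}})$; here I use that ${^{\mathbb{L}}\mathcal{Z}_{\N}}(L^{\flat\prime})$ is the image of $\rO_{\mathcal{Z}_{\N}(x^{\prime})}\otimes^{\mathbb{L}}_{\rO_{\N}}\rO_{\mathcal{Z}_{\N}(y^{\prime})}$ in $\textup{Gr}^{2}K_0^{\mathcal{Z}(L^{\flat\prime})}(\N)$ and is independent of the chosen basis.

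There is no genuinely hard step: the argument is purely formal once the three ingredients are in place — the isomorphism $\iota^{\textup{VV}}$, the twisted cycle identity (\ref{spe-MVV}), and the invariance (\ref{inv-hpe-spe}) — all of which are established above. The only subtlety worth spelling out explicitly is the bookkeeping with the conjugation automorphism: the chart $\M^{-}$ ``sees'' the second $p$-divisible group $X^{\prime}$, so every special-cycle condition on it is phrased through $x^{\prime}=x_0 x x_0^{-1}$ rather than $x$, and this is exactly what replaces $L^{\flat}$ by $L^{\flat\prime}$ on the hyperspecial side. I would therefore be careful to record this twist at each step, since it is the one place where the $\MVV$ computation differs from the $\MFF$ one.
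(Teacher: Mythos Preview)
Your proposal is correct and follows essentially the same approach as the paper, which simply states that the proof is identical to that of Lemma~\ref{int-MFF} after replacing $\MFF$ by $\MVV$. You have carefully unwound this replacement, correctly tracking the conjugation twist $x\mapsto x^{\prime}$ that accounts for the appearance of $L^{\flat\prime}$ on the hyperspecial side.
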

\begin{proof}
    The proof is the same as that of Lemma \ref{int-MFF}, we just need to replace $\MFF$ there by $\MVV$. 
\end{proof}
\subsection{Intersections on $\MFV$}
\label{MFV-CAL}
Let 
\begin{equation}
   \left(X_1^{\textup{FV}}\stackrel{x_{0}^{\textup{F}}}\longrightarrow X_{1}^{\prime\FV},\left(\rho_1^{\FV},\rho_1^{\prime\FV}\right)\right),\,\,\,\,\left(X_2^{\FV}\stackrel{x_{0}^{\textup{V}}}\longrightarrow X_{2}^{\prime\textup{\FV}},\left(\rho_2^{\FV},\rho_2^{\prime\FV}\right)\right)
   \label{MFV-univ}
\end{equation}
be the base change of the universal object (\ref{univ-obj-M}) over $\M$ to $\MFV$. By the equations of $\MFV$ in Proposition \ref{geometry-M}, the morphism $x_0^{\textup{F}}$ is isomorphic to the Frobenius morphism, while $x_0^{\textup{V}}$ is isomorphic to the Verschiebung morphism. Hence there exist two isomorphisms $\iota_1^{\textup{F}}:X_1^{\FV,(p)}\rightarrow X_1^{\prime\FV}$ and $\iota_2^{\textup{V}}:X_2^{\prime\FV,(p)}\rightarrow X_2^{\FV}$ over $\MFV$ such that the following diagrams commute,
\begin{equation*}
    \begin{tikzcd}
& X_1^{\FV,(p)} \ar[dd, "\iota_1^{\textup{F}}"]
\\
X_1^{\textup{FV}} \ar[ur, "\textup{F}"] \ar[dr, "x_0^{\textup{F}}"']
\\
& X_1^{\prime\FV} 
\end{tikzcd}
\begin{tikzcd}
& X_2^{\prime\FV,(p)} \ar[dd, "\iota_2^{\textup{V}}"'] \ar[dr, "\textup{V}"]
\\
& &X_2^{\prime\textup{FV}}  
\\
& X_2^{\FV} \ar[ur, "x_0^{\textup{V}}"']
\end{tikzcd}
\end{equation*}
here we use $\textup{F},\textup{V}$ to represent the relative Frobenius and Verschiebung morphism.
\par
For a morphism $f:S\rightarrow\MFV$. Let $(X_{1,S}^{\FV},\rho_{1,S}^{\FV})$ and $(X_{2,S}^{\prime\FV},\rho_{2,S}^{\prime\FV})$ be the base change of the $p$-divisible groups $(X_{1}^{\FV},\rho_{1}^{\FV})$ and $(X_{2}^{\prime\FV},\rho_{2,S}^{\prime\FV})$ to $S$ through the morphism $f$. Then we get a map $\MFV(S)\rightarrow\N_{\F}(S):f\mapsto\left(\left(X_{1,S}^{\FV},\rho_{1,S}^{\FV}\right),\left(X_{2}^{\prime\FV},\rho_{2,S}^{\prime\FV}\right)\right)$. Therefore we obtain a morphism between formal schemes $\pi^{\FV}:\MFV\rightarrow\N_{\F}$. Using the open cover we fixed in $\S$\ref{opencover} and Lemma \ref{coordinate-change}, we have the following isomorphism of formal schemes over $\N_\F$:
\begin{equation}
    \iota^{\textup{FV}}:\MFV\xrightarrow{\sim}\widetilde{\N}_\F.
    \label{MFV-id}
\end{equation}
Denote by $\MFV_{\circ}=\left(\iota^{\FV}\right)^{\ast}\left(\widetilde{\N}_\F^{\circ}\right)$, $\MVF_{\bullet}=\left(\iota^{\FV}\right)^{\ast}\left(\widetilde{\N}_\F^{\bullet}\right)$. Then $\MFV_{\circ}\subset\M^{+}$, $\MFV_{\bullet}\subset\M^{-}$ and $\{\MFV_{\circ},\MFV_{\bullet}\}$ gives an open cover of $\MFV$.
\par
Let $\N_{0,\F}=\N_{0}\times_{W}\F$. Denote by $\textup{Fr}:\N_{0,\F}\rightarrow\N_{0,\F}$ the relative Frobenius morphism of $\N_{0,\F}$. The morphism sends a pair $(X,\rho)\in\N_{0,\F}(S)$ to the pair $(X^{(p)},\rho^{(p)})$, here $X^{(p)}=X\times_{S,\textup{Fr}_{S}}S$ where $\textup{Fr}_{S}:S\rightarrow S$ is the absolute Frobenius morphism. Denote by $p^{\FV}_{\circ}:\MFV_{\circ}\rightarrow\N_{\F}$ the restriction of the morphism $p_{+}:\M^{+}\rightarrow\N$ (cf. (\ref{pMtoN})) to $\MFV_{\circ}$. Denote by $p^{\FV}_{\bullet}:\MFV_{\bullet}\rightarrow\N_{\F}$ the restriction of the morphism $p_{-}:\M^{-}\rightarrow\N$ (cf. (\ref{pMtoN})) to $\MFV_{\bullet}$. Then we have
\begin{align}
    p_{\circ}^{\FV}=\left(\textup{Id}\times\textup{Fr}\right)\circ\pi^{\FV},\,\,\,\,
    p_{\bullet}^{\FV}=\left(\textup{Fr}\times\textup{Id}\right)\circ\pi^{\FV}.
    \label{MFVtoN}
\end{align}
\par
Let $x\in\B$ be a nonzero element such that $\nu_p(q(x))\geq0$. By the moduli interpretation of the divisor $\mathcal{Z}(x)$ in Lemma \ref{divisor}, we have
\begin{equation}
   \mathcal{Z}(x)\cap\MFV_{\circ}=\left(\pi^{\FV}\right)^{\ast}\left(\textup{Id}\times\textup{Fr}\right)^{\ast}\mathcal{Z}_{\N_{\F}}(x),\,\, \mathcal{Z}(x)\cap\MFV_{\bullet}=\left(\pi^{\FV}\right)^{\ast}\left(\textup{Fr}\times\textup{Id}\right)^{\ast}\mathcal{Z}_{\N_{\F}}(x^{\prime}),
   \label{spe-mfv}
\end{equation}
Notice that the quasi-isogeny $x_0\cdot x=x^{\prime}\cdot x_0:X_1^{\FV}\dashrightarrow X_2^{\prime\FV}$ lifts to an isogeny over $\mathcal{Z}(x)\cap\MFV$. Therefore
\begin{equation}
    \mathcal{Z}(x)\cap\MFV_{\circ}\subset\left(\pi^{\FV}\right)^{\ast}\mathcal{Z}_{\N_{\F}}(x_0\cdot x),\,\, \mathcal{Z}(x)\cap\MFV_{\bullet}=\left(\pi^{\FV}\right)^{\ast}\mathcal{Z}_{\N_{\F}}(x^{\prime}\cdot x_0).
    \label{spe-MFV}
\end{equation}
\begin{lemma}
    Let $x\in\B$ be a nonzero element such that $\nu_p(q(x))\geq0$. Then we have the following identity of effective Cartier divisors on $\MFV$:
    \begin{equation}
        \mathcal{Z}(x)\cap\MFV=\left(\iota^{\textup{FV}}\right)^{\ast}\left(p\cdot\mathcal{Z}_{\widetilde{\N}_{\F}}(\overline{x_0}^{-1}\cdot x)+\exc_{\widetilde{\N}_{\F}}\right).
        \label{special-cycle-MFV-decom}
    \end{equation}
    \label{spe-cyc-MFV0dec}
\end{lemma}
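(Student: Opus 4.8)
The plan is to prove (\ref{special-cycle-MFV-decom}) as an identity of effective Cartier divisors by comparing the two sides on the open cover $\{\MFV_{\circ},\MFV_{\bullet}\}$ of $\MFV$, after transporting everything to $\widetilde{\N}_{\F}$ along the isomorphism $\iota^{\FV}$ of (\ref{MFV-id}). Under $\iota^{\FV}$ the morphism $\pi^{\FV}$ becomes the blow-up $\pi_{\F}\colon\widetilde{\N}_{\F}\to\N_{\F}$, the charts $\MFV_{\circ},\MFV_{\bullet}$ become $\widetilde{\N}_{\F}^{\circ},\widetilde{\N}_{\F}^{\bullet}$, and, since $\mathcal{Z}_{\widetilde{\N}_{\F}}(z)=\pi_{\F}^{\ast}\mathcal{Z}_{\N_{\F}}(z)$ by definition, the right-hand side of (\ref{special-cycle-MFV-decom}) becomes $p\cdot\pi_{\F}^{\ast}\mathcal{Z}_{\N_{\F}}(z)+\exc_{\widetilde{\N}_{\F}}$, where $z\coloneqq\overline{x_0}^{-1}\cdot x$ and $\mathcal{Z}_{\N_{\F}}(z)\coloneqq\varnothing$ when $\nu_p(q(z))<0$, i.e.\ when $\nu_p(q(x))=0$. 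On the other side, by Lemma~\ref{divisor} we have $\mathcal{Z}(x)=\mathcal{Z}_{\N}(x)\times_{\N}\M^{+}$ on $\M^{+}$ and $\mathcal{Z}(x)=\mathcal{Z}_{\N}(x')\times_{\N}\M^{-}$ on $\M^{-}$ as Cartier divisors; restricting these to $\MFV_{\circ}\subset\M^{+}$ and $\MFV_{\bullet}\subset\M^{-}$ and using $p_{\circ}^{\FV}=(\textup{Id}\times\textup{Fr})\circ\pi^{\FV}$, $p_{\bullet}^{\FV}=(\textup{Fr}\times\textup{Id})\circ\pi^{\FV}$ from (\ref{MFVtoN}) yields the Cartier-divisor identities recorded in (\ref{spe-mfv}). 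Everything thus reduces to computing $(\textup{Fr}\times\textup{Id})^{\ast}\mathcal{Z}_{\N_{\F}}(x')$ and $(\textup{Id}\times\textup{Fr})^{\ast}\mathcal{Z}_{\N_{\F}}(x)$ on $\N_{\F}$ and then pulling up $\pi_{\F}$.

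For the computation on $\MFV_{\bullet}$, I would expand $\mathcal{Z}_{\N_{\F}}(x')=\sum_{i=0}^{[n/2]}\mathcal{D}_{\N_{\F}}(p^{-i}x')$ with $n=\nu_p(q(x'))=\nu_p(q(x))$, each $\mathcal{D}_{\N_{\F}}(p^{-j}x')$ being, by Lemma~\ref{cyclic-diff}(b2) reduced modulo $p$, a product of binomial factors of the shapes $t-t^{\prime p^{k}}$, $t^{p^{k}}-t^{\prime}$ and $(t^{p^{a-1}}-t^{\prime p^{b-1}})^{p-1}$ in appropriate coordinates; pulling these back along $\textup{Fr}\times\textup{Id}$, which in those coordinates is $t\mapsto t^{p}$, the characteristic-$p$ identity $t^{p}-s^{p}=(t-s)^{p}$ makes each twisted factor of shape $t-t^{\prime p^{k}}$ with $k\ge1$ acquire multiplicity $p$, and across the sum over $i$ the surviving factors telescope into exactly $p$ copies of $\mathcal{Z}_{\N_{\F}}(z)$ together with a single leftover top factor of shape $t^{p^{n+1}}-(\textup{unit})\,t^{\prime}$. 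The element that appears is $z=\overline{x_0}^{-1}\cdot x$ because over $\MFV$ the universal $x_0$ realizes the relative Frobenius, so the $\textup{Fr}\times\textup{Id}$-twist removes precisely one factor of $x_0$ from $x'=x_0\cdot x\cdot x_0^{-1}$, and the degree of $\textup{Fr}$ produces the factor $p$ in $q(x_0)$. Finally, on $\widetilde{\N}_{\F}^{\bullet}=\textup{Spf}\,\F[v^{\prime}][[t^{\prime}]]$ with $t=v^{\prime}t^{\prime}$ in the coordinates of \S\ref{opencover}, the $\pi_{\F}$-pullback of the top factor is $(v^{\prime}t^{\prime})^{p^{n+1}}-(\textup{unit})\,t^{\prime}=t^{\prime}\cdot(\textup{unit})$, hence equals $\exc_{\widetilde{\N}_{\F}}$ with multiplicity one on that chart, the complementary factor being a unit since its $t^{\prime}$-constant term is a unit. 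Combining, $\mathcal{Z}(x)\cap\MFV_{\bullet}=p\cdot\mathcal{Z}_{\widetilde{\N}_{\F}}(z)|_{\widetilde{\N}_{\F}^{\bullet}}+\exc_{\widetilde{\N}_{\F}}|_{\widetilde{\N}_{\F}^{\bullet}}$, and the mirror computation on $\widetilde{\N}_{\F}^{\circ}$ (with $\textup{Id}\times\textup{Fr}$, $t^{\prime}\mapsto t^{\prime p}$, $x$ in place of $x'$, and the leftover factor $t-(\textup{unit})\,t^{\prime p^{n+1}}$, which is clean on that chart) gives the analogous identity; the two patch.

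The main obstacle is the delicate characteristic-$p$ bookkeeping in the second step. One must carry out the telescoping precisely — in particular checking that the two charts give the same divisor on the overlap $\widetilde{\N}_{\F}^{\circ}\cap\widetilde{\N}_{\F}^{\bullet}$, equivalently that $(\textup{Fr}\times\textup{Id})^{\ast}\mathcal{Z}_{\N_{\F}}(x')$ and $(\textup{Id}\times\textup{Fr})^{\ast}\mathcal{Z}_{\N_{\F}}(x)$ agree away from the origin, which reduces to the quaternion identity $x_0\cdot x\cdot x_0^{-1}\cdot\mathfrak{f}=\pm\,\overline{\mathfrak{f}}\cdot x$ relating the Frobenius $\mathfrak{f}$ and the Verschiebung $\overline{\mathfrak{f}}$ — track the unit twists via the coordinate change of Lemma~\ref{coordinate-change} so that the complementary factor of the pulled-back top term is genuinely a unit of the chart ring (using that the image of a unit of $\F[[t,t^{\prime}]]$ in $\F[v^{\prime}][[t^{\prime}]]$ has a nonzero scalar $t^{\prime}$-constant term), and treat the low-valuation cases $\nu_p(q(x))=0,1$ separately, where $\mathcal{Z}_{\widetilde{\N}_{\F}}(z)$ degenerates (empty, resp.\ $\pi_{\F}^{\ast}(t-t^{\prime})$) and the \emph{middle} Jordan components in Lemma~\ref{cyclic-diff}(b2) are absent. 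As consistency checks one should verify that the coefficient of $\exc_{\widetilde{\N}_{\F}}$ obtained equals $p\cdot\textup{mult}_{0}\mathcal{Z}_{\N_{\F}}(z)+1$, compatibly with Proposition~\ref{difference-decomposition} and Proposition~\ref{equation-loc-exc}, and note that the components $\N^{\textup{I}-}_{0}(x_0\cdot\overline{x})$, $\N^{\textup{II}+}_{0}(x)$, $\N^{\textup{II}-}_{0}(x')$ of $\widetilde{\mathcal{D}}(x)$ (Lemma~\ref{dec-diff-div}) meet $\MFV$ only in dimension zero — their traces on $\exc_{\M}$ being disjoint, away from a point, from $\MFV\cap\exc_{\M}$ by Proposition~\ref{equation-loc-exc} — so that they do not enter the Cartier divisor $\mathcal{Z}(x)\cap\MFV$.
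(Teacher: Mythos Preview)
Your overall strategy is correct and matches the paper's: work on the two charts $\MFV_{\circ},\MFV_{\bullet}$, use (\ref{spe-mfv}) to reduce to computing $(\textup{Id}\times\textup{Fr})^{\ast}\mathcal{Z}_{\N_{\F}}(x)$ and $(\textup{Fr}\times\textup{Id})^{\ast}\mathcal{Z}_{\N_{\F}}(x')$ on $\N_{\F}$, and then identify the leftover top factor with $\exc_{\widetilde{\N}_{\F}}$ on the appropriate chart. The difference is in how that Frobenius pullback is evaluated.

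You propose an explicit polynomial computation: expand $\mathcal{Z}_{\N_{\F}}$ into Jordan factors via Lemma~\ref{cyclic-diff}(b2), apply $t\mapsto t^{p}$, and telescope. The paper instead uses the moduli interpretation together with the $\mathcal{C}_a$-decomposition of Lemma~\ref{inv-blow-up-NF}(a). Since $x_0$ realizes Frobenius on the framing object, composing with Frobenius on the second factor is the same as post-composing with $x_0$, so one gets directly
\[
(\textup{Id}\times\textup{Fr})^{\ast}\mathcal{Z}_{\N_{\F}}(x)=\mathcal{Z}_{\N_{\F}}(x_0\cdot x)-\mathcal{C}_{n+1}(x_0\cdot x)=p\cdot\mathcal{Z}_{\N_{\F}}(\overline{x_0}^{-1}\cdot x)+\mathcal{C}_{-n-1}(x_0\cdot x),
\]
the last step using $p^{-1}x_0\cdot x=\overline{x_0}^{-1}\cdot x$ and the multiplicity pattern in Lemma~\ref{inv-blow-up-NF}(a). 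This bypasses all the bookkeeping you flag: there is no need to track units through Lemma~\ref{coordinate-change}, no separate treatment of small $n$, and the agreement on the overlap is automatic because both charts are compared to the \emph{same} divisor $p\cdot\mathcal{Z}_{\N_{\F}}(\overline{x_0}^{-1}\cdot x)$ plus a single extremal component $\mathcal{C}_{\pm(n+1)}$ whose equation is exactly of the form $\nu t-t'^{p^{n+1}}$ (resp.\ $\nu' t'-t^{p^{n+1}}$), hence pulls back to $\exc_{\widetilde{\N}_{\F}}$ with multiplicity one on the relevant chart. Your route would work, but the paper's moduli shortcut via (\ref{spe-MFV}) and Lemma~\ref{inv-blow-up-NF}(a) is both shorter and cleaner.
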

\begin{proof}
    Let $n=\nu_p(q(x))\geq0$. Since $x_0^{\textup{F}}$ (resp. $x_0^{\textup{V}}$) is isomorphic to the relative Frobenius (resp. Verschiebung) morphism, we have the following equality:
    \begin{align*}
        \left(\textup{Id}\times\textup{Fr}\right)^{\ast}\mathcal{Z}_{\N_{\F}}(x)&=\mathcal{Z}_{\N_\F}(x_0\cdot x)-\mathcal{C}_{n+1}(x_0\cdot x)=p\cdot\mathcal{Z}_{\N_{\F}}(\overline{x_0}^{-1}\cdot x)+\mathcal{C}_{-n-1}(x_0\cdot x),\\
        \left(\textup{Fr}\times\textup{Id}\right)^{\ast}\mathcal{Z}_{\N_{\F}}(x)&=\mathcal{Z}_{\N_\F}( x^{\prime}\cdot x_0)-\mathcal{C}_{-n-1}(x^{\prime}\cdot x_0)=p\cdot\mathcal{Z}_{\N_{\F}}(\overline{x_0}^{-1}\cdot x)+\mathcal{C}_{n+1}(x^{\prime}\cdot x_0).
    \end{align*}
    Here we use the equality $p^{-1}x_0\cdot x=\overline{x_0}^{-1}\cdot x$.
    By (\ref{spe-mfv}) and (\ref{spe-MFV}) and the equation of $\mathcal{C}_{-n-1}(x_0\cdot x)$ in Lemma \ref{inv-blow-up-NF}(a), we have that over the open formal subscheme $\MFV_{\circ}$, 
    \begin{align*}
        \mathcal{Z}(x)\cap\MFV_{\circ}&=\left(\pi^{\FV}\right)^{\ast}\left(p\cdot\mathcal{Z}_{\N_{\F}}(\overline{x_0}^{-1}\cdot x)+\mathcal{C}_{-n-1}(x_0\cdot x)\right)\\&=\left(\iota^{\textup{FV}}\right)^{\ast}\left(p\cdot\mathcal{Z}_{\widetilde{\N}_{\F}}(\overline{x_0}^{-1}\cdot x)+\exc_{\widetilde{\N}_{\F}}\right).
    \end{align*}
    While over the open formal subscheme $\MFV_{\bullet}$, 
    \begin{align*}
        \mathcal{Z}(x)\cap\MFV_{\bullet}&=\left(\pi^{\FV}\right)^{\ast}\left(p\cdot\mathcal{Z}_{\N_{\F}}(\overline{x_0}^{-1}\cdot x)+\mathcal{C}_{n+1}(x^{\prime}\cdot x_0)\right)\\&=\left(\iota^{\textup{FV}}\right)^{\ast}\left(p\cdot\mathcal{Z}_{\widetilde{\N}_{\F}}(\overline{x_0}^{-1}\cdot x)+\exc_{\widetilde{\N}_{\F}}\right).
    \end{align*}
    The formula (\ref{special-cycle-MFV-decom}) is true on an open cover of $\MFV$, hence is true over all $\MFV$.
\end{proof}
\begin{corollary}
    Let $L^{\flat}\subset\B$ be a $\zp$-lattice of rank $2$. Then
    \begin{equation*}
        \chi(\M,{^{\mathbb{L}}\mathcal{Z}}(L^{\flat})\otimes^{\mathbb{L}}_{\rO_{\M}}\rO_{\MFV})=p^{2}\cdot\chi\left(\N,{^{\mathbb{L}}\mathcal{Z}_\N}\left(\overline{x_0}^{-1}\cdot L^{\flat}\right)\otimes^{\mathbb{L}}_{\rO_{\N}}\rO_{\N_{\F}}\right)-1.
    \end{equation*}
    \label{int-MFV}
\end{corollary}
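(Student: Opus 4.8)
The plan is to use Lemma \ref{spe-cyc-MFV0dec} to transport the intersection on $\MFV$ to an intersection of divisors on the regular surface $\widetilde{\N}_\F$, and then to expand by bilinearity. Fix a $\zp$-basis $x,y$ of $L^{\flat}$ and write $A=\overline{x_0}^{-1}\cdot x$, $B=\overline{x_0}^{-1}\cdot y$, so that $A,B$ is a $\zp$-basis of $\overline{x_0}^{-1}\cdot L^{\flat}$. Since $\M$ is regular and $\MFV$ is a regular Cartier divisor on it (Proposition \ref{geometry-M}), the number $\chi(\M,{^{\mathbb{L}}\mathcal{Z}}(L^{\flat})\otimes^{\mathbb{L}}_{\rO_{\M}}\rO_{\MFV})$ is computed on $\MFV$ as the intersection number of the two effective Cartier divisors $\mathcal{Z}(x)\cap\MFV$ and $\mathcal{Z}(y)\cap\MFV$. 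Applying the isomorphism $\iota^{\FV}\colon\MFV\xrightarrow{\sim}\widetilde{\N}_\F$ of (\ref{MFV-id}) and the divisorial identity (\ref{special-cycle-MFV-decom}), this becomes the pairing
\begin{equation*}
    \left(\bigl(p\cdot\mathcal{Z}_{\widetilde{\N}_\F}(A)+\exc_{\widetilde{\N}_\F}\bigr)\cdot\bigl(p\cdot\mathcal{Z}_{\widetilde{\N}_\F}(B)+\exc_{\widetilde{\N}_\F}\bigr)\right)_{\widetilde{\N}_\F}
\end{equation*}
on the regular surface $\widetilde{\N}_\F$.

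Next I would expand this pairing into four terms, using Lemma \ref{linearization} to linearize the classes of $p\cdot\mathcal{Z}_{\widetilde{\N}_\F}(A)+\exc_{\widetilde{\N}_\F}$ and $p\cdot\mathcal{Z}_{\widetilde{\N}_\F}(B)+\exc_{\widetilde{\N}_\F}$ in $\textup{Gr}^{1}K_0(\widetilde{\N}_\F)$:
\begin{equation*}
    p^{2}\bigl(\mathcal{Z}_{\widetilde{\N}_\F}(A)\cdot\mathcal{Z}_{\widetilde{\N}_\F}(B)\bigr)+p\bigl(\mathcal{Z}_{\widetilde{\N}_\F}(A)\cdot\exc_{\widetilde{\N}_\F}\bigr)+p\bigl(\exc_{\widetilde{\N}_\F}\cdot\mathcal{Z}_{\widetilde{\N}_\F}(B)\bigr)+\bigl(\exc_{\widetilde{\N}_\F}\cdot\exc_{\widetilde{\N}_\F}\bigr).
\end{equation*}
The two mixed terms vanish by Lemma \ref{inv-blow-up-NF}(b) (equivalently, because $\mathcal{Z}_{\widetilde{\N}_\F}(A)=\pi_\F^{\ast}\mathcal{Z}_{\N_\F}(A)$ is a pullback and $\pi_\F$ contracts $\exc_{\widetilde{\N}_\F}$, so the projection formula gives $0$); the self-intersection term equals $-1$ by Lemma \ref{exc-N0-F-blow}. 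For the leading term, Corollary \ref{invariance-hpe} identifies $\bigl(\mathcal{Z}_{\widetilde{\N}_\F}(A)\cdot\mathcal{Z}_{\widetilde{\N}_\F}(B)\bigr)_{\widetilde{\N}_\F}$ with $\chi(\N_\F,\rO_{\mathcal{Z}_{\N_\F}(A)}\otimes^{\mathbb{L}}_{\rO_{\N_\F}}\rO_{\mathcal{Z}_{\N_\F}(B)})$, and the final step in the proof of Lemma \ref{int-MFF} (using flatness of $\mathcal{Z}_\N(A)$ and $\mathcal{Z}_\N(B)$ over $W$) rewrites this as $\chi(\N,{^{\mathbb{L}}\mathcal{Z}_\N}(\overline{x_0}^{-1}\cdot L^{\flat})\otimes^{\mathbb{L}}_{\rO_{\N}}\rO_{\N_\F})$. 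Collecting the four contributions then yields $p^{2}\chi(\N,{^{\mathbb{L}}\mathcal{Z}_\N}(\overline{x_0}^{-1}\cdot L^{\flat})\otimes^{\mathbb{L}}_{\rO_{\N}}\rO_{\N_\F})-1$, as claimed.

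The genuinely delicate part is $K_0$-theoretic bookkeeping rather than any new geometric input. First, one must justify that ${^{\mathbb{L}}\mathcal{Z}}(L^{\flat})\otimes^{\mathbb{L}}_{\rO_{\M}}\rO_{\MFV}$ is legitimately computed on $\MFV$ as the intersection of the two Cartier divisors $\mathcal{Z}(x)\cap\MFV$ and $\mathcal{Z}(y)\cap\MFV$: this uses regularity of $\M$ and properness of the triple intersection $\mathcal{Z}(x)\cap\mathcal{Z}(y)\cap\MFV$, which follows from Lemma \ref{proper-with-strict} together with the fact that $\MFV$ contains neither $\mathcal{Z}(x)$ nor $\mathcal{Z}(y)$. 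Second, one should note that the degenerate case $\nu_p(q(A))<0$ (which occurs when $\nu_p(q(x))=0$) is harmless: there $\mathcal{Z}_{\widetilde{\N}_\F}(A)$ and $\mathcal{Z}_{\N_\F}(A)$ are empty and both sides of the asserted identity collapse to $-1=p^{2}\cdot 0-1$. I expect no conceptual obstacle beyond organizing these compatibilities, since the substantive inputs — the decomposition (\ref{special-cycle-MFV-decom}) and the blow-up invariance of special intersection numbers on $\N_\F$ — are already in hand.
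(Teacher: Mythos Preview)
Your proposal is correct and follows essentially the same approach as the paper's own proof: transport to $\MFV\simeq\widetilde{\N}_\F$ via $\iota^{\FV}$, apply the decomposition (\ref{special-cycle-MFV-decom}), expand bilinearly, kill the cross terms by (\ref{exc-blow-up-N_0-F-int}), compute the self-intersection of $\exc_{\widetilde{\N}_\F}$ as $-1$ via (\ref{exc-N0-F-blow-formula}), and identify the leading term via (\ref{inv-hpe-spe}). Your additional remarks on proper intersection and the degenerate case $\nu_p(q(A))<0$ are sound refinements that the paper leaves implicit.
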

\begin{proof}
    Let $x,y$ be a $\zp$-basis of the lattice $L^{\flat}$. We have
    \begin{align*}
        &\chi(\M,{^{\mathbb{L}}\mathcal{Z}}(L^{\flat})\otimes^{\mathbb{L}}_{\rO_{\M}}\rO_{\MFV})=\chi(\MFV,\rO_{\mathcal{Z}(x)\cap\MFV}\otimes^{\mathbb{L}}_{\rO_{\MFV}}\rO_{\mathcal{Z}(x)\cap\MFV})\\
        &\overset{(\ref{exc-blow-up-N_0-F-int}),(\ref{special-cycle-MFV-decom})}{=}p^{2}\cdot\chi(\widetilde{\N}_\F,\rO_{\mathcal{Z}_{\widetilde{\N}_\F}}(x)\otimes^{\mathbb{L}}_{\rO_{\widetilde{\N}_\F}}\rO_{\mathcal{Z}_{\widetilde{\N}_\F}}(y))+\chi(\widetilde{\N}_\F,\rO_{\exc_{\widetilde{\N}_\F}}\otimes^{\mathbb{L}}_{\rO_{\widetilde{\N}_\F}}\rO_{\exc_{\widetilde{\N}_\F}})\\
        &\overset{(\ref{exc-N0-F-blow-formula}),(\ref{inv-hpe-spe})}{=}p^{2}\cdot\chi\left(\N,{^{\mathbb{L}}\mathcal{Z}_\N}\left(\overline{x_0}^{-1}\cdot L^{\flat}\right)\otimes^{\mathbb{L}}_{\rO_{\N}}\rO_{\N_{\F}}\right)-1.
    \end{align*}
\end{proof}

\subsection{Intersections on $\MVF$}
The ideas of the computations in this part is similar to $\S$\ref{MFV-CAL}. Let 
\begin{equation}
   \left(X_1^{\textup{VF}}\stackrel{x_{0}^{\textup{V}}}\longrightarrow X_{1}^{\prime\VF},\left(\rho_1^{\VF},\rho_1^{\prime\VF}\right)\right),\,\,\,\,\left(X_2^{\VF}\stackrel{x_{0}^{\textup{F}}}\longrightarrow X_{2}^{\prime\textup{\VF}},\left(\rho_2^{\VF},\rho_2^{\prime\VF}\right)\right)
   \label{MVF-univ}
\end{equation}
be the base change of the universal object (\ref{univ-obj-M}) over $\M$ to $\MVF$. By the equations of $\MVF$ in Proposition \ref{geometry-M}, the morphism $x_0^{\textup{F}}$ is isomorphic to the Frobenius morphism, while $x_0^{\textup{V}}$ is isomorphic to the Verschiebung morphism. Hence there exist two isomorphisms $\iota_2^{\textup{F}}:X_2^{\VF,(p)}\rightarrow X_2^{\prime\VF}$ and $\iota_1^{\textup{V}}:X_1^{\prime\VF,(p)}\rightarrow X_1^{\VF}$ over $\MVF$ such that the following diagrams commute,
\begin{equation*}
    \begin{tikzcd}
& X_2^{\VF,(p)} \ar[dd, "\iota_2^{\textup{F}}"]
\\
X_2^{\textup{VF}} \ar[ur, "\textup{F}"] \ar[dr, "x_0^{\textup{F}}"']
\\
& X_2^{\prime\VF} 
\end{tikzcd}
\begin{tikzcd}
& X_1^{\prime\VF,(p)} \ar[dd, "\iota_1^{\textup{V}}"'] \ar[dr, "\textup{V}"]
\\
& &X_1^{\prime\textup{VF}}  
\\
& X_1^{\VF} \ar[ur, "x_0^{\textup{V}}"']
\end{tikzcd}
\end{equation*}
here we use $\textup{F},\textup{V}$ to represent the relative Frobenius and Verschiebung morphism.
\par
For a morphism $g:S\rightarrow\MVF$. Let $(X_{1,S}^{\prime\VF},\rho_{1,S}^{\prime\VF})$ and $(X_{2,S}^{\VF},\rho_{2,S}^{\VF})$ be the base change of the $p$-divisible groups $(X_{1}^{\prime\VF},\rho_{1}^{\prime\VF})$ and $(X_{2}^{\VF},\rho_{2,S}^{\VF})$ to $S$ through the morphism $g$. Then we get a map $\MVF(S)\rightarrow\N_{\F}(S):g\mapsto\left(\left(X_{1,S}^{\prime\VF},\rho_{1,S}^{\prime\VF}\right),\left(X_{2}^{\VF},\rho_{2,S}^{\VF}\right)\right)$. Therefore we obtain a morphism between formal schemes $\pi^{\VF}:\MVF\rightarrow\N_{\F}$. Using the open cover we fixed in $\S$\ref{opencover} and Lemma \ref{coordinate-change}, we have the following isomorphism of formal schemes over $\N_\F$:
\begin{equation}
    \iota^{\textup{VF}}:\MVF\xrightarrow{\sim}\widetilde{\N}_\F.
    \label{MVF-id}
\end{equation}
Denote by $\MVF_{\circ}=\left(\iota^{\VF}\right)^{\ast}\left(\widetilde{\N}_\F^{\circ}\right)$, $\MFV_{\bullet}=\left(\iota^{\VF}\right)^{\ast}\left(\widetilde{\N}_\F^{\bullet}\right)$. Then $\MVF_{\circ}\subset\M^{-}$, $\MVF_{\bullet}\subset\M^{+}$ and $\{\MVF_{\circ},\MVF_{\bullet}\}$ gives an open cover of $\MVF$.
\par
Recall that we defined $\textup{Fr}:\N_{0,\F}\rightarrow\N_{0,\F}$ as the relative Frobenius morphism of $\N_{0,\F}$ in $\S$\ref{hyperspecial0}. Denote by $p^{\VF}_{\circ}:\MVF_{\circ}\rightarrow\N_{\F}$ the restriction of the morphism $p_{-}:\M^{-}\rightarrow\N$ (cf. (\ref{pMtoN})) to $\MVF_{\circ}$. Denote by $p^{\VF}_{\bullet}:\MVF_{\bullet}\rightarrow\N_{\F}$ the restriction of the morphism $p_{+}:\M^{+}\rightarrow\N$ (cf. (\ref{pMtoN})) to $\MVF_{\bullet}$. Then we have
\begin{align}
    p_{\circ}^{\VF}=\left(\textup{Id}\times\textup{Fr}\right)\circ\pi^{\VF},\,\,\,\,
    p_{\bullet}^{\VF}=\left(\textup{Fr}\times\textup{Id}\right)\circ\pi^{\VF}.
    \label{MVFtoN}
\end{align}
\par
Let $x\in\B$ be a nonzero element such that $\nu_p(q(x))\geq0$. By the moduli interpretation of the divisor $\mathcal{Z}(x)$ in Lemma \ref{divisor}, we have
\begin{equation}
   \mathcal{Z}(x)\cap\MVF_{\circ}=\left(\pi^{\VF}\right)^{\ast}\left(\textup{Id}\times\textup{Fr}\right)^{\ast}\mathcal{Z}_{\N_{\F}}(x^{\prime}),\,\, \mathcal{Z}(x)\cap\MFV_{\bullet}=\left(\pi^{\FV}\right)^{\ast}\left(\textup{Fr}\times\textup{Id}\right)^{\ast}\mathcal{Z}_{\N_{\F}}(x),
   \label{spe-mvf}
\end{equation}
Notice that the quasi-isogeny $\overline{x}_0\cdot x^{\prime}=x\cdot \overline{x}_0:X_1^{\prime\FV}\dashrightarrow X_2^{\FV}$ lifts to an isogeny over $\mathcal{Z}(x)\cap\MVF$. Therefore
\begin{equation}
    \mathcal{Z}(x)\cap\MFV_{\circ}\subset\left(\pi^{\VF}\right)^{\ast}\mathcal{Z}_{\N_{\F}}(\overline{x}_0\cdot x^{\prime}),\,\, \mathcal{Z}(x)\cap\MFV_{\bullet}=\left(\pi^{\VF}\right)^{\ast}\mathcal{Z}_{\N_{\F}}(x\cdot \overline{x}_0).
    \label{spe-MVF}
\end{equation}
\begin{lemma}
    Let $x\in\B$ be a nonzero element such that $\nu_p(q(x))\geq0$. Then we have the following identity of effective Cartier divisors on $\MVF$:
    \begin{equation*}
        \mathcal{Z}(x)\cap\MVF=\left(\iota^{\textup{VF}}\right)^{\ast}\left(p\cdot\mathcal{Z}_{\widetilde{\N}_{\F}}(x_0^{-1}\cdot x)+\exc_{\widetilde{\N}_{\F}}\right).
    \end{equation*}
\end{lemma}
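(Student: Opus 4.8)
The plan is to reproduce the proof of Lemma~\ref{spe-cyc-MFV0dec} with the two factors --- equivalently the indices $\circ$ versus $\bullet$, the opens $\M^{-}$ versus $\M^{+}$, and the twists $\textup{Id}\times\textup{Fr}$ versus $\textup{Fr}\times\textup{Id}$ --- interchanged. Write $n\coloneqq\nu_p(q(x))\geq0$. First I would restrict to the two opens $\MVF_{\circ}\subset\M^{-}$ and $\MVF_{\bullet}\subset\M^{+}$ of the cover $\{\MVF_{\circ},\MVF_{\bullet}\}$. By Lemma~\ref{divisor} one has $\mathcal{Z}^{-}(x)=\mathcal{Z}_{\N}(x^{\prime})\times_{\N,p_{-}}\M^{-}$ and $\mathcal{Z}^{+}(x)=\mathcal{Z}_{\N}(x)\times_{\N,p_{+}}\M^{+}$; inserting the factorizations $p_{\circ}^{\VF}=(\textup{Id}\times\textup{Fr})\circ\pi^{\VF}$ and $p_{\bullet}^{\VF}=(\textup{Fr}\times\textup{Id})\circ\pi^{\VF}$ of~(\ref{MVFtoN}) gives the local descriptions
\[
  \mathcal{Z}(x)\cap\MVF_{\circ}=(\pi^{\VF})^{\ast}(\textup{Id}\times\textup{Fr})^{\ast}\mathcal{Z}_{\N_{\F}}(x^{\prime}),\qquad
  \mathcal{Z}(x)\cap\MVF_{\bullet}=(\pi^{\VF})^{\ast}(\textup{Fr}\times\textup{Id})^{\ast}\mathcal{Z}_{\N_{\F}}(x)
\]
recorded in~(\ref{spe-mvf}), together with the containments~(\ref{spe-MVF}) expressing that $\overline{x}_{0}\cdot x^{\prime}=x\cdot\overline{x}_{0}$ already lifts to an isogeny over $\mathcal{Z}(x)\cap\MVF$.

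The heart of the argument is then to compute the two Frobenius-twisted pullbacks $(\textup{Id}\times\textup{Fr})^{\ast}\mathcal{Z}_{\N_{\F}}(x^{\prime})$ and $(\textup{Fr}\times\textup{Id})^{\ast}\mathcal{Z}_{\N_{\F}}(x)$ as effective Cartier divisors on $\N_{\F}$. Here I would use that over $\MVF$ the isogenies $x_{0}^{\textup{V}}$ and $x_{0}^{\textup{F}}$ are, via the isomorphisms $\iota_{1}^{\textup{V}}$ and $\iota_{2}^{\textup{F}}$, the relative Verschiebung and the relative Frobenius; hence pulling a special divisor on a factor of $\N_{\F}$ back along $\textup{Fr}$ amounts to precomposing the universal quasi-isogeny with the Frobenius isogeny, which at the level of special quasi-endomorphisms is multiplication by $x_{0}$ (the class of $x_{0}$ in $\mathcal{O}_{\B}\cong\textup{End}(\X)$ being a uniformizer that represents relative Frobenius). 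Combining this with the identity $x_{0}\overline{x}_{0}=q(x_{0})$, with $\nu_{p}(q(x_{0}))=1$, and with the component decomposition of special divisors on $\N_{\F}$ from Lemma~\ref{inv-blow-up-NF}(a), I expect to obtain identities of the shape
\[
  (\textup{Id}\times\textup{Fr})^{\ast}\mathcal{Z}_{\N_{\F}}(x^{\prime})=p\cdot\mathcal{Z}_{\N_{\F}}(x_{0}^{-1}\cdot x)+\mathcal{C}_{n+1},\qquad
  (\textup{Fr}\times\textup{Id})^{\ast}\mathcal{Z}_{\N_{\F}}(x)=p\cdot\mathcal{Z}_{\N_{\F}}(x_{0}^{-1}\cdot x)+\mathcal{C}_{-n-1},
\]
with $\mathcal{C}_{n+1}$ and $\mathcal{C}_{-n-1}$ the two outermost components in the notation of Lemma~\ref{inv-blow-up-NF}(a); this is the same shape as in the $\MFV$ case, except that the governing quasi-endomorphism is now $x_{0}^{-1}\cdot x$ rather than $\overline{x}_{0}^{-1}\cdot x$, because the two Frobenius legs of $\MVF$ are attached on the opposite sides.

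Finally I would pull these identities back along $\pi^{\VF}$ and transport them through $\iota^{\VF}\colon\MVF\xrightarrow{\sim}\widetilde{\N}_{\F}$. As in the proof of Lemma~\ref{spe-cyc-MFV0dec}, the outermost component $\mathcal{C}_{\pm(n+1)}$ pulls back to exactly the exceptional divisor $\exc_{\widetilde{\N}_{\F}}$, so both local descriptions become $(\iota^{\VF})^{\ast}\bigl(p\cdot\mathcal{Z}_{\widetilde{\N}_{\F}}(x_{0}^{-1}\cdot x)+\exc_{\widetilde{\N}_{\F}}\bigr)$; since they agree on the overlap $\MVF_{\circ}\cap\MVF_{\bullet}$, they glue to the asserted global identity of Cartier divisors on $\MVF$. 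The main obstacle is the second step: one has to chase the framing quasi-isogenies through $\iota_{1}^{\textup{V}}$, $\iota_{2}^{\textup{F}}$, and through the Frobenius morphism of $\N_{0,\F}$ carefully enough to pin down that the twisted divisor is governed precisely by $x_{0}^{-1}\cdot x$ (and not by $x\cdot x_{0}^{-1}$ or $\overline{x}_{0}^{-1}\cdot x$), and to handle the minor subtlety that $x_{0}^{-1}\cdot x$ fails to be integral when $n=0$, in which case $\mathcal{Z}_{\N_{\F}}(x_{0}^{-1}\cdot x)$ must be read as the locus over which the relevant quasi-isogeny is already a morphism.
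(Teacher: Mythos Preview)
Your proposal is correct and follows exactly the approach the paper has in mind: the paper's own proof is simply ``The proof is almost identical to that of Lemma~\ref{spe-cyc-MFV0dec}, so we omit it,'' and you have spelled out precisely that omitted argument, swapping the roles of the two factors and replacing $\overline{x}_0^{-1}\cdot x$ by $x_0^{-1}\cdot x$. Your identification of the delicate point --- pinning down which of $x_0^{-1}x$, $xx_0^{-1}$, $\overline{x}_0^{-1}x$ governs the twisted divisor after chasing the framings through $\iota_1^{\textup{V}},\iota_2^{\textup{F}}$ --- is well taken, and the $n=0$ case is harmless since $\mathcal{Z}_{\widetilde{\N}_{\F}}(x_0^{-1}\cdot x)$ is then empty and the formula reduces to $\mathcal{Z}(x)\cap\MVF=(\iota^{\VF})^{\ast}\exc_{\widetilde{\N}_{\F}}$.
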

\begin{proof}
    The proof is almost identical to that of Lemma \ref{spe-cyc-MFV0dec}, so we omit it.
\end{proof}
\begin{corollary}
    Let $L^{\flat}\subset\B$ be a $\zp$-lattice of rank $2$. Then
    \begin{equation*}
        \chi(\M,{^{\mathbb{L}}\mathcal{Z}}(L^{\flat})\otimes^{\mathbb{L}}_{\rO_{\M}}\rO_{\MVF})=p^{2}\cdot\chi\left(\N,{^{\mathbb{L}}\mathcal{Z}_\N}\left(x_0^{-1}\cdot L^{\flat}\right)\otimes^{\mathbb{L}}_{\rO_{\N}}\rO_{\N_{\F}}\right)-1.
    \end{equation*}
    \label{int-MVF}
\end{corollary}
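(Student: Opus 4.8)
The plan is to follow the proof of Corollary \ref{int-MFV} line by line, with the twist $\overline{x_0}^{-1}\cdot(-)$ replaced by $x_0^{-1}\cdot(-)$, as the preceding lemma dictates. First I would fix a $\zp$-basis $x,y$ of the rank $2$ lattice $L^{\flat}$, so that ${^{\mathbb{L}}\mathcal{Z}}(L^{\flat})=[\rO_{\mathcal{Z}(x)}\otimes^{\mathbb{L}}_{\rO_{\M}}\rO_{\mathcal{Z}(y)}]$. Since $\MVF$ is a regular Cartier divisor on the regular formal scheme $\M$ (Proposition \ref{geometry-M}), and $\mathcal{Z}(x)\cap\MVF$, $\mathcal{Z}(y)\cap\MVF$ are proper intersections, the left hand side reduces to an Euler characteristic on $\MVF$,
\begin{equation*}
    \chi\left(\M,{^{\mathbb{L}}\mathcal{Z}}(L^{\flat})\otimes^{\mathbb{L}}_{\rO_{\M}}\rO_{\MVF}\right)=\chi\left(\MVF,\rO_{\mathcal{Z}(x)\cap\MVF}\otimes^{\mathbb{L}}_{\rO_{\MVF}}\rO_{\mathcal{Z}(y)\cap\MVF}\right),
\end{equation*}
which I would then transport to $\widetilde{\N}_{\F}$ along the isomorphism $\iota^{\textup{VF}}:\MVF\xrightarrow{\sim}\widetilde{\N}_{\F}$ of (\ref{MVF-id}).

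By the preceding lemma, $\iota^{\textup{VF}}$ identifies $\mathcal{Z}(x)\cap\MVF$ with the effective Cartier divisor $p\cdot\mathcal{Z}_{\widetilde{\N}_{\F}}(x_0^{-1}\cdot x)+\exc_{\widetilde{\N}_{\F}}$, and likewise for $y$. Expanding the cup product $\rO$-bilinearly in $K_0$ (Lemma \ref{linearization}) yields a main term $p^{2}\cdot[\rO_{\mathcal{Z}_{\widetilde{\N}_{\F}}(x_0^{-1}x)}\otimes^{\mathbb{L}}_{\rO_{\widetilde{\N}_{\F}}}\rO_{\mathcal{Z}_{\widetilde{\N}_{\F}}(x_0^{-1}y)}]$, two mixed terms proportional to $[\rO_{\mathcal{Z}_{\widetilde{\N}_{\F}}(\ast)}\otimes^{\mathbb{L}}_{\rO_{\widetilde{\N}_{\F}}}\rO_{\exc_{\widetilde{\N}_{\F}}}]$, and the self-intersection term $[\rO_{\exc_{\widetilde{\N}_{\F}}}\otimes^{\mathbb{L}}_{\rO_{\widetilde{\N}_{\F}}}\rO_{\exc_{\widetilde{\N}_{\F}}}]$. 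The mixed terms contribute $0$ to the Euler characteristic by (\ref{exc-blow-up-N_0-F-int}), and the self-intersection term contributes $-1$ by (\ref{exc-N0-F-blow-formula}), so that
\begin{equation*}
    \chi\left(\MVF,\rO_{\mathcal{Z}(x)\cap\MVF}\otimes^{\mathbb{L}}_{\rO_{\MVF}}\rO_{\mathcal{Z}(y)\cap\MVF}\right)=p^{2}\cdot\chi\left(\widetilde{\N}_{\F},\rO_{\mathcal{Z}_{\widetilde{\N}_{\F}}(x_0^{-1}x)}\otimes^{\mathbb{L}}_{\rO_{\widetilde{\N}_{\F}}}\rO_{\mathcal{Z}_{\widetilde{\N}_{\F}}(x_0^{-1}y)}\right)-1.
\end{equation*}

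Finally I would invoke the blow-up invariance (\ref{inv-hpe-spe}) (Corollary \ref{invariance-hpe}), applied with $x_0^{-1}x,x_0^{-1}y$ in place of $x,y$, to descend the computation from $\widetilde{\N}_{\F}$ to $\N_{\F}$, and then identify $\chi(\N_{\F},\rO_{\mathcal{Z}_{\N_{\F}}(x_0^{-1}x)}\otimes^{\mathbb{L}}_{\rO_{\N_{\F}}}\rO_{\mathcal{Z}_{\N_{\F}}(x_0^{-1}y)})$ with $\chi(\N,{^{\mathbb{L}}\mathcal{Z}_{\N}}(x_0^{-1}\cdot L^{\flat})\otimes^{\mathbb{L}}_{\rO_{\N}}\rO_{\N_{\F}})$, using that $\mathcal{Z}_{\N_{\F}}(z)=\mathcal{Z}_{\N}(z)\cap\N_{\F}$ is a proper intersection (flatness over $W$, Lemma \ref{proper-hpe}) and that $x_0^{-1}x,x_0^{-1}y$ is a $\zp$-basis of $x_0^{-1}\cdot L^{\flat}$, so that ${^{\mathbb{L}}\mathcal{Z}_{\N}}(x_0^{-1}\cdot L^{\flat})$ depends only on that lattice. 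Chaining the two displays then gives the stated identity. I expect no real obstacle at this stage, since it is purely a bookkeeping of line bundles on $\bP_{\F}^{1}$, word for word parallel to the $\MFV$ case. The genuine content is in the preceding lemma — the description of $\mathcal{Z}(x)\cap\MVF$: over $\MVF$ the first universal isogeny is Verschiebung and the second is Frobenius, so the $\pi^{\VF}$-pullback of a special cycle twists $x$ into $x_0^{-1}\cdot x$ and creates exactly one extra copy of $\exc_{\widetilde{\N}_{\F}}$, and that twisting-and-multiplicity computation (carried out exactly as in Lemma \ref{spe-cyc-MFV0dec}) is the step that must be done with care.
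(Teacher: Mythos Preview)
Your proposal is correct and is exactly the approach the paper takes: the paper's own proof simply reads ``The proof is almost identical to that of Corollary \ref{int-MFV}, so we omit it,'' and your write-up is precisely that parallel argument with $\overline{x_0}^{-1}$ replaced by $x_0^{-1}$, invoking the preceding lemma, (\ref{exc-blow-up-N_0-F-int}), (\ref{exc-N0-F-blow-formula}), and (\ref{inv-hpe-spe}) in the same order as in the $\MFV$ case.
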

\begin{proof}
The proof is almost identical to that of Corollary \ref{int-MFV}, so we omit it.    
\end{proof}

\section{Proof of the main theorem}
\label{main-proof-part}
\subsection{Difference formulas combined}
\begin{lemma}
Let $L^{\flat}\subset\B$ be a $\zp$-lattice of rank $2$. Let $x\in\B$ be a nonzero element such that $x\,\bot\,L^{\flat}$ and $\nu_p(q(x))\geq\max\{\max(L^{\flat}),2\}$. Then we have
    \begin{align}
    &\chi(\M,{^{\mathbb{L}}\mathcal{Z}}(L^{\flat})\otimes^{\mathbb{L}}_{\rO_{\M}}\rO_{\MFF})=\chi(\M,{^{\mathbb{L}}\mathcal{Z}}(L^{\flat})\otimes^{\mathbb{L}}_{\rO_{\M}}\rO_{\MVV})=\partial\den\left(\lx[-1]\obot H_2^{+}, L^{\flat}\right),\label{MFF-MVV-INT}\\
    &\chi(\M,{^{\mathbb{L}}\mathcal{Z}}(L^{\flat})\otimes^{\mathbb{L}}_{\rO_{\M}}\rO_{\MFV})=\chi(\M,{^{\mathbb{L}}\mathcal{Z}}(L^{\flat})\otimes^{\mathbb{L}}_{\rO_{\M}}\rO_{\MVF})=\partial\den\left(\lx[-1]\obot H_2^{+}[p], L^{\flat}\right).\label{MFV-MVF-INT}
\end{align}
\label{term-identity}
\end{lemma}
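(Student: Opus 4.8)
The plan is to reduce all four intersection numbers on $\M$ to the single geometric quantity $\chi(\N,{^{\mathbb{L}}\mathcal{Z}_\N}(-)\otimes^{\mathbb{L}}_{\rO_{\N}}\rO_{\N_{\F}})$ computed in Corollary \ref{diff-to-p-hpe}, and then to match the resulting derived local densities by transporting lattices through the isometry $(\cdot)^{\prime}$ and through simultaneous rescalings, finishing with the analytic identity of Lemma \ref{analytic-calculations-1}. Throughout, the hypotheses $x\perp L^{\flat}$ and $\nu_p(q(x))\geq\max\{\max(L^{\flat}),2\}$ are precisely those required by Corollary \ref{diff-to-p-hpe}.

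First I would treat the $\MFF$ term: Lemma \ref{int-MFF} gives $\chi(\M,{^{\mathbb{L}}\mathcal{Z}}(L^{\flat})\otimes^{\mathbb{L}}_{\rO_{\M}}\rO_{\MFF})=\chi(\N,{^{\mathbb{L}}\mathcal{Z}_\N}(L^{\flat})\otimes^{\mathbb{L}}_{\rO_{\N}}\rO_{\N_{\F}})$, which by Corollary \ref{diff-to-p-hpe} equals $\partial\den(\lx[-1]\obot H_2^{+},L^{\flat})$. For $\MVV$, Lemma \ref{int-MVV} produces instead $\chi(\N,{^{\mathbb{L}}\mathcal{Z}_\N}(L^{\flat\prime})\otimes^{\mathbb{L}}_{\rO_{\N}}\rO_{\N_{\F}})$ with $L^{\flat\prime}$ the image of $L^{\flat}$ under the isometry $(\cdot)^{\prime}$. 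Since $(\cdot)^{\prime}$ preserves $q$, it sends $L^{\flat}$ to an isometric lattice, sends $x$ to a perpendicular vector $x^{\prime}$ with $q(x^{\prime})=q(x)$, and preserves all fundamental invariants; hence Corollary \ref{diff-to-p-hpe} applies to $(L^{\flat\prime},x^{\prime})$ and, because local densities depend only on isometry classes, again yields $\partial\den(\lx[-1]\obot H_2^{+},L^{\flat})$.

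Next I would treat the $\MFV$ term, starting from Corollary \ref{int-MFV}, which gives $p^{2}\cdot\chi(\N,{^{\mathbb{L}}\mathcal{Z}_\N}(\overline{x_0}^{-1}\cdot L^{\flat})\otimes^{\mathbb{L}}_{\rO_{\N}}\rO_{\N_{\F}})-1$. The map $v\mapsto\overline{x_0}^{-1}v$ is a similitude of $(\B,q)$ with multiplier $q(x_0)^{-1}$; writing $q(x_0)=p\varepsilon$ with $\varepsilon\in\zp^{\times}$, it identifies, compatibly with orthogonality, the pair $(\overline{x_0}^{-1}x,\overline{x_0}^{-1}L^{\flat})$ with $(\lx[q(x_0)^{-1}],L^{\flat}[q(x_0)^{-1}])$ up to isometry. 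After checking that the (shifted) valuation hypotheses of Corollary \ref{diff-to-p-hpe} remain satisfied, that corollary gives $p^{2}\cdot\partial\den(\lx[q(x_0)^{-1}][-1]\obot H_2^{+},L^{\flat}[q(x_0)^{-1}])-1$. Rescaling both lattices (and the normalizing form) by the unit $-\varepsilon$, using $H_2^{+}[u]\simeq H_2^{+}$ for every unit $u$ and the invariance of normalized derived densities under simultaneous rescaling, turns this into $p^{2}\cdot\partial\den(\lx[p^{-1}]\obot H_2^{+},L^{\flat}[-p^{-1}])-1$; by the explicit binary formulas of Yang recorded in the proof of Lemma \ref{analytic-calculations-1} this derived density depends only on the fundamental invariants of $L^{\flat}[p^{-1}]$, which are insensitive to the unit twist, so it equals $p^{2}\cdot\partial\den(\lx[p^{-1}]\obot H_2^{+},L^{\flat}[p^{-1}])-1$. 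Lemma \ref{analytic-calculations-1} identifies the latter with $\partial\den(\lx\obot H_2^{+}[p],L^{\flat})$, and one further appeal to the same explicit formulas (the right-hand side there is a formula in the fundamental invariants of $L^{\flat}$ alone) gives $\partial\den(\lx[-1]\obot H_2^{+}[p],L^{\flat})$, as required. The $\MVF$ term is identical, starting from Corollary \ref{int-MVF} with $x_0^{-1}$ in place of $\overline{x_0}^{-1}$ and using $q(x_0^{-1})=q(\overline{x_0}^{-1})$.

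The geometric rewrites and the two applications of Corollary \ref{diff-to-p-hpe} are routine. The delicate part, and the only place something could go wrong, is the $\MFV$/$\MVF$ bookkeeping: one must check that no spurious unit ambiguity survives the rescaling by $q(x_0)$ and the auxiliary sign twist $[-1]$ — note that $[-1]$ does \emph{not} give an isometric rank-one lattice when $-1\notin(\qp^{\times})^{2}$ — and that the valuation hypotheses of Corollary \ref{diff-to-p-hpe} still hold for the rescaled lattice $\overline{x_0}^{-1}L^{\flat}$. Both are resolved by invoking the explicit formulas for derived local densities of binary lattices already used for Lemma \ref{analytic-calculations-1}.
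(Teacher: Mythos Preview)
Your proposal is correct and follows the same route as the paper: Lemmas \ref{int-MFF}--\ref{int-MVV} and Corollaries \ref{int-MFV}--\ref{int-MVF} reduce each of the four terms to $\chi(\N,{^{\mathbb{L}}\mathcal{Z}_\N}(-)\otimes^{\mathbb{L}}_{\rO_{\N}}\rO_{\N_{\F}})$, Corollary \ref{diff-to-p-hpe} converts this to a derived density, and Lemma \ref{analytic-calculations-1} finishes the $\MFV/\MVF$ cases. The only cosmetic difference is in the unit bookkeeping: the paper rescales by $\varepsilon=q(x_0)/p$ to reach $p^{2}\cdot\partial\den(\lx[-p^{-1}]\obot H_2^{+},L^{\flat}[p^{-1}])-1$ directly and then applies Lemma \ref{analytic-calculations-1} with $\lx$ replaced by $\lx[-1]$, whereas you rescale by $-\varepsilon$ and remove the resulting sign twists twice via Yang's explicit binary formulas --- a slightly longer but equally valid path.
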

\begin{proof}
    The formula (\ref{MFF-MVV-INT}) is proved by combining Lemma \ref{int-MFF}, Lemma \ref{int-MVV} and Corollary \ref{diff-to-p-hpe}. Let's now give the proof of the formula (\ref{MFV-MVF-INT}). By Corollaries \ref{int-MFV},  \ref{int-MVF} and   \ref{diff-to-p-hpe}, we have
    \begin{equation*}
        \chi(\M,{^{\mathbb{L}}\mathcal{Z}}(L^{\flat})\otimes^{\mathbb{L}}_{\rO_{\M}}\rO_{\MFV})=\chi(\M,{^{\mathbb{L}}\mathcal{Z}}(L^{\flat})\otimes^{\mathbb{L}}_{\rO_{\M}}\rO_{\MVF})=p^{2}\cdot\partial\den\left(\lx[-p^{-1}]\obot H_2^{+},L^{\flat}[p^{-1}]\right)-1.
    \end{equation*}
    By Lemma \ref{analytic-calculations-1}, we have
    \begin{equation*}
        \partial\den\left(\lx[-1]\obot H_2^{+}[p],L^{\flat}\right)=p^{2}\cdot\partial\den\left(\lx[-p^{-1}]\obot H_2^{+},L^{\flat}[p^{-1}]\right)-1.
    \end{equation*}
    Therefore (\ref{MFV-MVF-INT}) is true.
\end{proof}
\begin{corollary}
    Let $L^{\flat}\subset\B$ be a $\zp$-lattice of rank $2$. Let $x\in\B$ be a nonzero element such that $x\,\bot\,L^{\flat}$ and $\nu_p(q(x))\geq\max\{\max(L^{\flat}),2\}$. Then we have
    \begin{equation*}
        \Int^{\mathcal{Z}}\left(L^{\flat}\obot\lx\right)-\Int^{\mathcal{Z}}\left(L^{\flat}\obot\langle p^{-1}x\rangle\right)=\partial\den\left(H_0(p),L^{\flat}\obot\lx\right)-\partial\den\left(H_0(p),L^{\flat}\obot\langle p^{-1}x\rangle\right).
    \end{equation*}
    \begin{equation*}
        \Int^{\mathcal{Y}}\left(L^{\flat}\obot\lx\right)-\Int^{\mathcal{Y}}\left(L^{\flat}\obot\langle p^{-1}x\rangle\right)=\partial\den\left(H_0(p)^{\vee},L^{\flat}\obot\lx\right)-\partial\den\left(H_0(p)^{\vee},L^{\flat}\obot\langle p^{-1}x\rangle\right).
    \end{equation*}
    \label{diff-com}
\end{corollary}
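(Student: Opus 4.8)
The plan is to assemble the statement from the three difference formulas already in hand: the geometric difference formula on $\M$ (Lemma \ref{geo-diff-M}), the term-by-term identifications (Lemma \ref{term-identity}), and the analytic difference formula (Lemma \ref{der-diff}), handling the $\mathcal{Y}$-case by transporting it to the $\mathcal{Z}$-case through the automorphism $\iota^{\M}$ of $\S$\ref{aut-M}.

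First, for the $\mathcal{Z}$-cycles. Since $\nu_p(q(x))\geq\max\{\max(L^{\flat}),2\}$, in particular $\nu_p(q(x))\geq 2$, so Lemma \ref{geo-diff-M} applies and writes $\Int^{\mathcal{Z}}(L^{\flat}\obot\lx)-\Int^{\mathcal{Z}}(L^{\flat}\obot\langle p^{-1}x\rangle)$ as the sum of the four Euler characteristics attached to the components $\MFF,\MVV,\MFV,\MVF$. By Lemma \ref{term-identity} the $\MFF$- and $\MVV$-terms each equal $\partial\den(\lx[-1]\obot H_2^{+},L^{\flat})$ and the $\MFV$- and $\MVF$-terms each equal $\partial\den(\lx[-1]\obot H_2^{+}[p],L^{\flat})$, so the left-hand side equals $2\,\partial\den(\lx[-1]\obot H_2^{+},L^{\flat})+2\,\partial\den(\lx[-1]\obot H_2^{+}[p],L^{\flat})$. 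On the analytic side, Lemma \ref{der-diff} in the case $n\geq 2$ identifies exactly this expression with $\partial\den(H_0(p),L^{\flat}\obot\lx)-\partial\den(H_0(p),L^{\flat}\obot\langle p^{-1}x\rangle)$. Comparing gives the first identity. The one point requiring care here is simply that the hypothesis $\nu_p(q(x))\geq\max\{\max(L^{\flat}),2\}$ places us in the $n\geq2$ regime of \emph{both} difference formulas at once, which is precisely why the four-term geometric expansion matches the two-term analytic one with the right multiplicities.

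For the $\mathcal{Y}$-cycles I would use $\mathcal{Y}(v)=(\iota^{\M})^{\ast}\mathcal{Z}(x_0\cdot v)$ for all $v\in\B$ (Definition \ref{special-cycle-on-M}). Since $\iota^{\M}$ is an automorphism of $\M$ and pullback along it commutes with derived tensor product and preserves Euler characteristics, one gets $\Int^{\mathcal{Y}}(M)=\Int^{\mathcal{Z}}(x_0\cdot M)$ for every rank-$3$ lattice $M$, where $x_0\cdot M=\{x_0 v:v\in M\}$ carries the restriction of $q$. Applying this to $M=L^{\flat}\obot\lx$ and $M=L^{\flat}\obot\langle p^{-1}x\rangle$, and noting $x_0\cdot(L^{\flat}\obot\lx)=(x_0\cdot L^{\flat})\obot\langle x_0 x\rangle$, that $x_0 x\perp x_0\cdot L^{\flat}$ (because $(x_0 v,x_0 w)=q(x_0)(v,w)$), that $x_0 x\notin (x_0\cdot L^{\flat})\otimes\Q$, and that $\nu_p(q(x_0 x))=\nu_p(q(x))+1\geq\max\{\max(x_0\cdot L^{\flat}),2\}$ since $\max(x_0\cdot L^{\flat})=\max(L^{\flat})+1$, the $\mathcal{Z}$-difference formula just proved applies to the pair $(x_0\cdot L^{\flat},\,x_0 x)$. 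This yields
\begin{align*}
\Int^{\mathcal{Y}}(L^{\flat}\obot\lx)-\Int^{\mathcal{Y}}(L^{\flat}\obot\langle p^{-1}x\rangle)
&=\Int^{\mathcal{Z}}\left((x_0\cdot L^{\flat})\obot\langle x_0 x\rangle\right)-\Int^{\mathcal{Z}}\left((x_0\cdot L^{\flat})\obot\langle p^{-1}x_0 x\rangle\right)\\
&=\partial\den\left(H_0(p),(x_0\cdot L^{\flat})\obot\langle x_0 x\rangle\right)-\partial\den\left(H_0(p),(x_0\cdot L^{\flat})\obot\langle p^{-1}x_0 x\rangle\right).
\end{align*}
Since the $q$-multiplication by $x_0$ identifies $(x_0\cdot L^{\flat},q)\cong L^{\flat}[q(x_0)]$ and $(\langle x_0 x\rangle,q)\cong\lx[q(x_0)]$, and $\nu_\varpi(q(x_0))=1$, the right-hand side becomes $\partial\den(H_0(p),L^{\flat}[p]\obot\lx[p])-\partial\den(H_0(p),L^{\flat}[p]\obot\langle p^{-1}x\rangle[p])$, which by Lemma \ref{analytic-calculations-2} equals $\partial\den(H_0(p)^{\vee},L^{\flat}\obot\lx)-\partial\den(H_0(p)^{\vee},L^{\flat}\obot\langle p^{-1}x\rangle)$, as required.

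The main obstacle is not any single calculation — Lemmas \ref{geo-diff-M}, \ref{term-identity}, \ref{der-diff}, \ref{analytic-calculations-2} already carry the heavy lifting — but the clean execution of the $\iota^{\M}$-reduction and, above all, the passage from the $x_0$-translated lattices to the scaled lattices $L^{\flat}[p]$, $\lx[p]$ appearing in Lemma \ref{analytic-calculations-2}. One must check that scaling the quadratic form by the valuation-one element $q(x_0)$ can be replaced by scaling by $p$ in the relevant derived densities; this is where one invokes that, the final formulas for $\Int^{\mathcal{Z}}$ and $\Int^{\mathcal{Y}}$ being independent of the auxiliary $x_0$, we may as well normalize $q(x_0)=p$ (equivalently, verify directly that the derived density differences occurring here depend on the scaling only through its valuation). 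A secondary, purely bookkeeping obstacle is confirming that all the hypotheses of Lemma \ref{geo-diff-M} and Lemma \ref{term-identity} survive the translation by $x_0$, which was done above.
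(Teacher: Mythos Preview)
Your proposal is correct and follows essentially the same route as the paper: for the $\mathcal{Z}$-identity you combine Lemma \ref{geo-diff-M}, Lemma \ref{term-identity}, and the $n\geq 2$ case of Lemma \ref{der-diff} exactly as the paper does; for the $\mathcal{Y}$-identity you transport to $\mathcal{Z}$ via $\iota^{\M}$ and conclude with Lemma \ref{analytic-calculations-2}, which is again the paper's argument. The one point you flag, replacing $q(x_0)$ by $p$ in the densities, the paper simply leaves implicit; the clean resolution is not to invoke independence of $x_0$ (which would be circular here) but to note that the derived densities in Lemma \ref{analytic-calculations-2} depend on the lattices only through their fundamental invariants, hence are unchanged under scaling by a unit, or alternatively to fix $x_0$ with $q(x_0)=p$ from the outset.
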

\begin{proof}
    For the intersection number of $\mathcal{Z}$-cycles, we have
    \begin{align*}
        \Int^{\mathcal{Z}}\left(L^{\flat}\obot\lx\right)-\Int^{\mathcal{Z}}&\left(L^{\flat}\obot\langle p^{-1}x\rangle\right)=2\cdot\chi(\M,{^{\mathbb{L}}\mathcal{Z}}(L^{\flat})\otimes^{\mathbb{L}}_{\rO_{\M}}\rO_{\MFF})+2\cdot\chi(\M,{^{\mathbb{L}}\mathcal{Z}}(L^{\flat})\otimes^{\mathbb{L}}_{\rO_{\M}}\rO_{\MFV})\\
        &\overset{(\ref{MFF-MVV-INT}),(\ref{MFV-MVF-INT})}{=}2\cdot\partial\den\left(\lx[-1]\obot H_2^{+}, L^{\flat}\right)+2\cdot\partial\den\left(\lx[-1]\obot H_2^{+}[p], L^{\flat}\right).
    \end{align*}
    On the analytic side, we have the following formula by Lemma \ref{der-diff},
    \begin{align*}
        \partial\den\left(H_0(p),L^{\flat}\obot\lx\right)-&\partial\den\left(H_0(p),L^{\flat}\obot\langle p^{-1}x\rangle\right)\\
        &=2\cdot\partial\den\left(\lx[-1]\obot H_2^{+}, L^{\flat}\right)+2\cdot\partial\den\left(\lx[-1]\obot H_2^{+}[p], L^{\flat}\right).
    \end{align*}
    Hence the first formula follows from the above two identities.
    \par
    For the intersection number of $\mathcal{Y}$-cycles. By the identity $\mathcal{Y}(H)=\left(\iota_{\M}\right)^{\ast}\mathcal{Z}(x_0\cdot H)$, we have
    \begin{align*}
        \Int^{\mathcal{Y}}&\left(L^{\flat}\obot\lx\right)-\Int^{\mathcal{Y}}\left(L^{\flat}\obot\langle p^{-1}x\rangle\right)\\
        &=\Int^{\mathcal{Z}}\left(L^{\flat}[q(x_0)]\obot\lx[q(x_0)]\right)-\Int^{\mathcal{Z}}\left(L^{\flat}[q(x_0)]\obot\langle p^{-1}x\rangle[q(x_0)]\right)\\
        &=\partial\den\left(H_0(p),L^{\flat}[q(x_0)]\obot\lx[q(x_0)]\right)-\partial\den\left(H_0(p),L^{\flat}[q(x_0)]\obot\langle p^{-1}x\rangle[q(x_0)]\right)\\
        &=\partial\den\left(H_0(p)^{\vee},L^{\flat}\obot\lx\right)-\partial\den\left(H_0(p)^{\vee},L^{\flat}\obot\langle p^{-1}x\rangle\right).
    \end{align*}
    Here the last identity is proved by Lemma \ref{analytic-calculations-2}.
\end{proof}

\subsection{Base cases: the geometric side}
\label{base-geo}
Let $L\subset\B$ be a quadratic lattice of rank $3$ such that $\min(L)=0$. Let $x\in L$ be an element such that $\nu_p(q(x))=0$. By Lemma \ref{dec-diff-div}, we have the following equality of Cartier divisors on $\M$,
\begin{equation*}
    \mathcal{Z}(x)=\exc_{\M}+\N_0^{\textup{I}+}(x_0\cdot x),
\end{equation*}
where the divisor $\N_0^{\textup{I}+}(x_0\cdot x)$ is isomorphic to the blow up $\widetilde{\N}_0(x_0\cdot x)$ of the formal scheme $\N_0(x_0\cdot x)$ along its unique closed $\F$-point. For simplicity, let $z=x_0\cdot x$. We fix an isomorphism $\iota_{z}:\N_0^{\textup{I}+}(z)\xrightarrow{\sim}\widetilde{\N}_0(z)$. Let $p_z$ be the following composition morphism
\begin{equation*}
    p_z:\N_0^{\textup{I}+}(z)\xrightarrow{\iota_z}\widetilde{\N}_0(z)\xrightarrow{\pi_z}\N_0(z)\hookrightarrow\N.
\end{equation*}
Recall that we use $\exc_{z}^{\textup{I}+}$ to denote the exceptional divisor on $\N_0^{\textup{I}+}(z)$.
\begin{lemma}
    Let $y\in\B$ be an element such that $x,y$ are linearly independent. Then we have
    \begin{equation}
        \chi(\N_{0}^{\textup{I}+}(z),\mathcal{O}_{\N_{0}^{\textup{I}+}(z)\cap\mathcal{Z}(y)}\otimes^{\mathbb{L}}_{\rO_{\N_{0}^{\textup{I}+}(z)}}\rO_{\exc_{z}^{\textup{I}+}})=-1.
    \end{equation}\label{N0Z-EXC-SPE}
\end{lemma}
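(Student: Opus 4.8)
The plan is to reinterpret the displayed Euler characteristic as an intersection number of two effective Cartier divisors on the regular surface $\N_{0}^{\textup{I}+}(z)$, and then to reduce that to the degree of the line bundle $\mathcal{O}_{\M}(\mathcal{Z}(y))$ restricted to the rational curve $\exc_{z}^{\textup{I}+}$ — a degree that is forced by what is already known about $\exc_{\M}\simeq\bP_{\F}^{1}\times\bP_{\F}^{1}$.

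First I would record the geometry. Since $\nu_{p}(q(x))=0$, Corollary \ref{spe-decom}(a) and Lemma \ref{dec-diff-div} identify $\N_{0}^{\textup{I}+}(z)=\widetilde{\mathcal{D}}(x)=\sz(x)$; by Proposition \ref{geometry-M}(iv) and Lemma \ref{blow-up-N0(x)}(a) this is a $2$-dimensional regular formal scheme, connected hence irreducible, with exceptional divisor $\exc_{z}^{\textup{I}+}\simeq\bP_{\F}^{1}$, and $\exc_{z}^{\textup{I}+}=\exc_{\M}\cap\N_{0}^{\textup{I}+}(z)\subset\exc_{\M}$. Because $x,y$ are linearly independent, Lemma \ref{proper-with-strict}(a) tells us $\sz(x)$ and $\mathcal{Z}(y)$ intersect properly in $\M$; as $\sz(x)=\N_{0}^{\textup{I}+}(z)$ is irreducible, this forces $\N_{0}^{\textup{I}+}(z)\not\subset\mathcal{Z}(y)$, so $D\coloneqq\N_{0}^{\textup{I}+}(z)\cap\mathcal{Z}(y)$ is an effective Cartier divisor on the surface $\N_{0}^{\textup{I}+}(z)$ with $\mathcal{O}_{\N_{0}^{\textup{I}+}(z)}(D)=\mathcal{O}_{\M}(\mathcal{Z}(y))|_{\N_{0}^{\textup{I}+}(z)}$.

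Next, resolving $\mathcal{O}_{D}$ by $0\to\mathcal{O}_{\N_{0}^{\textup{I}+}(z)}(-D)\to\mathcal{O}_{\N_{0}^{\textup{I}+}(z)}\to\mathcal{O}_{D}\to0$ and using that $\exc_{z}^{\textup{I}+}\simeq\bP_{\F}^{1}$ is proper over $\F$, the left-hand side of the asserted identity becomes
\[
\chi\big(\exc_{z}^{\textup{I}+},\mathcal{O}_{\exc_{z}^{\textup{I}+}}\big)-\chi\big(\exc_{z}^{\textup{I}+},\mathcal{O}_{\M}(-\mathcal{Z}(y))|_{\exc_{z}^{\textup{I}+}}\big)=\deg\big(\mathcal{O}_{\M}(\mathcal{Z}(y))|_{\exc_{z}^{\textup{I}+}}\big).
\]
To evaluate this degree I would restrict in two stages. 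By Corollary \ref{spe-decom}(b), formula (\ref{spe-exc}) (together with Lemma \ref{self-int-exc}, which identifies the $\textup{Gr}^{2}$-class of $\mathcal{O}_{\exc_{\M}}\otimes^{\mathbb{L}}_{\rO_{\M}}\mathcal{O}_{\mathcal{Z}(y)}$ with the restricted line bundle), one has $\mathcal{O}_{\M}(\mathcal{Z}(y))|_{\exc_{\M}}=\rO(0,-1)=\textup{pr}_{2}^{\ast}\mathcal{O}(-1)$ in $\textup{Pic}(\exc_{\M})$. By Proposition \ref{equation-loc-exc}(I), since $\nu_{p}(q(x))=0$, the curve $\exc_{z}^{\textup{I}+}=\exc_{x_{0}\cdot x}^{\textup{I}+}$ is the locus $(v=\textup{a nonzero number in }\F)$, which under the identification $\iota\colon\exc_{\M}\simeq\bP_{\F}^{1}\times\bP_{\F}^{1}$ of Remark \ref{coordinate-exc} is a fibre of $\textup{pr}_{1}$, mapped isomorphically onto the second factor by $\textup{pr}_{2}$. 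Hence $\mathcal{O}_{\M}(\mathcal{Z}(y))|_{\exc_{z}^{\textup{I}+}}=\mathcal{O}_{\bP_{\F}^{1}}(-1)$, of degree $-1$, which is exactly the assertion.

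The argument is short once set up; the one genuine point to check is that $\N_{0}^{\textup{I}+}(z)$ is not contained in $\mathcal{Z}(y)$ — so that $D$ is a bona fide Cartier divisor and the line-bundle restriction makes sense — which is precisely what Lemma \ref{proper-with-strict} supplies. I would also note that invoking formula (\ref{spe-exc}) presupposes $\nu_{p}(q(y))\geq0$; this is automatic in the intended application in $\S$\ref{base-geo}, where $y$ is a basis vector of a rank-$3$ lattice $L$ with $\min(L)=0$. Beyond careful bookkeeping with the coordinates $v,u,v^{\prime},u^{\prime}$ on $\exc_{\M}$, I do not anticipate any real obstacle.
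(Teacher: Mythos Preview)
Your proposal is correct and follows essentially the same approach as the paper: both reduce the Euler characteristic to the intersection pairing on $\exc_{\M}\simeq\bP_{\F}^{1}\times\bP_{\F}^{1}$, using that $\exc_{z}^{\textup{I}+}$ has class $\rO(1,0)$ and that $[\rO_{\exc_{\M}}\otimes^{\mathbb{L}}\rO_{\mathcal{Z}(y)}]=\rO(0,-1)$ from (\ref{spe-exc}). The only difference is packaging---the paper writes the computation as a triple derived tensor on $\M$ and evaluates $\rO(1,0)\cdot\rO(0,-1)=-1$ via Lemma~\ref{int-on-exc}, while you phrase it as the degree of $\mathcal{O}_{\M}(\mathcal{Z}(y))$ restricted to the $\textup{pr}_{1}$-fibre $\exc_{z}^{\textup{I}+}$; your extra care in justifying that $\N_{0}^{\textup{I}+}(z)\not\subset\mathcal{Z}(y)$ and in flagging the implicit hypothesis $\nu_{p}(q(y))\geq0$ is welcome.
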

\begin{proof}
    By definition we know that $\exc_{z}^{\textup{I}+}=\exc_\M\cap\N_{0}^{\textup{I}+}(z)$. As a divisor on $\exc_{\M}$, we have
    \begin{equation*}
        [\rO_{\exc_{\M}}\otimes^{\mathbb{L}}_{\rO_{\M}}\rO_{\N_0^{\textup{I}+}(z)}]=[\rO_{\exc_z^{\textup{I}+}}]=\rO(1,0)
    \end{equation*}
    by Corollary \ref{spe-decom} (b). Then we have
    \begin{align*}
        \chi(\N_{0}^{\textup{I}+}(z),\mathcal{O}_{\N_{0}^{\textup{I}+}(z)\cap\mathcal{Z}(y)}\otimes^{\mathbb{L}}_{\rO_{\N_{0}^{\textup{I}+}(z)}}\rO_{\exc_{z}^{\textup{I}+}})&=\chi(\mathcal{M},\rO_{\N_0^{\textup{I}+}(z)}\otimes^{\mathbb{L}}_{\rO_\M}\rO_{\mathcal{Z}(y)}\otimes^{\mathbb{L}}_{\rO_\M}\rO_{\exc_\M})\\
        &=\chi(\exc_\M,\rO_{\exc_z^{\textup{I}+}}\otimes^{\mathbb{L}}_{\rO_{\exc_\M}}\rO_{\mathcal{Z}(y)\cap\exc_{\M}})\\
        &=\rO(1,0)\cdot\rO(-1,-1)=-1.
    \end{align*}
\end{proof}
\par
Let $y\in\B$ be an element such that it is linearly independent from $x$ and $\nu_p(q(y))=1$. Denote by $z^{\prime}=x_0\cdot y$.
By the moduli interpretation of the special divisor $\mathcal{Z}(y)$, we have 
\begin{equation*}
    \left(p_z\right)^{\ast}\left(\mathcal{Z}_{\N}(p^{-1}z^{\prime})\right)=\left(\pi_z\iota_z\right)^{\ast}\left(\N_0(z)\cap\mathcal{Z}_\N(p^{-1}z^{\prime})\right)\subset\N_0^{\textup{I}+}(z)\cap\mathcal{Z}(y).
\end{equation*}
Notice that $\nu_p(q(p^{-1}z^{\prime}))=0$, hence the intersection $\N_0(z)\cap\mathcal{Z}_{\N}(p^{-1}z^{\prime})$ is a regular divisor on $\mathcal{N}_0(z)$ which is isomorphic to $\textup{Spf}\,W_0$ by \cite[(5.10)]{GK93}, where $W_0$ is the integer ring of a ramified quadratic extension of $\qp$. Denote this divisor by $\mathcal{C}(y)$. Let $\widetilde{\mathcal{C}}(y)$ be the strict transform of the divisor $\mathcal{C}(y)$ under the blow up morphism $\pi_z:\widetilde{\N}_0(z)\rightarrow\N_0(z)$. Let $p_{z,1}, p_{z,2}:\N_0^{\textup{I}+}(z)\rightarrow\N_0$ be the composition of $p_z$ with the two projections from $\N\simeq\N_0\times_W\N_0$ to $\N_0$.
\begin{lemma}
    Let $y\in\B$ be an element such that it is perpendicular to $x$ and $\nu_p(q(y))=0$ or $1$. Then
    \begin{equation*}
        \N_0^{\textup{I}+}(z)\cap\mathcal{Z}(y)=\begin{cases}
            \iota_{z}^{\ast}\left(\exc_{\widetilde{\N}_0(z)}\right), &\textup{if $\nu_p(q(y))=0$;}\\
            \iota_{z}^{\ast}\left(2\cdot\exc_{\widetilde{\N}_0(z)}+\widetilde{\mathcal{C}}(y)\right), &\textup{if $\nu_p(q(y))=1$.}
        \end{cases}
    \end{equation*}
    \label{pull-back-0-strata}
\end{lemma}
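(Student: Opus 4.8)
The plan is to compute the effective Cartier divisor $\N_0^{\textup{I}+}(z)\cap\mathcal{Z}(y)$ on the regular surface $\N_0^{\textup{I}+}(z)$ by first splitting off a horizontal part, which I would identify by a moduli computation over $\N_0(z)\setminus\{0\}$, and then pinning down the multiplicity of the exceptional curve by an intersection number. Throughout, write $z=x_0\cdot x$, $z'=x_0\cdot y$ (so $\nu_p(q(z))=1$ and $\nu_p(q(z'))=1+\nu_p(q(y))$), and identify $\N_0^{\textup{I}+}(z)$ with $\widetilde{\N}_0(z)$ via $\iota_z$, so that the blow-down becomes $\pi_z$ and $p_z=\st_z\circ\pi_z$. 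Since $\mathcal{Z}(y)$ is a Cartier divisor on $\M$ (Lemma \ref{divisor}) and $\N_0^{\textup{I}+}(z)$ is a regular Cartier divisor on $\M$ (Proposition \ref{geometry-M}) meeting $\mathcal{Z}(y)$ properly, the intersection is indeed an effective Cartier divisor on $\N_0^{\textup{I}+}(z)$, and (as $\mathcal{Z}(y)$ has a single purely vertical component, namely $\exc_{\M}$, by Lemma \ref{dec-diff-div}) its only possible vertical component is $\exc_z^{\textup{I}+}$.

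The first step is to analyse the restriction of $\mathcal{Z}(y)$ to $\N_0^{\textup{I}+}(z)\setminus\exc_z^{\textup{I}+}\cong\N_0(z)\setminus\{0\}$. Feeding the moduli description of $\mathcal{Z}(y)$ on the charts $\M^{\pm}$ (Lemma \ref{divisor}) into the standard-decomposition construction of $\st_z^{\textup{I}+}$ from $\S$\ref{local-correspondences} — using that over $\N_0^{\textup{I}+}(z)$ the framing on the second factor is twisted by the cyclic isogeny $\pi$ deforming $z=x_0\cdot x$ — one finds that the defining conditions of $\mathcal{Z}(y)$ there become the condition that the quasi-isogeny $p^{-1}z'=\overline{x_0}^{-1}\cdot y$ deform to an isogeny (the extra $p^{-1}$ being the geometric cancellation phenomenon, since $x_0$ already deforms along $\N_0(z)$ through its $\Gamma_0(p)$-level structure). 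Hence $p_z$ identifies $(\N_0^{\textup{I}+}(z)\cap\mathcal{Z}(y))\setminus\exc_z^{\textup{I}+}$ with $\mathcal{C}(y)=\N_0(z)\cap\mathcal{Z}_{\N}(p^{-1}z')$, which is empty when $\nu_p(q(y))=0$ (then $\nu_p(q(p^{-1}z'))=-1<0$) and is the regular horizontal divisor $\textup{Spf}\,W_0$ of \cite[(5.10)]{GK93} when $\nu_p(q(y))=1$. It follows that $\N_0^{\textup{I}+}(z)\cap\mathcal{Z}(y)=\iota_z^{\ast}\bigl(b\cdot\exc_{\widetilde{\N}_0(z)}+\widetilde{\mathcal{C}}(y)\bigr)$ for some integer $b\geq1$; the bound $b\geq1$ comes, when $\nu_p(q(y))=0$, from $\mathcal{Z}(y)=\exc_{\M}+\N_0^{\textup{I}+}(x_0y)$ (Lemma \ref{dec-diff-div}), and, when $\nu_p(q(y))=1$, from the containment $(p_z)^{\ast}\mathcal{Z}_{\N}(p^{-1}z')\subseteq\N_0^{\textup{I}+}(z)\cap\mathcal{Z}(y)$ together with $\pi_z^{\ast}\mathcal{C}(y)=\widetilde{\mathcal{C}}(y)+\exc_{\widetilde{\N}_0(z)}$ (Lemma \ref{blow-up-N0(x)}).

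The second step determines $b$ by intersecting with $\exc_z^{\textup{I}+}$. By Lemma \ref{N0Z-EXC-SPE} the intersection number $\chi(\N_0^{\textup{I}+}(z),\rO_{\N_0^{\textup{I}+}(z)\cap\mathcal{Z}(y)}\otimes^{\mathbb{L}}_{\rO_{\N_0^{\textup{I}+}(z)}}\rO_{\exc_z^{\textup{I}+}})$ equals $-1$; expanding the divisor found in Step 1 and using $(\exc_{\widetilde{\N}_0(z)})^2=-1$ together with the fact that $\widetilde{\mathcal{C}}(y)$ meets $\exc_{\widetilde{\N}_0(z)}$ transversally in one point (both from Lemma \ref{blow-up-N0(x)}), this reads $-1=-b$ when $\nu_p(q(y))=0$ and $-1=-b+1$ when $\nu_p(q(y))=1$. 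Thus $b=1$ and $b=2$ respectively, which is exactly the asserted formula.

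The main obstacle is the first step: the moduli-theoretic identification of $\mathcal{Z}(y)\cap\N_0^{\textup{I}+}(z)$ away from the exceptional curve, and in particular making rigorous why the relevant special cycle on $\N_0(z)$ is the one attached to $p^{-1}z'=\overline{x_0}^{-1}\cdot y$ rather than to $y$, $y'$, or $z'$ itself. This requires careful bookkeeping of the framing morphisms $\rho_0$ produced by the standard decomposition on both charts $\M^{+}$ and $\M^{-}$ (the latter involving $x'$ in place of $x$) and checking that, over $\N_0(z)\setminus\{0\}$, the inclusion of $\mathcal{C}(y)$ into $p_z$ of $\mathcal{Z}(y)\cap\N_0^{\textup{I}+}(z)$ is an equality of closed formal subschemes; once this is in hand, the rest is routine manipulation with the intersection theory on $\exc_{\M}$ and $\exc_{\widetilde{\N}_0(z)}$ set up in $\S$\ref{spe-exc-part}.
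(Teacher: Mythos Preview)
Your strategy—identify the horizontal part via moduli, then pin down the exceptional multiplicity by intersection—is natural, but step~1 has a genuine gap, and the paper in fact proceeds in the opposite order.

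When you unwind the framings in $\st_z^{\textup{I}+}$ (for $z=x_0x$ with $\nu_p(q(z))=1$), the framing on the second copy $X_2=X$ is $\rho_0=\rho\, x^{-1}$, not something involving $x_0$. Hence on the chart $\widetilde{\N}_0(z)^{\circ}\subset\M^{+}$ the defining condition of $\mathcal{Z}(y)$ is that $y_1:=x^{-1}y\in\B^{0}$ lifts to an \emph{endomorphism} of $(X,\rho)$, i.e.\ the CM condition $(p_{z,1})^{\ast}\mathcal{Z}_{\N_0}(y_1)$; on $\widetilde{\N}_0(z)^{\bullet}\subset\M^{-}$ one gets the analogous CM condition $(p_{z,2})^{\ast}\mathcal{Z}_{\N_0}(y_1')$ on $(X',\rho')$. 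These are two \emph{different} pullbacks from the single-factor $\N_0$, not the single condition ``$p^{-1}z'$ lifts to $X\to X'$'' on $\N_0(z)$ that you assert. In particular $(p_{z,1})^{\ast}\mathcal{Z}_{\N_0}(y_1)$, viewed globally on $\widetilde{\N}_0(z)$, has a genuine horizontal component (the projection $\N_0(z)\to\N_0$ is finite flat of degree $p{+}1$), which only disappears from $\mathcal{Z}(y)\cap\N_0^{\textup{I}+}(z)$ because on the $\bullet$ chart one must switch to the other CM condition. So your moduli identification does not directly give the horizontal part, and the ``main obstacle'' you flag is real.

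The paper sidesteps this by reversing your two steps. It first reads off the exceptional multiplicity from the explicit CM equations on $\N_0$: when $\nu_p(q(y_1))=0$ the equation has the shape $t+a\cdot p$, and on the blow-up chart $W[u][[t]]/(\nu p+t^{2}(t^{p-1}-u))$ (where $p$ is a unit times $t^{2}$) this becomes $t\cdot(\text{unit})$, giving multiplicity~$1$; when $\nu_p(q(y_1))=1$ the equation has the shape $p+t^{2}f(t)$, which becomes $t^{2}\cdot(\text{nonunit})$, giving multiplicity exactly~$2$. Only then does it invoke Lemma~\ref{N0Z-EXC-SPE}—your step~2—together with the easy containment $\widetilde{\mathcal{C}}(y)\subset\N_0^{\textup{I}+}(z)\cap\mathcal{Z}(y)$: since each horizontal component contributes $+1$ to the intersection with $\exc$, the identity $-1=-2+1$ forces exactly one horizontal component, namely $\widetilde{\mathcal{C}}(y)$. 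Thus intersection theory pins down the horizontal part, while the vertical multiplicity comes from the equations; your roles are swapped, and the swap matters because it is the vertical multiplicity that falls out cleanly from the chartwise CM description.
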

\begin{proof}
Using Lemma \ref{cyclic-diff} (b), we can construct the following open cover of the formal scheme $\widetilde{\N}_0(z)$: 
\begin{equation}
    \widetilde{\N}_0(z)^{\circ}=\textup{Spf}\,W[u][[t]]\big{/}\left(\nu p+t^{2}(t^{p-1}-u)\right),\,\,\widetilde{\N}_0(z)^{\bullet}=\textup{Spf}\,W[v][[t^{\prime}]]\big{/}\left(\nu^{\prime} p+t^{\prime2}(t^{p-1}-v)\right),
    \label{exp-open-cover-N0z}
\end{equation}
where $\nu,\nu^{\prime}$ are invertible elements in the corresponding rings and $uv=1$. Using Lemma \ref{coordinate-change}, we can see easily that $\widetilde{\N}_0(z)^{\circ}\subset\M^{+}$ and $\widetilde{\N}_0(z)^{\bullet}\subset\M^{-}$. Let $y_1=x^{-1}y$. Then we have $y\in\B^{0}$. By the moduli interpretation of $\mathcal{Z}(y)$ in Lemma \ref{divisor}, we have
\begin{equation*}
    \mathcal{Z}(y)\cap\widetilde{\N}_0(z)^{\circ}=\left(p_{z,1}\right)^{\ast}\mathcal{Z}_{\N_0}(y_1),\,\,\,\,\mathcal{Z}(y)\cap\widetilde{\N}_0(z)^{\bullet}=\left(p_{z,2}\right)^{\ast}\mathcal{Z}_{\N_0}(y_1^{\prime}).
\end{equation*}
\begin{itemize}
    \item If $\nu_p(q(y))=0$, we have $\nu_p(q(y_1))=\nu_p(q(y_1^{\prime}))=0$. The equation cutting out $\mathcal{Z}_{\N_0}(y_1)$ (resp. $\mathcal{Z}_{\N_0}(y_1^{\prime})$) has the form $t+a\cdot p$ (resp. $t^{\prime}+a^{\prime}\cdot p$) for some $a\in\mathcal{O}_{\N_0}$ (resp. $a^{\prime}\in\mathcal{O}_{\N_0}$) since $\mathcal{Z}_{\N_0}(y_1)$ (resp. $\mathcal{Z}_{\N_0}(y_1^{\prime})$) is isomorphic to $\textup{Spf}\,W$ by Remark \ref{regularity-diff-smooth}. Using the open cover (\ref{exp-open-cover-N0z}), the equation of $\mathcal{Z}(y)$ on $\widetilde{\N}_0(z)^{\circ}$ (resp. $\widetilde{\N}_0(z)^{\bullet}$) is given by $t\times$(a unit element in $\mathcal{O}_{\widetilde{\N}_0(z)^{\circ}}$) (resp. $t^{\prime}\times$(a unit element in $\mathcal{O}_{\widetilde{\N}_0(z)^{\bullet}}$)). Therefore we have 
\begin{equation*}
    \N_0^{\textup{I}+}(z)\cap\mathcal{Z}(y)=\iota_{z}^{\ast}\left(\exc_{\widetilde{\N}_0(z)}\right).
\end{equation*}
\item If $\nu_p(q(y))=1$, we have $\nu_p(q(y_1))=\nu_p(q(y_1^{\prime}))=1$. The equation cutting out $\mathcal{Z}_{\N_0}(y_1)$ (resp. $\mathcal{Z}_{\N_0}(y_1^{\prime})$) has the form $p+t^{2}\cdot f(t)$ where $f(t)\in W[[t]]$ (resp. $p+t^{\prime2}g(t^{\prime})$ where $g(t^{\prime})\in W[[t^{\prime}]]$) since $\mathcal{Z}_{\N_0}(y_1)$ (resp. $\mathcal{Z}_{\N_0}(y_1^{\prime})$) is a regular formal scheme but not formally smooth over $W$ by Remark \ref{regularity-diff-smooth}. Using the open cover (\ref{exp-open-cover-N0z}), the equation of $\mathcal{Z}(y)$ on $\widetilde{\N}_0(z)^{\circ}$ (resp. $\widetilde{\N}_0(z)^{\bullet}$) is given by $t^{2}\cdot\left(\nu^{-1}(u-t^{p-1})+f(t)\right)$ (resp. $t^{\prime2}\cdot\left(\nu^{\prime-1}(v-t^{\prime p-1})+g(t^{\prime})\right)$). Therefore the multiplicity of $\exc_{z}^{\textup{I}+}$ in the divisor $\N_0^{\textup{I}+}(z)\cap\mathcal{Z}(y)$ is exactly $2$. Since every horizontal divisor in $\N_0^{\textup{I}+}(z)\cap\mathcal{Z}(y)$ has intersection number $1$ with $\exc_{z}^{\textup{I}+}$, we conclude that there is only one horizontal divisor by Lemma \ref{N0Z-EXC-SPE}. Notice that $\widetilde{\mathcal{C}}(y)\subset\N_0^{\textup{I}+}(z)\cap\mathcal{Z}(y)$ is a horizontal divisor. Therefore
\begin{equation*}
    \N_0^{\textup{I}+}(z)\cap\mathcal{Z}(y)=\iota_{z}^{\ast}\left(2\cdot\exc_{\widetilde{\N}_0(z)}+\widetilde{\mathcal{C}}(y)\right).
\end{equation*}
\end{itemize}
\end{proof}
\begin{corollary}
    Let $L\subset\B$ be a lattice of rank $3$ whose Gross-Keating invariant $\textup{GK}(L)=(0,0,1)$ or $(0,1,1)$. Then
    \begin{equation*}
        \Int^{\mathcal{Z}}(L)=\begin{cases}
            -1, &\textup{if $\textup{GK}(L)=(0,0,1)$;}\\
            0, &\textup{if $\textup{GK}(L)=(0,1,1)$.}
        \end{cases}
    \end{equation*}
    \label{base:geometric}
\end{corollary}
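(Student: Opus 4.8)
The plan is to run the difference-divisor decomposition of $\S$\ref{spe-exc-part} and reduce $\Int^{\mathcal{Z}}(L)$, in both cases, to an intersection computation on the regular surface $\N_0^{\textup{I}+}(z)\simeq\widetilde{\N}_0(z)$ and, for the type $(0,1,1)$, to one known Gross--Keating value on the hyperspecial space $\N$.

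First I would fix a Jordan (orthogonal) basis $x_1,x_2,x_3$ of $L$ with $\nu_p(q(x_i))=a_i$, where $(a_1,a_2,a_3)=\textup{GK}(L)\in\{(0,0,1),(0,1,1)\}$; in particular $a_1=0$, so $\min(L)=0$ and $\nu_p(q(x_1))=0$, and this basis may be used to compute $\Int^{\mathcal{Z}}(L)$ by Corollary~\ref{linear-invariance-2}. Put $z=x_0\cdot x_1$, so $\nu_p(q(z))=1$. As recorded at the start of $\S$\ref{base-geo} (Lemma~\ref{dec-diff-div} and Proposition~\ref{difference-decomposition} for $n=0$) one has $\mathcal{Z}(x_1)=\exc_{\M}+\N_0^{\textup{I}+}(z)$ as Cartier divisors, hence by Lemma~\ref{linearization}
\begin{equation*}
\Int^{\mathcal{Z}}(L)=\chi\big(\M,[\rO_{\exc_{\M}}]\cdot[\rO_{\mathcal{Z}(x_2)}]\cdot[\rO_{\mathcal{Z}(x_3)}]\big)+\chi\big(\M,[\rO_{\N_0^{\textup{I}+}(z)}]\cdot[\rO_{\mathcal{Z}(x_2)}]\cdot[\rO_{\mathcal{Z}(x_3)}]\big).
\end{equation*}
The first term vanishes exactly as in the proof of Lemma~\ref{geo-diff-M}: by (\ref{spe-exc}) (applied to $x_2$ and to $x_3$) it reduces to $\rO(0,-1)\cdot\rO(0,-1)$ in $\textup{Pic}(\exc_{\M})$, which is $0$ by Lemma~\ref{int-on-exc}.

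For the second term I would restrict to the regular surface $\N_0^{\textup{I}+}(z)\simeq\widetilde{\N}_0(z)$; the intersections $\N_0^{\textup{I}+}(z)\cap\mathcal{Z}(x_i)$ are proper by Lemma~\ref{proper-with-strict}, so the term equals the surface intersection number $\big(\N_0^{\textup{I}+}(z)\cap\mathcal{Z}(x_2)\big)\cdot\big(\N_0^{\textup{I}+}(z)\cap\mathcal{Z}(x_3)\big)$. By Lemma~\ref{pull-back-0-strata} each factor is $\exc_{\widetilde{\N}_0(z)}$ when the corresponding $\nu_p(q(x_i))=0$, and $2\exc_{\widetilde{\N}_0(z)}+\widetilde{\mathcal{C}}(x_i)$ when $\nu_p(q(x_i))=1$. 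If $\textup{GK}(L)=(0,0,1)$ the number becomes $\big(\exc_{\widetilde{\N}_0(z)}\cdot(2\exc_{\widetilde{\N}_0(z)}+\widetilde{\mathcal{C}}(x_3))\big)$, which is precisely the quantity shown in Lemma~\ref{N0Z-EXC-SPE} to equal $-1$; hence $\Int^{\mathcal{Z}}(L)=0+(-1)=-1$. If $\textup{GK}(L)=(0,1,1)$ the number is $\big((2\exc_{\widetilde{\N}_0(z)}+\widetilde{\mathcal{C}}(x_2))\cdot(2\exc_{\widetilde{\N}_0(z)}+\widetilde{\mathcal{C}}(x_3))\big)$; expanding it and using $(\exc_{\widetilde{\N}_0(z)})^{2}=-1$ (Lemma~\ref{blow-up-N0(x)}(a)) together with $\widetilde{\mathcal{C}}(x_i)\cdot\exc_{\widetilde{\N}_0(z)}=1$ (from Lemma~\ref{N0Z-EXC-SPE}), the $\exc_{\widetilde{\N}_0(z)}$-contributions cancel and it collapses to $\big(\widetilde{\mathcal{C}}(x_2)\cdot\widetilde{\mathcal{C}}(x_3)\big)$. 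Writing $\pi_z^{\ast}\mathcal{C}(x_i)=\widetilde{\mathcal{C}}(x_i)+\exc_{\widetilde{\N}_0(z)}$ and invoking Lemma~\ref{blow-up-N0(x)}(c)--(d), this equals $\big(\mathcal{C}(x_2)\cdot\mathcal{C}(x_3)\big)_{\N_0(z)}-1$.

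It then remains to evaluate $\big(\mathcal{C}(x_2)\cdot\mathcal{C}(x_3)\big)_{\N_0(z)}$. Because $\nu_p(q(z))=1$ we have $\mathcal{Z}_{\N}(p^{-1}z)=\varnothing$, so $\N_0(z)=\mathcal{D}_{\N}(z)=\mathcal{Z}_{\N}(z)$ as closed formal subschemes of $\N$, and then $\mathcal{C}(x_i)=\mathcal{Z}_{\N}(z)\cap\mathcal{Z}_{\N}(p^{-1}x_0 x_i)$; hence $\big(\mathcal{C}(x_2)\cdot\mathcal{C}(x_3)\big)_{\N_0(z)}=\Int_{\N}(M)$ for the orthogonal $\zp$-lattice $M=\langle z\rangle\obot\langle p^{-1}x_0 x_2\rangle\obot\langle p^{-1}x_0 x_3\rangle$, whose diagonal $q$-valuations are $0,0,1$. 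By \cite{GK93} (equivalently, by Theorem~\ref{GK-main} together with the rank $\le 3$ local density formulas of \cite{yanglocaldensity}) one has $\Int_{\N}(M)=1$, so $\Int^{\mathcal{Z}}(L)=0$. The main obstacle I anticipate is not a single hard step but the careful bookkeeping throughout: tracking the blow-up corrections (the coefficients $2$ on $\exc_{\widetilde{\N}_0(z)}$ and the strict-transform identities of Lemma~\ref{blow-up-N0(x)}), working consistently in the graded $K_0$-groups, and making the clean identification $\mathcal{C}(x_i)=\mathcal{Z}_{\N}(z)\cap\mathcal{Z}_{\N}(p^{-1}x_0x_i)$ that funnels the whole computation into the known Gross--Keating base value.
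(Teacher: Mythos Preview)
Your proof is correct and follows essentially the same approach as the paper: decompose $\mathcal{Z}(x_1)=\exc_{\M}+\N_0^{\textup{I}+}(z)$, kill the $\exc_{\M}$-term via (\ref{spe-exc}), apply Lemma~\ref{pull-back-0-strata} on $\widetilde{\N}_0(z)$, and for $(0,1,1)$ reduce to the hyperspecial Gross--Keating value $\Int_{\N}(M)=1$. The only cosmetic difference is that the paper rewrites $2\exc_{\widetilde{\N}_0(z)}+\widetilde{\mathcal{C}}(x_i)=\exc_{\widetilde{\N}_0(z)}+\pi_z^{\ast}\mathcal{C}(x_i)$ before expanding (and cites \cite[Proposition~5.4]{GK93} directly for the final value $1$), whereas you expand first and then convert $\widetilde{\mathcal{C}}(x_2)\cdot\widetilde{\mathcal{C}}(x_3)$ to $(\mathcal{C}(x_2)\cdot\mathcal{C}(x_3))_{\N_0(z)}-1$; the arithmetic is the same.
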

\begin{proof}
    Let $x,y_1,y_2$ be an orthogonal basis of the lattice $L$ where $\nu_p(q(x))=0$ and $\nu_p(q(y_2))=1$. Let $z=x_0\cdot x$. We have
    \begin{align*}
        \Int^{\mathcal{Z}}(L)=\chi\bigg{(}\M,\rO_{\mathcal{Z}(x)}\otimes^{\mathbb{L}}_{\rO_\M}\rO_{\mathcal{Z}(y_1)}\otimes^{\mathbb{L}}_{\rO_\M}&\rO_{\mathcal{Z}(y_2)}\bigg{)}=\chi\left(\M,\rO_{\N^{\textup{I}+}_0(z)}\otimes^{\mathbb{L}}_{\rO_\M}\rO_{\mathcal{Z}(y_1)}\otimes^{\mathbb{L}}_{\rO_\M}\rO_{\mathcal{Z}(y_2)}\right)\\
        &=\chi\left(\N_0^{\textup{I}+}(z),\rO_{\mathcal{Z}(y_1)\cap\N_0^{\textup{I}+}(z)}\otimes^{\mathbb{L}}_{\rO_\M}\rO_{\mathcal{Z}(y_2)\cap\N_0^{\textup{I}+}(z)}\right).
    \end{align*}
    In the following, we use $(\cdot,\cdot)_{\widetilde{\N}_0(z)}$ (resp. $(\cdot,\cdot)_{\N_0(z)}$) to denote the intersection pairing of divisors on $\widetilde{\N}_0(z)$ (resp. $\N_0(z)$).
    \begin{itemize}
        \item $\nu_p(q(y_1))=0$. By Lemma \ref{pull-back-0-strata}, we have
        \begin{align*}
            \Int^{\mathcal{Z}}(L)=\left(\exc_{\widetilde{\N}_0(z)},2\cdot\exc_{\widetilde{\N}_0(z)}+\widetilde{\mathcal{C}}(y_2)\right)_{\widetilde{\N}_0(z)}=-2+1=-1.
        \end{align*}
        \item $\nu_p(q(y_1))=1$. By the construction of $\widetilde{\mathcal{C}}(y_i)$ where $i=1,2$, we have
        \begin{equation}
            \mathcal{C}(y_i)\simeq\mathcal{Z}_{\N_0}(p^{-1}x_0\cdot y_i).
            \label{rel-C-y-i}
        \end{equation}
        By Lemma \ref{pull-back-0-strata}, we have
        \begin{equation*}
            \N_0^{\textup{I}+}(z)\cap\mathcal{Z}(y_i)=\iota_{z}^{\ast}\left(\exc_{\widetilde{\N}_0(z)}+\pi_z^{\ast}\mathcal{C}(y_i)\right).
        \end{equation*}
        By Lemma \ref{blow-up-N0(x)}, we have
        \begin{align*}
             \Int^{\mathcal{Z}}(L)&=\left(\exc_{\widetilde{\N}_0(z)}+\pi^{\ast}\mathcal{C}(y_1),\exc_{\widetilde{\N}_0(z)}+\pi^{\ast}\mathcal{C}(y_2)\right)_{\widetilde{\N}_0(z)}\\
             &=-1+0+0+\left(\pi^{\ast}\mathcal{C}(y_1),\pi^{\ast}\mathcal{C}(y_2)\right)_{\widetilde{\N}_0(z)}=-1+\left(\mathcal{C}(y_1),\mathcal{C}(y_2)\right)_{\N_0(z)}.
        \end{align*}
        Notice that $p^{-1}x_0\cdot y_1,p^{-1}x_0\cdot y_2,z$ span an orthogonal basis of a rank 3 lattice in $\B$ whose Gross-Keating invariant is $(0,0,1)$. By (\ref{rel-C-y-i}) and the fact that $\N_0(z)=\mathcal{Z}_{\N}(z)$, we have $\left(\mathcal{C}(y_1),\mathcal{C}(y_2)\right)_{\N_0(z)}=1$ by \cite[Proposition 5.4]{GK93}. Therefore $\Int^{\mathcal{Z}}(L)=0$.
    \end{itemize} 
\end{proof}

\subsection{Proof of Theorem \ref{main-theorem}}
Theorem \ref{main-theorem} states that for a $\zp$-lattice $L\subset\B$ of rank $3$, we have
\begin{equation}
    \Int^{\mathcal{Z}}(L)=\partial\den(H_0(p),L).
    \label{proof-main}
\end{equation}
and 
\begin{equation}
    \Int^{\mathcal{Y}}(L)=\partial\den(H_0(p)^{\vee},L)-1\,\,\,\textup{if $L[p]$ is integral}.
    \label{proof-main-2}
\end{equation}
\par
We first prove (\ref{proof-main}). Notice that this is automatically true if $L$ is not integral because both sides are $0$. Now we assume that the quadratic lattice $L$ is integral. Let $(a_1,a_2,a_3)$ be the Gross-Keating invariant of $L$ where $0\leq a_1\leq a_2\leq a_3$ are integers. Let $n(L)=a_1+a_2+a_3$. We prove the equality (\ref{proof-main}) by induction on $n$.
\begin{itemize}
    \item [1.] Base cases: $n(L)=1$ or $2$. The equality (\ref{proof-main}) is true by combining Lemma \ref{base:analytic} and Corollary \ref{base:geometric}.
    \item[2.] Induction step: If the equality (\ref{proof-main}) is true for all quadratic lattices $L$ such that $n(L)\leq k$ where $k\geq2$ is an integer. Let $L^{\prime}$ be a quadratic lattice such that $n(L^{\prime})=k+1$. Let $x_1,x_2,x_3$ be an orthogonal basis of $L^{\prime}$ such that $\nu_p(q(x_1))\leq\nu_p(q(x_2))\leq\nu_p(q(x_3))$. Then $\nu_p(q(x_3))\geq2$. Let $L^{\flat}=\zp\cdot x_1\obot\zp\cdot x_2$ and $L=L^{\flat}\obot\zp\cdot p^{-1}x_3$. Then we have $n(L)=k-1<k$. The equality (\ref{proof-main}) holds by combining $\Int^{\mathcal{Z}}(L)=\partial\den(H_0(p),L)$ and Corollary \ref{diff-com}.   
\end{itemize}
Therefore we can conclude that the equality (\ref{proof-main}) is true for all lattices $L\subset\B$ of rank $3$. The proof of (\ref{proof-main-2}) is similar so we omit it. 
$\hfill\qed$

\part{Applications}
\label{app-part}
\section{A conjecture of Kudla--Rapoport}
\label{rapoport-conj}
\subsection{CM cycles on $\N_0(x_0)$}
Roughly speaking, the formal scheme $\N_0(x_0)$ parameterizes cyclic isogenies deforming $x_0$. Since $\nu_p(q(x_0))=1$, we have $\N_0(x_0)=\mathcal{Z}_\N(x_0)$ by Lemma \ref{cyclic-diff}. Let
\begin{equation*}
    \left(X^{\univ}\xrightarrow{\pi} X^{\prime\univ},(\rho^{\univ},\rho^{\prime\univ})\right)
\end{equation*}
be the universal deformation of the isogeny $x_0$ over $\N_0(x_0)$. Let $\B^{0}$ be the subset of $\B$ consisting of trace $0$ elements. Let $S$ be the subset of $H_0(p)$ consisting of trace $0$ elements.
\begin{definition}
    Let $H\subset\B^0$ be a subset. Define the special cycle $\mathcal{Z}_{\N_0(x_0)}^{\textup{CM}}(H)\subset\N_0(x_0)$ to be the closed formal subscheme cut out by the conditions,
\begin{equation*}
    \rho^{\textup{univ}}\circ x \circ (\rho^{\textup{univ}})^{-1}\in \textup{End}(X^{\textup{univ}});
\end{equation*}
\begin{equation*}
    \rho^{\prime\textup{univ}}\circ x^{\prime} \circ (\rho^{\prime\textup{univ}})^{-1} \in \textup{End}(X^{\prime\textup{univ}}).
\end{equation*}
for all $x\in H$.
\end{definition}
For a given element $x\in\mathbb{B}^{0}$, the special cycle $\mathcal{Z}^{\textup{CM}}_{\N_0(x_0)}(x)$ is not a divisor and has embedded components. Let $\mathcal{Z}^{\textup{CM}}_{\N_0(x_0)}(x)^{\flat}$ be the associated divisor of $\mathcal{Z}^{\textup{CM}}_{\N_0(x_0)}(x)$ following the notion of Kudla and Rapoport \cite[$\S$4]{KRshimuracurve}. In the current situation, let $f_x,f_{x^{\prime}}\in\mathcal{O}_{\N_0(x_0)}$ be the two elements cutting out the closed formal subschemes over which $x$ and $x^{\prime}$ are isogenies respectively. Let $g_x$ be the greatest common divisor of $f_x$ and $f_{x^{\prime}}$, then the associated divisor of $\mathcal{Z}^{\textup{CM}}_{\N_0(x_0)}(x)$ is defined by the elemeng $g_x$.
\par
In 2006, Michael Rapoport gave on talk titled ``Some remarks on special cycles on Shimura curves". In that talk, he explained his conjecture with Steven Kudla:
\begin{conjecture}
    Let $x,y\in\B^0$ be two linearly independent vectors. Let $M$ be the $\zp$-lattice spanned by $x$ and $y$. Then
    \begin{equation*}
        \chi\left(\N_0(x_0),\mathcal{O}_{\mathcal{Z}^{\textup{CM}}_{\N_0(x_0)}(x)^{\flat}}\otimes^{\mathbb{L}}_{\mathcal{O}_{\N_0(x_0)}}\mathcal{Z}^{\textup{CM}}_{\N_0(x_0)}(y)^{\flat}\right)=\partial\den(S,M)+1.
    \end{equation*}\label{conj-of-KR}
\end{conjecture}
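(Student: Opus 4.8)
\textbf{Proof proposal for Conjecture \ref{conj-of-KR}.}
The plan is to realize the CM intersection on $\N_0(x_0)$ as a pullback of the $\mathcal{Z}$-cycle intersection on $\M$ already computed in Theorem \ref{main-theorem}, using the ``geometric cancellation law'' sketched in the introduction. Concretely, I would first observe that $\N_0(x_0)=\mathcal{Z}_{\N(x_0)}(x_0)$ by Lemma \ref{cyclic-diff}, so its strict transform $\widetilde{\mathcal{Z}}(1)\subset\M$ along the blow-up morphism $\pi\colon\M\to\N(x_0)$ is exactly the blow-up $\widetilde{\N}_0(x_0)$ of $\N_0(x_0)$ at its closed point. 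For a trace-zero element $x\in\mathbb{B}^0$ we have $x^{\prime}=x_0 x x_0^{-1}=-\overline{x_0}x x_0^{-1}$, and over $\widetilde{\mathcal{Z}}(1)$ the pair $(x,x^{\prime})$ deforming to isogenies is precisely the condition cutting out $\mathcal{Z}^{\mathrm{CM}}_{\N_0(x_0)}(x)$; the associated divisor $\mathcal{Z}^{\mathrm{CM}}_{\N_0(x_0)}(x)^{\flat}$ corresponds, after pulling back, to the Cartier divisor $\mathcal{Z}(x)\cap\widetilde{\mathcal{Z}}(1)$ on $\M$ (the embedded components of $\mathcal{Z}^{\mathrm{CM}}$ are absorbed by passing to the strict transform / associated divisor, and the exceptional divisor $\mathrm{Exc}_\M$ contributes a correction which one tracks with the line-bundle computations of \S\ref{spe-exc-part}, in particular \eqref{spe-exc}). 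This reduces the left side of Conjecture \ref{conj-of-KR} to $\chi\!\left(\M, \rO_{\widetilde{\mathcal{Z}}(1)}\otimes^{\mathbb{L}}_{\rO_\M}\rO_{\mathcal{Z}(x)}\otimes^{\mathbb{L}}_{\rO_\M}\rO_{\mathcal{Z}(y)}\right)$ up to an explicit term coming from $\mathrm{Exc}_\M$.

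Next I would feed this into the rank-$3$ intersection number. If $x_0$ is chosen with $q(x_0)=p$ (up to a unit, which is harmless), then $1,x,y$ span a rank-$3$ lattice $L=\langle 1\rangle\obot M$ inside $\mathbb{B}$ with $\langle 1\rangle$ of valuation zero, and Corollary \ref{linear-invariance-2} together with the decomposition $\mathcal{Z}(1)=\mathrm{Exc}_\M+\widetilde{\mathcal{Z}}(1)$ (Corollary \ref{spe-decom}(a) with $n=0$) gives
\begin{equation*}
    \chi\!\left(\M,\rO_{\widetilde{\mathcal{Z}}(1)}\otimes^{\mathbb{L}}_{\rO_\M}\rO_{\mathcal{Z}(x)}\otimes^{\mathbb{L}}_{\rO_\M}\rO_{\mathcal{Z}(y)}\right)=\Int^{\mathcal{Z}}(L)-\chi\!\left(\M,\rO_{\mathrm{Exc}_\M}\otimes^{\mathbb{L}}_{\rO_\M}\rO_{\mathcal{Z}(x)}\otimes^{\mathbb{L}}_{\rO_\M}\rO_{\mathcal{Z}(y)}\right).
\end{equation*}
The second term is evaluated on $\mathrm{Exc}_\M\simeq\bP^1_\F\times\bP^1_\F$ by \eqref{spe-exc} and Lemma \ref{int-on-exc}: $[\rO_{\mathrm{Exc}_\M}\otimes^{\mathbb{L}}\rO_{\mathcal{Z}(x)}]=\rO(0,-1)$ and similarly for $y$, so $\rO(0,-1)\cdot\rO(0,-1)=0$. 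Hence the left side of Conjecture \ref{conj-of-KR} equals $\Int^{\mathcal{Z}}(L)$ plus the (constant) exceptional correction coming from the $\mathcal{Z}^{\mathrm{CM},\flat}$ versus $\mathcal{Z}(x)\cap\widetilde{\mathcal{Z}}(1)$ comparison; Theorem \ref{main-theorem} then identifies $\Int^{\mathcal{Z}}(L)=\partial\den(H_0(p),L)$.

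Finally I would match the analytic sides by an ``analytic cancellation law'': since $L=\langle 1\rangle\obot M$ with $\langle 1\rangle$ self-dual of valuation $0$, one expects $\partial\den(H_0(p),\langle 1\rangle\obot M)=\partial\den(S,M)$ up to an explicit constant, where $S\subset H_0(p)$ is the trace-zero sublattice — this is the content alluded to in \S\ref{ana-cancel} of the introduction, and it should follow from the rank-$1$ primitive density formula \eqref{rank1} applied to the factor $\langle 1\rangle$ exactly as in the hyperspecial cancellation arguments of \cite{LZ22b}. Combining the geometric and analytic cancellations, and carefully bookkeeping the two constant corrections (from $\mathrm{Exc}_\M$ on the geometric side and from the density normalization on the analytic side) so that their net effect is the additive $+1$ predicted by the conjecture, yields the claimed identity. \textbf{The main obstacle} I anticipate is precisely this bookkeeping: showing that the associated divisor $\mathcal{Z}^{\mathrm{CM}}_{\N_0(x_0)}(x)^{\flat}$ pulls back under $\pi$ (restricted to $\widetilde{\mathcal{Z}}(1)$) to $\mathcal{Z}(x)\cap\widetilde{\mathcal{Z}}(1)$ on the nose — including that no stray exceptional multiplicity appears — and then tracking the exact constant through the normalized derived densities. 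This requires a careful local analysis of the equations of $f_x, f_{x^\prime}$ on $\N_0(x_0)$ in the coordinates of Proposition \ref{local-gamma0p}, of the kind carried out in Proposition \ref{difference-decomposition}, together with the rank-$2$ explicit density formulas of Yang \cite{yanglocaldensity} to pin down the numerical constant, and a comparison with Shi's independent computation \cite{ShiCM} as a consistency check.
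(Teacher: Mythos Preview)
Your strategy is essentially the paper's: identify $\widetilde{\N}_0(x_0)$ with $\widetilde{\mathcal{Z}}(1)\subset\M$, use $\mathcal{Z}(1)=\exc_\M+\widetilde{\mathcal{Z}}(1)$ to reduce to $\Int^{\mathcal{Z}}(\langle 1\rangle\obot M)$, observe that the $\exc_\M$ term contributes $\mathcal{O}(0,-1)\cdot\mathcal{O}(0,-1)=0$, apply Theorem~\ref{main-theorem}, and finish with an analytic cancellation $\partial\den(H_0(p),\langle 1\rangle\obot M)=\partial\den(S,M)$. All of this is exactly Lemma~\ref{id-CM-Z}, Theorem~\ref{int-CM}, and Lemma~\ref{analytic-cancel}.

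The one point you have backwards is the source of the $+1$. The analytic cancellation is \emph{exact}: Lemma~\ref{analytic-cancel} is just the $n=0$ case of Lemma~\ref{der-diff}, with no additive constant. The $+1$ is purely geometric and arises precisely where you flagged your ``main obstacle'': the pullback of $\mathcal{Z}^{\textup{CM}}_{\N_0(x_0)}(x)^{\flat}$ to $\widetilde{\N}_0(x_0)$ does \emph{not} equal $\mathcal{Z}(x)\cap\widetilde{\mathcal{Z}}(1)$ on the nose. Writing $(f_x,f_{x'})=(g_x)\cdot\mathfrak{m}_x$ with $\mathfrak{m}_x$ primary to the maximal ideal, the ideal $\mathfrak{m}_x$ pulls back to a multiple of the exceptional divisor $\exc_{x_0}$ of $\widetilde{\N}_0(x_0)\to\N_0(x_0)$ (not $\exc_\M$), and the multiplicity is exactly $1$; this is Lemma~\ref{rel-cm-flat}, pinned down by the computation $\chi(\mathcal{Z}^{\textup{CM}}(x)\cdot\exc_{x_0})=-1$ of Lemma~\ref{N0Z-EXC-SPE}. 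Then $\chi(\text{flat}_x\cdot\text{flat}_y)_{\N_0(x_0)}=\chi\bigl((\mathcal{Z}^{\textup{CM}}(x)-\exc_{x_0})\cdot(\mathcal{Z}^{\textup{CM}}(y)-\exc_{x_0})\bigr)_{\widetilde{\N}_0(x_0)}=\Int^{\textup{CM}}(M)+2-1=\Int^{\textup{CM}}(M)+1$, using $\exc_{x_0}^2=-1$ and $\chi(\pi_{x_0}^*\text{flat}\cdot\exc_{x_0})=0$ from Lemma~\ref{blow-up-N0(x)}. So your anticipated ``stray exceptional multiplicity'' does appear, equals $1$, and is the entire story.
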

In the following paragraphs, we will reformulate and then confirm this conjecture.

\subsection{Geometric cancellation law}
\label{geo-cancel}
Let $1\in\B$ be the identity element. By the definition of the special cycles on $\N(x_0)$, there exists an isomorphism $\iota_0:\N_0(x_0)\simeq\mathcal{Z}_{\N(x_0)}(1)$. For a subset $H\subset\B^0$, let $H^{\prime}=\{1\}\cup H$. We have
\begin{equation*}
    \mathcal{Z}_{\N_0(x_0)}^{\textup{CM}}(H)=(\iota_0)^{\ast}(\mathcal{Z}_{\N(x_0)}(H^{\prime}))
\end{equation*}
Recall that we denote by $\widetilde{\N}_0(x_0)$ the blow up of the formal scheme $\N_0(x_0)$ along its unique closed $\F$-point. The isomorphism $\iota_0$ induces a closed immersion $\widetilde{\N}_0(x_0)\rightarrow\mathcal{Z}(1)\subset\M$. By Lemma \ref{dec-diff-div}, this closed immersion induces an isomorphism $\widetilde{\N}_0(x_0)\xrightarrow{\sim}\N_0^{\textup{I}+}(1)=\widetilde{\mathcal{D}}(1)=\widetilde{\mathcal{Z}}(1)$. We still denote this isomorphism by $\iota_0$.
\begin{definition}
    Let $H\subset\B^0$ be a subset. Define the special cycle $\mathcal{Z}^{\textup{CM}}(H)\subset\widetilde{\N}_0(x_0)$ to be 
    \begin{equation*}
        \mathcal{Z}^{\textup{CM}}(H)\coloneqq\mathcal{Z}_{\N_0(x_0)}^{\textup{CM}}(H)\times_{\N_0(x_0)}\widetilde{\N}_0(x_0).
    \end{equation*}
    For a single element $x\in\B^0$, denote by $\mathcal{Z}^{\textup{CM}}(x)$ the cycle $\mathcal{Z}^{\textup{CM}}(\{x\})$.
    \label{special-cycle-CM}
\end{definition}
Recall that we use $\left(X_{1,\M}\stackrel{(x_0)_{1,\M}}\longrightarrow X_{1,\M}^{\prime},\left(\rho_{1,\M},\rho_{1,\M}^{\prime}\right)\right),\,\,\,\,\left(X_{2,\M}\stackrel{(x_0)_{2,\M}}\longrightarrow X_{2,\M}^{\prime},\left(\rho_{2,\M},\rho_{2,\M}^{\prime}\right)\right)$ to denote the universal object over $\M$. Over the formal scheme $\widetilde{\N}_0(x_0)\simeq\N_0^{\textup{I}+}(1)\subset\M$, we have the following commutative diagram
\begin{equation}
        \begin{tikzcd}
    {X_{1,\M}}
    \arrow[d,"{x_{0,1}^{\univ}}"swap] \arrow[r, "{1}","{\sim}"']
    & {X_{2,\M}}
    \arrow[d,"{x_{0,2}^{\univ}}"]
    \\
    {X_{1,\M}^{\prime}}
    \arrow[r, "{1}","{\sim}"']
    &{X_{2,\M}^{\prime}},
\end{tikzcd}\label{over-Z1}
\end{equation}
We can identify $X_{1,\M}$ with $X_{2,\M}$ (resp.  $X_{1,\M}^{\prime}$ with $X_{2,\M}^{\prime}$) using the diagram (\ref{over-Z1}). Comparing the definition of $\mathcal{Z}(H)$ in Definition \ref{special-cycle-on-M} and the definition of $\mathcal{Z}^{\textup{CM}}(H)$ in Definition \ref{special-cycle-CM}, we get
\begin{lemma}
    Let $H\subset\B^0$ be a subset. Then
    \begin{equation*}
        \mathcal{Z}^{\textup{CM}}(H)=(\iota_0)^{\ast}\left(\N_0^{\textup{I}+}(x_0)\cap\mathcal{Z}(H)\right).
    \end{equation*} \label{id-CM-Z}
\end{lemma}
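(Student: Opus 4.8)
The plan is to deduce the identity directly from the moduli descriptions, using the commutative diagram \eqref{over-Z1} to trade the two pairs of universal $p$-divisible groups carried by $\N_0^{\textup{I}+}(x_0)$ for a single pair. Recall that $\mathcal{Z}(1)\subset\M$ is, by Definition \ref{special-cycle-on-M}, the locus over which the quasi-isogeny $\rho_{2,\M}\circ\rho_{1,\M}^{-1}\colon X_{1,\M}\dashrightarrow X_{2,\M}$ (the lift of $1$) and $\rho_{2,\M}'\circ(\rho_{1,\M}')^{-1}\colon X_{1,\M}'\dashrightarrow X_{2,\M}'$ (the lift of $1'=1$) become honest morphisms; being of height $0$ these are isomorphisms $\psi$ and $\psi'$, which are the horizontal arrows in \eqref{over-Z1}, and by their very construction they are compatible with the framings: $\psi\circ\rho_{1,\M}=\rho_{2,\M}$ and $\psi'\circ\rho_{1,\M}'=\rho_{2,\M}'$. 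Since $\N_0^{\textup{I}+}(x_0)\subset\mathcal{Z}(1)$, all of this restricts to $\N_0^{\textup{I}+}(x_0)$.

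The second step is to unwind the isomorphism $\iota_0\colon\widetilde{\N}_0(x_0)\xrightarrow{\sim}\N_0^{\textup{I}+}(x_0)$ of $\S$\ref{geo-cancel}: it is the composite of $\N_0(x_0)\simeq\mathcal{Z}_{\N(x_0)}(1)$, the closed immersion $\st_{x_0}^{\textup{I}+}$ of $\S$\ref{local-correspondences}, and the blow-up identification of Lemma \ref{dec-diff-div}. Since $\nu_p(q(x_0))=1$, the standard decomposition of $x_0$ has a single step, so this chain is as simple as possible, and tracing through it one checks that $\iota_0^{\ast}\bigl(X_{1,\M},\rho_{1,\M},X_{1,\M}',\rho_{1,\M}',x_{0,1}^{\univ}\bigr)$ is identified with the pullback along the blow-up morphism $\widetilde{\N}_0(x_0)\to\N_0(x_0)$ of the universal datum $\bigl(X^{\univ},\rho^{\univ},X^{\prime\univ},\rho^{\prime\univ},\pi\bigr)$. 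Granting this, for $x\in\B^{0}$ the closed formal subscheme $\N_0^{\textup{I}+}(x_0)\cap\mathcal{Z}(x)$ is cut out inside $\N_0^{\textup{I}+}(x_0)$ by
\begin{equation*}
    \rho_{2,\M}\circ x\circ\rho_{1,\M}^{-1}\in\textup{Hom}(X_{1,\M},X_{2,\M}),\qquad \rho_{2,\M}'\circ x'\circ(\rho_{1,\M}')^{-1}\in\textup{Hom}(X_{1,\M}',X_{2,\M}');
\end{equation*}
composing with the isomorphisms $\psi^{-1}$ and $(\psi')^{-1}$ and using the framing compatibility, these become $\rho_{1,\M}\circ x\circ\rho_{1,\M}^{-1}\in\textup{End}(X_{1,\M})$ and $\rho_{1,\M}'\circ x'\circ(\rho_{1,\M}')^{-1}\in\textup{End}(X_{1,\M}')$, which under $\iota_0$ are exactly the conditions of Definition \ref{special-cycle-CM} defining $\mathcal{Z}_{\N_0(x_0)}^{\textup{CM}}(x)$, pulled back to the blow-up (here I use that forming the locus where a quasi-isogeny becomes a morphism commutes with base change). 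Intersecting over all $x\in H$ then gives $(\iota_0)^{\ast}\bigl(\N_0^{\textup{I}+}(x_0)\cap\mathcal{Z}(H)\bigr)=\mathcal{Z}^{\textup{CM}}(H)$.

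The one place that demands care — and which I regard as the main (though not deep) obstacle — is the identification in the second step: one must verify that the chain $\widetilde{\N}_0(x_0)\simeq\mathcal{Z}_{\N(x_0)}(1)\hookrightarrow\N(x_0)$ together with $\st_{x_0}^{\textup{I}+}$ and the strict-transform/blow-up identification does not twist the framings $\rho_{i,\M},\rho_{i,\M}'$ by a nontrivial quasi-isogeny, so that the two pairs of universal data are matched on the nose rather than merely up to isogeny (in fact, running the defining formulas of $\S$\ref{local-correspondences} in the case $n=1$ should make $\psi$ and $\psi'$ the identity). This is a bookkeeping check with those formulas; once it is in place, the remainder of the argument is a formal manipulation of the representing conditions.
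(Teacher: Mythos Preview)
Your proposal is correct and follows essentially the same approach as the paper: both argue by comparing the moduli-theoretic conditions defining $\mathcal{Z}(H)$ and $\mathcal{Z}^{\textup{CM}}(H)$ via the identification of the two universal pairs afforded by diagram~\eqref{over-Z1}. The paper's proof is a one-line reference to this comparison, whereas you spell out the verification that $\iota_0$ (which is indeed induced by the diagonal embedding $\st_{x_0}^{\textup{I}+}$ in the $n=1$ case) matches the framings on the nose; this extra detail is sound and fills in exactly what the paper leaves implicit.
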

\begin{corollary}
    Let $x\in\B^0$. The CM cycle $\mathcal{Z}^{\textup{CM}}(x)$ is an effective Cartier divisor on the formal scheme $\widetilde{\N}_0(x_0)$.
\end{corollary}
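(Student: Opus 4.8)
The plan is to deduce the Cartier property from the geometric cancellation law (Lemma~\ref{id-CM-Z}) together with the observation that, although the cycle $\mathcal{Z}^{\textup{CM}}_{\N_0(x_0)}(x)$ downstairs has embedded components and fails to be a divisor, its lift $\mathcal{Z}(x)$ to the regular $3$-fold $\M$ \emph{is} an effective Cartier divisor (Lemma~\ref{divisor}). We may assume $x\neq 0$. First I would invoke Lemma~\ref{id-CM-Z} to write
\[
  \mathcal{Z}^{\textup{CM}}(x)=(\iota_0)^{\ast}\bigl(\N_0^{\textup{I}+}(x_0)\cap\mathcal{Z}(x)\bigr),
\]
where $\iota_0\colon\widetilde{\N}_0(x_0)\xrightarrow{\sim}\N_0^{\textup{I}+}(x_0)$ is an isomorphism. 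Since being an effective Cartier divisor is stable under pullback along an isomorphism, it suffices to show that $\N_0^{\textup{I}+}(x_0)\cap\mathcal{Z}(x)$ is an effective Cartier divisor on $\N_0^{\textup{I}+}(x_0)$.

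For this I would assemble two structural inputs. On the one hand, $\N_0^{\textup{I}+}(x_0)\cong\widetilde{\N}_0(x_0)$ is a $2$-dimensional regular formal scheme by Lemma~\ref{blow-up-N0(x)}(a); being a blow-up of the local formal scheme $\N_0(x_0)$ at its closed point it is connected, hence integral, so each of its completed local rings is a domain. On the other hand, $\mathcal{Z}(x)$ is an effective Cartier divisor on $\M$ by Lemma~\ref{divisor}, hence locally cut out by a regular element $f\in\rO_{\M}$. Restricting $f$ to $\N_0^{\textup{I}+}(x_0)$ yields a local equation for $\N_0^{\textup{I}+}(x_0)\cap\mathcal{Z}(x)$ inside the (local) domain $\rO_{\N_0^{\textup{I}+}(x_0)}$; by irreducibility of $\N_0^{\textup{I}+}(x_0)$, this restriction is a nonzero — hence non-zero-divisor — element, and therefore $\N_0^{\textup{I}+}(x_0)\cap\mathcal{Z}(x)$ is an effective Cartier divisor, precisely as long as $\N_0^{\textup{I}+}(x_0)\not\subseteq\mathcal{Z}(x)$.

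It remains to check this non-containment. By Lemma~\ref{dec-diff-div} applied to the element $1\in\B$ (for which $\nu_p(q(1))=0$) one has $\N_0^{\textup{I}+}(x_0)=\widetilde{\mathcal{D}}(1)=\sz(1)$, a closed formal subscheme of $\M$ of dimension $2$. Since $x$ is a nonzero trace-zero element of the division algebra $\B$, the vectors $1$ and $x$ are linearly independent over $\qp$; hence Lemma~\ref{proper-with-strict}(a), applied to the pair $(1,x)$, shows that $\sz(1)$ and $\mathcal{Z}(x)$ intersect properly on $\M$, so $\dim\bigl(\sz(1)\cap\mathcal{Z}(x)\bigr)\leq 1<2$. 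Consequently $\N_0^{\textup{I}+}(x_0)=\sz(1)$ is not contained in $\mathcal{Z}(x)$, which completes the argument.

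I do not expect a genuine obstacle here: the substance has already been packaged into the earlier results — Lemma~\ref{id-CM-Z}, where the embedded components are absorbed; Lemma~\ref{divisor}, giving Cartier-ness of $\mathcal{Z}(x)$ on the regular $\M$; and the proper-intersection statement of Lemma~\ref{proper-with-strict}. The single point demanding care is that the properness should be invoked for the pair $(1,x)$ rather than for $(x_0,x)$, so that the argument remains valid regardless of whether $x_0$ is itself a trace-zero element (the division algebra $\B$ does contain trace-zero uniformizers, and for such a choice of $x_0$ a direct argument with $\mathcal{Z}(x_0)$ would break when $x$ is a scalar multiple of $x_0$).
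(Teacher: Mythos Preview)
Your proof is correct and follows the same approach as the paper: reduce via Lemma~\ref{id-CM-Z} to showing that $\mathcal{Z}(x)\cap\N_0^{\textup{I}+}(x_0)$ is an effective Cartier divisor, and then use that $\mathcal{Z}(x)$ is Cartier on $\M$ (Lemma~\ref{divisor}). Your argument is in fact a bit more thorough than the paper's, which simply asserts that the restriction of a Cartier divisor is a divisor without addressing the non-containment $\N_0^{\textup{I}+}(x_0)\not\subseteq\mathcal{Z}(x)$; you supply this by identifying $\N_0^{\textup{I}+}(x_0)=\sz(1)$ and invoking Lemma~\ref{proper-with-strict}(a) for the linearly independent pair $(1,x)$.
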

\begin{proof}
    By Lemma \ref{id-CM-Z}, we know that $\mathcal{Z}^{\textup{CM}}(x)=\left(\mathcal{Z}(x)\cap\N_0^{\textup{I}+}(1)\right)\times_{\N_0^{\textup{I}+}(1),\iota_0}\widetilde{\N}_0(x_0)$. Lemma \ref{divisor} shows that $\mathcal{Z}(x)$ is a divisor on $\M$, hence $\mathcal{Z}(x)\cap\N_0^{\textup{I}+}(1)$ is a divisor on $\N_0^{\textup{I}+}(1)$. Therefore $\mathcal{Z}^{\textup{CM}}(x)$ is an effective Cartier divisor on the formal scheme $\widetilde{\N}_0(x_0)$.
\end{proof}
\begin{lemma}
    Let $x\in\B^0$. Let $\pi_{x_0}:\widetilde{\N}_0(x_0)\rightarrow\N_0(x_0)$ be the blow up morphism with exceptional divisor $\exc_{x_0}$. Then
    \begin{equation*}
        \mathcal{Z}^{\textup{CM}}(x)=\pi_{x_0}^{\ast}\left(\mathcal{Z}^{\textup{CM}}_{\N_0(x_0)}(x)^{\flat}\right)+\exc_{x_0}.
    \end{equation*}\label{rel-cm-flat}
\end{lemma}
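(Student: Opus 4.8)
\textit{Proposal.} The plan is to reduce the asserted identity to the computation of a single intersection number on the regular formal surface $\widetilde{\N}_0(x_0)$, and then to evaluate that number using the line bundle computed in Corollary \ref{spe-decom} together with the location of the exceptional curves described in Proposition \ref{equation-loc-exc}.

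First I would invoke Lemma \ref{id-CM-Z} to rewrite $\mathcal{Z}^{\textup{CM}}(x)=(\iota_0)^{\ast}\bigl(\N_0^{\textup{I}+}(x_0)\cap\mathcal{Z}(x)\bigr)$, recalling that under $\iota_0$ the blow-down $\pi_{x_0}$ is identified with the restriction to $\N_0^{\textup{I}+}(x_0)$ of the blow-up $\pi\colon\M\to\N(x_0)$, so that $\exc_{x_0}$ is carried to $\exc_{\M}\cap\N_0^{\textup{I}+}(x_0)=\exc_{x_0\cdot 1}^{\textup{I}+}$. Since $\mathcal{Z}(x)$ is a Cartier divisor on $\M$ that does not contain $\N_0^{\textup{I}+}(x_0)$ among its components, $\mathcal{Z}^{\textup{CM}}(x)$ is an effective Cartier divisor on $\widetilde{\N}_0(x_0)$ (this is the preceding Corollary). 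Over the complement of $\exc_{x_0}$ the morphism $\pi_{x_0}$ is an isomorphism and $\mathcal{Z}^{\textup{CM}}(x)$ agrees with $\mathcal{Z}^{\textup{CM}}_{\N_0(x_0)}(x)$; because every embedded component of the CM cycle $\mathcal{Z}^{\textup{CM}}_{\N_0(x_0)}(x)$ is supported at the unique (supersingular) closed point, this restriction coincides with $\mathcal{Z}^{\textup{CM}}_{\N_0(x_0)}(x)^{\flat}$ there. Passing to closures gives $\pi_{x_0\ast}\mathcal{Z}^{\textup{CM}}(x)=\mathcal{Z}^{\textup{CM}}_{\N_0(x_0)}(x)^{\flat}$, so that $\mathcal{Z}^{\textup{CM}}(x)$ and the strict transform of $\mathcal{Z}^{\textup{CM}}_{\N_0(x_0)}(x)^{\flat}$ differ only along $\exc_{x_0}$, i.e.\ there is a unique integer $c$ with
\begin{equation*}
\mathcal{Z}^{\textup{CM}}(x)=\pi_{x_0}^{\ast}\bigl(\mathcal{Z}^{\textup{CM}}_{\N_0(x_0)}(x)^{\flat}\bigr)+c\cdot\exc_{x_0}.
\end{equation*}
The lemma is thus equivalent to the equality $c=1$.

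To pin down $c$, I would intersect both sides with $\exc_{x_0}$ on the regular surface $\widetilde{\N}_0(x_0)$. By the projection formula $\pi_{x_0}^{\ast}(D)\cdot\exc_{x_0}=0$ for every divisor $D$ on $\N_0(x_0)$, and by Lemma \ref{blow-up-N0(x)}(a) (applied to $x_0$) one has $\exc_{x_0}\cdot\exc_{x_0}=-1$; hence $c=-\,\mathcal{Z}^{\textup{CM}}(x)\cdot\exc_{x_0}$. Now $\mathcal{Z}^{\textup{CM}}(x)\cdot\exc_{x_0}$ is the degree on $\exc_{x_0}$ of the restriction of $\rO_{\M}(\mathcal{Z}(x))$ to the curve $\exc_{x_0}\subset\exc_{\M}\subset\M$. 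By Corollary \ref{spe-decom}(b), formula \eqref{spe-exc}, we have $\rO_{\M}(\mathcal{Z}(x))|_{\exc_{\M}}\simeq\rO(0,-1)$ on $\exc_{\M}\simeq\bP_{\F}^{1}\times\bP_{\F}^{1}$; on the other hand, Proposition \ref{equation-loc-exc}(I), applied to the element $1$ (for which $\nu_p(q(1))=0$), identifies $\exc_{x_0}=\exc_{x_0\cdot 1}^{\textup{I}+}$ with the locus $(v=\textup{a nonzero number in }\F)$, which in the coordinates fixed in Remark \ref{coordinate-exc} is a fibre of the first projection $\textup{pr}_1\colon\exc_{\M}\to\bP_{\F}^{1}$. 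Restricting $\rO(0,-1)=\textup{pr}_1^{\ast}\rO(0)\otimes\textup{pr}_2^{\ast}\rO(-1)$ to such a fibre yields $\rO_{\bP_{\F}^{1}}(-1)$, of degree $-1$. Therefore $\mathcal{Z}^{\textup{CM}}(x)\cdot\exc_{x_0}=-1$, so $c=1$, and the lemma follows.

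The argument is short once Lemma \ref{id-CM-Z} and the formula \eqref{spe-exc} are in hand. The two points that need a little care are: (i) verifying that $\mathcal{Z}^{\textup{CM}}(x)$ and $\pi_{x_0}^{\ast}\bigl(\mathcal{Z}^{\textup{CM}}_{\N_0(x_0)}(x)^{\flat}\bigr)$ differ only along $\exc_{x_0}$ — equivalently, that the embedded part of the CM cycle $\mathcal{Z}^{\textup{CM}}_{\N_0(x_0)}(x)$ lies entirely over the supersingular point, so that away from $\exc_{x_0}$ the preimage is just the associated divisor; and (ii) identifying $\exc_{x_0}$ with the \emph{correct} ruling of the quadric $\exc_{\M}$, namely the one along which $\rO(0,-1)$ restricts to a line bundle of degree $-1$ rather than $0$. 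Neither is serious, but (ii) is the place where a sign or a factor of the two rulings could be interchanged, so it is worth tracking the conventions of $\S$\ref{opencover} and Remark \ref{coordinate-exc} explicitly.
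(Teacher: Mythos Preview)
Your proposal is correct and follows essentially the same approach as the paper: establish that the two divisors differ by a multiple $m_x\cdot\exc_{x_0}$, then determine $m_x$ by intersecting with $\exc_{x_0}$. The paper packages your intersection computation as Lemma \ref{N0Z-EXC-SPE} (proved via the same ingredients $\rO(0,-1)$ from Corollary \ref{spe-decom}(b) and the identification of $\exc_{x_0}^{\textup{I}+}$ with a $(v=\textup{const})$ fibre), and cites \cite[Lemma 4.2]{KRshimuracurve} for the ideal decomposition $(f_x,f_{x^{\prime}})=(g_x)\cdot\mathfrak{m}_x$ where your more geometric phrasing asserts that the embedded locus sits over the closed point; these are the same facts.
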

\begin{proof}
    Let $f_x,f_{x^{\prime}}\in\mathcal{O}_{\N_0(x_0)}$ be the two elements cutting out the closed formal subschemes over which $x$ and $x^{\prime}$ are isogenies respectively. Let $g_x$ be the greatest common divisor of $f_x$ and $f_{x^{\prime}}$. Then $(f_x,f_{x^{\prime}})=(g_x)\cdot\mathfrak{m}_x$ where $\mathfrak{m}_x\subset\mathcal{O}_{\N_0(x_0)}$ is a primary ideal whose radial is the maximal ideal by the proof of \cite[Lemma 4.2]{KRshimuracurve}. Therefore $\pi_{x_0}^{\ast}\mathfrak{m}_{x}$ is a multiple of $\exc_{x_0}$, i.e.,
    \begin{equation}
        \mathcal{Z}^{\textup{CM}}(x)=\pi_{x_0}^{\ast}\left(\mathcal{Z}^{\textup{CM}}_{\N_0(x_0)}(x)^{\flat}\right)+m_x\cdot\exc_{x_0}.\label{mx}
    \end{equation}
    By Lemma \ref{N0Z-EXC-SPE}, we have
    \begin{equation*}
        \chi\left(\widetilde{\N}_0(x_0),\mathcal{O}_{\mathcal{Z}^{\textup{CM}}(x)}\otimes^{\mathbb{L}}_{\mathcal{O}_{\widetilde{\N}_0(x_0)}}\mathcal{O}_{\exc_{x}}\right)=-1.
    \end{equation*}
    Notice that $\chi\left(\widetilde{\N}_0(x_0),\mathcal{O}_{\pi_{x_0}^{\ast}\left(\mathcal{Z}^{\textup{CM}}_{\N_0(x_0)}(x)^{\flat}\right)}\otimes^{\mathbb{L}}_{\mathcal{O}_{\widetilde{\N}_0(x_0)}}\mathcal{O}_{\exc_{x}}\right)=0$, hence by (\ref{mx}), we get $-m_x=-1$, therefore $m_x=1$.
\end{proof}
By the above lemma and Lemma \ref{blow-up-N0(x)} (d), we obtain the following:
\begin{corollary}
    Let $x,y\in\B^0$ be two linearly independent vectors. Then
    \begin{equation*}
        \chi\left(\N_0(x_0),\mathcal{O}_{\mathcal{Z}^{\textup{CM}}_{\N_0(x_0)}(x)^{\flat}}\otimes^{\mathbb{L}}_{\mathcal{O}_{\N_0(x_0)}}\mathcal{Z}^{\textup{CM}}_{\N_0(x_0)}(y)^{\flat}\right)=\chi\left(\widetilde{\N}_0(x_0),\rO_{\mathcal{Z}^{\textup{CM}}(x)}\otimes^{\mathbb{L}}_{\rO_{\widetilde{\N}_0(x_0)}}\rO_{\mathcal{Z}^{\textup{CM}}(y)}\right)+1.
    \end{equation*}\label{rel-int-num}
\end{corollary}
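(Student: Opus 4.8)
The plan is to transfer the whole computation to the regular surface $\widetilde{\N}_0(x_0)$ via the blow-up $\pi_{x_0}\colon\widetilde{\N}_0(x_0)\to\N_0(x_0)$, expand in the graded $K$-group, and read off the three resulting pieces from Lemma \ref{blow-up-N0(x)}. Write $\mathcal{A}\coloneqq\mathcal{Z}^{\textup{CM}}_{\N_0(x_0)}(x)^{\flat}$ and $\mathcal{B}\coloneqq\mathcal{Z}^{\textup{CM}}_{\N_0(x_0)}(y)^{\flat}$ for the two associated divisors on $\N_0(x_0)$. First I would invoke Lemma \ref{rel-cm-flat}, which gives the identities of effective Cartier divisors on $\widetilde{\N}_0(x_0)$
\begin{equation*}
    \mathcal{Z}^{\textup{CM}}(x)=\pi_{x_0}^{\ast}\mathcal{A}+\exc_{x_0},\qquad \mathcal{Z}^{\textup{CM}}(y)=\pi_{x_0}^{\ast}\mathcal{B}+\exc_{x_0}.
\end{equation*}
Because $\mathcal{Z}^{\textup{CM}}(x)$ and $\mathcal{Z}^{\textup{CM}}(y)$ share the common component $\exc_{x_0}$ they do not meet properly, so one passes to classes: by Lemma \ref{linearization} we have $[\rO_{\mathcal{Z}^{\textup{CM}}(x)}]=[\rO_{\pi_{x_0}^{\ast}\mathcal{A}}]+[\rO_{\exc_{x_0}}]$ and $[\rO_{\mathcal{Z}^{\textup{CM}}(y)}]=[\rO_{\pi_{x_0}^{\ast}\mathcal{B}}]+[\rO_{\exc_{x_0}}]$ in $\textup{Gr}^{1}K_0(\widetilde{\N}_0(x_0))$.

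Next I would multiply these two classes in $\textup{Gr}^{2}K_0(\widetilde{\N}_0(x_0))$, obtaining four terms, and take $\chi$ of each. The term $[\rO_{\pi_{x_0}^{\ast}\mathcal{A}}]\cdot[\rO_{\pi_{x_0}^{\ast}\mathcal{B}}]$ has Euler characteristic $\chi(\N_0(x_0),\rO_{\mathcal{A}}\otimes^{\mathbb{L}}_{\rO_{\N_0(x_0)}}\rO_{\mathcal{B}})$ by Lemma \ref{blow-up-N0(x)}(d) — equivalently by the projection formula $\pi_{x_0,\ast}\pi_{x_0}^{\ast}=\textup{id}$ together with the invariance of $\chi$ under proper pushforward; here $\mathcal{A}$ and $\mathcal{B}$ are finite sums of regular horizontal divisors of quasi-canonical type intersecting properly off the exceptional locus (cf. Lemma \ref{proper-with-strict} and \cite[$\S$4]{KRshimuracurve}), so the lemma applies componentwise. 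The two mixed terms $[\rO_{\pi_{x_0}^{\ast}\mathcal{A}}]\cdot[\rO_{\exc_{x_0}}]$ and $[\rO_{\exc_{x_0}}]\cdot[\rO_{\pi_{x_0}^{\ast}\mathcal{B}}]$ each vanish under $\chi$, since by Lemma \ref{blow-up-N0(x)}(c) the relevant restriction is $\rO_{\bP_{\F}^{1}}(0)$. Finally the self-intersection term $[\rO_{\exc_{x_0}}]\cdot[\rO_{\exc_{x_0}}]$ contributes $-1$, since by Lemma \ref{blow-up-N0(x)}(a) the restriction is $\rO_{\bP_{\F}^{1}}(-1)$. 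Adding these up, $\chi\big(\widetilde{\N}_0(x_0),\rO_{\mathcal{Z}^{\textup{CM}}(x)}\otimes^{\mathbb{L}}_{\rO_{\widetilde{\N}_0(x_0)}}\rO_{\mathcal{Z}^{\textup{CM}}(y)}\big)=\chi(\N_0(x_0),\rO_{\mathcal{A}}\otimes^{\mathbb{L}}_{\rO_{\N_0(x_0)}}\rO_{\mathcal{B}})-1$, which is precisely the asserted identity once the $-1$ is moved to the other side.

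There is essentially no hard obstacle here: the statement is a formal consequence of Lemma \ref{rel-cm-flat} and the blow-up formulas in Lemma \ref{blow-up-N0(x)}. The only point demanding a little care is that the two CM divisors fail to intersect properly on $\widetilde{\N}_0(x_0)$ — they share $\exc_{x_0}$ — so the manipulation must be carried out at the level of $\textup{Gr}^{2}K_0$ using the additivity of Lemma \ref{linearization} rather than by a naive cycle-theoretic intersection; the finiteness of all the Euler characteristics involved is guaranteed by the fact, implicit in Lemma \ref{proper-with-strict}, that $\mathcal{Z}^{\textup{CM}}(x)$ and $\mathcal{Z}^{\textup{CM}}(y)$ still intersect properly away from the exceptional curve.
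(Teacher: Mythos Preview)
Your proposal is correct and is exactly the expansion of the paper's one-line proof, which merely cites Lemma~\ref{rel-cm-flat} and Lemma~\ref{blow-up-N0(x)}(d). You have correctly identified that in fact all three parts (a), (c), (d) of Lemma~\ref{blow-up-N0(x)} are needed --- (d) for the pullback term, (c) for the vanishing of the two mixed terms, and (a) for the $-1$ coming from the exceptional self-intersection --- and your remark that the manipulation must take place in $\textup{Gr}^{2}K_0$ (via Lemma~\ref{linearization}) rather than as a naive cycle intersection, because $\mathcal{Z}^{\textup{CM}}(x)$ and $\mathcal{Z}^{\textup{CM}}(y)$ share the component $\exc_{x_0}$, is exactly the right caveat.
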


\subsection{Analytic cancellation law}
\label{ana-cancel}
\begin{lemma}
    Let $M\subset\B^0$ be a $\zp$-lattice of rank $2$. Then
    \begin{equation*}
        \partial\den(S,M)=\partial\den(H_0(p),\zp\cdot1\obot M).
    \end{equation*}
    \label{analytic-cancel}
\end{lemma}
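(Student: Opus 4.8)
The plan is to deduce the identity from the cancellation law for local densities together with one explicit rank-one density computation. First I would dispose of the degenerate case: if $M$ is not integral then neither is $\zp\cdot1\obot M$, so neither lattice admits a non-trivial representation into an integral lattice, both polynomials $\den(X,H_0(p),\zp\cdot1\obot M)$ and $\den(X,S,M)$ vanish identically, and the asserted equality reads $0=0$. So assume $M$ is integral. Next I would record the structural input: since $p$ is odd, the trace decomposition $x=\tfrac12\,\mathrm{tr}(x)\cdot1+\big(x-\tfrac12\,\mathrm{tr}(x)\cdot1\big)$ together with the computation $(1,x)=\mathrm{tr}(x)$ exhibits an orthogonal splitting of quadratic $\zp$-lattices
\begin{equation*}
    H_0(p)=\langle1\rangle\obot S,
\end{equation*}
where $\langle1\rangle$ is the self-dual rank-one lattice with $q(1)=1$; the same matrix computation gives $S\simeq\langle x'\rangle[-1]\obot H_2^{+}[p]$ for any $x'\in\B$ with $q(x')\in\zp^{\times}$ (compatibly with $H_0(p)\simeq H_2^{+}[p]\obot H_2^{+}$), so that $\partial\den(S,M)$ is a derived density of the type defined in $\S$\ref{diff-analytic}.

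The key step will be a cancellation law, in the spirit of the manipulations of $\S$\ref{diff-analytic}. For each $k\geq0$ we have $H_0(p)\obot H_{2k}^{+}=\langle1\rangle\obot\big(S\obot H_{2k}^{+}\big)$, and a quadratic module map $\zp\cdot1\obot M\to H_0(p)\obot H_{2k}^{+}$ over $\zp/p^{d}$ restricts on the first summand to a vector $v$ with $q(v)=1$. As the target is integral and $q(v)$ is a unit, $v$ is primitive and spans a unimodular orthogonal direct summand, whose orthogonal complement is isometric to $S\obot H_{2k}^{+}$ by Witt cancellation over $\zp$ (valid since $p$ is odd; cf. the uniqueness of Jordan decompositions, Lemma \ref{unique-jordan-decom}); the restriction of the original map to $M$ is exactly a representation into that complement, and every unit-norm representation is primitive. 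Counting, together with the additivity of the dimensions of the representation schemes, would yield
\begin{equation*}
    \den(p^{-k},H_0(p),\zp\cdot1\obot M)=\pden\big(\langle1\rangle\obot S\obot H_{2k}^{+},\langle1\rangle\big)\cdot\den(p^{-k},S,M)
\end{equation*}
for all $k\geq0$. The lattice $H_0(p)\obot H_{2k}^{+}$ has unit Jordan component $H_{2k+2}^{+}$ and $p$-component $H_2^{+}[p]$, so writing $v=v_0+v_1$ accordingly forces $q(v_0)=1-q(v_1)\in\zp^{\times}$, and the rank-one formula (\ref{rank1}) gives $\pden(H_{2k+2}^{+},\langle q(v_0)\rangle)=1-p^{-(k+1)}$ independently of $v_0,v_1$; summing over $v_1$ produces the factor $1-p^{-(k+1)}$, i.e.
\begin{equation*}
    \den(X,H_0(p),\zp\cdot1\obot M)=(1-p^{-1}X)\cdot\den(X,S,M).
\end{equation*}

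Finally I would differentiate at $X=1$. The left side vanishes there because $\zp\cdot1\obot M\subset\B$ is anisotropic and does not embed into $H_0(p)$; hence so does $\den(X,S,M)$, and differentiating the last display gives $\tfrac{d}{dX}\big|_{X=1}\den(X,H_0(p),\zp\cdot1\obot M)=(1-p^{-1})\cdot\tfrac{d}{dX}\big|_{X=1}\den(X,S,M)$. Substituting into the definitions of the two normalized derived densities, the lemma reduces to the single numerical identity between the normalizing constants,
\begin{equation*}
    \den\big(H_0(p),H_2^{+}\obot H_1^{+}[p]\big)=2(1-p^{-1})(p-1),
\end{equation*}
which follows from two further applications of the cancellation law, reducing it to $\pden(H_2^{+},\langle1\rangle)=1-p^{-1}$, $\pden(\langle-1\rangle,\langle-1\rangle)=2$ and $\den(H_2^{+}[p],H_1^{+}[p])=p-1$, all instances of (\ref{rank1}). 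The one delicate point will be the cancellation step: making the bijection precise at the level of the representation schemes over $\zp/p^{d}$ — primitivity of unit-norm vectors, compatibility of splitting off the unimodular summand with Witt cancellation, and independence of the fibre count from the chosen $v$ — is where all the care is required; everything afterwards is bookkeeping. As an alternative to the conceptual route, and in line with the companion identities Lemma \ref{analytic-calculations-1} and Lemma \ref{analytic-calculations-2}, one could instead invoke Yang's closed formulas \cite{yanglocaldensity} for local densities of lattices of rank $\leq3$ to evaluate both sides of the lemma explicitly as functions of the fundamental invariants of $M$ and compare.
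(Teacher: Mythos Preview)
Your argument is correct, but it re-derives from scratch what the paper has already packaged as the $n=0$ case of Lemma~\ref{der-diff}. The paper's proof is a one-liner: take $L^{\flat}=M$ and $x=1$ in Lemma~\ref{der-diff}; since $\nu_p(q(1))=0$ the formula reads
\[
\partial\den\big(H_0(p),M\obot\langle1\rangle\big)-\partial\den\big(H_0(p),M\obot\langle p^{-1}\rangle\big)=\partial\den\big(\langle1\rangle[-1]\obot H_2^{+}[p],M\big),
\]
the second term on the left vanishes because $\langle p^{-1}\rangle$ is not integral, and the right-hand side is $\partial\den(S,M)$ since $S\simeq\langle1\rangle[-1]\obot H_2^{+}[p]$.

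Your route is genuinely the same mechanism under the hood: the Witt-cancellation identity $\den(X,H_0(p),\zp\cdot1\obot M)=(1-p^{-1}X)\,\den(X,S,M)$ that you establish is precisely the content of the $n=0$ case of Lemma~\ref{diff-ana} (itself the specialization of Theorem~\ref{diff-loc-density}), and your computation of the normalization constant $\den(H_0(p),H_2^{+}\obot H_1^{+}[p])=2(1-p^{-1})(p-1)$ is the bookkeeping already absorbed into the definition of $\partial\den$. What you gain is a self-contained argument that does not appeal to the general difference machinery of \S\ref{diff-analytic}; what you lose is concision, since you are reproving a special case of results already in hand. Your suggested alternative via Yang's explicit formulas \cite{yanglocaldensity} would also work and is in the spirit of the verifications in Lemmas~\ref{analytic-calculations-1} and~\ref{analytic-calculations-2}, but is likewise unnecessary given Lemma~\ref{der-diff}.
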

\begin{proof}
    In Lemma \ref{der-diff}, we take $L^{\flat}=M$ and $x=1$. Then $\nu_p(q(x))=0$. The formula in this lemma corresponds to the $``n=0"$ case of Lemma \ref{der-diff}.
\end{proof}

\subsection{Derived CM cycles ${^{\mathbb{L}}\mathcal{Z}^{\textup{CM}}(M)}$}
\begin{lemma}
    Let $M\subset\B^0$ be a $\zp$-lattice of rank $2$. Let $\{x,y\}$ be a basis of $M$, then the element $[\rO_{\mathcal{Z}^{\textup{CM}}(x)}\otimes^{\mathbb{L}}_{\rO_{\widetilde{\N}_0(x_0)}}\rO_{\mathcal{Z}^{\textup{CM}}(y)}]\in\textup{Gr}^{2}K_0^{\mathcal{Z}^{\textup{CM}}(M)}(\widetilde{\N}_0(x_0))$ only depends on the $\zp$-lattice $M$.
    \label{linear-invariance-CM}
\end{lemma}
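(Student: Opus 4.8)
The plan is to reduce this to the linear invariance of the derived special cycles $\mathcal{Z}(L)$ on $\M$ that has already been established in Corollary \ref{linear-invariance-2}, via the geometric cancellation law. First I would recall from Lemma \ref{id-CM-Z} that $\mathcal{Z}^{\textup{CM}}(H)=(\iota_0)^{\ast}\left(\N_0^{\textup{I}+}(x_0)\cap\mathcal{Z}(H)\right)$, where $\iota_0$ is the fixed isomorphism $\widetilde{\N}_0(x_0)\xrightarrow{\sim}\N_0^{\textup{I}+}(x_0)=\widetilde{\mathcal{Z}}(1)\subset\M$. Since $\N_0^{\textup{I}+}(x_0)$ is a \emph{fixed} regular Cartier divisor on $\M$ (independent of $x,y$), pushing forward along the closed immersion $\N_0^{\textup{I}+}(x_0)\hookrightarrow\M$ and using that $\iota_0^{\ast}$ is a ring isomorphism, we get
\begin{equation*}
    [\rO_{\mathcal{Z}^{\textup{CM}}(x)}\otimes^{\mathbb{L}}_{\rO_{\widetilde{\N}_0(x_0)}}\rO_{\mathcal{Z}^{\textup{CM}}(y)}]=\iota_0^{\ast}\left([\rO_{\N_0^{\textup{I}+}(x_0)}\otimes^{\mathbb{L}}_{\rO_{\M}}\rO_{\mathcal{Z}(x)}\otimes^{\mathbb{L}}_{\rO_{\M}}\rO_{\mathcal{Z}(y)}]\right)
\end{equation*}
as an element of $\textup{Gr}^{2}K_0^{\mathcal{Z}^{\textup{CM}}(M)}(\widetilde{\N}_0(x_0))$, after identifying $\textup{Gr}^{*}K_0$ of $\widetilde{\N}_0(x_0)$ with the corresponding graded piece of the $K$-group supported on $\N_0^{\textup{I}+}(x_0)\subset\M$.

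Next I would observe that the right-hand side factors through the element $[\rO_{\mathcal{Z}(x)}\otimes^{\mathbb{L}}_{\rO_{\M}}\rO_{\mathcal{Z}(y)}]\in\textup{Gr}^{2}K_0^{\mathcal{Z}(M)}(\M)$ by cup product with the fixed class $[\rO_{\N_0^{\textup{I}+}(x_0)}]\in\textup{Gr}^{1}K_0(\M)$. By Corollary \ref{linear-invariance-2}, the class $[\rO_{\mathcal{Z}(x)}\otimes^{\mathbb{L}}_{\rO_{\M}}\rO_{\mathcal{Z}(y)}]$ depends only on the $\zp$-lattice $M$ (note $M\subset\B^0\subset\B$, so Corollary \ref{linear-invariance-2} applies directly). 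Since the cup product with $[\rO_{\N_0^{\textup{I}+}(x_0)}]$ and the pullback $\iota_0^{\ast}$ are both independent of the choice of basis $\{x,y\}$, the composite is independent of that choice as well. One should be slightly careful that the support condition is respected: $\mathcal{Z}^{\textup{CM}}(x)\cap\mathcal{Z}^{\textup{CM}}(y)$ lies in $\mathcal{Z}^{\textup{CM}}(M)$, which maps into $\N_0^{\textup{I}+}(x_0)\cap\mathcal{Z}(M)$, so all the classes indeed live in the claimed $K$-group with the correct support, and the graded-piece identification $\textup{Gr}^{2}K_0^{\mathcal{Z}^{\textup{CM}}(M)}(\widetilde{\N}_0(x_0))\simeq\textup{F}_0 K_0'(\mathcal{Z}^{\textup{CM}}(M))$ is canonical.

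The main obstacle I anticipate is bookkeeping the filtration-degree and support compatibilities under the closed immersion $\N_0^{\textup{I}+}(x_0)\hookrightarrow\M$: one needs that cup product with the codimension-one class $[\rO_{\N_0^{\textup{I}+}(x_0)}]$ sends $\textup{Gr}^{2}K_0^{\mathcal{Z}(M)}(\M)$ into $\textup{Gr}^{3}K_0^{\mathcal{Z}(M)\cap\N_0^{\textup{I}+}(x_0)}(\M)$ (which is \eqref{codim-add}), and that this graded piece is canonically identified with $\textup{Gr}^{2}K_0^{\mathcal{Z}^{\textup{CM}}(M)}(\widetilde{\N}_0(x_0))$ via $\iota_0^{\ast}$, using that $\N_0^{\textup{I}+}(x_0)$ is regular of dimension $2$. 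Once these identifications are in place, the proof is immediate: linear invariance upstairs on $\M$ transfers verbatim to linear invariance downstairs on $\widetilde{\N}_0(x_0)$. An alternative, more hands-on route — proving linear invariance directly by the same case analysis (Case 1, Case 2, and the two scaling cases) as in the proof of Corollary \ref{linear-invariance-2}, now using Lemma \ref{rel-cm-flat} and Lemma \ref{N0Z-EXC-SPE} in place of \eqref{spe-exc} — would also work but is strictly more laborious, so I would present the reduction argument.
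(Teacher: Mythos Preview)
Your proposal is correct and is essentially the same approach as the paper's: both reduce to Corollary~\ref{linear-invariance-2} via the identification $\mathcal{Z}^{\textup{CM}}(x)=(\iota_0)^{\ast}(\N_0^{\textup{I}+}(x_0)\cap\mathcal{Z}(x))$. The paper phrases it slightly differently---it writes ${^{\mathbb{L}}\mathcal{Z}}(M\obot\langle 1\rangle)$, decomposes $\mathcal{Z}(1)=\exc_{\M}+\N_0^{\textup{I}+}(x_0)$, and shows the $\exc_{\M}$-contribution is the fixed class $[\mathcal{O}(0,-1)\otimes^{\mathbb{L}}_{\rO_{\exc_{\M}}}\mathcal{O}(0,-1)]$---whereas you cup directly with $[\rO_{\N_0^{\textup{I}+}(x_0)}]$; but since $\mathcal{Z}(1)$ and $\exc_{\M}$ are both independent of the chosen basis of $M$, the two computations are interchangeable.
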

\begin{proof}
    Let $L=M\obot\langle x_0\rangle$. Notice that
    \begin{align*}
        {^{\mathbb{L}}\mathcal{Z}}(L)&=[\rO_{\mathcal{Z}(x_0)}\otimes^{\mathbb{L}}_{\rO_{\M}}\rO_{\mathcal{Z}(x_1)}\otimes^{\mathbb{L}}_{\rO_{\M}}\rO_{\mathcal{Z}(x_2)}]\\
        &=[\rO_{\exc_{\M}}\otimes^{\mathbb{L}}_{\rO_{\M}}\rO_{\mathcal{Z}(x_1)}\otimes^{\mathbb{L}}_{\rO_{\M}}\rO_{\mathcal{Z}(x_2)}]+[\rO_{\N_0^{\textup{I}+}(x_0)}\otimes^{\mathbb{L}}_{\rO_{\M}}\rO_{\mathcal{Z}(x)}\otimes^{\mathbb{L}}_{\rO_{\M}}\rO_{\mathcal{Z}(y)}]\\
        &=[\mathcal{O}(0,-1)\otimes^{\mathbb{L}}_{\rO_{\exc_{\M}}}\mathcal{O}(0,-1)]+[\rO_{\N_0^{\textup{I}+}(x_0)\cap\mathcal{Z}(x)}\otimes^{\mathbb{L}}_{\rO_{\N_0^{\textup{I}+}(x_0)}}\rO_{\N_0^{\textup{I}+}(x_0)\cap\mathcal{Z}(y)}].\\
    \end{align*}
    \par
    Since ${^{\mathbb{L}}\mathcal{Z}}(L)$ is independent of the basis of $L$ by Lemma \ref{linear-invariance-2}, the element $[\rO_{\N_0^{\textup{I}+}(x_0)\cap\mathcal{Z}(x)}\otimes^{\mathbb{L}}_{\rO_{\N_0^{\textup{I}+}(x_0)}}\rO_{\N_0^{\textup{I}+}(x_0)\cap\mathcal{Z}(y)}]\in\textup{Gr}^{2}K_0^{\mathcal{Z}(L)}(\N^{\textup{I}+}_0(x_0))$ is independent of basis $x,y$ of $M$. Notice that we have an isomorphism $\iota_0:\widetilde{\N}_0(x_0)\xrightarrow{\sim}\N_0^{\textup{I}+}(x_0)$. By Lemma \ref{id-CM-Z}, we have
    \begin{equation*}
        [\rO_{\mathcal{Z}^{\textup{CM}}(x)}\otimes^{\mathbb{L}}_{\rO_{\widetilde{\N}_0(x_0)}}\rO_{\mathcal{Z}^{\textup{CM}}(y)}]=\left(\iota_0\right)^{\ast}[\rO_{\N_0^{\textup{I}+}(x_0)\cap\mathcal{Z}(x)}\otimes^{\mathbb{L}}_{\rO_{\N_0^{\textup{I}+}(x_0)}}\rO_{\N_0^{\textup{I}+}(x_0)\cap\mathcal{Z}(y)}]
    \end{equation*}
    Therefore $[\rO_{\mathcal{Z}^{\textup{CM}}(x)}\otimes^{\mathbb{L}}_{\rO_{\widetilde{\N}_0(x_0)}}\rO_{\mathcal{Z}^{\textup{CM}}(y)}]\in\textup{Gr}^{2}K_0^{\mathcal{Z}^{\textup{CM}}(M)}(\widetilde{\N}_0(x_0))$ only depends on the $\zp$-lattice $M$.
\end{proof}
Based on Lemma \ref{linear-invariance-CM}, the following definition is reasonable.
\begin{definition}
    Let $M\subset\B^0$ be a $\zp$-lattice of rank $2$. Let $\{x,y\}$ be a basis of $M$. Define the derived CM cycle to be
    \begin{equation*}
        {^{\mathbb{L}}\mathcal{Z}^{\textup{CM}}(M)}=[\rO_{\mathcal{Z}^{\textup{CM}}(x)}\otimes^{\mathbb{L}}_{\rO_{\M}}\rO_{\mathcal{Z}^{\textup{CM}}(y)}]\in\textup{Gr}^{2}K_0^{\mathcal{Z}^{\textup{CM}}(M)}(\widetilde{\N}_0(x_0)).
    \end{equation*}
\end{definition}

\subsection{Intersection of CM cycles}
\begin{definition}
    Let $M\subset\B$ be a $\zp$-lattice of rank 2. Define the arithmetic intersection numbers
    \begin{equation*}
        \Int^{\textup{CM}}(M)\coloneqq\chi\left(\widetilde{\N}_0(x_0),{^{\mathbb{L}}\mathcal{Z}^{\textup{CM}}}(M)\right).
    \end{equation*}
    Here $\chi$ denotes the Euler-Poincare characteristic.
\end{definition}
\begin{theorem}
    Let $M\subset\B$ be a $\zp$-lattice of rank 2. Then
    \begin{equation*}
        \Int^{\textup{CM}}(M)=\partial\den(S,M).
    \end{equation*}
    \label{int-CM}
\end{theorem}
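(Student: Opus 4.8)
The plan is to deduce Theorem~\ref{int-CM} from the main theorem (Theorem~\ref{main-theorem}) together with the geometric cancellation law of \S\ref{geo-cancel} and the analytic cancellation law of \S\ref{ana-cancel}. First I would fix a $\zp$-basis $x_1,x_2$ of $M$ and form the rank~$3$ lattice $L\coloneqq\zp\cdot1\obot M\subset\B$; the sum is orthogonal because $M\subset\B^0$ forces $(1,m)=\textup{tr}(m)=0$ for all $m\in M$, and $q(1)=1$. The goal is then the chain of equalities $\Int^{\textup{CM}}(M)=\Int^{\mathcal{Z}}(L)=\partial\den(H_0(p),L)=\partial\den(S,M)$, in which the middle equality is precisely Theorem~\ref{main-theorem} applied to $L$, and the last equality is Lemma~\ref{analytic-cancel} (the analytic cancellation law, itself the $n=0$ instance of the difference formula in Lemma~\ref{der-diff}). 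So the whole argument reduces to the first equality $\Int^{\mathcal{Z}}(L)=\Int^{\textup{CM}}(M)$, which is the integrated form of the geometric cancellation law.

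To establish $\Int^{\mathcal{Z}}(L)=\Int^{\textup{CM}}(M)$ I would follow the computation already carried out inside the proof of Lemma~\ref{linear-invariance-CM}, now passing to Euler characteristics. Since $\nu_p(q(1))=0$ and $x_0\cdot1=x_0$, Corollary~\ref{spe-decom}(a) and Lemma~\ref{dec-diff-div} give the identity of Cartier divisors $\mathcal{Z}(1)=\exc_{\M}+\N_0^{\textup{I}+}(x_0)$ on $\M$, hence by Lemma~\ref{linearization} we have $[\rO_{\mathcal{Z}(1)}]=[\rO_{\exc_{\M}}]+[\rO_{\N_0^{\textup{I}+}(x_0)}]$ in $\textup{Gr}^1 K_0(\M)$. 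Using the basis $\{1,x_1,x_2\}$ of $L$ and multiplying by $[\rO_{\mathcal{Z}(x_1)}]\cdot[\rO_{\mathcal{Z}(x_2)}]\in\textup{F}^2$, together with the fact that $\N_0^{\textup{I}+}(x_0)=\sz(1)$ is a regular Cartier divisor on $\M$ (Proposition~\ref{geometry-M}(iv)) meeting each $\mathcal{Z}(x_i)$ properly (Lemma~\ref{proper-with-strict}) and with base change for $\otimes^{\mathbb{L}}$, one finds that ${^{\mathbb{L}}\mathcal{Z}}(L)$ splits in $\textup{Gr}^3 K_0^{\mathcal{Z}(L)}(\M)$ as the sum of an ``exceptional part'' $[\rO_{\exc_{\M}}\otimes^{\mathbb{L}}_{\rO_{\M}}\rO_{\mathcal{Z}(x_1)}\otimes^{\mathbb{L}}_{\rO_{\M}}\rO_{\mathcal{Z}(x_2)}]$ and a ``CM part'' $[\rO_{\N_0^{\textup{I}+}(x_0)\cap\mathcal{Z}(x_1)}\otimes^{\mathbb{L}}_{\rO_{\N_0^{\textup{I}+}(x_0)}}\rO_{\N_0^{\textup{I}+}(x_0)\cap\mathcal{Z}(x_2)}]$. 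For the CM part, Lemma~\ref{id-CM-Z} together with the isomorphism $\iota_0\colon\widetilde{\N}_0(x_0)\xrightarrow{\sim}\N_0^{\textup{I}+}(x_0)$ identifies $\N_0^{\textup{I}+}(x_0)\cap\mathcal{Z}(x_i)$ with $\mathcal{Z}^{\textup{CM}}(x_i)$, so its Euler characteristic is $\chi(\widetilde{\N}_0(x_0),{^{\mathbb{L}}\mathcal{Z}^{\textup{CM}}}(M))=\Int^{\textup{CM}}(M)$. For the exceptional part I would restrict to $\exc_{\M}\simeq\bP^1_{\F}\times\bP^1_{\F}$: formula~(\ref{spe-exc}) shows $\rO_{\exc_{\M}}\otimes^{\mathbb{L}}_{\rO_{\M}}\rO_{\mathcal{Z}(x_i)}$ is a rank-$0$ $K$-theory class with first Chern class $\rO(0,-1)$, and by Riemann--Roch on the smooth surface $\exc_{\M}$ the Euler characteristic of the derived product of two rank-$0$ classes equals the intersection number of their first Chern classes, which by Lemma~\ref{int-on-exc} is $\rO(0,-1)\cdot\rO(0,-1)=0$. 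Taking $\chi(\M,-)$ then gives $\Int^{\mathcal{Z}}(L)=0+\Int^{\textup{CM}}(M)$, and combining with Theorem~\ref{main-theorem} and Lemma~\ref{analytic-cancel} completes the proof.

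The genuinely heavy input is Theorem~\ref{main-theorem}, which has already been proved, and the dictionary between CM cycles on $\widetilde{\N}_0(x_0)$ and special cycles on $\M$ provided by Lemmas~\ref{id-CM-Z}, \ref{rel-cm-flat} and~\ref{linear-invariance-CM}; once these are in hand the theorem is essentially a corollary. The one point I would be most careful about is the vanishing of the exceptional contribution $\chi(\M,\rO_{\exc_{\M}}\otimes^{\mathbb{L}}_{\rO_{\M}}\rO_{\mathcal{Z}(x_1)}\otimes^{\mathbb{L}}_{\rO_{\M}}\rO_{\mathcal{Z}(x_2)})=0$: because $\exc_{\M}\subset\mathcal{Z}(x_i)$, the naive ``restrict to a curve and intersect'' picture is misleading, and one must genuinely work with the virtual rank-$0$ classes, keeping track of the conormal bundle of $\exc_{\M}$ in $\M$ computed in Lemma~\ref{self-int-exc} (equivalently, invoking formula~(\ref{spe-exc}) rather than trying to write $\mathcal{Z}(x_i)\cap\exc_{\M}$ as an effective curve). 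With that subtlety resolved, no further computation is needed.
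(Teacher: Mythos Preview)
Your proposal is correct and follows essentially the same route as the paper's proof: form $L=\zp\cdot1\obot M$, split ${^{\mathbb{L}}\mathcal{Z}}(L)$ via $\mathcal{Z}(1)=\exc_{\M}+\N_0^{\textup{I}+}(x_0)$ (exactly as done in the proof of Lemma~\ref{linear-invariance-CM}), observe that the exceptional part contributes $\mathcal{O}(0,-1)\cdot\mathcal{O}(0,-1)=0$ by Lemma~\ref{int-on-exc}, identify the remaining part with $\Int^{\textup{CM}}(M)$ via Lemma~\ref{id-CM-Z}, and conclude by Theorem~\ref{main-theorem} and Lemma~\ref{analytic-cancel}. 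Your Riemann--Roch justification for the vanishing of the exceptional term is a slightly more elaborate phrasing of what the paper does directly via the intersection pairing on $\textup{Pic}(\exc_{\M})$, but the content is the same.
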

\begin{proof}
    Let $L=M\obot\zp\cdot1$. By the proof of Lemma \ref{linear-invariance-CM}, we have
    \begin{equation*}
        {^{\mathbb{L}}\mathcal{Z}}(L)=[\mathcal{O}(0,-1)\otimes^{\mathbb{L}}_{\rO_{\exc_{\M}}}\mathcal{O}(0,-1)]+\left(\iota_0\right)_{\ast}{^{\mathbb{L}}\mathcal{Z}^{\textup{CM}}}(M).
    \end{equation*}
    Therefore
    \begin{align*}
        \Int^{\textup{CM}}(M)=\Int^{\mathcal{Z}}(L)=\partial\den(H_0(p),L)\overset{\textup{Lemma}\,\ref{analytic-cancel}}{=}\partial\den(S,M).
    \end{align*}
\end{proof}
\begin{remark}
    Now the original Conjecture \ref{conj-of-KR} can be proved by combining Theorem \ref{int-CM} and Corollary \ref{rel-int-num}.
\end{remark}

\section{Arithmetic intersection of Hecke correspondences}
\label{ari-int-hecke-app}
\subsection{Modular curves}
\label{modu-curve}
Let $\mathbb{A}_f$ be the ring of finite ad$\grave{\textup{e}}$les over $\mathbb{Q}$. Let $U\subset\textup{GL}_2(\mathbb{A}_f)$ be a sufficiently small compact open subgroup, the quotient space $\textup{GL}_2(\mathbb{Q})\backslash\mathbb{H}\times\textup{GL}_2(\mathbb{A}_f)/U$ admits a canonical model, i.e., there exists a finite extension $\mathbb{Q}_U$ of $\mathbb{Q}$ and a variety $Y_U$ over $\mathbb{Q}_U$ such that
\begin{equation*}
    Y_U(\mathbb{C})\coloneqq Y_U\times_{\mathbb{Q}}\mathbb{C}=\textup{GL}_2(\mathbb{Q})\backslash\mathbb{H}\times\textup{GL}_2(\mathbb{A}_f)/U\simeq\textup{GL}_2(\mathbb{Q})^{+}\backslash\mathbb{H}^{+}\times\textup{GL}_2(\mathbb{A}_f)/U,
\end{equation*}
where $\textup{GL}_2(\mathbb{Q})^{+}$ consists of elements with positive determinant and $\mathbb{H}^{+}$ is the upper half plane. The curve $Y_U$ has a canonical compactification $X_U$ which is a proper smooth curve over $\mathbb{Q}_U$. The curve $X_U$ has a Hodge class $\mathscr{L}_U\in\textup{Pic}(X_U)_{\mathbb{Q}}$.

\subsection{Cyclic isogeny}
Let $\pi:E\rightarrow E^{\prime}$ is an isogeny of two elliptic curves over a base scheme $S$. For a prime $p\vert\textup{deg}(\pi)$, define the $p$-part $\pi_p:E\rightarrow E_1$ of $\pi$ to be an isogeny which factors $\pi$ in the way $\pi=\pi^{p}\circ\pi_p$ where $\pi^{p}:E_1\rightarrow E^{\prime}$ is another isogeny such that $p\nmid\textup{deg}(\pi^{p})$.
\begin{definition}
    An isogeny $\pi:E\rightarrow E^{\prime}$ between two elliptic curves over $S$ is cyclic if $\textup{ker}(\pi)$ is a cyclic group scheme over $S$ in the sense of \cite[$\S$6.1]{KM85}. For a prime number $p\vert\textup{deg}(\pi)$, we say $\pi$ is $p$-cyclic if the $p$-part $\pi_p:E\rightarrow E_1$ of $\pi$ is cyclic.
    \label{def-cyclic}
\end{definition}
\par
For a cyclic isogeny $\pi:E\rightarrow E^{\prime}$ of degree $p^{n}$ for some integer $n\geq0$ between elliptic curves. There is a standard decomposition of $\pi$ into the composition of $n$ degree $p$ isogenies:
\begin{equation*}
    \pi:E=E_0\xrightarrow{\pi_1}E_1\xrightarrow{\pi_2}\cdots\xrightarrow{\pi_n}E_n\simeq E^{\prime}.
\end{equation*}
We refer to \cite[$\S$6.7]{KM85} for the details on the notions of standard decomposition.

\subsection{Global Hecke correspondences}
Let $U$ be the same compact open subgroup in $\textup{GL}_2(\mathbb{A}_f)$ as $\S$\ref{modu-curve}. For a double coset $UxU$ of $U\backslash\textup{GL}_2(\mathbb{A}_f)/U$ where $x\in\textup{GL}_2(\mathbb{A}_f)$, we have a Hecke correspondence defined as the image of the morphism:
\begin{equation*}
    (\pi_{U\cap xUx^{-1},U},\pi_{U\cap x^{-1}U,U}\circ T_x): \mathsf{T}(x)_U\coloneqq X_{U\cap xUx^{-1}}(\mathbb{C})\rightarrow X_{U}(\mathbb{C})^{2}.
\end{equation*}
Notice that the geometrically connected components $\pi_0(X_{U})$ of $X_U$ is a principal homogeneous space over $\mathbb{Q}_{+}^{\times}\backslash\mathbb{A}_f^{\times}/\textup{det}(U)$. For an element $a\in\pi_0(X_{U})$, define $X_{U,a}$ to be the connected component of $X_U(\mathbb{C})$ corresponding to $a$. Define $M_{U,\alpha}=\bigsqcup\limits_{a\in\pi_0(X_{U})} X_{U,a}\times X_{U,\alpha a}$. Then
\begin{equation*}
    X_{U}(\mathbb{C})^{2}=\bigsqcup\limits_{\alpha\in\mathbb{Q}_{+}^{\times}\backslash\mathbb{A}_f^{\times}/\textup{det}(U)}M_{U,\alpha}.
\end{equation*}
The Hecke correspondence $\mathsf{T}(x)_U$ is supported on the component $M_{U,\textup{det}(x)^{-1}}$.

\subsection{Integral models of modular curves}
Let $p$ be a finite prime. Let $n$ be a positive integer. Let $U^{p}\subset\textup{GL}_2(\mathbb{A}_f^{p})$ is a sufficiently small compact open subgroup. Let $\Gamma_0(p^{n})$ be the following subgroup of $\textup{GL}_2(\zp)$:
\begin{equation*}
    \Gamma_0(p^{n})=\left\{g=\begin{pmatrix}
        a & b\\
        p^{n}c & d
    \end{pmatrix}:\,a,b,c,d\in\zp,\,\,\textup{det}(g)\in\zp^{\times}.\right\}
\end{equation*}
\par
Let $U=\Gamma_0(p^{n})\cdot U^{p}$, we are going to construct an integral model $\mathcal{Y}_U$ for the modular curve $Y_{U}$. The model $\mathcal{Y}_{U^{p}}(p^{n})$ is a Deligne-Mumford stack over $\mathbb{Z}_{(p)}$. Let $S$ be a connected $\mathbb{Z}_{(p)}$-scheme. Let $\overline{s}$ be a geometric point of $S$. 
The groupoid $\mathcal{Y}_{U^{p}}(p^{n})(S)$ consists of objects $(E\stackrel{\pi}\rightarrow E^{\prime},\overline{\eta^{p}})$, where $E\stackrel{\pi}\rightarrow E^{\prime}$ is a cyclic isogeny of degree $p^{n}$ and  $\overline{\eta^{p}}$ is a $\pi_{1}(S,\overline{s})$-invariant $U^p$-equivalence class of isomorphisms
\begin{equation*}
    \eta^{p}:\left(\mathbb{A}_{f}^{p}\right)^{2}\stackrel{\simeq}\longrightarrow  V^{p}(E_{\overline{s}}).
\end{equation*}
A morphism from $(E_{1}\stackrel{\pi_{1}}\rightarrow E_{1}^{\prime},\overline{\eta_{1}^{p}})$ to $(E_{2}\stackrel{\pi_{2}}\rightarrow E_{2}^{\prime},\overline{\eta_{2}^{p}})$ is a pair $(f,f^{\prime})$ of isomorphisms $f:E_{1}\rightarrow E_{2}$ and $f^{\prime}:E_{1}^{\prime}\rightarrow E_{2}^{\prime}$ such that $f^{\prime}\circ \pi_{1}=\pi_{2}\circ f$ and $\overline{\eta_{1}^{p}} =\overline{V^{p}(f)\circ\eta_{2}^{p}}$ as $U^p$-orbits. 
\par
For a point $\tau\in\mathbb{H}^{+}$, denote by $E_{\tau}$ the elliptic curve $\mathbb{C}\big{/}\mathbb{Z}+\mathbb{Z}\tau$. For the elliptic curve $E_{\tau}$, we have $E[m]=\left(\frac{1}{m}\mathbb{Z}+\frac{1}{m}\mathbb{Z}\tau\right)\big{/}\mathbb{Z}+\mathbb{Z}\tau\simeq\left(\mathbb{Z}/m\mathbb{Z}\right)^{2}$, there is naturally defined $a_{\tau}^{p}:V^{p}(E_{\tau})\rightarrow \left(\mathbb{A}_f^{p}\right)^{2}$ by choosing the isomorphism $E[m]\xrightarrow{\sim}\left(\mathbb{Z}/m\mathbb{Z}\right)^{2}$ compatibly using the basis $1,\tau$. There is a bijection between the set associated to the groupoid $\mathcal{Y}_U(\mathbb{C})$ and $Y_U(\mathbb{C})$ given an follows: Let $[\tau,g]\in Y_U(\mathbb{C})\simeq\textup{GL}_2(\mathbb{Q})^{+}\backslash\mathbb{H}^{+}\times\textup{GL}_2(\mathbb{A}_f)/U$ be a point. It is mapped to the object 
\begin{equation}
    \left(E_{\tau}\xrightarrow{\times p^{n}} E_{p^{n}\tau},\overline{\left(a_\tau^{p}\right)^{-1}\circ g}\right)\in\mathcal{Y}_U(\mathbb{C}).
    \label{map1}
\end{equation}
Given an object $(E\stackrel{\pi}\rightarrow E^{\prime},\overline{\eta^{p}})\in\mathcal{Y}_U(\mathbb{C})$, there exists an element $\tau\in\mathbb{H}^{+}$ such that there exists two isomorphisms $f:E\rightarrow E_{\tau}$ and $f^{\prime}:E^{\prime}\rightarrow E_{p^{n}\tau}$ such that $f^{\prime}\circ\pi\circ f^{-1}:E_{\tau}\rightarrow E_{p^{n}\tau}$ is given by multiplication by $p^{n}$. This object is mapped to the following element in $Y_U(\mathbb{C})$:
\begin{equation}
    [\tau,a_{\tau}^{p}\circ V^{p}(f)\circ\eta^{p}]\in\textup{GL}_2(\mathbb{Q})^{+}\backslash\mathbb{H}^{+}\times\textup{GL}_2(\mathbb{A}_f)/U.
    \label{map2}
\end{equation}
The above two maps are inverse to each other.
\par
The stacks $\mathcal{Y}_{U}$ is a 2-dimensional Deligne-Mumford stack. There exists another 2-dimensional Deligne-Mumford stack $\mathcal{X}_{U}$ which can be viewed as a compactification of $\mathcal{Y}_{U}$ and $\mathcal{Y}_{U}$. The stack $\mathcal{Y}_{U}$ is an open substack of $\mathcal{X}_{U}$ and $\mathcal{X}_{U}(\mathbb{C})\simeq X_{U}(\mathbb{C})$.

\subsection{Moduli interpretations of Hecke correspondences}
\begin{lemma}
    The following elements form a set of coset representative for $\Gamma_0(p)\backslash\textup{GL}_2(\mathbb{Q}_p)/\Gamma_0(p)$:
    \begin{equation*}
        \begin{pmatrix}
            p^{a} & 0\\
            0 & p^{b}
        \end{pmatrix},\,\,\,\,\begin{pmatrix}
            0 & p^{a}\\
            p^{b} & 0
        \end{pmatrix},
    \end{equation*}
    where $a,b\in\mathbb{Z}$.
    \label{double-coset}
\end{lemma}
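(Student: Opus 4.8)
The statement is a classical Cartan/Iwasawa-type decomposition for the double coset space $\Gamma_0(p)\backslash\mathrm{GL}_2(\mathbb{Q}_p)/\Gamma_0(p)$, where $\Gamma_0(p)=H_0(p)^{\times}$ is the Iwahori subgroup. The plan is to reduce an arbitrary $g\in\mathrm{GL}_2(\mathbb{Q}_p)$ to one of the listed normal forms by performing row and column operations by elements of $\Gamma_0(p)$, and then to check that the listed representatives are pairwise inequivalent. Note that the Iwahori $\Gamma_0(p)$ is generated by $\mathrm{GL}_2(\mathbb{Z}_p)$-elements of the form $\begin{pmatrix} a & b \\ 0 & d\end{pmatrix}$ together with $\begin{pmatrix} 1 & 0 \\ pc & 1\end{pmatrix}$; equivalently $\Gamma_0(p)$ and its ``opposite'' generate $\mathrm{GL}_2(\mathbb{Z}_p)$, and the affine Weyl group governs the cosets.

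First I would recall the affine Bruhat--Tits picture: $\mathrm{GL}_2(\mathbb{Q}_p) = \coprod_{w \in \widetilde{W}} \Gamma_0(p)\, \dot w\, \Gamma_0(p)$ where $\widetilde{W}$ is the extended affine Weyl group, which here is generated by the translations $\mathrm{diag}(p^a,p^b)$ together with the reflection $s = \begin{pmatrix} 0 & 1 \\ 1 & 0\end{pmatrix}$. Concretely, the Iwahori factorization of $\mathrm{GL}_2(\mathbb{Z}_p) = \Gamma_0(p) \sqcup \Gamma_0(p)\,s\,\Gamma_0^{-}$ combined with the elementary divisor theorem for $\mathrm{GL}_2(\mathbb{Q}_p)$ reduces the classification to listing $\widetilde W$ modulo the relevant identifications. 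For $g$ with integral entries, one does Gaussian elimination: use the upper-triangular part of $\Gamma_0(p)$ on the right and the lower-unipotent-with-$p$ part on the left to clear entries, tracking carefully that only moves preserving the ``$p$ in the lower-left'' structure of $H_0(p)$ are allowed. This shows every class contains a monomial matrix $\mathrm{diag}(p^a,p^b)$ or $\mathrm{antidiag}(p^a,p^b)$.

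Next I would verify that distinct monomial/antimonomial matrices from the list give distinct double cosets. This is the standard computation that $\Gamma_0(p) g_1 \Gamma_0(p) = \Gamma_0(p) g_2 \Gamma_0(p)$ forces $g_1^{-1}\gamma g_2 \in \Gamma_0(p)$ for some $\gamma$, and then matching valuations of entries and of determinants pins down $(a,b)$ and the diagonal-vs-antidiagonal type. The invariants to use are: the valuation of $\det g$ (which is $a+b$), the elementary divisors of the $\mathbb{Z}_p$-lattice $g\mathbb{Z}_p^2$ relative to $\mathbb{Z}_p^2$ (giving $\{a,b\}$ as an unordered pair up to the $\mathrm{GL}_2(\mathbb{Z}_p)$ orbit), and finally the refinement by which the Iwahori, as opposed to $\mathrm{GL}_2(\mathbb{Z}_p)$, separates the two chambers — this is exactly the dichotomy diagonal vs.\ antidiagonal and the ordering of $a,b$ within each type. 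One must be slightly careful that $\mathrm{diag}(p^a,p^b)$ and $\mathrm{diag}(p^b,p^a)$ are \emph{not} $\Gamma_0(p)$-conjugate when $a\neq b$, unlike in the $\mathrm{GL}_2(\mathbb{Z}_p)$ case; here the antidiagonal elements mediate between them.

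The main obstacle, such as it is, lies in the second step: cleanly proving the separation of cosets, i.e.\ that the list has no redundancies and is exhausted. The reduction argument in step one is routine matrix manipulation, but the inequivalence claim requires a careful invariant-theoretic argument adapted to the Iwahori (not hyperspecial) level — one cannot simply invoke Smith normal form. I would handle it by passing to the building: $\Gamma_0(p)$ is the stabilizer of an edge in the Bruhat--Tits tree of $\mathrm{PGL}_2(\mathbb{Q}_p)$, double cosets correspond to $\Gamma_0(p)$-orbits of edges, and two edges are in the same orbit iff they have the same (ordered, once an orientation of the base edge is fixed) combinatorial position, which is recorded precisely by the data $(a,b)$ with the diagonal/antidiagonal flag. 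Since this lemma is only needed as input to the double-coset decomposition of $H_0(p)$ under $\Gamma_0(p)\times\Gamma_0(p)$ used in Lemma~\ref{double-coset}'s consequences, I would keep the proof brief and cite the tree description, spelling out only the valuation bookkeeping that identifies each representative.
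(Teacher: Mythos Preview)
Your approach is correct and is essentially the standard one: the paper does not give its own argument here but simply refers to Bushnell--Henniart \cite[\S 17.1]{LLCforGL(2)}, whose proof is exactly the Iwahori--Bruhat decomposition $\mathrm{GL}_2(\mathbb{Q}_p)=\coprod_{w\in\widetilde W}\Gamma_0(p)\,\dot w\,\Gamma_0(p)$ with $\widetilde W\cong X_*(T)\rtimes S_2$ represented by the listed monomial matrices. Your building/tree interpretation for the inequivalence of representatives is a perfectly good (and arguably cleaner) substitute for the valuation bookkeeping; either version suffices, and nothing further is needed.
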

\begin{proof}
We refer to the proof of Bunshnell and Henniart \cite[$\S$17.1]{LLCforGL(2)}.
\end{proof}
\begin{definition}
    We divide the group $\textup{GL}_2(\mathbb{Q}_p)$ into the following four subsets
    \begin{align*}
        &\mathbb{V}_p^{\textup{I}+}=\bigcup\limits_{a\leq b}\Gamma_0(p)\begin{pmatrix}
            p^{a} & 0\\
            0 & p^{b}
        \end{pmatrix}\Gamma_0(p),\,\,\,\,\,\,\,\mathbb{V}_p^{\textup{I}-}=\bigcup\limits_{b< a}\Gamma_0(p)\begin{pmatrix}
            p^{a} & 0\\
            0 & p^{b}
        \end{pmatrix}\Gamma_0(p),\\
        &\mathbb{V}_p^{\textup{II}+}=\bigcup\limits_{a< b}\Gamma_0(p)\begin{pmatrix}
            0 & p^{a}\\
            p^{b} & 0
        \end{pmatrix}\Gamma_0(p),\,\,\,\,\,\,\,\mathbb{V}_p^{\textup{II}-}=\bigcup\limits_{b\leq a}\Gamma_0(p)\begin{pmatrix}
            0 & p^{a}\\
            p^{b} & 0
        \end{pmatrix}\Gamma_0(p).
    \end{align*}
\end{definition}
\begin{lemma}
    Let $x\in\textup{M}_2(\mathbb{A}_f)$ such that $\textup{det}(x)\in\mathbb{Q}_{>0}^{\times}$ and $x_p$ is a primitive element in the lattice $H_0(p)$. Let $x^{\ast}\in\textup{M}_2(\mathbb{A}_f)$ be the contragradient of $x$. Let $n=\nu_p(\textup{det}(x_p))\geq0$. Let $U=\Gamma_0(p)\cdot U^{p}$ be a compact open subgroup of $\textup{GL}_2(\mathbb{A}_f)$. Then
    \begin{itemize}
        \item [$\textup{I}+$:] If $x_p\in\mathbb{V}_p^{\textup{I}+}$, there exists an isomorphism $\mathsf{Z}(x)_U\simeq \mathcal{X}_{\Gamma_0(p^{n+1})\cdot U^p\cap xU^{p}x^{-1}}(\mathbb{C})$ under which the morphism $\mathsf{Z}(x)_U\rightarrow X_U^{2}\simeq\mathcal{X}_{U}^{2}(\mathbb{C})$ on the open subvariety $\mathcal{Y}_{\Gamma_0(p^{n+1})\cdot U^p\cap xU^{p}x^{-1}}(\mathbb{C})$ is given by
        \begin{equation*}
            \iota_{x}^{\textup{I}+}:\left(E\xrightarrow{\pi} E^{\prime},\overline{\eta^{p}}\right)\mapsto\left(\left(E\xrightarrow{\pi_1} E_1,\overline{\eta^{p}}\right),\left(E_{n}\xrightarrow{\pi_{n+1}} E^{\prime},\overline{V^{p}(\pi_{n}\circ\cdots\circ\pi_1)\circ\eta^{p}\circ \left(x^{\ast}\right)^{-1}}\right)\right).
        \end{equation*}
        \item [$\textup{I}-$:] If $x_p\in\mathbb{V}_p^{\textup{I}-}$, there exists an isomorphism $\mathsf{Z}(x)_U\simeq \mathcal{X}_{\Gamma_0(p^{n+1})\cdot U^p\cap x^{-1}U^{p}x}(\mathbb{C})$ under which the morphism $\mathsf{Z}(x)_U\rightarrow X_U^{2}\simeq\mathcal{X}_{U}^{2}(\mathbb{C})$ on the open subvariety $\mathcal{Y}_{\Gamma_0(p^{n+1})\cdot U^p\cap x^{-1}U^{p}x}(\mathbb{C})$ is given by
        \begin{equation*}
            \iota_{x}^{\textup{I}-}:\left(E\xrightarrow{\pi} E^{\prime},\overline{\eta^{p}}\right)\mapsto\left(\left(E_n\xrightarrow{\pi_{n+1}} E^{\prime},\overline{V^{p}(\pi_{n}\circ\cdots\circ\pi_1)\circ\eta^{p}\circ x^{-1}}\right),\left(E\xrightarrow{\pi_{1}} E_1,\overline{\eta^{p}}\right)\right).
        \end{equation*}
        \item [$\textup{II}+$:] If $x_p\in\mathbb{V}_p^{\textup{II}+}$, there exists an isomorphism $\mathsf{Z}(x)_U\simeq \mathcal{X}_{\Gamma_0(p^{n})\cdot U^p\cap xU^{p}x^{-1}}(\mathbb{C})$ under which the morphism $\mathsf{Z}(x)_U\rightarrow X_U^{2}\simeq\mathcal{X}_{U}^{2}(\mathbb{C})$ on the open subvariety $\mathcal{Y}_{\Gamma_0(p^{n})\cdot U^p\cap xU^{p}x^{-1}}(\mathbb{C})$ is given by
        \begin{equation*}
            \iota_{x}^{\textup{II}+}:\left(E\xrightarrow{\pi} E^{\prime},\overline{\eta^{p}}\right)\mapsto\left(\left(E\xrightarrow{\pi_{1}} E_1,\overline{\eta^{p}}\right),\left(E^{\prime}\xrightarrow{\pi_{n}^{\vee}} E_{n-1},\overline{V^{p}(\pi)\circ\eta^{p}\circ \left(x^{\ast}\right)^{-1}}\right)\right).
        \end{equation*}
        \item [$\textup{II}-$:] If $x_p\in\mathbb{V}_p^{\textup{II}-}$, there exists an isomorphism $\mathsf{Z}(x)_U\simeq \mathcal{X}_{\Gamma_0(p^{n})\cdot U^p\cap x^{-1}U^{p}x}(\mathbb{C})$ under which the morphism $\mathsf{Z}(x)_U\rightarrow X_U^{2}\simeq\mathcal{X}_{U}^{2}(\mathbb{C})$ on the open subvariety $\mathcal{Y}_{\Gamma_0(p^{n})\cdot U^p\cap x^{-1}U^{p}x}(\mathbb{C})$ is given by
        \begin{equation*}
            \iota_{x}^{\textup{II}-}:\left(E\xrightarrow{\pi} E^{\prime},\overline{\eta^{p}}\right)\mapsto\left(\left(E_1\xrightarrow{\pi_1^{\vee}} E,\overline{V^{p}(\pi_1)\circ \eta^{p}\circ x^{-1}}\right),\left(E_{n-1}\xrightarrow{\pi_{n}} E^{\prime},\overline{V^{p}(\pi_{n-1}\circ\cdots\circ\pi_1)\circ\eta^{p}}\right)\right).
        \end{equation*}
    \end{itemize}
    \label{moduli-hecke}
\end{lemma}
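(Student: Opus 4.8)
The plan is to pass to the complex uniformization of the modular curves and to the moduli dictionary $(\ref{map1})$--$(\ref{map2})$, and then to bookkeep level structures at $p$ and away from $p$ separately. Recall that by construction $\mathsf{Z}(x)_U$ is the modular curve attached to $U^{\prime}:=U\cap xUx^{-1}$, and its two projections to $X_U(\mathbb{C})$ are $[\tau,g]\mapsto[\tau,g]$ and $[\tau,g]\mapsto[\tau,gx]$. Thus the statement splits into two tasks: identifying the group $U^{\prime}$, and rewriting these two projections in moduli-theoretic terms.

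For the first task, the away-from-$p$ part of $U^{\prime}$ is tautologically $U^p\cap xU^p x^{-1}$, so there is nothing to do there; only the $p$-component needs work. Using Lemma \ref{double-coset}, the primitivity hypothesis $x_p\in H_0(p)\setminus pH_0(p)$, and $\nu_p(\det x_p)=n$, the element $x_p$ is, up to left and right multiplication by $\Gamma_0(p)$, one of the four representatives $\mathrm{diag}(1,p^n)$, $\mathrm{diag}(p^n,1)$, $\left(\begin{smallmatrix}0 & p^{n-1}\\ p & 0\end{smallmatrix}\right)$, $\left(\begin{smallmatrix}0 & 1\\ p^n & 0\end{smallmatrix}\right)$, lying in $\mathbb{V}_p^{\mathrm{I}+}$, $\mathbb{V}_p^{\mathrm{I}-}$, $\mathbb{V}_p^{\mathrm{II}+}$, $\mathbb{V}_p^{\mathrm{II}-}$ respectively. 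A direct matrix computation then shows that $\Gamma_0(p)\cap x_p\Gamma_0(p)x_p^{-1}$ is conjugate inside $\mathrm{GL}_2(\mathbb{Q}_p)$ to $\Gamma_0(p^{n+1})$ in the two type-$\mathrm{I}$ cases and to $\Gamma_0(p^{n})$ in the two type-$\mathrm{II}$ cases; recording the conjugating element, and feeding it into $(\ref{map1})$--$(\ref{map2})$, produces the claimed isomorphism $\mathsf{Z}(x)_U\simeq\mathcal{X}_{\Gamma_0(p^{n+1})\cdot (U^p\cap xU^p x^{-1})}(\mathbb{C})$, and respectively with $p^n$ in place of $p^{n+1}$ in the type-$\mathrm{II}$ cases.

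For the second task, an object of $\mathcal{Y}_{\Gamma_0(p^{n+1})\cdot(\cdots)}(\mathbb{C})$ is a cyclic $p^{n+1}$-isogeny $E\xrightarrow{\pi}E^{\prime}$ together with a prime-to-$p$ level structure $\overline{\eta^p}$, and I would exploit its standard factorization $\pi=\pi_{n+1}\circ\cdots\circ\pi_1$ through the intermediate curves $E_1,\dots,E_n$ of \cite[\S6.7]{KM85}. In case $\mathrm{I}+$: forgetting from level $\Gamma_0(p^{n+1})$ down to $\Gamma_0(p)$ retains only the cyclic $p$-isogeny $\pi_1\colon E\to E_1$, giving the first component $(E\xrightarrow{\pi_1}E_1,\overline{\eta^p})$; while $T_x$, whose $p$-part is multiplication by the representative $\mathrm{diag}(1,p^n)$, moves the base curve along the chain to $E_n$ and leaves the sub-$p$-isogeny $\pi_{n+1}\colon E_n\to E^{\prime}$, and the prime-to-$p$ Hecke twist multiplies $\eta^p$ on the right by $(x^{\ast})^{-1}$ and, since the base curve changed from $E$ to $E_n$ along the prime-to-$p$-degree-$1$ isogeny $\pi_n\circ\cdots\circ\pi_1$, inserts $V^p(\pi_n\circ\cdots\circ\pi_1)$; this is precisely $\iota_x^{\mathrm{I}+}$. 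Running the same recipe with the three other representatives produces $\iota_x^{\mathrm{I}-}$, $\iota_x^{\mathrm{II}+}$, $\iota_x^{\mathrm{II}-}$; the antidiagonal representatives in the type-$\mathrm{II}$ cases are what bring in the dual isogenies $\pi_1^{\vee}$, $\pi_n^{\vee}$ and the level $\Gamma_0(p^n)$, while the alternation between $x^{\ast}$ and $x^{-1}$ in the formulas merely reflects the passage between $x$ and its adjugate $x^{\ast}$, which differ by the scalar $\det(x)\in\mathbb{Q}_{>0}^{\times}$, a scalar that is absorbed into the level structure.

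The main obstacle will be the scrupulous case-by-case matching of the intermediate curve ($E_1$ versus $E_{n-1}$ versus $E_n$) and of the exact prime-to-$p$ level structure in each of the four cases, which forces one to be careful about the conventions in the standard factorization and about which of $E$, $E^{\prime}$ plays the role of the ``source'' in the second projection, i.e.\ the $\pm$ distinction. Once the formulas are checked on $\mathbb{C}$-points, they visibly describe morphisms of the moduli problems that are already defined over $\mathbb{Z}_{(p)}$, so no additional argument is needed; the extension over the cusps to the compactifications $\mathcal{X}$ is the standard one on Tate curves.
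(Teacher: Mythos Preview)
Your approach is essentially the same as the paper's: the paper also works over $\mathbb{C}$ via the uniformization, computes the two projections $[\tau,g]\mapsto[\tau,g]$ and $[\tau,g]\mapsto[\tau,gx]$, moves the second by the rational element $x_n=\mathrm{diag}(p^n,1)\in\mathrm{GL}_2(\mathbb{Q})^+$ to get $[\det(x)^{-1}x_n\tau,\,x_ng(x^\ast)^{-1}]$, and then feeds everything into the moduli dictionary (\ref{map1})--(\ref{map2}) together with a small commutative diagram to identify the prime-to-$p$ level structure on $E_{p^n\tau}$; only case $\mathrm{I}+$ is written out, the others being declared analogous.

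One bookkeeping slip to fix: your assignment of the two antidiagonal representatives to $\mathbb{V}_p^{\mathrm{II}\pm}$ is swapped. With the paper's convention $\mathbb{V}_p^{\mathrm{II}+}=\bigcup_{a<b}\Gamma_0(p)\left(\begin{smallmatrix}0&p^a\\p^b&0\end{smallmatrix}\right)\Gamma_0(p)$, the primitive element $\left(\begin{smallmatrix}0&1\\p^n&0\end{smallmatrix}\right)$ has $(a,b)=(0,n)$ and hence lies in $\mathbb{V}_p^{\mathrm{II}+}$ for $n\ge 1$, while $\left(\begin{smallmatrix}0&p^{n-1}\\p&0\end{smallmatrix}\right)$ has $(a,b)=(n-1,1)$ and lies in $\mathbb{V}_p^{\mathrm{II}-}$ for $n\ge 2$. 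This does not affect your method, but it will affect which of $\pi_1^\vee$ or $\pi_n^\vee$ appears in which formula when you carry out the case-by-case check.
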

\begin{proof}
    We only give the proof for the case $\textup{I}+$. For simplicity, denote by $x_n$ the following element 
    \begin{equation*}
        x_n=\begin{pmatrix}
            p^{n} & 0\\
            0 & 1
        \end{pmatrix}\in\textup{GL}_2(\mathbb{Q})^{+}.
    \end{equation*}
    We have the following diagram for the correspondence $\mathsf{T}(x)_U:$
    \begin{equation*}
        \begin{tikzcd}
&  & \mathsf{T}(x)_U \ar[ddl, "\pi"'] \ar[ddr, "\pi\circ T(x)"]
\\
& & &
\\
& X_{\Gamma_0(p)\cdot U^p} & & X_{\Gamma_0(p)\cdot U^p}&
\end{tikzcd}
    \end{equation*}
    By the definition of Hecke correspondence $\mathsf{T}(x)_U$, we know that it is isomorphic to compactification of the modular curve $Y_{\Gamma_0(p^{n+1})\cdot U^{p}\cap xU^{p}x^{-1}}(\mathbb{C})\simeq\textup{GL}_2(\mathbb{Q})^{+}\backslash\mathbb{H}^{+}\times\textup{GL}_2(\mathbb{A}_f)/U^{p}\cap xU^{p}x^{-1}$. Let $[\tau,g]$ be an element of the set $Y_{\Gamma_0(p^{n+1})\cdot U^{p}\cap xU^{p}x^{-1}}(\mathbb{C})$, it is mapped to $[\tau,g]\in X_{\Gamma_0(p)\cdot U^{p}}(\mathbb{C})$ under the morphism $\pi$, and $[\tau,gx]=[x_n\tau,x_ngx]=[\textup{det}(x)^{-1}x_nz,x_ng\left(x^{\ast}\right)^{-1}]\in X_{\Gamma_0(p)\cdot U^{p}}(\mathbb{C})$ under the morphism $\pi\circ T(x)$.
    \par
    By the bijection map (\ref{map1}), the element $[\tau,g]\in Y_{\Gamma_0(p^{n+1})\cdot U^{p}\cap xU^{p}x^{-1}}(\mathbb{C})$ corresponds to the object $\left(E_{\tau}\xrightarrow{\times p^{n+1}} E_{p^{n+1}\tau},\overline{\left(a_\tau^{p}\right)^{-1}\circ g}\right)\in\mathcal{Y}_{\Gamma_0(p^{n+1})\cdot U^{p}\cap xU^{p}x^{-1}}(\mathbb{C})$ in the groupoid $\mathcal{Y}_{\Gamma_0(p^{n+1})\cdot U^{p}\cap xU^{p}x^{-1}}(\mathbb{C})$. The element $[\tau,g]\in Y_{\Gamma_0(p)\cdot U^{p}}(\mathbb{C})$ corresponds to the object $\left(E_{\tau}\xrightarrow{\times p} E_{p\tau},\overline{\left(a_\tau^{p}\right)^{-1}\circ g}\right)\in\mathcal{Y}_{\Gamma_0(p)\cdot U^{p}}(\mathbb{C})$ in the groupoid $\mathcal{Y}_{\Gamma_0(p)\cdot U^{p}}(\mathbb{C})$. The element $[\textup{det}(x)^{-1}x_n\tau,x_ng\left(x^{\ast}\right)^{-1}]\in Y_{\Gamma_0(p)\cdot U^{p}}(\mathbb{C})$ corresponds to the object $\left(E_{p^{n}\tau}\xrightarrow{\times p} E_{p^{n+1}\tau},\overline{\left(a_{p^{n}\tau}^{p}\right)^{-1}\circ x_ng\left(x^{\ast}\right)^{-1}}\right)\in\mathcal{Y}_{\Gamma_0(p)\cdot U^{p}}(\mathbb{C})$ in the groupoid $\mathcal{Y}_{\Gamma_0(p)\cdot U^{p}}(\mathbb{C})$. Notice that $E_{\tau}\xrightarrow{\times p^{n}} E_{p^{n}\tau}$ is the composition of the first $n$ degree $p$ isogenies of the standard decomposition of the degree $p^{n+1}$ cyclic isogeny $E_{\tau}\xrightarrow{\times p^{n+1}}E_{p^{n+1}\tau}$. We have the following commutative diagram
    \begin{equation*}
        \begin{tikzcd}
&  \left(\mathbb{A}_f^{p}\right)^{2} \ar[rrr,"\eta^{p}"] \ar[d, "{x^{\ast}}"'] & & & V^{p}(E_\tau) \ar[r, "a_{\tau}^{p}"] \ar[d, "V^{p}(\times p^{n})"]& \left(\mathbb{A}_f^{p}\right)^{2} \ar[d, "x_n"]
\\
&  \left(\mathbb{A}_f^{p}\right)^{2} \ar[rrr,"V^{p}(\times p^{n})\circ\eta^{p}\circ \left(x^{\ast}\right)^{-1}"] & & & V^{p}(E_{p^{n}\tau}) \ar[r, "a_{p^{n}\tau}^{p}"] & \left(\mathbb{A}_f^{p}\right)^{2}.
\end{tikzcd}
    \end{equation*}
    Therefore $\left(a_{p^{n}\tau}^{p}\right)^{-1}\circ x_ng\left(x^{\ast}\right)^{-1}=V^{p}(\times p^{n})\circ\eta^{p}\circ\left(x^{\ast}\right)^{-1}$. Hence the object $\left(E\xrightarrow{\pi} E^{\prime},\overline{\eta^{p}}\right)\in\mathcal{Y}_{\Gamma_0(p^{n+1})\cdot U^{p}\cap xU^{p}x^{-1}}(\mathbb{C})$ is mapped to $ \left(E\xrightarrow{\pi_1} E_1,\overline{\eta^{p}}\right),\left(E_{n}\xrightarrow{\pi_{n+1}} E^{\prime},\overline{V^{p}(\pi_{n}\circ\cdots\circ\pi_1)\circ\eta^{p}\circ \left(x^{\ast}\right)^{-1}}\right)\in\mathcal{Y}_{\Gamma_0(p)\cdot U^{p}}(\mathbb{C})$, i.e., the case $\textup{I}+$ is true.
\end{proof}

\subsection{Generating series}
Let $v$ be a place of $\mathbb{Q}$. Let $\mathbb{A}$ be the ring of ad$\grave{\textup{e}}$les over $\mathbb{Q}$. Let $V=\textup{M}_2(\mathbb{Q})$ be equipped with the quadratic form given by determinant. Recall that $\mathbb{V}=\{\mathbb{V}_{v}\}$ is the following incoherent collection of quadratic spaces of $\mathbb{A}$ of rank 4 (cf. (\ref{incoherent-intro})),
\begin{equation}
    \mathbb{V}_{v}=V_v=\textup{M}_2(\mathbb{Q}_v)\,\,\textup{if $v<\infty$, and $\mathbb{V}_{\infty}$ is positive definite.}
    \label{incoherent}
\end{equation}
Let $q_{\mathbb{V}}:\mathbb{V}\rightarrow\mathbb{A}$ be the quadratic form on $\mathbb{V}$. Let $\mathbb{V}_f=\mathbb{V}\otimes_{\mathbb{A}}\mathbb{A}_f$. Let $H=\textup{GSpin}(V)$, we have $H\simeq\textup{GL}_2\times_{\mathbb{G}_m}\textup{GL}_2$. We still use $\textup{det}$ to denote the morphism $H\rightarrow\mathbb{G}_m$.
\par
Let $U\subset\textup{GL}_2(\mathbb{A}_f)$ be a sufficiently small compact open subgroup. Let $K=(U\times U)\cap H(\mathbb{A}_f)\simeq U\times_{\mathbb{G}_m}U$. Let $M_K$ be the compactification of the Shimura variety $\textup{Sh}(H,K)$. It's easy to see that $M_K(\mathbb{C})=M_{U,1}$.
\par
For $i=1,2$, let $p_{i}:M_K\rightarrow X_U$ be the two projection morphisms. Let $x=x_{\infty}\otimes x_f\in\mathbb{V}$ be an element. Following \cite[$\S$3.3]{YZZ}, define the following cycle in $M_K$:
\begin{equation*}
    \mathsf{Z}(x)_{K}=\begin{cases}
        \mathsf{Z}(x_f)_U,&\textup{if $q_{\mathbb{V}}(x)\in\mathbb{Q}^{\times}$;}\\
        \frac{1}{2}\left(p_1^{\ast}\mathscr{L}_{U}+p_2^{\ast}\mathscr{L}_U\right),&\textup{if $x=0$;}\\
        0, &\textup{if $q_{\mathbb{V}}(x)\notin\mathbb{Q}^{\times}$ and $x\neq0$.}
    \end{cases}
\end{equation*}
\par
Let $\mathscr{S}(\mathbb{V})=\mathscr{S}(\mathbb{V}_{\infty})\otimes\mathscr{S}(\mathbb{V}_f)$ be the Schwartz function space on $\mathbb{V}$.
Let $\phi_{\infty}\in\mathscr{S}(\mathbb{V}_{\infty})$ be the standard Gaussian function. Let $\phi_{f}\in\mathscr{S}(\mathbb{V}_f)$ be a $K$-invariant function. Let $\phi=\phi_{\infty}\otimes\phi_{f}$. Let $\widetilde{K}=\textup{O}(\mathbb{V}_{\infty})\otimes K$, it acts on $\mathbb{V}$. Define the following generating series
\begin{equation}
    \mathsf{T}(\phi)=\sum\limits_{x\in\widetilde{K}\backslash\mathbb{V}}\phi(x)\mathsf{T}(x)_K.
    \label{gen-ser}
\end{equation}
Notice that there exists an extended Weil representation $r$ of $\textup{GL}_2(\mathbb{A})$ on the space $\mathscr{S}(\mathbb{V})$ (cf. \cite[$\S$2.1]{YZZ}). Define
\begin{equation}
    \mathsf{T}(g,\phi)=\sum\limits_{x\in\widetilde{K}\backslash\mathbb{V}}\left(r(g)\phi\right)(x)\cdot\mathsf{T}(x)_K.
    \label{gen-ser-g}
\end{equation}

\subsection{A regular integral model for $M_K$ and supersingular uniformization}
Let $U=\Gamma_0(p)\cdot U^{p}$, where $U^{p}\subset\textup{GL}_2(\mathbb{A}_f^{p})$ is a sufficiently small compact open subgroup. Let $K=U\times_{\mathbb{G}_m}U$, we will construct a regular integral model $\M_{K,(p)}$ for the compactified Shimura variety $M_K$ over $\Z_{(p)}$.
\par
Let $\mathcal{O}_U$ be the integer ring of $\mathbb{Q}_U$. The morphism $\mathcal{X}_U\rightarrow\textup{Spec}\,\Z_{(p)}$ factors through $\mathcal{O}_{U,(p)}\coloneqq\mathcal{O}_{U}\otimes_{\mathbb{Z}}\Z_{(p)}$. By the class field theory, we have $\mathcal{O}_{U,(p)}\simeq\prod\limits_{a\in\pi_0(X_U)}\Z_{(p)}$. Let $\mathcal{X}_{U,a}=\mathcal{X}_{U}\times_{\mathcal{O}_{U,(p)},a}\Z_{(p)}$. The Deligne-Mumford stack $\mathcal{X}_{U}=\coprod\limits_{a\in\pi_0(X_U)}\mathcal{X}_{U,a}$. Define
\begin{equation*}
    \mathcal{H}_{K,(p)}=\coprod\limits_{a\in\pi_0(X_U)}\mathcal{X}_{U,a}\times_{\Z_{(p)}}\mathcal{X}_{U,a}.
\end{equation*}
\par
Denote by $\mathcal{H}_{K,(p)}^{\textup{ss}}$ the supersingular locus of the Deligne-Mumford stack $\mathcal{H}_{K,(p)}$, it is a union of finitely many $\F$-points. Let $\mathbb{E}$ be a supersingular elliptic curve over $\mathbb{F}$ such that $\mathbb{E}[p^{\infty}]\simeq\mathbb{X}$. Let $B$ be the unique quaternion division algebra over $\mathbb{Q}$ which ramifies at $p$ and $\infty$. Let
\begin{equation*}
    \left(\left(\mathbb{E}_1\xrightarrow{f_1} \mathbb{E}_1^{\prime},\overline{\eta_{\mathbb{E},1}^p}\right), \left(\mathbb{E}_2\xrightarrow{f_2} \mathbb{E}_2^{\prime},\overline{\eta_{\mathbb{E},2}^p}\right)\right)
\end{equation*}
be one of the points in $\mathcal{H}_{K,(p)}^{\textup{ss}}(\mathbb{F})$. Then there exists four quasi-isogenies $\rho_{\mathbb{E},i}:\mathbb{E}\rightarrow\mathbb{E}_i$, $\rho_{\mathbb{E},i}^{\prime}:\mathbb{E}\rightarrow\mathbb{E}_i^{\prime}$ ($i=1,2$) of degrees 1 such that $\rho_{\mathbb{E},i}^{\prime-1}\circ f_i\circ\rho_{\mathbb{E},i}=x_0$. We fix a choice of representatives $\eta_{\mathbb{E},1}^p$ and $\eta_{\mathbb{E},2}^p$ from now on.
\par
Let $\widehat{\mathcal{H}}_K^{\textup{ss}}$ be the completion of $\mathcal{H}_{K,(p)}$ along the closed substack $\mathcal{H}_{K,(p)}^{\textup{ss}}$. Then we have the following supersingular uniformization map:
\begin{equation}
    \Theta_{\mathcal{H}}:\widehat{\mathcal{H}}_{K,(p)}^{\textup{ss}}\xrightarrow{\sim} H^{\prime}(\mathbb{Q})_{0}\backslash\mathcal{N}(x_0)\times H(\mathbb{A}_f^{p})/K,
\end{equation}
where $H^{\prime}(\mathbb{Q})_0=\{g\in B^{\times}(\mathbb{Q})\times_{\mathbb{Q}^{\times}}B^{\times}(\mathbb{Q}):\nu_p(\nu(g))=0\}$. 
The map $\Theta_{\mathcal{H}}$ is given as follows: Let $S$ be an object in $\textup{Nilp}_W$. Let
\begin{equation}
    \left(\left(E_1\xrightarrow{\pi_1} E_1^{\prime},\overline{\eta_1^p}\right), \left(E_2\xrightarrow{\pi_2} E_2^{\prime},\overline{\eta_2^p}\right)\right)
    \label{obj-naive}
\end{equation}
be an object in $\widehat{\mathcal{H}}_{K,(p)}^{\textup{ss}}(S)$. Then there exists four quasi-isogenies $\rho_{E,i}:\mathbb{E}_i\times_{\F}\overline{S}\rightarrow E_i\times_{S}\overline{S}$, $\rho_{E,i}^{\prime}:\mathbb{E}_i^{\prime}\times_{\F}\overline{S}\rightarrow E_i^{\prime}\times_{S}\overline{S}$ ($i=1,2$) of degrees 1 such that $\pi_i\circ\rho_{E,i}=\rho_{E,i}^{\prime}\circ f_i$ for $i=1,2$. For $i=1,2$, let $g_i\in\textup{GL}_2(\mathbb{A}_f^{p})$ be the element such that 
\begin{equation}
    g_i= \left(\eta_{\mathbb{E},i}^{p}\right)^{-1}\circ V^p(\rho_{E,i})^{-1}\circ \eta_{i}^{p}.
    \label{fram-g}
\end{equation}
Then $g_i$ is determined up to right multiplication by an element in $U^p$. By the definition of $\mathcal{H}_{K,(p)}$, we must have $\textup{det}(g_1)=\textup{det}(g_2)$. Let $\rho_i=\left(\rho_{E,i}\circ\rho_{\mathbb{E},i}\right)[p^{\infty}]$ and $\rho_i^{\prime}=\left(\rho_{E,i}^{\infty}\circ\rho_{\mathbb{E},i}^{\infty}\right)[p^{\infty}]$. Then the object (\ref{obj-naive}) is mapped to 
\begin{align*}
    \left(\left(E_1[p^{\infty}]\xrightarrow{\pi_1[p^{\infty}]} E_1^{\prime}[p^{\infty}],(\rho_1,\rho_1^{\prime})\right), \left(E_2[p^{\infty}]\xrightarrow{\pi_2[p^{\infty}]} E_2^{\prime}[p^{\infty}],(\rho_2,\rho_2^{\prime})\right)\right)\times(g_1,g_2)\\
    \in H^{\prime}(\mathbb{Q})_{0}\backslash\mathcal{N}(x_0)(S)\times H(\mathbb{A}_f^{p})/K.
\end{align*}
\par
We also want to make the action of $H^{\prime}(\mathbb{Q})_{0}$ on the formal scheme $\N(x_0)$ clear. Let 
    \begin{equation}
        \left(X_1\stackrel{\pi_1}\rightarrow X_1^{\prime}, (\rho_1,\rho_1^{\prime})\right), \left(X_2\stackrel{\pi_2}\rightarrow X_2^{\prime}, (\rho_2,\rho_2^{\prime})\right)
    \end{equation}
    be an object in the set $\N(x_0)(S)$. Let $(b_1,b_2)\in H^{\prime}(\mathbb{Q})_{0}$. Then
    \begin{align*}
        (b_1,b_2)\cdot&\left(\left(X_1\stackrel{\pi_1}\rightarrow X_1^{\prime}, (\rho_1,\rho_1^{\prime})\right), \left(X_2\stackrel{\pi_2}\rightarrow X_2^{\prime}, (\rho_2,\rho_2^{\prime})\right)\right)\\
        &=\left(\left(X_1\stackrel{\pi_1}\rightarrow X_1^{\prime}, (\rho_1 b_1^{-1},\rho_1^{\prime}b_1^{\prime-1})\right), \left(X_2\stackrel{\pi_2}\rightarrow X_2^{\prime}, (\rho_2 b_2^{-1},\rho_2^{\prime}b_2^{\prime-1})\right)\right).
    \end{align*}
\par
Let $\M_{K,(p)}$ be the blow up of the Deligne-Mumford stack $\mathcal{H}_{K,(p)}$ along the closed substack $\mathcal{H}_{K,(p)}^{\textup{ss}}$. Let $\widehat{\M}_{K,(p)}^{\textup{ss}}$ be the completion of $\M_{K,(p)}$ along its supersingular locus, i.e., the exceptional divisor $\exc_{K}$. Notice that by the universal property of blow up, the automorphism $g:\N(x_0)\rightarrow\N(x_0)$ extends to an automorphism $g:\M\rightarrow\M$, i.e., the group $H^{\prime}(\mathbb{Q})_0$ also acts on the formal scheme $\M$.
\begin{lemma}
    The stack $\M_{K,(p)}$ is a 3-dimensional regular Deligne-Mumford stack. There exists a supersingular uniformization map:
    \begin{equation}
        \Theta_{\M}:\widehat{\M}_{K,(p)}^{\textup{ss}}\xrightarrow{\sim} H^{\prime}(\mathbb{Q})_{0}\backslash\M\times H(\mathbb{A}_f^{p})/K
    \end{equation}
    which makes the following diagram commutative
        \begin{equation*}
\begin{tikzcd}
    {\widehat{\M}_{K,(p)}^{\textup{ss}}}
    \arrow[d] \arrow[r,"\Theta_{\M}"]
    & {H^{\prime}(\mathbb{Q})_{0}\backslash\M\times H(\mathbb{A}_f^{p})/K}
    \arrow[d]
    \\
    {\widehat{\mathcal{H}}_{K,(p)}^{\textup{ss}}}
    \arrow[r,"\Theta_{\mathcal{H}}"]
    &{H^{\prime}(\mathbb{Q})_{0}\backslash\mathcal{N}(x_0)\times H(\mathbb{A}_f^{p})/K}.
\end{tikzcd}
    \end{equation*}
\end{lemma}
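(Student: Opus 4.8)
The plan is to reduce both assertions to the local computation on the Rapoport--Zink side together with the formal uniformization $\Theta_{\mathcal{H}}$, using the principle that the formation of a blow-up along a closed center commutes with flat base change, with formal completion along the center, and with the étale (and quotient) charts furnished by a uniformization map. In particular, once one identifies $\mathcal{H}_{K,(p)}^{\textup{ss}}$ with the non-regular locus of $\mathcal{H}_{K,(p)}$ and knows that blowing it up looks formally like the blow-up $\M\to\N(x_0)$ of $\S$\ref{RZBLOWUP}, both regularity and the existence of $\Theta_{\M}$ come out of Proposition \ref{geometry-M}(i) with essentially no extra work.

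For regularity I would argue as follows. Away from the supersingular locus the blow-down $\M_{K,(p)}\to\mathcal{H}_{K,(p)}$ is an isomorphism, so it suffices to show $\mathcal{H}_{K,(p)}\setminus\mathcal{H}_{K,(p)}^{\textup{ss}}$ is regular of pure dimension $3$: the integral model $\mathcal{X}_U$ is a regular $2$-dimensional Deligne--Mumford stack that is moreover smooth over $\mathbb{Z}_{(p)}$ away from its supersingular points, so at a point of $\mathcal{X}_{U,a}\times_{\mathbb{Z}_{(p)}}\mathcal{X}_{U,a}$ for which the first (resp.\ second) elliptic curve is ordinary the projection to the other factor is smooth, and regularity of the product at that point follows from regularity of the remaining factor. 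Hence the non-regular locus of $\mathcal{H}_{K,(p)}$ is precisely the reduced $0$-dimensional substack $\mathcal{H}_{K,(p)}^{\textup{ss}}$. Near it I would base change to $W$ and apply $\Theta_{\mathcal{H}}$: the formal completion of $\mathcal{H}_{K,(p)}\times_{\mathbb{Z}_{(p)}}W$ along $\mathcal{H}_{K,(p)}^{\textup{ss}}$ is $H^{\prime}(\mathbb{Q})_{0}\backslash\N(x_0)\times H(\mathbb{A}_f^{p})/K$, and under this identification the supersingular locus corresponds in each formal chart to the unique closed point of $\N(x_0)$. Since the $H^{\prime}(\mathbb{Q})_{0}$-action fixes that closed point, it lifts to $\M$ by the universal property of blow-up (as recalled just before the statement), and since blowing up a reduced center commutes with étale localization, with the profinite quotient by $K$, with the quotient by $H^{\prime}(\mathbb{Q})_{0}$, and with the flat base change $\mathbb{Z}_{(p)}\to W$, the formal completion of $\M_{K,(p)}\times_{\mathbb{Z}_{(p)}}W$ along $\exc_{K}$ becomes $H^{\prime}(\mathbb{Q})_{0}\backslash\M\times H(\mathbb{A}_f^{p})/K$. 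By Proposition \ref{geometry-M}(i) the formal scheme $\M$ is regular of dimension $3$, and since the completed local rings of $\M_{K,(p)}$ at points of $\exc_{K}$ become those of $\M$ after the faithfully flat base change to $W$, regularity descends; thus $\M_{K,(p)}$ is a regular $3$-dimensional Deligne--Mumford stack.

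The displayed isomorphism $\Theta_{\M}$ and the commutative square are then a direct byproduct of the preceding paragraph: $\Theta_{\M}$ is obtained by applying $\mathrm{Bl}$ to $\Theta_{\mathcal{H}}$ — along $\mathcal{H}_{K,(p)}^{\textup{ss}}$ on the source and along the corresponding closed point of $\N(x_0)$ on the target — and the square commutes because both vertical arrows are the respective blow-down morphisms, which $\Theta_{\mathcal{H}}$ intertwines by construction.

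The hard part will be making the slogan ``blow-up commutes with the supersingular uniformization'' fully rigorous in the stacky and formal setting: one must check that blowing up a reduced closed center contained in the non-regular locus is compatible with étale-local charts and with passage to the quotients by $K$ and by $H^{\prime}(\mathbb{Q})_{0}$ (the latter acting with the center inside its fixed locus), and that all of this is compatible with completion along the center and with base change to $W$. This is formal, but it is the only step requiring care; everything else reduces to Proposition \ref{geometry-M}(i) and the elementary regularity analysis of $\mathcal{X}_U\times_{\mathbb{Z}_{(p)}}\mathcal{X}_U$ above.
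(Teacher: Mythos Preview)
Your proposal is correct and follows essentially the same approach as the paper: both obtain $\Theta_{\M}$ by blowing up the uniformization $\Theta_{\mathcal{H}}$ along the supersingular center and invoke Proposition \ref{geometry-M}(i) for the local regularity, using that blow-up commutes with the relevant base changes, completions, and quotient constructions. The paper's own proof is extremely terse (it essentially just says the result ``follows easily'' from $\Theta_{\mathcal{H}}$ and the parallel blow-up constructions), whereas you have spelled out the regularity argument away from the supersingular locus and the faithfully flat descent from $W$ to $\mathbb{Z}_{(p)}$; these details are implicit in the paper but your version makes them explicit.
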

\begin{proof}
    The formal scheme $\M$ and the integral model $\M_{K,(p)}$ are constructed by blowing up along the supersingular $\mathbb{F}$-point(s) of the formal scheme $\N(x_0)$ and the integral model $\mathcal{H}_{K,(p)}$. Since we have the uniformization morphism $\Theta_{\mathcal{H}}:\widehat{\mathcal{H}}_{K,(p)}^{\textup{ss}}\xrightarrow{\sim} H^{\prime}(\mathbb{Q})_{0}\backslash\mathcal{N}(x_0)\times H(\mathbb{A}_f^{p})/K$, the supersingular uniformization morphism $\Theta_{\M}$ for $\widehat{\M}_{K,(p)}^{\textup{ss}}$ follows easily.
\end{proof}

\subsection{Supersingular uniformization of Hecke correspondences}
We still assume that $U=\Gamma_0(p)\cdot U^{p}$, where $U^{p}\subset\textup{GL}_2(\mathbb{A}_f^{p})$ is a sufficiently small compact open subgroup. Let $x\in\mathbb{V}$ be an element such that $q_{\mathbb{V}}(x)\in\mathbb{Q}^{\times}$ and $x_p$ is a primitive element in the lattice $H_0(p)$. Let $n=\nu_p(q_{\mathbb{V}}(x))\geq0$. Let symbols $\ast\in\{\textup{I},\textup{II}\}$ and $?\in\{+,-\}$. If $x_p\in\mathbb{V}^{\ast?}$, we can extend the morphism $\mathsf{T}(x_f)_U\rightarrow X_U^{2}$ to $\mathcal{X}_{\Gamma_0(p^{n+1})\cdot U^{\prime p}}\rightarrow\mathcal{H}_{K}$ (if $\ast=\textup{I}$) or $\mathcal{X}_{\Gamma_0(p^{n})\cdot U^{\prime p}}\rightarrow\mathcal{H}_K$ (if $\ast=\textup{II}$) using the moduli interpretations of Lemma \ref{moduli-hecke}. Define
\begin{equation*}
    \widehat{\mathsf{T}}(x)_K=\begin{cases}
        \left(\textup{strict transform of $\mathcal{X}_{\Gamma_0(p^{n+1})\cdot U^{\prime p}}$}\right)+\exc_{K}, &\textup{if $\ast=\textup{I}$;}\\
        \textup{strict transform of $\mathcal{X}_{\Gamma_0(p^{n})\cdot U^{\prime p}}$}, &\textup{if $\ast=\textup{II}$.}
    \end{cases}
\end{equation*}
\begin{definition}
    Let $x\in\mathbb{V}$ be an element such that $q_{\mathbb{V}}(x)\in\mathbb{Q}^{\times}$. Then there exists a unique integer $r$ such that $p^{r}x_p$ is a primitive element in the lattice $H_0(p)$. Define
    \begin{equation*}
        \widehat{\mathsf{T}}(x)_K=\widehat{\mathsf{T}}(p^{r}x)_K.
    \end{equation*}
    For a $K$-invariant Schwartz function $\phi_{f}\in\mathscr{S}(\mathbb{V}_f)$. Let $\phi=\phi_{\infty}\otimes\phi_{f}\in\mathscr{S}(\mathbb{V})$ where $\phi_{\infty}\in\mathscr{S}(\mathbb{V}_{\infty})$ is the standard Gaussian function. Define
\begin{equation*}
    \widehat{\mathsf{T}}(g,\phi)=\sum\limits_{x\in\widetilde{K}\backslash\mathbb{V}}\left(r(g)\phi\right)(x)\cdot\widehat{\mathsf{T}}(x)_K,\,\,g\in\textup{GL}_2(\mathbb{A}).
\end{equation*}
This can be viewed as the integral version of the generating series $\mathsf{T}(g,\phi)$ as in (\ref{gen-ser-g}) by the following lemma.
\end{definition}
\begin{lemma}
    Let $x\in\mathbb{V}$ be an element such that $q_{\mathbb{V}}(x)\in\mathbb{Q}^{\times}$. Then $\widehat{\mathsf{T}}(x)_K$ is an effective Cartier divisor on the Deligne-Mumford stack $\M_{K}$. Moreover, we have
    \begin{equation*}
        \widehat{\mathsf{T}}(x)_K(\mathbb{C})=\mathsf{T}(x)_K(\mathbb{C}).
    \end{equation*}
    \label{C-points}
\end{lemma}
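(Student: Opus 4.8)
The plan is to prove Lemma \ref{C-points} in two parts: first that $\widehat{\mathsf{T}}(x)_K$ is an effective Cartier divisor on $\M_K$, and second that its complex points recover the classical Hecke correspondence. For the Cartier divisor statement, I would reduce to a local question along the two strata of $\M_{K,(p)}$: away from the supersingular locus the integral model $\M_{K,(p)}$ agrees with $\mathcal{H}_{K,(p)}$, which is smooth over $\Z_{(p)}$, and there $\widehat{\mathsf{T}}(x)_K$ is simply the closure of the classical Hecke divisor $\mathsf{T}(x_f)_U$ on a smooth relative curve over $\Z_{(p)}$, hence automatically an effective Cartier divisor. The real content is at the supersingular points, where I would invoke the supersingular uniformization map $\Theta_{\M}:\widehat{\M}_{K,(p)}^{\textup{ss}}\xrightarrow{\sim} H^{\prime}(\mathbb{Q})_{0}\backslash\M\times H(\mathbb{A}_f^{p})/K$. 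Under $\Theta_{\M}$, the completion of $\widehat{\mathsf{T}}(x)_K$ along the supersingular locus matches (a finite disjoint union of copies of) the special divisor $\mathcal{Z}(x_p)$ on $\M$ — this is exactly why the definition of $\widehat{\mathsf{T}}(x)_K$ was rigged with the extra $+\exc_K$ in the type $\textup{I}$ case and without it in the type $\textup{II}$ case, to match the decomposition of $\widetilde{\mathcal{D}}(x)$ in Lemma \ref{dec-diff-div} together with Corollary \ref{spe-decom}(a). Since $\mathcal{Z}(x)$ is a Cartier divisor on the regular formal scheme $\M$ by Lemma \ref{divisor}, descent along the (étale-locally trivial) uniformization gives that $\widehat{\mathsf{T}}(x)_K$ is Cartier on the completion, and combined with the generic statement this gives that $\widehat{\mathsf{T}}(x)_K$ is an effective Cartier divisor on all of $\M_{K,(p)}$ and hence on $\M_K$.

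The matching with $\mathcal{Z}(x_p)$ is where the bulk of the work lies. I would first normalize $x$ so that $p^{r}x_p$ is primitive in $H_0(p)$, which is the convention in the definition, and then split into the four cases $\ast?\in\{\textup{I}\pm,\textup{II}\pm\}$ according to which double coset $\Gamma_0(p)x_p\Gamma_0(p)$ lies in (Lemma \ref{double-coset} and the subsequent definition of $\mathbb{V}_p^{\ast?}$). In each case, the moduli description of the morphism $\mathsf{T}(x)_U\to X_U^2$ from Lemma \ref{moduli-hecke} should be compared, via the framing data $(\rho_{E,i},\rho_{E,i}^{\prime})$ and the formula (\ref{fram-g}) for $g_i$, with the definition of the closed immersions $\st_x^{\textup{I}\pm},\st_x^{\textup{II}\pm}:\N_0(x_p)\to\N(x_0)$ from $\S$\ref{local-correspondences}. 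The point is that the ``standard decomposition'' of a cyclic isogeny of $p$-power degree used on the modular-curve side (cf. $\S$6.7 of \cite{KM85}) is literally the same combinatorial object as the one used to define the $\st$ morphisms on the Rapoport--Zink side, so the identifications are compatible. Then passing to strict transforms under the respective blow-ups and adding the exceptional divisor exactly when $\ast=\textup{I}$ corresponds, under $\Theta_{\M}$, to the decomposition of $\mathcal{Z}(x_p)$ as $(n+1)\exc_{\M}+\widetilde{\mathcal{Z}}(x_p)$ in Corollary \ref{spe-decom}(a), whose irreducible components of $\widetilde{\mathcal{D}}(p^{-i}x_p)$ are precisely the various $\N_0^{\textup{I}\pm}$, $\N_0^{\textup{II}\pm}$ by Lemma \ref{dec-diff-div}.

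For the statement about complex points, I would argue directly: over $\mathbb{C}$ the blow-up $\M_K\to\mathcal{H}_{K}$ is an isomorphism (the supersingular locus is a set of characteristic-$p$ points and the exceptional divisor is vertical), and the exceptional term $\exc_K$ contributes nothing over $\mathbb{C}$. Hence $\widehat{\mathsf{T}}(x)_K(\mathbb{C})$ is the image of the strict transform, which over $\mathbb{C}$ is just the closure of $\mathsf{T}(x_f)_U\subset X_U^2(\mathbb{C})$, i.e. $\mathsf{T}(x)_K(\mathbb{C})$ in the notation of the generating-series subsection (using that $\mathsf{Z}(x)_K=\mathsf{Z}(x_f)_U$ when $q_{\mathbb{V}}(x)\in\mathbb{Q}^{\times}$). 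One subtlety to handle carefully is the scaling $x\mapsto p^{r}x$ and making sure the isomorphisms $\mathsf{Z}(x)_U\simeq\mathcal{X}_{\Gamma_0(p^{n+1})\cdot U^{p}\cap\,\cdot\,}(\mathbb{C})$ of Lemma \ref{moduli-hecke} are intertwined with the analogous integral-model statements; the extra $\exc_K$ summand is invisible at the generic fiber, so this poses no genuine difficulty.

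I expect the main obstacle to be the bookkeeping in the supersingular matching: one must track, simultaneously and compatibly, (i) the prime-to-$p$ level structures and the element $g_i\in\textup{GL}_2(\mathbb{A}_f^p)$ of (\ref{fram-g}), (ii) the $p$-divisible group side via $\st_x^{\ast?}$, and (iii) the blow-up/strict-transform operation, and verify that the four cases of Lemma \ref{moduli-hecke} line up with the four morphisms $\st_x^{\textup{I}+},\st_x^{\textup{I}-},\st_x^{\textup{II}+},\st_x^{\textup{II}-}$ and with the four components $\N(x_0)^{\textup{FF}},\N(x_0)^{\textup{FV}},\N(x_0)^{\textup{VF}},\N(x_0)^{\textup{VV}}$ of the special fiber in the expected way (Frobenius $\leftrightarrow$ type $\textup{I}$, Verschiebung $\leftrightarrow$ type $\textup{II}$, and so on). Once this dictionary is set up cleanly, the Cartier-divisor assertion and the complex-points assertion both follow formally from the already-established local results (Lemma \ref{divisor}, Corollary \ref{cor: divisor}, Lemma \ref{dec-diff-div}, Corollary \ref{spe-decom}).
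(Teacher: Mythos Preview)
Your approach is correct but takes a substantially longer route than the paper. The paper's proof is essentially two lines: the strict transform of the regular Deligne--Mumford stack $\mathcal{X}_{\Gamma_0(p^{n+\delta})\cdot U^{\prime p}}$ (with $\delta\in\{0,1\}$ depending on the type) remains regular by Lemma~\ref{blow-up-N0(x)}, and a regular codimension-$1$ closed substack of the regular $3$-dimensional stack $\M_{K,(p)}$ is automatically an effective Cartier divisor; adding the Cartier divisor $\exc_K$ in the type~$\textup{I}$ case preserves this. No supersingular uniformization or dictionary with the local $\st^{\ast?}$ morphisms is needed at this stage.

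What you outline---identifying the formal completion of $\widehat{\mathsf{T}}(x)_K$ along the supersingular locus with local cycles on $\M$ via $\Theta_{\M}$---is exactly the content of the \emph{next} lemma in the paper, and you are effectively front-loading that work. One imprecision worth flagging: the local model of a single $\widehat{\mathsf{T}}(x)_K^{\textup{ss}}$ (for primitive $x_p$) is not the full special divisor $\mathcal{Z}(x_p)$, but rather a single component $\N_0^{\ast?}(z)$ from Lemma~\ref{dec-diff-div}, plus $\exc_{\M}$ in the type~$\textup{I}$ case. Only after summing over all double-coset types and all $p$-power rescalings does one recover the full $\mathcal{Z}(y)$, as in Corollary~\ref{uni-hecke}. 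Your later remarks suggest you see this, but the sentence ``matches \ldots\ the special divisor $\mathcal{Z}(x_p)$'' is not literally correct; if you go this route, appeal instead to Proposition~\ref{geometry-M}(iv), which already says each $\N_0^{\ast?}(z)$ is a Cartier divisor on $\M$. For the complex-points assertion, your argument and the paper's coincide: $\exc_K$ is vertical, and $\mathsf{T}(x)_K$ depends only on $x$ modulo $\mathbb{Q}^\times$, so $\widehat{\mathsf{T}}(x)_K(\mathbb{C})=\mathsf{T}(p^{r}x)_K(\mathbb{C})=\mathsf{T}(x)_K(\mathbb{C})$.
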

\begin{proof}
    The strict transform of the regular Deligne-Mumford stack $\mathcal{X}_{\Gamma_0(p^{n})\cdot U^{\prime p}}$ is still regular by Lemma \ref{blow-up-N0(x)}, hence it's still a divisor on $\M_{K,(p)}$. Therefore $\widehat{\mathsf{T}}(x)_K$ is still a divisor.
    \par
    The Hecke correspondence only depends on the coset $\mathbb{Q}^{\times}\backslash\mathbb{V}$. Let $r$ be an integer such that $p^{r}x_p$ is a primitive element in $H_0(p)$. Then we have $\mathsf{T}(x)_K=\mathsf{T}(p^{r}x)_K$. Since $\exc_{\M_K}$ is supported on the special fiber of $\M_{K,(p)}$, we have
    \begin{equation*}
        \widehat{\mathsf{T}}(x)_K(\mathbb{C})=\widehat{\mathsf{T}}(p^{r}x)_K(\mathbb{C})=\mathsf{T}(p^{r}x)_K(\mathbb{C})=\mathsf{T}(x)_K(\mathbb{C}).
    \end{equation*}
\end{proof}
Recall that $B$ is the unique quaternion division algebra over $\mathbb{Q}$ which ramifies at $p$ and $\infty$. For an element $y\in B$, let $H_{y}^{\prime}\subset H^{\prime}$ be the subgroup which stabilizes $y$. Let $H_{y}^{\prime}(\mathbb{Q})_0=H_{y}^{\prime}\cap H^{\prime}(\mathbb{Q})_0$.
\begin{lemma}
    Let $x\in\mathbb{V}$ be an element such that $q_{\mathbb{V}}(x)\in\mathbb{Q}^{\times}$ and $x_p$ is a primitive element in the lattice $H_0(p)$. Let $\widehat{\mathsf{T}}(x)_K^{\textup{ss}}$ be the completion of $\widehat{\mathsf{T}}(x)_K$ along its supersingular locus. Then we have the following identity in $\textup{K}_0^{\prime}(\widehat{\M}_K^{\textup{ss}})_{\mathbb{C}}$:
    \begin{equation*}
        \widehat{\mathsf{T}}(x)_K^{\textup{ss}}=\sum\limits_{\substack{y\in H^{\prime}(\mathbb{Q})_0\backslash B\\q_{B}(y)=q_{\mathbb{V}}(x)}}\sum\limits_{\substack{g\in H_{y}^{\prime}(\mathbb{Q})_0\backslash H(\mathbb{A}_f^{p})/K^p\\g^{-1}y=x}}\Theta_{\M}^{-1}\begin{cases}
            \left(\N_0^{\textup{I}+}(x_0y)+\exc_{\M},g\right),&\textup{if $x_p\in\mathbb{V}_p^{\textup{I}+}$;}\\
            \left(\N_0^{\textup{I}-}(x_0\overline{y})+\exc_{\M},g\right),&\textup{if $x_p\in\mathbb{V}_p^{\textup{I}-}$;}\\
            \left(\N_0^{\textup{II}+}(y),g\right),&\textup{if $x_p\in\mathbb{V}_p^{\textup{II}+}$;}\\
            \left(\N_0^{\textup{II}-}(y^{\prime}),g\right) ,&\textup{if $x_p\in\mathbb{V}_p^{\textup{II}-}$.}\\
        \end{cases}
    \end{equation*}
\end{lemma}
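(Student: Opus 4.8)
The plan is to reduce the identity to the local geometry of $\M$ through the supersingular uniformization $\Theta_{\M}$, and then to recognize the completion of the integral Hecke correspondence $\widehat{\mathsf{T}}(x)_K$ at each supersingular point as one of the four local Hecke cycles $\N_0^{\textup{I}+}, \N_0^{\textup{I}-}, \N_0^{\textup{II}+}, \N_0^{\textup{II}-}$ on $\M$ constructed in Section \ref{local-correspondences}. Since $\widehat{\mathsf{T}}(x)_K$ is an effective Cartier divisor on $\M_{K,(p)}$ (Lemma \ref{C-points}) and $\Theta_{\M}$ is an isomorphism $\widehat{\M}_{K,(p)}^{\textup{ss}}\xrightarrow{\sim}H^{\prime}(\mathbb{Q})_0\backslash\M\times H(\mathbb{A}_f^{p})/K$, it suffices to compute the pullback of $\widehat{\mathsf{T}}(x)_K^{\textup{ss}}$ to $\M\times H(\mathbb{A}_f^{p})/K^{p}$ as an $H^{\prime}(\mathbb{Q})_0$-equivariant element of $K_0^{\prime}$. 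I would do this one connected component $\M\times\{g\}$ at a time, using that over such a component $\Theta_{\M}$ is compatible with the level-$\Gp$ Rapoport--Zink space $\N(x_0)$ and its universal pair of deformations of $x_0$.

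\emph{The indexing set.} Assume first $x_p\in\mathbb{V}_p^{\textup{I}+}$, so by definition $\widehat{\mathsf{T}}(x)_K$ is the strict transform of $\mathcal{X}_{\Gamma_0(p^{n+1})\cdot U^{\prime p}}$ (pushed into $\M_{K,(p)}$ through the morphism $\iota_x^{\textup{I}+}$ of Lemma \ref{moduli-hecke}) together with its exceptional correction $\exc_K$. A supersingular $\mathbb{F}$-point of $\mathcal{X}_{\Gamma_0(p^{n+1})\cdot U^{\prime p}}$ is a cyclic $p^{n+1}$-isogeny of supersingular elliptic curves with prime-to-$p$ level; composing with the fixed framings $\rho_{\mathbb{E},i},\eta_{\mathbb{E},i}^{p}$ used to build $\Theta_{\M}$ produces a quasi-isogeny $y\in B$ with $q_B(y)=q_{\mathbb{V}}(x)$, agreeing with $x$ away from $p$ after twisting by the prime-to-$p$ framing class $g\in H(\mathbb{A}_f^{p})/K^{p}$, and conversely each such $(y,g)$ arises. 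The indeterminacy of the framing is exactly the action of $H^{\prime}(\mathbb{Q})_0$ with stabilizer $H^{\prime}_y(\mathbb{Q})_0$, which yields the double sum $\sum_{y}\sum_{g:\,g^{-1}y=x}$ of the statement.

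\emph{Matching the local cycle.} Unwinding $\iota_x^{\textup{I}+}$ of Lemma \ref{moduli-hecke} --- it factors the cyclic isogeny through its standard decomposition and records the first and last degree-$p$ steps with their induced framings --- and transporting it through $\Theta_{\M}$, one identifies it with the closed immersion $\st_{x_0\cdot y}^{\textup{I}+}\colon\N_0(x_0\cdot y)\to\N(x_0)$ of Section \ref{local-correspondences}, where $\nu_p(q(x_0\cdot y))=n+1$. Hence the strict transform of $\mathcal{X}_{\Gamma_0(p^{n+1})\cdot U^{\prime p}}$, completed at the point $(y,g)$, pulls back under $\Theta_{\M}$ to $\N_0^{\textup{I}+}(x_0\cdot y)\times\{g\}$, and the correction $\exc_K$ adds one copy of $\exc_{\M}$ per branch $(y,g)$, as one checks from Lemma \ref{blow-up-N0(x)}(c) and the divisor identities of Section \ref{spe-exc-part} (for $n=0$ this is already $\mathcal{Z}(y)=\N_0^{\textup{I}+}(x_0\cdot y)+\exc_{\M}$ from Lemma \ref{dec-diff-div} and Proposition \ref{difference-decomposition}). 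The three remaining cases $x_p\in\mathbb{V}_p^{\textup{I}-},\mathbb{V}_p^{\textup{II}+},\mathbb{V}_p^{\textup{II}-}$ run identically with $\st_{x_0\cdot y}^{\textup{I}+}$ replaced by $\st_{x_0\cdot\overline{y}}^{\textup{I}-},\st_{y}^{\textup{II}+},\st_{y^{\prime}}^{\textup{II}-}$, the $\exc_K$ term being absent when $\ast=\textup{II}$; the level ($\Gamma_0(p^{n+1})$ versus $\Gamma_0(p^{n})$) attached to $x_p$'s standard form is precisely the bookkeeping of Lemma \ref{double-coset}. Summing over $(y,g)$ and descending modulo $H^{\prime}(\mathbb{Q})_0$ gives the asserted identity in $K_0^{\prime}(\widehat{\M}_K^{\textup{ss}})_{\mathbb{C}}$.

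\emph{Expected main obstacle.} The delicate part is the framing bookkeeping: verifying that the quasi-isogeny $y$ read off from a supersingular point genuinely satisfies $q_B(y)=q_{\mathbb{V}}(x)$ and agrees with $x$ away from $p$ (not merely up to a local twist), that $g$ is well defined modulo $K^{p}$ with $g^{-1}y=x$, and --- most of all --- that the moduli morphism $\iota_x^{\textup{I}+}$ of Lemma \ref{moduli-hecke} coincides with $\st_{x_0\cdot y}^{\textup{I}+}$ on the nose under $\Theta_{\M}$ rather than up to an automorphism of the source; the analogous matching of the exceptional multiplicities (one copy of $\exc_{\M}$) needs care as well. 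Once these are pinned down, passing from the set-theoretic description to the $K_0^{\prime}$-identity is routine given Section \ref{spe-exc-part} and the construction of $\M_{K,(p)}$.
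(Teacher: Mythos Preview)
Your proposal is correct and follows essentially the same approach as the paper's proof: first parametrize the supersingular $\mathbb{F}$-points of the Hecke correspondence by pairs $(y,g)$ via the framing data, identify the moduli map $\iota_x^{\textup{I}+}$ of Lemma~\ref{moduli-hecke} with the local immersion $\st_{x_0\cdot y}^{\textup{I}+}$ under the uniformization, and then pass to strict transforms plus the prescribed copy of the exceptional divisor. The only cosmetic difference is that the paper carries out the matching on $\mathcal{H}_{K,(p)}$ via $\Theta_{\mathcal{H}}$ first (obtaining $\st^{\textup{I}+}_{x_0y}(\N_0(x_0y))$ inside $\N(x_0)$) and then takes strict transforms to land in $\M$, whereas you propose to work directly on $\M$ via $\Theta_{\M}$; these are equivalent since strict transform commutes with the base change defining $\Theta_{\M}$ from $\Theta_{\mathcal{H}}$.
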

\begin{proof}
    We will only give the proof for the case $x_p\in\mathbb{V}_p^{\textup{I}+}$. Let $\mathsf{T}_{\mathcal{H}}(x)\simeq\mathcal{X}_{\Gamma_0(p^{n+1})\cdot U^{p}\cap xU^{p}x^{-1}}$ be the closure of the image of $\mathsf{T}(x)$ in the stack $\mathcal{H}_K$ under the morphism $\mathsf{T}(x)\rightarrow\mathcal{X}_U^{2}$ given in Lemma \ref{moduli-hecke} (\textup{I}+). Let $\widehat{\mathsf{T}}_{\mathcal{H}}(x)^{\textup{ss}}$ be the completion of $\mathsf{T}_{\mathcal{H}}(x)$ along its supersingular locus. We will prove that
    \begin{equation}
        \widehat{\mathsf{T}}_{\mathcal{H}}(x)^{\textup{ss}}=\bigsqcup\limits_{\substack{y\in H^{\prime}(\mathbb{Q})_0\backslash B\\q_{B}(y)=q_{\mathbb{V}}(x)}}\bigsqcup\limits_{\substack{g\in H_{y}^{\prime}(\mathbb{Q})_0\backslash H(\mathbb{A}_f^{p})/K^p\\g^{-1}y=x}}\Theta_{\mathcal{H}}^{-1}\left(\st_{x_0y}^{\textup{I}+}\left(\mathcal{N}_0(x_0y)\right),g\right).
        \label{unif-hecke-I+}
    \end{equation}
    We prove this by constructing explicit maps between the two sides of (\ref{unif-hecke-I+}).
    \par
    Let $S$ be a $W$-scheme such that $p$ is locally nilpotent on $S$. Let $(z,g)\in\widehat{\mathcal{H}}_K^{\textup{ss}}(S)\times H(\mathbb{A}_f^{p})/K$ be an element such that $[(z,g)]\in\widehat{\mathsf{T}}_{\mathcal{H}}(x)^{\textup{ss}}(S)$. It gives rise to the following object in $\widehat{\mathcal{H}}_{K,(p)}^{\textup{ss}}(S)$:
\begin{equation*}
    \left(\left(E_1\xrightarrow{\pi_1} E_1^{\prime},\overline{\eta_1^p}\right), \left(E_2\xrightarrow{\pi_2} E_2^{\prime},\overline{\eta_2^p}\right)\right).
\end{equation*}
There exists four quasi-isogenies $\rho_{E,i}:\mathbb{E}_i\times_{\F}\overline{S}\rightarrow E_i\times_{S}\overline{S}$, $\rho_{E,i}^{\prime}:\mathbb{E}_i^{\prime}\times_{\F}\overline{S}\rightarrow E_i^{\prime}\times_{S}\overline{S}$ ($i=1,2$) of degrees 1 such that $\pi_i\circ\rho_{E,i}=\rho_{E,i}^{\prime}\circ f_i$ for $i=1,2$. Since $[(z,g)]\in\widehat{\mathsf{T}}(x)^{\textup{ss}}(S)$, there exists a cyclic isogeny $\pi:E_1\rightarrow E_2^{\prime}$ of degree $p^{n+1}$ such that $\pi_1$ (resp. $\pi_2$) is the first (resp. last) degree $p$ isogeny in the standard decomposition of $\pi$. Let $\Tilde{y}=\left(\rho_{E,2}^{\prime}\circ\rho_{\mathbb{E},2}^{\prime}\right)^{-1}\circ\pi\circ\left(\rho_{E,1}^{\prime}\circ\rho_{\mathbb{E},1}^{\prime}\right)\in\textup{End}^{0}(\mathbb{E})\simeq B$ and $y=x_0^{-1}\cdot\Tilde{y}$. By (\ref{fram-g}), we have $q_B(y)=q_{\mathbb{V}}(x)$ and $g^{-1}y=x$. Moreover, the following element 
\begin{align*}
    \left(\left(E_1[p^{\infty}]\xrightarrow{\pi_1[p^{\infty}]} E_1^{\prime}[p^{\infty}],(\rho_1,\rho_1^{\prime})\right), \left(E_2[p^{\infty}]\xrightarrow{\pi_2[p^{\infty}]} E_2^{\prime}[p^{\infty}],(\rho_2,\rho_2^{\prime})\right)\right)
\end{align*}
lies in the closed formal subscheme $\st_{x_0y}^{\textup{I}+}\left(\mathcal{N}_0(x_0y)\right)\subset\N(x_0)$. The reverse direction can be proved easily, therefore (\ref{unif-hecke-I+}) is true.
\par
Let $\widehat{\mathsf{T}}^{\textup{bl}}(x)$ be the strict transform of $\widehat{\mathsf{T}}_{\mathcal{H}}(x)$ under the blow up morphism $\M_K\rightarrow\mathcal{H}_K$. By the definition of $\widehat{\mathsf{T}}(x)_K$, we have 
\begin{equation}
    \widehat{\mathsf{T}}(x)_K=\widehat{\mathsf{T}}^{\textup{bl}}(x)+\exc_K.
    \label{diffence2}
\end{equation}
Denote by $\widehat{\mathsf{T}}^{\textup{bl}}(x)^{\textup{ss}}$ the completion of $\widehat{\mathsf{T}}^{\textup{bl}}(x)$ along the supersingular locus. By (\ref{unif-hecke-I+}), we have
\begin{equation*}
        \widehat{\mathsf{T}}^{\textup{bl}}(x)^{\textup{ss}}=\bigsqcup\limits_{\substack{y\in H^{\prime}(\mathbb{Q})_0\backslash B\\q_{B}(y)=q_{\mathbb{V}}(x)}}\bigsqcup\limits_{\substack{g\in H_{y}^{\prime}(\mathbb{Q})_0\backslash H(\mathbb{A}_f^{p})/K^p\\g^{-1}y=x}}\Theta_{\mathcal{M}}^{-1}\left(\widetilde{\mathcal{N}}^{\textup{I}+}_0(x_0y),g\right).
    \end{equation*}
Combining with (\ref{diffence2}), the formula in the lemma of the case $\textup{I}+$ is true.
\end{proof}
\begin{corollary}
    Let $\widehat{\mathsf{T}}(\phi)_K^{\textup{ss}}$ be the completion of $\widehat{\mathsf{T}}(\phi)_K$ along its supersingular locus. Then we have the following identity in $\textup{K}_0^{\prime}(\widehat{\M}_K^{\textup{ss}})_{\mathbb{C}}$:
    \begin{equation*}
        \widehat{\mathsf{T}}(\phi)_K^{\textup{ss}}=\sum\limits_{y\in H^{\prime}(\mathbb{Q})_0\backslash B}\sum\limits_{g\in H_{y}^{\prime}(\mathbb{Q})_0\backslash H(\mathbb{A}_f^{p})/K^p}\phi^{p}(g^{-1}y)\cdot\Theta_{\M}^{-1}\begin{cases}
            \left(\mathcal{Z}(y),g\right),&\textup{if $\phi_p=1_{H_0(p)}$;}\\
            \left(\mathcal{Y}(y)-\exc_{\M},g\right) ,&\textup{if $\phi_p=1_{H_0(p)^{\vee}}$.}
        \end{cases}
    \end{equation*}
    \label{uni-hecke}
\end{corollary}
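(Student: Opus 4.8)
The plan is to deduce the corollary from the preceding lemma, which expresses the supersingular completion $\widehat{\mathsf{T}}(x)_K^{\textup{ss}}$ of a single Hecke correspondence, together with the decomposition of the special divisors $\mathcal{Z}(y)$ and $\mathcal{Y}(y)$ provided by Lemma \ref{dec-diff-div}, Corollary \ref{spe-decom} and formula (\ref{diff-Y}). First I would unwind $\widehat{\mathsf{T}}(\phi)_K=\sum_{x\in\widetilde{K}\backslash\mathbb{V}}\phi(x)\widehat{\mathsf{T}}(x)_K$ and note that inside each Fourier coefficient $q_{\mathbb{V}}(x)=q$ the sum is finite, so passing to the supersingular completion and to $\mathbb{C}$-coefficients is termwise and $\widehat{\mathsf{T}}(\phi)_K^{\textup{ss}}=\sum_{x}\phi(x)\widehat{\mathsf{T}}(x)_K^{\textup{ss}}$. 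For $x$ with $q_{\mathbb{V}}(x)\in\mathbb{Q}^{\times}$ I would then substitute the preceding lemma — after replacing $x$ by $p^{r}x$ so that $x_p$ is primitive — which rewrites $\widehat{\mathsf{T}}(x)_K^{\textup{ss}}$ as a sum over $(y,g)$ of $\Theta_{\M}^{-1}$ applied to one of $\N_0^{\textup{I}+}(x_0\cdot y)+\exc_{\M}$, $\N_0^{\textup{I}-}(x_0\cdot\overline{y})+\exc_{\M}$, $\N_0^{\textup{II}+}(y)$, $\N_0^{\textup{II}-}(y^{\prime})$, according to which of $\mathbb{V}_p^{\textup{I}\pm},\mathbb{V}_p^{\textup{II}\pm}$ contains $x_p$.

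Next I would reorganize the sum so that $(y,g)$ is the outer index and collect, for each $y$ with $n\coloneqq\nu_p(q_B(y))\geq0$, all $x$ contributing a piece attached to a $p$-power multiple of $y$. The prime-to-$p$ component $x^p$ is then pinned to $g^{-1}(y^p)$, producing the weight $\phi^p(g^{-1}y)$, while $x_p$ runs, modulo $\Gamma_0(p)$-double cosets, over the primitive elements of the $p$-adic lattice underlying $\phi_p$; by Lemma \ref{double-coset} these double cosets are exactly the four families listed immediately after it, and by the case division of the preceding lemma they correspond bijectively to the constituents $\N_0^{\textup{I}+}(x_0\cdot p^{-i}y)$, $\N_0^{\textup{I}-}(x_0\cdot\overline{p^{-i}y})$, $\N_0^{\textup{II}+}(p^{-i}y)$, $\N_0^{\textup{II}-}((p^{-i}y)^{\prime})$ of the difference divisors $\widetilde{\mathcal{D}}(p^{-i}y)$ for $0\le i\le[n/2]$ occurring in Lemma \ref{dec-diff-div}. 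When $\phi_p=1_{H_0(p)}$ each such $x_p$ lies in $H_0(p)$, so $\phi_p(x_p)=1$ throughout, and summing the pieces reproduces $\sum_{i=0}^{[n/2]}\widetilde{\mathcal{D}}(p^{-i}y)=\widetilde{\mathcal{Z}}(y)$ together with the $\exc_{\M}$'s carried by the I-type pieces; a short count, identical to the one behind Corollary \ref{spe-decom}, shows the latter is exactly $(n+1)\,\exc_{\M}$, so the inner sum is $\widetilde{\mathcal{Z}}(y)+(n+1)\exc_{\M}=\mathcal{Z}(y)$. This gives the $\phi_p=1_{H_0(p)}$ case.

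For $\phi_p=1_{H_0(p)^{\vee}}$ I would run the same argument with $H_0(p)^{\vee}$ in place of $H_0(p)$, so that $x_p$ now additionally ranges over classes of determinant valuation one lower, and with (\ref{diff-Y}) in place of Lemma \ref{dec-diff-div}. Because the extra boundary constituent in (\ref{diff-Y}) is of type $\textup{II}$ it carries no $\exc_{\M}$, so the I-type pieces produce exactly one fewer copy of $\exc_{\M}$ than is carried by $\mathcal{Y}(y)=(\iota_{\M})^{\ast}\mathcal{Z}(x_0\cdot y)$; this deficit is precisely the $-\exc_{\M}$ in the statement, and the inner sum becomes $\Theta_{\M}^{-1}(\mathcal{Y}(y)-\exc_{\M},g)$. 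Summing over $(y,g)$, and matching the term $y=0$ with the Hodge-class contribution $\widehat{\mathsf{T}}(0)_K$ by unwinding definitions, finishes the proof.

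The main obstacle will be the bookkeeping in the middle step: checking that the double-coset-to-piece correspondence is a genuine bijection with no over- or under-counting — which uses that $H_y^{\prime}(\mathbb{Q})_0$ is the full stabilizer of $y$ in $H^{\prime}(\mathbb{Q})_0$ and the standard orbit calculus for the double quotient $H_y^{\prime}(\mathbb{Q})_0\backslash H(\mathbb{A}_f^{p})/K^p$ — and that the $\exc_{\M}$-multiplicities come out exactly right, namely $(n+1)\exc_{\M}$ of Corollary \ref{spe-decom} for $\mathcal{Z}$ and the $-\exc_{\M}$ correction for $\mathcal{Y}$. A secondary point that needs care is the interaction of the $p$-power rescaling $x\mapsto p^{r}x$ (under which $\widehat{\mathsf{T}}(x)_K$ is invariant but $q_{\mathbb{V}}(x)$ and $\phi(x)$ are not) with the reindexing, together with the verification that the archimedean Gaussian factor and the formal $q$-expansion variable are carried identically on both sides so that no spurious constant intervenes.
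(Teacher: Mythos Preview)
Your proposal is correct and takes essentially the same approach as the paper: apply the preceding lemma termwise to each $\widehat{\mathsf{T}}(x)_K^{\textup{ss}}$, reorganize the sum by $(y,g)$, and assemble the resulting local Hecke pieces $\N_0^{\ast?}(\cdot)$ together with their attached $\exc_{\M}$'s into $\mathcal{Z}(y)$ (resp.\ $\mathcal{Y}(y)-\exc_{\M}$) using Lemma~\ref{dec-diff-div} and (\ref{diff-Y}). The only cosmetic difference is bookkeeping---the paper first groups by $m=\nu_p(q_{\mathbb{V}}(x))$ to produce $\mathcal{D}(y)$ at each level and then telescopes, whereas you collect the pieces for all $p$-power layers at once; your remark about the $y=0$/Hodge-class term is unnecessary, as the paper does not treat it and it does not enter the corollary.
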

\begin{proof}
    Denote by $\mathbb{V}_p(m)$ the following subset of $H_0(p)$:
    \begin{equation*}
        \mathbb{V}_p(m)=\{x\in\mathbb{V}:\textup{det}(x)\in\mathbb{Q}^{\times}_{>0}, \nu_p(q_{\mathbb{V}}(x))=m\}.
    \end{equation*}
    For symbols $\ast\in\{\textup{I},\textup{II}\}$ and $?\in\{+,-\}$, define $\textup{sgn}:\{\textup{I},\textup{II}\}\rightarrow\{0,1\}$ to be $\textup{sgn}(\textup{I})=1$ and $\textup{sgn}(\textup{II})=0$. Define 
    \begin{equation*}
        \mathbb{V}_p^{\ast?}(m)=\{x\in\mathbb{V}_p(m):x_p\in\mathbb{V}_p^{\ast?}\cap H_0(p)\},\,\,\,\,\mathbb{V}_p^{\ast?}(m)^{\vee}=\{x\in\mathbb{V}_p(m):x_p\in\mathbb{V}_p^{\ast?}\cap H_0(p)^{\vee}\}.
    \end{equation*}
    Therefore we have
    \begin{align*}
        \widehat{\mathsf{T}}(\phi)_ K=\sum\limits_{x\in\widetilde{K}\backslash\mathbb{V}}\phi(x)\cdot\widehat{\mathsf{T}}(x)_K=\sum\limits_{m=-\infty}^{\infty}\sum\limits_{x\in\widetilde{K}\backslash\mathbb{V}(m)}\phi(x)\cdot\widehat{\mathsf{T}}(x)_K
    \end{align*}
    \par
     We first consider the case $\phi_p=1_{H_0(p)}$. By Corollary \ref{uni-hecke}, we have
     \begin{align*}
        \widehat{\mathsf{T}}(\phi)_K^{\textup{ss}}&=\sum\limits_{m=0}^{\infty}\sum\limits_{\ast\in\{\textup{I},\textup{II}\}}\sum\limits_{?\in\{+,-\}}\sum\limits_{x\in\widetilde{K}\backslash\mathbb{V}_p^{\ast?}(m)}\phi^{p}(x^{p})\cdot\widehat{\mathsf{T}}(x)_K^{\textup{ss}}.
    \end{align*}
    For $m=0$, we have $\ast=\textup{I}$ and $?=+$, hence 
    \begin{align*}
        &\sum\limits_{\ast\in\{\textup{I},\textup{II}\}}\sum\limits_{?\in\{+,-\}}\sum\limits_{x\in\widetilde{K}\backslash\mathbb{V}_p^{\ast?}(0)}\phi^{p}(x^{p})\cdot\widehat{\mathsf{T}}(x)_K^{\textup{ss}}=\sum\limits_{x\in\widetilde{K}\backslash\mathbb{V}_p^{\textup{I}+}(0)}\phi^{p}(x^{p})\cdot\widehat{\mathsf{T}}(x)_K^{\textup{ss}}\\
        &=\sum\limits_{x\in\widetilde{K}\backslash\mathbb{V}_p^{\textup{I}+}(0)}\sum\limits_{\substack{y\in H^{\prime}(\mathbb{Q})_0\backslash B\\q_{B}(y)=q_{\mathbb{V}}(x)}}\sum\limits_{\substack{g\in H_{y}^{\prime}(\mathbb{Q})_0\backslash H(\mathbb{A}_f^{p})/K^p\\g^{-1}y=x}}\phi^{p}(g^{-1}y)\cdot\Theta_{\M}^{-1}\left(\exc_{\M}+\N_0^{\textup{I}+}(x_0y),g\right)\\
        &\overset{\textup{Lemma}\,\ref{dec-diff-div}}{=}\sum\limits_{\substack{y\in H^{\prime}(\mathbb{Q})_0\backslash B\\\nu_p(q_{B}(y))=0}}\sum\limits_{g\in H_{y}^{\prime}(\mathbb{Q})_0\backslash H(\mathbb{A}_f^{p})/K^p}\phi^{p}(g^{-1}y)\cdot\Theta_{\M}^{-1}\left(\mathcal{Z}(y),g\right).
    \end{align*}
    Notice that $\mathcal{D}(y)=\mathcal{Z}(y)$ if $\nu_p(q_B(y))=0$ or $1$. Similar arguments apply to $m\geq1$, we get (using Lemma \ref{dec-diff-div} repeatedly)
    \begin{equation*}
        \sum\limits_{\ast\in\{\textup{I},\textup{II}\}}\sum\limits_{?\in\{+,-\}}\sum\limits_{x\in\widetilde{K}\backslash\mathbb{V}_p^{\ast?}(m)}\phi^{p}(x^{p})\cdot\widehat{\mathsf{T}}(x)_K^{\textup{ss}}=\sum\limits_{\substack{y\in H^{\prime}(\mathbb{Q})_0\backslash B\\\nu_p(q_{B}(y))=m}}\sum\limits_{g\in H_{y}^{\prime}(\mathbb{Q})_0\backslash H(\mathbb{A}_f^{p})/K^p}\phi^{p}(g^{-1}y)\cdot\Theta_{\M}^{-1}\left(\mathcal{D}(y),g\right).
    \end{equation*}
    Therefore
    \begin{align*}
        \widehat{\mathsf{T}}(\phi)_K^{\textup{ss}}&=\sum\limits_{m=0}^{\infty}\sum\limits_{\substack{y\in H^{\prime}(\mathbb{Q})_0\backslash B\\\nu_p(q_{B}(y))=m}}\sum\limits_{g\in H_{y}^{\prime}(\mathbb{Q})_0\backslash H(\mathbb{A}_f^{p})/K^p}\phi^{p}(g^{-1}y)\cdot\Theta_{\M}^{-1}\left(\mathcal{D}(y),g\right)\\
        &=\sum\limits_{y\in H^{\prime}(\mathbb{Q})_0\backslash B}\sum\limits_{g\in H_{y}^{\prime}(\mathbb{Q})_0\backslash H(\mathbb{A}_f^{p})/K^p}\phi^{p}(g^{-1}y)\cdot\Theta_{\M}^{-1}\left(\sum\limits_{m=0}^{\infty}\mathcal{D}(p^{-m}y),g\right)\\
        &=\sum\limits_{y\in H^{\prime}(\mathbb{Q})_0\backslash B}\sum\limits_{g\in H_{y}^{\prime}(\mathbb{Q})_0\backslash H(\mathbb{A}_f^{p})/K^p}\phi^{p}(g^{-1}y)\cdot\Theta_{\M}^{-1}\left(\mathcal{Z}(y),g\right).
    \end{align*}
    \par
    Using (\ref{local-id-aut-M-1}), (\ref{local-id-aut-M}) and (\ref{diff-Y}), the proof of the case $\phi_p=1_{H_0(p)^{\vee}}$ is similar. So we omit it.
\end{proof}

\subsection{Whittaker functions and Eisenstein series}
\label{eisen}
Let $\nu:\textup{GSp}_6\rightarrow\mathbb{G}_m$ be the homomorphism such that $\textup{ker}(\nu)=\textup{Sp}_6$. Let $P$ be the following Siegel parabolic subgroup of $\textup{GSp}_6$:
\begin{equation*}
    P=\left\{\begin{pmatrix}
        a & \ast\\
        0 & \nu\cdot{^{t}a^{-1}}
    \end{pmatrix}\in\textup{GSp}_6\,\,\bigg\vert\,\,a\in\textup{GL}_3,\,\,\nu\in\mathbb{G}_m\right\}.
\end{equation*}
Let $M,N\subset P$ be the following groups,
\begin{equation*}
    M=\left\{m(a)=\begin{pmatrix}
        a & 0\\
        0 & \cdot{^{t}a^{-1}}
    \end{pmatrix}\in\textup{GSp}_6\,\,\bigg\vert\,\,a\in\textup{GL}_3\right\},\,\,\,\,N=\left\{n(b)=\begin{pmatrix}
        \mathbf{1}_3 & b\\
        0 & \mathbf{1}_3
    \end{pmatrix}\in\textup{GSp}_6\,\,\bigg\vert\,\,b\in\textup{Sym}_3\right\}.
\end{equation*}
\par
For a complex number $s\in\mathbb{C}$, define the following character $\lambda_s$ of $P(\mathbb{A})$:
\begin{equation*}
    \lambda_s\left(\begin{pmatrix}
        a & \ast\\
        0 & \nu\cdot{^{t}a^{-1}}
    \end{pmatrix}\right)=\vert \nu\vert_{\mathbb{A}}^{-3s}\vert\textup{det}(a)\vert_{\mathbb{A}}^{2s}.
\end{equation*}
Let $I(s)=\textup{Ind}_{P(\mathbb{A})}^{\textup{GSp}_6(\mathbb{A})}\lambda_s$ be the degenerate principal series of $\textup{GSp}_6(\mathbb{A})$. It's easy to see that $I(s)=\otimes_v^{\prime} I_v(s)$ is the restricted tensor product of the local function space $I_v(s)$.
\par
There exists a Weil representation $r$ of the group $\textup{Sp}_6(\mathbb{A})$ on the Schwartz function space $\mathscr{S}(\mathbb{V}^3)$ (cf. \cite[$\S$2.1]{YZZ}, see also \cite[Definition 2.2.1]{Li21}). For an element $a\in\mathbb{A}^{\times}$, let
\begin{equation*}
    d(a)=\begin{pmatrix}
        \mathbf{1}_3 & 0\\
        0 & a\cdot\mathbf{1}_3
    \end{pmatrix}.
\end{equation*}
\par
Let $\Phi\in\mathscr{S}(\mathbb{V}^{3})$ be a Schwartz function. Define 
\begin{equation}
    f_{\Phi}(g,0)=\vert\nu(g)\vert_{\mathbb{A}}^{-3}r\left(d(\nu(g))^{-1}g\right)\Phi(0).
    \label{section-0}
\end{equation}
The function $f_{\Phi}(g,0)$ is an element of $I(s)$. We extend it to a standard section $f_{\Phi}(g,s)$ of $I(s)$. It satisfies
\begin{equation}
    f_{\Phi}(d(\nu)n(b)m(a)g,s)=\vert v\vert_{\mathbb{A}}^{-3s-3}\vert\textup{det}(a)\vert_{\mathbb{A}}^{2s+2}f_{\Phi}(g,s).
    \label{rela-siegel-weil}
\end{equation}
Notice that for a single place $v$ and a Schwartz function $\Phi_v\in\mathscr{S}(\mathbb{V}_v^{3})$, we can define a section $f_{\Phi_v}(g_v,s)\in I_v(s)$ using similar formulas as (\ref{section-0}) and (\ref{rela-siegel-weil}).
\par
Define the Siegel Eisenstein series associated to the function $\Phi\in\mathscr{S}(\mathbb{V}^{3})$ to be
\begin{equation*}
    E(g,s,\Phi)=\sum\limits_{\gamma\in P(\mathbb{Q})\backslash\textup{GSp}_6(\mathbb{Q})}f_{\Phi}(\gamma g,s).
\end{equation*}
This summation is absolutely convergent when $\textup{re}(s)>2$. It extends to a meromorphic function of $s\in\mathbb{C}$ and holomorphic at $s=0$ (cf. \cite[Theorem 2.2]{Kud97Ann}). In the following, we will always assume that $\Phi=\Phi_{\infty}\otimes\Phi_f$ where $\Phi_{\infty}$ is the standard Gaussian function on $\mathbb{V}_{\infty}^{3}$ and $\Phi_{f}\in\mathscr{\mathbb{V}^{3}}$.
\par
Let $\psi=\otimes_v\psi_v:\mathbb{Q}\backslash\mathbb{A}\rightarrow\mathbb{C}^{\times}$ be the standard character. For $T\in\textup{Sym}_3(\mathbb{Q})$, define its $T$-th Fourier coefficients to be
\begin{equation*}
    E_T(g,s,\Phi)=\int\limits_{\textup{Sym}_3(\mathbb{Q})\backslash\textup{Sym}_3(\mathbb{A})}E(n(b)g,s,\Phi)\psi(-Tb)\textup{d}b.
\end{equation*}
When $\Phi=\otimes_{v}\Phi_v$ is decomposable and $T$ is non-singular, we have the following decomposition
\begin{equation*}
    E_T(g,s,\Phi)=\prod\limits_{v}W_{T,v}(g_v,s,\Phi_v),
\end{equation*}
where the local Whittaker function is given by
\begin{equation*}
    W_{T,v}(g_v,s,\Phi_v)=\int\limits_{\textup{Sym}_3(\mathbb{Q}_v)}f_{\Phi_v}\left(w_3^{-1}n(b)g_v,s\right)\psi_v\left(-\textup{tr}\left(\frac{1}{2}Tb\right)\right)\textup{d}b,\,\,w_3=\begin{pmatrix}
        0 & \mathbf{1}_3\\
        -\mathbf{1}_3 & 0
    \end{pmatrix}.
\end{equation*}
Let $\mathbf{1}=(\mathbf{1}_v)\in\textup{GSp}(\mathbb{A})$ be the identity element. Kudla \cite[Proposition A.6]{Kud97Ann} proved the following:
\begin{lemma}
    Let $v$ be a finite prime. Let $\Lambda\subset\mathbb{V}_v$ be a $\mathbb{Z}_v$-lattice of rank $4$. Let $L$ be a non-degenerate quadratic $\mathbb{Z}_v$-lattice of rank $3$ with fundamental matrix $T$. Then for positive integers $k$,
    \begin{equation*}
        W_{T,v}(1_v,k,1_{\Lambda^{3}})=\vert \textup{det}(S)\vert_p^{3/2}\cdot\den(\Lambda\obot H_{2k}^{+},L),
    \end{equation*}
    where $S$ is the a fundamental matrix of the lattice $\Lambda$.
    \label{whit-rep}
\end{lemma}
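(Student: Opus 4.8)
\textbf{Proof plan for Lemma \ref{whit-rep}.}

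The plan is to compute the local Whittaker integral $W_{T,v}(1_v,k,1_{\Lambda^{3}})$ directly and match it against the defining limit for $\den(\Lambda\obot H_{2k}^{+},L)$. First I would unfold the definition: since $\Phi_v=1_{\Lambda^{3}}$, the section $f_{\Phi_v}(g_v,s)$ at the point $s=k$ is (up to the normalizing power of $|\nu(g_v)|_p$) given by the value of the Weil representation acting on the characteristic function of $\Lambda^3$, and the Whittaker integral becomes an integral over $\textup{Sym}_3(\mathbb{Q}_v)$ of $\big(r(w_3^{-1}n(b))1_{\Lambda^3}\big)(0)\,\psi_v(-\tfrac12\mathrm{tr}(Tb))\,db$. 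Using the explicit formulas for the Weil representation of $\textup{Sp}_6(\mathbb{Q}_v)$ acting on $\mathscr S(\mathbb{V}_v^3)\simeq\mathscr S(M_{3\times 4}(\mathbb{Q}_v))$ — the action of $n(b)$ by the quadratic character $\psi_v(b\cdot q)$, the action of $w_3$ by (a normalized) Fourier transform, and the action of $m(a)$ by scaling — one reduces $\big(r(w_3^{-1}n(b))1_{\Lambda^3}\big)(0)$ to a Gauss-type sum/integral that only sees the reduction of $b$ against the quadratic form on $\Lambda$. The standard manipulation here is to truncate: the integrand is locally constant, so the integral over $\textup{Sym}_3(\mathbb{Q}_v)$ can be replaced by a sum over $b\in \varpi^{-d}\textup{Sym}_3(\mathcal{O}_v)/\textup{Sym}_3(\mathcal{O}_v)$ for $d$ large, times the volume $q^{-6d}$ of $\varpi^{d}\textup{Sym}_3(\mathcal O_v)$, with the sum stabilizing for $d\gg 0$.

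The second step is the combinatorial identification. After the truncation, one recognizes the finite exponential sum over $b\in\textup{Sym}_3(\mathcal O_v/\varpi^d)$ of $\psi_v\big(\tfrac{1}{2}\mathrm{tr}(b(G_\Lambda[x]-T))\big)$, summed over $x\in \mathrm{Hom}(L,\Lambda)\otimes \mathcal O_v/\varpi^d$ where $G_\Lambda$ is the Gram matrix of $\Lambda$, as the indicator (with the right normalization) that $x$ represents $L$ into $\Lambda$ modulo $\varpi^d$ — this is the classical ``orthogonality of additive characters detects the quadratic-form condition'' argument, exactly the one that appears in Kudla--Yang and in Li--Zhang. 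Carrying out the constants carefully, the power $q^{-6d}$ from the volume combines with the count $\#\textup{Rep}(\Lambda\obot H_{2k}^{+},L)(\mathcal O_v/\varpi^{d})$... wait — more precisely, the identity $\den(X,\Lambda,L)|_{X=q^{-k}}=\den(\Lambda\obot H_{2k}^{+},L)$ from the start of $\S$\ref{nota-qua-loc-den} lets one replace the $s=k$ specialization by a genuine local density of the rank-$(4+2k)$ lattice $\Lambda\obot H_{2k}^{+}$, whose Gram matrix has the same $v$-adic determinant as $\Lambda$ up to a unit, so the normalizing factor $|\det S|_p^{3/2}$ (with $S$ a Gram matrix of $\Lambda$, hence $|\det S|_p=|\det\Lambda|_p$) is precisely what appears when one passes from the ``volume-normalized'' Whittaker integral to the ``number-of-solutions-normalized'' local density in the sense of the definition of $\den(M,L)$, where the denominator is $q^{d\cdot\dim(\mathrm{Rep}_{M,L})_{F}}$ and $\dim(\mathrm{Rep})_F=\binom{m}{2}-\binom{m-n}{2}$ with $m=4$, $n=3$ giving $6-1=...$; one checks the exponent bookkeeping matches.

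The main obstacle I expect is exactly this normalization bookkeeping: tracking the several powers of $q$ (the $|\nu|$-factors built into $f_\Phi$, the Haar-measure normalization on $\textup{Sym}_3(\mathbb{Q}_v)$, the self-dual measure used in the Fourier transform inside the Weil representation, and the exponent in the denominator of the local density) so that all the stray factors collapse to the single clean constant $|\det S|_p^{3/2}$. The cleanest way to avoid reproving a known statement from scratch is to cite Kudla \cite[Proposition A.6]{Kud97Ann} for the general shape $W_{T,v}(1_v,s_0,1_{\Lambda^3})=\gamma\cdot\den(\ldots)$ and then pin down $\gamma=|\det S|_p^{3/2}$ either by specializing to $\Lambda$ self-dual (where $|\det S|_p=1$ and the formula is the classical one for good reduction, matching \cite{YZZ}) and scaling, or by a direct change-of-variables $b\mapsto {}^t\!a\,b\,a$ argument that isolates the $|\det|$-dependence. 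I would also need the elementary fact that $T$, the fundamental matrix of $L$, is the matrix appearing in the character $\psi_v(-\tfrac12\mathrm{tr}(Tb))$ — i.e.\ that our normalization of ``fundamental matrix'' and the one implicit in the Fourier coefficient $E_T$ agree — which is a convention check rather than a genuine difficulty.
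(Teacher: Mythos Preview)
The paper does not prove this lemma at all: it simply states the result and attributes it to Kudla \cite[Proposition A.6]{Kud97Ann}. Your proposal correctly identifies this reference and sketches the standard argument behind it (unfold the Whittaker integral via the Weil representation, truncate, use orthogonality of additive characters to detect the representation condition, and track the normalization constants), so your approach is entirely in line with the paper's treatment --- indeed more detailed than the paper's own one-line citation.
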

\par
For a nonsingular matrix $T\in\textup{Sym}_3(\mathbb{Q})$, define
\begin{equation}
    \textup{Diff}(T)=\{v:\,\,T\,\,\textup{is not represented by the quadratic space $\mathbb{V}_v$}\}.
    \label{diff-set}
\end{equation}
The set $\textup{Diff}(T)$ has odd cardinality. By \cite[Proposition A.4]{Kud97Ann}, we also have
\begin{equation*}
    W_{T,v}(g_v,0,\Phi_v)=0,\,\,\textup{if $v\in\textup{Diff}(T)$}.
\end{equation*}
For a finite place $p$ and a Schwartz function $\Phi\in\mathscr{S}(\mathbb{V}^3)$, define
\begin{equation*}
    E_p^{\prime}(g,0,\Phi)=\sum\limits_{T:\textup{Diff}(T)=\{p\}}E_{T}^{\prime}(g,0,\Phi).
\end{equation*}

\subsection{Arithmetic intersection of Hecke correspondences}
\label{mimic-gk}
Recall that $N$ is an odd and squarefree positive integer. Take $U=\Gamma_0(N)(\widehat{\mathbb{Z}})$. Then $\mathcal{O}_U$ is $\mathbb{Z}$. Let $\mathcal{X}_0(N)$ be the proper regular integral model of $X_0(N)$ constructed by Katz, Mazur \cite{KM85} and \v{C}esnavi\v{c}ius \cite{Ces17}. Let $\M_0(N)$ be the blow up of the Deligne-Mumford stack $\mathcal{X}_0(N)\times_{\mathbb{Z}}\mathcal{X}_0(N)$ along its supersingular points with residue field characteristic $p\vert N$. 
\par
Let $H_0(N)$ be the following rank $4$ quadratic lattice over $\mathbb{Z}$:
\begin{equation*}
    H_0(N)=\left\{\begin{pmatrix}
        a & b\\
        N c & d
    \end{pmatrix}:a,b,c,d\in\Z\right\}.
\end{equation*}
Then $\mathbf{1}_{H_0(N)\otimes\widehat{\mathbb{Z}}}\coloneqq \otimes_{p<\infty}\mathbf{1}_{H_0(N)\otimes\zp}$ is a function in $\mathscr{S}(\mathbb{V}_f)$.
For an integer $m>0$, define
\begin{equation}\label{def-hecke}
    \widehat{\mathsf{T}}(m)=\sum\limits_{\substack{x\in\widetilde{K}\backslash\mathbb{V}\\ q_{\mathbb{V}}(x)=m}}\mathbf{1}_{H_0(N)\otimes\widehat{\mathbb{Z}}}(x)\cdot\widehat{\mathsf{T}}(x)_K=\sum\limits_{\substack{x\in\widetilde{K}\backslash H_0(N)\otimes\widehat{\mathbb{Z}}\\ q_{\mathbb{V}}(x)=m}}\widehat{\mathsf{T}}(x)_K.
\end{equation}
It is a divisor on $\M_0(N)$. For three positive integers $m_1,m_2,m_3$, define the intersection numbers of three divisors to be
\begin{equation}
    \left(\widehat{\mathsf{T}}(m_1)\cdot\widehat{\mathsf{T}}(m_2)\cdot\widehat{\mathsf{T}}(m_3)\right)=\chi\left(\M_0(N),\mathcal{O}_{\widehat{\mathsf{T}}(m_1)}\otimes^{\mathbb{L}}_{\mathcal{O}_{\M_0(N)}}\mathcal{O}_{\widehat{\mathsf{T}}(m_2)}\otimes^{\mathbb{L}}_{\mathcal{O}_{\M_0(N)}}\mathcal{O}_{\widehat{\mathsf{T}}(m_3)}\right)
    \label{integral-m-hecke}
\end{equation}
\begin{theorem}
    Let $m_1,m_2,m_3$ be three positive integers such that there is no positive definite binary quadratic form over $\mathbb{Z}$ which represents the three integers $m_1,m_2,m_3$. Then
    \begin{equation*}
        \left(\widehat{\mathsf{T}}(m_1)\cdot\widehat{\mathsf{T}}(m_2)\cdot\widehat{\mathsf{T}}(m_3)\right)=-2\cdot\sum\limits_{T}E_T^{\prime}(\mathbf{1},0,\Phi_{\infty}\otimes\mathbf{1}_{\left(H_0(N)\otimes\widehat{\mathbb{Z}}\right)^{3}}),
    \end{equation*}
    where the summation ranges over all the half-integral symmetric positive definite $3\times3$ matrices $T$ with diagonal elements $m_1,m_2,m_3$.
    \label{a-i-h-1}
\end{theorem}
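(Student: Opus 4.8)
The plan is to imitate the strategy of Gross--Keating \cite{GK93} for $N=1$: reduce the global arithmetic intersection to a finite sum of local intersection numbers supported at supersingular points, uniformize each contribution $p$-adically, invoke the local identities already proved, and match the outcome with the Fourier coefficients of the Eisenstein series through a comparison of orbital integrals and Whittaker functions. First I would show that the hypothesis on $m_1,m_2,m_3$ forces the triple intersection into the supersingular loci of the special fibres. By Lemma \ref{C-points} the generic fibre of $\widehat{\mathsf T}(m_i)$ is the classical Hecke correspondence on $X_0(N)^2$, and a point of $\mathsf T(m_1)\cap\mathsf T(m_2)\cap\mathsf T(m_3)$ in characteristic $0$, or in the ordinary locus of a special fibre, would give a pair $(E,E')$ of elliptic curves with three isogenies $\phi_i\colon E\to E'$ of degrees $m_i$; since $\operatorname{Hom}(E,E')$ is then a positive definite lattice of rank at most $2$, the $\phi_i$ would span a positive definite binary (possibly degenerate) quadratic form representing $m_1,m_2,m_3$, contrary to hypothesis. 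Hence $\mathcal O_{\widehat{\mathsf T}(m_1)}\otimes^{\mathbb L}\mathcal O_{\widehat{\mathsf T}(m_2)}\otimes^{\mathbb L}\mathcal O_{\widehat{\mathsf T}(m_3)}$ is supported on the (blown up) supersingular loci, the intersection is $0$-dimensional, and
\[
\bigl(\widehat{\mathsf T}(m_1)\cdot\widehat{\mathsf T}(m_2)\cdot\widehat{\mathsf T}(m_3)\bigr)=\sum_{p}\ \mathrm{Int}_p(m_1,m_2,m_3),
\]
where $\mathrm{Int}_p$ is the contribution of the supersingular points in characteristic $p$; this localization I would make rigorous using the regularity of $\M_0(N)$ and the analogue of \cite[Lemma 5.11]{GK93}.

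For $p\mid N$ I would pass to the completion of $\M_0(N)$ along its supersingular locus and apply the supersingular uniformization of Hecke divisors (Corollary \ref{uni-hecke}) with the Schwartz function $\Phi=\Phi_\infty\otimes\mathbf 1_{H_0(N)\otimes\widehat{\mathbb Z}}^{3}$, whose $p$-component is $\mathbf 1_{H_0(p)^3}$; then, locally, each $\widehat{\mathsf T}(m_i)$ becomes a $\mathcal Z$-divisor $\mathcal Z(y_i)$ on $\M$ weighted by $\Phi^p$, and combining with the uniformization map $\Theta_{\M}$ and collecting terms according to the definite quaternion algebra $B=B_{p,\infty}$ one obtains
\[
\mathrm{Int}_p(m_1,m_2,m_3)=\sum_{T}\ \sum_{L}\ \mathrm{Orb}^{p}(L;\Phi^p)\cdot\Int^{\mathcal Z}(L_p),
\]
the sum over positive definite half-integral symmetric $T$ with diagonal $(m_1,m_2,m_3)$ and over $H^{\prime}(\mathbb Q)_0$-equivalence classes of ternary lattices $L\subset B$ with Gram matrix $T$; here $\mathrm{Orb}^p(L;\Phi^p)$ is a prime-to-$p$ orbital integral of $\Phi^p$ and $L_p\subset\B$ is anisotropic because $B$ is definite. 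Theorem \ref{main-theorem} then gives $\Int^{\mathcal Z}(L_p)=\partial\den(H_0(p),L_p)$. For $p\nmid N$ the product $\mathcal X_0(N)\times_{\mathbb Z}\mathcal X_0(N)$ is already regular, the Hecke divisors are ordinary $\mathcal Z_{\N}$-divisors near the supersingular points, and Theorem \ref{GK-main} yields the same shape of identity with $H_0(N)\otimes\zp=\mathrm M_2(\zp)$ replacing $H_0(p)$; in both cases $L_p$ is anisotropic, so $\den(H_0(N)\otimes\zp,L_p)=0$ and it is its derivative that appears.

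On the analytic side, I would use $E_T(\mathbf 1,0,\Phi)=\prod_v W_{T,v}(\mathbf 1,0)$. For a positive definite $T$ occurring above, $L_\ell$ embeds into the split quaternion algebra $B\otimes\mathbb Q_\ell$ for all finite $\ell\ne p$, so $W_{T,\ell}(\mathbf 1,0)=\den(H_0(N)\otimes\mathbb Z_\ell,L_\ell)\neq0$ by Lemma \ref{whit-rep}, the archimedean factor $W_{T,\infty}(\mathbf 1,0,\Phi_\infty)$ is a nonzero constant depending only on the rank, and since $L_p$ is anisotropic $W_{T,p}(\mathbf 1,0,\mathbf 1_{H_0(p)^3})=\den(H_0(p),L_p)=0$ (cf.\ the discussion after Theorem \ref{main-intro}). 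Thus $E_T(\mathbf 1,0,\Phi)=0$, the only $T$ contributing to $\sum_T E_T'(\mathbf 1,0,\Phi)$ are those with $\operatorname{Diff}(T)=\{p\}$ for a single prime $p$, and
\[
E_T'(\mathbf 1,0,\Phi)=W_{T,p}'(\mathbf 1,0)\cdot W_{T,\infty}(\mathbf 1,0,\Phi_\infty)\cdot\prod_{\substack{\ell\ne p\\ \ell<\infty}}\den(H_0(N)\otimes\mathbb Z_\ell,L_\ell).
\]
Differentiating the local density polynomial identity of Lemma \ref{whit-rep} and unwinding the normalization of $\partial\den$, one rewrites $-2\,W_{T,p}'(\mathbf 1,0)$ as a universal constant times $\partial\den(H_0(p),L_p)$ (the factor $2$ being the one noted in the remark after the definition of $\partial\den$, reflecting that $\N(x_0)$ doubly covers the Rapoport--Zink space of $H_0(p)$), and similarly for $p\nmid N$.

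It remains to match the two expressions term by term. For each prime $p$, each $T$ and each $L$, the product $W_{T,\infty}(\mathbf 1,0,\Phi_\infty)\prod_{\ell\ne p}\den(H_0(N)\otimes\mathbb Z_\ell,L_\ell)$ equals, up to the same universal constant, the geometric weight $\mathrm{Orb}^p(L;\Phi^p)$; this is the Siegel mass formula, equivalently the comparison of prime-to-$p$ orbital integrals and archimedean volumes with products of local representation densities, carried out exactly as in the case $N=1$. Summing first over $L$, then over $T$, then over all primes $p$ (at $p\mid N$ through the blow-up $\M_0(N)$ and Theorem \ref{main-theorem}, at $p\nmid N$ through the regular model $\mathcal X_0(N)^2$ and Theorem \ref{GK-main}), one obtains the stated identity together with the precise constant $-2$. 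I expect the main obstacle to be exactly this last piece of bookkeeping — tracking the archimedean Gamma factor, the good-place densities, and the double-cover factor so that the constant $-2$ emerges — with a secondary, more routine, point being the rigorous localization of the derived intersection product to the supersingular locus.
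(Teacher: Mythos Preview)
Your proposal is correct and follows essentially the same route as the paper: localize the triple product to the supersingular loci via the hypothesis on $m_1,m_2,m_3$, apply the supersingular uniformization of Hecke divisors (Corollary \ref{uni-hecke}), invoke Theorem \ref{main-theorem} at $p\mid N$ and the Gross--Keating/hyperspecial result at $p\nmid N$, and match with $E_T'$ through Lemma \ref{whit-rep} and the Siegel--Weil volume identity. The paper packages the $p\nmid N$ case and the constant bookkeeping by citing \cite[Theorem 5.4.3]{YZZ} and the proof of Theorem \ref{int-yzz} (see (\ref{measure-SW})), rather than redoing the Gross--Keating computation, but the substance is identical to what you outline.
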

\begin{proof}
    The condition that there is no positive definite binary quadratic form over $\mathbb{Z}$ which represents the three integers $m_1,m_2,m_3$ implies that the three divisors $\{\widehat{\mathsf{T}}(m_i)\}_{i=1}^{3}$ have no self-intersections on the generic fiber of $\M_0(N)$. For a prime number $p$ (not necessarily odd), let $B(p)$ be the unique quaternion algebra over $\mathbb{Q}$ which ramifies at $p$ and $\infty$. Let $\M_{(p)}$ be following formal scheme:
\begin{equation*}
    \M_{(p)}=\begin{cases}
        \N\simeq\textup{Spf}\,W[[t_1,t_2]], &\textup{if $p\nmid N$;}\\
        \M, &\textup{if $p\,\vert\, N$.}
    \end{cases}
\end{equation*}
Take $\Phi=\Phi_{\infty}\otimes\mathbf{1}_{\left(H_0(N)\otimes\widehat{\mathbb{Z}}\right)^{3}}\in\mathscr{S}(\mathbb{V}^{3})$, we have the following by Corollary \ref{uni-hecke} and (\ref{def-hecke})
\begin{align}
    &\left(\widehat{\mathsf{T}}(m_1)\cdot\widehat{\mathsf{T}}(m_2)\cdot\widehat{\mathsf{T}}(m_3)\right)=\sum\limits_{p<\infty}\chi\left(\M_0(N)_{(p)},\mathcal{O}_{\widehat{\mathsf{T}}(m_1)}\otimes^{\mathbb{L}}_{\mathcal{O}_{\M_0(N)_{(p)}}}\mathcal{O}_{\widehat{\mathsf{T}}(m_2)}\otimes^{\mathbb{L}}_{\mathcal{O}_{\M_0(N)_{(p)}}}\mathcal{O}_{\widehat{\mathsf{T}}(m_3)}\right)\label{lasstt}\\
    &=\sum\limits_{p<\infty}\sum\limits_{\substack{\boldsymbol{y}\in H^{\prime}(\mathbb{Q})_0\backslash B(p)^{3}\\q_{B(p)}(y_i)=m_i}}\sum\limits_{g\in H_{\boldsymbol{y}}^{\prime}(\mathbb{Q})_0\backslash H(\mathbb{A}_f^p)/K^p}\Phi^{p}(g^{-1}\boldsymbol{y})\cdot\chi\left(\M_{(p)},[{^{\mathbb{L}}\otimes_{i=1}^{3}}\rO_{\mathcal{Z}(y_i)}]\right)\cdot\log(p)\notag\\
    &=\sum\limits_{p<\infty}\sum\limits_{T}\sum\limits_{\substack{\boldsymbol{y}\in H^{\prime}(\mathbb{Q})_0\backslash B(p)^{3}\\T(\boldsymbol{y})=T}}\sum\limits_{g\in H_{\boldsymbol{y}}^{\prime}(\mathbb{Q})_0\backslash H(\mathbb{A}_f^p)/K^p}\Phi^{p}(g^{-1}\boldsymbol{y})\cdot\chi\left(\M_{(p)},[{^{\mathbb{L}}\otimes_{i=1}^{3}}\rO_{\mathcal{Z}(y_i)}]\right)\cdot\log(p).\notag
\end{align}
where the summation symbol for $T$ in the last line ranges over all the half-integral symmetric positive definite $3\times3$ matrices $T$ with diagonal elements $m_1,m_2,m_3$. 
\par
For $p\nmid N$, \cite[Theorem 5.4.3]{YZZ} shows that
\begin{align}
   \sum\limits_{\substack{\boldsymbol{y}\in H^{\prime}(\mathbb{Q})_0\backslash B(p)^{3}\\T(\boldsymbol{y})=T}}\sum\limits_{g\in H_{\boldsymbol{y}}^{\prime}(\mathbb{Q})_0\backslash H(\mathbb{A}_f^p)/K^p}&\Phi^{p}(g^{-1}\boldsymbol{y})\cdot\chi\left(\M_{(p)},[{^{\mathbb{L}}\otimes_{i=1}^{3}}\rO_{\mathcal{Z}(y_i)}]\right)\cdot\log(p)\label{lastone}\\&=-2\cdot E_T^{\prime}(\mathbf{1},0,\Phi_{\infty}\otimes\mathbf{1}_{\left(H_0(N)\otimes\widehat{\mathbb{Z}}\right)^{3}}).\notag
\end{align}
For $p\,\vert\,N$, we obtain the same formula as (\ref{lastone}) by combining the volume calculation (\ref{measure-SW}) and the intersection of $\mathcal{Z}$-cycles in Theorem \ref{main-theorem} (we refer to the proof of Theorem \ref{int-yzz} in the next section for more details). Then the theorem follows by combining (\ref{lasstt}) and (\ref{lastone}).
\end{proof}
\begin{remark}
    For $N=1$, this theorem is proved by Gross and Keating \cite[(1.19)]{GK93}.
\end{remark}

\subsection{Triple product formula: the minimal ramification case}
\label{yzz-work}
Let $p$ be an odd prime number. Let $U=\Gamma_0(p)\cdot U^{p}$, where $U^{p}\subset\textup{GL}_2(\mathbb{A}_f^{p})$ is a sufficiently small compact open subgroup. Let $K=U\times_{\mathbb{G}_m}U=K_p\cdot K^{p}$. For $i=1,2,3$, let $\phi_i=\phi_{i,p}\otimes\phi_i^{p}\in\mathscr{S}(\mathbb{V})$ be three Schwartz functions such that 
\begin{itemize}
    \item [(1)] $\phi_{i,\infty}$ is the standard Gaussian function on $\mathbb{V}_{\infty}$.
    \item[(2)] $\phi_{i,p}=1_{H_0(p)}$ or $1_{H_0(p)^{\vee}}$ and $\phi_i^{p}$ is invariant under the group $K^p$.
    \item[(3)] There exists a finite place $v$ prime to $p$ such that the Schwartz function $\phi_v=\phi_{1,v}\otimes\phi_{2,v}\otimes\phi_{3,v}\in\mathscr{S}(\mathbb{V}_v^{3})$ is regularly supported in the sense of \cite[Definition 4.4.1]{YZZ}.
\end{itemize}
\par
Notice that by condition (3), the three cycles $\widehat{\mathsf{Z}}(g_i,\phi_i)$ have no self-intersections on the generic fiber of $\M_{K,(p)}$. The existence of these functions with some additional non-vanishing properties were proved by Liu \cite{Liu}. Let $\mathbb{G}=\textup{GL}_2\times_{\mathbb{G}_m}\textup{GL}_2\times_{\mathbb{G}_m}\textup{GL}_2$. 
\begin{lemma}
    For an element $g=(g_1,g_2,g_3)\in\mathbb{G}(\mathbb{A})$ such that $g_p=\mathbf{1}_p$ is the identity element, let $z\in\widehat{\mathsf{T}}(g_1,\phi_1)\cap\widehat{\mathsf{T}}(g_2,\phi_2)\cap\widehat{\mathsf{T}}(g_3,\phi_3)(\mathbb{F})$. Then for $i=1,2,3$, we have
    \begin{equation*}
        z\in\widehat{\mathsf{T}}(g_i,\phi_i)^{\textup{ss}}(\mathbb{F}).
    \end{equation*}
    \label{int-ss}
\end{lemma}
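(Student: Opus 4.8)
The statement asserts that any $\mathbb{F}$-point in the triple intersection $\widehat{\mathsf{T}}(g_1,\phi_1)\cap\widehat{\mathsf{T}}(g_2,\phi_2)\cap\widehat{\mathsf{T}}(g_3,\phi_3)$ already lies in the supersingular locus of each $\widehat{\mathsf{T}}(g_i,\phi_i)$. The plan is to exploit the fact that, away from the supersingular locus, the integral model $\M_{K,(p)}$ coincides with $\mathcal{H}_{K,(p)}$, which is (generically along the ordinary locus) the product $\mathcal{X}_{U}\times\mathcal{X}_{U}$; and that an $\mathbb{F}$-point not lying over a supersingular point corresponds to a pair of \emph{ordinary} (or at least not both supersingular) elliptic curves. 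First I would recall that the divisors $\widehat{\mathsf{T}}(g_i,\phi_i)$ are, outside the exceptional divisor $\exc_K$, the (strict transforms of the) Hecke correspondences coming from the moduli interpretation in Lemma~\ref{moduli-hecke}: over the non-supersingular part they parameterize a pair of cyclic isogenies of elliptic curves $(E\xrightarrow{\pi}E',\overline{\eta^p})$ together with a further cyclic isogeny relating the two pairs, and crucially such an isogeny forces the pair of elliptic curves to have the same isogeny class (ordinary vs. supersingular) at $p$. Thus the support of $\widehat{\mathsf{T}}(g_i,\phi_i)$ in the special fiber of $\M_{K,(p)}$ consists of points where the two underlying elliptic curves are both ordinary or both supersingular.

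The key point is then a quaternionic/positive-definiteness argument. Suppose $z$ is an $\mathbb{F}$-point in the triple intersection. If $z$ is ordinary, then $z$ gives a pair of ordinary elliptic curves $(E_1,E_2)$ over $\mathbb{F}$ with $E_1,E_2$ isogenous, and for each $i$ the condition $z\in\widehat{\mathsf{T}}(g_i,\phi_i)$ produces an element $y_i\in\operatorname{Hom}(E_1,E_2)\otimes\mathbb{Q}$ with $q(y_i)=q_{\mathbb{V}}(x_i)$ for some $x_i$ in the support of $\phi_i$, i.e. $q(y_i)=m_i>0$ (these are totally positive since $\phi_{i,\infty}$ is Gaussian and forces $q_{\mathbb{V}}(x_i)$ positive). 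But for two ordinary elliptic curves over $\mathbb{F}$ which are isogenous, $\operatorname{Hom}(E_1,E_2)\otimes\mathbb{Q}$ is an imaginary quadratic field $K$, a \emph{two-dimensional} $\mathbb{Q}$-vector space with a positive-definite norm form. The three elements $y_1,y_2,y_3$ therefore span at most a rank-$2$ lattice, and the Gram matrix $T=(\tfrac12(y_i,y_j))$ is a positive semi-definite binary form (or degenerate) representing $m_1,m_2,m_3$ on its diagonal. In the setting of Theorem~\ref{int-hecke-main-intro} the regular-support hypothesis (3) on $\phi_v$ at the auxiliary place $v$, together with the positivity, exactly rules out $T$ being singular — but more to the point, we need to rule out the ordinary (and mixed) cases entirely, which is where the regular-support condition plays its role: it guarantees the three vectors $x_i$ at $v$ have linearly independent reductions, forcing $T$ nonsingular, hence rank $3$, which is impossible inside the $2$-dimensional space $\operatorname{Hom}(E_1,E_2)\otimes\mathbb{Q}$.

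Concretely, the steps I would carry out are: (i) observe from Lemma~\ref{moduli-hecke} and the construction of $\widehat{\mathsf{T}}(x)_K$ that away from $\exc_K$ the divisor $\widehat{\mathsf{T}}(x)_K$ is supported on the locus where the two elliptic curves lie in a common isogeny class at $p$; (ii) if $z$ is not in $\widehat{\mathsf{T}}(g_i,\phi_i)^{\textup{ss}}$ for some $i$, then $z$ is a non-supersingular point of $\widehat{\mathsf{T}}(g_i,\phi_i)$, so at least one of the two elliptic curves at $z$ is ordinary, and by (i) both are ordinary and isogenous; (iii) then $\operatorname{Hom}(E_1,E_2)_{\mathbb{Q}}$ is an imaginary quadratic field, a positive-definite quadratic space of dimension $2$; (iv) the three cycle conditions at $z$ (one from each $\widehat{\mathsf{T}}(g_j,\phi_j)$, $j=1,2,3$) give three vectors $y_1,y_2,y_3$ in this space whose Gram matrix $T$ has positive diagonal and is (by the regular-support hypothesis at $v$, transported through the level structure $\overline{\eta^p}$) nondegenerate of rank $3$; (v) rank $3$ inside a $2$-dimensional space is a contradiction; hence $z$ must lie in $\widehat{\mathsf{T}}(g_i,\phi_i)^{\textup{ss}}(\mathbb{F})$ for every $i$. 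The main obstacle is step~(i)–(iv): making precise how the regular-support condition at $v$ forces nondegeneracy of the Gram matrix of the $y_i$ (this requires tracking the level structures $\overline{\eta_i^p}$ and the support condition on $\phi_v$ through the moduli interpretation, essentially the argument of \cite[Definition~4.4.1 and Lemma~5.?]{YZZ}), and ensuring that the ``both curves in the same isogeny class'' assertion is genuinely forced by the existence of the cyclic isogeny relating the two deformations — both of which are geometric inputs rather than formal manipulations, but are standard once the moduli picture is set up.
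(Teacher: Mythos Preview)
Your proposal is correct and follows essentially the same approach as the paper's proof. The paper's argument is just a terse version of yours: it notes that condition~(3) (regular support at $v$) forces $\dim_{\mathbb{Q}}\operatorname{Hom}(E_1,E_2)\geq 3$, and since this dimension is at most $2$ when the curves are ordinary (and $0$ in the mixed case), both $E_1,E_2$ must be supersingular. Your steps (i)--(v) unpack this same contradiction in more detail; the worries you flag about tracking the level structures through the moduli interpretation are real but standard, and the paper simply takes them for granted.
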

\begin{proof}
    Let $\left(\left(E_1\xrightarrow{\pi_1} E_1^{\prime},\overline{(\eta_1^p,\eta_1^{\prime p})}\right), \left(E_2\xrightarrow{\pi_2} E_2^{\prime},\overline{(\eta_2^p,\eta_2^{\prime p})}\right)\right)$ be the pair of degree $p$ isogenies corresponding to the point $z$. Condition (3) implies that $\textup{dim}_{\mathbb{Q}}\,\textup{Hom}(E_1,E_2)\geq3$. Therefore $E_1$ and $E_2$ must be supersingular elliptic curves. Hence $z\in\widehat{\mathsf{T}}(g_i,\phi_i)^{\textup{ss}}(\mathbb{F})$ for $i=1,2,3$.
\end{proof}
Denote by $\Phi\in\mathscr{S}(\mathbb{V}^{3})$ by $\Phi=\phi_1\otimes\phi_2\otimes\phi_3$. For an element $g=(g_1,g_2,g_3)\in\mathbb{G}(\mathbb{A})$, define $[\mathcal{O}_{\widehat{\mathsf{T}}^{\textup{ss}}(g,\Phi)}]$ to be the image of ${^{\mathbb{L}}\otimes}_{i=1}^{3}\mathcal{O}_{\widehat{\mathsf{T}}^{\textup{ss}}(g_i,\phi_i)}$ in $\textup{Gr}^3\textup{K}_0^{\prime}(\widehat{\M}_{K,(p)}^{\textup{ss}})$. Define
\begin{equation*}
    \left(\widehat{\mathsf{T}}(g_1,\phi_1)\cdot\widehat{\mathsf{T}}(g_2,\phi_2)\cdot\widehat{\mathsf{T}}(g_3,\phi_3)\right)_p=\chi\left(\M_{K,(p)},[\mathcal{O}_{\widehat{\mathsf{T}}^{\textup{ss}}(g,\Phi)}]\right)\cdot\log(p).
\end{equation*}
\begin{theorem}
    Let $\phi_i\in\mathscr{S}(\mathbb{V})$ be three Schwartz functions satisfying (1), (2) and (3). Then for all elements $g=(g_1,g_2,g_3)\in\mathbb{G}(\mathbb{A})$ such that $g_{v}=\mathbf{1}_v$, we have
    \begin{equation}
        \left(\widehat{\mathsf{T}}(g_1,\phi_1)\cdot\widehat{\mathsf{T}}(g_2,\phi_2)\cdot\widehat{\mathsf{T}}(g_3,\phi_3)\right)_p=-2E_p^{\prime}(g,0,\Phi),\,\,\,\,\textup{if $g_p=\mathbf{1}_p$ and $\Phi_p=1_{H_0(p)^{3}}$ or $1_{H_0(p)^{\vee3}}$}.
        \label{int-hecke-main}
    \end{equation}
    \label{int-yzz}
\end{theorem}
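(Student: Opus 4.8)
The plan is to deduce \eqref{int-hecke-main} from the local identity of Theorem~\ref{main-theorem} by $p$-adic uniformization, running the argument of \cite[\S5.4]{YZZ} (itself modeled on Gross--Keating) with the good-reduction local computation at $p$ replaced by Theorem~\ref{main-theorem}; hypothesis~(3) plays the role of the ``regular support'' condition and guarantees that only nonsingular positive definite coefficients $T$ occur. First I would reduce to the supersingular locus: by hypothesis~(3) and Lemma~\ref{int-ss} the support of $\widehat{\mathsf{T}}(g_1,\phi_1)\cap\widehat{\mathsf{T}}(g_2,\phi_2)\cap\widehat{\mathsf{T}}(g_3,\phi_3)$ in characteristic $p$ lies in $\widehat{\M}_{K,(p)}^{\textup{ss}}$, so the $p$-part of the intersection is $\chi\big(\M_{K,(p)},[\mathcal{O}_{\widehat{\mathsf{T}}^{\textup{ss}}(g,\Phi)}]\big)\log p$. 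Applying the Weil-representation action to Corollary~\ref{uni-hecke}, when $\Phi_p=1_{H_0(p)^{3}}$ (resp.\ $1_{H_0(p)^{\vee3}}$) the supersingular part $\widehat{\mathsf{T}}(g_i,\phi_i)^{\textup{ss}}$ is, via $\Theta_{\M}^{-1}$, a sum over $y\in H^{\prime}(\mathbb{Q})_0\backslash B$ and $g\in H_{y}^{\prime}(\mathbb{Q})_0\backslash H(\mathbb{A}_f^{p})/K^p$ of $\big(r(g_i)\phi_i\big)^p(g^{-1}y)$ times $(\mathcal{Z}(y),g)$ (resp.\ $(\mathcal{Y}(y)-\exc_{\M},g)$). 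Taking the triple derived tensor product and grouping the quaternionic data $\boldsymbol{y}=(y_1,y_2,y_3)$ by the matrix $T=T(\boldsymbol{y})$ --- which hypothesis~(3) forces to be positive definite nonsingular with $\textup{Diff}(T)=\{p\}$ --- the geometric side becomes
\begin{equation*}
    \sum_{T}\ \sum_{\substack{\boldsymbol{y}\in H^{\prime}(\mathbb{Q})_0\backslash B^{3}\\ T(\boldsymbol{y})=T}}\ \sum_{g}\ \big(r(g^p)\Phi^p\big)(g^{-1}\boldsymbol{y})\ \cdot\ \chi\big(\M,[\mathcal{O}_{\mathcal{Z}(y_1)}\otimes^{\mathbb{L}}_{\rO_{\M}}\mathcal{O}_{\mathcal{Z}(y_2)}\otimes^{\mathbb{L}}_{\rO_{\M}}\mathcal{O}_{\mathcal{Z}(y_3)}]\big)\ \cdot\ \log p ,
\end{equation*}
with $\mathcal{Z}(y_i)$ replaced by $\mathcal{Y}(y_i)-\exc_{\M}$ in the $\mathcal{Y}$-case, where $L_{\boldsymbol{y}}=\zp y_1+\zp y_2+\zp y_3\subset\B$.

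Next I would feed in Theorem~\ref{main-theorem}. Using the linear invariance of Corollary~\ref{linear-invariance-2} to pass to a basis, the $\mathcal{Z}$-local number is $\Int^{\mathcal{Z}}(L_{\boldsymbol{y}})=\partial\den(H_0(p),L_{\boldsymbol{y}})$. In the $\mathcal{Y}$-case I would expand $\prod_{i}\big([\mathcal{O}_{\mathcal{Y}(y_i)}]-[\mathcal{O}_{\exc_{\M}}]\big)$ in $K_0(\M)$: the terms with exactly one factor $\exc_{\M}$ vanish since by \eqref{spe-exc} and its $\iota^{\M}$-twist the restriction of $\mathcal{Y}(y_i)$ to $\exc_{\M}\simeq\bP_{\F}^{1}\times\bP_{\F}^{1}$ is a line bundle of bidegree $(0,-1)$ or $(-1,0)$, which squares to $0$ by Lemma~\ref{int-on-exc}; the three terms with two factors $\exc_{\M}$ each contribute $\mathcal{O}(-1,-1)\cdot\mathcal{O}(0,-1)=1$ by Lemma~\ref{self-int-exc} and Lemma~\ref{int-on-exc}; and $[\mathcal{O}_{\exc_{\M}}]^{3}$ contributes $\mathcal{O}(-1,-1)\cdot\mathcal{O}(-1,-1)=2$. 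Hence the $\mathcal{Y}$-local number equals $\Int^{\mathcal{Y}}(L_{\boldsymbol{y}})+3-2=\Int^{\mathcal{Y}}(L_{\boldsymbol{y}})+1=\partial\den(H_0(p)^{\vee},L_{\boldsymbol{y}})$: the correction $-\exc_{\M}$ in the uniformization of Corollary~\ref{uni-hecke} precisely absorbs the constant $-1$ in Theorem~\ref{main-theorem}. So in both cases the geometric side is $\sum_{T}\sum_{\boldsymbol{y},g}\big(r(g^p)\Phi^p\big)(g^{-1}\boldsymbol{y})\cdot\partial\den(H_0(p)^{(\vee)},L_{\boldsymbol{y}})\cdot\log p$.

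Finally I would match this with the analytic side. For $T$ with $\textup{Diff}(T)=\{p\}$ one has $E_T(g,s,\Phi)=W_{T,p}(g_p,s,\Phi_p)\prod_{v\neq p}W_{T,v}(g_v,s,\Phi_v)$ with $W_{T,p}(\mathbf{1}_p,0,\Phi_p)=0$, so $E_T^{\prime}(g,0,\Phi)=W_{T,p}^{\prime}(\mathbf{1}_p,0,\Phi_p)\prod_{v\neq p}W_{T,v}(g_v,0,\Phi_v)$. By Lemma~\ref{whit-rep} and the interpolation defining the normalized derived densities, $W_{T,p}^{\prime}(\mathbf{1}_p,0,\Phi_p)$ equals an explicit constant --- carrying $\log p$ and, after the factor $-2$ built into the normalization of $\partial\den$ (the scalar attributed to the double cover $\N(x_0)\to$ RZ space of $H_0(p)$), the correct sign --- times $\partial\den(H_0(p)^{(\vee)},L)$ for the relevant lattice. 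The remaining product $\prod_{v\neq p}W_{T,v}(g_v,0,\Phi_v)$, including the archimedean Gaussian factor, is identified with $\sum_{\boldsymbol{y},g}\big(r(g^p)\Phi^p\big)(g^{-1}\boldsymbol{y})$ by the orbital-integral computation and the Haar-measure/volume normalization \eqref{measure-SW}, exactly as in \cite[\S5.4]{YZZ}. Comparing the two sides coefficient-by-coefficient over positive definite $T$ with diagonal $(m_1,m_2,m_3)$ and $\textup{Diff}(T)=\{p\}$ then yields \eqref{int-hecke-main}.

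The main obstacle will be the last step rather than the geometry: organizing the prime-to-$p$ and archimedean Whittaker/orbital-integral bookkeeping --- in particular the precise normalizations of the Haar measures on $H(\mathbb{A}_f^{p})$ and of the local densities --- so that the universal constant is exactly $-2$ and not some other rational multiple, and checking that this bookkeeping of \cite[\S5.4]{YZZ} survives intact when the place $p$ carries $\Gamma_0(p)$-level (so that $B$ is division at $p$ and the local model is $\M$ rather than $\N$). The new local input Theorem~\ref{main-theorem}, together with the $\mathcal{Y}$-cancellation computed above, is precisely what makes the constant match; everything else is a transcription of the good-reduction argument.
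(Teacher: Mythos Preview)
Your proposal is correct and follows essentially the same route as the paper: reduce to the supersingular locus via Lemma~\ref{int-ss}, apply the uniformization of Corollary~\ref{uni-hecke}, group by $T$ with $\textup{Diff}(T)=\{p\}$, insert the local identity of Theorem~\ref{main-theorem}, and match with $E_T^{\prime}$ using the volume computation \eqref{measure-SW} from \cite[\S5.4]{YZZ}. In fact your treatment of the $\mathcal{Y}$-case is more explicit than the paper's: the paper simply asserts $\chi\big(\M,[{^{\mathbb{L}}\otimes_{i=1}^{3}}\rO_{\mathcal{Y}(y_i)-\exc_{\M}}]\big)=\Int^{\mathcal{Y}}(L)+1$ without justification, whereas your expansion of $\prod_i\big([\mathcal{O}_{\mathcal{Y}(y_i)}]-[\mathcal{O}_{\exc_{\M}}]\big)$ via Lemmas~\ref{self-int-exc} and~\ref{int-on-exc} and Corollary~\ref{spe-decom} supplies the missing $0+3-2=+1$ computation.
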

\begin{proof}
    We first consider the case $\Phi_p=1_{H_0(p)^{3}}$. Let $z\in\widehat{\mathsf{T}}(g_i,\phi_i)^{\textup{ss}}(\mathbb{F})$, there exists three linearly independent vectors $y_1,y_2,y_3\in B$ such that $z\in\bigcap\limits_{i=1}^{3}\mathcal{Z}(y_i)$. Let $T$ be the fundamental matrix of the quadratic lattice $L$ spanned by $y_1,y_2$ and $y_3$. Then
        \begin{equation*}
            \textup{Diff}(T)=\{p\}.
        \end{equation*}
        Moreover, we have
        \begin{align*}
            \chi\left(\M,[{^{\mathbb{L}}\otimes_{i=1}^{3}}\rO_{\mathcal{Z}(y_i)}]\right)&=\Int^{\mathcal{Z}}(L)=\partial\den(H_0(p),L)\\
            &=p^{4}(p-1)^{-2}\cdot\log(p)^{-1}\cdot W_{T,p}^{\prime}(1,0,1_{H_0(p)^3}).
        \end{align*}
        By Lemma \ref{uni-hecke}, we have 
        \begin{align}
            [\mathcal{O}_{\widehat{\mathsf{T}}^{\textup{ss}}(g,\Phi)}]=\sum\limits_{\boldsymbol{y}\in H^{\prime}(\mathbb{Q})_0\backslash B^{3}}&\sum\limits_{g\in H_{\boldsymbol{y}}^{\prime}(\mathbb{Q})_0\backslash H(\mathbb{A}_f^p)/K^p}\Phi^{p}(g^{-1}\boldsymbol{y})\cdot\Theta_{\M}^{-1}\left([{^{\mathbb{L}}\otimes_{i=1}^{3}}\rO_{\mathcal{Z}(y_i)}]\right)
            \label{uniffffff}
        \end{align}
        Therefore
        \begin{align*}
            &\left(\widehat{\mathsf{T}}(g_1,\Phi_1)\cdot\widehat{\mathsf{T}}(g_2,\Phi_2)\cdot\widehat{\mathsf{T}}(g_3,\Phi_3)\right)_p\\&=\sum\limits_{\boldsymbol{y}\in H^{\prime}(\mathbb{Q})_0\backslash B^{3}}\sum\limits_{g\in H_{\boldsymbol{y}}^{\prime}(\mathbb{Q})_0\backslash H(\mathbb{A}_f^p)/K^p}\Phi^{p}(g^{-1}\boldsymbol{y})\cdot\chi\left(\M,[{^{\mathbb{L}}\otimes_{i=1}^{3}}\rO_{\mathcal{Z}(y_i)}]\right)\cdot\log(p)\\
            &=\sum_{T:\textup{Diff}(T)=\{p\}}\sum\limits_{\substack{\boldsymbol{y}\in H^{\prime}(\mathbb{Q})_0\backslash B^{3}\\T(\boldsymbol{y})=T}}\sum\limits_{g\in H_{\boldsymbol{y}}^{\prime}(\mathbb{Q})_0\backslash H(\mathbb{A}_f^p)/K^p}\Phi^{p}(g^{-1}\boldsymbol{y})\cdot \frac{p^{4}}{(p-1)^{2}}\cdot W_{T,p}^{\prime}(1,0,1_{H_0(p)^{3}})\\
            &=\frac{p^{4}}{(p-1)^{2}}\cdot\frac{1}{\textup{vol}(K^p)}\cdot\int\limits_{\textup{SO}(B)(\mathbb{A}_f^p)}\Phi^p(g^{-1}\boldsymbol{y})\textup{d}g\cdot W_{T,p}^{\prime}(1,0,1_{H_0(p)^{3}}).
        \end{align*}
    We use the same measures for orthogonal groups as fixed in \cite[$\S$1.4]{YZZ}. By the calculations of the volume factor in Theorem 5.4.3 of \textit{loc.cit.} and Siegel-Weil formula, we have
    \begin{equation}
       \frac{p^{4}}{(p-1)^{2}}\cdot \frac{1}{\textup{vol}(K^p)}\cdot\int\limits_{\textup{SO}(B)(\mathbb{A}_f^p)}\Phi^p(g^{-1}\boldsymbol{y})\textup{d}g=-2\cdot\prod\limits_{v\neq p}W_{T,v}(g_v,0,\Phi_v).
        \label{measure-SW}
    \end{equation}
    Therefore (\ref{int-hecke-main}) is true when $g_p=\mathbf{1}_p$.
    \par
    Now we consider the case $\Phi_p=1_{H_0(p)^{\vee3}}$. Let $z\in\widehat{\mathsf{T}}(g_i,\Phi_i)^{\textup{ss}}(\mathbb{F})$, there exists three linearly independent vectors $y_1,y_2,y_3\in B$ such that $z\in\bigcap\limits_{i=1}^{3}\left(\mathcal{Y}(y_i)-\exc_{\M}\right)$. Let $T$ be the fundamental matrix of the quadratic lattice $L$ spanned by $y_1,y_2$ and $y_3$. Then
        \begin{equation*}
            \textup{Diff}(T)=\{p\}.
        \end{equation*}
        Moreover, we have
        \begin{align*}
            \chi\left(\M,[{^{\mathbb{L}}\otimes_{i=1}^{3}}\rO_{\mathcal{Y}(y_i)-\exc_{\M}}]\right)&=\Int^{\mathcal{Y}}(L)+1=\partial\den(H_0(p)^{\vee},L)\\
            &=p^{4}(p-1)^{-2}\cdot\log(p)^{-1}\cdot W_{T,p}^{\prime}(1,0,1_{H_0(p)^{\vee3}}).
        \end{align*}
        By Lemma \ref{uni-hecke}, we have 
        \begin{align*}
            [\mathcal{O}_{\widehat{\mathsf{T}}^{\textup{ss}}(g,\Phi)}]=\sum\limits_{\boldsymbol{y}\in H^{\prime}(\mathbb{Q})_0\backslash B^{3}}&\sum\limits_{g\in H_{\boldsymbol{y}}^{\prime}(\mathbb{Q})_0\backslash H(\mathbb{A}_f^p)/K^p}\Phi^{p}(g^{-1}\boldsymbol{y})\cdot\Theta_{\M}^{-1}\left([{^{\mathbb{L}}\otimes_{i=1}^{3}}\rO_{\mathcal{Y}(y_i)-\exc_{\M}}]\right).
        \end{align*}
        The remaining parts are similar to the proof of the case $\Phi_p=1_{H_0(p)^{3}}$.
\end{proof}

\bibliographystyle{alpha}
\bibliography{reference}

\end{document}